\newcommand{\N}{\mathbb{N}}
\newcommand{\Z}{\mathbb{Z}}
\newcommand{\R}{\mathbb{R}}
\newcommand{\1}{\mathbb{1}}
\newcommand{\linop}{\mathcal{L}}
\xdef\csname cal\x\endcsname{\noexpand\ensuremath{\noexpand\mathcal{\x}}}
\newcommand{\eps}{\varepsilon}
\renewcommand{\phi}{\varphi}
\newcommand{\dom}{\operatorname{\mathrm{dom}}}
\DeclareMathOperator{\Id}{\mathrm{Id}}
\newcommand{\spann}{\mathrm{span}}
\newtheorem{theorem}{Theorem}[chapter]
\newtheorem{lemma}[theorem]{Lemma}
\newtheorem{cor}[theorem]{Corollary}
\theoremstyle{definition}
\newtheorem{defn}[theorem]{Definition}
\newtheorem{example}[theorem]{Example}
\newtheorem{assumption}[theorem]{Assumption}
\crefname{theorem}{Theorem}{Theorems}
\crefname{cor}{Corollary}{Corollaries}
\crefname{defn}{Definition}{Definitions}
\crefname{example}{Example}{Examples}
\crefname{assumption}{Assumption}{Assumptions}
\crefname{rem}{Remark}{Remarks}
\newcommand{\norm}[1]{\|#1\|}
\newcommand{\abs}[1]{\left|#1\right|}
\newcommand{\inner}[2]{\left(#1 \,\middle|\, #2\right)}
\newcommand{\dual}[1]{\langle #1 \rangle}
\newcommand{\setof}[2]{\left\{#1 \;\middle|\; #2\right\}}
\newcommand{\wkto}{\rightharpoonup}
\DeclareMathOperator{\Exp}{\mathbb{E}}
\DeclareMathOperator{\Cov}{\mathrm{Cov}}
 \gdef\tfn@fnt{0}%
\setlist[enumerate]{label=(\roman*)}
\title{Regularization of Inverse Problems}
\author{Christian Clason}
\begin{document}
\maketitle

\frontmatter
\tableofcontents

\mainmatter

\chapter*{Preface}
\markboth{Preface}{Preface}
\enlargethispage*{2cm}

Inverse problems occur wherever a quantity cannot be directly measured but only inferred through comparing observations to the output of mathematical models.
Examples of such problems are ubiquitous in biomedical imaging, non-destructive testing, and calibration of financial models. The name \emph{inverse problem} is due to the fact that it contains as a \emph{direct problem} the evaluation of the model given an estimate of the sought-for quantity. However, it is more relevant from a mathematical point of view that such problems are \emph{ill-posed} and cannot be treated by standard methods for solving (non)linear equations.\footnote{Otherwise there would not be need of a dedicated lecture. In fact, a more fitting title would have been \emph{Ill-posed Problems}, but the term \emph{inverse problem} has become widely accepted, especially in applications.}

The mathematical theory of inverse problems is therefore a part of functional analysis: in the same way that the latter is concerned with the question when an equation $F(x)=y$ between infinite-dimensional vector spaces admits a unique solution $x$ that depends continuously on $y$, the former is concerned with conditions under which this is \emph{not} the case and with methods to at least obtain a reasonable approximation to $x$. In finite dimensions, this essentially corresponds to the step from regular to inconsistent, under- or overdetermined, or ill-conditioned systems of linear equations.

Although inverse problems are increasingly studied in Banach spaces, we will restrict ourselves in these notes to Hilbert spaces as here the theory is essentially complete and allows for full characterizations in many cases and instead refer to \cite{Scherzer:2009,ItoJin,HKKS} for a treatment of such problems.
Similarly, we will only cursorily give an outlook to statistical (frequentist and Bayesian) inverse problems, which have also become prominent in recent years. Here a broad and elementary exposition aimed at a mathematical audience is still missing.

\medskip

These notes are based on graduate lectures given 2014--2020 at the University of Duisburg-Essen. As such, no claim is made of originality (beyond possibly the selection and arrangement of the material). Rather, like a magpie, I have tried to collect the shiniest results and proofs I could find. Here I mainly followed the seminal work \cite{Engl} (with simplifications by considering only compact instead of bounded linear operators on Hilbert spaces), with additional material from \cite{Hohage,Kindermann,Scherzer:2014,Kirsch,ItoJin,Kaltenbacher}. Further literature consulted during the writing of these notes was \cite{Louis,Hofmann,Rieder,Harrach,Burger}. The outlook on frequentist and Bayesian statistical inverse problems is based on \cite{Cavalier,Kekkonen}, respectively.

\part{Basics of functional analysis}

\chapter{Linear operators in normed spaces}

In this and the following chapter, we collect the basic concepts and results (and, more importantly, fix notations) from linear functional analysis that will be used throughout these notes. For details and proofs, the reader is referred to the standard literature, e.g., \cite{Alt,Brezis:2010a}, or to \cite{Clason}.

\section{Normed vector spaces}

In the following, $X$ will denote a vector space over the field $\mathbb{K}$, where we restrict ourselves to the case $\mathbb{K}=\R$. A mapping $\norm{\cdot}:X\to \R^+:= [0,\infty)$ is called a \emph{norm} (on $X$) if for all $x\in X$ there holds
\begin{enumerate}
    \item $\norm{\lambda x} = |\lambda| \norm{x}$ for all $\lambda\in\mathbb{K}$,
    \item $\norm{x+y} \leq \norm{x} + \norm{y}$ for all $y\in X$,
    \item $\norm{x} = 0$ if and only if $x = 0\in X$.
\end{enumerate}
\begin{example}\label{ex:funktan:norm}
    \begin{enumerate}
        \item Norms on $X = \R^N$ are defined by
            \begin{equation*}
                \begin{aligned}
                    \norm{x}_p &= \left(\sum_{i=1}^N |x_i|^p\right)^{1/p},\qquad1\leq p<\infty,\\
                    \norm{x}_\infty &= \max_{i=1,\dots,N} |x_i|.
                \end{aligned}
            \end{equation*}
        \item Norms on $X = \ell^p$ (the space of real-valued sequences for which the corresponding terms are finite) are defined by
            \begin{equation*}
                \begin{aligned}
                    \norm{x}_p &= \left(\sum_{i=1}^\infty |x_i|^p\right)^{1/p},\qquad 1\leq p<\infty,\\
                    \norm{x}_\infty &= \sup_{i=1,\dots,\infty} |x_i|.
                \end{aligned}
            \end{equation*}
        \item Norms on $X = L^p(\Omega)$ (the space of real-valued measurable functions on the domain $\Omega\subset \R^d$ for which the corresponding terms are finite) are defined by
            \begin{equation*}
                \begin{aligned}
                    \norm{u}_p &= \left(\int_\Omega |u(x)|^p\,dx\right)^{1/p},\qquad 1\leq p<\infty,\\
                    \norm{u}_\infty &= \mathop\mathrm{ess\,\sup}_{x\in \Omega} |u(x)|.
                \end{aligned}
            \end{equation*}
        \item A norm on $X = C(\overline\Omega)$ (the space of continuous functions on $\overline\Omega$) is defined by
            \begin{equation*}
                \norm{u}_C = \sup_{x\in \Omega} |u(x)|.
            \end{equation*}
            Similarly, a norm on the space $C^k(\overline\Omega)$ of $k$ times continuously differentiable functions is defined by $\norm{u}_{C^k}=\sum_{j=0}^k\norm{u^{(j)}}_{C}$.
    \end{enumerate}
\end{example}
If $\norm{\cdot}$ is a norm on $X$, the pair $(X,\norm{\cdot})$ is called a \emph{normed vector space}, and one frequently denotes this by writing $\norm{\cdot}_X$. If the norm is canonical (as in \cref{ex:funktan:norm}\,(ii)--(iv)), it is often omitted, and one speaks simply of ``the normed vector space $X$''.

Two norms $\norm{\cdot}_1$, $\norm{\cdot}_2$ are called \emph{equivalent} if there exist constants $c_1,c_2 >0 $ such that
\begin{equation*}
    c_1 \norm{x}_2 \leq \norm{x}_1 \leq c_2 \norm{x}_2 \qquad\text{for all } x\in X.
\end{equation*}
If $X$ is finite-dimensional, all norms on $X$ are equivalent. However, in this case the constants $c_1, c_2$ may depend on the dimension of $X$; in particular, it may be the case that $c_1(N)\to 0$ or $c_2(N)\to \infty$ for $\dim X=N\to \infty$, making the corresponding inequality useless for growing dimensions. Avoiding such dimension-dependent constants is therefore one of the main reasons for studying inverse problems in infinite-dimensional spaces.

If $(X,\norm{\cdot}_X)$ and $(Y,\norm{\cdot}_Y)$ are normed vector spaces with $X\subset Y$, then $X$ is called \emph{continuously embedded} in $Y$, denoted by $X\hookrightarrow Y$, if there exists a $C>0$ such that
\begin{equation*}
    \norm{x}_Y \leq C\norm{x}_X \qquad\text{for all } x\in X.
\end{equation*}

\bigskip

A norm directly induces a notion of convergence, the so-called \emph{strong convergence}:
A sequence $\{x_n\}_{n\in\N}\subset X$ converges (strongly in $X$) to a $x\in X$, denoted by $x_n\to x$, if
\begin{equation*}
    \lim_{n\to\infty} \norm{x_n-x}_X = 0.
\end{equation*}

A set $U\subset X$ is called
\begin{itemize}
    \item \emph{closed} if for every convergent sequence $\{x_n\}_{n\in\N}\subset U$ the limit $x\in X$ lies in $U$ as well;
    \item \emph{compact} if every sequence $\{x_n\}_{n\in\N}\subset U$ contains a convergent subsequence $\{x_{n_k}\}_{k\in\N}$ with limit $x\in U$;
    \item \emph{dense} in $X$ if for all $x\in X$ there exists a sequence $\{x_n\}_{n\in\N}\subset U$ with $x_n\to x$.
\end{itemize}
The union of $U$ with the set of all limits of convergent sequences in $U$ is called the \emph{closure} $\overline U$ of $U$; obviously, $U$ is dense in $\overline U$.

A normed vector space $X$ is called \emph{complete}, if every Cauchy sequence in $X$ converges; in this case, $X$ is called a \emph{Banach space}. All spaces in \cref{ex:funktan:norm} are Banach spaces. If $X$ is an incomplete normed space, we denote by $\overline X$ the \emph{completion} of $X$ (with respect to the norm $\|\cdot\|_X$).

Finally, we define for later use for given $x\in X$ and $r>0$
\begin{itemize}
    \item the \emph{open ball} $U_r(x) := \setof{z\in X}{\norm{x-z}_X< r}$ and
    \item the \emph{closed ball} $B_r(x) := \setof{z\in X}{\norm{x-z}_X\leq r}$.
\end{itemize}
The closed ball around $x=0$ with radius $r=1$ is also referred to as the \emph{unit ball} $B_X$.
A set $U\subset X$ is called
\begin{itemize}
    \item \emph{open} if for all $x\in U$ there exists an $r>0$ such that $U_r(x)\subset U$ (i.e., all $x\in U$ are \emph{interior points} of $U$);
    \item \emph{bounded} if it is contained in a closed ball $B_r(0)$ for an $r>0$;
    \item \emph{convex} if for all $x,y\in U$ and $\lambda\in [0,1]$ also $\lambda x + (1-\lambda)y\in U$.
\end{itemize}
In normed spaces, the complement of an open set is also closed and vice versa (i.e., the closed sets in the sense of topology are exactly the (sequentially) closed sets in the sense of the above definition).
The definition of a norm directly implies that open and closed balls are convex.
On the other hand, the unit ball is compact if \emph{and only if} $X$ is finite-dimensional; this will be of fundamental importance throughout these notes.

\section{Bounded operators}

We now consider mappings between normed vector spaces. In the following, let $(X,\norm{\cdot}_X)$ and $(Y,\norm{\cdot}_Y)$ be normed vector spaces, $U\subset X$, and $F: U\to Y$ be a mapping. We denote by
\begin{itemize}
    \item $\calD(F) := U$ the \emph{domain} of $F$;
    \item $\calN(F) := \setof{x\in U}{F(x) = 0}$ the \enquote{kernel} or \enquote{null space} of $F$;
    \item $\calR(F) := \setof{F(x)\in Y}{x\in U}$ the \enquote{range} of $F$.
\end{itemize}
We call $F:U\to Y$
\begin{itemize}
    \item \emph{continuous} in $x\in U$ if for all $\eps>0$ there exists a $\delta >0$ with
        \begin{equation*}
            \norm{F(x)-F(z)}_Y \leq \eps\qquad \text{for all } z\in U \text{ with } \norm{x-z}_X \leq \delta;
        \end{equation*}
    \item \emph{Lipschitz continuous} if there exists a \emph{Lipschitz constant} $L>0$ with
        \begin{equation*}
            \norm{F(x_1)-F(x_2)}_Y \leq L \norm{x_1-x_2}_X \qquad\text{for all } x_1,x_2 \in U.
        \end{equation*}
\end{itemize}
A mapping $F:X\to Y$ is thus continuous if and only if $x_n\to x$ implies $F(x_n)\to F(x)$; it is \emph{closed} if both $x_n\to x$ and $F(x_n)\to y$ imply $F(x) = y$.

If $F:X\to Y$ is linear (i.e., $F(\lambda_1 x_1 +\lambda_2 x_2) = \lambda_1 F(x_1) + \lambda_2 F(x_2)$ for all $\lambda_1,\lambda_2 \in \R$ and $x_1,x_2\in X$), continuity of $F$ is equivalent to the existence of a $C>0$ such that
\begin{equation*}
    \norm{F(x)}_Y \leq C\norm{x}_X \qquad\text{for all }x\in X.
\end{equation*}
For this reason, continuous linear mappings are called \emph{bounded}; one also speaks of a bounded linear \emph{operator}.
(In the following, we generically denote these by $T$ and omit the parentheses around the argument to indicate this.) If $Y$ is complete, the vector space $\linop(X,Y)$ of bounded linear operators becomes a Banach space when endowed with the \emph{operator norm}
\begin{equation*}
    \norm{T}_{\linop(X,Y)} = \sup_{x\in X\setminus\{0\}}\frac{\norm{Tx}_Y}{\norm{x}_X}
    = \sup_{\norm{x}_X\leq 1} \norm{Tx}_Y = \sup_{\norm{x}_X = 1} \norm{Tx}_Y,
\end{equation*}
which is equal to the minimal constant $C$ in the definition of continuity.
This immediately implies that
\begin{equation*}
    \norm{Tx}_Y \leq \norm{T}_{\linop(X,Y)} \norm{x}_X \qquad\text{for all }x\in X.
\end{equation*}

As in linear algebra, we call $T$
\begin{itemize}
    \item \emph{injective} if $\calN(T) = \{0\}$;
    \item \emph{surjective} if $\calR(T)= Y$;
    \item \emph{bijective} if $T$ is injective and surjective.
\end{itemize}
If $T\in \linop(X,Y)$ is bijective, the \emph{inverse} $T^{-1}:Y\to X$, $Tx\mapsto x$, is continuous if and only if there exists a $c>0$ with
\begin{equation}
    \label{eq:funktan:inverse_bd}
    c\norm{x}_X \leq \norm{Tx}_Y \qquad\text{for all }x\in X;
\end{equation}
in this case, $\norm{T^{-1}}_{\linop(Y,X)} = c^{-1}$ holds for the maximal $c$ satisfying \eqref{eq:funktan:inverse_bd}.
The question of when this is the case is answered by the following three main theorems of functional analysis (that all are more or less direct consequences of the Open Mapping Theorem).
\begin{theorem}[continuous inverse]\label{cor:funktan:inverse}
    If $X,Y$ are Banach spaces and $T\in \linop(X,Y)$ is bijective, then $T^{-1}:Y\to X$ is continuous.
\end{theorem}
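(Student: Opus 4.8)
The plan is to reduce the statement to the Open Mapping Theorem, exactly as the parenthetical remark above anticipates. Recall that this theorem guarantees that a \emph{surjective} operator $T\in\linop(X,Y)$ between Banach spaces is \emph{open}, i.e.\ maps open subsets of $X$ to open subsets of $Y$. Since a bijective $T$ is in particular surjective, the theorem applies, and the continuity of $T^{-1}$ then follows almost immediately from a topological reformulation: a map is continuous precisely when preimages of open sets are open, and for open $U\subset X$ one has $(T^{-1})^{-1}(U)=T(U)$ by bijectivity, which is open by the theorem. (Here $T^{-1}$ is automatically linear, so continuity and boundedness coincide.)

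I would, however, prefer to argue quantitatively so as to recover the characterization \eqref{eq:funktan:inverse_bd} directly. Applying openness to the open unit ball $U_1(0)\subset X$ shows that $T(U_1(0))$ is an open subset of $Y$ containing $T0=0$, so there is an $r>0$ with $U_r(0)\subset T(U_1(0))$. Unwinding this inclusion, every $y\in Y$ with $\norm{y}_Y<r$ equals $Tx$ for some $x$ with $\norm{x}_X<1$, and by injectivity this $x$ is exactly $T^{-1}y$; hence $\norm{T^{-1}y}_X<1$ whenever $\norm{y}_Y<r$. A homogeneity (scaling) argument upgrades this to $\norm{T^{-1}y}_X\le r^{-1}\norm{y}_Y$ for all $y\in Y$, which is precisely \eqref{eq:funktan:inverse_bd} with $c=r$ and yields $\norm{T^{-1}}_{\linop(Y,X)}\le r^{-1}$.

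The genuine content, and the only real obstacle, is the Open Mapping Theorem itself, whose proof rests on the Baire category theorem and uses completeness of \emph{both} $X$ and $Y$. This is exactly where the Banach-space hypotheses enter: dropping completeness allows bijective bounded operators whose inverse is unbounded, so the result is sharp in that respect. Everything else in the argument is routine, the only points requiring a word of care being the identity $(T^{-1})^{-1}(U)=T(U)$, which genuinely needs bijectivity, and the scaling step, which uses the positive homogeneity of the norm established in the definition above.
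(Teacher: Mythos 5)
Your argument is correct and is exactly the route the paper anticipates: it states this theorem without proof as one of the results that are \enquote{more or less direct consequences of the Open Mapping Theorem}, and your deduction (openness applied to the unit ball, then scaling to recover \eqref{eq:funktan:inverse_bd} with $c=r$) is the standard way to make that reduction precise. No gaps.
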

Of particular relevance for inverse problems is the situation that $T$ is injective but not surjective; in this case, one would like to at least have a continuous inverse on the range of $T$. However, this does not hold in general, which is one of the fundamental issues in infinite-dimensional inverse problems.
\begin{theorem}[closed range]\label{thm:closedrange}
    If $X,Y$ are Banach spaces and $T\in\linop(X,Y)$ is injective, then $T^{-1}:\calR(T)\to X$ is continuous if and only if $\calR(T)$ is closed.
\end{theorem}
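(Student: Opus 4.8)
The plan is to prove both implications by reducing them to results already available, namely the continuous inverse theorem (\cref{cor:funktan:inverse}) and the a priori characterization \eqref{eq:funktan:inverse_bd} of a bounded inverse. Note first that since $T$ is injective, $T^{-1}:\calR(T)\to X$ is a well-defined linear map, so both statements are meaningful.

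For the implication ``$\calR(T)$ closed $\implies$ $T^{-1}$ continuous'', the key observation is that a closed subspace of a Banach space is again complete. Since $Y$ is a Banach space and $\calR(T)$ is assumed closed, $\calR(T)$ equipped with $\norm{\cdot}_Y$ is therefore itself a Banach space. Regarding $T$ as a map $T:X\to\calR(T)$, it is bounded and linear, and it is bijective onto $\calR(T)$ because $T$ is injective and $\calR(T)$ is by definition its range. \Cref{cor:funktan:inverse} applied to this bijection between two Banach spaces then yields immediately that $T^{-1}:\calR(T)\to X$ is continuous.

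For the converse, I would argue by a Cauchy-sequence argument. Assuming $T^{-1}$ continuous, the characterization \eqref{eq:funktan:inverse_bd} provides a constant $c>0$ with $c\norm{x}_X\leq\norm{Tx}_Y$ for all $x\in X$. To show $\calR(T)$ is closed, take any sequence $\{Tx_n\}\subset\calR(T)$ with $Tx_n\to y$ in $Y$; the goal is to exhibit a preimage of $y$. Applying the estimate to $x_n-x_m$ gives $c\norm{x_n-x_m}_X\leq\norm{Tx_n-Tx_m}_Y$, and since the convergent sequence $\{Tx_n\}$ is Cauchy, so is $\{x_n\}$ in $X$. Here completeness of $X$ enters decisively: it furnishes a limit $x_n\to x\in X$, whereupon continuity of $T$ gives $Tx_n\to Tx$, and uniqueness of limits forces $y=Tx\in\calR(T)$. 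Hence $\calR(T)$ is closed.

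The only genuinely delicate point is the first implication, where the whole argument hinges on recognizing that closedness of $\calR(T)$ is precisely the hypothesis needed to make $\calR(T)$ a complete space, so that \cref{cor:funktan:inverse} becomes applicable after corestricting $T$ to its range; once this reduction is seen, the proof is immediate. The converse, by contrast, is a routine estimate exploiting the a priori bound \eqref{eq:funktan:inverse_bd} together with the completeness of $X$.
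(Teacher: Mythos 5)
Your proof is correct. The paper itself states this theorem without proof (it is deferred to the standard literature, with the remark that it follows from the Open Mapping Theorem), and your argument is exactly the standard one consistent with that remark: corestricting $T$ to its closed, hence complete, range and invoking \cref{cor:funktan:inverse} for the forward direction, and a Cauchy-sequence argument using the lower bound $c\norm{x}_X\leq\norm{Tx}_Y$ for the converse. The only cosmetic point is that \eqref{eq:funktan:inverse_bd} is stated in the text for bijective $T$, whereas you use the equivalence ``$T^{-1}$ continuous on $\calR(T)$ iff $T$ is bounded below'' for merely injective $T$; this extension is immediate from the definition of the operator norm of $T^{-1}:\calR(T)\to X$ and does not affect the argument.
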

The following theorem completes the trio.
\begin{theorem}[closed graph]\label{thm:closedgraph}
    Let $X,Y$ be Banach spaces. Then $T:X\to Y$ is continuous if and only if $T$ is closed.
\end{theorem}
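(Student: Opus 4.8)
The statement has two implications, and the plan is to dispatch them separately. For the easy direction ($T$ continuous $\implies$ $T$ closed), I would argue directly from the sequential definitions recorded above: if $x_n\to x$ and $Tx_n\to y$, then continuity gives $Tx_n\to Tx$, and uniqueness of strong limits forces $y=Tx$, which is exactly the closedness of $T$. This uses nothing beyond the definitions.

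The substance of the theorem is the converse, and here the plan is to reduce it to the Continuous Inverse Theorem (\cref{cor:funktan:inverse}). First I would equip the product $X\times Y$ with the norm $\norm{(x,y)}:=\norm{x}_X+\norm{y}_Y$ and check that it becomes a Banach space, since a Cauchy sequence in $X\times Y$ is Cauchy in each component and $X$, $Y$ are complete. Next I would consider the graph $\graph(T)=\setof{(x,Tx)}{x\in X}\subset X\times Y$, observe that it is a linear subspace, and use the hypothesis that $T$ is closed to conclude that $\graph(T)$ is a \emph{closed} subspace: if $(x_n,Tx_n)\to(x,y)$ in $X\times Y$, then $x_n\to x$ and $Tx_n\to y$, so closedness of $T$ yields $y=Tx$ and the limit again lies in $\graph(T)$. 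A closed subspace of a Banach space is complete, so $\graph(T)$ is itself a Banach space.

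With this in place, the final step is to apply \cref{cor:funktan:inverse} to the projection $\pi\colon\graph(T)\to X$, $(x,Tx)\mapsto x$. This map is linear, bounded (indeed $\norm{\pi(x,Tx)}_X=\norm{x}_X\leq\norm{(x,Tx)}$), and bijective (injectivity because $\pi(x,Tx)=0$ forces $x=0$ and hence $Tx=0$; surjectivity because every $x\in X$ is hit by $(x,Tx)$). Therefore $\pi^{-1}\colon X\to\graph(T)$ is continuous, i.e.\ there is a $C>0$ with $\norm{x}_X+\norm{Tx}_Y=\norm{(x,Tx)}\leq C\norm{x}_X$ for all $x\in X$. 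Since both terms on the left are nonnegative, this gives $\norm{Tx}_Y\leq C\norm{x}_X$, which is precisely the boundedness, hence continuity, of $T$.

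I expect the main conceptual obstacle to be spotting the graph trick at all: the temptation is to attack continuity of $T$ directly, whereas the whole point is to transport the problem into the product space, where closedness of $T$ becomes closedness (and thus completeness) of $\graph(T)$, unlocking \cref{cor:funktan:inverse}. The remaining verifications---completeness of $X\times Y$, closedness of the graph, and the three properties of the projection---are routine once the construction is chosen.
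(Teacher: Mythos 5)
Your proof is correct and complete. Note that the paper itself does not prove this statement: \cref{thm:closedgraph} is listed among the background results from functional analysis for which the reader is referred to the standard literature, so there is no in-text argument to compare against. Your route --- the easy direction by uniqueness of limits, and the converse by making $X\times Y$ a Banach space under the sum norm, identifying closedness of $T$ with closedness (hence completeness) of $\graph(T)$, and applying \cref{cor:funktan:inverse} to the bijective bounded projection $\graph(T)\to X$ --- is exactly the standard derivation, and it is consistent with the paper's remark that all three main theorems are more or less direct consequences of the Open Mapping Theorem. The only implicit ingredient worth making explicit is that $T$ is linear (signalled in the paper by the notation $T$ and the omitted parentheses), which you use both to make $\graph(T)$ a subspace and to pass from the bound $\norm{Tx}_Y\leq C\norm{x}_X$ to continuity.
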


\bigskip

We now consider sequences of linear operators. Here we distinguish two notions of convergence: A sequence $\{T_n\}_{n\in\N}\subset \linop(X,Y)$ converges to $T\in \linop(X,Y)$
\begin{enumerate}
    \item \emph{pointwise} if $T_nx\to Tx$ (strongly in $Y$) for all $x\in X$;
    \item \emph{uniformly} if $T_n\to T$ (strongly in $\linop(X,Y)$).
\end{enumerate}
Obviously, uniform convergence implies pointwise convergence; weaker conditions are provided by another main theorem of functional analysis.
\begin{theorem}[Banach--Steinhaus]\label{thm:banach-steinhaus}
    Let $X$ be a Banach space, $Y$ be a normed vector space, and $\{T_i\}_{i\in I}\subset \linop(X,Y)$ be a family of pointwise bounded operators, i.e., for all $x\in X$ there exists an $M_x>0$ with $\sup_{i\in I}\|T_ix\|_Y \leq M_x$. Then
    \begin{equation*}
        \sup_{i\in I} \|T_i\|_{\linop(X,Y)} < \infty.
    \end{equation*}
\end{theorem}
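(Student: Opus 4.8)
The plan is to run the classical Baire category argument, which is where the completeness hypothesis on $X$ enters decisively. For each $n\in\N$ I would introduce the set
\[
    A_n := \setof{x\in X}{\sup_{i\in I}\norm{T_ix}_Y \leq n},
\]
and first verify that every $A_n$ is closed. This follows because $A_n=\bigcap_{i\in I}\setof{x\in X}{\norm{T_ix}_Y\leq n}$ is an intersection of closed sets, each being the preimage of the interval $[0,n]$ under the continuous map $x\mapsto\norm{T_ix}_Y$ (continuous since $T_i$ is bounded and the norm is continuous). The pointwise boundedness assumption then shows that these sets exhaust $X$: for any $x\in X$ I choose $n\geq M_x$, so that $x\in A_n$; hence $X=\bigcup_{n\in\N}A_n$.

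Since $X$ is a Banach space, it is a complete metric space, and I would invoke the Baire category theorem, which forbids such a space from being written as a countable union of closed sets with empty interior. Consequently at least one $A_{n_0}$ must have nonempty interior, i.e.\ there exist $x_0\in X$ and $r>0$ with $B_r(x_0)\subset A_{n_0}$; equivalently, $\norm{T_i(x_0+z)}_Y\leq n_0$ for all $i\in I$ and all $z$ with $\norm{z}_X\leq r$.

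It then remains to convert this local bound into a uniform bound on the operator norms, exploiting the linearity of each $T_i$. For arbitrary $z\in X$ with $\norm{z}_X\leq r$ I would write $T_iz = T_i(x_0+z)-T_ix_0$ and estimate, using also $x_0\in A_{n_0}$,
\[
    \norm{T_iz}_Y \leq \norm{T_i(x_0+z)}_Y + \norm{T_ix_0}_Y \leq 2n_0.
\]
Rescaling (replacing $z$ by $rz/\norm{z}_X$ for $z\neq 0$) yields $\norm{T_iz}_Y\leq (2n_0/r)\norm{z}_X$ for every $z\in X$ and every $i\in I$, whence $\norm{T_i}_{\linop(X,Y)}\leq 2n_0/r$ uniformly in $i$, which is the assertion.

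The only genuinely hard ingredient is the Baire category theorem invoked in the middle step; everything else is bookkeeping. In particular, completeness of $X$ is used solely through Baire, and since the statement is false without it, I would not expect to avoid this input. A secondary point requiring a little care is the passage from ``$A_{n_0}$ has nonempty interior'' to a bound on a ball centered at the origin, which is exactly the translation-and-subtraction trick above that relies on the linearity of each $T_i$.
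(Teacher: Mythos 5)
Your argument is correct: it is the standard Baire category proof of the uniform boundedness principle, and the paper itself states \cref{thm:banach-steinhaus} without proof, deferring to the standard literature. The only cosmetic point is that Baire gives you an \emph{open} ball inside $A_{n_0}$, so to get the bound for all $\norm{z}_X\leq r$ as written you should either shrink the radius slightly or note that the closed sublevel condition extends to the closure; this does not affect the argument. Everything else, including the translation-and-rescaling step that converts the local bound into a uniform operator-norm bound, is exactly as it should be.
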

\begin{cor}\label{cor:banach-steinhaus1}
    Let $X,Y$ be Banach spaces and $\{T_n\}_{n\in\N}\subset \linop(X,Y)$. Then the following statements are equivalent:
    \begin{enumerate}
        \item $\{T_n\}_{n\in\N}$ converges uniformly on compact subsets of $X$;
        \item $\{T_n\}_{n\in\N}$ converges pointwise on $X$;
        \item $\{T_n\}_{n\in\N}$ converges pointwise on a dense subset $U\subset X$ and
            \begin{equation*}
                \sup_{n\in \N} \|T_n\|_{\linop(X,Y)} < \infty.
            \end{equation*}
    \end{enumerate}
\end{cor}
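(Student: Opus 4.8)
The plan is to prove the equivalence by the cyclic chain $(i)\Rightarrow(ii)\Rightarrow(iii)\Rightarrow(i)$, so that each statement implies its successor and the circle closes. Two of these implications are essentially immediate, and the entire weight of the argument rests on the last one, $(iii)\Rightarrow(i)$, which is where completeness of $Y$ and the uniform bound will do the real work.

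For $(i)\Rightarrow(ii)$ I would simply observe that every singleton $\{x\}\subset X$ is compact, so uniform convergence on compact subsets specializes to convergence of $\{T_nx\}$ for each fixed $x$, which is precisely pointwise convergence. For $(ii)\Rightarrow(iii)$, pointwise convergence on all of $X$ trivially restricts to pointwise convergence on any dense subset (one may even take $U=X$); to obtain the uniform bound I would note that each convergent sequence $\{T_nx\}$ in $Y$ is bounded, so the family $\{T_n\}$ is pointwise bounded, and \cref{thm:banach-steinhaus} then yields $\sup_n\norm{T_n}_{\linop(X,Y)}<\infty$. This is the only place Banach--Steinhaus enters, and it is exactly what lets us trade pointwise for uniform information.

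The substance is in $(iii)\Rightarrow(i)$, which I would carry out in two stages. Write $M:=\sup_n\norm{T_n}_{\linop(X,Y)}<\infty$. First I would upgrade pointwise convergence on the dense set $U$ to pointwise convergence on all of $X$: given $x\in X$ and $\eps>0$, choose $u\in U$ with $\norm{x-u}_X<\eps$ and estimate, for $n,m\in\N$,
\begin{equation*}
    \norm{T_nx-T_mx}_Y \leq \norm{T_n(x-u)}_Y + \norm{T_nu-T_mu}_Y + \norm{T_m(u-x)}_Y \leq 2M\eps + \norm{T_nu-T_mu}_Y.
\end{equation*}
Since $\{T_nu\}$ converges it is Cauchy, so the middle term vanishes as $n,m\to\infty$ and $\{T_nx\}$ is Cauchy; completeness of $Y$ then furnishes a limit, which I would call $Tx$. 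Linearity of $T$ is inherited in the limit, and $\norm{Tx}_Y=\lim_n\norm{T_nx}_Y\leq M\norm{x}_X$ shows $T\in\linop(X,Y)$ with $\norm{T}_{\linop(X,Y)}\leq M$.

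For the second stage I would promote this to uniform convergence on a compact set $K\subset X$ by a finite-net argument, and this is where I expect the main obstacle to lie. Since $K$ is compact it is totally bounded, so for $\eps>0$ there are finitely many points $x_1,\dots,x_p$ with $K\subset\bigcup_j U_\eps(x_j)$. For arbitrary $x\in K$ I pick $x_j$ with $\norm{x-x_j}_X<\eps$ and split
\begin{equation*}
    \norm{T_nx-Tx}_Y \leq \norm{T_n(x-x_j)}_Y + \norm{T_nx_j-Tx_j}_Y + \norm{T(x_j-x)}_Y \leq 2M\eps + \norm{T_nx_j-Tx_j}_Y,
\end{equation*}
using $\norm{T}_{\linop(X,Y)}\leq M$. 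Choosing $N$ so large that $\norm{T_nx_j-Tx_j}_Y<\eps$ for all $n\geq N$ and all $j\in\{1,\dots,p\}$ — possible because there are only finitely many centers, each converging pointwise — I obtain $\sup_{x\in K}\norm{T_nx-Tx}_Y\leq(2M+1)\eps$ for $n\geq N$, which is exactly uniform convergence on $K$. The crux is that the uniform bound $M$ is indispensable for converting the net's covering radius $\eps$ into a uniform operator error $2M\eps$ valid across a whole ball, while total boundedness of $K$ is what reduces the supremum over the infinitely many points of $K$ to a finite maximum controllable by pointwise convergence alone; without either ingredient the argument collapses, which is also the conceptual reason the three conditions coincide.
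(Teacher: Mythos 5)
Your proof is correct and is the standard argument: the paper states this corollary without proof (it appears in the preliminary chapter, where proofs are deferred to the literature), and your cyclic chain --- singletons are compact for (i)$\Rightarrow$(ii), Banach--Steinhaus for the uniform bound in (ii)$\Rightarrow$(iii), and the $\eps/3$ density argument followed by a finite-net argument on a totally bounded set for (iii)$\Rightarrow$(i) --- is exactly the canonical route. No gaps; the only point worth making explicit is that the limit operator $T$ constructed in stage one of (iii)$\Rightarrow$(i) is the same one that appears in (i) and (ii), which your construction already guarantees.
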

\begin{cor}\label{cor:banach-steinhaus2}
    Let $X,Y$ be Banach spaces and $\{T_n\}_{n\in\N}\subset \linop(X,Y)$. If $T_n$ converges pointwise to a $T:X\to Y$, then $T$ is bounded.
\end{cor}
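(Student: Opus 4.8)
The plan is to exploit the fact that pointwise convergence furnishes, for each individual $x$, a convergent and hence bounded sequence of images, and then to upgrade this pointwise control to uniform control of the operator norms via \cref{thm:banach-steinhaus}. Concretely, I would proceed in three steps: first verify that the limit map $T$ is linear, then establish that the family $\{T_n\}_{n\in\N}$ is pointwise bounded, and finally combine the resulting uniform norm bound with the continuity of the norm to estimate $\norm{Tx}_Y$.

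For the first step, I would fix $x_1,x_2\in X$ and $\lambda_1,\lambda_2\in\R$ and use linearity of each $T_n$ to write $T_n(\lambda_1 x_1+\lambda_2 x_2)=\lambda_1 T_n x_1+\lambda_2 T_n x_2$. Passing to the limit $n\to\infty$ on both sides — the left-hand side converges to $T(\lambda_1 x_1+\lambda_2 x_2)$ and the right-hand side to $\lambda_1 Tx_1+\lambda_2 Tx_2$ by the assumed pointwise convergence and the linearity of limits in the normed space $Y$ — yields linearity of $T$. For the second step, I would note that for each fixed $x\in X$ the sequence $\{T_nx\}_{n\in\N}$ converges in $Y$ and is therefore bounded, so that $M_x:=\sup_{n\in\N}\norm{T_nx}_Y<\infty$; this is exactly the pointwise boundedness hypothesis of \cref{thm:banach-steinhaus}.

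Since $X$ is a Banach space, \cref{thm:banach-steinhaus} then applies and gives a uniform bound $M:=\sup_{n\in\N}\norm{T_n}_{\linop(X,Y)}<\infty$. For the final step I would fix $x\in X$ and start from $\norm{T_nx}_Y\leq \norm{T_n}_{\linop(X,Y)}\norm{x}_X\leq M\norm{x}_X$ for every $n$. Letting $n\to\infty$ and using that $T_nx\to Tx$ in $Y$ together with the continuity of the norm (so that $\norm{T_nx}_Y\to\norm{Tx}_Y$), I obtain $\norm{Tx}_Y\leq M\norm{x}_X$. As $x$ was arbitrary, this is precisely the boundedness criterion for the linear map $T$, with $\norm{T}_{\linop(X,Y)}\leq M$.

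The only genuinely non-routine ingredient is the passage from pointwise boundedness to a uniform bound on the operator norms, which is where completeness of $X$ enters in an essential way through \cref{thm:banach-steinhaus}; the remaining steps (linearity and the limit estimate) are direct consequences of the definitions and the continuity of the norm. I would therefore expect the main obstacle to be purely conceptual, namely recognizing that Banach--Steinhaus is exactly the tool needed, rather than any technical difficulty in the estimates.
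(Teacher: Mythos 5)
Your proof is correct and follows exactly the route the paper intends: the result is stated as a corollary of the Banach--Steinhaus \cref{thm:banach-steinhaus} (the paper defers the proof to the standard literature), and your argument — linearity of the limit, pointwise boundedness from convergence, uniform boundedness via Banach--Steinhaus, and passage to the limit in $\norm{T_nx}_Y\leq M\norm{x}_X$ — is the standard and complete realization of that plan. No gaps.
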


\chapter{Compact operators in Hilbert spaces}\label{chap:funktan_hilbert}

As mentioned in the preface, the theory of linear inverse problems can be stated most fully in Hilbert spaces. There, the analogy to ill-conditioned linear systems of equations is also particularly evident.

\section{Inner products and weak convergence}

Hilbert spaces are characterized by an additional structure: a mapping $\inner{\cdot}{\cdot}:X\times X\to \R$ on a normed vector space $X$ over the field $\R$ is called \emph{inner product} if
\begin{enumerate}
    \item $\inner{\alpha x+\beta y}{z} = \alpha\inner{x}{z} + \beta \inner{y}{z}$ for all $x,y,z\in X$ and $\alpha,\beta\in \R$;
    \item $\inner{x}{y} = \inner{y}{x}$ for all $x,y\in X$;
    \item $\inner{x}{x} \geq 0$ for all $x\in X$ with $\inner{x}{x}=0$ if and only if $x=0$.
\end{enumerate}
An inner product induces a norm
\begin{equation*}
    \norm{x}_X :=  \sqrt{\inner{x}{x}_X}
\end{equation*}
which satisfies the \emph{Cauchy--Schwarz inequality}
\begin{equation*}
    |\inner{x}{y}_X| \leq \norm{x}_X\norm{y}_X.
\end{equation*}
(If one argument is fixed, the inner product is hence continuous in the other with respect to the induced norm.)
If $X$ is complete with respect to the induced norm (i.e., $(X,\norm{\cdot}_X)$ is a Banach space), then $X$ is called a \emph{Hilbert space}; if the inner product and hence the induced norm is canonical, it is frequently omitted.
\begin{example}
    \Cref{ex:funktan:norm}\,(i--iii) for $p=2$ are Hilbert spaces, where the inner product is defined by
    \begin{enumerate}
        \item for $X=\R^N$:\quad $\displaystyle \inner{x}{y}_X = \sum_{i=1}^N x_iy_i$,
        \item for $X=\ell^2$:\quad $\displaystyle \inner{x}{y}_X = \sum_{i=1}^\infty x_iy_i$,
        \item for $X=L^2(\Omega)$:\quad $\displaystyle \inner{u}{v}_X = \int_\Omega u(x)v(x)\,dx$.
    \end{enumerate}
    In all cases, the inner product induces the canonical norm.
\end{example}

The inner product induces an additional notion of convergence: the \emph{weak convergence}.
A sequence $\{x_n\}_{n\in\N}\subset X$ converges weakly (in $X$) to $x\in X$, denoted by $x_n\wkto x$, if
\begin{equation*}
    \inner{x_n}{z}_X\to  \inner{x}{z}_X \qquad\text{for all }z\in X.
\end{equation*}
This notion generalizes the componentwise convergence in $\R^N$ (choose $z=e_i$, the $i$th unit vector); hence weak and strong convergence coincide in finite dimensions.
In infinite-dimensional spaces, strong convergence implies weak convergence but not vice versa.
However, if a sequence $\{x_n\}_{n\in\N}$ converges weakly to $x\in X$ and additionally  $\norm{x_n}_X\to\norm{x}_X$, then $x_n$ converges even strongly to $x$.
Furthermore, the norm is \emph{weakly lower semicontinuous}:  If $x_n\wkto x$, then
\begin{equation}\label{eq:funktan:lsc}
    \norm{x}_X\leq \liminf_{n\to\infty}\norm{x_n}_X.
\end{equation}

This notion of convergence is useful in particular because the Bolzano--Weierstraß Theorem holds for it (in contrast to the strong convergence) even in infinite dimensions: Every bounded sequence in a Hilbert space contains a weakly convergent subsequence. Conversely, every weakly convergent sequence is bounded.

\bigskip

We now consider linear operators $T\in \linop(X,Y)$ between Hilbert spaces $X$ and $Y$.
Of particular importance is the special case $Y=\R$, i.e., the space $\linop(X,\R)$ of \emph{bounded linear functionals} on $X$. These can be identified with elements of $X$.
\begin{theorem}[Fréchet--Riesz]\label{thm:frechetriesz}
    Let $X$ be a Hilbert space and $\lambda\in \linop(X,\R)$. Then there exist a unique $z_\lambda\in X$ with $\norm{\lambda}_{\linop(X,\R)} = \norm{z_\lambda}_X$ and
    \begin{equation*}
        \lambda(x) = \inner{z_\lambda}{x}_X \qquad\text{for all } x\in X.
    \end{equation*}
\end{theorem}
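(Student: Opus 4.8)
The plan is to reduce the representation problem to the orthogonal geometry of the kernel $\calN(\lambda)$ and then read off the representer as a suitably scaled vector orthogonal to that kernel. The case $\lambda = 0$ is handled separately by the (forced) choice $z_\lambda = 0$, so I assume $\lambda \neq 0$ throughout the main argument.

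First I would record that $\calN(\lambda)$ is a \emph{closed} linear subspace: it is a subspace by linearity of $\lambda$, and it is closed because $\lambda$ is continuous, so $x_n \to x$ with $\lambda(x_n)=0$ forces $\lambda(x) = \lim_n \lambda(x_n) = 0$. Since $\lambda \neq 0$, this subspace is proper. The decisive geometric step is then to invoke the orthogonal decomposition $X = \calN(\lambda) \oplus \calN(\lambda)^\perp$, which in particular guarantees $\calN(\lambda)^\perp \neq \{0\}$; I may therefore fix a vector $w \in \calN(\lambda)^\perp$ and, after normalizing, assume $\norm{w} = 1$.

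Next comes the purely algebraic heart of the proof. For an arbitrary $x \in X$ I form $v := \lambda(x)\, w - \lambda(w)\, x$ and apply $\lambda$ to it, obtaining $\lambda(v) = \lambda(x)\lambda(w) - \lambda(w)\lambda(x) = 0$, so that $v \in \calN(\lambda)$. Since $w \perp \calN(\lambda)$, testing with $w$ gives $0 = \inner{w}{v} = \lambda(x)\,\norm{w}^2 - \lambda(w)\,\inner{w}{x}$, and because $\norm{w} = 1$ this rearranges to $\lambda(x) = \lambda(w)\,\inner{w}{x} = \inner{\lambda(w)\,w}{x}$. Hence $z_\lambda := \lambda(w)\, w$ represents $\lambda$. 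Uniqueness follows from nondegeneracy of the inner product: if $\inner{z_1}{x} = \inner{z_2}{x}$ for all $x$, the choice $x = z_1 - z_2$ yields $\norm{z_1 - z_2}^2 = 0$. For the norm identity, the Cauchy--Schwarz inequality gives $|\lambda(x)| = |\inner{z_\lambda}{x}| \leq \norm{z_\lambda}\,\norm{x}$ and thus $\norm{\lambda} \leq \norm{z_\lambda}$, while testing $\lambda$ on its own representer, $\norm{z_\lambda}^2 = \inner{z_\lambda}{z_\lambda} = \lambda(z_\lambda) \leq \norm{\lambda}\,\norm{z_\lambda}$, supplies the reverse inequality.

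The one genuine obstacle is the existence of a nonzero vector orthogonal to the closed proper subspace $\calN(\lambda)$, i.e.\ the orthogonal decomposition theorem; everything afterward is formal manipulation of the inner product. This is precisely where completeness of $X$ is indispensable: one produces the orthogonal projection onto $\calN(\lambda)$ by taking a norm-minimizing sequence toward a point lying outside the kernel and showing, via the parallelogram identity, that this sequence is Cauchy and hence convergent. I should note that one must resist the tempting shortcut of extracting a weakly convergent minimizing sequence and passing to the limit through $\lambda$, since weak convergence here is defined via inner products and the assertion that $\lambda$ is weakly continuous is tantamount to the very representation being proved.
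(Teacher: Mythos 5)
The paper states this result as background and gives no proof of its own (the surrounding chapters explicitly defer all proofs to the standard literature), so there is nothing to compare against; judged on its own, your argument is the standard and correct proof. The reduction to the closed proper subspace $\calN(\lambda)$, the choice of a unit vector $w\in\calN(\lambda)^\bot$ (nonzero because the orthogonal decomposition $X=\calN(\lambda)\oplus\calN(\lambda)^\bot$ forces $\calN(\lambda)^\bot\neq\{0\}$ for a proper closed kernel), the algebraic identity $\lambda(x)w-\lambda(w)x\in\calN(\lambda)$ yielding $z_\lambda=\lambda(w)w$, and the two-sided norm estimate via Cauchy--Schwarz and testing on $z_\lambda$ are all in order, as is uniqueness from nondegeneracy. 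You also correctly isolate where completeness enters -- the projection theorem for the closed subspace $\calN(\lambda)$, proved via a minimizing sequence and the parallelogram identity -- and your warning against the weak-convergence shortcut is well taken: in this text weak convergence is \emph{defined} by testing against inner products, so asserting that $\lambda$ respects it would presuppose exactly the representation being proved. No gaps.
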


This theorem allows to define for any linear operator $T\in \linop(X,Y)$ an \emph{adjoint operator} $T^*\in\linop(Y,X)$ via
\begin{equation*}
    \inner{T^* y}{x}_X = \inner{Tx}{y}_Y \qquad\text{for all } x\in X, y\in Y,
\end{equation*}
which satisfies
\begin{enumerate}
    \item $(T^*)^* = T$;
    \item $\norm{T^*}_{\linop(Y,X)} = \norm{T}_{\linop(X,Y)}$;
    \item $\norm{T^*T}_{\linop(X,X)} = \norm{T}_{\linop(X,Y)}^2$.
\end{enumerate}
If $T^* = T$, then $T$ is called \emph{selfadjoint}.

\section{Orthogonality and orthonormal systems}

An inner product induces the notion of \emph{orthogonality}: If $X$ is a Hilbert space, then $x,y\in X$ are called \emph{orthogonal} if $\inner{x}{y}_X = 0$.
For a set $U\subset X$,
\begin{equation*}
    U^\bot := \setof{x\in X}{\inner{x}{u}_X = 0 \text{ for all } u\in U}
\end{equation*}
is called the \emph{orthogonal complement} of $U$ in $X$; the definition immediately implies that $U^\bot$ is a closed subspace. In particular, $X^\bot = \{0\}$.
Furthermore, $U\subset (U^\bot)^\bot$. If $U$ is a closed subspace, it even holds that $U=(U^\bot)^\bot$ (and hence $\{0\}^\bot = X$). In this case, we have the \emph{orthogonal decomposition}
\begin{equation*}
    X = U\oplus U^\bot,
\end{equation*}
i.e., every element $x\in X$ can be represented uniquely as
\begin{equation*}
    x = u +u_\bot\qquad\text{with}\qquad u\in U,\ u_\bot\in U^\bot.
\end{equation*}
The mapping $x\mapsto u$ defines a linear operator $P_U\in \linop(X,X)$, called the \emph{orthogonal projection} on $U$, which has the following properties:
\begin{enumerate}
    \item $P_U$ is selfadjoint;
    \item $\norm{P_U}_{\linop(X,X)} = 1$;
    \item $\Id - P_U = P_{U^\bot}$;
    \item $\norm{x-P_U x}_X = \min_{u\in U} \norm{x-u}_X$;
    \item $z = P_U x$ if and only if $z\in U$ and $z-x \in U^\bot$.
\end{enumerate}

If $U$ is not a closed subset, only $(U^\bot)^\bot=\overline{U}\supset U$ holds. Hence, for any $T\in \linop(X,Y)$ we have
\begin{enumerate}
    \item $\calR(T)^\bot = \calN(T^*)$ and hence $\calN(T^*)^\bot = \overline{\calR(T)}$;
    \item $\calR(T^*)^\bot = \calN(T)$ and hence $\calN(T)^\bot = \overline{\calR(T^*)}$.
\end{enumerate}
In particular, the null space of a bounded linear operator is always closed; furthermore, $T$ is injective if and only if $\calR(T^*)$ is dense in $X$.

\bigskip

A set $U\subset X$ whose elements are pairwise orthogonal is called an \emph{orthogonal system}. If in addition
\begin{equation*}
    \inner{x}{y}_X =
    \begin{cases}
        1 & \text{if } x =y, \\
        0 &\text{else},
    \end{cases}
\end{equation*}
for all $x,y\in U$, then $U$ is called an \emph{orthonormal system}; an orthonormal system is called \emph{complete}, if there is no orthonormal system $V\subset X$ with $U\subsetneq V$.
Every orthonormal system $U\subset X$ satisfies the \emph{Bessel inequality}
\begin{equation}\label{eq:funktan:bessel}
    \sum_{u\in U} |\inner{x}{u}_X|^2 \leq \norm{x}_X^2\qquad\text{for all }x\in X,
\end{equation}
where at most countably many terms are not equal to zero.
If equality holds in \eqref{eq:funktan:bessel}, then $U$ is called an \emph{orthonormal basis}; in this case, $U$ is complete and
\begin{equation}\label{eq:funktan:ons_reihe}
    x  = \sum_{u\in U} \inner{x}{u}_X u \qquad\text{for all } x\in X.
\end{equation}

Every Hilbert space contains an orthonormal basis. If one of them is at most countable, the Hilbert space is called \emph{separable}. The Bessel inequality then implies that the sequence $\{u_n\}_{n\in\N} = U$ converges weakly to $0$ (but not strongly due to $\norm{u_n}_X=1$!)
\begin{example}
    For $X=L^2((0,1))$, an orthonormal basis is given by $\{u_n\}_{n\in\Z}$ for
    \begin{equation*}
        u_n(x) = \begin{cases}
            \sqrt{2}\sin(2 \pi\,n\,x) & n>0,\\
            \sqrt{2}\cos(2 \pi\,n\,x) & n<0,\\
            1 & n=0.
        \end{cases}
    \end{equation*}
\end{example}

Finally, every closed subspace $U\subset X$ contains an orthonormal basis $\{u_n\}_{n\in \N}$ for which the orthogonal projection on $U$ can be written as
\begin{equation*}
    P_U x = \sum_{j=1}^\infty \inner{x}{u_j}_X u_j.
\end{equation*}

\section{The spectral theorem for compact operators}

Just as Hilbert spaces can be considered as generalizations of finite-dimensional vector spaces, compact operators furnish an analog to matrices.
Here a linear operator $T:X\to Y$ is called \emph{compact} if the image of every bounded sequence $\{x_n\}_{n\in\N}\subset X$ contains a convergent subsequence $\{Tx_{n_k}\}_{k\in\N}\subset Y$.
A linear operator $T$ is compact if and only if $T$ maps weakly convergent sequences in $X$ to strongly convergent sequences in $Y$. (This property is also called \emph{complete continuity}.) We will generically denote compact operators by $K$.

Obviously, every linear operator with finite-dimensional range is compact. In particular, the \emph{identity}
$\Id:X\to X$ -- like the unit ball $B_X$ -- is compact if \emph{and only if} $X$ is finite-dimensional.
Furthermore, the space $\calK(X,Y)$ of linear compact operators from $X$ to $Y$ is a closed subspace of $\linop(X,Y)$ (and hence a Banach space when endowed with the operator norm).
This implies that the limit of any sequence of linear operators with finite-dimensional range is compact.
If $T\in \linop(X,Y)$ and $S\in \linop(Y,Z)$ and at least one of the two is compact, then $S\circ T$ is compact as well. Furthermore, $T^*$ is compact if and only if $T$ is compact (which is known as the Schauder Theorem).

\begin{example}\label{ex:integral-operator}
    Canonical examples of compact operators are \emph{integral operators}.
    We consider for $X=Y=L^2(\Omega)$ with $\Omega=(0,1)$ and for a given \emph{kernel} $k\in L^2(\Omega\times \Omega)$ the operator $K:L^2(\Omega)\to L^2(\Omega)$ defined pointwise via
    \begin{equation*}
        [K x](t) = \int_0^1 k(s,t)x(s)\,ds \qquad\text{for almost every }t\in \Omega
    \end{equation*}
    (where $Kx\in L^2(\Omega)$ by Fubini's Theorem).
    The Cauchy--Schwarz inequality and Fubini's Theorem immediately yield
    \begin{equation*}
        \norm{K}_{\linop(X,Y)} \leq \norm{k}_{L^2(\Omega^2)},
    \end{equation*}
    which also imply that $K$ is a bounded operator from $L^2(\Omega)$ to $L^2(\Omega)$.

    Since $k\in L^2(\Omega^2)$ is in particular measurable, there is a sequence $\{k_n\}_{n\in \N}$ of simple functions (i.e., attaining only finitely many different values) with $k_n\to k$ in $L^2(\Omega^2)$. These can be written as
    \begin{equation*}
        k_n(s,t) = \sum_{i,j=1}^n \alpha_{ij}\1_{E_i}(s)\1_{E_j}(t),
    \end{equation*}
    where $\1_E$ is the characteristic function of the measurable interval $E\subset \Omega$ and $E_i$ are a finite disjoint decomposition of $\Omega$. The corresponding integral operators $K_n$ with kernel $k_n$ by linearity of the integral therefore satisfy
    \begin{equation*}
        \norm{K_n -K}_{\linop(X,Y)} \leq \norm{k_n-k}_{L^2(\Omega^2)} \to 0,
    \end{equation*}
    i.e., $K_n\to K$. Furthermore,
    \begin{equation*}
        [K_n x](t) = \int_0^1 k_n(s,t)x(s)\,ds = \sum_{j=1}^{n} \left(\sum_{i=1}^n \alpha_{ij}  \int_{E_i} x(s)\,ds\right)\1_{E_j}(t)
    \end{equation*}
    and hence $K_n x$ is a linear combination of the $\{\1_{E_j}\}_{1\leq j\leq n}$. This implies that $K$ is the limit of the sequence $\{K_n\}_{n\in\N}$ of operators with finite-dimensional range and therefore compact.

    For the adjoint operator $K^*\in \linop(X,X)$, one can use the definition of the inner product on $L^2(\Omega)$ together with Fubini's Theorem to show that
    \begin{equation*}
        [K^*y](s) = \int_0^1 k(s,t)y(t)\,dt \qquad\text{for almost every }s\in \Omega.
    \end{equation*}
    Hence an integral operator is selfadjoint if and only if the kernel is symmetric, i.e., $k(s,t) = k(t,s)$ for almost every $s,t\in \Omega$.

    For example, solution operators to (partial) differential equations or convolution operators -- and thus a large class of practically relevant operators -- can be represented as integral operators and thus shown to be compact.
\end{example}

\bigskip

The central analogy between compact operators and matrices consists in the fact that compact linear operators have at most countably many eigenvalues (which is not necessarily the case for bounded linear operators).
Correspondingly, we have the following variant for the Schur factorization, which will be the crucial tool allowing the thorough investigation of linear inverse problems in Hilbert spaces.
\begin{theorem}[spectral theorem]\label{thm:spektral}
    Let $X$ be a Hilbert space and $K\in \calK(X,X)$ be selfadjoint. Then there exists a (possibly finitely terminating) orthonormal system $\{u_n\}_{n\in \N}\subset X$ and a (in this case also finitely terminating) null sequence $\{\lambda_n\}_{n\in\N}\subset [0,\infty)$ with
    \begin{equation*}
        Kx = \sum_{n\in \N} \lambda_n \inner{x}{u_n}_X u_n \qquad \text{for all }x\in X.
    \end{equation*}
    Furthermore, $\{u_n\}_{n\in\N}$ forms an orthonormal basis of $\overline{\calR(K)}$.
\end{theorem}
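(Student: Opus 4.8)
The plan is to build the system $\{u_n\}$ one eigenpair at a time by extremizing the Rayleigh quotient, exactly as one diagonalizes a symmetric matrix, with compactness supplying the existence of the extremizers. The starting point is the variational identity $\norm{K}_{\linop(X,X)}=\sup_{\norm{x}_X=1}\abs{\inner{Kx}{x}_X}$, which holds because $K$ is selfadjoint and reduces the search for the largest eigenvalue to an extremal problem for the real functional $x\mapsto\inner{Kx}{x}_X$ on the unit sphere. First I would show that this extremum is attained at an eigenvector: taking $\{x_k\}$ with $\norm{x_k}_X=1$ and $\inner{Kx_k}{x_k}_X\to\lambda_1$ (where $\lambda_1$ is whichever of $\sup$/$\inf$ has modulus $\norm{K}$), boundedness yields, along a subsequence still denoted $\{x_k\}$, a weak limit $x_k\wkto u_1$ by the Hilbert-space Bolzano--Weierstraß property, and compactness upgrades this to $Kx_k\to Ku_1$ strongly. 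Expanding $\norm{Kx_k-\lambda_1 x_k}_X^2$ and using $\norm{Kx_k}_X\leq\norm{K}=\abs{\lambda_1}$ shows $Kx_k-\lambda_1 x_k\to 0$, hence $\lambda_1 x_k\to Ku_1$; comparing strong and weak limits forces $Ku_1=\lambda_1 u_1$ with $\norm{u_1}_X=1$.

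Next I would iterate. Given orthonormal eigenvectors $u_1,\dots,u_{n-1}$ with nonzero eigenvalues, set $V_n:=\{u_1,\dots,u_{n-1}\}^\bot$. Selfadjointness makes $V_n$ invariant under $K$, since $x\bot u_j$ gives $\inner{Kx}{u_j}_X=\inner{x}{Ku_j}_X=\lambda_j\inner{x}{u_j}_X=0$; thus $K|_{V_n}$ is again compact and selfadjoint and the first step applies on $V_n$, producing $u_n\in V_n$ and $\lambda_n$ with $\abs{\lambda_n}=\norm{K|_{V_n}}$. The process either terminates, when $K|_{V_n}=0$, or continues indefinitely, and by construction the $u_n$ are orthonormal with $\abs{\lambda_1}\geq\abs{\lambda_2}\geq\cdots$. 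Defining $\lambda_n$ as the supremum of the Rayleigh quotient on $V_n$ gives $\lambda_n\in[0,\infty)$ precisely in the positive-operator case to which the theorem will later be applied (e.g.\ to $K^*K$); for a general selfadjoint $K$ one must take the extremum of larger modulus and obtains real, possibly negative, eigenvalues.

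For the null-sequence claim in the non-terminating case I would argue by contradiction: if $\abs{\lambda_n}\geq\eps$ for infinitely many $n$, then orthonormality gives $\norm{Ku_n-Ku_m}_X^2=\lambda_n^2+\lambda_m^2\geq 2\eps^2$ for those indices, so the bounded sequence $\{u_n\}$ has an image with no Cauchy subsequence, contradicting compactness; hence $\lambda_n\to 0$. For the representation, fix $x$ and put $r_N:=x-\sum_{n=1}^N\inner{x}{u_n}_X u_n\in V_{N+1}$, which satisfies $\norm{r_N}_X\leq\norm{x}_X$ by Bessel's inequality. Then $\norm{Kr_N}_X\leq\norm{K|_{V_{N+1}}}\,\norm{r_N}_X\leq\abs{\lambda_{N+1}}\,\norm{x}_X\to 0$, and since $Kr_N=Kx-\sum_{n=1}^N\lambda_n\inner{x}{u_n}_X u_n$, the series converges to $Kx$ (with equality already at finite $N$ in the terminating case). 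Finally, the representation gives $\calR(K)\subset\overline{\spann\{u_n\}}$, while $u_n=\lambda_n^{-1}Ku_n\in\calR(K)$ yields the reverse inclusion, so $\{u_n\}$ is an orthonormal system with dense span in $\overline{\calR(K)}$, i.e.\ an orthonormal basis of $\overline{\calR(K)}$.

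The main obstacle is the first step: showing the Rayleigh quotient actually attains its extremum at an eigenvector. This is exactly where compactness is indispensable—a merely bounded selfadjoint operator may have no eigenvalues at all—and it is the weak-to-strong convergence property of compact operators that converts the only weakly convergent maximizing sequence into a genuine eigenvector. The preliminary identity $\norm{K}=\sup_{\norm{x}_X=1}\abs{\inner{Kx}{x}_X}$ likewise uses selfadjointness essentially (through a polarization argument) and should be established first.
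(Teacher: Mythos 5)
The paper itself does not prove this theorem: it sits in the preliminary chapter whose results are explicitly quoted from the standard literature, so there is no in-text argument to compare against. Your proof is the standard variational one and is correct. The extremal characterization $\norm{K}_{\linop(X,X)}=\sup_{\norm{x}_X=1}\abs{\inner{Kx}{x}_X}$ for selfadjoint $K$, the argument that weak compactness of the unit ball plus complete continuity of $K$ upgrades a maximizing sequence to an eigenvector (via $\norm{Kx_k-\lambda_1x_k}_X\to 0$), the inductive restriction to the invariant subspace $\{u_1,\dots,u_{n-1}\}^\bot$, the Cauchy-subsequence contradiction showing $\lambda_n\to 0$, the tail estimate $\norm{Kr_N}_X\leq\abs{\lambda_{N+1}}\norm{x}_X$ giving the series representation, and the identification of $\overline{\spann\{u_n\}}$ with $\overline{\calR(K)}$ are all sound. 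You are also right to flag the one genuine wrinkle in the statement itself: for a general selfadjoint compact operator the eigenvalues are real but need not be nonnegative, so $\{\lambda_n\}_{n\in\N}\subset[0,\infty)$ as written presupposes that $K$ is positive semidefinite --- which holds for $K^*K$, the only operator to which the theorem is applied later in the notes, but not for arbitrary selfadjoint $K$. One cosmetic remark: when $\lambda_1=0$ your first step degenerates, but then $\norm{K}_{\linop(X,X)}=0$ forces $K=0$ and the theorem holds with the empty system, so nothing is lost.
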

Setting $x=u_n$ immediately implies that $u_n$ is an eigenvector for the eigenvalue $\lambda_n$, i.e., $Ku_n = \lambda_n u_n$. By convention, the eigenvalues are sorted by decreasing magnitude, i.e.,
\begin{equation*}
    |\lambda_1|\geq |\lambda_2|\geq \cdots > 0.
\end{equation*}
With this ordering, the eigenvalues can also be characterized by the \emph{Courant--Fischer min--max principle}
\begin{equation}\label{eq:courant}
    \begin{aligned}[t]
        \lambda_n &= \min_{V\subset X}\max_{x\in V}\setof{\inner{Kx}{x}_X}{\norm{x}_X = 1,\ \dim V^\bot = n-1}\\
        &=\max_{V\subset X}\min_{x\in V}\setof{\inner{Kx}{x}_X}{\norm{x}_X = 1,\ \dim V\vphantom{^\bot} = n}.
    \end{aligned}
\end{equation}
In particular, $\norm{K}_{L(X,X)} = |\lambda_1|$.

\part{Linear inverse problems}

\chapter{Ill-posed operator equations}

We now start our study of operator equations that cannot be solved by standard methods.
We first consider a linear operator $T$ between two normed vector spaces $X$ and $Y$. Following \href{http://mathshistory.st-andrews.ac.uk/Biographies/Hadamard.html}{Jacques Hadamard}, we call the equation $Tx=y$ \emph{well-posed}, if for all $y\in Y$
\begin{enumerate}
    \item there exists an $x\in X$ with $Tx = y$;
    \item this solution is unique, i.e., $z\neq x$ implies $Tz\neq y$;
    \item this solution depends continuously on $y$, i.e., for all $\{x_n\}_{n\in\N}$ with $Tx_n\to y$ we also have $x_n\to x$.
\end{enumerate}
If one of these conditions is violated, the equation is called \emph{ill-posed}.

In practice, a violation of the first two conditions often occurs due to insufficient knowledge of reality and can be handled by extending the mathematical model giving rise to the equation. It can also be handled by extending the concept of a solution such that a generalized solution exists for arbitrary $y\in Y$; if this is not unique, one can use additional information on the sought-for $x$ to select a specific solution.
For finite-dimensional Hilbert spaces, this leads to the well-known \emph{least squares method}; since then all linear operators are continuous, the problem is then solved in principle (even if the details and in particular the efficient numerical implementation may still take significant effort). However, in infinite dimensions this is not the case, as the following example illustrates.
\begin{example}\label{ex:illposed_diff}
    We want to compute for given $y\in Y:= C^1([0,1])$ the derivative $x:= y'\in C([0,1])$, where we assume that the function $y$ to be differentiated is only given by measurements subject to additive noise, i.e., we only have at our disposal
    \begin{equation*}
        \tilde y = y + \eta.
    \end{equation*}
    In general, we cannot assume that the measurement error $\eta$ is continuously differentiable; but for the sake of simplicity, we assume that it is at least continuous. In this case, $\tilde y\in C([0,1])$ as well, and we have to consider the mapping $x=y'\mapsto y$ as a (linear) operator $T:C([0,1])\to C([0,1])$. Obviously, condition (i) is then violated. But the problem is not well-posed even if the error is continuously differentiable by coincidence:
    Consider a sequence $\{\delta_n\}_{n\in\N}$ with $\delta_n\to 0$, choose $k\in \N$ arbitrary, and set
    \begin{equation*}
        \eta_n(t) := \delta_n\sin\left(\tfrac{k t}{\delta_n}\right)
    \end{equation*}
    as well as $\tilde y_n:= y +\eta_n$.
    Then, $\eta_n\in C^1([0,1])$ and
    \begin{equation*}
        \norm{\tilde y_n-y}_{C} = \norm{\eta_n}_{C}=\delta_n\to 0,
    \end{equation*}
    but
    \begin{equation*}
        \tilde x_n(t):= \tilde y_n'(t) = y'(t) + k\cos\left(\tfrac{k t}{\delta_n}\right),
    \end{equation*}
    i.e., $x := y'$ satisfies
    \begin{equation*}
        \norm{ x-\tilde x_n}_{C} = \norm{\eta'_n}_{C} = k \qquad\text{for all }n\in \N.
    \end{equation*}
    Hence the error in the derivative $x$ can (depending on $k$) be arbitrarily large, even if the error in $y$ is arbitrarily small.

    (In contrast, the problem is of course well-posed for $T:C([0,1])\to C^1([0,1])$, since then $\norm{\eta_n}_{C^1}\to 0$ implies by definition that $\norm{\bar x-x_n}_{C} \leq \norm{\eta_n}_{C^1}\to 0$. The occurring norms thus decide the well-posedness of the problem; these are however usually given by the problem setting. In our example, taking $C^1([0,1])$ as image space implies that besides $y$ also $y'$ is measured -- and that is precisely the quantity we are interested in, so that we are no longer considering an inverse problem.)
\end{example}
Note that the three conditions for well-posedness are not completely independent. For example, if $T\in \linop(X,Y)$ satisfies the first two conditions, and $X$ and $Y$ are Banach spaces, then $T$ is bijective and thus has by \cref{cor:funktan:inverse} a continuous inverse, satisfying also the third condition.

\section{Generalized inverses}

We now try to handle the first two conditions for linear operators between Hilbert spaces by generalizing the concept of solution in analogy to the least squares method in $\R^N$.
Let $X$ and $Y$ be Hilbert spaces (which we always assume from now on) and consider for $T\in \linop(X,Y)$ the equation $Tx=y$. If $y\notin\calR(T)$, this equation has no solution. In this case it is reasonable to look for an $x\in X$ that minimizes the distance $\norm{Tx-y}_Y$. On the other hand, if $\calN(T)\neq \{0\}$, then there exist infinitely many solutions; in this case, we chose the one with minimal norm.
This leads to the following definition.
\newpage
\begin{defn}
    An element $x^\dag\in X$ is called
    \begin{enumerate}
        \item \emph{least squares solution} of $Tx=y$ if
            \begin{equation*}
                \norm{Tx^\dag-y}_Y = \min_{z\in X}\norm{Tz-y}_Y;
            \end{equation*}
        \item \emph{minimum norm solution} of $Tx=y$ if
            \begin{equation*}
                \norm{x^\dag}_X = \min \setof{\norm{z}_X}{\text{$z$ is least squares solution of $Tx=y$}}.
            \end{equation*}
    \end{enumerate}
\end{defn}
If $T$ is bijective, $x=T^{-1}y$ is obviously the only least squares and hence minimum norm solution. A least squares solution need not exist, however, if $\calR(T)$ is not closed (since in this case, the minimum in the definition need not be attained).
To answer the question for which $y\in Y$ a minimum norm solution exists, we introduce an operator -- called \emph{generalized inverse} or \emph{pseudoinverse} -- mapping $y$ to the corresponding minimum norm solution.
We do this by first restriction the domain and range of $T$ such that the operator is invertible and then extending the inverse of the restricted operator to its maximal domain.
\begin{theorem}\label{thm:pseudoinverse}
    Let $T\in \linop(X,Y)$ and set
    \begin{equation*}
        \tilde T := T|_{\calN(T)^{\bot}}:\calN(T)^\bot \to \calR(T).
    \end{equation*}
    Then there exists a unique linear extension $T^\dag$, called \emph{Moore--Penrose inverse}, of $\tilde T^{-1}$ with
    \begin{align}
        \calD(T^\dag)&=\calR(T) \oplus \calR(T)^\bot,\\
        \calN(T^\dag)&=\calR(T)^\bot.
    \end{align}
\end{theorem}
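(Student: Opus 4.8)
The plan is to first verify that $\tilde T$ is a bijection, so that $\tilde T^{-1}$ is a well-defined linear map, and then to extend it by zero on $\calR(T)^\bot$. First I would recall that $\calN(T)$ is a closed subspace (the null space of a bounded operator is always closed), so the orthogonal decomposition $X = \calN(T)\oplus\calN(T)^\bot$ is available. Injectivity of $\tilde T$ is then immediate, since $\calN(\tilde T) = \calN(T)\cap\calN(T)^\bot = \{0\}$. For surjectivity onto $\calR(T)$, given $y\in\calR(T)$ I would pick any $x$ with $Tx=y$, split $x = x_0 + x_1$ with $x_0\in\calN(T)$ and $x_1\in\calN(T)^\bot$, and observe that $\tilde T x_1 = Tx_1 = Tx - Tx_0 = y$. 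Hence $\tilde T\colon\calN(T)^\bot\to\calR(T)$ is a linear bijection and $\tilde T^{-1}\colon\calR(T)\to\calN(T)^\bot$ exists.

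Next I would set up the extension. Because $\calR(T)\cap\calR(T)^\bot = \{0\}$, the sum $\calD(T^\dag) := \calR(T)\oplus\calR(T)^\bot$ is a genuine (internal, orthogonal) direct sum, so every $y\in\calD(T^\dag)$ has a unique decomposition $y = y_0 + y_1$ with $y_0\in\calR(T)$ and $y_1\in\calR(T)^\bot$. I would then define
\begin{equation*}
    T^\dag y := \tilde T^{-1} y_0 .
\end{equation*}
Uniqueness of the decomposition makes this well-defined, and linearity follows from that of the orthogonal projection $y\mapsto y_0$ together with that of $\tilde T^{-1}$. Taking $y_1 = 0$ shows that $T^\dag$ restricts to $\tilde T^{-1}$ on $\calR(T)$, so $T^\dag$ is an extension of $\tilde T^{-1}$. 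For the kernel, $T^\dag y = 0$ means $\tilde T^{-1}y_0 = 0$, which by injectivity of $\tilde T^{-1}$ forces $y_0 = 0$ and hence $y = y_1\in\calR(T)^\bot$; conversely every element of $\calR(T)^\bot$ is annihilated, so $\calN(T^\dag) = \calR(T)^\bot$. This settles existence and the two displayed identities.

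For uniqueness I would argue that the stated conditions leave no freedom. Any linear extension $S$ of $\tilde T^{-1}$ with domain $\calR(T)\oplus\calR(T)^\bot$ is determined by its values on $\calR(T)^\bot$, since for $y = y_0 + y_1$ linearity gives $Sy = \tilde T^{-1}y_0 + Sy_1$. The requirement $\calN(S) = \calR(T)^\bot$ (in particular $\calR(T)^\bot\subseteq\calN(S)$) forces $Sy_1 = 0$ for every $y_1\in\calR(T)^\bot$, whence $S = T^\dag$.

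I do not expect a genuine obstacle; the argument is essentially bookkeeping with the two orthogonal decompositions (by $\calN(T)$ in $X$ and by $\calR(T)^\bot$ in $Y$). The one point that requires care is that $\calR(T)$ need not be closed, so $\calD(T^\dag) = \calR(T)\oplus\calR(T)^\bot$ is in general only a dense subspace of $Y$ and $T^\dag$ is merely densely defined (and, as the ill-posedness discussion foreshadows, generally unbounded). One must therefore resist invoking the orthogonal projection onto $\calR(T)$ on all of $Y$: the map $y\mapsto y_0$ lands in $\calR(T)$ only on $\calD(T^\dag)$, which is precisely why the domain must be specified as stated.
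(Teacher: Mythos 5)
Your proposal is correct and follows the same route as the paper: invert the restricted operator $\tilde T$, extend by zero on $\calR(T)^\bot$ via the unique orthogonal decomposition of $y\in\calD(T^\dag)$, and note that the prescribed kernel forces this extension. You simply spell out the bijectivity of $\tilde T$, the kernel identity, and the uniqueness argument in more detail than the paper's rather terse version does.
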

\begin{proof}
    Due to the restriction to $\calN(T)^\bot$ and $\calR(T)$, the operator $\tilde T$ is injective and surjective, and hence there exists a (linear) inverse $\tilde T^{-1}$. Thus $T^{\dag}$ is well-defined and linear on $\calR(T)$. For any $y\in \calD(T^\dag)$, we obtain by orthogonal decomposition unique $y_1 \in \calR(T)$ and $y_2\in \calR(T)^\bot$ with $y=y_1+y_2$. Since $\calN(T^\dag) = \calR(T)^\bot$,
    \begin{equation}\label{eq:inverse:proj1}
        T^\dag y := T^\dag y_1 + T^\dag y_2 = T^\dag y_1 = \tilde T^{-1} y_1
    \end{equation}
    defines a unique linear extension.
    Hence $T^\dag$ is well-defined on its whole domain $\calD(T^\dag)$.
\end{proof}
If $T$ is bijective, we obviously have $T^\dag = T^{-1}$. However, it is important to note that $T^{\dag}$ need not be a \emph{continuous} extension.

\bigskip

In the following, we will need the following properties of the Moore--Penrose inverse.
\begin{lemma}\label{lem:inverse}
    The Moore--Penrose inverse $T^\dag$ satisfies $\calR(T^\dag) = \calN(T)^\bot$ as well as the \emph{Moore--Penrose equations}
    \begin{enumerate}
        \item $TT^\dag T = T$,
        \item $T^\dag T T^\dag = T^\dag$,
        \item $T^\dag T = \Id-P_{\calN}$,
        \item $TT^\dag = (P_{\overline\calR})|_{\calD(T^\dag)}$,
    \end{enumerate}
    where $P_{\calN}$ and $P_{\overline\calR}$ denote the orthogonal projections on $\calN(T)$ and $\overline{\calR(T)}$, respectively.
\end{lemma}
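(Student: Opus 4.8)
The plan is to read off everything from the explicit formula for $T^\dag$ obtained in the proof of \cref{thm:pseudoinverse}: for $y\in\calD(T^\dag)$ with orthogonal decomposition $y=y_1+y_2$, where $y_1\in\calR(T)$ and $y_2\in\calR(T)^\bot$, one has $T^\dag y=\tilde T^{-1}y_1$, with $\tilde T:\calN(T)^\bot\to\calR(T)$ a bijection. I would also use the two orthogonal decompositions $X=\calN(T)\oplus\calN(T)^\bot$ and $Y=\overline{\calR(T)}\oplus\calR(T)^\bot$, together with the elementary fact $\calR(T)^\bot=\overline{\calR(T)}^\bot$ (which holds since $U^\bot=\overline U^\bot$ for any $U$, by continuity of the inner product).

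First I would establish the range identity. Since $\tilde T^{-1}$ takes values in $\calN(T)^\bot$, the formula for $T^\dag y$ shows $\calR(T^\dag)\subset\calN(T)^\bot$; conversely, for $x\in\calN(T)^\bot$ we have $Tx\in\calR(T)\subset\calD(T^\dag)$ and $\tilde Tx=Tx$, hence $T^\dag(Tx)=\tilde T^{-1}(\tilde Tx)=x$, giving $\calN(T)^\bot\subset\calR(T^\dag)$.

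Next I would prove the two \emph{projection} identities (iii) and (iv) directly, as the remaining two equations then follow algebraically. For (iii), decompose $x=P_{\calN}x+(\Id-P_{\calN})x$; since $T$ annihilates $\calN(T)$ and $(\Id-P_{\calN})x\in\calN(T)^\bot$, we get $Tx=\tilde T(\Id-P_{\calN})x\in\calR(T)$, and applying $T^\dag=\tilde T^{-1}$ on $\calR(T)$ yields $T^\dag Tx=(\Id-P_{\calN})x$. For (iv), for $y=y_1+y_2\in\calD(T^\dag)$ as above, $TT^\dag y=\tilde T\tilde T^{-1}y_1=y_1$; because $\calR(T)^\bot=\overline{\calR(T)}^\bot$, the splitting $y=y_1+y_2$ is exactly the orthogonal decomposition of $y$ relative to the closed subspace $\overline{\calR(T)}$, so $y_1=P_{\overline\calR}y$ and hence $TT^\dag=(P_{\overline\calR})|_{\calD(T^\dag)}$.

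Finally I would derive (i) and (ii) by composition. Multiplying (iii) on the left by $T$ and using $TP_{\calN}=0$ gives $TT^\dag T=T(\Id-P_{\calN})=T$. Applying (iii) to $T^\dag y$ and using the range identity $\calR(T^\dag)=\calN(T)^\bot$, so that $P_{\calN}T^\dag y=0$, gives $T^\dag TT^\dag y=(\Id-P_{\calN})T^\dag y=T^\dag y$. I expect the only real subtlety to lie in (iv): one must keep track of the distinction between $\calR(T)$ and its closure, since $T^\dag$ lives only on the dense domain $\calR(T)\oplus\calR(T)^\bot$, and check that the component $y_1\in\calR(T)$ nevertheless coincides with the orthogonal projection onto $\overline{\calR(T)}$ — which is precisely what the identity $\calR(T)^\bot=\overline{\calR(T)}^\bot$ provides.
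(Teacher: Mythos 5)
Your proof is correct and follows essentially the same route as the paper: read everything off the explicit formula $T^\dag y=\tilde T^{-1}P_{\overline\calR}y$, establish the range identity and the two projection identities (iii), (iv) directly, and obtain (i), (ii) as algebraic corollaries. The only (immaterial) difference is that you derive (ii) from (iii) together with $\calR(T^\dag)=\calN(T)^\bot$, whereas the paper derives it from (iv) and the identity $T^\dag P_{\overline\calR}y=T^\dag y$.
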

\begin{proof}
    We first show that $\calR(T^\dag) = \calN(T)^\bot$.
    By the definition of $T^\dag$ and \eqref{eq:inverse:proj1}, we have for all $y\in \calD(T^\dag)$ that
    \begin{equation}
        \label{eq:inverse:mpgl1}
        T^\dag y = T^\dag P_{\overline\calR}y = \tilde T^{-1} P_{\overline\calR}y
    \end{equation}
    since $y\in \calD(T^\dag)=\calR(T)\oplus \calR(T)^\bot$ implies that $P_{\overline{\calR}}y \in \calR(T)$ (and not only in $\overline{\calR(T)}$ -- this fundamental property will be used repeatedly in the following).
    Hence $T^\dag y \in \calR(\tilde T^{-1}) = \calN(T)^\bot$, i.e., $\calR(T^\dag)\subset \calN(T)^\bot$. Conversely, $T^\dag T x = \tilde T^{-1}\tilde T x = x$ for all $x\in\calN(T)^\bot$, i.e., $x\in \calR(T^\dag)$. This shows that $\calR(T^\dag) = \calN(T)^\bot$ as claimed.

    Ad (iv): For $y\in\calD(T^\dag)$, we have from \eqref{eq:inverse:mpgl1} and $\calR(T^\dag) = \calN(T)^\bot$ that
    \begin{equation*}
        TT^\dag y = T\tilde T^{-1} P_{\overline\calR}y = \tilde T\tilde T^{-1} P_{\overline\calR}y = P_{\overline\calR}y
    \end{equation*}
    since $\tilde T^{-1} P_{\overline\calR}y\in \calN(T)^\bot$ and $T=\tilde T$ on $\calN(T)^\bot$.

    Ad (iii): The definition of $T^\dag$ implies that $T^\dag Tx = \tilde T^{-1} Tx$ for all $x\in X$ and hence that
    \begin{equation*}
        T^\dag T x = \tilde T^{-1} T\left(P_{\calN} x + (\Id - P_{\calN})x\right)
        = \tilde T^{-1} T P_{\calN} x + \tilde T^{-1} \tilde T (\Id - P_{\calN}) x = (\Id - P_{\calN})x.
    \end{equation*}

    Ad (ii): Using (iv) and \eqref{eq:inverse:mpgl1} yields for $y\in \calD(T^\dag)$ that
    \begin{equation*}
        T^\dag TT^\dag y = T^\dag P_{\overline\calR}y = T^\dag y.
    \end{equation*}

    Ad (i): Directly from (iii) follows that
    \begin{equation*}
        TT^\dag Tx = T(\Id - P_{\calN})x = Tx - TP_{\calN} x = Tx \qquad\text{for all } x\in X.
        \qedhere
    \end{equation*}
\end{proof}
(In fact, the Moore--Penrose equations are an equivalent characterization of $T^\dag$.)

We can now show that the Moore-Penrose inverse indeed yields the minimum norm solution; in passing, we also characterize the least squares solutions.
\begin{theorem}\label{thm:inverse:minnorm}
    For any $y\in \calD(T^\dag)$, the equation $Tx=y$ admits
    \begin{enumerate}
        \item least squares solutions, which are exactly the solutions of
            \begin{equation}\label{eq:inverse:ausgleichung}
                Tx = P_{\overline\calR} y;
            \end{equation}
        \item a unique minimum norm solution $x^\dag\in X$, which is given by
            \begin{equation*}
                x^\dag = T^\dag y.
            \end{equation*}
            The set of all least squares solutions is given by $x^\dag + \calN(T)$.
    \end{enumerate}
\end{theorem}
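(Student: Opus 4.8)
The plan is to reduce both parts to Pythagorean (orthogonal) decompositions, using the projections and identities already collected in \cref{lem:inverse}. For part (i), I would first split $y$ along the orthogonal decomposition of $Y$ into $\overline{\calR(T)}$ and its complement, writing $y = P_{\overline\calR}y + (y - P_{\overline\calR}y)$. Since $Tz \in \calR(T)\subset\overline{\calR(T)}$ for every $z\in X$, the difference $Tz - P_{\overline\calR}y$ lies in $\overline{\calR(T)}$ while $y - P_{\overline\calR}y$ lies in its orthogonal complement, so the Pythagorean theorem gives
\begin{equation*}
    \norm{Tz - y}_Y^2 = \norm{Tz - P_{\overline\calR}y}_Y^2 + \norm{y - P_{\overline\calR}y}_Y^2.
\end{equation*}
The second summand is independent of $z$; hence minimizing $\norm{Tz-y}_Y$ is the same as making $\norm{Tz - P_{\overline\calR}y}_Y$ vanish, and the minimum is attained exactly when $Tz = P_{\overline\calR}y$, which is \eqref{eq:inverse:ausgleichung}.

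The one place where the hypothesis $y\in\calD(T^\dag)$ is indispensable -- and the step I expect to be the main obstacle -- is verifying that the minimum is actually attained, i.e.\ that \eqref{eq:inverse:ausgleichung} has a solution at all. Here I would invoke the key fact already exploited in \cref{lem:inverse}: because $y\in\calD(T^\dag) = \calR(T)\oplus\calR(T)^\bot$, the projection $P_{\overline\calR}y$ lies in $\calR(T)$ itself, not merely in its closure. Without this, the Pythagorean reduction shows only that the infimum equals $\norm{y-P_{\overline\calR}y}_Y$, which need not be realized (precisely the failure when $\calR(T)$ is not closed). Granting it, there is some $z$ with $Tz = P_{\overline\calR}y$, so least squares solutions exist and coincide with the solutions of \eqref{eq:inverse:ausgleichung}. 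As a concrete witness I would note that $x^\dag := T^\dag y$ satisfies $Tx^\dag = TT^\dag y = P_{\overline\calR}y$ by Moore--Penrose equation (iv), so $x^\dag$ is itself a least squares solution.

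For part (ii), I would first describe the whole solution set: any two least squares solutions $x_1,x_2$ satisfy $Tx_1 = Tx_2 = P_{\overline\calR}y$ by part (i), so $x_1 - x_2\in\calN(T)$, and since $x^\dag$ is one such solution the set of all least squares solutions is the affine subspace $x^\dag + \calN(T)$. To single out the element of minimal norm I would use $x^\dag = T^\dag y\in\calR(T^\dag) = \calN(T)^\bot$ from \cref{lem:inverse}. Writing an arbitrary least squares solution as $x^\dag + n$ with $n\in\calN(T)$ and applying the Pythagorean theorem once more,
\begin{equation*}
    \norm{x^\dag + n}_X^2 = \norm{x^\dag}_X^2 + \norm{n}_X^2 \geq \norm{x^\dag}_X^2,
\end{equation*}
with equality if and only if $n = 0$. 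Hence $x^\dag = T^\dag y$ is the unique minimum norm solution, completing the proof.
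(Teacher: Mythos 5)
Your proof is correct and follows essentially the same route as the paper: both hinge on $P_{\overline\calR}y\in\calR(T)$ for $y\in\calD(T^\dag)$ to get existence, characterize least squares solutions as the solutions of $Tx=P_{\overline\calR}y$ via the minimizing property of the orthogonal projection, and use the decomposition along $\calN(T)^\bot\oplus\calN(T)$ with Pythagoras for uniqueness of the minimum norm solution. The only cosmetic difference is in part (ii), where you start from $x^\dag:=T^\dag y\in\calN(T)^\bot$ and verify it is minimal, while the paper first extracts the $\calN(T)^\bot$-component of an arbitrary least squares solution and only afterwards identifies it with $T^\dag y$ via the Moore--Penrose equations.
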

\begin{proof}
    First, $P_{\overline\calR}y \in \calR(T)$ for $y\in\calD(T^\dag)$ implies that \eqref{eq:inverse:ausgleichung} admits at least one solution. The optimality of the orthogonal projection further implies that any such solution $z\in X$ satisfies
    \begin{equation*}
        \norm{Tz-y}_Y = \norm{P_{\overline\calR}y-y}_Y = \min_{w\in\overline{\calR(T)}}\norm{w-y}_Y \leq \norm{Tx-y}_Y\qquad\text{for all }x\in X,
    \end{equation*}
    i.e., all solutions of \eqref{eq:inverse:ausgleichung} are least squares solutions of $Tx=y$. Conversely, any least squares solution $z\in X$ satisfies
    \begin{equation*}
        \norm{P_{\overline\calR}y-y}_Y \leq \norm{Tz-y}_Y = \min_{x\in X}\norm{Tx-y}_Y = \min_{w\in{\calR(T)}}\norm{w-y}_Y \leq \norm{P_{\overline\calR}y-y}_Y
    \end{equation*}
    since $P_{\overline\calR}y\in\calR(T)$ and hence $Tz=P_{\overline\calR}y$. This shows (i).

    The least squares solutions are this exactly the solutions of $Tx=P_{\overline\calR}y$, which can be uniquely represented as $x=\bar x+x_0$ with $\bar x\in \calN(T)^\bot$ and $x_0\in\calN(T)$. Since $T$ is injective on $\calN(T)^\bot$, the element $\bar x$ must be independent of $x$ (otherwise $Tx' = T\bar x' \neq T\bar x = P_{\overline\calR}y$ for $x' =\bar x' + x_0$ with $\bar x' \neq \bar x$).
    It then follows from
    \begin{equation*}
        \norm{x}_{X}^2 = \norm{\bar x+x_0}_X^2 = \norm{\bar x}_X^2 + 2\inner{\bar x}{x_0}_X + \norm{x_0}_X^2 = \norm{\bar x}_X^2 + \norm{x_0}_X^2 \geq \norm{\bar x}^2_X
    \end{equation*}
    that $x^\dag:= \bar x\in \calN(T)^\bot$ is the unique minimum norm solution.

    Finally, $x^\dag \in \calN(T)^\bot$ and $Tx^\dag = P_{\overline \calR}y$ together with \cref{lem:inverse}\,(iii) and (ii) imply that
    \begin{equation*}
        x^\dag = P_{\calN^\bot}x^\dag = (\Id-P_{\calN})x^\dag = T^\dag T x^\dag = T^\dag P_{\overline\calR}y = T^\dag TT^\dag y = T^\dag y,
    \end{equation*}
    which shows (ii).
\end{proof}

We can give an alternative characterization that will later be useful.
\begin{cor}\label{thm:inverse:normalen}
    Let $y\in \calD(T^\dag)$. Then $x\in X$ is a least squares solution of $Tx =y$ if and only if $x$ satisfies the \emph{normal equation}
    \begin{equation}
        \label{eq:inverse:normal}
        T^* T x = T^*y.
    \end{equation}
    Is additionally $x\in\calN(T)^\bot$, then $x=x^\dag$.
\end{cor}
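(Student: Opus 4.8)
The plan is to reduce the normal equation to the projection characterization of least squares solutions already established in \cref{thm:inverse:minnorm}. By part (i) of that theorem, for $y\in\calD(T^\dag)$ an element $x$ is a least squares solution of $Tx=y$ if and only if $Tx=P_{\overline\calR}y$. So the entire corollary reduces to proving the equivalence $Tx=P_{\overline\calR}y \Leftrightarrow T^*Tx=T^*y$, and I would run both implications through the single relation $\calN(T^*)=\calR(T)^\bot=(\overline{\calR(T)})^\bot$ recorded earlier for the orthogonal complement of the range.

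For the forward direction, assuming $Tx=P_{\overline\calR}y$, I would write $Tx-y = P_{\overline\calR}y-y = -(y-P_{\overline\calR}y)$. Since $\Id-P_{\overline\calR}$ is the projection onto $(\overline{\calR(T)})^\bot$ (property (iii) of orthogonal projections), the residual $Tx-y$ lies in $(\overline{\calR(T)})^\bot=\calN(T^*)$; applying $T^*$ then gives $T^*(Tx-y)=0$, i.e. the normal equation. For the converse, assuming $T^*Tx=T^*y$, I read this as $Tx-y\in\calN(T^*)=(\overline{\calR(T)})^\bot$. Now I invoke the characterization of the orthogonal projection (property (v)): since $Tx\in\calR(T)\subset\overline{\calR(T)}$ and $Tx-y\in(\overline{\calR(T)})^\bot$, it follows that $Tx=P_{\overline\calR}y$, so $x$ is a least squares solution. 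This translation of ``residual orthogonal to the range'' into ``$Tx$ equals the projection'' is the one step where care is needed, but it is exactly what property (v) supplies, so I do not expect a genuine obstacle.

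For the final assertion, suppose $x$ is a least squares solution with $x\in\calN(T)^\bot$. By \cref{thm:inverse:minnorm}\,(ii) the least squares solutions form the affine set $x^\dag+\calN(T)$ with $x^\dag=T^\dag y\in\calR(T^\dag)=\calN(T)^\bot$ (using \cref{lem:inverse}). Hence $x-x^\dag\in\calN(T)$, while at the same time $x-x^\dag\in\calN(T)^\bot$ because both $x$ and $x^\dag$ lie in $\calN(T)^\bot$. Therefore $x-x^\dag\in\calN(T)\cap\calN(T)^\bot=\{0\}$, giving $x=x^\dag$. The whole argument is essentially bookkeeping on top of \cref{thm:inverse:minnorm} together with the range/kernel duality, so the only thing to get right is the consistent use of $(\overline{\calR(T)})^\bot=\calN(T^*)$.
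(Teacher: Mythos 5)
Your proof is correct and follows essentially the same route as the paper: both reduce the corollary to the characterization $Tx=P_{\overline\calR}y$ from \cref{thm:inverse:minnorm}\,(i) and then translate this via property (v) of orthogonal projections and the identity $\overline{\calR(T)}{}^\bot=\calN(T^*)$ into the normal equation. Your final step, identifying $x$ with $x^\dag$ through $x-x^\dag\in\calN(T)\cap\calN(T)^\bot=\{0\}$, merely spells out what the paper's terser appeal to \cref{thm:inverse:minnorm}\,(ii) leaves implicit.
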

\begin{proof}
    \Cref{thm:inverse:minnorm}\,(i) states that $x\in X$ is a least squares solution if and only if $Tx = P_{\overline\calR} y$, which is equivalent to $Tx\in \overline{\calR(T)}$ and $Tx-y\in \overline{\calR(T)}{}^\bot = \calN(T^*)$, i.e., $T^*(Tx-y) = 0$.

    Similarly, \cref{thm:inverse:minnorm}\,(ii) implies that a least squares solution $x$ has minimal norm if and only if $x=T^\dag y\in\calN(T)^\bot$.
\end{proof}
The minimum norm solution $x^\dag$ of $Tx=y$ is therefore also the solution -- and hence, in particular, the least squares solution -- of \eqref{eq:inverse:normal} with minimal norm, i.e.,
\begin{equation}\label{eq:inverse:normal_pseudo}
    x^\dag = (T^*T)^\dag T^*y.
\end{equation}
We can therefore characterize $x^\dag$ as the minimum norm solution of \eqref{eq:inverse:normal} as well as of $Tx=y$, which can sometimes be advantageous.

\bigskip

Until now, we have considered the pseudo-inverse of its domain without characterizing this further; this we now catch up on.
First, by construction $\calD(T^\dag) = \calR(T)\oplus \calR(T)^\bot$. Since orthogonal complements are always closed,
\begin{equation*}
    \overline{\calD(T^\dag)} = \overline{\calR(T)} \oplus \calR(T)^\bot = \calN(T^*)^\bot \oplus \calN(T^*) = Y,
\end{equation*}
i.e., $\calD(T^\dag)$ is dense in $Y$. If $\calR(T)$ is closed, this implies that $\calD(T^\dag)=Y$ (which conversely implies that $\calR(T)$ is closed).
Furthermore, for $y\in \calR(T)^\bot = \calN(T^\dag)$ the minimum norm solution is always $x^\dag=0$.
The central question is therefore whether a given $y\in\overline{\calR(T)}$ is in fact an element of $\calR(T)$. If this always holds, $T^\dag$ is even continuous. Conversely, the existence of a single $y\in\overline{\calR(T)}\setminus\calR(T)$ already suffices for $T^\dag$ not to be continuous.
\begin{theorem}\label{thm:inverse:cont}
    Let $T\in \linop(X,Y)$. Then $T^\dag:\calD(T^\dag)\to X$ is continuous if and only if $\calR(T)$ is closed.
\end{theorem}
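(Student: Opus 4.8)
The plan is to prove the two implications separately, using the representation $T^\dag y=\tilde T^{-1}P_{\overline\calR}y$ from \eqref{eq:inverse:mpgl1} as the common tool in both.

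For the direction ``$\calR(T)$ closed implies $T^\dag$ continuous'', I would first record that closedness of $\calR(T)$ gives $\overline{\calR(T)}=\calR(T)$ and, as already observed in the text, $\calD(T^\dag)=Y$. The restricted operator $\tilde T:\calN(T)^\bot\to\calR(T)$ is then a bounded bijection between two \emph{closed} subspaces of Hilbert spaces, i.e.\ between Banach spaces, so \cref{cor:funktan:inverse} applies and $\tilde T^{-1}$ is continuous. Since $P_{\overline\calR}$ is here the orthogonal projection onto the closed subspace $\calR(T)$ and thus bounded (of norm at most $1$), the identity $T^\dag=\tilde T^{-1}P_{\overline\calR}$ exhibits $T^\dag$ as a composition of two bounded operators and hence as continuous.

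For the converse ``$T^\dag$ continuous implies $\calR(T)$ closed'', I would show $\overline{\calR(T)}\subset\calR(T)$. Fix $y\in\overline{\calR(T)}$ and choose $y_n\in\calR(T)$ with $y_n\to y$. Setting $x_n:=T^\dag y_n$ is legitimate since $\calR(T)\subset\calD(T^\dag)$, and linearity together with boundedness of $T^\dag$ (with constant $C$) yields $\norm{x_n-x_m}_X=\norm{T^\dag(y_n-y_m)}_X\le C\norm{y_n-y_m}_Y$, so $\{x_n\}$ is Cauchy and converges to some $x\in X$ by completeness of $X$. Now \cref{lem:inverse}\,(iv) gives $Tx_n=TT^\dag y_n=P_{\overline\calR}y_n$, and since $y_n\in\calR(T)\subset\overline{\calR(T)}$ we have $P_{\overline\calR}y_n=y_n$, whence $Tx_n=y_n$. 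Passing to the limit using continuity of $T$ then gives $Tx=\lim_n Tx_n=\lim_n y_n=y$, so $y\in\calR(T)$.

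The only genuinely delicate point is this converse implication, and within it the step of converting boundedness of $T^\dag$ into actual convergence of the preimages $x_n$: the boundedness estimate is precisely what makes $\{x_n\}$ Cauchy, after which completeness of $X$ supplies a limit whose image can be identified via the relation $TT^\dag=P_{\overline\calR}$ and the continuity of $T$. The forward implication, by contrast, is essentially bookkeeping once one notices that a closed range turns $\tilde T$ into a bijection of Banach spaces to which the continuous inverse theorem directly applies.
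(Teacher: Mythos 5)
Your proof is correct, but the forward implication takes a genuinely different route from the paper's. The paper proves that $T^\dag$ is always a \emph{closed} operator -- by showing that $y_n\to y$ and $T^\dag y_n\to x$ force $Tx=P_{\overline\calR}y$ and $x\in\calN(T)^\bot$, so that $x$ is the minimum norm solution -- and then, when $\calR(T)$ is closed and hence $\calD(T^\dag)=Y$, invokes the Closed Graph \cref{thm:closedgraph}. You instead apply the continuous inverse theorem (\cref{cor:funktan:inverse}) directly to the bijection $\tilde T:\calN(T)^\bot\to\calR(T)$ between Banach spaces and read off continuity of $T^\dag=\tilde T^{-1}P_{\overline\calR}$ as a composition of bounded maps. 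Your version is shorter and avoids the verification that a limit of minimum norm solutions is again the minimum norm solution; the paper's version buys the independently useful fact that $T^\dag$ is closed even when it is unbounded. For the converse the two arguments are essentially the same: the paper first extends $T^\dag$ continuously to $Y$ and evaluates $T\overline{T^\dag}y$, while you work directly with the Cauchy sequence $x_n=T^\dag y_n$ -- but the extension $\overline{T^\dag}y$ is by definition the limit of that very sequence, so the content is identical. One small point worth making explicit in your write-up: the bound $\norm{T^\dag(y_n-y_m)}_X\le C\norm{y_n-y_m}_Y$ uses that $y_n-y_m\in\calR(T)\subset\calD(T^\dag)$ and that continuity of the \emph{linear} operator $T^\dag$ on its domain is equivalent to such a uniform bound; you implicitly use this, and it is fine.
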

\begin{proof}
    We apply the Closed Graph \cref{thm:closedgraph}, for which we have to show that $T^\dag$ is closed. Let $\{y_n\}_{n\in\N}\subset \calD(T^\dag)$ be a sequence with $y_n\to y\in Y$ and $T^\dag y_n \to x\in X$. \Cref{lem:inverse}\,(iv) then implies that
    \begin{equation*}
        TT^\dag y_n = P_{\overline\calR} y_n \to P_{\overline\calR} y
    \end{equation*}
    due to the continuity of the orthogonal projection. It follows from this and the continuity of $T$ that
    \begin{equation}\label{eq:inverse_cont1}
        P_{\overline\calR} y = \lim_{n\to\infty} P_{\overline\calR} y_n = \lim_{n\to\infty} TT^\dag y_n = Tx,
    \end{equation}
    i.e., $x$ is a least squares solution.
    Furthermore, $T^\dag y_n \in \calR(T^\dag)=\calN(T)^\bot$ also implies that
    \begin{equation*}
        T^\dag y_n \to x\in \calN(T)^\bot
    \end{equation*}
    since $\calN(T)^\bot=\overline{\calR(T^*)}$ is closed. By \cref{thm:inverse:minnorm}\,(ii), $x$ is thus the minimum norm solution of $Tx=y$, i.e., $x = T^\dag y$. Hence $T^\dag$ is closed.

    If $\calR(T)$ is now closed, we have that $\calD(T^\dag)=Y$ and thus that $T^\dag:Y\to X$ is continuous by \cref{thm:closedgraph}. Conversely, if $T^\dag$ is continuous on $\calD(T^\dag)$, the density of $\calD(T^\dag)$ in $Y$ ensures that $T^\dag$ can be extended continuously to $Y$ by
    \begin{equation*}
        \overline{T^\dag} y := \lim_{n\to\infty} T^\dag y_n \quad \text{for a sequence } \{y_n\}_{n\in\N} \subset \calD(T^\dag) \text{ with } y_n\to y\in Y.
    \end{equation*}
    (Since $T^\dag$ is bounded, it maps Cauchy sequences to Cauchy sequences, and hence $\overline{T^\dag}$ is well-defined and continuous.)
    Let now $y\in\overline{\calR(T)}$ and $\{y_n\}_{n\in\N} \subset \calR(T)$ with $y_n\to y$. As for \eqref{eq:inverse_cont1}, we then have
    \begin{equation*}
        y = P_{\overline{\calR}}y = \lim_{n\to\infty} P_{\overline{\calR}}y_n = \lim_{n\to\infty}T{T^\dag}y_n = T\overline{T^\dag}y \in \calR(T)
    \end{equation*}
    and hence that $\overline{\calR(T)}=\calR(T)$.
\end{proof}

Accordingly, the operator equation $Tx=y$ is called \emph{ill-posed in the sense of \href{https://en.wikipedia.org/wiki/Zuhair_Nashed}{Nashed}} if $\calR(T)$ is not closed.
Unfortunately, this already excludes many interesting compact operators.
\begin{cor}\label{thm:inverse:discont}
    If $K\in\calK(X,Y)$ has infinite-dimensional range $\calR(K)$, then $K^\dag$ is not continuous.
\end{cor}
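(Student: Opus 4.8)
The plan is to reduce the statement to a pure closedness question and then derive a contradiction from compactness. By \cref{thm:inverse:cont}, $K^\dag$ is continuous if and only if $\calR(K)$ is closed; hence it suffices to show that a compact operator with infinite-dimensional range cannot have closed range. I would argue by contraposition: assume $\calR(K)$ is closed, and deduce that $\calR(K)$ must in fact be finite-dimensional.

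So suppose $\calR(K)$ is closed. Then both $\calN(K)^\bot$ and $\calR(K)$ are closed subspaces of Hilbert spaces and hence Hilbert (in particular Banach) spaces in their own right. Consider, as in \cref{thm:pseudoinverse}, the restriction
\begin{equation*}
    \tilde K := K|_{\calN(K)^\bot} : \calN(K)^\bot \to \calR(K),
\end{equation*}
which is bijective and bounded. Since $\tilde K$ is a bijective bounded operator between Banach spaces, the Continuous Inverse \cref{cor:funktan:inverse} applies and yields that $\tilde K^{-1}:\calR(K)\to\calN(K)^\bot$ is continuous, i.e.\ bounded.

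The key observation is now that $\tilde K$ is itself compact: any bounded sequence in $\calN(K)^\bot$ is bounded in $X$, so the compactness of $K$ provides a convergent subsequence of its image, and the limit lies in the closed set $\calR(K)$. Because the composition of a compact operator with a bounded one is again compact, the operator
\begin{equation*}
    \tilde K^{-1}\tilde K = \Id_{\calN(K)^\bot}
\end{equation*}
is compact. But the identity on a space is compact if and only if that space is finite-dimensional (as recorded in the excerpt). Hence $\calN(K)^\bot$ is finite-dimensional, and therefore so is its image $\calR(K)=\tilde K(\calN(K)^\bot)$, contradicting the assumption that $\calR(K)$ is infinite-dimensional. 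This contradiction shows $\calR(K)$ is not closed, and thus $K^\dag$ is not continuous.

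The only step requiring a little care — the main (though modest) obstacle — is verifying that $\tilde K$ genuinely qualifies as a compact operator \emph{into} the closed subspace $\calR(K)$, so that the composition $\tilde K^{-1}\tilde K$ inherits compactness and the ``identity is compact only in finite dimensions'' principle can be invoked; everything else is an immediate application of the continuous inverse theorem and the characterization from \cref{thm:inverse:cont}.
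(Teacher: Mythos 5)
Your proof is correct and follows essentially the same route as the paper: reduce via \cref{thm:inverse:cont} to the closedness of $\calR(K)$, invert the restriction $\tilde K$ continuously, and conclude that an identity operator would be compact, forcing finite dimension. The only cosmetic difference is that you obtain the identity on $\calN(K)^\bot$ via $\tilde K^{-1}\circ\tilde K$, whereas the paper uses $K\circ\tilde K^{-1}$ to get the identity on $\calR(K)$ directly.
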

\begin{proof}
    Assume to the contrary that $K^\dag$ is continuous. Then $\calR(K)$ is closed by \cref{thm:inverse:cont}, and thus the operator $\tilde K:=K:\calN(K)^\bot \to \calR(K)$ from \cref{thm:pseudoinverse} has a continuous inverse $\tilde K^{-1}\in\linop(\calR(K),\calN(K)^\bot)$.
    Now, $K$ and therefore also $K\circ\tilde K^{-1}$ are compact. By
    \begin{equation*}
        K \tilde K^{-1} y = y \qquad \text{for all } y\in \calR(K),
    \end{equation*}
    this implies that the identity $\Id:\calR(K)\to \calR(K)$ is compact as well, which is only possible if $\calR(K)$ is finite-dimensional.
\end{proof}
For compact operators, the third condition for well-posedness in the sense of Hadamard therefore has to handled by other methods, which we will study in the following chapters.

\section{Singular value decomposition of compact operators}

We now characterize the Moore--Penrose inverse of a compact operator $K\in \calK(X,Y)$ via orthonormal systems. We would like to do this using a spectral decomposition, which however exists only for selfadjoint operators. But by \cref{thm:inverse:normalen}, we can equivalently consider the Moore--Penrose inverse of $K^*K$, which \emph{is} selfadjoint; this leads to the \emph{singular value decomposition}.
\begin{theorem}\label{thm:svd}
    For every $K\in\calK(X,Y)$, there exist
    \begin{enumerate}
        \item a null sequence $\{\sigma_n\}_{n\in\N}$ with $\sigma_1\geq \sigma_2\geq \dots >0$,
        \item an orthonormal basis $\{u_n\}_{n\in\N}\subset Y$ of $\overline{\calR(K)}$,
        \item an orthonormal basis $\{v_n\}_{n\in\N}\subset X$ of $\overline{\calR(K^*)}$
    \end{enumerate}
    (possibly finitely terminating) with
    \begin{equation}\label{eq:sing_vec}
        K v_n = \sigma_n u_n\quad\text{and}\quad K^* u_n = \sigma_n v_n \qquad\text{for all }n\in\N
    \end{equation}
    and
    \begin{equation}\label{eq:svd}
        Kx = \sum_{n\in\N} \sigma_n \inner{x}{v_n}_X u_n \qquad\text{for all }x\in X.
    \end{equation}
    A sequence $\{(\sigma_n,u_n,v_n)\}_{n\in\N}$ satisfying the \emph{singular value decomposition} \eqref{eq:svd} is called \emph{singular system}.
\end{theorem}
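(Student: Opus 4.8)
The plan is to reduce everything to the spectral theorem (Theorem~\ref{thm:spektral}) applied to the selfadjoint compact operator $K^*K \in \calK(X,X)$. First I would note that $K^*K$ is selfadjoint (since $(K^*K)^* = K^*(K^*)^* = K^*K$) and compact (as a composition involving the compact operator $K$), and moreover positive, since $\inner{K^*Kx}{x}_X = \inner{Kx}{Kx}_Y = \norm{Kx}_Y^2 \geq 0$. Thus the spectral theorem yields an orthonormal system $\{v_n\}_{n\in\N} \subset X$ and a null sequence of eigenvalues $\mu_n \geq 0$ with $K^*K\,v_n = \mu_n v_n$ and $\{v_n\}$ an orthonormal basis of $\overline{\calR(K^*K)}$. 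Since the $\mu_n$ are positive (discarding any zero eigenvalues, as the spectral theorem lists only nonzero ones), I would define the \emph{singular values} $\sigma_n := \sqrt{\mu_n} > 0$; these inherit the decreasing null-sequence property from the $\mu_n$.

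Next I would construct the $u_n \in Y$ by setting
\begin{equation*}
    u_n := \frac{1}{\sigma_n} K v_n.
\end{equation*}
The verification that $\{u_n\}$ is orthonormal is the key computation: using the defining relation and the adjoint,
\begin{equation*}
    \inner{u_n}{u_m}_Y = \frac{1}{\sigma_n\sigma_m}\inner{Kv_n}{Kv_m}_Y = \frac{1}{\sigma_n\sigma_m}\inner{K^*Kv_n}{v_m}_X = \frac{\mu_n}{\sigma_n\sigma_m}\inner{v_n}{v_m}_X,
\end{equation*}
which equals $\delta_{nm}$ because $\mu_n = \sigma_n^2$ and $\{v_n\}$ is orthonormal. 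The two relations in \eqref{eq:sing_vec} then follow directly: $Kv_n = \sigma_n u_n$ holds by definition of $u_n$, and $K^*u_n = \frac{1}{\sigma_n}K^*Kv_n = \frac{\mu_n}{\sigma_n}v_n = \sigma_n v_n$.

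To establish the expansion \eqref{eq:svd}, I would first prove that $\{v_n\}$ is an orthonormal basis of $\overline{\calR(K^*)}$. This is where I must connect $\overline{\calR(K^*K)}$ with $\overline{\calR(K^*)}$; the clean way is to show $\calN(K^*K) = \calN(K)$ (one inclusion is trivial, the other uses $\norm{Kx}_Y^2 = \inner{K^*Kx}{x}_X$), whence $\overline{\calR(K^*K)} = \calN(K^*K)^\bot = \calN(K)^\bot = \overline{\calR(K^*)}$ by the orthogonality relations stated in the excerpt. Then for arbitrary $x\in X$, I decompose $x = P x + (x - Px)$ where $P$ projects onto $\overline{\calR(K^*)} = \calN(K)^\bot$; the component in $\calN(K)$ is annihilated by $K$, and on the orthogonal complement I expand $Px = \sum_n \inner{x}{v_n}_X v_n$ using \eqref{eq:funktan:ons_reihe} and apply $K$ termwise (justified by continuity of $K$ and convergence of the series), yielding $Kx = \sum_n \sigma_n \inner{x}{v_n}_X u_n$. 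Finally, $\{u_n\}$ is an orthonormal basis of $\overline{\calR(K)}$ because every $Kx$ lies in the closed span of the $u_n$ by \eqref{eq:svd}, and each $u_n = \sigma_n^{-1}Kv_n \in \calR(K)$.

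The main obstacle I anticipate is the bookkeeping around the spectral relationship $\overline{\calR(K^*K)} = \overline{\calR(K^*)}$, i.e.\ verifying that the $\{v_n\}$ produced by the spectral theorem genuinely span $\overline{\calR(K^*)}$ rather than merely $\overline{\calR(K^*K)}$; the identity $\calN(K^*K)=\calN(K)$ is the crux that makes this work, and everything else is routine orthonormality computation and continuity arguments.
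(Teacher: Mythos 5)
Your proposal is correct and follows essentially the same route as the paper: apply the spectral theorem to $K^*K$, set $\sigma_n := \sqrt{\mu_n}$ and $u_n := \sigma_n^{-1}Kv_n$, verify orthonormality via $\inner{Kv_n}{Kv_m}_Y = \inner{K^*Kv_n}{v_m}_X$, and obtain \eqref{eq:svd} by pushing $K$ through the series expansion of $P_{\calN(K)^\bot}x$. The only point of divergence is your identification $\overline{\calR(K^*K)} = \overline{\calR(K^*)}$ via $\calN(K^*K)=\calN(K)$ together with the orthogonality relations, which is cleaner than (and equally valid as) the paper's sequence/diagonal argument.
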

\begin{proof}
    Since $K^*K:X\to X$ is compact and selfadjoint, the Spectral \cref{thm:spektral} yields a null sequence $\{\lambda_n\}_{n\in\N}\subset [0,\infty)$ (ordered by decreasing magnitude) and an orthonormal system $\{v_n\}_{n\in\N}\subset X$ of corresponding eigenvectors with
    \begin{equation*}
        K^*Kx = \sum_{n\in\N} \lambda_n \inner{x}{v_n}_X v_n \qquad \text{for all }x\in X.
    \end{equation*}
    Since $\lambda_n = \lambda_n \norm{v_n}_X^2 = \inner{\lambda_n v_n}{v_n}_X =\inner{K^*K v_n}{v_n}_X = \norm{Kv_n}_Y^2 >0$, we can define for all $n\in\N$
    \begin{equation*}
        \sigma_n := \sqrt{\lambda_n} >0 \qquad\text{ and }\qquad u_n:= \sigma_n^{-1}{Kv_n} \in Y.
    \end{equation*}
    (If the sequence $\{v_n\}_{n\in\N}$ terminates finitely, we set $\sigma_n:=0$.)
    The latter form an orthonormal system due to
    \begin{equation*}
        \inner{u_i}{u_j}_Y = \frac1{\sigma_i\sigma_j}\inner{Kv_i}{Kv_j}_Y = \frac1{\sigma_i\sigma_j} \inner{K^*Kv_i}{v_j}_X = \frac{\lambda_i}{\sigma_i\sigma_j}\inner{v_i}{v_j}_X =
        \begin{cases}
            1 & \text{if }i=j,\\ 0 & \text{else.}
        \end{cases}
    \end{equation*}
    Furthermore, we have for all $n\in\N$ that
    \begin{equation*}
        K^*u_n = \sigma_n^{-1} K^*K v_n = \sigma_n^{-1}\lambda_n v_n = \sigma_n v_n.
    \end{equation*}

    \Cref{thm:spektral} also yields that $\{v_n\}_{n\in\N}$ is an orthonormal basis of $\overline{\calR(K^*K)}$. In addition, $\overline{\calR(K^*K)} = \overline{\calR(K^*)}$, since for any $x\in \overline{\calR(K^*)}$, there exists a sequence $\{y_n\}_{n\in\N}\subset Y$ with $K^*y_n \to x$; in particular, we can take $y_n\in \calN(K^*)^\bot=\overline{\calR(K)}$, and a diagonal argument shows $x\in\overline{\calR(K^*K)}$. (The other direction is obvious.) Hence, $\{v_n\}_{n\in\N}$ is an orthonormal basis of $\overline{\calR(K^*)}=\calN(K)^\bot$, and therefore
    \begin{equation*}
        Kx = K P_{\calN^\bot} x = K \left(\sum_{n\in\N} \inner{x}{v_n}_X v_n\right) \qquad\text{for all }x\in X.
    \end{equation*}
    From this, we obtain the singular value decomposition \eqref{eq:svd} by \enquote{pushing} $K$ through the series representation. Since we will repeatedly apply such arguments in the following, we justify this step in detail.
    First, we set $x_N := \sum_{n=1}^N \inner{x}{v_n}_X v_n$ for any $x\in X$ and $N\in\N$.
    Then we clearly have $x_N \to P_{\calN^\bot} x$ as $N\to \infty$ and hence by continuity of $K$ also
    \begin{equation}\label{eq:svd:operator}
        \begin{aligned}[t]
            Kx &= K \left(P_{\calN^\bot} x\right) = K(\lim_{N\to\infty} x_N) = \lim_{N\to\infty} Kx_N \\
            &= \lim_{N\to\infty} \sum_{n=1}^N \inner{x}{v_n}_X Kv_n = \sum_{n\in\N} \inner{x}{v_n}_X Kv_n.
        \end{aligned}
    \end{equation}
    We thus have for all $x\in X$ that
    \begin{equation*}
        Kx = \sum_{n\in\N} \inner{x}{v_n}_X Kv_n = \sum_{n\in\N} \inner{x}{v_n}_X \sigma_n u_n= \sum_{n\in\N} \inner{x}{K^*u_n}_X u_n = \sum_{n\in\N} \inner{Kx}{u_n}_X u_n.
    \end{equation*}
    The second equation yields \eqref{eq:svd}, while the last implies that $\{u_n\}_{n\in\N}$ is an orthonormal basis of $\overline{\calR(K)}$.
\end{proof}
Since the eigenvalues of $K^*K$ with eigenvector $v_n$ are exactly the eigenvalues of $KK^*$ with eigenvector $u_n$, this also yields by \eqref{eq:sing_vec} a singular system $\{(\sigma_n,v_n,u_n)\}_{n\in\N}$ of $K^*$ such that
\begin{equation}
    \label{eq:svd_adj}
    K^* y = \sum_{n\in\N} \sigma_n \inner{y}{u_n}_Y v_n \qquad\text{for all }y\in Y.
\end{equation}

\bigskip

We now use the singular value decomposition of $K$ to characterize the domain $\calD(K^\dag) = \calR(K)\oplus\calR(K)^\bot$ of the Moore--Penrose inverse $K^\dag$.
As was already discussed before \cref{thm:inverse:cont}, this reduces to the question whether $y\in \overline{\calR(K)}$ is in fact an element of $\calR(K)$.
\begin{theorem}\label{thm:inverse:picard}
    Let $K\in\calK(X,Y)$ with singular system $\{(\sigma_n,u_n,v_n)\}_{n\in\N}$ and $y\in \overline{\calR(K)}$. Then $y\in \calR(K)$ if and only if the \emph{Picard condition}
    \begin{equation}
        \label{eq:inverse:picard}
        \sum_{n\in\N} \sigma_n^{-2} |\inner{y}{u_n}_Y|^2 <\infty
    \end{equation}
    is satisfied. In this case,
    \begin{equation}\label{eq:inverse:picard_pseudo}
        K^\dag y = \sum_{n\in\N} \sigma_n^{-1} \inner{y}{u_n}_Y v_n.
    \end{equation}
\end{theorem}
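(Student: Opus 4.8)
The plan is to read everything off the singular value decomposition from \cref{thm:svd}. Since $\{u_n\}_{n\in\N}$ is an orthonormal basis of $\overline{\calR(K)}$ and $y\in\overline{\calR(K)}$ by hypothesis, I would start from the expansion $y = \sum_{n\in\N} \inner{y}{u_n}_Y u_n$, so that everything is controlled by the Fourier coefficients $\inner{y}{u_n}_Y$. The equivalence then becomes a square-summability statement about the rescaled coefficients $\sigma_n^{-1}\inner{y}{u_n}_Y$, which I would prove in the two directions separately.

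For necessity, suppose $y\in\calR(K)$, say $y=Kx$ for some $x\in X$. Using the SVD \eqref{eq:svd} gives $y = Kx = \sum_{n\in\N}\sigma_n\inner{x}{v_n}_X u_n$, and comparing this with the expansion of $y$ in the basis $\{u_n\}$ yields $\inner{y}{u_n}_Y = \sigma_n\inner{x}{v_n}_X$, hence $\inner{x}{v_n}_X = \sigma_n^{-1}\inner{y}{u_n}_Y$. Bessel's inequality \eqref{eq:funktan:bessel} applied to $x$ against the orthonormal system $\{v_n\}$ then gives $\sum_{n\in\N}\sigma_n^{-2}\abs{\inner{y}{u_n}_Y}^2 = \sum_{n\in\N}\abs{\inner{x}{v_n}_X}^2 \leq \norm{x}_X^2 < \infty$, which is exactly \eqref{eq:inverse:picard}.

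For sufficiency, assume the Picard condition and define the candidate $x := \sum_{n\in\N}\sigma_n^{-1}\inner{y}{u_n}_Y v_n$. The point is that \eqref{eq:inverse:picard} says precisely that the coefficient sequence is square-summable, so since $\{v_n\}$ is orthonormal and $X$ is complete, the partial sums are Cauchy and $x\in X$ is well-defined. Applying $K$ and pushing it through the series exactly as in \eqref{eq:svd:operator} (continuity of $K$ together with $Kv_n=\sigma_n u_n$ from \eqref{eq:sing_vec}) gives $Kx = \sum_{n\in\N}\inner{y}{u_n}_Y u_n = y$, so $y\in\calR(K)$. Finally, to obtain \eqref{eq:inverse:picard_pseudo}, I would observe that $x$ lies in $\overline{\calR(K^*)}=\calN(K)^\bot$ (it is a limit of linear combinations of the $v_n$) and satisfies $Kx=y=P_{\overline\calR}y$, so by \cref{thm:inverse:minnorm}\,(ii) it is the minimum norm solution, i.e.\ $x=K^\dag y$.

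The genuinely substantive step is the sufficiency direction: the Picard condition is invoked solely to guarantee convergence of the defining series for $x$, and this is where the whole result lives. The remaining care is bookkeeping — justifying the term-by-term application of $K$ via its continuity and the $\sigma_n u_n$ relation, and handling the finitely terminating case (where all sums are finite and convergence is automatic). The necessity direction and the identification with $K^\dag$ are then essentially immediate from Bessel's inequality and \cref{thm:inverse:minnorm}, respectively.
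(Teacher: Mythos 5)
Your proposal is correct and follows essentially the same route as the paper's proof: Bessel's inequality applied to $\inner{x}{v_n}_X = \sigma_n^{-1}\inner{y}{u_n}_Y$ for necessity, the Cauchy-sequence construction of $x = \sum_{n\in\N}\sigma_n^{-1}\inner{y}{u_n}_Y v_n \in \calN(K)^\bot$ for sufficiency, and the identification $x = K^\dag y$ via \cref{thm:inverse:minnorm}. The only cosmetic difference is that the paper obtains the coefficient relation via the adjoint identity $\inner{y}{u_n}_Y = \inner{x}{K^*u_n}_X$ rather than by comparing expansions, which amounts to the same computation.
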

\begin{proof}
    Let $y\in\calR(K)$, i.e., there exists $x\in X$ with $Kx=y$. Then
    \begin{equation*}
        \inner{y}{u_n}_Y = \inner{x}{K^*u_n}_X = \sigma_n \inner{x}{v_n}_X \qquad\text{for all }n\in\N,
    \end{equation*}
    and the Bessel inequality \eqref{eq:funktan:bessel} yields
    \begin{equation*}
        \sum_{n\in\N} \sigma_n^{-2} |\inner{y}{u_n}_Y|^2 = \sum_{n\in\N} |\inner{x}{v_n}_X|^2 \leq \norm{x}_X^2 < \infty.
    \end{equation*}

    Conversely, let $y\in \overline{\calR(K)}$ satisfy \eqref{eq:inverse:picard}, which
    implies that $\{\sum_{n=1}^N \sigma_n^{-2}|\inner{y}{u_n}_Y|^2\}_{N\in\N}$ is a Cauchy sequence. Then $\{x_N\}_{N\in\N}$ defined by
    \begin{equation*}
        x_N:= \sum_{n=1}^N \sigma_n^{-1} \inner{y}{u_n}_Yv_n
    \end{equation*}
    is a Cauchy sequence as well, since $\{v_n\}_{n\in\N}$ forms an orthonormal system and thus
    \begin{equation*}
        \norm{x_N-x_M}_X^2 = \norm{{\textstyle\sum\nolimits_{n=N+1}^M} \sigma_n^{-1} \inner{y}{u_n}_Yv_n}_X^2 = \sum_{n=N+1}^M |\sigma_n^{-1} \inner{y}{u_n}_Y|^2 \to 0\quad\text{as }N,M\to\infty.
    \end{equation*}
    Furthermore, $\{v_n\}_{n\in\N}\subset \overline{\calR(K^*)}$. Hence, $\{x_N\}_{N\in\N}\subset \overline{\calR(K^*)}$ converges to some
    \begin{equation*}
        x:= \sum_{n\in\N} \sigma_n^{-1} \inner{y}{u_n}_Yv_n \in \overline{\calR(K^*)}= \calN(K)^\bot
    \end{equation*}
    by the closedness of $\overline{\calR(K^*)}$.
    Now we have as in \eqref{eq:svd:operator} that
    \begin{equation*}
        Kx = \sum_{n\in\N} \sigma_n^{-1} \inner{y}{u_n}_YKv_n = \sum_{n\in\N} \inner{y}{u_n}_Yu_n = P_{\overline{\calR}} y = y,
    \end{equation*}
    which implies that $y\in\calR(K)$.

    Finally, $Kx = P_{\overline{\calR}} y$ for $x\in\calN(K)^\bot$ is equivalent to $x=K^\dag y$ by \cref{thm:inverse:minnorm}, which also shows \eqref{eq:inverse:picard_pseudo}.
\end{proof}

The Picard condition states that a minimum norm solution can only exist if the \enquote{Fourier coefficients}
$\inner{y}{u_n}_Y$ of $y$ decay fast enough compared to the singular values $\sigma_n$.
The representation \eqref{eq:inverse:picard_pseudo} also shows how perturbations of $y$ relate to perturbations of $x^\dag$: If $y^\delta = y+\delta u_n$ for some $\delta>0$ and $n\in\N$, then
\begin{equation*}
    \norm{K^\dag y^\delta -K^\dag y}_X = \delta \norm{K^\dag u_n}_X = \sigma_n^{-1}\delta \to \infty \quad\text{as }n\to \infty,
\end{equation*}
and the faster the singular values decay, the more the error is amplified for given $n$.
Hence one distinguishes
\begin{itemize}
    \item \emph{moderately ill-posed} problems, for which there exist $c,r>0$ with $\sigma_n \geq c n^{-r}$ for all $n\in\N$ (i.e., $\sigma_n$ decays at most polynomially), and
    \item \emph{severely ill-posed} problems, for which this is not the case. If $\sigma_n \leq c e^{-n^r}$ for all $n\in\N$ and $c,r>0$ (i.e., $\sigma_n$ decays at least exponentially), the problem is called \emph{exponentially ill-posed}.
\end{itemize}
For exponentially ill-posed problems, one can in general not expect to obtain a solution that is more than a very rough approximation. On the other hand, if $\calR(K)$ finite-dimensional, then the sequence $\{\sigma_n\}_{n\in\N}$ is finite and the error stays bounded; in this case, $K^\dag$ is continuous as expected.

The singular value decomposition is a valuable analytical tool, but its explicit computation for a concrete operator is in general quite involved. We again consider differentiation as an elementary example.
\begin{example}\label{ex:integration_svd}
    Let $X=L^2(\Omega)$ for $\Omega=(0,1)$ and let $K\in \calK(X,X)$ be an integral operator defined via
    \begin{equation*}
        [Kx](t) = \int_0^1 k(s,t) x(s)\,ds \qquad\text{with}\qquad
        k(s,t) =\begin{cases} 1 & \text{if }s\leq t,\\0 & \text{else}.\end{cases}
    \end{equation*}
    If $x=y'$ for some $y \in C^1([0,1])$ with $y(0)=0$, then
    \begin{equation*}
        [Kx](t) = \int_0^t x(s)\,ds = y(t) - y(0) = y(t),
    \end{equation*}
    i.e., the derivative $y$ of $y\in C^1([0,1])$ is a solution of the operator equation $Kx=y$ (which is also meaningful for $y\in L^2(\Omega)$ but may not admit a solution then).

    The corresponding adjoint operator is given by
    \begin{equation*}
        [K^*y](t) = \int_0^1 k(t,s) y(s)\,ds = \int_t^1 y(s)\,ds,
    \end{equation*}
    since $k(t,s)=1$ for $s\geq t$ and $0$ else.
    We now compute the eigenvalues and eigenvectors of $K^*K$, i.e., any $\lambda>0$ and $v\in L^2(\Omega)$ with
    \begin{equation}
        \label{eq:svd_diff_ev}
        \lambda v(t) = [K^*Kv](t) = \int_t^1\int_0^s v(r)\,dr\,ds.
    \end{equation}
    We first proceed formally. Inserting $t=1$ yields $\lambda v(1) = 0$ and therefore $v(1) = 0$. Differentiating \eqref{eq:svd_diff_ev} yields
    \begin{equation*}
        \lambda v'(t) = \frac{d}{dt}\left( -\int_1^t\int_0^s v(r)\,dr\,ds\right) = -\int_0^t v(r)\,dr,
    \end{equation*}
    which for $t=0$ implies that $v'(0)=0$. Differentiating again now leads to the ordinary differential equation
    \begin{equation*}
        \lambda v''(t) + v(t) = 0
    \end{equation*}
    which has the general solution
    \begin{equation*}
        v(t) = c_1 \sin(\sigma^{-1} t) + c_2 \cos(\sigma^{-1} t)
    \end{equation*}
    for $\sigma := \sqrt{\lambda}$ and constants $c_1,c_2$ that have yet to be determined. For this, we insert the boundary conditions $v'(0)=0$ and $v(1)=0$, which yields $c_1 = 0$ and $c_2 \cos (\sigma^{-1}) = 0$, respectively. Since $c_2 = 0$ leads to the trivial solution $v=0$ and eigenvectors are by definition not trivial, we must have $\cos(\sigma^{-1})=0$; the only candidates for the singular values $\sigma_n$ are therefore the reciprocal roots of the cosine, i.e.,
    \begin{equation*}
        \sigma_n = \frac{2}{(2n-1)\pi}, \qquad n\in\N.
    \end{equation*}
    From this, we obtain the eigenvectors
    \begin{equation*}
        v_n(t) = \sqrt{2}\cos\left((n-\tfrac12)\pi\,t\right),\qquad n\in\N,
    \end{equation*}
    where the constant $c_2=\sqrt{2}$ is chosen such that $\norm{v_n}_{L^2(\Omega)}=1$.
    We further compute
    \begin{equation*}
        u_n := \sigma_n^{-1} Kv_n = (n-\tfrac12)\pi\int_0^t \sqrt{2}\cos\left((n-\tfrac12)\pi\,s\right)ds = \sqrt{2} \sin\left((n-\tfrac12)\pi\,t\right),\quad n\in\N.
    \end{equation*}
    Now we have $v_n,u_n\in L^2(\Omega)$, and it is straightforward to verify that $\sigma_n^2$ and $v_n$ satisfy the eigenvalue relation \eqref{eq:svd_diff_ev}. As in the proof of \cref{thm:svd}, this yields a singular value decomposition of $K$ and thus a singular system $\{(\sigma_n,u_n,v_n)\}_{n\in\N}$.

    Since $\sigma_n =\mathcal{O}(\frac1n)$, this implies that differentiation (in this formulation) is a moderately ill-posed problem.
    It is now possible to show that $\{u_n\}_{n\in\N}$ is an orthonormal basis of $L^2(\Omega)$ (which are not unique).
    Furthermore, the Picard condition \eqref{eq:inverse:picard} for $y\in L^2(\Omega)$ is given by
    \begin{equation*}
        \sum_{n\in\N} \frac14{(2n-1)^2\pi^2} |\inner{y}{u_n}_{L^2}|^2 <\infty.
    \end{equation*}
    In addition, \eqref{eq:funktan:ons_reihe} implies that
    \begin{equation*}
        y = \sum_{n\in\N} \inner{y}{u_n}_{L^2} u_n,
    \end{equation*}
    and hence formally differentiating the Fourier series term by term yields
    \begin{equation*}
        z:= \sum_{n\in\N} \inner{y}{u_n}_{L^2} u_n' =
        \sum_{n\in\N} \left(n-\tfrac12\right)\pi \inner{y}{u_n}_{L^2} v_n.
    \end{equation*}
    The Picard condition is thus equivalent to the condition that $\norm{z}_{L^2(\Omega)}^2 <\infty$ and hence that the formally differentiated series converges (in $L^2(\Omega)$); in this case $K^\dag y = z$. If $y$ is continuously differentiable, this convergence is even uniform and we obtain that $y'=z = K^\dag y$.
\end{example}

\bigskip

The singular value decomposition allows defining functions of compact operators, which will be a fundamental tool in the following chapters. Let $\phi:[0,\infty)\to\R$ be a piecewise continuous and locally bounded function. We then define for $K\in \calK(X,Y)$ with singular system $\{(\sigma_n,u_n,v_n)\}_{n\in\N}$ the operator $\phi(K^*K):X\to X$ by
\begin{equation}
    \label{eq:functional}
    \phi(K^*K)x = \sum_{n\in\N} \phi(\sigma_n^2) \inner{x}{v_n}_X v_n+ \phi(0)P_{\calN(K)} x \qquad \text{for all }x\in X.
\end{equation}
This series converges in $X$ since $\phi$ is only evaluated on the closed and bounded interval $[0,\sigma_1^2]=[0,\norm{K}_{\linop(X,Y)}^2]$. Furthermore, the Bessel inequality implies that
\begin{equation}\label{eq:functional_continuous}
    \norm{\phi(K^*K)}_{\linop(X,X)} \leq \sup_{n\in\N} |\phi(\sigma_n^2)| + \phi(0) \leq 2\sup_{\lambda\in [0,\norm{K}_{\linop(X,Y)}^2]} |\phi(\lambda)|<\infty,
\end{equation}
i.e., $\phi(K^*K)\in \linop(X,X)$.

In particular, we consider here power functions $\phi(t) = t^r$ for $r\geq 0$ and especially the following examples.
\begin{example}\label{ex:spectralcalculus}
    Let $K\in\calK(X,Y)$.
    \begin{enumerate}
        \item For $\phi(t) = 1$ we have $\phi(K^*K) = \Id$ since for all $x\in X$,
            \begin{equation*}
                \phi(K^*K)x = \sum_{n\in\N} \inner{x}{v_n}_X v_n+ P_{\calN(K)} x = P_{\overline{\calR(K^*)}} x + P_{\calN(K)} x = x
            \end{equation*}
            due to $\overline{\calR(K^*)} = \calN(K)^\bot$.
        \item For $\phi(t) = t$ we have $\phi(K^*K)=K^*K$ due to $\phi(0)=0$ and the spectral theorem.
        \item For $\phi(t) = \sqrt{t}$ we call $|K|:= \phi(K^*K)$ the \emph{absolute value} of $K$; since $\sigma_n>0$, we have
            \begin{equation*}\label{eq:functional_sqrt}
                |K|x = \sum_{n\in\N} \sigma_n \inner{x}{v_n}_X v_n \qquad\text{for all }x\in X.
            \end{equation*}
    \end{enumerate}
\end{example}

Comparing \cref{ex:spectralcalculus}\,(iii) with the singular value decomposition \eqref{eq:svd} shows that $|K|$ essentially has the same behavior as $K$, the only difference being that the former maps to $X$ instead of $Y$.
This is illustrated by the following properties, which will be used later.
\begin{lemma}\label{lem:functional_range}
    Let $K\in \calK(X,Y)$. Then
    \begin{enumerate}
        \item $|K|^{r+s} = |K|^r \circ |K|^s$ for all $r,s\geq 0$;
        \item $|K|^r$ is selfadjoint for all $r\geq 0$;
        \item $\norm{|K|x}_X = \norm{Kx}_Y$ for all $x\in X$;
        \item $\calR(|K|) = \calR(K^*)$.
    \end{enumerate}
\end{lemma}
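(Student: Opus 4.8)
The plan is to work throughout from the series representation of $|K|^r$ obtained by specializing the functional calculus \eqref{eq:functional} to $\phi(t)=t^{r/2}$. For $r>0$ this gives $\phi(0)=0$ and hence
\begin{equation*}
    |K|^r x = \sum_{n\in\N}\sigma_n^r\inner{x}{v_n}_X v_n \qquad\text{for all }x\in X,
\end{equation*}
while for $r=0$ the constant function $\phi\equiv 1$ yields $|K|^0=\Id$ as in \cref{ex:spectralcalculus}\,(i). From this representation I read off the two facts that drive everything else: $|K|^r v_m = \sigma_m^r v_m$ (using orthonormality of $\{v_n\}$ together with $P_{\calN(K)}v_m=0$), and $|K|^r$ vanishes on $\calN(K)$ for $r>0$. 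Throughout I will use that each $|K|^r$ is bounded by \eqref{eq:functional_continuous}, so that it may be interchanged with the norm-convergent series exactly as in the \enquote{pushing $K$ through the series} step of the proof of \cref{thm:svd}.

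Parts (i)--(iii) then reduce to routine manipulations of this series. For (i), I apply the bounded operator $|K|^r$ termwise to $|K|^s x=\sum_m\sigma_m^s\inner{x}{v_m}_X v_m$ and use $|K|^r v_m=\sigma_m^r v_m$ to obtain $\sum_m\sigma_m^{r+s}\inner{x}{v_m}_X v_m=|K|^{r+s}x$; the cases $r=0$ or $s=0$ are immediate since $|K|^0=\Id$. For (ii), the symmetry of the real inner product gives $\inner{|K|^r x}{y}_X=\sum_n\sigma_n^r\inner{x}{v_n}_X\inner{v_n}{y}_X=\inner{x}{|K|^r y}_X$, where continuity of the inner product justifies moving it inside the convergent sum. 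For (iii), the Parseval identity applied to the orthonormal systems $\{v_n\}$ and $\{u_n\}$ gives
\begin{equation*}
    \norm{|K|x}_X^2=\sum_{n\in\N}\sigma_n^2|\inner{x}{v_n}_X|^2=\norm{Kx}_Y^2,
\end{equation*}
the second equality being the singular value decomposition \eqref{eq:svd}; alternatively one may invoke $|K|^2=K^*K$ (from (i) and \cref{ex:spectralcalculus}\,(ii)) together with (ii).

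The substantive step is (iv), and the main obstacle is to show that the two ranges consist of \emph{exactly} the same elements rather than merely one containing the other. My plan is to compare both operators through their common coefficient space relative to the basis $\{v_n\}$. By \eqref{eq:svd_adj}, $K^*$ acts via $K^*y=\sum_n\sigma_n\inner{y}{u_n}_Y v_n$, so both $|K|x$ and $K^*y$ have the form $\sum_n\sigma_n a_n v_n$; it therefore suffices to show that the admissible coefficient sequences $(a_n)$ coincide. For $|K|$ the sequences are $(\inner{x}{v_n}_X)_{n}$ with $x\in X$, and by the Bessel inequality together with the fact that $\{v_n\}$ is an orthonormal basis of $\calN(K)^\bot$ these range over \emph{all} square-summable sequences (any such sequence is realized by $x=\sum_n a_n v_n$). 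For $K^*$ the sequences are $(\inner{y}{u_n}_Y)_{n}$ with $y\in Y$; since $\{u_n\}$ is an orthonormal basis of $\overline{\calR(K)}$, the same Bessel and realizability argument shows these are again exactly the square-summable sequences (with $(a_n)$ realized by $y=\sum_n a_n u_n\in Y$). Hence both ranges equal $\setof{\sum_{n\in\N}\sigma_n a_n v_n}{(a_n)\in\ell^2}$ and therefore coincide, giving $\calR(|K|)=\calR(K^*)$. The only point demanding care here is the convergence bookkeeping -- that the displayed series define elements of $X$ and that the realizing $x$ and $y$ genuinely reproduce the prescribed coefficients -- which the orthonormality of $\{v_n\}$ and $\{u_n\}$ handles directly.
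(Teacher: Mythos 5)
Your proof is correct. Parts (i)--(iii) follow the paper's own argument essentially verbatim (termwise manipulation of the spectral series, symmetry of the inner product, and $|K|^2=K^*K$ for the norm identity), so there is nothing to add there. For part (iv) you take a genuinely different route. The paper first reduces $x\in\calR(K^*)$ to the two conditions $Kx\in\calR(KK^*)$ and $x\in\calN(K)^\bot$, and then compares the Picard condition of \cref{thm:inverse:picard} for $KK^*$ (whose singular values are $\sigma_n^2$) with the Picard condition for $|K|$, observing that both reduce to $\sum_n\sigma_n^{-2}|\inner{x}{v_n}_X|^2<\infty$. You instead parametrize both ranges directly: since $|K|x=\sum_n\sigma_n\inner{x}{v_n}_Xv_n$ and $K^*y=\sum_n\sigma_n\inner{y}{u_n}_Yv_n$ by \eqref{eq:svd_adj}, and since the coefficient sequences $(\inner{x}{v_n}_X)_n$ and $(\inner{y}{u_n}_Y)_n$ each sweep out exactly $\ell^2$ (Bessel for one inclusion, explicit realization via $x=\sum_na_nv_n$ resp.\ $y=\sum_na_nu_n$ for the other), both ranges equal $\setof{\sum_{n\in\N}\sigma_na_nv_n}{(a_n)\in\ell^2}$. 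Your version is more self-contained -- it avoids invoking the Picard condition and, in particular, sidesteps the slightly delicate reduction ``$x\in\calR(K^*)$ iff $Kx\in\calR(KK^*)$ and $x\in\calN(K)^\bot$'', which the paper asserts without proof. The paper's version, on the other hand, makes explicit the link to the Picard condition that is exploited immediately afterwards in the discussion of source conditions. Both are valid; your convergence bookkeeping (realizability of arbitrary $\ell^2$ coefficients by elements of $X$ and $Y$) is exactly the point that needed care, and you handle it correctly.
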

\begin{proof}
    Ad (i): This follows directly from
    \begin{equation*}
        \begin{aligned}
            |K|^{r+s}x &= \sum_{n\in\N} \sigma_n^{r+s} \inner{x}{v_n}_X v_n =
            \sum_{n\in\N} \sigma_n^{r} \left(\sigma_n^s\inner{x}{v_n}_X\right) v_n \\
            &=
            \sum_{n\in\N} \sigma_n^{r} \inner{\sum_{m\in\N}\sigma_m^s\inner{x}{v_m}_X v_m}{v_n}_Xv_n \\
            &= \sum_{n\in\N} \sigma_n^{r} \inner{|K|^s x}{v_n}_X v_n
            = |K|^r ( |K|^s x)
        \end{aligned}
    \end{equation*}
    since $\{v_n\}_{n\in\N}$ is an orthonormal system.

    Ad (ii): For any $x,z\in X$ and $r\geq 0$, the bilinearity and symmetry of the inner product implies that
    \begin{equation*}
        \inner{|K|^r x}{z }_X = \sum_{n\in\N} \sigma_n^r \inner{x}{v_n}_X\inner{v_n}{z}_X = \inner{x}{|K|^rz}_X.
    \end{equation*}

    Ad (iii):  This follows from (i), (ii), and
    \begin{equation*}
        \norm{|K|x}_X^2 = \inner{|K|x}{|K|x}_X = \inner{|K|^2 x }{x}_X = \inner{K^*Kx}{x}_X = \inner{Kx}{Kx}_X = \norm{Kx}_X^2.
    \end{equation*}

    Ad (iv):
    Let $\{(\sigma_n,u_n,v_n)\}_{n\in\N}$ be a singular system of $K$. Then $\{(\sigma_n,v_n,u_n)\}_{n\in\N}$ is a singular system of $K^*$, and -- by definition -- $\{(\sigma_n,v_n,v_n)\}_{n\in\N}$ is a singular system of $|K|$.
    Now $x\in\calR(K^*)$ if and only if $Kx\in\calR(KK^*)$ and $x\in\calN(K)^\bot$. The Picard condition for $Kx\in\calR(KK^*)$ is
    \begin{equation*}
        \infty> \sum_{n\in\N} \sigma_n^{-4} |\inner{Kx}{u_n}_Y|^2 = \sum_{n\in\N} \sigma_n^{-4} |\inner{x}{K^*u_n}_X|^2 = \sum_{n\in\N} \sigma_n^{-2} |\inner{x}{v_n}_X|^2.
    \end{equation*}
    But this is also the Picard condition for $x\in\calR(|K|)$ (compare the proof of \cref{thm:inverse:picard}), which for $x\in\calN(K)^\bot$ is even a necessary condition.
\end{proof}

The proof of \cref{lem:functional_range}\,(iv) already indicates that we can use $|K|$ to formulate a variant of the Picard condition for $x\in \overline{\calR(K^*)}$ (instead of $y\in \overline {\calR(K)}$); we will use this in a following chapter to characterize minimum norm solutions that can be particularly well approximated.

We finally need the following inequality.
\begin{lemma}\label{lem:interpolation}
    Let $K\in\calK(X,Y)$. Then any $r>s\geq0$ and $x\in X$ satisfy the \emph{interpolation inequality}
    \begin{equation}
        \label{eq:interpolation}
        \norm{|K|^s x}_X \leq \norm{|K|^{r}x}_X^{\frac{s}{r}} \ \norm{x}_X^{1-\frac{s}{r}}.
    \end{equation}
\end{lemma}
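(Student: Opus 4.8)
The goal is to prove the interpolation inequality $\norm{|K|^s x}_X \leq \norm{|K|^{r}x}_X^{s/r}\,\norm{x}_X^{1-s/r}$ for $r>s\geq 0$. The plan is to use the singular value decomposition of $|K|$ together with H\"older's inequality on the sequence of Fourier coefficients. By \cref{lem:functional_range}, the operator $|K|$ has singular system $\{(\sigma_n,v_n,v_n)\}_{n\in\N}$, so that
\begin{equation*}
    \norm{|K|^s x}_X^2 = \sum_{n\in\N} \sigma_n^{2s} |\inner{x}{v_n}_X|^2,
\end{equation*}
and analogously for the other two terms, where for $s=0$ the factor $\sigma_n^{0}=1$ recovers $\norm{P_{\calN^\bot}x}_X^2\le\norm{x}_X^2$. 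The strategy is to view the right-hand side as a sum of products and split each summand using the exponent $s/r\in(0,1]$.

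\textbf{The main step.}
First I would write $\sigma_n^{2s} = (\sigma_n^{2r})^{s/r}$, so that each term in the sum for $\norm{|K|^s x}_X^2$ becomes
\begin{equation*}
    \left(\sigma_n^{2r}|\inner{x}{v_n}_X|^2\right)^{s/r}\left(|\inner{x}{v_n}_X|^2\right)^{1-s/r}.
\end{equation*}
Then I would apply H\"older's inequality to this sum of products with conjugate exponents $p=r/s$ and $q=r/(r-s)$ (both at least $1$, with the case $s=0$ handled trivially since the first factor drops out). This yields
\begin{equation*}
    \sum_{n\in\N}\sigma_n^{2s}|\inner{x}{v_n}_X|^2 \leq \left(\sum_{n\in\N}\sigma_n^{2r}|\inner{x}{v_n}_X|^2\right)^{s/r}\left(\sum_{n\in\N}|\inner{x}{v_n}_X|^2\right)^{1-s/r}.
\end{equation*}
Recognizing the first sum on the right as $\norm{|K|^r x}_X^2$ and bounding the second by $\norm{x}_X^2$ via Bessel, I obtain $\norm{|K|^s x}_X^2 \leq \norm{|K|^r x}_X^{2s/r}\,\norm{x}_X^{2(1-s/r)}$, and taking square roots gives the claim.

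\textbf{The main obstacle.}
The only genuine subtlety is ensuring H\"older's inequality applies when the singular system is infinite and the series need not terminate; here the finiteness of $\norm{|K|^r x}_X$ (guaranteed since $\sigma_n\le\norm{K}$ is bounded, so $|K|^r x$ is well-defined by \eqref{eq:functional}) and of $\norm{x}_X$ means both factors on the right are finite, so the inequality is not vacuous. One should also treat the boundary case $s=0$ separately, where the inequality reduces to $\norm{P_{\calN^\bot}x}_X\le\norm{x}_X$ and is immediate. Otherwise the argument is a direct, essentially routine application of H\"older's inequality on $\ell^1$, and I expect no serious difficulty.
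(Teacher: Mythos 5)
Your proposal is correct and follows essentially the same route as the paper's proof: the same splitting of each summand into $\sigma_n^{2s}|\inner{x}{v_n}_X|^{2s/r}\cdot|\inner{x}{v_n}_X|^{2-2s/r}$, the same Hölder exponents $p=r/s$, $q=r/(r-s)$, the same use of the Bessel inequality, and the same separate treatment of $s=0$. No gaps.
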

\begin{proof}
    By definition of $|K|^s$,
    \begin{equation}\label{eq:interpolation1}
        \norm{|K|^s x}_X^2 = \sum_{n\in\N} \sigma_n^{2s} |(x,v_n)_X|^2,
    \end{equation}
    which together with the Bessel inequality immediately yields the claim for $s=0$.

    For $s>0$, we apply the Hölder inequality
    \begin{equation*}
        \sum_{n\in\N} a_n b_n \leq \left(\sum_{n\in\N} a_n^p\right)^{\frac1p}\left(\sum_{n\in\N} b_n^q\right)^{\frac1q}
        \qquad\text{for}\quad\frac1p+\frac1q = 1
    \end{equation*}
    to
    \begin{equation*}
        a_n := \sigma_n^{2s} |\inner{x}{v_n}_X|^{2\frac{s}r},\qquad
        b_n := |\inner{x}{v_n}_X|^{2-2\frac{s}r}, \qquad p = \frac{r}{s},\qquad q = \frac{r}{r-s}.
    \end{equation*}
    Then, \eqref{eq:interpolation1} and the Bessel inequality yield
    \begin{equation*}
        \begin{aligned}
            \norm{|K|^s x}_X^2 &\leq
            \left(\sum_{n\in\N} \sigma_n^{2r} |\inner{x}{v_n}_X|^{2}\right)^{\frac{s}{r}}
            \left(\sum_{n\in\N} |\inner{x}{v_n}_X|^{2}\right)^{1-\frac{s}{r}}\\
            &\leq \norm{|K|^rx}_X^{2\frac{s}{r}}\ \norm{x}_X^{2(1-\frac{s}{r})},
        \end{aligned}
    \end{equation*}
    and the claim follows after taking the square root.
\end{proof}

\chapter{Regularization methods}\label{chap:regularization}

As shown in the last chapter, the ill-posed operator equation $Tx=y$ admits for any $y\in\calD(T^\dag)$ a unique minimum norm solution $x^\dag = T^\dag y$. In practice, one however usually does not have access to the \enquote{exact data} $y$ but only to a \enquote{noisy measurement} $y^\delta\in B_\delta(y)$, i.e., satisfying
\begin{equation*}
    \norm{y-y^\delta}_Y \leq \delta,
\end{equation*}
where $\delta>0$ is the \emph{noise level}. Since $T^\dag$ is not continuous in general, $T^\dag y^\delta$ is not guaranteed to be a good approximation of $x^\dag$ even for $y^\delta\in\calD(T^\dag)$.
The goal is therefore to construct an approximation $x^\delta_\alpha$ that on the one hand depends continuously on $y^\delta$ -- and thus on $\delta$ -- and on the other hand can through the choice of a \emph{regularization parameter} $\alpha>0$ be brought as close to $x^\dag$ as the noise level $\delta$ allows. In particular, for $\delta \to 0$ and an appropriate choice of $\alpha(\delta)$, we want to ensure that $x^\delta_{\alpha(\delta)}\to x^\dag$. A method which constructs such an approximation is called \emph{regularization method}.

\section{Regularization and parameter choice}

For linear operators between Hilbert spaces, such constructions can be defined through \emph{regularization operators}, which can be considered as a continuous replacement for the unbounded pseudoinverse $T^\dag$. This leads to the following definition.
\begin{defn}\label{def:regularisierung}
    Let $T\in\linop(X,Y)$ be a bounded linear operator between the Hilbert spaces $X$ and $Y$.
    A family $\{R_\alpha\}_{\alpha>0}$ of linear operators $R_\alpha:Y\to X$ is called a \emph{regularization} (of $T^\dag$) if
    \begin{enumerate}
        \item $R_\alpha\in\linop(Y,X)$ for all $\alpha>0$;
        \item $R_\alpha y\to T^\dag y$ as $\alpha\to 0$ and all $y\in\calD(T^\dag)$.
    \end{enumerate}
\end{defn}
A regularization is therefore a pointwise approximation of the Moore--Penrose inverse by continuous operators. However, the Banach--Steinhaus Theorem implies that the convergence cannot be uniform if $T^\dag$ is not continuous.
\begin{theorem}\label{thm:regularisierung:glm}
    Let $T\in\linop(X,Y)$ and $\{R_\alpha\}_{\alpha>0}\subset\linop(Y,X)$ be a regularization. If $T^\dag$ is not continuous, then $\{R_\alpha\}_{\alpha>0}$ is not uniformly bounded. In particular, then there exists a $y\in Y$ and a null sequence $\{\alpha_n\}_{n\in\N}$ with $\norm{R_{\alpha_n} y}_X\to \infty$.
\end{theorem}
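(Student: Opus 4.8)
The plan is to prove the contrapositive-style implication in two stages, matching the two assertions of the theorem. I would argue by contradiction for the first claim: suppose $\{R_\alpha\}_{\alpha>0}$ \emph{were} uniformly bounded, i.e.\ $\sup_{\alpha>0}\norm{R_\alpha}_{\linop(Y,X)} =: M < \infty$, and derive that $T^\dag$ must then be continuous. Since the definition of regularization only guarantees convergence along $\alpha\to 0$, I would first extract a null sequence $\{\alpha_n\}_{n\in\N}$ so that the pointwise convergence $R_{\alpha_n}y \to T^\dag y$ holds for every $y\in\calD(T^\dag)$, turning the family into a sequence to which I can apply the Banach--Steinhaus machinery from \cref{cor:banach-steinhaus1,cor:banach-steinhaus2}.

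The core of the first stage is then to show that the pointwise limit of the $R_{\alpha_n}$ extends $T^\dag$ to a bounded operator on all of $Y$. On the dense subspace $\calD(T^\dag)\subset Y$ (dense by the discussion preceding \cref{thm:inverse:cont}) the sequence converges pointwise to $T^\dag$, and the uniform bound $M$ holds on all of $Y$; by \cref{cor:banach-steinhaus1} (equivalence of (iii) with pointwise convergence on a dense set plus uniform boundedness), $\{R_{\alpha_n}\}$ converges pointwise on all of $Y$ to some operator $R:Y\to X$, which is bounded by \cref{cor:banach-steinhaus2}. Since $R$ agrees with $T^\dag$ on the dense set $\calD(T^\dag)$, it is a continuous extension of $T^\dag$; restricting back to $\calD(T^\dag)$ shows $T^\dag$ itself is continuous there, contradicting the hypothesis. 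This establishes that $\{R_\alpha\}_{\alpha>0}$ cannot be uniformly bounded.

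For the second, more concrete assertion, I would use the failure of uniform boundedness directly together with the Banach--Steinhaus \cref{thm:banach-steinhaus} in its contrapositive form. Because $\sup_{\alpha>0}\norm{R_\alpha}_{\linop(Y,X)}=\infty$, the family $\{R_\alpha\}$ cannot be pointwise bounded: if it were, i.e.\ if $\sup_{\alpha>0}\norm{R_\alpha y}_X < \infty$ for every $y\in Y$, then \cref{thm:banach-steinhaus} (with $X$ here playing the role of the complete domain $Y$) would force the operator norms to be uniformly bounded. Hence there exists some $y\in Y$ with $\sup_{\alpha>0}\norm{R_\alpha y}_X = \infty$, and I can then select a null sequence $\{\alpha_n\}_{n\in\N}$ realizing $\norm{R_{\alpha_n}y}_X\to\infty$.

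The one point requiring care — and the step I expect to be the main obstacle — is the passage from a family indexed by the continuous parameter $\alpha>0$ to a countable sequence, since Banach--Steinhaus as stated in \cref{cor:banach-steinhaus1} is phrased for sequences $\{T_n\}_{n\in\N}$. In the first stage this is handled by choosing an arbitrary null sequence $\alpha_n\to 0$ and noting that property (ii) of \cref{def:regularisierung} gives convergence along it; in the second stage the unboundedness $\sup_{\alpha>0}\norm{R_\alpha}=\infty$ lets me pick $\alpha_n$ with $\norm{R_{\alpha_n}}\to\infty$, but I must verify I can additionally take $\alpha_n\to 0$ (so that it is genuinely a \emph{null} sequence). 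This follows because the unboundedness must already occur for arbitrarily small $\alpha$: for any $\alpha_0>0$, the tail $\{R_\alpha\}_{0<\alpha<\alpha_0}$ is still not uniformly bounded — otherwise combining a uniform bound on $(0,\alpha_0)$ with a uniform bound on $[\alpha_0,\infty)$ (which would follow from the first stage argument applied on that range, or can be assumed away since we only need small $\alpha$) would contradict what we proved. Thus I can choose $\alpha_n<1/n$ with $\norm{R_{\alpha_n}}\geq n$, and a uniform-boundedness-principle argument on the single fixed vector witnesses the divergence $\norm{R_{\alpha_n}y}_X\to\infty$ for a suitable $y\in Y$.
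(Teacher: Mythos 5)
Your first stage is correct and is essentially the paper's own argument: assume a uniform bound $M$, combine it with the pointwise convergence $R_\alpha y\to T^\dag y$ on the dense subspace $\calD(T^\dag)$ via \cref{cor:banach-steinhaus1,cor:banach-steinhaus2}, and conclude that $T^\dag$ has a bounded extension, contradicting its discontinuity. The problem is in your second stage, and you have correctly identified where it lives but not repaired it. From $\sup_{\alpha>0}\norm{R_\alpha}_{\linop(Y,X)}=\infty$ and Banach--Steinhaus you get a $y$ with $\sup_{\alpha>0}\norm{R_\alpha y}_X=\infty$, but nothing forces this divergence to occur along $\alpha\to 0$: \cref{def:regularisierung} places no constraint on $R_\alpha$ for $\alpha$ bounded away from zero, so the supremum could be realized entirely on, say, $\alpha\in[1,\infty)$. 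Your attempted repair rests on the claim that a uniform bound on $[\alpha_0,\infty)$ ``would follow from the first stage argument applied on that range''; this is false, since the first-stage argument uses only the behavior as $\alpha\to 0$ and says nothing about large $\alpha$. Moreover, even the corrected observation --- that $\{R_\alpha\}_{0<\alpha<\alpha_0}$ must fail to be uniformly bounded for \emph{every} $\alpha_0>0$, because a uniform bound on any such tail alone already reruns stage one and contradicts the discontinuity of $T^\dag$ --- does not finish the job: applying Banach--Steinhaus to each tail $\{R_\alpha\}_{0<\alpha<1/k}$ produces a witness $y_k$ \emph{depending on} $k$, and extracting a single $y$ that works for a null sequence would require an extra argument (e.g., a Baire category / condensation-of-singularities step) that you do not supply.

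The clean fix is to quantify over null sequences from the start, which is what the paper's contrapositive implicitly does. Fix an arbitrary null sequence $\{\alpha_n\}_{n\in\N}$ and run your stage one on the countable family $\{R_{\alpha_n}\}_{n\in\N}$: since $R_{\alpha_n}y\to T^\dag y$ on the dense set $\calD(T^\dag)$, a bound $\sup_{n}\norm{R_{\alpha_n}}_{\linop(Y,X)}<\infty$ would make $T^\dag$ continuous, so in fact $\sup_n\norm{R_{\alpha_n}}_{\linop(Y,X)}=\infty$. Now apply \cref{thm:banach-steinhaus} in contrapositive form to this \emph{sequence}: there is a single $y\in Y$ with $\sup_n\norm{R_{\alpha_n}y}_X=\infty$, and a subsequence $\{\alpha_{n_k}\}_{k\in\N}$ --- still a null sequence --- along which $\norm{R_{\alpha_{n_k}}y}_X\to\infty$. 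This yields both assertions of the theorem at once and removes the need to say anything about $R_\alpha$ for $\alpha$ away from zero.
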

\begin{proof}
    Assume to the contrary that no such $y\in Y$ exists. Then the family $\{R_\alpha\}_{\alpha>0}\subset\linop(Y,X)$ is bounded pointwise and hence uniformly by the Banach--Steinhaus \cref{thm:banach-steinhaus}. Thus there exists an $M>0$ with $\|R_\alpha\|_{\linop(Y,X)} \leq M$ for all $\alpha>0$. Together with the pointwise convergence $R_\alpha\to T^\dag$ on the dense subset $\calD(T^\dag)\subset Y$, \cref{cor:banach-steinhaus1} yields convergence on all of $\overline{\calD(T^\dag)} = Y$. By \cref{cor:banach-steinhaus2}, $T^\dag$ is then continuous, and the claim follows by contraposition.
\end{proof}
In fact, under an additional assumption, $R_{\alpha_n}y$ has to diverge for \emph{all} $y\notin\calD(T^\dag)$.
\begin{theorem}\label{thm:regularisierung:div}
    Let $T\in\linop(X,Y)$ be such that $T^\dag$ is not continuous, and let $\{R_\alpha\}_{\alpha>0}\subset\linop(Y,X)$ be a regularization of $T^\dag$. If
    \begin{equation}\label{eq:regularisierung:bounded}
        \sup_{\alpha>0} \norm{TR_\alpha}_{\linop(Y,Y)}<\infty,
    \end{equation}
    then $\norm{R_\alpha y}_X\to \infty$ as $\alpha\to0$ and all $y\notin\calD(T^\dag)$.
\end{theorem}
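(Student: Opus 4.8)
The plan is to argue by contraposition: I want to show that if $y \notin \calD(T^\dag)$ then $\norm{R_\alpha y}_X \to \infty$, so I assume instead that $\norm{R_\alpha y}_X$ stays bounded along some null sequence $\{\alpha_n\}$ (i.e., it does \emph{not} diverge to infinity), and derive that $y$ must in fact lie in $\calD(T^\dag)$. The key structural fact to exploit is the uniform bound \eqref{eq:regularisierung:bounded} on $\norm{TR_\alpha}_{\linop(Y,Y)}$, which lets me control the images $TR_\alpha y$ even when I cannot directly control $R_\alpha y$.

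\textbf{Extracting a weak limit.} If $\norm{R_{\alpha_n} y}_X$ fails to diverge, then along a subsequence (not relabeled) it is bounded. Since $X$ is a Hilbert space, the Bolzano--Weierstra\ss{} property for weak convergence gives a further subsequence with $R_{\alpha_n} y \wkto x$ for some $x \in X$. The next step is to identify $Tx$. Because $T$ is bounded (hence weakly continuous), $R_{\alpha_n} y \wkto x$ implies $TR_{\alpha_n} y \wkto Tx$. On the other hand, I expect $TR_{\alpha_n} y$ to converge to $P_{\overline{\calR}} y$: indeed, for data in $\calD(T^\dag)$ the regularization property gives $R_\alpha z \to T^\dag z$ and hence $TR_\alpha z \to TT^\dag z = P_{\overline{\calR}} z$ by \cref{lem:inverse}\,(iv), and the uniform bound \eqref{eq:regularisierung:bounded} together with \cref{cor:banach-steinhaus1} upgrades this pointwise convergence on the dense set $\calD(T^\dag)$ to convergence of $\{TR_{\alpha_n}\}$ on all of $Y$. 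Thus $TR_{\alpha_n} y \to P_{\overline{\calR}} y$ strongly, and by uniqueness of weak limits $Tx = P_{\overline{\calR}} y$.

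\textbf{Concluding membership in the domain.} Once I know $Tx = P_{\overline{\calR}} y$, I have exhibited a genuine solution of the least-squares equation \eqref{eq:inverse:ausgleichung}, so by \cref{thm:inverse:minnorm}\,(i) the element $x$ is a least squares solution of $Tx = y$. But the existence of a least squares solution means precisely that $P_{\overline{\calR}} y \in \calR(T)$, and combined with the orthogonal decomposition $y = P_{\overline{\calR}} y + P_{\calN(T^*)} y$ with $P_{\calN(T^*)} y \in \calR(T)^\bot$, this places $y$ in $\calR(T) \oplus \calR(T)^\bot = \calD(T^\dag)$. This contradicts $y \notin \calD(T^\dag)$, completing the argument.

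\textbf{The main obstacle.} The delicate point is establishing that $TR_{\alpha_n} y$ converges for the \emph{specific} $y \notin \calD(T^\dag)$ under consideration, since the regularization property only guarantees convergence of $R_\alpha$ on $\calD(T^\dag)$. This is exactly where hypothesis \eqref{eq:regularisierung:bounded} is indispensable: it provides the uniform operator bound $\sup_n \norm{TR_{\alpha_n}}_{\linop(Y,Y)} < \infty$ needed to invoke \cref{cor:banach-steinhaus1} and extend pointwise convergence from the dense subset $\calD(T^\dag)$ to the whole space $Y$. Without this uniform bound the family $\{TR_{\alpha_n}\}$ need not converge off the dense set, and the weak-limit identification would break down. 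I would therefore take care to apply the Banach--Steinhaus machinery to the operators $TR_{\alpha_n}$ rather than to $R_{\alpha_n}$ themselves, since it is only the former that are known to be uniformly bounded.
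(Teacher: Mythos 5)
Your argument is correct and follows essentially the same route as the paper's proof: extract a bounded subsequence, pass to a weak limit $x$, use the uniform bound \eqref{eq:regularisierung:bounded} with \cref{cor:banach-steinhaus1} to get $TR_{\alpha_n}y \to P_{\overline{\calR}}y$ on all of $Y$, and identify $Tx = P_{\overline{\calR}}y$ to conclude $y\in\calD(T^\dag)$, a contradiction. Your closing remark about why the Banach--Steinhaus machinery must be applied to $TR_{\alpha_n}$ rather than $R_{\alpha_n}$ correctly pinpoints the role of the hypothesis.
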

\begin{proof}
    Let $y\in Y\setminus\calD(T^\dag)=\overline{\calR(T)}\setminus\calR(T)$ be arbitrary and assume that there exists a null sequence $\{\alpha_n\}_{n\in\N}$ for which $\{R_{\alpha_n}y\}_{n\in\N}$ is bounded. Then there exists a subsequence $\{x_k\}_{k\in\N}$, $x_k:= R_{\alpha_{n_k}}y$, with $x_k\wkto x\in X$. Since bounded linear operators are weakly continuous, this also yields that $Tx_k\wkto Tx$.

    On the other hand, the continuity of $T$ and the pointwise convergence $R_\alpha\to T^\dag$ on $\calD(T^\dag)$ imply together with \cref{lem:inverse}\,(iv) that $TR_\alpha \tilde y \to TT^\dag \tilde y = P_{\overline{\calR}}\tilde y$ for all $\tilde y\in\calD(T^\dag)$.
    The assumption \eqref{eq:regularisierung:bounded} and \cref{cor:banach-steinhaus1} then yield the pointwise convergence of $TR_{\alpha_n}\to P_{\overline \calR}$ on all of $Y$. From $Tx_k =T R_{\alpha_{n_k}}y \to  P_{\overline{\calR}}y$ and $Tx_k\wkto Tx$, it now follows by the uniqueness of the limit that $Tx =  P_{\overline{\calR}}y$. Hence $P_{\overline{\calR}}y\in\calR(T)$ and therefore $y\in\calR(T)$, in contradiction to the assumption that $y\notin \calR(T)$. Hence $\{R_{\alpha_n}y\}_{n\in\N}$ cannot be bounded.
\end{proof}

\bigskip

However, we can in general not assume that a given noisy measurement $y^\delta\in B_\delta(y)$ is an element of $\calD(T^\dag)$.
We therefore have to consider the \emph{regularization error}
\begin{equation}
    \label{eq:regularisierung:splitting}
    \begin{aligned}[t]
        \norm{R_{\alpha}y^\delta - T^\dag y}_X &\leq  \norm{R_{\alpha}y^\delta - R_{\alpha}y}_X +  \norm{R_{\alpha}y - T^\dag y}_X\\
        &\leq \delta \norm{R_{\alpha}}_{\linop(Y,X)} + \norm{R_{\alpha}y - T^\dag y}_X.
    \end{aligned}
\end{equation}
This decomposition is a fundamental tool of regularization theory, and we will meet it repeatedly throughout the following.
Here the first term describes the \emph{(propagated) data error}, which by \cref{thm:regularisierung:glm} cannot be bounded for $\alpha\to 0$ as long as $\delta>0$. The second term describes the \emph{approximation error}, which due to the assumed pointwise convergence for $\alpha\to 0$ does tend to zero. To obtain a reasonable approximation, we thus have to choose $\alpha$ in a suitable dependence of $\delta$ such that the total regularization error vanishes as $\delta\to 0$.
\begin{defn}\label{def:parameterwahl}
    A function $\alpha:\R^+ \times Y\to\R^+$, $(\delta,y^\delta)\mapsto \alpha(\delta,y^\delta)$,
    is called a \emph{parameter choice rule}.
    We distinguish
    \begin{enumerate}
        \item \emph{a priori choice rules} that only depend on $\delta$;
        \item \emph{a posteriori choice rules} that depend on $\delta$ and $y^\delta$;
        \item \emph{heuristic choice rules} that only depend on $y^\delta$.
    \end{enumerate}

    If $\{R_\alpha\}_{\alpha>0}$ is a regularization of $T^\dag$ and $\alpha$ is a parameter choice rule, the pair $(R_\alpha,\alpha)$ is called a \emph{(convergent) regularization method} if
    \begin{equation}\label{eq:regularisierung:paramwahl}
        \lim_{\delta\to 0}\sup_{y^\delta\in B_\delta(y)}\norm{R_{\alpha(\delta,y^\delta)}y^\delta - T^\dag y}_X = 0 \qquad\text{for all }y\in \calD(T^\dag).
    \end{equation}
\end{defn}
We thus demand that the regularization error vanishes for \emph{all} noisy measurements $y^\delta$ that are compatible with the noise level $\delta\to 0$.

\subsection*{A priori choice rules}

We first show that every regularization admits an a priori choice rule and hence leads to a convergent regularization method.
\begin{theorem}\label{thm:regularisierung:a priori}
    Let $\{R_\alpha\}_{\alpha>0}$ be a regularization of $T^\dag$. Then there exists an a priori choice rule $\alpha$ such that $(R_\alpha,\alpha)$ is a regularization method.
\end{theorem}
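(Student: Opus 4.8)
The plan is to build everything on the error splitting \eqref{eq:regularisierung:splitting}: for a fixed $y\in\calD(T^\dag)$ and any $y^\delta\in B_\delta(y)$ it bounds the total error by the \emph{data error} $\delta\norm{R_\alpha}_{\linop(Y,X)}$ plus the \emph{approximation error} $\norm{R_\alpha y-T^\dag y}_X$. The decisive observation is that these two terms are governed by different mechanisms: the data error is independent of $y$ and involves only the operator norm of $R_\alpha$, whereas the approximation error carries all the $y$-dependence but tends to $0$ as $\alpha\to 0$ by the very definition of a regularization (\cref{def:regularisierung}). An a priori rule is not permitted to see $y$, but it does not need to: it suffices to drive $\alpha$ to $0$ slowly enough that the uniform, $y$-free data error also vanishes.

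Concretely, I would reduce the theorem to constructing a single function $\alpha:\R^+\to\R^+$, depending on $\delta$ alone, with
\begin{equation*}
    \alpha(\delta)\to 0 \qquad\text{and}\qquad \delta\,\norm{R_{\alpha(\delta)}}_{\linop(Y,X)}\to 0 \qquad\text{as }\delta\to 0.
\end{equation*}
Granting such a rule, fix any $y\in\calD(T^\dag)$. Passing to the supremum over $y^\delta\in B_\delta(y)$ in \eqref{eq:regularisierung:splitting} affects only the data term, which is already dominated by the $y$-independent quantity $\delta\norm{R_{\alpha(\delta)}}_{\linop(Y,X)}\to 0$; and since $\alpha(\delta)\to 0$, the regularization property yields $R_{\alpha(\delta)}y\to T^\dag y$, so the approximation error vanishes too. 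Hence the left-hand side of \eqref{eq:regularisierung:paramwahl} tends to $0$ for every $y\in\calD(T^\dag)$, which is precisely the assertion that $(R_\alpha,\alpha)$ is a regularization method.

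It then remains to manufacture such a rule from the numbers $\norm{R_\alpha}_{\linop(Y,X)}$, which are finite for each fixed $\alpha$ even though they blow up as $\alpha\to 0$ when $T^\dag$ is discontinuous (\cref{thm:regularisierung:glm}). I would fix any strictly decreasing null sequence, say $\alpha_k:=1/k$, set $c_k:=\norm{R_{\alpha_k}}_{\linop(Y,X)}<\infty$, and define
\begin{equation*}
    \delta_k := \min_{1\le j\le k}\frac{1}{j\,(1+c_j)},
\end{equation*}
which gives a nonincreasing null sequence with $0<\delta_k\le\tfrac1k$ (one may arrange strict decrease, e.g.\ by an extra factor $2^{-k}$, so that the intervals $(\delta_{k+1},\delta_k]$ partition $(0,\delta_1]$). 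Putting $\alpha(\delta):=\alpha_k$ for $\delta\in(\delta_{k+1},\delta_k]$ and $\alpha(\delta):=\alpha_1$ for $\delta>\delta_1$ yields a genuine a priori rule. As $\delta\to 0$ the associated index $k\to\infty$, whence $\alpha(\delta)=\alpha_k\to 0$; and wherever $\alpha(\delta)=\alpha_k$ we have $\delta\le\delta_k\le(k(1+c_k))^{-1}$, which forces $\delta\,\norm{R_{\alpha(\delta)}}_{\linop(Y,X)}=\delta\,c_k\le\delta_k c_k\le\tfrac1k\to 0$. Both required properties hold.

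The one step demanding care is exactly this construction: the norms $c_k$ may grow arbitrarily fast and nonmonotonically, so I cannot prescribe the thresholds $\delta_k$ directly and hope they decrease. The running-minimum definition is what reconciles the two competing demands — that $\delta_k$ decrease monotonically to $0$ while remaining small enough relative to $c_k$ to kill the data error — and it works only because each individual $c_k$ is finite. Conceptually, the subtlety the proof must navigate is the tension between a rule that cannot depend on $y$ and a convergence requirement that must hold for every $y$; the splitting \eqref{eq:regularisierung:splitting} resolves it by quarantining all $y$-dependence in the approximation error, which $\alpha(\delta)\to 0$ handles in the only sense needed.
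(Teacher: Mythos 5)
Your proof is correct, but it takes a genuinely different route from the paper's. The paper fixes $y\in\calD(T^\dag)$ and builds the rule as a composition $\alpha=\sigma\circ\rho^{-1}$ of two moduli extracted at that particular $y$: a $\sigma(\eps)$ from the pointwise convergence $R_\alpha y\to T^\dag y$ and a $\rho(\eps)$ from the continuity of the single operator $R_{\sigma(\eps)}$ at $y$; the convergence of the resulting method is then immediate from the construction. You instead prove, ahead of time, the sufficiency half of \cref{thm:regularisierung:apriori_char} --- that $\alpha(\delta)\to 0$ together with $\delta\norm{R_{\alpha(\delta)}}_{\linop(Y,X)}\to 0$ forces \eqref{eq:regularisierung:paramwahl} via the splitting \eqref{eq:regularisierung:splitting} --- and then manufacture such a rule from nothing but the finiteness of the operator norms $c_k=\norm{R_{1/k}}_{\linop(Y,X)}$ guaranteed by \cref{def:regularisierung}, using the running minimum to keep the thresholds $\delta_k$ monotone despite the possibly erratic growth of the $c_k$. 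What your approach buys is a rule that is manifestly independent of $y$: a single $\alpha(\delta)$ works simultaneously for every $y\in\calD(T^\dag)$, which matches the quantifier structure of \eqref{eq:regularisierung:paramwahl} more cleanly than the paper's construction, whose $\sigma$, $\rho$, and hence $\alpha$ all depend on the chosen $y$. The price is that you rely on the quantitative bound $\norm{R_\alpha y^\delta-R_\alpha y}_X\le\delta\norm{R_\alpha}_{\linop(Y,X)}$, i.e., on the linearity and boundedness of $R_\alpha$, where the paper needs only continuity of each $R_\alpha$ at the single point $y$; and your argument duplicates work that the paper defers to the separate characterization theorem. Both constructions are valid.
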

\begin{proof}
    Let $y\in\calD(T^\dag)$ be arbitrary. Since $R_\alpha \to T^\dag$ pointwise by assumption, there exists for all $\eps>0$ a $\sigma(\eps)>0$ such that
    \begin{equation*}
        \norm{R_{\sigma(\eps)}y-T^\dag y}_X \leq \frac\eps2.
    \end{equation*}
    This defines a monotonically increasing function $\sigma:\R^+\to\R^+$ with $\lim_{\eps\to 0}\sigma(\eps) = 0$. Similarly, the operator $R_{\sigma(\eps)}$ is continuous for every fixed $\eps>0$ and hence there exists a $\rho(\eps)>0$ with
    \begin{equation*}
        \norm{R_{\sigma(\eps)}z - R_{\sigma(\eps)}y}_X \leq\frac\eps2\qquad\text{for all $z\in Y$ with }\norm{z-y}_Y \leq \rho(\eps).
    \end{equation*}
    Again, this defines a function $\rho:\R^+\to\R^+$ with $\lim_{\eps\to 0}\rho(\eps)=0$, where we can assume without loss of generality that $\rho$ is strictly increasing and continuous (by choosing $\rho(\eps)$ maximally in case it is not unique).
    The Inverse Function Theorem thus ensures that there exists a strictly monotone and continuous inverse function $\rho^{-1}$ on $\calR(\rho)$ with $\lim_{\delta\to 0} \rho^{-1}(\delta) = 0$. We extend this function monotonically and continuously to $\R^+$ and define our a priori choice rule
    \begin{equation*}
        \alpha:\R^+\to\R^+,\qquad \delta\mapsto \sigma(\rho^{-1}(\delta)).
    \end{equation*}
    Then we have in particular $\lim_{\delta\to 0}\alpha(\delta) = 0$. Furthermore, for all $\eps>0$ there exists a $\delta:= \rho(\eps)>0$ such that $\alpha(\delta) = \sigma(\eps)$ and hence
    \begin{equation*}
        \norm{R_{\alpha(\delta)}y^\delta -T^\dag y}_X \leq
        \norm{R_{\sigma(\eps)}y^\delta -R_{\sigma(\eps)} y}_X +
        \norm{R_{\sigma(\eps)}y -T^\dag y}_X  \leq \frac\eps2 + \frac\eps2 = \eps
    \end{equation*}
    for all $y^\delta \in B_\delta(y)$.
    This implies that $\norm{R_{\alpha(\delta)}y^\delta -T^\dag y}_X\to 0$ as $\delta\to 0$ for any family $\{y^\delta\}_{\delta >0}\subset Y$ with $y^\delta \in B_\delta(y)$. Hence $(R_\alpha,\alpha)$ is a convergent regularization method.
\end{proof}
We can even give a full characterization of a priori choice rules that lead to convergent regularization methods.
\begin{theorem}\label{thm:regularisierung:apriori_char}
    Let $T^\dag$ no be continuous, $\{R_\alpha\}_{\alpha>0}$ be a regularization, and $\alpha:\R^+\to\R^+$ an a priori choice rule. Then $(R_\alpha,\alpha)$ is a regularization method if and only if
    \begin{enumerate}
        \item $\displaystyle\lim_{\delta\to 0} \alpha(\delta) = 0$,
        \item $\displaystyle \lim_{\delta\to 0} \delta\norm{R_{\alpha(\delta)}}_{\linop(Y,X)} = 0$.
    \end{enumerate}
\end{theorem}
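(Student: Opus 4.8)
The plan is to treat the two implications separately, using the error decomposition \eqref{eq:regularisierung:splitting} as the backbone. For the sufficiency direction, assume (i) and (ii) and fix $y\in\calD(T^\dag)$. Applied with $\alpha=\alpha(\delta)$, the splitting gives, for every $y^\delta\in B_\delta(y)$,
\begin{equation*}
    \norm{R_{\alpha(\delta)}y^\delta - T^\dag y}_X \leq \delta\norm{R_{\alpha(\delta)}}_{\linop(Y,X)} + \norm{R_{\alpha(\delta)}y - T^\dag y}_X,
\end{equation*}
where the right-hand side no longer depends on $y^\delta$. The first term tends to $0$ by (ii). For the second, (i) gives $\alpha(\delta)\to0$, so the defining pointwise convergence of the regularization (\cref{def:regularisierung}) yields $R_{\alpha(\delta)}y\to T^\dag y$. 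Taking the supremum over $y^\delta\in B_\delta(y)$ and letting $\delta\to0$ then establishes \eqref{eq:regularisierung:paramwahl}, i.e.\ $(R_\alpha,\alpha)$ is a regularization method.

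For necessity I would argue directly from convergence. Write $\gamma(\delta):=\sup_{y^\delta\in B_\delta(y)}\norm{R_{\alpha(\delta)}y^\delta - T^\dag y}_X$, which by assumption tends to $0$. To obtain (ii), observe that both $y$ and $y+\delta z$ lie in $B_\delta(y)$ whenever $\norm{z}_Y\leq1$; the triangle inequality then gives
\begin{equation*}
    \delta\norm{R_{\alpha(\delta)}z}_X \leq \norm{R_{\alpha(\delta)}(y+\delta z) - T^\dag y}_X + \norm{R_{\alpha(\delta)}y - T^\dag y}_X \leq 2\gamma(\delta).
\end{equation*}
Taking the supremum over $\norm{z}_Y\leq1$ yields $\delta\norm{R_{\alpha(\delta)}}_{\linop(Y,X)}\leq 2\gamma(\delta)\to0$, which is exactly (ii).

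The remaining and most delicate point is the necessity of (i); this is where the discontinuity of $T^\dag$ must enter essentially (for continuous $T^\dag$ one could take $R_\alpha\equiv T^\dag$ together with any constant rule, so (i) would fail). I would argue by contradiction: if $\alpha(\delta)\not\to0$, there is a null sequence $\delta_n\to0$ with $\alpha(\delta_n)\geq c>0$. Evaluating convergence at the exact data $y^\delta=y$ shows $R_{\alpha(\delta_n)}y\to T^\dag y$ for every $y\in\calD(T^\dag)$, i.e.\ pointwise convergence on the dense subset $\calD(T^\dag)\subset Y$. If one can show that $\{R_{\alpha(\delta_n)}\}_{n}$ is uniformly bounded, then \cref{cor:banach-steinhaus1} upgrades this to pointwise convergence on all of $Y$, and \cref{cor:banach-steinhaus2} forces the pointwise limit -- which agrees with $T^\dag$ on the dense set $\calD(T^\dag)$ -- to be a bounded operator, making $T^\dag$ continuous and contradicting the hypothesis.

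The \emph{main obstacle} is precisely establishing this uniform boundedness along the sequence: unlike for (ii), convergence only controls the product $\delta_n\norm{R_{\alpha(\delta_n)}}_{\linop(Y,X)}$ and not $\norm{R_{\alpha(\delta_n)}}_{\linop(Y,X)}$ itself, so one must genuinely exploit that the $\alpha(\delta_n)$ stay bounded away from $0$ (for instance via the typical monotonicity of $\alpha\mapsto\norm{R_\alpha}_{\linop(Y,X)}$, or by a continuity argument $R_{\alpha(\delta_n)}\to R_{\alpha_0}$ at a finite accumulation point $\alpha_0$ of $\{\alpha(\delta_n)\}$). This step, together with separately excluding the escape $\alpha(\delta_n)\to\infty$, is the crux of the argument and the place where the structure of the regularization family, beyond the bare \cref{def:regularisierung}, is needed.
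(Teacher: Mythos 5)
Your sufficiency argument and your proof that (ii) is necessary are both correct. For (ii), your direct estimate $\delta\norm{R_{\alpha(\delta)}z}_X\leq 2\gamma(\delta)$ for all $\norm{z}_Y\leq 1$ is in fact tidier than the paper's: there, the necessity of (ii) is shown by contraposition under the standing assumption that (i) holds (one picks near-maximizing $z_n$ with $\delta_n\norm{R_{\alpha(\delta_n)}z_n}_X\geq\eps$ and shows that $y_n=y+\delta_n z_n\in B_{\delta_n}(y)$ violates \eqref{eq:regularisierung:paramwahl}, using (i) to kill the term $R_{\alpha(\delta_n)}y-T^\dag y$). Your version needs neither (i) nor a contradiction.

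The genuine gap is the necessity of (i), which you explicitly leave open. The route you sketch -- Banach--Steinhaus, upgrading pointwise convergence on the dense subset $\calD(T^\dag)$ to all of $Y$, and concluding that $T^\dag$ is continuous -- is essentially the proof of \cref{thm:regularisierung:glm}, and the uniform-boundedness obstacle you correctly identify is a symptom of this being the wrong tool here: nothing in the hypotheses controls $\norm{R_{\alpha(\delta_n)}}_{\linop(Y,X)}$ along a sequence where $\alpha(\delta_n)$ stays bounded away from zero, so that argument cannot be closed. The paper instead dispatches this case in one line, again by contraposition on the \emph{noise-free} data: if $(R_\alpha,\alpha)$ is a regularization method, then \eqref{eq:regularisierung:paramwahl} applied to the constant family $y^\delta\equiv y$ forces $R_{\alpha(\delta)}y\to T^\dag y$ for every $y\in\calD(T^\dag)$, and if $\alpha(\delta)\not\to 0$ this pointwise convergence fails, since \cref{def:regularisierung} ties the convergence $R_\alpha y\to T^\dag y$ to $\alpha\to 0$. (Your unease is not baseless: this last step implicitly reads the definition as saying convergence occurs \emph{only} along $\alpha\to 0$, which a pathological family could violate. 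But the intended argument is this short contraposition; no uniform boundedness, no Banach--Steinhaus, and in particular no use of the discontinuity of $T^\dag$ beyond ruling out the trivial situation you mention.)
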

\begin{proof}
    The decomposition \eqref{eq:regularisierung:splitting} of the regularization error immediately implies that
    \begin{equation*}
        \norm{R_{\alpha(\delta)}y^\delta - T^\dag y}_X \leq \delta \norm{R_{\alpha(\delta)}}_{\linop(Y,X)} + \norm{R_{\alpha(\delta)}y - T^\dag y}_X\to 0\qquad\text{for }\delta\to 0
    \end{equation*}
    since the first term vanishes by assumption (ii), while the second vanishes due to the pointwise convergence of regularization operators together with assumption (i).

    Conversely, assume that either (i) or (ii) does not hold. If (i) is violated, then $R_{\alpha(\delta)}$ does not converge pointwise to $T^\dag y$. Hence, \eqref{eq:regularisierung:paramwahl} cannot hold for the constant sequence $y^\delta \equiv y$ and $\delta\to 0$, and therefore $(R_\alpha,\alpha)$ is not a regularization method. If now (i) holds but (ii) is violated, there exists a null sequence $\{\delta_n\}_{n\in\N}$ with $\delta_n\norm{R_{\alpha(\delta_n)}}_{\linop(Y,X)}\geq \eps$ for some $\eps>0$.
    We can therefore find a sequence $\{z_n\}_{n\in\N}\subset Y$ with $\norm{z_n}_Y=1$ and  $\delta_n\norm{R_{\alpha(\delta_n)}z_n}_{X}\geq \eps$.
    Let now $y\in\calD(T^\dag)$ be arbitrary and set $y_n:= y+\delta_n z_n$. Then $y_n\in B_{\delta_n}(y)$, but
    \begin{equation*}
        R_{\alpha(\delta_n)} y_n - T^\dag y =  (R_{\alpha(\delta_n)} y - T^\dag y) + \delta_n R_{\alpha(\delta_n)} z_n \not\to 0
    \end{equation*}
    since the first term on the right-hand side is a null sequence by (i) and the pointwise convergence of $R_\alpha$, but the second term is not a null sequence by construction.
    Hence, \eqref{eq:regularisierung:paramwahl} is violated and $(R_\alpha,\alpha)$ therefore not a regularization method. The claim now follows by contraposition.
\end{proof}
Since $\norm{R_\alpha}_{\linop(Y,X)}\to\infty$ as $\alpha\to 0$, assumption (ii) states that $\alpha$ cannot tend to zero too fast compared to $\delta$. An a priori choice rule thus usually has the form $\alpha(\delta) = \delta^r$ for some $r\in(0,1)$ (with $r$ depending on, among others, the specific regularization $\{R_\alpha\}_{\alpha>0}$).

\subsection*{A posteriori choice rules}

As we will see later, the optimal choice of $\alpha(\delta)$ requires information about the exact (minimum norm) solution $x^\dag$ that is not easily accessible. Such information is not required for a posteriori choice rules. The main idea behind these is the following: Let again $y\in\calD(T^\dag)$ and $y^\delta\in B_\delta(y)$ and consider the \emph{residual}
\begin{equation*}
    \norm{TR_\alpha y^\delta-y^\delta}_Y.
\end{equation*}
If now $y\in\calR(T)$ and $\norm{y-y^\delta}_Y=\delta$, even the (desired) minimum norm solution $x^\dag$ satisfies due to $Tx^\dag = y$ only
\begin{equation*}
    \norm{Tx^\dag-y^\delta}_Y = \norm{y-y^\delta}_Y = \delta.
\end{equation*}
It is therefore not reasonable to try to obtain a smaller residual for the regularization $R_\alpha y^\delta$ either. This motivates the \emph{Morozov discrepancy principle}: For given $\delta>0$ and $y^\delta\in B_\delta(y)$ choose $\alpha = \alpha(\delta,y^\delta)$ (as large as possible) such that
\begin{equation}
    \label{eq:parameter:morozov}
    \norm{TR_\alpha y^\delta-y^\delta}_Y\leq \tau \delta \qquad\text{for some } \tau>1 \text{ independent of $\delta$ and $y^\delta$}.
\end{equation}
However, this principle may not be satisfiable: If $y\in \calR(T)^\bot\setminus\{0\}$, then even the exact data $y^\delta = y$ and the minimum norm solution $x^\dag$ only satisfy
\begin{equation*}
    \norm{Tx^\dag - y}_Y =\norm{TT^\dag y - y}_Y = \norm{P_{\overline{\calR}} y - y}_Y = \norm{y}_Y > \tau\delta
\end{equation*}
for some fixed $\tau>1$ and $\delta$ small enough. We therefore have to assume that this situation cannot occur; for this it is sufficient that $\calR(T)$ is dense in $Y$ (since in this case $\calR(T)^\bot = \overline{\calR(T)}{}^\bot=\{0\}$).

The practical realization usually consists in choosing a null sequence $\{\alpha_n\}_{n\in \N}$, computing  successively $R_{\alpha_n}y^\delta$ for $n=1,\dots$, and stopping as soon as the discrepancy principle \eqref{eq:parameter:morozov} is satisfied for an $\alpha_{n^*}$. The following theorem justifies this procedure.
\begin{theorem}\label{thm:morozov}
    Let $\{R_\alpha\}_{\alpha>0}$ be a regularization of $T^\dag$ with $\calR(T)$ dense in $Y$, $\{\alpha_n\}_{n\in \N}$ be a strictly decreasing null sequence, and $\tau>1$. If the family $\{TR_\alpha\}_{\alpha>0}$ is uniformly bounded, then for all $y\in \calD(T^\dag)$, $\delta>0$ and $y^\delta \in B_\delta(y)$ there exists an $n^*\in \N$ such that
    \begin{equation}
        \label{eq:parameter:morozov_seq}
        \norm{TR_{\alpha_{n^*}}y^\delta-y^\delta}_Y\leq \tau \delta < \norm{TR_{\alpha_n}y^\delta-y^\delta}_Y \qquad\text{for all } n<n^*.
    \end{equation}
\end{theorem}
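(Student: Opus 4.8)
The plan is to show that the residual $\norm{TR_{\alpha_n}y^\delta - y^\delta}_Y$ is eventually smaller than $\tau\delta$, and then to let $n^*$ be the \emph{first} index at which this occurs; minimality will then automatically force the strict lower bound for all $n<n^*$, which is exactly \eqref{eq:parameter:morozov_seq}. The crux is therefore the asymptotic behaviour of the residual along the null sequence $\{\alpha_n\}$.

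First I would determine the pointwise limit of the sequence $\{TR_{\alpha_n}\}_{n\in\N}\subset\linop(Y,Y)$ on the dense domain $\calD(T^\dag)$. Since $\{\alpha_n\}$ is a null sequence and $\{R_\alpha\}$ is a regularization, \cref{def:regularisierung}\,(ii) gives $R_{\alpha_n}\tilde y\to T^\dag\tilde y$ for every $\tilde y\in\calD(T^\dag)$; applying the continuous operator $T$ and using \cref{lem:inverse}\,(iv) yields $TR_{\alpha_n}\tilde y\to TT^\dag\tilde y=P_{\overline\calR}\tilde y$. Because $\calR(T)$ is dense in $Y$, we have $\overline{\calR(T)}=Y$ and hence $P_{\overline\calR}=\Id$, so that $TR_{\alpha_n}\tilde y\to\tilde y$ for all $\tilde y\in\calD(T^\dag)$.

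The main obstacle is to upgrade this to convergence at the noisy datum $y^\delta$, which in general does \emph{not} lie in $\calD(T^\dag)$, so the regularization property says nothing about it directly. This is where the uniform boundedness of $\{TR_\alpha\}_{\alpha>0}$ enters: together with the pointwise convergence just established on the dense subset $\calD(T^\dag)\subset Y$, \cref{cor:banach-steinhaus1} upgrades it to pointwise convergence of $\{TR_{\alpha_n}\}$ on \emph{all} of $Y$, and by \cref{cor:banach-steinhaus2} the limit is a bounded operator agreeing with $\Id$ on a dense set, hence equal to $\Id$. Thus $TR_{\alpha_n}y^\delta\to y^\delta$, i.e., the residual tends to $0$. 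Equivalently, one may argue directly: choosing $\tilde y\in\calR(T)$ with $\norm{\tilde y-y^\delta}_Y$ small, the splitting $\norm{TR_{\alpha_n}y^\delta-y^\delta}_Y\leq\norm{TR_{\alpha_n}}_{\linop(Y,Y)}\norm{y^\delta-\tilde y}_Y+\norm{TR_{\alpha_n}\tilde y-\tilde y}_Y+\norm{\tilde y-y^\delta}_Y$ is controlled by the uniform bound on the first summand, by the pointwise limit on the second, and by the choice of $\tilde y$ on the third.

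To conclude, since $\tau\delta>0$ while the residual converges to $0$, the set $\setof{n\in\N}{\norm{TR_{\alpha_n}y^\delta-y^\delta}_Y\leq\tau\delta}$ is a nonempty subset of $\N$ and therefore has a least element $n^*$. By minimality, $\norm{TR_{\alpha_n}y^\delta-y^\delta}_Y>\tau\delta$ for every $n<n^*$, which together with the defining inequality at $n^*$ establishes \eqref{eq:parameter:morozov_seq}. Note that no monotonicity of the residual in $n$ is required, only that it eventually drops below the threshold $\tau\delta$.
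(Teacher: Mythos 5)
Your proposal is correct and follows essentially the same route as the paper: pointwise convergence of $TR_{\alpha_n}$ to $TT^\dag=P_{\overline\calR}$ on the dense set $\calD(T^\dag)$, upgraded to all of $Y$ via uniform boundedness and Banach--Steinhaus, so that the residual at $y^\delta$ tends to $\norm{P_{\overline\calR}y^\delta-y^\delta}_Y=0$ by density of $\calR(T)$. Taking $n^*$ as the first index at which the residual drops below $\tau\delta$ is exactly the implicit final step of the paper's proof; your write-up just makes it (and the Banach--Steinhaus step) more explicit.
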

\begin{proof}
    We proceed as in the proof of \cref{thm:regularisierung:div}. The family $\{TR_\alpha\}_{\alpha>0}$ converges pointwise to $TT^\dag = P_{\overline{\calR}}$ on $\calD(T^\dag)$ and hence, due to the uniform boundedness, on all of $Y=\overline{\calD(T^\dag)}$. This implies that for all $y\in\calD(T^\dag)=\calR(T)$ and $y^\delta \in B_\delta(y)$,
    \begin{equation*}
        \lim_{n\to\infty} \norm{TR_{\alpha_{n}}y^\delta-y^\delta}_Y =
        \norm{P_{\overline{\calR}}y^\delta-y^\delta}_Y =0
    \end{equation*}
    since $\overline{\calR(T)}=Y$. From this, the claim follows.
\end{proof}

To show that the discrepancy principle indeed leads to a regularization method, it has to be considered in combination with a concrete regularization. We will do so in the following chapters.

\subsection*{Heuristic choice rules}

Heuristic choice rules do not need knowledge of the noise level $\delta$, which is often relevant in practice where this knowledge is not available (sufficiently exactly).
However, the following pivotal result -- known in the literature as the \emph{Bakushinski\u{\i} veto}, see \cite{Bakushinskii} -- states that this is not possible in general.
\begin{theorem}\label{thm:parameter:bakushinskii}
    Let $\{R_\alpha\}_{\alpha>0}$ be a regularization of $T^\dag$. If there exists a heuristic choice rule $\alpha$ such that $(R_\alpha,\alpha)$ is a regularization method, then $T^\dag$ is continuous.
\end{theorem}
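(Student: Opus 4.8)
The plan is to exploit the one feature that distinguishes a heuristic rule from the others — that $\alpha$ depends only on the data $y^\delta$ and not on the noise level $\delta$ — in order to show first that $R_\alpha$ reproduces $T^\dag$ exactly on $\calD(T^\dag)$, and then to upgrade this reproduction identity to continuity by feeding in convergent sequences of \emph{exact} data.

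First I would fix an arbitrary $y\in\calD(T^\dag)$ and observe that the exact datum $y^\delta=y$ lies in $B_\delta(y)$ for \emph{every} $\delta>0$. Because the rule is heuristic, $\alpha(y^\delta)=\alpha(y)$ is a single parameter independent of $\delta$, so the nonnegative number $\norm{R_{\alpha(y)}y-T^\dag y}_X$ occurs as one of the terms appearing in the supremum in \eqref{eq:regularisierung:paramwahl} for every $\delta>0$. Since that supremum tends to $0$ as $\delta\to0$ while this particular term does not depend on $\delta$ at all, it is trapped beneath a vanishing quantity and must therefore be zero. This yields the reproduction identity
\[
    R_{\alpha(y)}y = T^\dag y \qquad\text{for all } y\in\calD(T^\dag).
\]

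Next I would establish sequential continuity of $T^\dag$ on $\calD(T^\dag)$, which in normed spaces is equivalent to continuity. Given $\{y_n\}_{n\in\N}\subset\calD(T^\dag)$ with $y_n\to y\in\calD(T^\dag)$, set $\delta_n:=\norm{y_n-y}_Y\to0$, so that $y_n\in B_{\delta_n}(y)$. Applying \eqref{eq:regularisierung:paramwahl} with base point $y$, for any $\eps>0$ there is a $\delta_0>0$ such that $\norm{R_{\alpha(z)}z-T^\dag y}_X<\eps$ for every $z\in B_\delta(y)$ whenever $\delta<\delta_0$. For all $n$ large enough that $\delta_n<\delta_0$, choosing $\delta=\delta_n$ and $z=y_n$ and invoking the reproduction identity $R_{\alpha(y_n)}y_n=T^\dag y_n$ gives $\norm{T^\dag y_n-T^\dag y}_X<\eps$. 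Hence $T^\dag y_n\to T^\dag y$, so $T^\dag$ is continuous.

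I expect the only genuinely delicate point to be the logical structure of the first step: one must notice that the exact datum sits inside every ball $B_\delta(y)$ simultaneously and that heuristicity pins $\alpha(y)$ to a single value, so that a $\delta$-independent term is squeezed underneath a supremum that vanishes. This is precisely where the argument would collapse for an a priori or a posteriori rule, where $\alpha$ is allowed to shrink as $\delta\to0$; there the reproduction identity fails, no constraint on exact data is forced, and continuity of $T^\dag$ cannot be concluded.
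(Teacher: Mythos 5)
Your proposal is correct and follows essentially the same argument as the paper: the exact-data trick ($y\in B_\delta(y)$ for every $\delta>0$ combined with the $\delta$-independence of $\alpha(y)$) to obtain the reproduction identity $R_{\alpha(y)}y=T^\dag y$ on $\calD(T^\dag)$, followed by feeding the sequence $y_n\in B_{\delta_n}(y)$ with $\delta_n=\norm{y_n-y}_Y$ back into \eqref{eq:regularisierung:paramwahl} to conclude continuity. The paper merely packages the map $y\mapsto R_{\alpha(y)}y$ as an operator $R$ and applies the identity directly to the $y_n$, which is what you do as well.
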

\begin{proof}
    Assuming to the contrary that such a parameter choice rule $\alpha:Y\to\R^+$ exists, we can define the (possibly nonlinear) mapping
    \begin{equation*}
        R:Y\to X,\qquad y\mapsto R_{\alpha(y)} y.
    \end{equation*}
    Let now $y\in\calD(T^\dag)$ be arbitrary and consider any sequence $\{y_n\}_{n\in\N}\subset \calD(T^\dag)$ with $y_n\to y$.
    On the one hand, then naturally $y_n\in B_{\delta}(y_n)$ for all $\delta>0$ and $n\in \N$, and the assumption \eqref{eq:regularisierung:paramwahl} for fixed $y^\delta = y=y_n$ and $\delta\to 0$ yields that $Ry_n = T^\dag y_n$ for all $n\in \N$ (and hence that $R$ is in fact linear on $\calD(T^\dag)$). On the other hand, for $\delta_n:= \norm{y_n-y}_Y$ we also have $y_n\in B_{\delta_n}(y)$, and in this case passing to the limit $n\to\infty$ in \eqref{eq:regularisierung:paramwahl} shows that
    \begin{equation*}
        T^\dag y_n =  Ry_n = R_{\alpha(y_n)}y_n \to T^\dag y,
    \end{equation*}
    i.e., $T^\dag$ is continuous on $\calD(T^\dag)$.
\end{proof}
In particular for compact operators with infinite-dimensional range, \emph{no} heuristic choice rule can lead to a regularization method.
Of course, this does not mean that such methods cannot be used in practice. First, the veto does not rule out choice rules for finite-dimensional ill-posed problems (such as very ill-conditioned linear systems); however, these rules are then by necessity dimension-dependent.
Second, a sharp look at the proof shows that the crucial step consists in applying the choice rule to data $y^\delta\in\calD(T^\dag)$. The worst case for the noisy data is therefore $y^\delta\in\calR(T)$ (since only this subspace of $\calD(T^\dag)$ plays a role due to $\calR(T)^\bot = \calN(T^\dag)$), and in this case convergence cannot be guaranteed.
In many interesting cases, however, $T$ is a compact (i.e., smoothing) operator, while errors have a more random character and therefore do not typically lie in $\calR(T)$. Heuristic choice rules can therefore indeed work in \enquote{usual} situations. In fact, it is possible to show under the additional assumption $y^\delta \notin\calD(T^\dag)$ that a whole class of popular heuristic choice rules lead to a regularization method. Here, too, we need to consider the combination with a concrete regularization operator but already give some examples.
\begin{enumerate}
    \item The \emph{quasi-optimality principle} picks a finite strictly decreasing sequence $\{\alpha_n\}_{n\in\{1,\dots,N\}}$ and chooses $\alpha(y^\delta)=\alpha_{n^*}$ as the one satisfying
        \begin{equation*}\label{eq:parameter:quasiopt}
            {n^*}  \in \arg\min_{1\leq n< N}\norm{R_{\alpha_{n+1}}y^\delta - R_{\alpha_{n}}y^\delta}_X.
        \end{equation*}
    \item The \emph{Hanke--Raus rule} chooses
        \begin{equation*}
            \label{eq:parameter:hankeraus}
            \alpha(y^\delta)  \in \arg\min_{\alpha>0} \frac1{\sqrt\alpha} \norm{TR_\alpha y^\delta - y^\delta}_Y.
        \end{equation*}
    \item The \emph{L-curve criterion}\footnote{The name is due to the practical realization: If one plots the curve $\alpha\mapsto (\norm{TR_\alpha y^\delta - y^\delta}_Y,\norm{R_\alpha y^\delta}_X)$ (or, rather, a finite set of points on it) in a doubly logarithmic scale, it often has -- more or less -- the form of an ``L''; the chosen parameter is then the one lying closest to the \enquote{knee} of the L.}
        chooses
        \begin{equation*}
            \label{eq:parameter:l-curve}
            \alpha(y^\delta)  \in \arg\min_{\alpha>0} \norm{R_\alpha y^\delta}_X \norm{TR_\alpha y^\delta - y^\delta}_Y.
        \end{equation*}
\end{enumerate}

All of these methods in one way or another work by using the residual to obtain a reasonably close approximation of the noise level that is then used similarly as in an a priori or a posteriori choice rules. An extensive numerical comparison of these and other choice rules can be found in \cite{Bauer}.

\section{Convergence rates}

A central goal in the regularization of inverse problems is to obtain error estimates of the form
\begin{equation*}
    \norm{R_{\alpha(\delta,y^\delta)} y^\delta - T^\dag y}_X \leq \psi(\delta)
\end{equation*}
for an increasing function $\psi:\R^+\to\R^+$ with $\lim_{t\to 0}\psi(t) =0$. In particular, we are interested in the \emph{worst-case error}
\begin{equation}
    \label{eq:rates:worstcase}
    \calE(y,\delta):= \sup_{y^\delta\in B_\delta(y)} \norm{R_{\alpha(\delta,y^\delta)} y^\delta - T^\dag y}_X
\end{equation}
(which for regularization methods converges to zero as $\delta\to 0$ and any $y\in \calD(T^\dag)$ by \eqref{eq:regularisierung:paramwahl}). Here, $\psi$ has to depend in some form on $y$ since otherwise it would be possible to give regularization error estimates independently of $y$ and $y^\delta$ -- but since the convergence of $R_\alpha\to T^\dag$ is merely pointwise but not uniform, such estimates cannot be expected.
\begin{theorem}\label{thm:rates:counter}
    Let $(R_\alpha,\alpha)$ be a regularization method. If there exists a $\psi:\R^+\to\R^+$ with $\lim_{t\to 0}\psi(t) =0$ and
    \begin{equation}\label{eq:rates:bounded}
        \sup_{y\in \calD(T^\dag)\cap B_Y} \calE(y,\delta) \leq \psi(\delta),
    \end{equation}
    then $T^\dag$ is continuous.
\end{theorem}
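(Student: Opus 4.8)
The plan is to exploit linearity: since $T^\dag$ is linear on the subspace $\calD(T^\dag)=\calR(T)\oplus\calR(T)^\bot$, establishing continuity amounts to producing a single constant $C$ with $\norm{T^\dag y}_X\le C\norm{y}_Y$ for all $y\in\calD(T^\dag)$, and for this it suffices to bound $T^\dag$ on a small ball around $0$. I would therefore aim to turn the uniform worst-case bound \eqref{eq:rates:bounded} into a modulus-of-continuity estimate for $T^\dag$ on $\calD(T^\dag)\cap B_Y$.

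The key trick---and the step I expect to be the crux---is to feed the estimate two data configurations chosen so that the \emph{same} regularization operator appears in both and cancels. Fix $y_1,y_2\in\calD(T^\dag)\cap B_Y$ with $y_1\ne y_2$ and set $\delta:=\norm{y_1-y_2}_Y>0$. On the one hand, viewing $y_2$ as noisy data for the exact datum $y_1$ (legitimate since $\norm{y_2-y_1}_Y=\delta$, so $y_2\in B_\delta(y_1)$), the definition \eqref{eq:rates:worstcase} of $\calE$ together with \eqref{eq:rates:bounded} gives
\begin{equation*}
    \norm{R_{\alpha(\delta,y_2)}y_2 - T^\dag y_1}_X \le \calE(y_1,\delta)\le \psi(\delta).
\end{equation*}
On the other hand, viewing $y_2$ as noise-free data for itself (legitimate since $y_2\in B_\delta(y_2)$), the same reasoning yields
\begin{equation*}
    \norm{R_{\alpha(\delta,y_2)}y_2 - T^\dag y_2}_X \le \calE(y_2,\delta)\le \psi(\delta).
\end{equation*}
Because both inequalities involve the identical vector $R_{\alpha(\delta,y_2)}y_2$, the triangle inequality eliminates it and leaves
\begin{equation*}
    \norm{T^\dag y_1 - T^\dag y_2}_X \le 2\psi(\delta) = 2\psi\!\left(\norm{y_1-y_2}_Y\right).
\end{equation*}

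From here the conclusion follows by a routine scaling argument. Since $\lim_{t\to 0}\psi(t)=0$, I would pick $\rho\in(0,1]$ with $\psi(t)\le 1$ for all $t\in(0,\rho]$. Taking $y_2=0$ (which lies in $\calD(T^\dag)\cap B_Y$ with $T^\dag 0=0$), the displayed estimate gives $\norm{T^\dag y}_X\le 2$ for every $y\in\calD(T^\dag)$ with $\norm{y}_Y\le\rho$. Exploiting that $\calD(T^\dag)$ is a linear subspace, any nonzero $y\in\calD(T^\dag)$ can be rescaled to $\rho\,y/\norm{y}_Y\in\calD(T^\dag)\cap B_Y$, and linearity of $T^\dag$ then yields $\norm{T^\dag y}_X\le (2/\rho)\norm{y}_Y$. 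Thus $T^\dag$ is bounded, hence continuous, as claimed. Note that the convergence property of the regularization method is not even needed---only the uniform bound \eqref{eq:rates:bounded} and the linearity of $T^\dag$ enter---so the only genuinely delicate point is the data selection in the middle paragraph; everything else is bookkeeping. Read contrapositively, the result explains why no uniform rate over a bounded set can hold once $\calR(T)$ fails to be closed.
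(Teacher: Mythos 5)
Your proof is correct and rests on exactly the same device as the paper's: estimating the single element $R_{\alpha(\delta,y_2)}y_2$ against both $T^\dag y_1$ (with $y_2$ viewed as noisy data for $y_1$) and $T^\dag y_2$ (with $y_2$ viewed as noisy data for itself) via \eqref{eq:rates:bounded}, and eliminating it with the triangle inequality to get $\norm{T^\dag y_1 - T^\dag y_2}_X\leq 2\psi(\norm{y_1-y_2}_Y)$. The only cosmetic difference is that you spell out the final step as an explicit boundedness-by-rescaling argument, where the paper states sequential continuity on $\calD(T^\dag)\cap B_Y$ and then appeals to linearity.
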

\begin{proof}
    Let $y\in \calD(T^\dag)\cap B_Y$ and $\{y_n\}_{n\in\N}\subset \calD(T^\dag) \cap B_Y$ be a sequence with $y_n\to y$. Setting $\delta_n:= \norm{y-y_n}_Y\to 0$, we than have for $n\to\infty$ that
    \begin{equation*}
        \begin{aligned}
            \norm{T^\dag y_n - T^\dag y}_X &\leq \norm{T^\dag y_n - R_{\alpha(\delta_n,y_n)} y_n}_X + \norm{R_{\alpha(\delta_n,y_n)} y_n - T^\dag y}_X\\
            &\leq \calE(y_n,\delta_n) + \calE(y,\delta_n) \\
            &\leq 2\psi(\delta_n)\to 0.
        \end{aligned}
    \end{equation*}
    Hence $T^\dag$ is continuous on $\calD(T^\dag)\cap B_Y$ and thus, by linearity of $T^\dag$, on all of  $\calD(T^\dag)$.
\end{proof}
This implies that the convergence can be arbitrarily slow; knowledge of $\delta$ alone is therefore not sufficient to give error estimates -- we thus need additional assumptions on the exact data $y$ or, equivalently, the wanted minimum norm solution $x^\dag = T^\dag y$. As the proof of \cref{thm:rates:counter} shows, the existence of convergence rates is closely tied to the continuity of $T^\dag$ on closed subsets. We therefore consider for $\calM\subset X$ and $\delta>0$ the quantity
\begin{equation*}
    \eps(\calM,\delta) :=  \sup\setof{\norm{x}_X}{x\in\calM,\ \norm{Tx}_Y\leq \delta},
\end{equation*}
which can be interpreted as a \emph{modulus of conditional continuity} of $T^\dag:\calR(T)\cap \delta B_Y\to \calM$.
This modulus is in fact a lower bound for the worst-case error. Since both $\eps(\calM,\delta)$ and  $\calE(y,\delta)$ are not finite if $\calM\cap\calN(T)\neq \{0\}$ and $\calM$ are unbounded, we will only consider the more interesting case that $\calM\subset\calN(T)^\bot$.
\begin{theorem}
    Let $(R_\alpha,\alpha)$ be a regularization method. Then for all $\delta>0$ and $\calM\subset \calN(T)^\bot$,
    \begin{equation*}
        \sup_{y\in\calD(T^\dag), T^\dag y\in \calM} \calE(y,\delta) \geq \eps(\calM,\delta).
    \end{equation*}
\end{theorem}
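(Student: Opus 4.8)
The plan is to exhibit, for every element $x$ competing in the definition of $\eps(\calM,\delta)$, a single exact datum $y$ whose worst-case error already dominates $\norm{x}_X$; taking the supremum over all such $x$ then yields the claim. The whole argument reduces to a clever choice of the noisy datum, after which linearity of the regularization operators does all the work.

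First I would fix an arbitrary $x\in\calM$ with $\norm{Tx}_Y\leq\delta$ and set $y:=Tx\in\calR(T)\subset\calD(T^\dag)$. Since $x\in\calM\subset\calN(T)^\bot$, \cref{lem:inverse}\,(iii) gives
\begin{equation*}
    T^\dag y = T^\dag T x = (\Id-P_{\calN})x = x,
\end{equation*}
so that $T^\dag y = x\in\calM$ and $y$ is therefore an admissible index for the supremum on the left-hand side. It is precisely the hypothesis $\calM\subset\calN(T)^\bot$ that makes $T^\dag y$ recover $x$ exactly (rather than $x$ minus its kernel component), which is why that assumption appears in the statement.

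The crucial step is the choice of noisy measurement. Because $\norm{Tx}_Y\leq\delta$, the origin satisfies $\norm{y-0}_Y=\norm{Tx}_Y\leq\delta$, i.e.\ $0\in B_\delta(y)$. Exploiting the linearity of the operator $R_{\alpha(\delta,0)}$ we have $R_{\alpha(\delta,0)}\,0=0$, and hence the supremum defining $\calE(y,\delta)$ in \eqref{eq:rates:worstcase} is bounded below by the value attained at $y^\delta=0$:
\begin{equation*}
    \calE(y,\delta) \geq \norm{R_{\alpha(\delta,0)}\,0 - T^\dag y}_X = \norm{0-x}_X = \norm{x}_X.
\end{equation*}
Thus $\sup_{y\in\calD(T^\dag),\,T^\dag y\in\calM}\calE(y,\delta)\geq\norm{x}_X$ for this particular admissible $y$, and since $x$ was an arbitrary competitor in $\eps(\calM,\delta)=\sup\setof{\norm{x}_X}{x\in\calM,\ \norm{Tx}_Y\leq\delta}$, taking the supremum over all such $x$ yields the asserted inequality.

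The only point that requires care is recognizing that $y^\delta=0$ is an allowed perturbation precisely because of the constraint $\norm{Tx}_Y\leq\delta$; once this is seen, nothing beyond the identity $R_\alpha 0 = 0$ is needed, and no regularization or convergence property of $(R_\alpha,\alpha)$ enters at all. I expect this choice of the zero datum to be the genuinely creative step, while everything else is routine verification.
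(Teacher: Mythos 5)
Your proof is correct and follows essentially the same route as the paper's: both arguments take $y = Tx$ for a competitor $x \in \calM$ with $\norm{Tx}_Y \leq \delta$, observe that $y^\delta = 0$ lies in $B_\delta(y)$, and use $T^\dag Tx = x$ (valid since $x \in \calN(T)^\bot$) together with $R_{\alpha(\delta,0)}0 = 0$ to bound $\calE(Tx,\delta)$ from below by $\norm{x}_X$. Your identification of $y^\delta = 0$ as the key choice, and your remark that the linearity of $R_\alpha$ is all that is needed, match the paper's reasoning exactly.
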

\begin{proof}
    Let $x\in\calM$ with $\norm{Tx}_Y \leq \delta$. For $y^\delta = 0$, we then deduce from $x\in\calN(T)^\bot$ that
    \begin{equation*}
        \norm{x}_X = \norm{T^\dag Tx - R_{\alpha(\delta,0)} 0}_X \leq \calE(Tx,\delta)
    \end{equation*}
    and hence
    \begin{equation*}
        \eps(\calM,\delta) = \sup_{x\in\calM,  \norm{Tx}_Y\leq \delta} \norm{x}_X \leq  \sup_{x\in\calM,  \norm{Tx}_Y\leq \delta} \calE(Tx,\delta) \leq \sup_{T^\dag y\in\calM,  y\in\calD(T^\dag)} \calE(y,\delta)
    \end{equation*}
    since $\calD(T^\dag)=\calR(T)\oplus\calR(T)^\bot$ and $\calR(T)^\bot = \calN(T^\dag)$.
\end{proof}

For an appropriate choice of $\calM$, we can now derive sharp bounds on $\eps(\calM,\delta)$. We consider here for compact operators $K\in\calK(X,Y)$ subsets of the form
\begin{equation*}
    X_{\nu,\rho} = \setof{|K|^\nu w \in X}{\norm{w}_X\leq \rho} \subset\calR(|K|^\nu).
\end{equation*}
The definition of $|K|^\nu w$ via the spectral decomposition of $K$ implies in particular that $X_{\nu,\rho}\subset \overline{\calR(K^*)} = \calN(K)^\bot$.
\begin{theorem}\label{thm:rates:limit}
    Let $K\in\calK(X,Y)$ and $\nu,\rho>0$. Then for all $\delta>0$,
    \begin{equation*}
        \eps(X_{\nu,\rho},\delta)\leq \delta^{\frac\nu{\nu+1}} \rho^{\frac1{\nu+1}}.
    \end{equation*}
\end{theorem}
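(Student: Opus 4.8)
The plan is to bound directly the norm of an arbitrary competitor $x \in X_{\nu,\rho}$ satisfying the constraint $\norm{Kx}_Y \leq \delta$, using the interpolation inequality from \cref{lem:interpolation} together with the elementary identities for $|K|$ collected in \cref{lem:functional_range}. By definition, any such $x$ has the form $x = |K|^\nu w$ with $\norm{w}_X \leq \rho$, so it suffices to estimate $\norm{|K|^\nu w}_X$ and then take the supremum over all admissible $x$.

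First I would apply the interpolation inequality \eqref{eq:interpolation} to the vector $w$ with the choice $s = \nu$ and $r = \nu + 1$ (admissible since $r > s > 0$ because $\nu > 0$), giving
\begin{equation*}
    \norm{|K|^\nu w}_X \leq \norm{|K|^{\nu+1} w}_X^{\frac{\nu}{\nu+1}} \, \norm{w}_X^{\frac{1}{\nu+1}}.
\end{equation*}
The second factor is immediately controlled by $\norm{w}_X \leq \rho$, contributing a factor $\rho^{\frac1{\nu+1}}$.

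The crux is to rewrite the first factor in terms of the data constraint on $x$. Using the semigroup property \cref{lem:functional_range}\,(i), one has $|K|^{\nu+1} w = |K|\bigl(|K|^\nu w\bigr) = |K| x$, and then the norm identity \cref{lem:functional_range}\,(iii) gives $\norm{|K|^{\nu+1} w}_X = \norm{|K| x}_X = \norm{Kx}_Y \leq \delta$. Substituting both bounds into the displayed inequality yields $\norm{x}_X \leq \delta^{\frac{\nu}{\nu+1}} \rho^{\frac1{\nu+1}}$ for every admissible $x$, and taking the supremum over $\setof{x\in X_{\nu,\rho}}{\norm{Kx}_Y \leq \delta}$ gives exactly the claimed bound on $\eps(X_{\nu,\rho},\delta)$.

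I do not expect a genuine obstacle here; the only real insight is recognizing that the source condition ($x = |K|^\nu w$) and the data constraint ($\norm{Kx}_Y = \norm{|K|x}_X \leq \delta$) correspond precisely to the two endpoints $s=\nu$ and $r=\nu+1$ of an interpolation inequality applied to $w$. One should merely double-check the admissibility $r > s \geq 0$ and that $X_{\nu,\rho}\subset\calN(K)^\bot$ so that the norms involved are finite; both are guaranteed by $\nu>0$ and the remark preceding the theorem.
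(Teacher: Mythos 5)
Your proof is correct and follows essentially the same route as the paper: apply the interpolation inequality to the source representer $w$ with $s=\nu$, $r=\nu+1$, convert $\norm{|K|^{\nu+1}w}_X$ into $\norm{Kx}_Y$ via \cref{lem:functional_range}\,(i) and (iii), and take the supremum. No gaps.
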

\begin{proof}
    Let $x\in X_{\nu,\rho}$ and $\norm{Kx}_Y\leq \delta$. Then there exists a $w\in X$ with $x=|K|^\nu w$ and $\norm{w}_X\leq \rho$. The interpolation inequality from \cref{lem:interpolation} for $s = \nu$ and $r =\nu + 1$ together with the properties from \cref{lem:functional_range} then imply that
    \begin{equation*}
        \begin{aligned}
            \norm{x}_X &= \norm{|K|^\nu w}_X \leq \norm{|K|^{\nu+1}w}_X^{\frac{\nu}{\nu+1}} \norm{w}_X^{\frac{1}{\nu+1}} = \norm{K|K|^{\nu}w}_Y^{\frac{\nu}{\nu+1}} \norm{w}_X^{\frac{1}{\nu+1}} \\
            &= \norm{Kx}_Y^{\frac{\nu}{\nu+1}} \norm{w}_X^{\frac{1}{\nu+1}}
            \leq \delta^{\frac{\nu}{\nu+1}} \rho^{\frac{1}{\nu+1}}.
        \end{aligned}
    \end{equation*}
    Taking the supremum over all $x\in X_{\nu,\rho}$ with $\norm{Kx}_Y\leq \delta$ yields the claim.
\end{proof}
So far this is only an upper bound, but there always exists at least one sequence for which it is attained.
\begin{theorem}
    Let $K\in\calK(X,Y)$ and $\nu,\rho>0$. Then there exists a null sequence $\{\delta_n\}_{n\in\N}$ with
    \begin{equation*}
        \eps(X_{\nu,\rho},\delta_n) = \delta_n^{\frac\nu{\nu+1}} \rho^{\frac1{\nu+1}}.
    \end{equation*}
\end{theorem}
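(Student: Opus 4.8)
The plan is to exhibit, for a suitable null sequence $\{\delta_n\}_{n\in\N}$, an explicit element of $X_{\nu,\rho}$ whose norm equals the upper bound $\delta^{\nu/(\nu+1)}\rho^{1/(\nu+1)}$ already furnished by \cref{thm:rates:limit}. Since that theorem provides the reverse inequality $\eps(X_{\nu,\rho},\delta)\leq \delta^{\nu/(\nu+1)}\rho^{1/(\nu+1)}$ for \emph{every} $\delta>0$, producing a matching lower bound along $\{\delta_n\}$ forces equality there. The natural test elements are the singular vectors, which simultaneously diagonalize $K$ and $|K|$.

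Concretely, I would fix the singular system $\{(\sigma_n,u_n,v_n)\}_{n\in\N}$ of $K$ and recall from \cref{ex:spectralcalculus}\,(iii) that $\{(\sigma_n,v_n,v_n)\}_{n\in\N}$ is a singular system of $|K|$, so that $|K|^\nu v_n = \sigma_n^\nu v_n$. Choosing $w_n := \rho\, v_n$ (admissible since $\norm{v_n}_X = 1$) yields the test element
\[
    x_n := |K|^\nu w_n = \rho\,\sigma_n^\nu v_n \in X_{\nu,\rho}.
\]
Using $Kv_n = \sigma_n u_n$ from \eqref{eq:sing_vec} one then computes $\norm{Kx_n}_Y = \rho\,\sigma_n^{\nu+1} =: \delta_n$ together with $\norm{x_n}_X = \rho\,\sigma_n^\nu$. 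As $\{\sigma_n\}_{n\in\N}$ is a null sequence, so is $\{\delta_n\}_{n\in\N}$.

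The remaining step is just bookkeeping of exponents: from $\delta_n = \rho\,\sigma_n^{\nu+1}$ one obtains $\delta_n^{\nu/(\nu+1)} = \rho^{\nu/(\nu+1)}\sigma_n^{\nu}$, whence
\[
    \delta_n^{\frac{\nu}{\nu+1}}\rho^{\frac{1}{\nu+1}} = \rho^{\frac{\nu}{\nu+1}}\sigma_n^{\nu}\cdot\rho^{\frac{1}{\nu+1}} = \rho\,\sigma_n^{\nu} = \norm{x_n}_X.
\]
Since $x_n\in X_{\nu,\rho}$ satisfies $\norm{Kx_n}_Y = \delta_n$ and is therefore admissible in the supremum defining $\eps(X_{\nu,\rho},\delta_n)$, I get $\eps(X_{\nu,\rho},\delta_n)\geq \norm{x_n}_X = \delta_n^{\nu/(\nu+1)}\rho^{1/(\nu+1)}$; combining with the upper bound of \cref{thm:rates:limit} gives equality.

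There is no genuine obstacle beyond spotting the right test vectors; the only point requiring care is that this argument produces an honest (infinite) null sequence precisely when $K$ has infinitely many singular values, i.e.\ in the genuinely ill-posed case of infinite-dimensional range. If $\calR(K)$ is finite-dimensional the same computation still attains equality at the finitely many values $\delta_n = \rho\,\sigma_n^{\nu+1}$, but these need not accumulate at $0$; this degenerate (well-posed) situation is the one worth flagging, though it does not affect the compact operators of interest.
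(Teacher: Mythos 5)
Your proof is correct and follows essentially the same route as the paper: both take $x_n = |K|^\nu(\rho v_n) = \rho\sigma_n^\nu v_n$ and $\delta_n = \rho\sigma_n^{\nu+1}$, verify $\norm{Kx_n}_Y = \delta_n$ and $\norm{x_n}_X = \delta_n^{\nu/(\nu+1)}\rho^{1/(\nu+1)}$, and combine the resulting lower bound with \cref{thm:rates:limit}. Your direct computation of $\norm{Kx_n}_Y$ via $Kv_n=\sigma_n u_n$ (rather than through $K^*K$ as in the paper) and your remark on the finitely terminating case are minor but sound refinements.
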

\begin{proof}
    Let $\{(\sigma_n,u_n,v_n)\}_{n\in\N}$ be a singular system for $K$ and set $\delta_n :=  \rho \sigma_n^{\nu+1}$ as well as $x_n:= |K|^\nu (\rho v_n)$. Since singular values form a null sequence, we have $\delta_n\to 0$. Furthermore, by construction $x_n\in X_{\nu,\rho}$.
    It now follows from $\sigma_n = (\rho^{-1}\delta_n)^{\frac1{\nu+1}}$ that
    \begin{equation*}
        x_n = \rho |K|^\nu v_n  = \rho \sigma_n^\nu v_n =  \delta_n^{\frac\nu{\nu+1}}\rho^{\frac{1}{\nu+1}}v_n
    \end{equation*}
    since $\sigma^\nu_n$ is an eigenvalue of $|K|^\nu$ corresponding to the eigenvector $v_n$. Hence, $\norm{x_n}_X =  \delta_n^{\frac\nu{\nu+1}}\rho^{\frac{1}{\nu+1}}$.
    Analogously, we obtain that
    \begin{equation*}
        K^*K x_n = \delta_n^{\frac\nu{\nu+1}}\rho^{\frac{1}{\nu+1}} \sigma_n^2 v_n = \delta_n^{\frac{\nu+2}{\nu+1}} \rho^{-\frac{1}{\nu+1}} v_n
    \end{equation*}
    and thus that
    \begin{equation*}
        \norm{Kx_n}_Y^2 = \inner{Kx_n}{Kx_n}_Y = \inner{K^*Kx_n}{x_n}_X = \delta_n^{2}.
    \end{equation*}
    For all $n\in\N$, we therefore have that
    \begin{equation*}
        \eps(X_{\nu,\rho},\delta_n) = \sup_{x\in X_{\nu,\rho},\ \norm{Kx}_Y\leq \delta_n}\norm{x}_X \geq \norm{x_n}_X = \delta_n^{\frac\nu{\nu+1}} \rho^{\frac1{\nu+1}},
    \end{equation*}
    which together with \cref{thm:rates:limit} yields the claimed equality.
\end{proof}

This theorem implies that for a compact operator $K$ with infinite-dimensional range, there can be no regularization method for which the worst-case error can go to zero faster than $\delta_n^{\frac\nu{\nu+1}} \rho^{\frac1{\nu+1}}$ as $\delta\to 0$ -- and even this is only possible under the additional assumption that $x^\dag\in  X_{\nu,\rho}$.
In particular, the regularization error always tends to zero more slowly than the data error.

We thus call a regularization method \emph{optimal} (for $\nu$ and $\rho$) if
\begin{equation*}
    \calE(Kx^\dag,\delta) = \delta^{\frac\nu{\nu+1}} \rho^{\frac1{\nu+1}}\qquad\text{for all }x^\dag \in X_{\nu,\rho}
\end{equation*}
and \emph{order optimal} (for $\nu$ and $\rho$) if there exists a constant $c=c(\nu)\geq 1$ such that
\begin{equation}\label{eq:regularisierung:ordnung}
    \calE(Kx^\dag,\delta) \leq c \delta^{\frac\nu{\nu+1}} \rho^{\frac1{\nu+1}}\qquad\text{for all }x^\dag \in X_{\nu,\rho}.
\end{equation}
If we allow this constant to depend on $x^\dag$ -- i.e., we are only interested in \emph{convergence rates} -- then we set
\begin{equation*}
    X_\nu :=  \bigcup_{\rho>0} X_{\nu,\rho} = \calR(|K|^\nu)
\end{equation*}
and call a regularization method order optimal for $\nu$ if there exists a $c=c(x^\dag)\geq 1$ such that
\begin{equation*}
    \calE(Kx^\dag,\delta) \leq c \delta^{\frac\nu{\nu+1}} \qquad\text{for all }x^\dag \in X_{\nu}.
\end{equation*}

The assumption $x^\dag \in X_{\nu,\rho}$ is called a \emph{source condition}, and the element $w\in X$ with $|K|^\nu w = x^\dag$ is sometimes referred to as a \emph{source representer}. Since $K$ is a compact (i.e., smoothing) operator, source conditions are abstract smoothness conditions; e.g., for the integral operator  $K$ from \cref{ex:integration_svd}, the condition $x\in X_{2,\rho}$ implies that $x=K^*Kw = \int_t^1 \int_0^s w(r)\,dr\,ds$ has a second (weak) derivative $w$ whose $L^2$ norm is bounded by $\rho$.

Using the singular value decomposition of $K$, it is not hard to show that in general the condition $x^\dag \in X_\nu$ corresponds to a strengthened Picard condition, i.e., that the decay of the Fourier coefficients of $y$ in relation to the singular values of $K$ is faster the larger $\nu$ is.
\begin{lemma}\label{lem:regularisierung:quell}
    Let $K\in\calK(X,Y)$ have the singular system $\{(\sigma_n,u_n,v_n)\}_{n\in\N}$ and let $y\in\calR(K)$. Then $x^\dag = K^\dag y\in X_\nu$ if and only if
    \begin{equation}\label{eq:regularisierung:quell}
        \sum_{n\in\N} \sigma_n^{-2-2\nu} |\inner{y}{u_n}_Y|^2 < \infty.
    \end{equation}
\end{lemma}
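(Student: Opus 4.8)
The plan is to reduce the statement to the characterization of membership in the range of the selfadjoint compact operator $|K|^\nu$, whose singular system is $\{(\sigma_n^\nu, v_n, v_n)\}_{n\in\N}$ (as already observed in the proof of \cref{lem:functional_range}). First I would record the two ingredients that the hypothesis $y\in\calR(K)$ supplies. By the Picard \cref{thm:inverse:picard}, the minimum norm solution has the representation $x^\dag = K^\dag y = \sum_{n\in\N}\sigma_n^{-1}\inner{y}{u_n}_Y v_n$, and since $\{v_n\}_{n\in\N}$ is an orthonormal system, its Fourier coefficients are $\inner{x^\dag}{v_n}_X = \sigma_n^{-1}\inner{y}{u_n}_Y$. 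Moreover $x^\dag\in\calR(K^\dag)=\calN(K)^\bot = \overline{\calR(K^*)}$, so $x^\dag$ lies in the closed subspace spanned by $\{v_n\}_{n\in\N}$.

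Next I would characterize $x^\dag\in X_\nu = \calR(|K|^\nu)$ directly. Since $\nu>0$ we have $\phi(0)=0$ for $\phi(t)=t^{\nu/2}$, so by \eqref{eq:functional} the operator $|K|^\nu$ annihilates $\calN(K)$ and acts by $|K|^\nu w = \sum_{n\in\N}\sigma_n^\nu\inner{w}{v_n}_X v_n$. A source representer $w$ with $|K|^\nu w = x^\dag$ may therefore be sought in $\calN(K)^\bot$, where expanding $w = \sum_{n\in\N} \inner{w}{v_n}_X v_n$ and matching coefficients against $x^\dag$ forces $\inner{w}{v_n}_X = \sigma_n^{-\nu}\inner{x^\dag}{v_n}_X$. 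Such a $w\in X$ exists if and only if these coefficients are square-summable, i.e.
\begin{equation*}
    \sum_{n\in\N}\sigma_n^{-2\nu}\abs{\inner{x^\dag}{v_n}_X}^2 < \infty.
\end{equation*}
(Equivalently, one may simply invoke the Picard \cref{thm:inverse:picard} applied to the compact operator $|K|^\nu$, noting that $x^\dag\in\calN(K)^\bot = \overline{\calR(|K|^\nu)}$.)

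Finally I would substitute the coefficient formula from the first step, turning $\sigma_n^{-2\nu}\abs{\inner{x^\dag}{v_n}_X}^2$ into $\sigma_n^{-2\nu}\sigma_n^{-2}\abs{\inner{y}{u_n}_Y}^2 = \sigma_n^{-2-2\nu}\abs{\inner{y}{u_n}_Y}^2$, which is exactly the condition \eqref{eq:regularisierung:quell}, establishing the desired equivalence.

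The point requiring the most care is the second step: I must justify that square-summability of the coefficients $\sigma_n^{-\nu}\inner{x^\dag}{v_n}_X$ is precisely what is needed for a preimage $w$ to exist. This hinges on $\{v_n\}_{n\in\N}$ being an orthonormal basis of the closed subspace $\calN(K)^\bot$ that contains both $x^\dag$ and every relevant $w$, together with the injectivity of $|K|^\nu$ on that subspace (which follows from $\sigma_n^\nu>0$). Once these structural facts are in place, the remaining manipulations are routine bookkeeping with the orthonormal system.
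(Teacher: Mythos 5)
Your proposal is correct and follows essentially the same route as the paper's proof: represent $K^\dag y$ via the Picard series, equate coefficients against $|K|^\nu w = \sum_{n\in\N}\sigma_n^\nu\inner{w}{v_n}_X v_n$, and observe that a valid $w\in X$ exists precisely when the resulting coefficients are square-summable. Your extra remarks (restricting the source representer to $\calN(K)^\bot$ and the injectivity of $|K|^\nu$ there) make explicit a point the paper leaves implicit, but the argument is the same.
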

\begin{proof}
    From the definition and the representation \eqref{eq:inverse:picard_pseudo}, it follows that  $K^\dag y\in X_\nu$ if and only if there exists a $w\in X$ with
    \begin{equation*}
        \sum_{n\in\N} \sigma_n^{-1} \inner{y}{u_n}_Y v_n = K^\dag y = |K|^\nu w = \sum_{n\in\N} \sigma_n^{\nu} \inner{w}{v_n}_X v_n.
    \end{equation*}
    Since the $v_n$ form an orthonormal system, we can equate the corresponding coefficients to obtain that
    \begin{equation}\label{eq:regularisierung:quell1}
        \sigma_n^{-1} \inner{y}{u_n}_Y =  \sigma_n^{\nu} \inner{w}{v_n}_X \qquad\text{for all }n\in\N.
    \end{equation}
    As in the proof of \cref{thm:inverse:picard}, we have that $w\in X$ if and only if $\sum_{n\in\N} |\inner{w}{v_n}_X|^2$ is finite. Inserting \eqref{eq:regularisierung:quell1} now yields \eqref{eq:regularisierung:quell}.
\end{proof}

In fact, order optimality already implies the convergence of a regularization method. This is useful since it can be easier to show optimality of a methods than its regularization property
(in particular for the discrepancy principle, which motivates the slightly complicated statement of the following theorem).
\begin{theorem}\label{thm:regularisierung:ordnung}
    Let $K\in\calK(X,Y)$ with $\calR(K)$ dense in $Y$, $\{R_\alpha\}_{\alpha>0}$ be a regularization, and $\alpha(\delta,y^\delta)$ be a parameter choice rule. If there exists a $\tau_0\geq 1$ such that $R_\alpha$ together with $\alpha_\tau := \alpha(\tau\delta,y^\delta)$ for all $\tau>\tau_0$ satisfies the condition \eqref{eq:regularisierung:ordnung} for some $\nu>0$ and all $\rho>0$, then $(R_\alpha,\alpha_\tau)$ is a regularization method for all $\tau>\tau_0$.
\end{theorem}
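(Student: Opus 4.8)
The plan is to reduce the claim to the vanishing of the worst-case error of the shifted method and then transfer the assumed order-optimal rate from the smooth source sets $X_{\nu,\rho}$ to an arbitrary minimum-norm solution via the singular value decomposition. Fix $\tau>\tau_0$ and write $x^\dag:=K^\dag y$. Since $\calR(K)$ is dense, $\calR(K)^\bot=\{0\}$ and hence $\calD(K^\dag)=\calR(K)$, so every $y\in\calD(K^\dag)$ satisfies $Kx^\dag=y$ with $x^\dag\in\calN(K)^\bot$. Because $\{R_\alpha\}$ is a regularization by hypothesis, it suffices to verify \eqref{eq:regularisierung:paramwahl}, i.e.
\begin{equation*}
    \sup_{y^\delta\in B_\delta(y)}\norm{R_{\alpha(\tau\delta,y^\delta)}y^\delta-x^\dag}_X\to0\qquad(\delta\to0).
\end{equation*}

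First I would set up approximants from a singular system $\{(\sigma_n,u_n,v_n)\}_{n\in\N}$ of $K$. Writing $\xi_n:=\inner{x^\dag}{v_n}_X$, so $x^\dag=\sum_n\xi_nv_n$, I put $\hat x_N:=\sum_{n\le N}\xi_nv_n=|K|^\nu w_N$ with $w_N:=\sum_{n\le N}\sigma_n^{-\nu}\xi_nv_n$; thus $\hat x_N\in X_{\nu,\rho_N}$ with $\rho_N^2=\sum_{n\le N}\sigma_n^{-2\nu}\xi_n^2$, and $K^\dag(K\hat x_N)=\hat x_N$ by \cref{lem:inverse}\,(iii). The decisive observation is that $K$ smooths, so the image error is far smaller than the preimage error: with $r_N^2:=\sum_{n>N}\xi_n^2=\norm{x^\dag-\hat x_N}_X^2$ one has $t_N^2:=\norm{K(x^\dag-\hat x_N)}_Y^2=\sum_{n>N}\sigma_n^2\xi_n^2\to0$. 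This is what makes a noise matching feasible.

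The core trick exploits that order optimality is assumed for the shifted rule for \emph{every} $\tau>\tau_0$, which buys a margin in the noise level. I would fix $\tilde\tau:=\tfrac12(\tau_0+\tau)\in(\tau_0,\tau)$, set $s(\delta):=\tau\delta/\tilde\tau=\kappa\delta$ with $\kappa>1$, and choose $N(\delta):=\min\{N:t_N\le(\kappa-1)\delta\}$, which tends to $\infty$ as $\delta\to0$. Then for every $y^\delta\in B_\delta(y)$ we have $\norm{y^\delta-K\hat x_{N(\delta)}}_Y\le\delta+t_{N(\delta)}\le s(\delta)$, so $y^\delta\in B_{s(\delta)}(K\hat x_{N(\delta)})$; and since $\tilde\tau s(\delta)=\tau\delta$, the parameter $\alpha(\tilde\tau s(\delta),y^\delta)$ occurring in the order-optimality bound for the rule $\alpha_{\tilde\tau}$ at the smooth datum $K\hat x_{N(\delta)}$ coincides with $\alpha(\tau\delta,y^\delta)$. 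Order optimality \eqref{eq:regularisierung:ordnung} for $\hat x_{N(\delta)}\in X_{\nu,\rho_{N(\delta)}}$ then gives
\begin{equation*}
    \norm{R_{\alpha(\tau\delta,y^\delta)}y^\delta-\hat x_{N(\delta)}}_X\le c\,s(\delta)^{\frac{\nu}{\nu+1}}\rho_{N(\delta)}^{\frac1{\nu+1}}=c\,\kappa^{\frac{\nu}{\nu+1}}\delta^{\frac{\nu}{\nu+1}}\rho_{N(\delta)}^{\frac1{\nu+1}},
\end{equation*}
and a triangle inequality with $\norm{\hat x_{N(\delta)}-x^\dag}_X=r_{N(\delta)}$ bounds the worst-case error above by $c\,\kappa^{\nu/(\nu+1)}\delta^{\nu/(\nu+1)}\rho_{N(\delta)}^{1/(\nu+1)}+r_{N(\delta)}$, uniformly in $y^\delta$.

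The hard part is showing this bound vanishes, since $\rho_N\to\infty$ whenever $x^\dag\notin X_\nu$; I must show the blow-up is slow enough to be killed by $\delta^\nu$. Here I would use the minimality of $N(\delta)$: from $t_{N(\delta)-1}>(\kappa-1)\delta$ and $t_{N-1}^2=\sum_{n\ge N}\sigma_n^2\xi_n^2\le\sigma_N^2\norm{x^\dag}_X^2$ one gets $\delta\le C\sigma_{N(\delta)}$, whence $\delta^\nu\rho_{N(\delta)}\le C^\nu\sigma_{N(\delta)}^\nu\rho_{N(\delta)}$. Splitting at an auxiliary index $M<N$,
\begin{equation*}
    \sigma_N^\nu\rho_N=\Big(\sum_{n\le N}(\sigma_N/\sigma_n)^{2\nu}\xi_n^2\Big)^{1/2}\le(\sigma_N/\sigma_M)^{\nu}\norm{x^\dag}_X+r_M,
\end{equation*}
I would run a double limit: with $M$ fixed, $\delta\to0$ forces $\sigma_{N(\delta)}\to0$, giving $\limsup_{\delta\to0}\sigma_{N(\delta)}^\nu\rho_{N(\delta)}\le r_M$, and then $M\to\infty$ yields $r_M\to0$. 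Hence $\delta^\nu\rho_{N(\delta)}\to0$, and together with $r_{N(\delta)}\to0$ the worst-case error tends to zero, so $(R_\alpha,\alpha_\tau)$ is a convergent regularization method. The points to watch are that $\tilde\tau$ is held fixed (so the constant $c=c(\nu)$ is invoked at a single inflation factor) and that $N(\delta)\to\infty$, both of which hold for small $\delta$.
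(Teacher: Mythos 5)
Your proposal is correct and follows essentially the same route as the paper's proof: truncate $x^\dag$ in the singular basis, reinterpret $y^\delta$ as noisy data for the truncated element at the inflated noise level $s(\delta)=\frac{2\tau}{\tau+\tau_0}\delta$ so that the shifted rule $\alpha_{\tilde\tau}$ with $\tilde\tau=\frac12(\tau+\tau_0)$ reproduces $\alpha(\tau\delta,y^\delta)$, apply order optimality, and use the threshold property of the cutoff to kill the blow-up of $\rho$. The only differences are bookkeeping: you work with the sharp radius $\rho_N$ and a splitting/double-limit argument for $\delta^\nu\rho_{N(\delta)}\to 0$, where the paper uses the cruder $\rho=\eps^{-\nu}\norm{x^\dag}_X$ and shows $\delta/\eps(\delta)\to 0$ directly.
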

\begin{proof}
    We have to show that the \emph{uniform} convergence of the worst-case error for all $x^\dag \in X_{\nu,\rho}$ implies the \emph{pointwise} convergence for all $x^\dag \in \calR(K^\dag)$. For this, we construct a suitable $x_\eps\in X_{\nu,\rho}$, insert it into the error estimate, and apply the order optimality. The constant $\tau$ will be needed to adjust the noise level -- and hence be able to apply the parameter choice rule -- for $Kx^\dag$ to $Kx_\eps$.

    Let therefore $y\in \calD(K^\dag) = \calR(K)$ and $x^\dag = K^\dag y$ (so that $Kx^\dag = y$). Furthermore, let $\{(\sigma_n,u_n,v_n)\}_{n\in\N}$ be a singular system of $K$.
    For given $\eps>0$, we now choose an $N_\eps\in \N$ such that $\sigma_{N_\eps} \geq \eps > \sigma_{N_\eps+1}$ and set
    \begin{equation*}
        x_\eps := \sum_{n=1}^{N_\eps} \inner{x^\dag}{v_n}_X v_n
    \end{equation*}
    as well as
    \begin{equation*}
        \begin{aligned}[t]
            y_\eps :=  Kx_\eps &= \sum_{n=1}^{N_\eps} \inner{x^\dag}{v_n}_X K v_n
            = \sum_{n=1}^{N_\eps} \inner{x^\dag}{v_n}_X \sigma_n u_n\\
            &= \sum_{n=1}^{N_\eps} \inner{x^\dag}{K^*u_n}_X  u_n
            = \sum_{n=1}^{N_\eps} \inner{y}{u_n}_X u_n.
        \end{aligned}
    \end{equation*}
    Since $\{u_n\}_{n\in\N}$ is an orthonormal basis of $\overline{\calR(K)}$ and $\{v_n\}_{n\in\N}$ is an orthonormal basis of $\overline{\calR(K^*)} = \calN(K)^\bot$,
    we can represent $x^\dag = K^\dag y \in \calN(K)^\bot$ and $y = K x^\dag \in\calR(K)$ as
    \begin{equation*}
        x^\dag= \sum_{n\in\N} \inner{x^\dag}{v_n}_X v_n,\qquad
        y = \sum_{n\in\N} \inner{y}{u_n}_Y u_n.
    \end{equation*}
    Hence
    \begin{equation*}
        \norm{x^\dag-x_\eps}_X^2 = \sum_{n=N_\eps+1}^\infty \abs{\inner{x^\dag}{v_n}_X}^2
    \end{equation*}
    and
    \begin{equation}\label{eq:regularisierung:ordnung1}
        \begin{aligned}[t]
            \norm{y-y_\eps}_Y^2 &= \sum_{n=N_\eps+1}^\infty \abs{\inner{y}{u_n}_Y}^2 =  \sum_{n=N_\eps+1}^\infty \sigma_n^2\abs{\inner{x^\dag}{v_n}_X}^2\\
            &< \eps^2\sum_{n={N_\eps}+1}^\infty \abs{\inner{x^\dag}{v_n}_X}^2
            = \eps^2 \norm{x^\dag-x_\eps}_X^2
        \end{aligned}
    \end{equation}
    by the choice of $N_\eps$. In particular, $x_\eps\to x^\dag$ and $y_\eps\to y$ as $\eps\to 0$ (monotonically).

    By construction, $y_\eps\in \calR(K)$ and $x_\eps\in\calN(K)^\bot$, and therefore $x_\eps = K^\dag y_\eps$. From \cref{lem:regularisierung:quell} we thus deduce that $x_\eps\in X_\nu$ for all $\nu>0$, since it follows from $\inner{y_\eps}{u_n}_Y = 0$ for $n>N_\eps$ that the series in \eqref{eq:regularisierung:quell} is finite. Hence there exists an $w_\eps\in X$ with $x_\eps = |K|^\nu w_\eps$, i.e.,
    \begin{equation*}
        \sum_{n=1}^{N_\eps} \inner{x^\dag}{v_n}_X v_n = x_\eps = |K|^\nu w_\eps = \sum_{n\in\N} \sigma_n^{\nu} \inner{w_\eps}{v_n}_X v_n.
    \end{equation*}
    As $\calR(K)$ is dense in $Y$, the range of $K$ can not be finite-dimensional, which implies that $\sigma_n>0$ for all $n\in\N$.
    Since the $v_n$ form an orthonormal system, we thus obtain that
    \begin{equation*}
        \inner{w_\eps}{v_n}_X =
        \begin{cases}
            \sigma_n^{-\nu} \inner{x^\dag}{v_n}_X & n\leq N_\eps,\\
            0 & n>N_\eps,
        \end{cases}
    \end{equation*}
    and hence that
    \begin{equation*}\label{eq:regularisierung:ordnung2}
        \begin{aligned}[t]
            \norm{w_\eps}_X^2 &= \sum_{n=1}^{N_\eps} \abs{\inner{w_\eps}{v_n}_X}^2 =  \sum_{n=1}^{N_\eps} \sigma_n^{-2\nu}\abs{\inner{x^\dag}{v_n}_X}^2 \\
            &\leq \eps^{-2\nu} \sum_{n\in\N} \abs{\inner{x^\dag}{v_n}_X}^2 = \eps^{-2\nu} \norm{x^\dag}_X^2,
        \end{aligned}
    \end{equation*}
    again by the choice of $N_\eps$.
    This implies that $x_\eps\in X_{\nu,\rho}$ with $\rho = \eps^{-\nu} \norm{x^\dag}_X$.

    Let now $y^\delta \in B_\delta(y)$ and $\tau>\tau_0\geq 1$ and
    choose
    \begin{equation*}
        \eps(\delta) := \inf \setof{\eps>0}{\norm{y-y_\eps}_Y \geq \frac{\tau -\tau_0}{\tau + \tau_0} \delta}.
    \end{equation*}
    By definition and by the left-continuity (by the choice of $N_\eps$) and monotonicity of $\eps \mapsto \norm{y-y_\eps}_Y$, we then have in particular that
    \begin{equation}
        \label{eq:regularisierung:ordnung3}
        \norm{y-y_{\eps(\delta)}}_Y \leq \frac{\tau -\tau_0}{\tau + \tau_0} \delta \leq \norm{y-y_{2\eps(\delta)}}_Y
    \end{equation}
    and hence that
    \begin{equation*}
        \norm{y^\delta -y_{\eps(\delta)}}_Y \leq \norm{y^\delta - y}_Y + \norm{y-y_{\eps(\delta)}}_Y \leq \left(1+\frac{\tau-\tau_0}{\tau+\tau_0}\right)\delta =: \tilde\delta.
    \end{equation*}
    This implies that if $y^\delta$ is a noisy measurement for the exact data $y$ with noise level $\delta$, then $y^\delta$ is also a noisy measurement for $y_{\eps(\delta)}$ with noise level $\tilde\delta$.
    Setting $\tilde\tau := \frac12(\tau+\tau_0)>\tau_0$, we thus have $\tilde\tau\tilde\delta = \tau\delta$ and therefore
    \begin{equation*}
        \alpha_{\tilde\tau}(\tilde\delta,y^\delta) = \alpha(\tilde\tau\tilde\delta,y^\delta) = \alpha(\tau\delta,y^\delta ) = \alpha_\tau(\delta,y^\delta),
    \end{equation*}
    i.e., the parameter choice rules $\alpha_\tau$ for $y$ and $\alpha_{\tilde\tau}$ for $y_{\eps(\delta)}$ coincide for given $y^\delta$.
    The order optimality \eqref{eq:regularisierung:ordnung} of $(R_\alpha,\alpha_{\tilde\tau})$ for $x_\eps\in X_{\nu,\rho}$ (for arbitrary $\eps>0$) then implies that
    \begin{equation*}
        \begin{aligned}
            \norm{R_{\alpha_{\tau}(\delta, y^\delta)}y^\delta - x_\eps}_X  =
            \norm{R_{\alpha_{\tilde \tau}(\tilde \delta, y^\delta)}y^\delta - K^\dag y_\eps}_X
            \leq \calE(y_\eps,\tilde \delta)
            &\leq c\tilde\delta^\frac{\nu}{\nu+1} \left(\eps^{-\nu} \norm{x^\dag}_X\right)^{\frac{1}{\nu+1}}\\
            &=:  c_{\tau,\nu} \left(\frac{\delta}{\eps}\right)^\frac{\nu}{\nu+1}  \norm{x^\dag}_X^{\frac{1}{\nu+1}}.
        \end{aligned}
    \end{equation*}

    We can thus estimate
    \begin{equation*}
        \begin{aligned}
            \norm{R_{\alpha_{\tau}(\delta, y^\delta)}y^\delta - x^\dag}_X &\leq \norm{R_{\alpha_{\tau}(\delta, y^\delta)}y^\delta - x_{\eps(\delta)}}_X + \norm{x_{\eps(\delta)} - x^\dag}_X\\
            &\leq c_{\tau,\nu} \left(\frac{\delta}{{\eps(\delta)}}\right)^\frac{\nu}{\nu+1} \left( \norm{x^\dag}_X\right)^{\frac{1}{\nu+1}}+ \norm{x_{\eps(\delta)} - x^\dag}_X,
        \end{aligned}
    \end{equation*}
    and it remains to show that both $\delta{\eps(\delta)}^{-1} \to 0$ and $x_{\eps(\delta)}\to x^\dag$ as $\delta\to 0$.
    Since $\eps(\delta)>0$ is monotonically decreasing as $\delta\to 0$ and therefore convergent, we only have to distinguish two cases:
    \begin{enumerate}
        \item $\eps(\delta)\to \eps_0 >0$ as $\delta\to 0$. In this case, we obviously have that $\delta{\eps(\delta)}^{-1} \leq \delta {\eps_0}^{-1} \to 0$.
            It then follows from \eqref{eq:regularisierung:ordnung3} that
            \begin{equation*}
                \norm{y-y_{\eps_0}}_Y = \lim_{\delta\to 0} \norm{y-y_{\eps(\delta)}}_X \leq  \lim_{\delta\to 0}  \frac{\tau -\tau_0}{\tau + \tau_0} \delta  = 0
            \end{equation*}
            and hence that $x_{\eps_0} = K^\dag y_{\eps_0} = K^\dag y = x^\dag$.
        \item $\eps(\delta)\to 0$ as $\delta \to 0$.  This immediately implies that $x_{\eps(\delta)}\to x^\dag$. It then follows from \eqref{eq:regularisierung:ordnung3} and \eqref{eq:regularisierung:ordnung1} that
            \begin{equation*}
                \frac{\tau -\tau_0}{\tau + \tau_0} \delta \leq \norm{y-y_{2\eps(\delta)}}_Y
                \leq 2\eps(\delta) \norm{x^\dag - x_{2\eps(\delta)}}_X
            \end{equation*}
            and hence that
            \begin{equation*}
                \frac{\delta}{{\eps(\delta)}} \leq 2\frac{\tau+\tau_0}{\tau-\tau_0} \norm{x^\dag - x_{2\eps(\delta)}}_X \to 0.
            \end{equation*}
    \end{enumerate}
    Together, this shows that $R_{\alpha_{\tau}(\delta, y^\delta)}y^\delta \to x^\dag$ for all $y\in\calD(K^\dag)$ and $y^\delta\in B_\delta(y)$, and thus $(R_\alpha,\alpha_\tau)$ is a regularization method.
\end{proof}

Finally, we remark that it is possible to formulate weaker source conditions using more general \emph{index functions} $\psi$ than powers. One example are \emph{logarithmic source conditions} of the form $x^\dag \in\calR(-\ln |K|)$ that are appropriate for exponentially ill-posed problems; see, e.g., \cite{Hohage:2000}. In fact, it is possible to show that for every $x^\dag \in X$ there exists an index function $\psi$ with $x^\dag \in \calR(\psi(|K|))$ for which the worst-case error can be bounded in terms of $\psi$; see \cite{MatheHofmann:2008}.

\chapter{Spectral regularization}\label{chap:spektral}

As we have seen, regularizing an ill-posed operator equation $Tx=y$ consists in replacing the (unbounded) Moore--Penrose inverse $T^\dag$ by a family $\{R_\alpha\}_{\alpha>0}$ of operators that for $\alpha>0$ are continuous on $Y$ and for $\alpha\to 0$ converge pointwise on $\calD(T^\dag)$ to $T^\dag$.
For a compact operator $K\in\calK(X,Y)$, such regularizations can be constructed using the singular value decomposition together with the fact that by \cref{thm:inverse:normalen} we have for $y\in\calD(K^\dag)$ that
\begin{equation*}
    K^\dag y = (K^*K)^\dag K^* y.
\end{equation*}
Let therefore $\{(\sigma_n,u_n,v_n)\}_{n\in\N}$ be a singular system of $K$. By construction, $\{(\sigma_n^2,v_n,v_n)\}_{n\in\N}$ is then a singular system of $K^*K$, and \cref{thm:inverse:picard} yields that
\begin{equation*}
    \begin{aligned}
        (K^*K)^\dag K^* y &= \sum_{n\in\N} \sigma_n^{-2} \inner{K^*y}{ v_n}_X v_n = \sum_{n\in\N} \sigma_n^{-2} \sigma_n \inner{y}{ u_n}_Y v_n \\
        &= \sum_{n\in\N} \phi(\sigma_n^2) \sigma_n \inner{y}{ u_n}_Y v_n
    \end{aligned}
\end{equation*}
for $\phi(\lambda) = \lambda^{-1}$. The unboundedness of $K^\dag$ is thus due to the fact that $\phi$ is unbounded on $(0,\norm{K^*K}_{\linop(X,X)}]$ and that $\{\sigma_n\}_{n\in\N}$ is a null sequence.
To obtain a regularization, we therefore replace $\phi$ by a family $\{\phi_\alpha\}_{\alpha>0}$ of \emph{bounded} functions that converge pointwise to $\phi$. Here and throughout the following, we set $\kappa:= \norm{K}_{\linop(X,Y)}^2 = \norm{K^*K}_{\linop(X,X)}$ for brevity.
\begin{defn}
    Let $\{\phi_\alpha\}_{\alpha>0}$ be a family of piecewise continuous and bounded functions $\phi_\alpha:[0,\kappa] \to \R$. If
    \begin{enumerate}
        \item $\displaystyle \lim_{\alpha\to 0}\phi_\alpha(\lambda) = \frac1\lambda$  for all $\lambda\in (0,\kappa]$ and
        \item $\displaystyle \lambda|\phi_\alpha(\lambda)|\leq C_\phi$ for some $C_\phi>0$ and all $\lambda\in (0,\kappa]$ and $\alpha>0$,
    \end{enumerate}
    then $\{\phi_\alpha\}_{\alpha>0}$ is called a \emph{(regularizing) filter}.
\end{defn}
Note that the definition of the filter depends on $K$ only via its norm. In particular, if conditions (i) and (ii) hold for all $\lambda>0$, then $\{\phi_\alpha\}_{\alpha>0}$ is a regularization filter for \emph{any} compact operator.

The idea is now to take $R_\alpha:= \phi_\alpha(K^*K)K^*$ as a regularization operator, i.e., to set for $y\in Y$
\begin{equation*}
    \label{eq:spektral:regularisierung}
    \begin{aligned}[t]
        R_\alpha y= \phi_\alpha(K^*K)K^*y &=  \sum_{n\in\N} \phi_\alpha(\sigma_n^2) \inner{K^*y}{v_n}_Y v_n+ \phi_\alpha(0)P_{\calN} K^*y\\
        &= \sum_{n\in\N} \phi_\alpha(\sigma_n^2)\sigma_n \inner{y}{u_n}_Y v_n
    \end{aligned}
\end{equation*}
since $K^* y \in \overline{\calR(K^*)} = \calN(K)^\bot$.
(In contrast to the filter, the corresponding regularization \emph{does} depend on the concrete $K$ through its singular system.)

This approach covers several prototypical regularizations.
\begin{example}\label{ex:spektral}
    \begin{enumerate}
        \item The \emph{truncated singular value decomposition} corresponds to the choice
            \begin{equation}
                \label{eq:spektral:cutoff}
                \phi_\alpha(\lambda) = \begin{cases}\frac1\lambda & \text{if }\lambda\geq \alpha,\\ 0 & \text{else.}
                \end{cases}
            \end{equation}
            Obviously, $\phi_\alpha$ is bounded (by $\frac1{\alpha}$) and piecewise continuous, converges for $\lambda>0$ to $\frac1\lambda$ as $\alpha\to 0$, and satisfies the boundedness condition for $C_\phi = 1$. The corresponding regularization operator is given by
            \begin{equation} \label{eq:spektral:cutoff_R}
                R_\alpha y = \sum_{n\in\N}\phi_\alpha(\sigma_n^2)\sigma_n \inner{y}{u_n}_Y v_n = \sum_{\sigma_n\geq \sqrt\alpha}\frac1{\sigma_n} \inner{y}{u_n}_Y v_n,
            \end{equation}
            which also explains the name. We will revisit this example throughout this chapter.

        \item The \emph{Tikhonov regularization} corresponds to the choice
            \begin{equation*}
                \label{eq:spektral:tikhonov}
                \phi_\alpha(\lambda) = \frac{1}{\lambda+\alpha}.
            \end{equation*}
            Again, $\phi_\alpha$ is bounded (by $\frac1{\alpha}$) and continuous, converges for $\lambda>0$ to $\frac1\lambda$ as $\alpha\to 0$, and satisfies the boundedness condition for $C_\phi = 1$. The corresponding regularization operator is given by
            \begin{equation*}
                R_\alpha y = \sum_{n\in\N}\frac{\sigma_n}{\sigma_n^2+\alpha} \inner{y}{u_n}_Y v_n.
            \end{equation*}
            However, the regularization $\phi_\alpha(K^*K)K^*y$ can be computed without the aid of a singular value decomposition; we will treat this in detail in \cref{chap:tikhonov}.

        \item The \emph{Landweber regularization} corresponds to the choice
            \begin{equation*}
                \label{eq:spektral:landweber}
                \phi_\alpha(\lambda) = \frac{1-(1-\omega\lambda)^{1/\alpha}}{\lambda}
            \end{equation*}
            for a suitable $\omega>0$. If $\omega$ is small enough, one can show that this choice satisfies the definition of a regularizing filter. But here as well we can give a (more intuitive) characterization of the corresponding regularization operator without singular value decompositions; we therefore postpone its discussion to \cref{chap:landweber}.
    \end{enumerate}
\end{example}

\section{Regularization}\label{sec:spektral:regularisierung}

We first show that if $\{\phi_\alpha\}_{\alpha>0}$ is a regularizing filter, then $R_\alpha:=\phi_\alpha(K^*K)K^*$ defines indeed a regularization $\{R_\alpha\}_{\alpha>0}$ of $K^\dag$.
For this we will need the following three fundamental lemmas, which will be used throughout this chapter.

\begin{lemma}\label{lem:spektral:beschraenkt_K}
    Let $\{\phi_\alpha\}_{\alpha>0}$ be a regularizing filter. Then
    \begin{equation*}
        \norm{KR_\alpha}_{\linop(Y,Y)} \leq \sup_{n\in\N} |\phi_\alpha(\sigma_n^2)|\sigma_n^2 \leq C_\phi \qquad\text{for all }\alpha>0.
    \end{equation*}
\end{lemma}
\begin{proof}
    For all $y\in Y$ and $\alpha>0$, we have that (compare \eqref{eq:svd:operator})
    \begin{equation}\label{eq:spektral:beschraenkt_K}
        \begin{aligned}[t]
            KR_\alpha y = K\phi_\alpha(K^*K)K^* y &= \sum_{n\in\N} \phi_\alpha(\sigma_n^2)\sigma_n \inner{y}{u_n}_y Kv_n \\
            &=  \sum_{n\in\N} \phi_\alpha(\sigma_n^2)\sigma_n^2 \inner{y}{u_n}_y u_n.
        \end{aligned}
    \end{equation}
    Together with the Bessel inequality \eqref{eq:funktan:bessel}, this implies that
    \begin{equation*}
        \begin{aligned}
            \norm{KR_\alpha y }_Y^2 = \sum_{n\in\N} |\phi_\alpha(\sigma_n^2)\sigma_n^2 \inner{y}{u_n}_y|^2 &\leq
            \sup_{n\in\N} |\phi_\alpha(\sigma_n^2)\sigma_n^2|^2 \sum_{n\in\N} |\inner{y}{u_n}_Y|^2 \\
            &\leq  \sup_{n\in\N} |\phi_\alpha(\sigma_n^2)\sigma_n^2|^2 \norm{y}_Y^2.
        \end{aligned}
    \end{equation*}
    The second inequality now follows from the fact that $0<\sigma_n^2 \leq \sigma_1^2 = \norm{K^*K}_{\linop(X,X)} = \kappa$ together with the boundedness condition (ii) of regularizing filters.
\end{proof}
\begin{lemma}\label{lem:spektral:beschraenkt}
    Let $\{\phi_\alpha\}_{\alpha>0}$ be a regularizing filter. Then
    \begin{equation*}
        \norm{R_\alpha}_{\linop(Y,X)} \leq \sqrt{C_\phi} \sup_{\lambda\in(0,\kappa]} \sqrt{|\phi_\alpha(\lambda)|}  \qquad\text{for all }\alpha>0.
    \end{equation*}
    In particular, $R_\alpha:Y\to X$ is continuous for all $\alpha>0$.
\end{lemma}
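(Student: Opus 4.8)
The plan is to compute $\norm{R_\alpha y}_X^2$ directly from the series representation $R_\alpha y = \sum_{n\in\N}\phi_\alpha(\sigma_n^2)\sigma_n\inner{y}{u_n}_Y v_n$ established just before the lemma. Since $\{v_n\}_{n\in\N}$ is an orthonormal system, the cross terms vanish and
\[
    \norm{R_\alpha y}_X^2 = \sum_{n\in\N} |\phi_\alpha(\sigma_n^2)|^2 \sigma_n^2\, |\inner{y}{u_n}_Y|^2,
\]
so everything reduces to bounding the scalar coefficients $|\phi_\alpha(\sigma_n^2)|^2\sigma_n^2$ uniformly in $n$ and then invoking the Bessel inequality \eqref{eq:funktan:bessel}, exactly as in the proof of \cref{lem:spektral:beschraenkt_K}.

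The key — and only slightly non-obvious — step is to split the two powers of $\phi_\alpha$ so as to match the square roots in the claimed estimate. I would write
\[
    |\phi_\alpha(\sigma_n^2)|^2\sigma_n^2 = \bigl(|\phi_\alpha(\sigma_n^2)|\,\sigma_n^2\bigr)\,|\phi_\alpha(\sigma_n^2)|,
\]
bound the first factor by $C_\phi$ using the boundedness condition (ii) of a regularizing filter evaluated at $\lambda = \sigma_n^2 \in (0,\kappa]$, and bound the second factor by $\sup_{\lambda\in(0,\kappa]}|\phi_\alpha(\lambda)|$ using $0 < \sigma_n^2 \leq \sigma_1^2 = \kappa$. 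This yields the bound $|\phi_\alpha(\sigma_n^2)|^2\sigma_n^2 \leq C_\phi \sup_{\lambda\in(0,\kappa]}|\phi_\alpha(\lambda)|$, which is independent of $n$ and hence pulls out of the sum; the remaining factor $\sum_{n\in\N} |\inner{y}{u_n}_Y|^2 \leq \norm{y}_Y^2$ is controlled by Bessel. Taking square roots gives $\norm{R_\alpha y}_X \leq \sqrt{C_\phi}\,\sup_{\lambda\in(0,\kappa]}\sqrt{|\phi_\alpha(\lambda)|}\,\norm{y}_Y$, and passing to the supremum over $\norm{y}_Y \leq 1$ produces the operator-norm bound.

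Finally, continuity of $R_\alpha$ is immediate: a regularizing filter is by definition bounded on $[0,\kappa]$, so $\sup_{\lambda\in(0,\kappa]}|\phi_\alpha(\lambda)| < \infty$ and the right-hand side is finite for each fixed $\alpha>0$. The only thing to watch is the factorization in the second step: the naive estimate $|\phi_\alpha(\sigma_n^2)|^2\sigma_n^2 \leq (\sup_\lambda|\phi_\alpha(\lambda)|)^2\,\kappa$ would give merely $\norm{R_\alpha}_{\linop(Y,X)} \leq \sqrt{\kappa}\,\sup_\lambda|\phi_\alpha(\lambda)|$, which is both weaker and lacks the $\sqrt{C_\phi}$ structure exploited in the later convergence-rate arguments. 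Distributing one factor $\sigma_n^2$ onto a single factor of $\phi_\alpha$ is precisely what produces the sharper stated bound, and no genuine obstacle arises beyond this bookkeeping.
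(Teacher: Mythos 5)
Your proof is correct. It reaches the same coefficient bound as the paper -- one factor of $\lambda|\phi_\alpha(\lambda)|\leq C_\phi$ paired with one factor of $\sup_\lambda|\phi_\alpha(\lambda)|$ -- but by a more direct route: you expand $\norm{R_\alpha y}_X^2$ via Parseval for the orthonormal system $\{v_n\}_{n\in\N}$ and bound the scalar coefficients $|\phi_\alpha(\sigma_n^2)|^2\sigma_n^2$ pointwise before invoking Bessel. The paper instead writes $\norm{R_\alpha y}_X^2=\inner{R_\alpha y}{R_\alpha y}_X$, uses $\sigma_n v_n=K^*u_n$ to convert the inner sum into $\inner{KR_\alpha y}{u_n}_Y$, and then applies Cauchy--Schwarz together with the already-established bound $\norm{KR_\alpha y}_Y\leq C_\phi\norm{y}_Y$ from \cref{lem:spektral:beschraenkt_K}. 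The two arguments are mathematically equivalent -- the paper's adjoint manipulation is just another way of transferring one power of $\sigma_n^2$ onto one factor of $\phi_\alpha$ -- but yours is self-contained and arguably cleaner, while the paper's emphasizes the reuse of the first fundamental lemma. Your closing remark is also on point: the naive bound $\sqrt{\kappa}\,\sup_\lambda|\phi_\alpha(\lambda)|$ would scale like $\alpha^{-1}$ rather than $\alpha^{-1/2}$ for the standard filters, which would destroy the convergence-rate analysis in \cref{thm:spektral:apriori}.
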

\begin{proof}
    For all $y\in Y$ and $\alpha>0$, it follows from \cref{lem:spektral:beschraenkt_K} and $\sigma_n v_n = K^* u_n$ that
    \begin{equation*}\label{eq:spektral:beschraenkt}
        \begin{aligned}[t]
            \norm{R_\alpha y}_{X}^2 = \inner{R_\alpha y}{R_\alpha y}_X
            &= \sum_{n\in\N} \phi_\alpha(\sigma_n^2)\sigma_n \inner{y}{u_n}_Y \inner{R_\alpha y}{v_n}_X \\
            &= \sum_{n\in\N} \phi_\alpha(\sigma_n^2) \inner{y}{u_n}_Y \inner{KR_\alpha y}{u_n}_Y \\
            &\leq \sup_{n\in\N} |\phi_\alpha(\sigma_n^2)|  \inner{KR_\alpha y}{{\textstyle\sum_{n\in\N}}\inner{y}{u_n}_Y u_n}_Y\\
            &\leq \sup_{n\in\N} |\phi_\alpha(\sigma_n^2)| \ \norm{KR_\alpha y}_X \norm{P_{\overline{\calR(K^*)}}y}_Y\\
            &\leq \sup_{n\in\N} |\phi_\alpha(\sigma_n^2)| \ C_\phi \norm{y}_Y^2.
        \end{aligned}
    \end{equation*}
    Taking the supremum over all $y\in Y$ and using the boundedness of $\phi_\alpha$ now yields the claim.
\end{proof}
Finally, the third \enquote{fundamental lemma of spectral regularization} gives a spectral representation of the approximation error.
\begin{lemma}\label{lem:spektral:verfahrensfehler}
    Let $\{\phi_\alpha\}_{\alpha>0}$ be a regularizing filter, $y\in \calD(K^\dag)$, and $x^\dag := K^\dag y$. Then
    \begin{equation*}\label{eq:spektral:fehler}
        K^\dag y - R_\alpha y = \sum_{n\in\N} r_\alpha(\sigma_n^2) \inner{x^\dag}{v_n}_X v_n,
    \end{equation*}
    where $r_\alpha(\lambda) :=  1-\lambda\phi_\alpha(\lambda)$ satisfies
    \begin{align*}
        &\lim_{\alpha\to 0}r_\alpha(\lambda) = 0 \qquad\text{for all }\lambda\in(0,\kappa],\\
        &|r_\alpha(\lambda)|\leq 1+C_\phi  \quad\text{ for all }\lambda\in(0,\kappa] \text{ and }\alpha>0.
    \end{align*}
\end{lemma}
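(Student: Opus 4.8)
The plan is to expand both $K^\dag y$ and $R_\alpha y$ as series in the orthonormal basis $\{v_n\}_{n\in\N}$ of $\calN(K)^\bot$ and then subtract coefficient by coefficient. First I would recall from \eqref{eq:inverse:picard_pseudo} (applicable since $y\in\calD(K^\dag)$ guarantees the Picard condition for $P_{\overline{\calR(K)}}y\in\calR(K)$) that
\begin{equation*}
    x^\dag = K^\dag y = \sum_{n\in\N} \sigma_n^{-1}\inner{y}{u_n}_Y v_n,
\end{equation*}
while the regularization operator is, by its very definition, given by
\begin{equation*}
    R_\alpha y = \sum_{n\in\N} \phi_\alpha(\sigma_n^2)\sigma_n\inner{y}{u_n}_Y v_n.
\end{equation*}
Since $x^\dag\in\calN(K)^\bot = \overline{\calR(K^*)}$ and $\{v_n\}_{n\in\N}$ is an orthonormal basis of this space, comparing the first display with the expansion $x^\dag = \sum_{n\in\N} \inner{x^\dag}{v_n}_X v_n$ yields the crucial identity $\inner{y}{u_n}_Y = \sigma_n\inner{x^\dag}{v_n}_X$ for every $n\in\N$.

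Substituting this identity into both series and subtracting term by term gives
\begin{equation*}
    K^\dag y - R_\alpha y = \sum_{n\in\N}\bigl(1 - \sigma_n^2\phi_\alpha(\sigma_n^2)\bigr)\inner{x^\dag}{v_n}_X v_n = \sum_{n\in\N} r_\alpha(\sigma_n^2)\inner{x^\dag}{v_n}_X v_n,
\end{equation*}
which is exactly the claimed representation once $r_\alpha(\lambda) = 1-\lambda\phi_\alpha(\lambda)$ is set. The termwise subtraction is legitimate because both series converge in $X$ — the first by the Picard condition, the second because $R_\alpha\in\linop(Y,X)$ by \cref{lem:spektral:beschraenkt} — and they are expressed over the same orthonormal system, so their difference converges and its coefficients are the differences of the respective coefficients.

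It then remains to verify the two asserted properties of $r_\alpha$, both of which follow directly from the filter conditions. For fixed $\lambda\in(0,\kappa]$, condition (i) gives $\phi_\alpha(\lambda)\to 1/\lambda$ as $\alpha\to 0$, hence $r_\alpha(\lambda) = 1-\lambda\phi_\alpha(\lambda)\to 1 - \lambda\cdot\tfrac1\lambda = 0$. For the uniform bound, the triangle inequality together with condition (ii) yields
\begin{equation*}
    |r_\alpha(\lambda)| = |1-\lambda\phi_\alpha(\lambda)| \leq 1 + \lambda|\phi_\alpha(\lambda)| \leq 1 + C_\phi
\end{equation*}
for all $\lambda\in(0,\kappa]$ and all $\alpha>0$. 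I do not expect a genuine obstacle in this argument; the only point that requires care is the justification of the termwise manipulation of the two series, which is why I would state the convergence of each explicitly before combining them.
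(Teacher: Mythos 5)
Your argument is correct and every step checks out; the only difference from the paper's proof is which earlier result carries the weight. The paper starts from the normal equation $K^*Kx^\dag = K^*y$ (\cref{thm:inverse:normalen}) to write $R_\alpha y = \phi_\alpha(K^*K)K^*Kx^\dag$, so that the error becomes the single operator identity $K^\dag y - R_\alpha y = \bigl(\Id - \phi_\alpha(K^*K)K^*K\bigr)x^\dag = r_\alpha(K^*K)x^\dag$, which is then expanded via the functional calculus \eqref{eq:functional} using $x^\dag\in\calN(K)^\bot$. You instead invoke the Picard representation \eqref{eq:inverse:picard_pseudo} of $K^\dag y$, extract the identity $\inner{y}{u_n}_Y = \sigma_n\inner{x^\dag}{v_n}_X$, and subtract the two series coefficient by coefficient. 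Both routes reduce to the same spectral computation; the paper's version is slightly slicker because the identity $r_\alpha(K^*K) = \Id - \phi_\alpha(K^*K)K^*K$ is reused verbatim in later proofs (e.g.\ \cref{lem:spektral:fehler} and \cref{thm:spektral:morozov}), whereas your coefficient comparison is more self-contained and makes the legitimacy of the termwise manipulation explicit, which the paper leaves implicit. The verification of the two properties of $r_\alpha$ is identical in both.
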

\begin{proof}
    Since $K^*Kx^\dag = K^*y$ by \cref{thm:inverse:normalen}, we can write
    \begin{equation*}
        R_\alpha y = \phi_\alpha(K^*K)K^* y =  \phi_\alpha(K^*K)K^*K x^\dag,
    \end{equation*}
    and the definition of $r_\alpha$ together with \eqref{eq:functional} for $x^\dag \in \calN(K)^\bot$ immediately yields that
    \begin{equation*}
        K^\dag y - R_\alpha y = (\Id - \phi_\alpha(K^*K)K^*K)x^\dag = r_\alpha(K^*K)x^\dag = \sum_{n\in\N} r_\alpha(\sigma_n^2) \inner{x^\dag}{v_n}_X v_n.
    \end{equation*}

    The remaining claims follow from the corresponding properties of regularizing filters.
\end{proof}

We now have everything at hand to show the pointwise convergence and thus the regularization property of $\{R_\alpha\}_{\alpha>0}$.
\begin{theorem}\label{thm:spektral:konvergenz}
    Let $\{\phi_\alpha\}_{\alpha>0}$ be a regularizing filter. Then
    \begin{equation*}
        \lim_{\alpha\to 0} R_\alpha y = K^\dag y\qquad\text{for all } y\in\calD(K^\dag),
    \end{equation*}
    i.e., $\{R_\alpha\}_{\alpha>0}$ is a regularization.

    Furthermore, if $K^\dag$ is not continuous, then $\lim_{\alpha\to 0} \norm{R_\alpha y}_X = \infty$ for all $y\notin\calD(K^\dag)$.
\end{theorem}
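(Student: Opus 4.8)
The plan is to treat the two assertions separately, in both cases reducing to a statement about a numerical series via Parseval's identity and the singular system. For the regularization property I would begin from the error representation of \cref{lem:spektral:verfahrensfehler}: since the $\{v_n\}_{n\in\N}$ are orthonormal,
\begin{equation*}
    \norm{K^\dag y - R_\alpha y}_X^2 = \sum_{n\in\N} |r_\alpha(\sigma_n^2)|^2\, |\inner{x^\dag}{v_n}_X|^2,
\end{equation*}
and the task is to show this vanishes as $\alpha\to 0$. The point is that we may not interchange the limit with the infinite sum, so I would split the series at an index $N$. Because $x^\dag\in X$, Bessel's inequality makes $\{\inner{x^\dag}{v_n}_X\}_{n\in\N}$ square-summable, so given $\eps>0$ we can fix $N$ with $\sum_{n>N}|\inner{x^\dag}{v_n}_X|^2 < \eps$. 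On this tail the uniform bound $|r_\alpha(\sigma_n^2)|\leq 1+C_\phi$ from \cref{lem:spektral:verfahrensfehler} controls the contribution by $(1+C_\phi)^2\eps$, uniformly in $\alpha$. The remaining head is a fixed finite sum in which each term satisfies $r_\alpha(\sigma_n^2)\to 0$ as $\alpha\to 0$ (again \cref{lem:spektral:verfahrensfehler}), so the head tends to zero; letting $\eps\to 0$ afterwards yields pointwise convergence and hence the regularization property.

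For the divergence statement I would first reduce to $y\in\overline{\calR(K)}$: since $K^*$ annihilates $\calR(K)^\bot = \calN(K^*)$ and every $u_n$ lies in $\overline{\calR(K)}$, both $R_\alpha y$ and the coefficients $\inner{y}{u_n}_Y$ are unchanged under replacing $y$ by $P_{\overline\calR}y$. The hypothesis $y\notin\calD(K^\dag) = \calR(K)\oplus\calR(K)^\bot$ forces $P_{\overline\calR}y\in\overline{\calR(K)}\setminus\calR(K)$, so by the Picard characterization (\cref{thm:inverse:picard}) the series $\sum_{n\in\N}\sigma_n^{-2}|\inner{y}{u_n}_Y|^2$ diverges. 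Using Parseval once more,
\begin{equation*}
    \norm{R_\alpha y}_X^2 = \sum_{n\in\N} |\phi_\alpha(\sigma_n^2)|^2\, \sigma_n^2\, |\inner{y}{u_n}_Y|^2,
\end{equation*}
and filter condition (i) gives $|\phi_\alpha(\sigma_n^2)|^2\sigma_n^2\to\sigma_n^{-2}$ for each fixed $n$ as $\alpha\to 0$. Given any $M>0$, I choose $N$ with $\sum_{n=1}^N\sigma_n^{-2}|\inner{y}{u_n}_Y|^2 > 2M$ (possible by the divergence), whereupon the finite head converges to a value exceeding $M$; since every summand is nonnegative, $\norm{R_\alpha y}_X^2$ dominates this head and therefore exceeds $M$ for all small $\alpha$. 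As $M$ was arbitrary, $\norm{R_\alpha y}_X\to\infty$.

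The main obstacle in both parts is the interchange of limit and infinite summation, and it is handled by opposite devices. In the first part the delicate point is that termwise convergence $r_\alpha(\sigma_n^2)\to 0$ is by itself insufficient; the uniform bound $1+C_\phi$ supplies the domination needed to discard the tail. In the second part no such domination is available -- indeed the relevant terms blow up -- but it is also not needed: only a lower bound is required, and the nonnegativity of every summand is exactly what legitimizes truncating to a finite head and passing to the limit there.
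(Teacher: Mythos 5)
Your proof of the pointwise convergence is correct and follows the paper's argument essentially verbatim: the error representation from \cref{lem:spektral:verfahrensfehler}, a tail controlled uniformly in $\alpha$ by Bessel's inequality and the bound $|r_\alpha|\leq 1+C_\phi$, and termwise convergence on the finite head. (To conclude that $\{R_\alpha\}_{\alpha>0}$ is a \emph{regularization} in the sense of \cref{def:regularisierung} you should also record that each $R_\alpha$ is bounded, but that is exactly \cref{lem:spektral:beschraenkt} and costs one sentence.)

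For the divergence statement you take a genuinely different and more elementary route than the paper. The paper disposes of this part by invoking the general \cref{thm:regularisierung:div} (whose proof extracts a weakly convergent subsequence from a hypothetically bounded $\{R_{\alpha_n}y\}$ and derives $P_{\overline\calR}y\in\calR(T)$, a contradiction), with the hypothesis $\sup_\alpha\norm{KR_\alpha}<\infty$ supplied by \cref{lem:spektral:beschraenkt_K}. You instead argue directly in the singular system: reduce to $P_{\overline\calR}y\in\overline{\calR(K)}\setminus\calR(K)$, invoke the failure of the Picard condition from \cref{thm:inverse:picard} to get divergence of $\sum_n\sigma_n^{-2}|\inner{y}{u_n}_Y|^2$, and then bound $\norm{R_\alpha y}_X^2$ from below by a finite head on which filter condition (i) gives termwise convergence to $\sigma_n^{-2}|\inner{y}{u_n}_Y|^2$; nonnegativity of the summands is what licenses the truncation. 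This is a Fatou-type argument that is entirely self-contained and avoids weak compactness, at the price of being specific to spectral regularizations (it needs the explicit diagonal representation), whereas the paper's route works for any regularization with uniformly bounded $TR_\alpha$. Both are correct; note that your argument does not even use the hypothesis that $K^\dag$ is discontinuous, which is harmless since that hypothesis only serves to make the set $Y\setminus\calD(K^\dag)$ nonempty.
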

\begin{proof}
    Let $y\in \calD(K^\dag)$ and $x^\dag= K^\dag y$. \Cref{lem:spektral:verfahrensfehler} then yields that
    \begin{equation*}
        \norm{K^\dag y-R_\alpha y}_X^2 =\sum_{n\in\N} |r_\alpha(\sigma_n^2)|^2 \abs{\inner{x^\dag}{v_n}_X}^2.
    \end{equation*}
    To show that the right-hand side tends to zero as $\alpha\to 0$, we split the series into a finite sum, for which we can use the convergence of $r_\alpha$ and the boundedness of the Fourier coefficients, and a remainder term, for which we argue vice versa.

    Let therefore $\eps>0$ be arbitrary. Then we first obtain from the Bessel inequality an $N\in \N$ with
    \begin{equation*}
        \sum_{n=N+1}^\infty \abs{\inner{x^\dag}{v_n}_X}^2 < \frac{\eps^2}{2(1+C_\phi)^2}.
    \end{equation*}
    Furthermore, the pointwise convergence of $\{r_\alpha\}_{\alpha>0}$ -- which is uniform on the finite set $\{\sigma_1^2,\dots,\sigma_N^2\}$ -- yields an $\alpha_0>0$ with
    \begin{equation*}
        |r_\alpha(\sigma_n^2)|^2 < \frac{\eps^2}{2\norm{x^\dag}_X^2} \qquad\text{for all } n\leq N \text{ and }\alpha<\alpha_0.
    \end{equation*}
    We thus have for all $\alpha<\alpha_0$ that
    \begin{equation*}
        \begin{aligned}
            \norm{K^\dag y-R_\alpha y}_X^2 &= \sum_{n=1}^N |r_\alpha(\sigma_n^2)|^2 \abs{\inner{x^\dag }{v_n}_X}^2 +
            \sum_{n=N+1}^\infty |r_\alpha(\sigma_n^2)|^2 \abs{\inner{x^\dag }{v_n}_X}^2\\
            &\leq  \frac{\eps^2}{2\norm{x^\dag}_X^2}  \sum_{n=1}^N  \abs{\inner{x^\dag}{v_n}_X}^2 +  (1+C_\phi)^2 \frac{\eps^2}{2(1+C_\phi)^2} \\
            &\leq \frac{\eps^2}2+\frac{\eps^2}2 = \eps^2,
        \end{aligned}
    \end{equation*}
    i.e., $\norm{K^\dag y-R_\alpha y}_X \to 0$ as $\alpha\to 0$.
    Together with the continuity of $R_\alpha $ for $\alpha>0$ from \cref{lem:spektral:beschraenkt}, this implies by \cref{def:regularisierung} that $\{R_\alpha\}_{\alpha>0}$ is a regularization.

    Finally, the divergence for $y\notin\calD(K^\dag)$ follows from \cref{thm:regularisierung:div,lem:spektral:beschraenkt_K}.
\end{proof}

In particular, the truncated singular value decomposition, the Tikhonov regularization, and (after verifying the filter properties) the Landweber regularization from \cref{ex:spektral} all define regularizations for any compact operator.

\section{Parameter choice and convergence rates}\label{sec:spektral:parameter}

We now investigate which parameter choice rules $\alpha$ will for a given filter $\phi_\alpha$ lead to a convergent (and order optimal) regularization method $(R_\alpha,\alpha)$. To keep the notation concise, we will in the following write $x^\dag:= K^\dag y$, $x_\alpha:=R_\alpha y$ for $y\in \calD(K^\dag)$, and $x_\alpha^\delta:=R_\alpha y^\delta$ for $y^\delta \in B_\delta(y)$.

\subsection*{A priori choice rules}

By \cref{thm:regularisierung:apriori_char}, every a priori choice rule that satisfies $\alpha(\delta)\to 0$ and $\delta\norm{R_\alpha}_{\linop(Y,X)}\to 0$ and $\delta\to 0$ leads to a regularization method $(R_\alpha,\alpha)$. Together with \cref{lem:spektral:beschraenkt}, this leads to a condition on $\phi_\alpha$ and thus on $\alpha$.
\begin{example}[truncated singular value decomposition]
    Let $K\in\calK(X,Y)$ have the singular system $\{(\sigma_n,u_n,v_n)\}_{n\in\N}$. Then we have for $\phi_\alpha$ as in \eqref{eq:spektral:cutoff} that
    \begin{equation*}
        \norm{R_\alpha}_{\linop(Y,X)} \leq \sqrt{C_\phi} \sup_{n\in\N} \sqrt{|\phi_\alpha(\sigma_n^2)|} = \frac1{\sqrt\alpha}.
    \end{equation*}
    This yields a condition on the minimal singular value that we can include in \eqref{eq:spektral:cutoff_R} for given $\delta>0$:
    Choosing $n(\delta)$ with
    \begin{equation*}
        n(\delta)\to \infty,\qquad \frac\delta{\sigma_{n(\delta)}} \to 0 \qquad\text{as }\delta \to 0,
    \end{equation*}
    the truncated singular value decomposition together with the parameter choice rule $\alpha(\delta):= \sigma_{n(\delta)}^2$ becomes a regularization method.

    In particular, this holds for the choice $\alpha(\delta) :=  \sigma_{n(\delta)}^2\geq {\delta}> \sigma_{n(\delta)+1}^2$, which satisfies
    \begin{equation*}
        x_{\alpha(\delta)}^\delta = \sum_{\sigma_n\geq \sqrt\delta}\frac1{\sigma_n} \inner{y^\delta}{u_n}_Y v_n \to \sum_{n\in\N}\frac1{\sigma_n} \inner{y}{u_n}_Y v_n = x^\dag \qquad\text{as }\delta\to 0.
    \end{equation*}
\end{example}

We now consider convergence rates under the source condition $x^\dag \in X_{\nu,\rho}$ for $\nu,\rho>0$.
For this, we proceed as in the proof of \cref{thm:spektral:konvergenz} and first show that
\begin{equation*}
    \omega_\nu(\alpha) :=  \sup_{\lambda\in(0,\kappa]} \lambda^{\nu/2}|r_\alpha(\lambda)|
\end{equation*}
is an upper bound for the approximation error.
\begin{lemma}\label{lem:spektral:fehler}
    Let $y\in\calD(K^\dag)$ and $x^\dag \in X_{\nu,\rho}$ for some $\nu,\rho>0$. Then we have for all $\alpha>0$ that
    \begin{align}
        \norm{x_\alpha-x^\dag}_X &\leq  \omega_{\nu}(\alpha)\rho,\\
        \norm{Kx_\alpha-Kx^\dag}_Y &\leq  \omega_{\nu+1}(\alpha)\rho.
    \end{align}
\end{lemma}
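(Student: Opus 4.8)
The plan is to combine the spectral representation of the approximation error from \cref{lem:spektral:verfahrensfehler} with the source condition $x^\dag = |K|^\nu w$, $\norm{w}_X\leq\rho$. First I would record, using \cref{lem:spektral:verfahrensfehler} and the orthonormality of $\{v_n\}_{n\in\N}$, that
\begin{equation*}
    \norm{x_\alpha - x^\dag}_X^2 = \sum_{n\in\N} |r_\alpha(\sigma_n^2)|^2 \abs{\inner{x^\dag}{v_n}_X}^2.
\end{equation*}
The key step is then to express the Fourier coefficients of $x^\dag$ through those of $w$. Since $\{(\sigma_n,v_n,v_n)\}_{n\in\N}$ is a singular system of $|K|$, the $v_n$ are eigenvectors of the selfadjoint operator $|K|^\nu$ (\cref{lem:functional_range}) for the eigenvalues $\sigma_n^\nu$, whence
\begin{equation*}
    \inner{x^\dag}{v_n}_X = \inner{|K|^\nu w}{v_n}_X = \inner{w}{|K|^\nu v_n}_X = \sigma_n^\nu \inner{w}{v_n}_X.
\end{equation*}

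Substituting this and using $\sigma_n^2 \in (0,\kappa]$ (as $\sigma_n^2 \leq \sigma_1^2 = \kappa$), I would factor out the worst-case bound
\begin{equation*}
    \sigma_n^{2\nu}|r_\alpha(\sigma_n^2)|^2 = \left((\sigma_n^2)^{\nu/2}|r_\alpha(\sigma_n^2)|\right)^2 \leq \omega_\nu(\alpha)^2
\end{equation*}
from the sum and close with the Bessel inequality \eqref{eq:funktan:bessel}, giving
\begin{equation*}
    \norm{x_\alpha - x^\dag}_X^2 \leq \omega_\nu(\alpha)^2 \sum_{n\in\N} \abs{\inner{w}{v_n}_X}^2 \leq \omega_\nu(\alpha)^2 \norm{w}_X^2 \leq \omega_\nu(\alpha)^2 \rho^2.
\end{equation*}
Taking square roots yields the first estimate.

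For the second bound I would apply $K$ to the same series and invoke $Kv_n = \sigma_n u_n$ from \eqref{eq:sing_vec}, obtaining
\begin{equation*}
    K(x_\alpha - x^\dag) = -\sum_{n\in\N} r_\alpha(\sigma_n^2)\sigma_n^{\nu+1} \inner{w}{v_n}_X u_n,
\end{equation*}
so that the extra power of $K$ simply promotes the exponent $\nu$ to $\nu+1$. Since $\{u_n\}_{n\in\N}$ is orthonormal, the identical argument — now producing the factor $(\sigma_n^2)^{(\nu+1)/2}|r_\alpha(\sigma_n^2)| \leq \omega_{\nu+1}(\alpha)$ — together with Bessel delivers $\norm{Kx_\alpha - Kx^\dag}_Y \leq \omega_{\nu+1}(\alpha)\rho$. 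The computation is routine once the source representer is inserted; the only point that genuinely requires care is the identity $\inner{x^\dag}{v_n}_X = \sigma_n^\nu\inner{w}{v_n}_X$, which hinges on the $v_n$ forming a common eigenbasis of $|K|^\nu$ for every $\nu\geq 0$, and on checking that the factor $\sigma_n^{2\nu}$ (resp.\ $\sigma_n^{2(\nu+1)}$) combines with $r_\alpha$ to form exactly the quantity controlled by $\omega_\nu$ (resp.\ $\omega_{\nu+1}$) on the admissible spectral interval $(0,\kappa]$.
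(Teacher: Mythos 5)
Your proof is correct and follows essentially the same route as the paper: both combine the error representation from \cref{lem:spektral:verfahrensfehler} with the source condition $x^\dag=|K|^\nu w$ to produce the factor $\sigma_n^{2\nu}|r_\alpha(\sigma_n^2)|^2\leq\omega_\nu(\alpha)^2$ and close with the Bessel inequality. The only cosmetic difference is in the second estimate, where the paper converts $\norm{K(x_\alpha-x^\dag)}_Y$ to $\norm{|K|(x_\alpha-x^\dag)}_X$ via \cref{lem:functional_range}\,(iii) and stays in the $v_n$ basis, while you apply $K$ termwise and land in the $u_n$ basis — the same computation either way.
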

\begin{proof}
    By definition, for $x^\dag\in X_{\nu,\rho}$ there exists a $w\in X$ with $x^\dag = |K|^\nu w = (K^*K)^{\nu/2}w$ and $\norm{w}_X \leq \rho$. It then follows from \cref{lem:spektral:verfahrensfehler} that
    \begin{equation*}
        \begin{aligned}
            x^\dag-x_\alpha  &=  r_\alpha(K^*K)x^\dag = r_\alpha(K^*K)(K^*K)^{\nu/2} w \\
            &= \sum_{n\in\N} r_\alpha(\sigma_n^2) \sigma_n^{\nu} \inner{w}{v_n}_X v_n
        \end{aligned}
    \end{equation*}
    and hence that
    \begin{equation*}
        \begin{aligned}
            \norm{x_\alpha -x^\dag}_X^2 &=  \sum_{n\in\N} |r_\alpha(\sigma_n^2)|^2 \sigma_n^{2\nu} |\inner{w}{v_n}_X|^2 \\
            &\leq \omega_\nu(\alpha)^2 \sum_{n\in\N}|\inner{w}{v_n}_X|^2 \leq \omega_\nu(\alpha)^2\norm{w}_X^2 \leq \omega_\nu(\alpha)^2\rho^2.
        \end{aligned}
    \end{equation*}

    Furthermore, \cref{lem:functional_range}\,(iii) yields
    \begin{equation*}
        \norm{Kx_\alpha-Kx^\dag}_Y = \norm{K(x_\alpha-x^\dag)}_Y = \norm{|K|(x_\alpha-x^\dag)}_X.
    \end{equation*}
    From this together with
    \begin{equation*}
        \begin{aligned}
            |K|(x^\dag-x_\alpha)  &=  (K^*K)^{1/2} r_\alpha(K^*K)(K^*K)^{\nu/2} w \\
            &= \sum_{n\in\N} \sigma_n r_\alpha(\sigma_n^2) \sigma_n^{\nu} \inner{w}{v_n}_X v_n
        \end{aligned}
    \end{equation*}
    and $|r_{\alpha}(\sigma_n^2)\sigma_n^{\nu+1}|^2\leq \omega_{\nu+1}(\alpha)^2$, we similarly obtain the second estimate.
\end{proof}

We now have everything at hand to show convergence rates.
\begin{theorem}\label{thm:spektral:apriori}
    Let $y\in\calD(K^\dag)$ and $x^\dag = K^\dag y\in X_{\nu,\rho}$ for some $\nu,\rho>0$. If $\alpha(\delta)$ is an a priori choice rule with
    \begin{equation}\label{eq:spektral:apriori}
        c \left(\frac\delta\rho\right)^{\frac2{\nu+1}} \leq \alpha(\delta) \leq C  \left(\frac\delta\rho\right)^{\frac2{\nu+1}} \qquad\text{for } C>c>0
    \end{equation}
    and the filter $\{\phi_\alpha\}_{\alpha>0}$ satisfies for some $C_\nu>0$ the conditions
    \begin{align}
        \sup_{\lambda\in(0,\kappa]} |\phi_\alpha(\lambda)| &\leq C_\phi \alpha^{-1},\label{eq:spektral:ordnung_phi}\\
        \omega_\nu (\alpha) &\leq C_\nu \alpha^{\nu/2},\label{eq:spektral:ordnung_omega}
    \end{align}
    then $(R_\alpha,\alpha)$ is a (for this $\nu$ and all $\rho$) order optimal regularization method.
\end{theorem}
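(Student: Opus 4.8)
The plan is to apply the fundamental error splitting \eqref{eq:regularisierung:splitting}, which bounds $\norm{x_\alpha^\delta - x^\dag}_X$ by the propagated data error $\delta\norm{R_\alpha}_{\linop(Y,X)}$ plus the approximation error $\norm{x_\alpha - x^\dag}_X$, estimate each summand separately using the two fundamental lemmas of the section, and then insert the parameter choice \eqref{eq:spektral:apriori} so that the exponents combine to the optimal rate $\delta^{\nu/(\nu+1)}\rho^{1/(\nu+1)}$. Order optimality is the main content; the regularization-method property will follow from the a priori characterization.

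First I would bound the data error. \Cref{lem:spektral:beschraenkt} gives $\norm{R_\alpha}_{\linop(Y,X)} \leq \sqrt{C_\phi}\,\sup_{\lambda}\sqrt{|\phi_\alpha(\lambda)|}$, and combined with the filter condition \eqref{eq:spektral:ordnung_phi} this yields $\norm{R_\alpha}_{\linop(Y,X)} \leq C_\phi\,\alpha^{-1/2}$. For the approximation error I would invoke \cref{lem:spektral:fehler}, which under the source condition $x^\dag \in X_{\nu,\rho}$ gives $\norm{x_\alpha - x^\dag}_X \leq \omega_\nu(\alpha)\rho$, together with \eqref{eq:spektral:ordnung_omega} to obtain $\norm{x_\alpha - x^\dag}_X \leq C_\nu\,\alpha^{\nu/2}\rho$.

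Next I would insert the two-sided bound \eqref{eq:spektral:apriori}, using the \emph{lower} bound $\alpha(\delta) \geq c(\delta/\rho)^{2/(\nu+1)}$ to control $\alpha^{-1/2}$ in the data term and the \emph{upper} bound $\alpha(\delta)\leq C(\delta/\rho)^{2/(\nu+1)}$ in the approximation term. The exponent bookkeeping is the crux: $\delta\,\alpha^{-1/2}\lesssim \delta^{\,1-\frac1{\nu+1}}\rho^{\frac1{\nu+1}} = \delta^{\frac{\nu}{\nu+1}}\rho^{\frac1{\nu+1}}$, while $\alpha^{\nu/2}\rho \lesssim (\delta/\rho)^{\frac{\nu}{\nu+1}}\rho = \delta^{\frac{\nu}{\nu+1}}\rho^{\frac1{\nu+1}}$, so both contributions land at the same order and balance. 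Adding them gives $\norm{x_\alpha^\delta - x^\dag}_X \leq (C_\phi c^{-1/2} + C_\nu C^{\nu/2})\,\delta^{\frac{\nu}{\nu+1}}\rho^{\frac1{\nu+1}}$; since the data-error bound is independent of the particular $y^\delta\in B_\delta(y)$, taking the supremum yields $\calE(Kx^\dag,\delta)\leq c'\,\delta^{\frac{\nu}{\nu+1}}\rho^{\frac1{\nu+1}}$ with $c' = c'(\nu)\geq 1$, which is exactly the order-optimality estimate \eqref{eq:regularisierung:ordnung} for this $\nu$ and all $\rho$.

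Finally, to confirm that $(R_\alpha,\alpha)$ is a genuine regularization method—i.e. that \eqref{eq:regularisierung:paramwahl} holds for \emph{all} $y\in\calD(K^\dag)$, not merely those satisfying the source condition—I would verify the two conditions of \cref{thm:regularisierung:apriori_char}: condition (i), $\alpha(\delta)\to 0$, is immediate from the upper bound in \eqref{eq:spektral:apriori}, and condition (ii), $\delta\norm{R_{\alpha(\delta)}}_{\linop(Y,X)}\to 0$, is precisely the data-error estimate $\delta\norm{R_{\alpha(\delta)}}_{\linop(Y,X)}\leq C_\phi c^{-1/2}\,\delta^{\frac{\nu}{\nu+1}}\rho^{\frac1{\nu+1}}\to 0$ established above. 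I do not expect a deep obstacle here; the only point requiring care is the exponent arithmetic and the deliberate two-sided use of \eqref{eq:spektral:apriori}, the lower bound taming the data error and the upper bound taming the approximation error, so that the two are balanced at the optimal order.
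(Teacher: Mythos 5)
Your proof is correct, and its core---the splitting \eqref{eq:regularisierung:splitting}, the bound $\norm{R_\alpha}_{\linop(Y,X)}\leq C_\phi\,\alpha^{-1/2}$ from \cref{lem:spektral:beschraenkt} and \eqref{eq:spektral:ordnung_phi}, the bound $\norm{x_\alpha-x^\dag}_X\leq C_\nu\,\alpha^{\nu/2}\rho$ from \cref{lem:spektral:fehler} and \eqref{eq:spektral:ordnung_omega}, and the two-sided insertion of \eqref{eq:spektral:apriori}---is exactly the paper's argument, down to the constant $C_\phi c^{-1/2}+C_\nu C^{\nu/2}$. The one place you diverge is the final step establishing that $(R_\alpha,\alpha)$ is a convergent regularization method for \emph{all} $y\in\calD(K^\dag)$, not only those satisfying the source condition. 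You verify the two conditions of \cref{thm:regularisierung:apriori_char} directly ($\alpha(\delta)\to 0$, and $\delta\norm{R_{\alpha(\delta)}}_{\linop(Y,X)}\to 0$ as a byproduct of your data-error estimate), whereas the paper appeals to \cref{thm:regularisierung:ordnung}, observing that the shifted rule $\alpha_\tau(\delta)=\alpha(\tau\delta)$ is again of the form \eqref{eq:spektral:apriori} with modified constants, so that order optimality alone implies convergence. Your route is the more economical one here: it relies only on the elementary characterization of a priori rules and does not need the density of $\calR(K)$ in $Y$ that \cref{thm:regularisierung:ordnung} assumes. The paper's heavier machinery pays off only for a posteriori rules such as the discrepancy principle, where no analogue of \cref{thm:regularisierung:apriori_char} is available; for the a priori case both conclusions are equally valid.
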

\begin{proof}
    By \cref{thm:regularisierung:ordnung}, it suffices to show order optimality.
    We again use the decomposition \eqref{eq:regularisierung:splitting} into data error and approximation error: For given $\delta>0$ and $y^\delta\in B_\delta(y)$,
    \begin{equation*}
        \norm{x_{\alpha(\delta)}^\delta - x^\dag }_X \leq \delta \norm{R_{\alpha(\delta)}}_{\linop(Y,X)} + \norm{x_{\alpha(\delta)} - x^\dag}_X.
    \end{equation*}
    By \cref{lem:spektral:beschraenkt} and the assumption \eqref{eq:spektral:ordnung_phi}, we have that
    \begin{equation*}
        \norm{R_{\alpha(\delta)}}_{\linop(Y,X)}\leq \sqrt{C_\phi}  \sqrt{C_\phi\alpha(\delta)^{-1}} \leq  C_\phi \alpha(\delta)^{-1/2}.
    \end{equation*}
    Similarly, it follows from \cref{lem:spektral:fehler} and the assumption \eqref{eq:spektral:ordnung_omega} that
    \begin{equation*}
        \norm{x_{\alpha(\delta)} - x^\dag}_X \leq \omega_\nu(\alpha(\delta))\rho \leq C_\nu \alpha(\delta)^{\nu/2} \rho.
    \end{equation*}
    Together with the parameter choice rule \eqref{eq:spektral:apriori}, we obtain
    \begin{equation}\label{eq:spektral:ordnung:splitting}
        \begin{aligned}[t]
            \norm{x_{\alpha(\delta)}^\delta - x^\dag}_X &\leq  C_\phi \alpha(\delta)^{-1/2}\delta + C_\nu \alpha(\delta)^{\nu/2} \rho\\
            &\leq C_\phi c^{-1/2} \delta^{-\frac1{\nu+1}}\rho^{\frac1{\nu+1}} \delta + C_\nu C^{\nu/2} \delta^{\frac\nu{\nu+1}}\rho^{-\frac{\nu}{\nu+1}}\rho\\
            &= (C_\phi  c^{-1/2}+ C_\nu C^{\nu/2}) \delta^{\frac\nu{\nu+1}} \rho^{\frac1{\nu+1}}
        \end{aligned}
    \end{equation}
    and thus the order optimality.
    Since $\alpha_\tau(\delta,y^\delta) := \alpha(\tau\delta,y^\delta)$ is for any $\tau>0$ also of the form \eqref{eq:spektral:apriori} (with constants $c,C$ depending on $\tau$), \cref{thm:regularisierung:ordnung} now yields convergence for all $y\in \calD(K^\dag)$.
\end{proof}

Hence, to show for a given filter $\phi_\alpha$ the order optimality for some $\nu>0$, it suffices to verify for this $\nu$ the condition \eqref{eq:spektral:ordnung_omega} (as well as for $\phi_\alpha$ the condition \eqref{eq:spektral:ordnung_phi}).
The maximal $\nu_0>0$, for which all $\nu\in(0,\nu_0]$ satisfy the condition \eqref{eq:spektral:ordnung_omega}, is called the \emph{qualification} of the filter.
\begin{example}[truncated singular value decomposition]
    \label{ex:spectral:tsvd}
    It follows from \eqref{eq:spektral:cutoff} that
    \begin{equation*}
        \sup_{\lambda\in(0,\kappa]} |\phi_\alpha(\lambda)| \leq \alpha^{-1},
    \end{equation*}
    and hence this filter satisfies \eqref{eq:spektral:ordnung_phi} with $C_\phi = 1$.

    Furthermore, for all $\nu>0$ and $\lambda\in(0,\kappa]$,
    \begin{equation*}
        \lambda^{\nu/2} |r_\alpha(\lambda)| = \lambda^{\nu/2} |1-\lambda \phi_\alpha(\lambda)| =
        \begin{cases}
            0 & \text{if }\lambda\geq \alpha,\\
            \lambda^{\nu/2} & \text{if }\lambda<\alpha.
        \end{cases}
    \end{equation*}
    Hence for all $\alpha\in(0,\kappa]$,
    \begin{equation*}
        \omega_\nu(\alpha) = \sup_{\lambda\in(0,\kappa]} \lambda^{\nu/2} |r_\alpha(\lambda)| \leq \max\{0,\alpha^{\nu/2}\} = \alpha^{\nu/2},
    \end{equation*}
    and the condition \eqref{eq:spektral:ordnung_omega} is therefore satisfied for all $\nu>0$ with $C_\nu = 1$.
    (For $\alpha>\kappa$, \emph{all} singular values are truncated, i.e., $R_\alpha =0$.)
    This shows that the truncated singular value decomposition is order optimal for all $\nu>0$ and thus has \emph{infinite qualification}.
\end{example}

\subsection*{A posteriori choice rules}

We again consider the discrepancy principle: Fix $\tau > 1$ and choose $\alpha(\delta,y^\delta)$ such that
\begin{equation}\label{eq:spektral:diskrepanz}
    \norm{Kx^\delta_{\alpha(\delta,y^\delta)}-y^\delta}_Y\leq \tau \delta < \norm{Kx^\delta_{\alpha}-y^\delta}_Y \qquad\text{for all } \alpha>\alpha(\delta,y^\delta).
\end{equation}
As before, we assume that $\calR(K)$ is dense in $Y$.
If in addition $\alpha\mapsto \phi_\alpha(\lambda)$ is continuous for all $\lambda\in (0,\kappa]$, one can show that $\alpha\mapsto \norm{Kx^\delta_\alpha-y^\delta}_Y$ is continuous as well and hence, similarly to \cref{thm:morozov} using \cref{lem:spektral:beschraenkt_K}, that such an $\alpha(\delta,y^\delta)$ always exists.
To show that the discrepancy principle leads to an order optimal regularization method, we again apply \cref{thm:regularisierung:ordnung}, for which we have to take the discrepancy principle as a parameter choice rule $\alpha_\tau = \alpha(\tau\delta,y^\delta)$.
\begin{theorem}\label{thm:spektral:morozov}
    Let $\{\phi_\alpha\}_{\alpha>0}$ be a filter with qualification $\nu_0>0$ (i.e., satisfying \eqref{eq:spektral:ordnung_phi} and \eqref{eq:spektral:ordnung_omega} for all $\nu\in (0,\nu_0]$), and let
    \begin{equation}
        \label{eq:spektral:ordnung_r}
        \tau > \sup_{\alpha>0,\, \lambda\in(0,\kappa]}|r_\alpha(\lambda)|=: C_r.
    \end{equation}
    Then the discrepancy principle defines for all $\nu\in (0,\nu_0-1]$ an order optimal regularization method $(R_\alpha,\alpha_\tau)$.
\end{theorem}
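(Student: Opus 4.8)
The plan is to reduce the claim to order optimality and then invoke \cref{thm:regularisierung:ordnung}: taking $\tau_0 := C_r$ and reading the discrepancy principle with threshold $\tau\delta$ as the rule $\alpha_\tau(\delta,y^\delta) = \alpha(\tau\delta,y^\delta)$, it suffices to verify the bound \eqref{eq:regularisierung:ordnung} for every fixed $\tau>C_r$. Throughout I write $x^\dag = K^\dag y$, $x_\alpha = R_\alpha y$, $x_\alpha^\delta = R_\alpha y^\delta$ and $\eta = y^\delta - y$ (so $\norm{\eta}_Y\le\delta$, and $y = Kx^\dag$ since density of $\calR(K)$ forces $\calD(K^\dag)=\calR(K)$), and let $\alpha = \alpha(\delta,y^\delta)$ be the selected parameter. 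Using the assumed continuity of $\alpha\mapsto\norm{Kx_\alpha^\delta-y^\delta}_Y$, I may take the discrepancy to hold with equality, $\norm{Kx_\alpha^\delta-y^\delta}_Y = \tau\delta$. As in \eqref{eq:regularisierung:splitting} I split the error into the approximation error $e_1 := x_\alpha - x^\dag$ and the propagated data error $e_2 := x_\alpha^\delta - x_\alpha = R_\alpha\eta$, and bound each by the optimal rate $\delta^{\nu/(\nu+1)}\rho^{1/(\nu+1)}$ separately.

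For $e_1$ I would avoid estimating $\alpha$ from above and instead interpolate directly against the residual. Writing $K x_\alpha^\delta - y^\delta = (Kx_\alpha - y) + (KR_\alpha\eta - \eta)$ and noting $\norm{KR_\alpha\eta-\eta}_Y = \norm{r_\alpha(KK^*)\eta}_Y \le C_r\delta$ (since $|r_\alpha|\le C_r$ on the spectrum and $\eta\in\overline{\calR(K)}=Y$), the equality above yields $\norm{K e_1}_Y = \norm{Kx_\alpha - y}_Y \le (\tau+C_r)\delta$. On the other hand, the source condition $x^\dag = |K|^\nu w$ with $\norm{w}_X\le\rho$ makes $e_1 = -r_\alpha(K^*K)|K|^\nu w$, so the spectral coefficients of $|K|^{-\nu}e_1$ are $-r_\alpha(\sigma_n^2)\inner{w}{v_n}_X$ and hence $\norm{|K|^{-\nu}e_1}_X\le C_r\rho$ is finite. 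Applying the Hölder argument of \cref{lem:interpolation} to the coefficients of $e_1$ (with weights $\sigma_n^{2}$ and $\sigma_n^{-2\nu}$ and exponents $\tfrac{\nu}{\nu+1}$, $\tfrac1{\nu+1}$) gives
\[
    \norm{e_1}_X \le \norm{Ke_1}_Y^{\frac{\nu}{\nu+1}}\, \norm{|K|^{-\nu}e_1}_X^{\frac1{\nu+1}} \le \big((\tau+C_r)\delta\big)^{\frac{\nu}{\nu+1}}(C_r\rho)^{\frac1{\nu+1}},
\]
which is already of the desired order and uses neither the qualification nor any upper bound on $\alpha$.

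For $e_2$ I would use $\norm{e_2}_X \le \norm{R_\alpha}_{\linop(Y,X)}\delta \le C_\phi\,\alpha^{-1/2}\delta$ from \cref{lem:spektral:beschraenkt} together with \eqref{eq:spektral:ordnung_phi}, so that only a lower bound on $\alpha$ is missing. This is where $\tau>C_r$ and the qualification enter: from $\tau\delta = \norm{Kx_\alpha^\delta-y^\delta}_Y \le \norm{Kx_\alpha-y}_Y + C_r\delta$ I obtain $\norm{K e_1}_Y \ge (\tau-C_r)\delta > 0$, while \cref{lem:spektral:fehler} and \eqref{eq:spektral:ordnung_omega} at index $\nu+1$ (legitimate precisely because $\nu\le\nu_0-1$) give the matching upper bound $\norm{Ke_1}_Y \le C_{\nu+1}\alpha^{(\nu+1)/2}\rho$. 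Combining the two forces $\alpha \ge c_1 (\delta/\rho)^{2/(\nu+1)}$ with $c_1 = \big((\tau-C_r)/C_{\nu+1}\big)^{2/(\nu+1)}$, and substituting into the estimate for $e_2$ produces $\norm{e_2}_X \le C_\phi c_1^{-1/2}\,\delta^{\nu/(\nu+1)}\rho^{1/(\nu+1)}$.

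Adding the two estimates gives $\calE(Kx^\dag,\delta) = \sup_{y^\delta\in B_\delta(y)}\norm{x_\alpha^\delta - x^\dag}_X \le c\,\delta^{\nu/(\nu+1)}\rho^{1/(\nu+1)}$, i.e.\ \eqref{eq:regularisierung:ordnung} holds for every $\tau>C_r=\tau_0$; since $\{R_\alpha\}_{\alpha>0}$ is a regularization by \cref{thm:spektral:konvergenz} and $\calR(K)$ is dense, \cref{thm:regularisierung:ordnung} upgrades this to the regularization-method property. The main obstacle is the lower bound on $\alpha$: one must resist balancing a two-sided a-priori estimate on $\alpha$ (the upper bound is unavailable, since the residual has no matching lower bound in $\alpha$ for general filters) and instead (i) peel off the noise part of the residual using $C_r$ rather than the cruder $\norm{KR_\alpha}\le C_\phi$, so that only $\tau>C_r$ is needed, and (ii) spend exactly one unit of qualification ($\nu+1\le\nu_0$) to turn the residual lower bound into a lower bound on $\alpha$. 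Bounding $e_1$ by a negative-order interpolation against the residual is the complementary device that makes the unavailable upper bound on $\alpha$ unnecessary.
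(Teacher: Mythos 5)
Your proof is correct and follows essentially the same route as the paper's: the same error splitting, the same interpolation of $x_\alpha - x^\dag = |K|^\nu\xi$ against the residual (your ``negative-order interpolation'' is exactly \cref{lem:interpolation} applied to $\xi = |K|^{-\nu}(x_\alpha-x^\dag)$ with $s=\nu$, $r=\nu+1$), and the same expenditure of one unit of qualification via $\omega_{\nu+1}$ to turn the residual lower bound into a lower bound on $\alpha$, followed by the same appeal to \cref{thm:regularisierung:ordnung} with $\tau_0=C_r$. The only cosmetic difference is that you invoke continuity of $\alpha\mapsto\norm{Kx_\alpha^\delta-y^\delta}_Y$ to force equality in the discrepancy principle, whereas the paper evaluates the strict inequality at $2\alpha$, which avoids that assumption at the cost of a factor $2^{(\nu+1)/2}$ in the constant.
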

\begin{proof}
    We first observe that due to $|r_\alpha(\lambda)| \leq 1 + C_\phi$ for all $\alpha>0$ and $\lambda\in(0,\kappa]$, there always exists a $\tau>1$ satisfying \eqref{eq:spektral:ordnung_r}.

    Let now $y\in \calR(K)$, $x^\dag = K^\dag y\in X_{\nu,\rho}$ for some
    $\nu\in (0,\nu_0-1]$ and $\rho>0$, and $y^\delta \in B_\delta(y)$. We again use for $x_\alpha^\delta := x_{\alpha(\delta,y^\delta)}^\delta$ and $x_\alpha := x_{\alpha(\delta,y^\delta)}$ the decomposition
    \begin{equation}\label{eq:spektral:morozov_split}
        \norm{x_\alpha^\delta  - x^\dag}_X \leq \norm{x_\alpha - x^\dag}_X + \norm{x_\alpha - x_\alpha^\delta}_X
    \end{equation}
    and estimate the terms on the right-and side separately.

    For the first term, we again use the representation of the approximation errors from \cref{lem:spektral:verfahrensfehler} as well as the source condition $x^\dag = |K|^\nu w$ to obtain
    \begin{equation*}
        \begin{aligned}
            x^\dag - x_\alpha &= \sum_{n\in\N} r_\alpha(\sigma_n^2) \sigma_n^\nu\inner{w}{v_n}_X v_n\\
            &=\sum_{n\in\N} r_\alpha(\sigma_n^2) \inner{w}{v_n}_X |K|^\nu v_n\\
            &= |K|^\nu \sum_{n\in\N} r_\alpha(\sigma_n^2) \inner{w}{v_n}_X v_n
            =:  |K|^\nu \xi.
        \end{aligned}
    \end{equation*}
    The interpolation inequality \eqref{eq:interpolation} for $r=\nu$ and $s=\nu+1$ then yields that
    \begin{equation*}
        \norm{x_\alpha-x^\dag}_X = \norm{|K|^\nu \xi}_X \leq \norm{|K|^{\nu+1} \xi}_X^{\frac{\nu}{\nu+1}} \,\norm{\xi}_X^{\frac1{\nu+1}}.
    \end{equation*}
    Again we estimate the terms separately: For the second factor, we obtain from the definition of $\xi$, the Bessel inequality, the boundedness of $r_\alpha$, and the source condition that
    \begin{equation*}
        \norm{\xi}_X^2 = \sum_{n\in\N} |r_\alpha(\sigma_n^2)|^2 |\inner{w}{v_n}_X|^2\leq C_r^2 \norm{w}_X^2 \leq C_r^2 \rho^2.
    \end{equation*}
    For the first factor, we use \cref{lem:functional_range}\,(i), (iii), $Kx^\dag = y$ since $y\in\calR(K)$, and the productive zero to obtain
    \begin{equation*}
        \begin{aligned}
            \norm{|K|^{\nu+1} \xi}_X &= \norm{|K|(|K|^\nu \xi)}_X = \norm{|K|(x_\alpha-x^\dag)}_X = \norm{K(x_\alpha-x^\dag)}_Y = \norm{Kx_\alpha - y}_Y \\
            &\leq \norm{Kx_\alpha^\delta - y^\delta}_Y + \norm{y-y^\delta - K(x_\alpha - x_\alpha^\delta)}_Y.
        \end{aligned}
    \end{equation*}
    Yet again we estimate the terms separately: First, by the choice $\alpha(\delta,y^\delta)$ according to the discrepancy principle we have that $\norm{Kx_\alpha^\delta - y^\delta}_Y \leq \tau\delta$. For the second term, we write
    \begin{equation*}
        y - Kx_\alpha = y - KR_\alpha y = (\Id - K\phi_\alpha(K^*K)K^*)y
    \end{equation*}
    and analogously for $y^\delta - Kx_\alpha^\delta$.
    Hence,
    \begin{equation}\label{eq:spektral:morozov:fortgepflanzter_verfahrensfehler}
        \begin{aligned}[t]
            \norm{y-y^\delta - K(x_\alpha - x_\alpha^\delta)}_Y^2 &= \norm{(\Id-K\phi_\alpha (K^*K)K^*)(y-y^\delta)}_Y^2\\
            &= \sum_{n\in \N} \abs{r_\alpha(\sigma_n^2)}^2 \left|\inner{y-y^\delta}{u_n}_Y\right|^2\\
            & \leq C_r^2 \delta^2,
        \end{aligned}
    \end{equation}
    where we have used for the second equality that (compare \eqref{eq:spektral:beschraenkt_K})
    \begin{equation*}
        K\phi_\alpha (K^*K)K^*(y-y^\delta) = \sum_{n\in\N} \phi_\alpha(\sigma_n^2)\sigma_n^2 \inner{y-y^\delta}{u_n}_Y u_n .
    \end{equation*}
    Together, we obtain for the first term in \eqref{eq:spektral:morozov_split} that
    \begin{equation*}
        \norm{x_\alpha - x^\dag}_X \leq (\tau + C_r)^{\frac{\nu}{\nu+1}}\delta^{\frac{\nu}{\nu+1}} C_r^{\frac{1}{\nu+1}}\rho^{\frac{1}{\nu+1}} =:  C_1 \delta^{\frac{\nu}{\nu+1}}\rho^{\frac{1}{\nu+1}}.
    \end{equation*}

    It remains to estimate the second term \eqref{eq:spektral:morozov_split}. For this, we use \cref{lem:spektral:beschraenkt} and the condition \eqref{eq:spektral:ordnung_phi} to obtain
    \begin{equation}\label{eq:spektral:morozov2}
        \begin{aligned}[t]
            \norm{x_\alpha^\delta - x_\alpha}_X
            \leq \norm{R_\alpha}_{\linop(Y,X)}\delta
            &\leq \sqrt{C_\phi} \sup_{\lambda\in(0,\kappa]}\sqrt{|\phi_\alpha(\lambda)|} \delta\\
            &\leq {C_\phi}\alpha(\delta,y^\delta)^{-1/2}\delta.
        \end{aligned}
    \end{equation}
    To show that the right-hand side is of the optimal order, we need to bound $\alpha(\delta,y^\delta)$ in terms of $\delta$ appropriately. First, its choice according to the discrepancy principle implies in particular that
    \begin{equation*}\label{eq:spektral:morozov3}
        \tau\delta < \norm{Kx_{2\alpha}^\delta -y^\delta}_Y \leq  \norm{Kx_{2\alpha} - y}_Y + \norm{y-y^\delta - K(x_{2\alpha} - x_{2\alpha}^\delta)}_Y
    \end{equation*}
    (where the choice $2\alpha>\alpha$ was arbitrary and for the sake of simplicity).
    Since the estimate \eqref{eq:spektral:morozov:fortgepflanzter_verfahrensfehler} is uniform in $\alpha>0$, we also have that
    \begin{equation*}
        \norm{y-y^\delta - K(x_{2\alpha} - x_{2\alpha}^\delta)}_Y \leq C_r\delta
    \end{equation*}
    and thus that
    \begin{equation*}
        \norm{Kx_{2\alpha} - y}_Y > \tau\delta - \norm{y-y^\delta - K(x_{2\alpha} - x_{2\alpha}^\delta)}_Y \geq (\tau - C_r) \delta.
    \end{equation*}
    Conversely, we obtain from \cref{lem:spektral:fehler} and condition \eqref{eq:spektral:ordnung_omega} for $\nu+1\leq \nu_0$ the estimate
    \begin{equation*}
        \norm{Kx_{2\alpha} -y}_Y \leq \omega_{\nu+1}(2\alpha(\delta,y^\delta))\rho \leq C_{\nu+1} (2\alpha(\delta,y^\delta))^{\frac{\nu+1}2}\rho.
    \end{equation*}
    Since $\tau >C_r$ by assumption, this implies that
    \begin{equation*}
        \delta  < (\tau - C_r)^{-1} C_{\nu+1} 2^{\frac{\nu+1}2} \alpha(\delta,y^\delta)^{\frac{\nu+1}2}\rho =:  C_\tau \alpha(\delta,y^\delta)^{\frac{\nu+1}2}\rho,
    \end{equation*}
    i.e.,
    \begin{equation}\label{eq:spektral:morozov_alpha}
        \alpha(\delta,y^\delta)^{-1/2}\leq C_\tau^{\frac{1}{\nu+1}}\delta^{-\frac{1}{\nu+1}}\rho^{\frac{1}{\nu+1}}.
    \end{equation}
    Inserting this into \eqref{eq:spektral:morozov2} now yields
    \begin{equation*}
        \norm{x_\alpha^\delta - x_\alpha}_X \leq {C_\phi}C_\tau^{\frac{1}{\nu+1}} \delta \delta^{-\frac{1}{\nu+1}}\rho^{\frac{1}{\nu+1}} =:  C_2 \delta^{\frac{\nu}{\nu+1}}\rho^{\frac{1}{\nu+1}}.
    \end{equation*}

    Combining the estimates for the two terms in \eqref{eq:spektral:morozov_split}, we obtain that
    \begin{equation*}
        \norm{x_\alpha^\delta - x^\dag}_X \leq (C_1 + C_2) \delta^{\frac{\nu}{\nu+1}}\rho^{\frac{1}{\nu+1}}
    \end{equation*}
    and thus the order optimality. \Cref{thm:regularisierung:ordnung} for $\nu = \nu_0-1$ and $\tau_0 = C_r$ then shows that $R_\alpha$ together with the discrepancy principle as parameter choice rule $\alpha_\tau=\alpha(\tau\delta,y^\delta)$ for all $\tau>C_r$ is a regularization method.
\end{proof}
\begin{example}[truncated singular value decomposition]
    We have
    \begin{equation*}
        |r_\alpha(\lambda)| = \begin{cases} 1-\lambda\frac1\lambda = 0 & \lambda\geq \alpha\\
            1 & \lambda <\alpha
        \end{cases}
    \end{equation*}
    and hence $C_r = 1$. Since the truncated singular value decomposition has infinite qualification, it is also an order optimal regularization method for any $\nu>0$ when combined with the discrepancy principle for arbitrary $\tau > 1$.
\end{example}

If a filter only has finite qualification, the Morozov discrepancy principle only leads to an order optimal regularization method for $\nu>\nu_0-1$; this is the price to pay for the indirect control of $\alpha(\delta,y^\delta)$ through the residual (cf.~\eqref{eq:spektral:fehler}).
However, there are improved discrepancy principles that measure the residual in adapted norms and thus lead to order optimal regularization methods also for $\nu\in(\nu_0-1,\nu_0]$; see, e.g., \cite[Chapter~4.4]{Engl}.

\subsection*{Heuristic choice rules}

We consider as an example the Hanke--Raus rule: Define for $y^\delta \in Y$ the function
\begin{equation*}
    \Psi:(0,\kappa]\to \R,\qquad \Psi(\alpha) = \frac{\norm{Kx_{\alpha}^\delta - y^\delta}_Y}{\sqrt{\alpha}},
\end{equation*}
and choose
\begin{equation}\label{eq:spektral:hankeraus}
    \alpha(y^\delta) \in \arg\min_{\alpha\in(0,\kappa]} \Psi(\alpha).
\end{equation}
We assume in the following that $y\in\calR(K)$ and $\norm{y}_Y>\delta$.
First, we show a conditional error estimate.
\begin{theorem}\label{thm:spektral:hankeraus:fehler}
    Let $\{\phi_\alpha\}_{\alpha>0}$ be a filter with qualification $\nu_0>0$, i.e., satisfying \eqref{eq:spektral:ordnung_phi} as well as \eqref{eq:spektral:ordnung_omega} for all $\nu\in (0,\nu_0]$.
    Furthermore, assume there exists a minimizer $\alpha^*:= \alpha(y^\delta)\in (0,\kappa]$ of $\Psi$ with
    \begin{equation}\label{eq:spektral:heuristic_noise}
        \delta^* := \norm{Kx_{\alpha^*}^\delta - y^\delta}_Y>0.
    \end{equation}
    Then there exists a $c>0$ such that for all $x^\dag\in X_{\nu,\rho}$ with $\nu\in(0,\nu_0-1]$ and $\rho\geq 0$,
    \begin{equation*}
        \norm{x_{\alpha^*}^\delta - x^\dag}_X \leq c \left(1+\frac{\delta}{\delta^*}\right)\max\{\delta,\delta^*\}^{\frac{\nu}{\nu+1}}\rho^{\frac{1}{\nu+1}}.
    \end{equation*}
\end{theorem}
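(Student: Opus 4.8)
We need to bound $\|x_{\alpha^*}^\delta - x^\dag\|_X$ where $\alpha^* = \alpha(y^\delta)$ minimizes the Hanke-Raus functional $\Psi(\alpha) = \|Kx_\alpha^\delta - y^\delta\|_Y / \sqrt\alpha$.

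The key structural difference from a posteriori rules: here we don't know the noise level $\delta$ directly in the parameter choice. Instead we have $\delta^* = \|Kx_{\alpha^*}^\delta - y^\delta\|_Y$, which is the residual at the chosen parameter. The bound involves $\max\{\delta, \delta^*\}$ and the factor $(1 + \delta/\delta^*)$.

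**My Proof Plan**

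The plan is to use the standard splitting (as in the proof of Theorem on Morozov)
$$\|x_{\alpha^*}^\delta - x^\dag\|_X \leq \|x_{\alpha^*} - x^\dag\|_X + \|x_{\alpha^*} - x_{\alpha^*}^\delta\|_X,$$
and bound each term in terms of $\delta$, $\delta^*$, $\rho$, and $\alpha^*$, then eliminate $\alpha^*$ using the minimality property of $\alpha^*$ for $\Psi$.

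Let me think about how to proceed.

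**Approximation error term** $\|x_{\alpha^*} - x^\dag\|_X$:

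Using the source condition $x^\dag = |K|^\nu w$ with $\|w\|_X \leq \rho$, and the interpolation inequality (as in the Morozov proof), I write $x^\dag - x_{\alpha^*} = |K|^\nu \xi$ where $\xi = \sum r_{\alpha^*}(\sigma_n^2)\langle w | v_n\rangle v_n$. Then
$$\|x_{\alpha^*} - x^\dag\|_X \leq \||K|^{\nu+1}\xi\|_X^{\nu/(\nu+1)} \|\xi\|_X^{1/(\nu+1)}.$$
We have $\|\xi\|_X \leq C_r \rho$. The crucial factor is $\||K|^{\nu+1}\xi\|_X = \|Kx_{\alpha^*} - y\|_Y$ (the exact residual). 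This is bounded by
$$\|Kx_{\alpha^*} - y^\delta\|_Y + \|y - y^\delta - K(x_{\alpha^*} - x_{\alpha^*}^\delta)\|_Y \leq \delta^* + C_r\delta,$$
using $\|Kx_{\alpha^*}^\delta - y^\delta\|_Y = \delta^*$ by definition and the uniform bound (compare the equation labelled in the Morozov proof). Hence $\|x_{\alpha^*} - x^\dag\|_X \lesssim (\delta^* + \delta)^{\nu/(\nu+1)} \rho^{1/(\nu+1)}$. Since $\delta^* + \delta \leq 2\max\{\delta, \delta^*\}$, this matches the target up to a constant.

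**Data/propagated error term** $\|x_{\alpha^*} - x_{\alpha^*}^\delta\|_X$:

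This is where the heuristic character bites. We have $\|x_{\alpha^*} - x_{\alpha^*}^\delta\|_X \leq \|R_{\alpha^*}\|_{\linop(Y,X)}\,\delta \leq C_\phi\, (\alpha^*)^{-1/2}\delta$. The obstacle is that we have \emph{no a priori lower bound on} $\alpha^*$ here, unlike in the Morozov case. Here I exploit the minimality of $\alpha^*$ for $\Psi$. By definition $\Psi(\alpha^*) = \delta^*/\sqrt{\alpha^*}$, so $(\alpha^*)^{-1/2} = \Psi(\alpha^*)/\delta^*$. Thus
$$\|x_{\alpha^*} - x_{\alpha^*}^\delta\|_X \leq C_\phi \frac{\Psi(\alpha^*)}{\delta^*}\,\delta = C_\phi \frac{\delta}{\delta^*}\,\Psi(\alpha^*).$$
Now the key trick: bound $\Psi(\alpha^*) = \min_\alpha \Psi(\alpha) \leq \Psi(\bar\alpha)$ for a cleverly chosen comparison parameter $\bar\alpha$, namely the a priori optimal choice $\bar\alpha \sim (\delta/\rho)^{2/(\nu+1)}$. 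For this $\bar\alpha$ I estimate $\Psi(\bar\alpha) = \|Kx_{\bar\alpha}^\delta - y^\delta\|_Y/\sqrt{\bar\alpha}$; the numerator splits into the exact residual $\|Kx_{\bar\alpha} - y\|_Y \leq \omega_{\nu+1}(\bar\alpha)\rho \lesssim \bar\alpha^{(\nu+1)/2}\rho$ plus a propagated term $\leq C_r\delta$. Dividing by $\sqrt{\bar\alpha}$ and inserting $\bar\alpha$ yields $\Psi(\bar\alpha) \lesssim \delta^{\nu/(\nu+1)}\rho^{1/(\nu+1)}$ (plus a lower-order term). Putting it back gives $\|x_{\alpha^*} - x_{\alpha^*}^\delta\|_X \lesssim (\delta/\delta^*)\,\delta^{\nu/(\nu+1)}\rho^{1/(\nu+1)}$, which is dominated by $(1 + \delta/\delta^*)\max\{\delta,\delta^*\}^{\nu/(\nu+1)}\rho^{1/(\nu+1)}$.

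**The Main Obstacle**

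The hardest part is controlling the propagated error without any lower bound on $\alpha^*$ — the heuristic rule gives us only $\delta^*$, not $\delta$, so we cannot directly invoke $\alpha^* \gtrsim \delta^{2/(\nu+1)}$ as in the Morozov proof. The resolution is precisely to trade $(\alpha^*)^{-1/2}$ for $\Psi(\alpha^*)/\delta^*$ and then exploit that $\alpha^*$ \emph{minimizes} $\Psi$, so that $\Psi(\alpha^*)$ can be bounded from above by evaluating $\Psi$ at the a priori optimal parameter $\bar\alpha$. The factor $(1+\delta/\delta^*)$ in the statement is exactly what arises from this manoeuvre: the division by $\delta^*$ is unavoidable and accounts for the case where the residual $\delta^*$ happens to be small. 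I would also need to verify the elementary inequalities $\delta + \delta^* \leq 2\max\{\delta,\delta^*\}$ and $\delta/\delta^* \leq 1 + \delta/\delta^*$ to collect everything into the claimed form, which is routine.
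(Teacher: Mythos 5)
Your proposal is correct and follows the same overall strategy as the paper: the standard error splitting, the approximation error bounded via the interpolation inequality with $\delta^*$ playing the role of the discrepancy bound $\tau\delta$ (giving $(\delta^*+C_r\delta)^{\nu/(\nu+1)}(C_r\rho)^{1/(\nu+1)}$), and the propagated data error controlled by the productive $1=\delta^*/\delta^*$ so that $C_\phi(\alpha^*)^{-1/2}\delta = C_\phi(\delta/\delta^*)\Psi(\alpha^*)$, after which the minimality of $\alpha^*$ is played off against a comparison parameter $\bar\alpha$. The one genuine difference lies in the choice of $\bar\alpha$: the paper compares with the Morozov discrepancy-principle parameter $\alpha(\delta,y^\delta)$ and recycles the bound \eqref{eq:spektral:morozov_alpha} from \cref{thm:spektral:morozov}, which forces a case distinction according to whether that parameter exceeds $\kappa$; you compare with the a priori optimal choice $\bar\alpha\sim(\delta/\rho)^{2/(\nu+1)}$ and estimate $\Psi(\bar\alpha)\leq \bar\alpha^{-1/2}\bigl(\omega_{\nu+1}(\bar\alpha)\rho + C_r\delta\bigr)\lesssim \delta^{\nu/(\nu+1)}\rho^{1/(\nu+1)}$ directly, which is more self-contained and avoids the two cases. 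What you still owe is the verification that your $\bar\alpha$ actually lies in the admissible interval $(0,\kappa]$ — otherwise the comparison $\Psi(\alpha^*)\leq\Psi(\bar\alpha)$ is not available. This is where the standing assumption $\norm{y}_Y>\delta$ enters: together with the source condition it gives $\delta<\norm{Kx^\dag}_Y\leq\norm{K}_{\linop(X,Y)}^{\nu+1}\rho$, hence $(\delta/\rho)^{2/(\nu+1)}<\kappa$ (provided the implicit constant in your choice of $\bar\alpha$ is at most one). This is exactly the same place where the paper needs that assumption in its second case; with that one line added, your argument is complete.
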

\begin{proof}
    Once more we start from the error decomposition
    \begin{equation*}
        \norm{x_{\alpha^*}^\delta - x^\dag}_X \leq \norm{x_{\alpha^*} -x^\dag}_X  + \norm{x_{\alpha^*}^\delta - x_{\alpha^*}}_X.
    \end{equation*}
    For the first term, we argue as in the proof of \cref{thm:spektral:morozov} using \eqref{eq:spektral:heuristic_noise} in place of the discrepancy principle to show that
    \begin{equation}\label{eq:spektral:heuristic1}
        \norm{x_{\alpha^*} - x^\dag}_X \leq C_r^{\frac1{\nu+1}}(\delta^* + C_r\delta)^{\frac{\nu}{\nu+1}} \rho^{\frac{1}{\nu+1}}\leq C_1\max\{\delta,\delta^*\}^{\frac{\nu}{\nu+1}} \rho^{\frac{1}{\nu+1}}.
    \end{equation}
    for some constant $C_1>0$.

    For the second term, we obtain similarly as for \eqref{eq:spektral:morozov2} using \eqref{eq:spektral:heuristic_noise} (in the form of the productive $1=\delta^*/\delta^*$) that
    \begin{equation*}
        \norm{x_{\alpha^*}^\delta - x_{\alpha^*}}_X\leq {C_\phi}\frac{1}{\sqrt{\alpha^*}} \delta
        = {C_\phi}\frac\delta{\delta^*}   \frac{\norm{Kx_{\alpha^*}^\delta - y^\delta}_Y}{\sqrt{\alpha^*}} = {C_\phi}\frac\delta{\delta^*}\Psi(\alpha^*).
    \end{equation*}
    Again, we need to bound the last factor by the correct power of $\delta$, for which we use the choice rule. In this case, \eqref{eq:spektral:hankeraus} states that $\Psi(\alpha^*)\leq \Psi(\alpha)$ for all $\alpha\in(0,\kappa]$. The idea is now to compare with $\alpha$ chosen according to the discrepancy principle, which however need not be feasible (it may be larger than $\kappa$).
    Let therefore $\bar \alpha:= \alpha(\delta,y^\delta)$ be chosen such that \eqref{eq:spektral:diskrepanz} holds for some $\tau>C_r$ according to \cref{eq:spektral:ordnung_r}. If $\bar \alpha \leq \kappa$, then \eqref{eq:spektral:morozov_alpha} yields that
    \begin{equation}\label{eq:spektral:heuristic2a}
        \Psi(\alpha^*)\leq \Psi(\bar \alpha) \leq  (\tau\delta)(C_\tau^{\frac1{\nu+1}}\delta^{-\frac{1}{\nu+1}}\rho^{\frac{1}{\nu+1}})= C_\tau^{\frac1{\nu+1}}\tau\delta^{\frac{\nu}{\nu+1}} \rho^{\frac{1}{\nu+1}}.
    \end{equation}
    On the other hand, if $\bar \alpha>\kappa=\norm{K}_{\linop(X,Y)}^2$, then by assumption $\norm{Kx_{\kappa}^\delta-y^\delta}_Y\leq \tau\delta$ as well. From
    \begin{equation*}
        \delta < \norm{y}_Y = \norm{K x^\dag}_Y = \norm{K|K|^\nu w}_X \leq \norm{K}_{\linop(X,Y)}^{\nu+1}\rho
    \end{equation*}
    it then follows that
    \begin{equation}\label{eq:spektral:heuristic2b}
        \Psi(\alpha^*)\leq \Psi(\kappa)\leq \tau\delta \norm{K}_{\linop(X,Y)}^{-1}
        < \tau \delta \left(\delta^{-\frac1{\nu+1}}\rho^{\frac1{\nu+1}}\right)
        = \tau \delta^{\frac{\nu}{\nu+1}}\rho^{\frac{1}{\nu+1}}.
    \end{equation}
    In both cases, we thus obtain that
    \begin{equation*}
        \norm{x_{\alpha^*}^\delta - x_{\alpha^*}}_X\leq C_2 \frac\delta{\delta^*}\delta^{\frac{\nu}{\nu+1}} \rho^{\frac{1}{\nu+1}}
    \end{equation*}
    for some constant $C_2>0$. Together with \eqref{eq:spektral:heuristic1}, this shows the claimed estimate.
\end{proof}

Hence the Hanke--Raus rule would be order optimal if $\delta^* \approx \delta$.
Conversely, the rule would fail if $\alpha^*=0$ or $\delta^*=0$ occurred.
In the later case, $y^\delta\in \calR(K)$, and the unboundedness of $K^\dag$ would imply that $\norm{K^\dag y^\delta - K^\dag y}_Y$ could be arbitrarily large. We thus need to exclude this case in order to show error estimates. For example, we can assume that there exists an $\eps>0$ such that
\begin{equation}
    \label{eq:spektral:heuristic:noise_ass}
    y^\delta \in \calN_\eps :=  \setof{y+\eta \in Y}{\norm{(\Id-P_{\overline{\calR}})\eta}_Y \geq \eps \norm{\eta}_Y} ,
\end{equation}
where $P_{\overline{\calR}}$ denotes the orthogonal projection onto $\overline{\calR(K)}$.
Intuitively, this means that the noisy data $y^\delta$ cannot be arbitrarily close to  $\overline{\calR(K)}$.
Restricted to such data, the Hanke--Raus rule indeed leads to a convergent regularization method.
\begin{theorem}
    Let $\{\phi_\alpha\}_{\alpha>0}$ be a filter with qualification $\nu_0>0$ satisfying \eqref{eq:spektral:ordnung_phi} as well as \eqref{eq:spektral:ordnung_omega} for all $\nu\in(0,\nu_0]$. Assume further that \eqref{eq:spektral:heuristic:noise_ass} holds. Then for every $x^\dag\in X_{\nu,\rho}$ with $\nu\in(0,\nu_0-1]$ and $\rho>0$,
    \begin{equation*}
        \lim_{\delta\to 0}\sup_{y^\delta\in B_\delta(Kx^\dag)\cap \calN_\eps} \norm{x_{\alpha^*}^\delta - x^\dag}_X = 0.
    \end{equation*}
\end{theorem}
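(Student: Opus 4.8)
The plan is to feed the conditional estimate of \cref{thm:spektral:hankeraus:fehler} into a uniform convergence argument resting on two observations: the noise assumption forces the attained residual $\delta^*$ to stay comparable to the \emph{actual} perturbation, while the Hanke--Raus minimality forces $\delta^*$ itself to vanish. The decisive move is to apply \cref{thm:spektral:hankeraus:fehler} not with the nominal noise level $\delta$ but with the sharp quantity $\hat\delta := \norm{y-y^\delta}_Y \le \delta$, which is legitimate since $y^\delta \in B_{\hat\delta}(y)$ and the estimate holds for any admissible noise bound. This gives $\norm{x_{\alpha^*}^\delta - x^\dag}_X \le c\,(1 + \hat\delta/\delta^*)\max\{\hat\delta,\delta^*\}^{\nu/(\nu+1)}\rho^{1/(\nu+1)}$ for $x^\dag\in X_{\nu,\rho}$ with $\nu\in(0,\nu_0-1]$, and it then remains to show that $1+\hat\delta/\delta^*$ stays bounded and $\max\{\hat\delta,\delta^*\}\to 0$, both uniformly over $y^\delta\in B_\delta(Kx^\dag)\cap\calN_\eps$.

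For the bounded-factor part I would extract a lower bound $\delta^* \ge \eps\hat\delta$ from \eqref{eq:spektral:heuristic:noise_ass}. Writing $\eta := y^\delta - y$ and using \eqref{eq:spektral:beschraenkt_K} together with the fact that $\{u_n\}_{n\in\N}$ is an orthonormal basis of $\overline{\calR(K)}$, the residual splits orthogonally for \emph{every} $\alpha$ as $\norm{KR_\alpha y^\delta - y^\delta}_Y^2 = \sum_{n\in\N}|r_\alpha(\sigma_n^2)|^2|\inner{y^\delta}{u_n}_Y|^2 + \norm{(\Id-P_{\overline{\calR}})y^\delta}_Y^2$. Since $y=Kx^\dag\in\calR(K)$ we have $(\Id-P_{\overline{\calR}})y^\delta = (\Id-P_{\overline{\calR}})\eta$, so the second summand equals $\norm{(\Id-P_{\overline{\calR}})\eta}_Y^2 \ge \eps^2\norm{\eta}_Y^2 = \eps^2\hat\delta^2$ by the membership $y^\delta\in\calN_\eps$. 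Hence $\delta^* \ge \eps\hat\delta$, which both guarantees $\delta^*>0$ (so that \cref{thm:spektral:hankeraus:fehler} genuinely applies whenever $\eta\neq0$) and yields $1 + \hat\delta/\delta^* \le 1 + \eps^{-1}$.

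For the vanishing part I would bound $\delta^*$ from above. Because $\alpha^*\in(0,\kappa]$, we have $\delta^* = \sqrt{\alpha^*}\,\Psi(\alpha^*) \le \sqrt{\kappa}\,\Psi(\alpha^*)$, and the comparison of the minimal value $\Psi(\alpha^*)$ with the discrepancy-principle parameter carried out in \eqref{eq:spektral:heuristic2a}--\eqref{eq:spektral:heuristic2b} (run with noise level $\hat\delta$ via \eqref{eq:spektral:hankeraus}) gives $\Psi(\alpha^*)\le C\hat\delta^{\nu/(\nu+1)}\rho^{1/(\nu+1)}$. Thus $\delta^*\le \sqrt{\kappa}\,C\hat\delta^{\nu/(\nu+1)}\rho^{1/(\nu+1)}$, and since $\hat\delta\le\delta$ and $\nu/(\nu+1)<1$, both $\hat\delta$ and $\delta^*$ are dominated by a quantity $g(\delta)$ that is independent of $y^\delta$ and tends to $0$. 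Combining the two parts, $\sup_{y^\delta}\norm{x_{\alpha^*}^\delta - x^\dag}_X \le c\,(1+\eps^{-1})\,g(\delta)^{\nu/(\nu+1)}\rho^{1/(\nu+1)}\to 0$, which is the claim.

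The main obstacle---and the one genuinely new idea---lies in the prefactor $1+\hat\delta/\delta^*$: the nominal level $\delta$ provides no lower bound on $\delta^*$, so the naive ratio $\delta/\delta^*$ can blow up and render the estimate useless. The cure is to measure against the actual noise $\hat\delta$, for which the out-of-range condition \eqref{eq:spektral:heuristic:noise_ass} supplies exactly the matching lower bound $\delta^*\ge\eps\hat\delta$ via the orthogonal splitting of the residual. The only loose end is the degenerate case $\eta=0$ (exact data in $\calR(K)$), where $\Psi$ need not attain a positive minimum and $\delta^*$ may vanish; this is precisely the situation the noise assumption is designed to exclude, so I would restrict the supremum to $\eta\neq0$ and note that the resulting bound is uniform with constants depending only on $\eps,\nu,\rho,\kappa$.
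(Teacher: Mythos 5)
Your proof is correct and follows essentially the same route as the paper's: the out-of-range condition \eqref{eq:spektral:heuristic:noise_ass} yields the lower bound $\delta^*\geq\eps\norm{y-y^\delta}_Y$ (the paper gets this slightly more directly by noting that $\Id-P_{\overline{\calR}}$ has norm one and annihilates $Kx_{\alpha}^\delta$, rather than via the full orthogonal splitting of the residual), this makes the prefactor in \cref{thm:spektral:hankeraus:fehler} bounded by $1+\eps^{-1}$, and the upper bound $\delta^*=\sqrt{\alpha^*}\Psi(\alpha^*)\leq\sqrt{\kappa}\,\Psi(\alpha^*)$ combined with \eqref{eq:spektral:heuristic2a}--\eqref{eq:spektral:heuristic2b} forces $\delta^*\to0$ uniformly. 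Your refinement of measuring against the actual perturbation $\hat\delta=\norm{y-y^\delta}_Y\leq\delta$ rather than the nominal noise level is a welcome tightening (the paper silently normalizes $\norm{y^\delta-y}_Y=\delta$, leaving the case of strict inequality implicit), but it does not change the substance of the argument.
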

\begin{proof}
    Let $y\in\calR(K)$ and $y^\delta\in\calN_\eps$ with $\norm{y^\delta-y}_Y=\delta$. Since $\Id-P_{\overline{\calR}}$ is an orthogonal projection and therefore has operator norm $1$, we have for all $\alpha>0$ that
    \begin{equation}\label{eq:spektral:heuristic:conv1}
        \begin{aligned}[t]
            \norm{Kx_{\alpha}^\delta - y^\delta}_Y
            &\geq \norm{(\Id-P_{\overline{\calR}})(Kx_{\alpha}^\delta - y^\delta)}_Y = \norm{(\Id-P_{\overline{\calR}})y^\delta}_Y
            \\
            &= \norm{(\Id-P_{\overline{\calR}})(y^\delta-y)}_Y \geq \eps \norm{y^\delta -y}_Y \\
            &= \eps \delta >0.
        \end{aligned}
    \end{equation}
    This implies that the numerator of $\Psi(\alpha)$ is bounded from below, and hence $\Psi(\alpha)\to \infty$ for $\alpha\to 0$.
    The infimum over all $(0,\kappa]$ therefore must be attained for $\alpha^*>0$.
    In particular, it follows from \eqref{eq:spektral:heuristic:conv1} that
    \begin{equation*}
        \delta^* = \norm{Kx_{\alpha^*}^\delta - y^\delta}_Y \geq \eps \delta >0.
    \end{equation*}
    We thus obtain from \cref{thm:spektral:hankeraus:fehler} and the estimate $\delta\leq \eps^{-1}\delta^*$ that
    \begin{equation*}
        \norm{x_{\alpha^*}^\delta - x^\dag}_X \leq C_\eps (\delta^*)^{\frac{\nu}{\nu+1}}\rho^{\frac{1}{\nu+1}}
    \end{equation*}
    for some constant $C_\eps>0$.
    It thus suffices to show that $\delta\to 0$ implies that $\delta^*\to 0$ as well.
    But this follows from $\alpha^*\leq\kappa$ and \eqref{eq:spektral:heuristic2a} or \eqref{eq:spektral:heuristic2b}, since as $\delta \to 0$, we have that
    \begin{equation*}
        \delta^* = \norm{Kx_{\alpha^*}^\delta - y^\delta}_Y = \sqrt{\alpha^*}\Psi(\alpha^*)
        \leq \sqrt{\kappa}\Psi(\alpha^*) \leq \sqrt{\kappa}\max\{1,C_\tau^{\frac1{\nu+1}}\}\tau\delta^{\frac{\nu}{\nu+1}}\rho^{\frac1{\nu+1}}
        \to 0 .
        \qedhere
    \end{equation*}
\end{proof}

Under similar assumptions (and with more effort), it is also possible to show order optimality of the Hanke--Raus rule as well as of related minimization-based heuristic choice rules; see \cite{Kindermann}.

\chapter{Tikhonov regularization}\label{chap:tikhonov}

Due to its central role in the theory and practice of inverse problems, we again consider in more detail Tikhonov regularization, which corresponds to the filter
\begin{equation*}
    \phi_\alpha(\lambda) = \frac{1}{\lambda+\alpha}.
\end{equation*}
We get to the point quickly since we are well prepared. As we have already noted in \cref{ex:spektral}\,(ii), the filter $\phi_\alpha$ is continuous, converges to $\frac1\lambda$ as $\alpha\to0$, is uniformly bounded by $\alpha^{-1}$, and satisfies
\begin{equation*}
    \lambda\phi_\alpha(\lambda) = \frac{\lambda}{\lambda+\alpha} < 1=: C_\phi \qquad \text{for all }\alpha >0.
\end{equation*}
By \cref{thm:spektral:konvergenz}, the operator $R_\alpha = \phi_\alpha(K^*K)K^*$ is therefore a regularization, satisfies by \cref{lem:spektral:beschraenkt}
\begin{equation*}
    \norm{R_\alpha}_{\linop(Y,X)} \leq \frac1{\sqrt{\alpha}},
\end{equation*}
and by \cref{thm:regularisierung:apriori_char} leads together with the a priori choice rule $\alpha(\delta) = \delta$ to a convergent regularization method.

To show convergence rates, we apply \cref{thm:spektral:apriori} (for a priori choice rules) and \cref{thm:spektral:morozov} (for the Morozov discrepancy principle). First, since $\phi_\alpha(\lambda)\leq \alpha^{-1} = C_\phi\alpha^{-1}$ for all $\alpha>0$, the condition \eqref{eq:spektral:ordnung_phi} is satisfied.
Furthermore,
\begin{equation*}
    r_\alpha(\lambda) = 1-\lambda \phi_\alpha(\lambda) = \frac{\alpha}{\lambda + \alpha}\leq 1=: C_r \qquad\text{for all }\alpha >0, \lambda\in(0,\kappa].
\end{equation*}
To show the second condition \eqref{eq:spektral:ordnung_omega}, we have to estimate
\begin{equation*}
    \omega_\nu(\alpha) = \sup_{\lambda\in(0,\kappa]}\lambda^{\nu/2} |r_\alpha(\lambda)| = \sup_{\lambda\in(0,\kappa]} \frac{\lambda^{\nu/2}\alpha}{\lambda+\alpha} =: \sup_{\lambda\in(0,\kappa]} h_\alpha(\lambda)
\end{equation*}
by $C_\nu\alpha^{\nu/2}$ for a constant $C_\nu>0$.
To do this, we consider $h_\alpha(\lambda)$ for fixed $\alpha>0$ as a function of $\lambda$ and compute
\begin{equation*}
    h_\alpha'(\lambda) = \frac{\alpha\frac\nu2\lambda^{\nu/2-1}(\lambda + \alpha) -  \alpha\lambda^{\nu/2}}{(\lambda+\alpha)^2} =  \frac{\alpha\lambda^{\nu/2-1}}{(\lambda+\alpha)^2}\left(\frac\nu2 \alpha + \left(\frac\nu2 -1 \right)\lambda\right).
\end{equation*}
For $\nu\geq2$, the function $h_\alpha(\lambda)$ is therefore increasing, and the maximum over all $\lambda\in(0,\kappa]$ is attained in $\lambda^* := \kappa$. In this case,
\begin{equation*}
    \omega_\nu(\alpha) = h_\alpha(\kappa) = \frac{\alpha\kappa^{\nu/2}}{\kappa + \alpha} \leq \kappa^{\nu/2-1} \alpha.
\end{equation*}
We thus obtain the desired estimate (only) for $\nu=2$.

For $\nu\in(0,2)$, we can compute the root of $h_\alpha'(\lambda)$ as $\lambda^* := \frac{\alpha\frac\nu2}{1-\frac\nu2}$. There, $h_\alpha''(\lambda^*)<0$, which yields for all $\alpha>0$ that
\begin{equation*}
    \omega_\nu(\alpha) = h_\alpha(\lambda^*) = \frac{\alpha\left(\alpha\frac\nu2\left(1-\frac\nu2\right)^{-1}\right)^{\nu/2}}{\alpha+\alpha\frac\nu2\left(1-\frac\nu2\right)^{-1}} \leq \left(\frac\nu2\left(1-\frac\nu2\right)^{-1}\right)^{\nu/2} \alpha^{\nu/2}
\end{equation*}
and hence the desired estimate.

Tikhonov regularization thus has at least (and, as we will show, at most) qualification $\nu_0=2$. The corresponding order optimality for a priori and a posteriori choice rules now follows easily from \cref{thm:spektral:apriori} and \cref{thm:spektral:morozov}, respectively
\begin{cor}\label{cor:tikhonov:rate}
    For all $\nu\in(0,2]$, Tikhonov regularization together with the parameter choice rule
    \begin{equation*}
        c \left(\tfrac\delta\rho\right)^{\frac2{\nu+1}} \leq \alpha(\delta) \leq C  \left(\tfrac\delta\rho\right)^{\frac2{\nu+1}} \qquad\text{for } C>c>0
    \end{equation*}
    is an order optimal regularization method. In particular, for $\alpha \sim \delta^{2/3}$,
    \begin{equation*}
        \norm{x_{\alpha(\delta)}^\delta-x^\dag}_X \leq c \delta^{\frac{2}{3}} \qquad \text{for all }x^\dag\in \calR(K^*K)\text{ and } y^\delta\in B_\delta(Kx^\dag).
    \end{equation*}
\end{cor}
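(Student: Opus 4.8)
The plan is to invoke \cref{thm:spektral:apriori} directly, since all of its hypotheses have already been verified in the computation immediately preceding this statement. Concretely, I would first record that the Tikhonov filter $\phi_\alpha(\lambda)=(\lambda+\alpha)^{-1}$ satisfies the bound \eqref{eq:spektral:ordnung_phi} with $C_\phi=1$ (because $\phi_\alpha(\lambda)\le\alpha^{-1}$), and that for every $\nu\in(0,2]$ it satisfies the residual bound \eqref{eq:spektral:ordnung_omega} with the constant extracted above, namely $C_2=1$ in the borderline case and $C_\nu=\bigl(\tfrac\nu2(1-\tfrac\nu2)^{-1}\bigr)^{\nu/2}$ for $\nu\in(0,2)$. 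Since the parameter choice rule prescribed in the corollary is verbatim the admissible window \eqref{eq:spektral:apriori}, \cref{thm:spektral:apriori} applies for each fixed $\nu\in(0,2]$ and all $\rho>0$, and yields at once that $(R_\alpha,\alpha)$ is an order optimal regularization method.

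For the concluding rate I would specialize to $\nu=2$. Here $X_2=\calR(|K|^2)=\calR(K^*K)$, using the identity $|K|^2=K^*K$ from \cref{ex:spectralcalculus}\,(ii); thus the membership $x^\dag\in\calR(K^*K)$ is exactly the source condition $x^\dag\in X_{2,\rho}$ with $\rho=\norm{w}_X$ for the source representer $w$ satisfying $x^\dag=K^*Kw$. The exponent $\tfrac{2}{\nu+1}$ then collapses to $\tfrac23$, so the admissible window for the parameter becomes $\alpha\sim\delta^{2/3}$ (for $\rho$ fixed), and the order-optimality estimate \eqref{eq:regularisierung:ordnung} gives $\norm{x^\delta_{\alpha(\delta)}-x^\dag}_X\le c\,\delta^{2/3}\rho^{1/3}$ for all $y^\delta\in B_\delta(Kx^\dag)$. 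Since $\rho$ is determined once $x^\dag$ is fixed, absorbing $\rho^{1/3}$ into the constant yields the stated bound $\le c\,\delta^{2/3}$.

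The main point is that there is essentially no new analytic obstacle: the corollary is a bookkeeping consequence of the general a priori theory, and the only genuine work, namely bounding $\omega_\nu(\alpha)$ by $C_\nu\alpha^{\nu/2}$ via the explicit maximization of $\lambda\mapsto\lambda^{\nu/2}\alpha/(\lambda+\alpha)$, was already carried out. The only steps deserving care are the correct identification $X_2=\calR(K^*K)$ and keeping the $\rho$-dependence explicit so that it may be legitimately absorbed into the constant for fixed $x^\dag$. One should also bear in mind that the qualification of Tikhonov regularization is exactly $\nu_0=2$, which is precisely why the argument stops at $\nu=2$ and no rate faster than $\delta^{2/3}$ is obtainable in this way.
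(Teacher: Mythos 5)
Your proposal is correct and follows exactly the paper's route: the corollary is stated as an immediate consequence of \cref{thm:spektral:apriori} once the filter bounds \eqref{eq:spektral:ordnung_phi} (with $C_\phi=1$) and \eqref{eq:spektral:ordnung_omega} (with the constants you quote, including $C_2=\kappa^{0}=1$ at the borderline) have been verified, which the paper does in the computation directly preceding the statement. Your identification $X_2=\calR(|K|^2)=\calR(K^*K)$ and the absorption of $\rho^{1/3}$ into the constant for fixed $x^\dag$ are precisely the intended bookkeeping for the $\nu=2$ specialization.
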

\begin{cor}
    For all $\nu\in(0,1]$ and $\tau>1$, Tikhonov regularization together with the parameter choice rule
    \begin{equation*}
        \norm{Kx^\delta_{\alpha(\delta,y^\delta)}-y^\delta}_Y\leq \tau \delta < \norm{Kx^\delta_{\alpha}-y^\delta}_Y \qquad\text{for all } \alpha>\alpha(\delta,y^\delta)
    \end{equation*}
    is an order optimal regularization method. In particular,
    \begin{equation*}
        \norm{x_{\alpha(\delta,y^\delta)}^\delta-x^\dag}_X \leq c \delta^{\frac12} \qquad \text{for all }x^\dag\in  \calR(K^*)\text{ and } y^\delta\in B_\delta(Kx^\dag).
    \end{equation*}
\end{cor}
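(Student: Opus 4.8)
The plan is to read off the corollary as a direct specialization of the general discrepancy-principle result \cref{thm:spektral:morozov} to the Tikhonov filter, for which all the required filter data has already been assembled in the preceding discussion. Concretely, I would first record the three quantities that \cref{thm:spektral:morozov} needs: the filter satisfies \eqref{eq:spektral:ordnung_phi} with $C_\phi = 1$, it satisfies \eqref{eq:spektral:ordnung_omega} for every $\nu \in (0,2]$, and its qualification is therefore $\nu_0 = 2$. It then remains only to compute the constant $C_r$ from \eqref{eq:spektral:ordnung_r}: since $r_\alpha(\lambda) = \alpha/(\lambda+\alpha)$ increases toward $1$ as $\lambda \to 0^+$, we obtain $C_r = \sup_{\alpha>0,\,\lambda\in(0,\kappa]}|r_\alpha(\lambda)| = 1$.

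With these in hand, the first assertion is immediate: for any $\tau > C_r = 1$ and any $\nu$ in the admissible range $(0,\nu_0-1] = (0,1]$, \cref{thm:spektral:morozov} asserts that Tikhonov regularization together with the Morozov discrepancy principle \eqref{eq:spektral:diskrepanz} is an order optimal regularization method, which is exactly the stated claim. The standing assumption that $\calR(K)$ is dense in $Y$, needed for the discrepancy principle to be well-defined, is inherited from the surrounding subsection.

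For the concrete $\delta^{1/2}$ rate I would specialize to $\nu = 1$, so that $\tfrac{\nu}{\nu+1} = \tfrac12$. The key identification is that the source set $X_1 = \calR(|K|^1) = \calR(|K|)$ equals $\calR(K^*)$ by \cref{lem:functional_range}\,(iv); hence every $x^\dag \in \calR(K^*)$ admits a representer $w$ with $x^\dag = |K|w$, and setting $\rho := \norm{w}_X$ places $x^\dag \in X_{1,\rho}$. Order optimality \eqref{eq:regularisierung:ordnung} for this $\nu$ and $\rho$ then gives $\calE(Kx^\dag,\delta) \leq c\,\delta^{1/2}\rho^{1/2}$, and since $\rho$ is fixed once $x^\dag$ is fixed, absorbing $\rho^{1/2}$ into the constant yields $\norm{x^\delta_{\alpha(\delta,y^\delta)} - x^\dag}_X \leq c\,\delta^{1/2}$ (with $c$ now depending on $x^\dag$), valid for all $y^\delta \in B_\delta(Kx^\dag)$.

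Because the substantive work -- verifying the filter conditions, establishing qualification $\nu_0 = 2$, and proving the general theorem \cref{thm:spektral:morozov} -- is already complete, there is no genuine obstacle here: the corollary is essentially a bookkeeping assembly. The only point demanding mild care is the translation between the abstract source set $X_1$ and the range $\calR(K^*)$ via the absolute value $|K|$, together with the observation that the representer norm $\rho$ must be absorbed into the constant; this is precisely why the final estimate is a convergence rate (constant depending on $x^\dag$) rather than a uniform order-optimality bound.
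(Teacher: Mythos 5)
Your proposal is correct and follows exactly the route the paper intends: the text preceding the corollary already establishes $C_\phi=1$, $C_r=1$, and qualification $\nu_0=2$ for the Tikhonov filter, so the statement is a direct specialization of \cref{thm:spektral:morozov} to $\nu\in(0,\nu_0-1]=(0,1]$, with the $\delta^{1/2}$ rate obtained at $\nu=1$ via the identification $X_1=\calR(|K|)=\calR(K^*)$ from \cref{lem:functional_range}\,(iv). Your handling of the density assumption on $\calR(K)$ and the absorption of the representer norm $\rho$ into the constant matches the paper's conventions.
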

In fact, the qualification cannot be larger than $2$; Tikhonov regularization thus \emph{saturates} in contrast to, e.g., the truncated singular value decomposition.
To show this, we first derive the alternative characterization that was promised in \cref{ex:spektral}\,(ii).
\begin{lemma}\label{lem:tikhonov:normalen}
    Let $y\in Y$ and $\alpha>0$. Then $x = x_\alpha :=  R_\alpha y$ if and only if
    \begin{equation}
        \label{eq:tikhonov:normalen}
        (K^*K + \alpha \Id)x_\alpha = K^*y.
    \end{equation}
\end{lemma}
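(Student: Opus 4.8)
The plan is to verify the normal equation directly for the candidate $x_\alpha = R_\alpha y$ using the singular value decomposition, and then to obtain the reverse implication from the injectivity of $K^*K + \alpha\Id$. Since the filter here is $\phi_\alpha(\lambda) = (\lambda+\alpha)^{-1}$, the whole argument reduces to manipulating the spectral series, so I expect it to be short.

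First I would establish the ``only if'' direction. From \eqref{eq:spektral:regularisierung} with $\phi_\alpha(\sigma_n^2) = (\sigma_n^2+\alpha)^{-1}$ we have
\begin{equation*}
    R_\alpha y = \sum_{n\in\N} \frac{\sigma_n}{\sigma_n^2 + \alpha}\inner{y}{u_n}_Y v_n,
\end{equation*}
where the $\phi_\alpha(0)$-term is absent because $K^*y \in \overline{\calR(K^*)} = \calN(K)^\bot$. As $K^*K + \alpha\Id \in \linop(X,X)$ is bounded and hence continuous, it may be pushed through this series exactly as in \eqref{eq:svd:operator}; using $K^*K v_n = \sigma_n^2 v_n$, each coefficient becomes $\frac{\sigma_n}{\sigma_n^2+\alpha}(\sigma_n^2 + \alpha) = \sigma_n$, so that
\begin{equation*}
    (K^*K + \alpha\Id)R_\alpha y = \sum_{n\in\N}\sigma_n \inner{y}{u_n}_Y v_n = K^* y
\end{equation*}
by the adjoint singular value decomposition \eqref{eq:svd_adj}. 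This shows that $x_\alpha = R_\alpha y$ indeed solves \eqref{eq:tikhonov:normalen}.

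For the converse I would argue by uniqueness. The operator $K^*K + \alpha\Id$ is injective --- in fact boundedly invertible --- because for every $x\in X$,
\begin{equation*}
    \inner{(K^*K + \alpha\Id)x}{x}_X = \norm{Kx}_Y^2 + \alpha\norm{x}_X^2 \geq \alpha\norm{x}_X^2,
\end{equation*}
which by Cauchy--Schwarz forces $\norm{(K^*K + \alpha\Id)x}_X \geq \alpha\norm{x}_X$ and hence $\calN(K^*K + \alpha\Id) = \{0\}$. Consequently \eqref{eq:tikhonov:normalen} has at most one solution; since $R_\alpha y$ is one such solution by the first part, any $x$ with $(K^*K+\alpha\Id)x = K^*y$ must coincide with $R_\alpha y$.

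I expect no serious obstacle here. The only two points requiring care are the justification that the bounded operator $K^*K + \alpha\Id$ may be applied term by term to the infinite series (which is just continuity, as already exploited in \eqref{eq:svd:operator}) and the remark that the null-space contribution $\phi_\alpha(0)P_{\calN(K)}K^*y$ vanishes because $K^*y$ lies in $\calN(K)^\bot$. The coercivity estimate yielding injectivity is elementary, so the proof is essentially a two-line spectral computation plus a one-line uniqueness argument.
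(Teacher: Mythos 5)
Your proof is correct. The ``only if'' direction is essentially the paper's argument: both compute $(K^*K+\alpha\Id)R_\alpha y$ through the spectral series and observe that the coefficients collapse to $\sigma_n\inner{y}{u_n}_Y$, recovering $K^*y$. Where you diverge is the converse. The paper substitutes the representation $x = \sum_{n}\inner{x}{v_n}_X v_n + P_{\calN}x$ into \eqref{eq:tikhonov:normalen}, concludes $P_{\calN}x=0$ because the right-hand side lies in $\calN(K)^\bot$, and then equates Fourier coefficients to reconstruct $x=x_\alpha$ explicitly. You instead prove that $K^*K+\alpha\Id$ is injective via the coercivity estimate $\inner{(K^*K+\alpha\Id)x}{x}_X \geq \alpha\norm{x}_X^2$ and Cauchy--Schwarz, so that uniqueness plus the existence established in the first part finishes the argument. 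Your route is shorter and more robust: it makes no use of the singular value decomposition in the uniqueness step and would apply verbatim to any bounded (not necessarily compact) operator; the coercivity bound $\norm{(K^*K+\alpha\Id)x}_X\geq\alpha\norm{x}_X$ also quantifies the well-posedness of the regularized equation, which the paper only remarks on informally afterwards. The paper's coefficient-matching, on the other hand, has the minor advantage of re-deriving the explicit series for $x_\alpha$ from the equation itself rather than presupposing it. Both arguments are complete.
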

\begin{proof}
    We use the singular system $\{(\sigma_n,u_n,v_n)\}_{n\in\N}$ of $K$ to obtain
    \begin{align*}
        \alpha x_\alpha  &= \sum_{n\in\N} \alpha \frac{\sigma_n}{\sigma_n^2+\alpha} \inner{y}{u_n}_Y v_n
        \intertext{as well as}
        K^*Kx_\alpha &= \sum_{n\in\N} \frac{\sigma_n}{\sigma_n^2+\alpha} \inner{y}{u_n}_Y K^*Kv_n\\
        &=\sum_{n\in\N} \sigma_n^2 \frac{\sigma_n}{\sigma_n^2+\alpha} \inner{y}{u_n}_Y v_n.
    \end{align*}
    This implies that
    \begin{equation*}
        (K^*K + \alpha \Id)x_\alpha =  \sum_{n\in\N} \sigma_n  \inner{y}{u_n}_Y v_n = K^* y.
    \end{equation*}

    Conversely, let $x\in X$ be a solution of \eqref{eq:tikhonov:normalen}. Inserting the representation
    \begin{equation}\label{eq:tikhonov:normal1}
        x = \sum_{n\in\N} \inner{x}{v_n}_Xv_n + P_{\calN}x
    \end{equation}
    into \eqref{eq:tikhonov:normalen} then yields
    \begin{equation*}
        \sum_{n\in\N} (\sigma_n^2+\alpha)  \inner{x}{v_n}_X v_n + \alpha  P_{\calN}x=  (K^*K + \alpha \Id)x = K^* y =  \sum_{n\in\N} \sigma_n\inner{y}{u_n}_Y v_n.
    \end{equation*}
    Since $\{v_n\}_{n\in\N}$ is an orthonormal basis of $\overline{\calR(K^*)}=\calN(K)^\bot$, we must have $P_{\calN}x = 0$.
    Equating coefficients then shows that
    \begin{equation*}
        \inner{x}{v_n}_X = \frac{\sigma_n}{\sigma_n^2+\alpha} \inner{y}{u_n}_Y \qquad\text{for all }n\in \N.
    \end{equation*}
    Inserting this into \eqref{eq:tikhonov:normal1} in turn yields
    \begin{equation*}
        x = \sum_{n\in\N} \inner{x}{v_n}_Xv_n = \sum_{n\in\N} \frac{\sigma_n}{\sigma_n^2+\alpha} \inner{y}{u_n}_Y v_n = x_\alpha,
    \end{equation*}
    i.e., $x_\alpha$ is the unique solution of \eqref{eq:tikhonov:normalen}.
\end{proof}

The practical value of the characterization \eqref{eq:tikhonov:normalen} cannot be emphasized enough: Instead of a singular value decomposition, it suffices to compute the solution of a \emph{well-posed} linear equation (for a selfadjoint positive definite operator), which can be done using standard methods.

We now show that in general there cannot be an a priori choice rule for which the regularization error
$\norm{x_{\alpha(\delta)}^\delta-x^\dag}_X$ tends to zero faster than $\delta^{2/3}$.
\begin{theorem}
    Let $K\in\calK(X,Y)$ have infinite-dimensional range and let $y\in\calR(K)$. If there exists an a priori parameter choice rule $\alpha$ with $\lim_{\delta\to 0}\alpha(\delta) = 0$ such that
    \begin{equation}\label{eq:tikhonov:sat_ass}
        \lim_{\delta\to 0}\sup_{y^\delta\in B_\delta(y)}\norm{x_{\alpha(\delta)}^\delta-x^\dag}_X\delta^{-\frac23} = 0,
    \end{equation}
    then $x^\dag =0$.
\end{theorem}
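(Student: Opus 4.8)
The plan is to argue by contraposition: assuming $x^\dag\neq 0$, I will exhibit a sequence $\delta_n\to 0$ along which the worst-case error fails to be $o(\delta_n^{2/3})$. Writing $\{(\sigma_n,u_n,v_n)\}_{n\in\N}$ for a singular system of $K$, the engine is a lower bound for the worst-case error obtained by perturbing $y$ in a single singular direction. Since $R_\alpha u_n=\phi_\alpha(K^*K)K^*u_n=\frac{\sigma_n}{\sigma_n^2+\alpha}v_n$ and $\inner{y}{u_n}_Y=\sigma_n\inner{x^\dag}{v_n}_X$, the data $y^\delta_\pm:=y\pm\delta u_n\in B_\delta(y)$ satisfy, in the $v_n$-component,
\[
    \inner{x^\delta_{\alpha(\delta),\pm}-x^\dag}{v_n}_X=-\frac{\alpha(\delta)}{\sigma_n^2+\alpha(\delta)}\inner{x^\dag}{v_n}_X\pm\frac{\delta\sigma_n}{\sigma_n^2+\alpha(\delta)}.
\]
Choosing the sign so that the two terms reinforce and keeping only this component yields, for every $n\in\N$ and $\delta>0$,
\[
    \sup_{y^\delta\in B_\delta(y)}\norm{x^\delta_{\alpha(\delta)}-x^\dag}_X\ \geq\ \frac{\delta\sigma_n+\alpha(\delta)\,\abs{\inner{x^\dag}{v_n}_X}}{\sigma_n^2+\alpha(\delta)}=:E_n(\delta).
\]

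From this single inequality I extract two facts. First, fix an index $m$ with $\inner{x^\dag}{v_m}_X\neq 0$, possible precisely because $x^\dag\neq 0$ and $x^\dag\in\calN(K)^\bot=\overline{\spann\{v_n\}}$. Dropping the data term in $E_m(\delta)$ and using $\alpha(\delta)\to 0$, so that $\sigma_m^2+\alpha(\delta)\leq 2\sigma_m^2$ for small $\delta$, the hypothesis forces $\tfrac{1}{2\sigma_m^2}\abs{\inner{x^\dag}{v_m}_X}\,\alpha(\delta)\leq o(\delta^{2/3})$, hence $\alpha(\delta)=o(\delta^{2/3})$. Second — and this is the crux — I test the hypothesis along $\delta_n:=\sigma_n^3$, which tends to $0$ since $K$ has infinite-dimensional range and thus infinitely many singular values $\sigma_n\downarrow 0$. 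For this choice $\delta_n^{2/3}=\sigma_n^2$, so the bound just shown gives $\alpha(\delta_n)\leq\sigma_n^2$ for $n$ large, whence $\sigma_n^2+\alpha(\delta_n)\leq 2\sigma_n^2$, and dropping the bias term in $E_n(\delta_n)$,
\[
    \sup_{y^\delta\in B_{\delta_n}(y)}\norm{x^\delta_{\alpha(\delta_n)}-x^\dag}_X\ \geq\ \frac{\delta_n\sigma_n}{2\sigma_n^2}=\frac{\sigma_n^3}{2\sigma_n}=\frac{\sigma_n^2}{2}=\frac{\delta_n^{2/3}}{2}.
\]
This contradicts $\lim_{\delta\to 0}\delta^{-2/3}\sup_{y^\delta\in B_\delta(y)}\norm{x^\delta_{\alpha(\delta)}-x^\dag}_X=0$, so $x^\dag=0$.

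The step I expect to carry the weight is the choice $\delta_n=\sigma_n^3$. A naive attempt fixes the noise direction and lets $\delta\to 0$ freely, but then one only ever ``sees'' the fixed singular value and cannot detect saturation. One is then tempted to balance the propagated data error $\delta\norm{R_\alpha}_{\linop(Y,X)}$ against the approximation error at the scale $\alpha\sim\delta^{2/3}$; however, $\norm{R_\alpha}_{\linop(Y,X)}=\sup_n\frac{\sigma_n}{\sigma_n^2+\alpha}$ need not be of order $\alpha^{-1/2}$ when the singular values have large multiplicative gaps, so that route is fragile. Tying the test level $\delta_n$ directly to $\sigma_n$ sidesteps this entirely: it guarantees a singular value sitting exactly at the critical scale $\delta_n^{1/3}$, which is all that is needed to make $E_n(\delta_n)$ of the optimal order $\delta_n^{2/3}$. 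The remaining manipulations — the component computation and the two one-line estimates — are routine given the spectral representation of $R_\alpha$.
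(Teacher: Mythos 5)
Your proof is correct and follows essentially the same route as the paper's: both first establish $\alpha(\delta)=o(\delta^{2/3})$ from the hypothesis and then test along $\delta_n=\sigma_n^3$ with the perturbation $y\pm\delta_n u_n$ to produce a lower bound of order $\delta_n^{2/3}$ on the worst-case error. The only cosmetic differences are that you derive $\alpha(\delta)=o(\delta^{2/3})$ from a single spectral component of the approximation error (where the paper uses the normal-equation identity $(K^*K+\alpha\Id)(x_\alpha^\delta-x^\dag)=K^*(y^\delta-y)-\alpha x^\dag$ and operator-norm bounds) and that you combine bias and data terms with a sign choice where the paper uses a triangle inequality against the exact-data error.
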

\begin{proof}
    Assume to the contrary that $x^\dag\neq 0$.
    We first show that the given assumptions imply that $\alpha(\delta)\delta^{-2/3}\to 0$. For this, we use the characterization \eqref{eq:tikhonov:normalen} for $x_\alpha^\delta$ and $y^\delta$ to write
    \begin{equation*}
        \left(K^*K+\alpha(\delta) \Id\right)\left(x_{\alpha(\delta)}^\delta-x^\dag\right) = K^*y^\delta - K^* y -\alpha(\delta) x^\dag.
    \end{equation*}
    Together with $\kappa = \norm{K^*K}_{\linop(X,X)} = \norm{K^*}_{\linop(Y,X)}^2$, this implies that
    \begin{equation*}
        |\alpha(\delta)| \norm{x^\dag}_X \leq \sqrt{\kappa}\delta + (\alpha(\delta) + \kappa)\norm{x_{\alpha(\delta)}^\delta-x^\dag}_X.
    \end{equation*}
    Multiplying this with $\delta^{-2/3}$ and using the assumption \eqref{eq:tikhonov:sat_ass} as well as $x^\dag \neq 0$ then yields that
    \begin{equation*}
        |\alpha(\delta)|\delta^{-2/3}\leq \norm{x^\dag}_X^{-1} \left(\sqrt{\kappa}\delta^{\frac13} + (\alpha(\delta)+\kappa)\norm{x_{\alpha(\delta)}^\delta-x^\dag}_X\delta^{-\frac23}\right)\to 0 .
    \end{equation*}

    We now construct a contradiction.
    Let $\{(\sigma_n,u_n,v_n)\}_{n\in\N}$ be a singular system of $K$ and define
    \begin{equation*}
        \delta_n :=  \sigma_n^3\quad\text{and}\quad y_n :=  y + \delta_n u_n,\qquad n\in\N,
    \end{equation*}
    such that $\norm{y_n-y}_Y = \delta_n\to 0$ as $n\to\infty$. Furthermore, setting $\alpha_n :=  \alpha(\delta_n)$, we have that
    \begin{equation*}
        \begin{aligned}
            x_{\alpha_n}^{\delta_n} - x^\dag &= (x_{\alpha_n}^{\delta_n} - x_{\alpha_n}) + (x_{\alpha_n} - x^\dag)\\
            &= R_\alpha (y_n - y) +  (x_{\alpha_n} - x^\dag)\\
            &= \sum_{m\in\N}\frac{\sigma_m}{\sigma_m^2+\alpha_n}\inner{\delta_n u_n}{u_m}_Y v_m + (x_{\alpha_n} - x^\dag)\\
            &= \frac{\delta_n\sigma_n}{\sigma_n^2+\alpha_n} v_n + (x_{\alpha_n} - x^\dag).
        \end{aligned}
    \end{equation*}
    Together with the assumption \eqref{eq:tikhonov:sat_ass} for $y^\delta = y_n$ as well as for $y^\delta = y$, this implies that
    \begin{equation*}
        \frac{\sigma_n\delta_n^{1/3}}{\sigma_n^2+\alpha_n} \leq \norm{x_{\alpha_n}^{\delta_n} - x^\dag}_X\delta_n^{-2/3} + \norm{x_{\alpha_n} - x^\dag}_X\delta_n^{-2/3}\to 0 \qquad\text{as }n\to\infty.
    \end{equation*}
    On the other hand, $\sigma_n = \delta_n^{1/3}$ and $\alpha_n\delta_n^{-2/3}\to 0$ imply that
    \begin{equation*}
        \frac{\sigma_n\delta_n^{1/3}}{\sigma_n^2+\alpha_n} = \frac{\delta_n^{2/3}}{\delta_n^{2/3} + \alpha_n} = \frac{1}{1+\alpha_n\delta_n^{-2/3}}\to 1 \qquad\text{as }n\to \infty
    \end{equation*}
    and hence the desired contradiction.
\end{proof}

\bigskip

Comparing the characterization \eqref{eq:tikhonov:normalen} with the normal equations \eqref{eq:inverse:normal} suggest that Tikhonov regularization also has a minimization property. This is indeed the case.
\begin{theorem}\label{thm:tikhonov:funktional}
    Let $y\in Y$ and $\alpha>0$. Then $x_\alpha := R_\alpha y$ is the unique minimizer of the \emph{Tikhonov functional}
    \begin{equation}\label{eq:tikhonov:funktional}
        J_\alpha(x):= \frac12\norm{Kx-y}_Y^2 + \frac\alpha2 \norm{x}_X^2.
    \end{equation}
\end{theorem}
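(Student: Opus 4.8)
The plan is to reduce the statement to the already-established characterization in \cref{lem:tikhonov:normalen}, which identifies $R_\alpha y$ as the unique solution of the normal equation $(K^*K + \alpha\Id)x = K^*y$. It thus suffices to show that the minimizer of $J_\alpha$ exists, is unique, and solves precisely this equation; the desired identity $x_\alpha = R_\alpha y$ then follows immediately from the lemma. Since $J_\alpha$ is a quadratic functional, the cleanest route is a completing-the-square argument that avoids any appeal to abstract convexity machinery.

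First I would expand $J_\alpha$ using the inner product structure and the definition of the adjoint, writing $\norm{Kx-y}_Y^2 = \inner{K^*Kx}{x}_X - 2\inner{x}{K^*y}_X + \norm{y}_Y^2$, so that $J_\alpha(x) = \tfrac12\inner{(K^*K+\alpha\Id)x}{x}_X - \inner{x}{K^*y}_X + \tfrac12\norm{y}_Y^2$. Let $x_\alpha$ denote the solution of the normal equation (which exists and is unique by \cref{lem:tikhonov:normalen}). Substituting $x = x_\alpha + h$ for an arbitrary $h\in X$ and expanding, the terms linear in $h$ collect into $\inner{(K^*K+\alpha\Id)x_\alpha - K^*y}{h}_X$, which vanishes because $x_\alpha$ satisfies $(K^*K+\alpha\Id)x_\alpha = K^*y$. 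What remains is
\[
    J_\alpha(x_\alpha + h) = J_\alpha(x_\alpha) + \tfrac12\inner{(K^*K+\alpha\Id)h}{h}_X = J_\alpha(x_\alpha) + \tfrac12\left(\norm{Kh}_Y^2 + \alpha\norm{h}_X^2\right).
\]

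Since $\norm{Kh}_Y^2 \geq 0$ and $\alpha>0$, the quadratic remainder is bounded below by $\tfrac\alpha2\norm{h}_X^2$, which is strictly positive whenever $h\neq 0$. Hence $J_\alpha(x_\alpha + h) > J_\alpha(x_\alpha)$ for every $h\neq 0$, so $x_\alpha$ is the unique minimizer of $J_\alpha$, and the identification $x_\alpha = R_\alpha y$ closes the argument.

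There is essentially no hard step here; the only points requiring care are the bookkeeping in the expansion and the explicit use of $\alpha>0$, which is what makes the remainder strictly positive and hence simultaneously delivers existence of the minimizer (as the normal-equation solution) and its uniqueness. An equivalent alternative would be to compute the G\^ateaux derivative of $J_\alpha$ directly, set it to zero to recover the normal equation, and invoke the strict convexity of $J_\alpha$ stemming from the positive definiteness of $K^*K + \alpha\Id$; the completing-the-square version is preferable here since it is fully self-contained and yields uniqueness without a separate convexity argument.
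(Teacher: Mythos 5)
Your proof is correct, and its core is the same identity the paper uses: expanding $J_\alpha$ around the normal-equation solution $x_\alpha$ and observing that the linear terms vanish by \cref{lem:tikhonov:normalen}, leaving the remainder $\tfrac12\norm{K(x-x_\alpha)}_Y^2 + \tfrac\alpha2\norm{x-x_\alpha}_X^2$. The one genuine difference is how uniqueness is handled. The paper only records that this remainder is nonnegative, concluding that $x_\alpha$ is \emph{a} minimizer, and then runs a separate converse argument: for an arbitrary minimizer $\bar x$ it perturbs along $\bar x + tz$, divides by $t$, and passes to the limit to recover the normal equation as a first-order condition, after which uniqueness of the normal-equation solution finishes the job. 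You instead note that the remainder is bounded below by $\tfrac\alpha2\norm{x-x_\alpha}_X^2$, which is strictly positive for $x\neq x_\alpha$ because $\alpha>0$; this single strict inequality delivers existence and uniqueness of the minimizer simultaneously and makes the paper's entire second half unnecessary. Your version is the more economical of the two; the paper's variational converse has the side benefit of exhibiting the normal equation as the first-order optimality condition, which is the template reused later for the nonlinear Tikhonov functional, but nothing in the statement requires it.
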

\begin{proof}
    A minimizer $\bar x\in X$ of $J_\alpha$ is defined as satisfying $J_\alpha(\bar x)\leq J_\alpha(x)$ for all $x\in X$. We therefore take the difference of functional values for arbitrary $x\in X$ and for the solution $x_\alpha$ of \eqref{eq:tikhonov:normalen} and rearrange the inner products to obtain
    \begin{equation*}
        \begin{aligned}
            J_\alpha(x)-J_\alpha(x_\alpha) &= \frac12\inner{Kx-y}{Kx-y}_Y+ \frac\alpha2\inner{x}{x}_X  \\
            \MoveEqLeft[-1]
            - \frac12\inner{Kx_\alpha-y}{Kx_\alpha-y}_Y- \frac\alpha2\inner{x_\alpha}{x_\alpha}_X\\
            &= \frac12\norm{Kx-Kx_\alpha}_Y^2 + \frac\alpha2 \norm{x-x_\alpha}_X^2
            + \inner{K^*(Kx_\alpha -y)+\alpha x_\alpha}{x-x_\alpha}_X\\
            &= \frac12\norm{Kx-Kx_\alpha}_Y^2 + \frac\alpha2 \norm{x-x_\alpha}_X^2 \\
            &\geq 0,
        \end{aligned}
    \end{equation*}
    where we have used \eqref{eq:tikhonov:normalen} in the last equality.
    Hence, $x_\alpha$ is a minimizer of $J_\alpha$.

    Conversely, if $J_\alpha(x)-J_\alpha(\bar x)\geq 0$ for all $x\in X$, we in particular have for $x = \bar x + tz$ with arbitrary $t>0$ and $z\in X$ that
    \begin{equation*}
        0\leq J_\alpha(\bar x + tz)-J_\alpha(\bar x) = \frac{t^2}{2}\norm{Kz}_Y^2 + \frac{t^2\alpha}{2}\norm{z}_X^2 + t\inner{K^*(K\bar x -y)  + \alpha\bar x}{z}_X.
    \end{equation*}
    Dividing by $t>0$ and passing to the limit $t\to 0$ then yields
    \begin{equation*}
        \inner{K^*(K\bar x -y)  + \alpha\bar x}{z}_X \geq 0.
    \end{equation*}
    Since $z\in X$ was arbitrary, this can only hold if $K^*K \bar x+\alpha \bar x =K^*y$. As $x_\alpha$ is the unique solution of \eqref{eq:tikhonov:normalen}, we obtain $\bar x = x_\alpha$. Hence, $x_\alpha$ is in fact the unique minimizer of \eqref{eq:tikhonov:funktional}.
\end{proof}

The characterization of Tikhonov regularization as minimization of the functional \eqref{eq:tikhonov:funktional} furthermore yields another connection to the minimum norm solution $x^\dag$: Instead of insisting on a least squares solution, whose norm need not be bounded for $y\notin\calD(K^\dag)$, we look for an approximation that minimizes (squared) residual norm $\norm{Kx-y}_Y^2$ \emph{together} with the (squared) norm $\norm{x}_X^2$.%
\footnote{This is also the form in which this regularization was introduced by \href{http://www-history.mcs.st-andrews.ac.uk/Biographies/Tikhonov.html}{Andre\u{\i} Nikolaevich Tikhonov}, a prominent Russian mathematician of the 20th century; see \cite{Tikhonov2,Tikhonov1}.}
Here the regularization parameter $\alpha$ determines the trade-off: the smaller the noise level $\delta$, the more importance one can put on the minimization of the residual (i.e., the smaller $\alpha$ can be chosen).
Conversely, a larger noise level requires putting more weight on minimizing the \emph{penalty term} $\norm{x}_X^2$ (and hence choosing a larger $\alpha$) in order to obtain a stable approximation.

In addition, this characterization can be used to derive monotonicity properties of the
\emph{value functions}
\begin{align*}
    f(\alpha) &:= \frac12\norm{Kx_{\alpha}^\delta - y^\delta}_Y^2,\qquad g(\alpha) := \frac12\norm{x_\alpha^\delta}_X^2,
    \shortintertext{and}
    j(\alpha) &:= J_\alpha(x_\alpha^\delta)= f(\alpha)+\alpha g(\alpha) = J_\alpha(x_\alpha^\delta).
\end{align*}
\begin{lemma}\label{lem:tikhonov:monoton}
    The value functions $f$ and $g$ are monotone in the sense that for all $\alpha_1,\alpha_2>0$,
    \begin{align}
        \left(f(\alpha_1) - f(\alpha_2)\right)(\alpha_1-\alpha_2) &\geq 0,\label{eq:tikhonov:monoton_f}\\
        \left(g(\alpha_1) - g(\alpha_2)\right)(\alpha_1-\alpha_2) &\leq 0.\label{eq:tikhonov:monoton_g}
    \end{align}
\end{lemma}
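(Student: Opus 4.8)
The plan is to exploit the variational characterization of Tikhonov regularization from \cref{thm:tikhonov:funktional}: $x_\alpha^\delta = R_\alpha y^\delta$ is the \emph{unique} minimizer of $J_\alpha(x) = \frac12\norm{Kx - y^\delta}_Y^2 + \frac\alpha2\norm{x}_X^2$, and along the minimizing path $J_\alpha(x_\alpha^\delta) = f(\alpha) + \alpha\, g(\alpha)$. I would fix $\alpha_1,\alpha_2>0$ and abbreviate $x_i := x_{\alpha_i}^\delta$, $f_i := f(\alpha_i) = \frac12\norm{Kx_i - y^\delta}_Y^2$, and $g_i := g(\alpha_i) = \frac12\norm{x_i}_X^2$. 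Notably, nothing beyond the minimization property is needed; neither the filter nor the singular system enters, so the same argument works for any fixed data in place of $y^\delta$.

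The heart of the proof is a pair of optimality inequalities obtained by cross-testing. Since $x_1$ minimizes $J_{\alpha_1}$, testing against the competitor $x_2$ gives $f_1 + \alpha_1 g_1 \leq f_2 + \alpha_1 g_2$; symmetrically, minimality of $x_2$ for $J_{\alpha_2}$ tested against $x_1$ gives $f_2 + \alpha_2 g_2 \leq f_1 + \alpha_2 g_1$. Rearranging the two yields the sandwiching chain
\begin{equation*}
    \alpha_2(g_2 - g_1) \leq f_1 - f_2 \leq \alpha_1(g_2 - g_1).
\end{equation*}

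From this chain both claims follow almost immediately, but the order matters: $g$ must be handled first. Subtracting the left-hand bound from the right-hand one gives $0 \leq (\alpha_1 - \alpha_2)(g_2 - g_1)$, i.e., $(g(\alpha_1) - g(\alpha_2))(\alpha_1 - \alpha_2) \leq 0$, which is exactly \eqref{eq:tikhonov:monoton_g}. For \eqref{eq:tikhonov:monoton_f}, I would argue by cases on the sign of $\alpha_1 - \alpha_2$ (or, by symmetry of the indices, simply assume $\alpha_1 > \alpha_2$): the monotonicity of $g$ just established forces $g_1 \leq g_2$, i.e., $g_2 - g_1 \geq 0$, and then the \emph{left} inequality of the chain gives $f_1 - f_2 \geq \alpha_2(g_2 - g_1) \geq 0$ since $\alpha_2 > 0$. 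Multiplying $f_1 - f_2 \geq 0$ by $\alpha_1 - \alpha_2 > 0$ gives \eqref{eq:tikhonov:monoton_f}.

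There is no genuine obstacle here; the argument is a clean consequence of the two optimality inequalities. The only points requiring care are the bookkeeping that produces the sandwiching chain (getting the signs of $\alpha_1 g_\cdot$ and $\alpha_2 g_\cdot$ on the correct sides) and respecting the dependency that the $f$-estimate reuses the monotonicity of $g$, so the two conclusions cannot be interchanged.
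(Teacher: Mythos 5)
Your proof is correct and uses essentially the same argument as the paper: both start from the two cross-testing optimality inequalities $f_1+\alpha_1 g_1\leq f_2+\alpha_1 g_2$ and $f_2+\alpha_2 g_2\leq f_1+\alpha_2 g_1$, and obtain \eqref{eq:tikhonov:monoton_g} by adding them. The only (cosmetic) difference is in \eqref{eq:tikhonov:monoton_f}: the paper divides the two inequalities by $\alpha_1$ and $\alpha_2$ respectively and adds, which yields the claim directly without the case distinction or the detour through the monotonicity of $g$ that your version requires.
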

\begin{proof}
    The minimization property of $x_{\alpha_1}^\delta$ for $J_{\alpha_1}$ and of $x_{\alpha_2}^\delta$ for $J_{\alpha_2}$ imply that
    \begin{align*}
        f(\alpha_1) + \alpha_1 g(\alpha_1)  &\leq f(\alpha_2) + \alpha_1 g(\alpha_2),\\
        f(\alpha_2) + \alpha_2 g(\alpha_2)  &\leq f(\alpha_1) + \alpha_2 g(\alpha_1).
    \end{align*}
    Adding these inequalities and rearranging immediately yields \eqref{eq:tikhonov:monoton_g}.
    Dividing the first inequality by $\alpha_1>0$, the second by $\alpha_2>0$, and adding both yields
    \begin{equation*}
        \frac1{\alpha_1}\left(f(\alpha_1)-f(\alpha_2)\right)\leq \frac1{\alpha_2}\left(f(\alpha_1)-f(\alpha_2)\right).
    \end{equation*}
    Multiplying by $\alpha_1\alpha_2>0$ and rearranging then yields \eqref{eq:tikhonov:monoton_f}.
\end{proof}
As expected, the residual norm is decreasing and the norm of $x_\alpha^\delta$ is increasing as $\alpha\to 0$.
We next consider for the value function $j$ the one-sided difference quotients
\begin{align*}
    D^+ j(\alpha) &:=  \lim_{t\to 0^+} \frac{j(\alpha+t)-j(\alpha)}{t},\\
    D^- j(\alpha) &:=  \lim_{t\to 0^-} \frac{j(\alpha+t)-j(\alpha)}{t}.
\end{align*}
\begin{lemma}
    For all $\alpha>0$,
    \begin{equation*}
        \begin{array}{rcccl}
            D^+ j(\alpha) &\leq & g(\alpha) & \leq& D^- j(\alpha),\\
            j(\alpha) - \alpha D^- j(\alpha) &\leq & f(\alpha)& \leq& j(\alpha) - \alpha D^+(\alpha).
        \end{array}
    \end{equation*}
\end{lemma}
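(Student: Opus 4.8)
The plan is to read off everything from the variational characterization $j(\alpha)=J_\alpha(x_\alpha^\delta)=\min_{x\in X}J_\alpha(x)$ furnished by \cref{thm:tikhonov:funktional}. The decisive structural observation is that for each \emph{fixed} $x\in X$ the map $\alpha\mapsto J_\alpha(x)=\frac12\norm{Kx-y^\delta}_Y^2+\frac\alpha2\norm{x}_X^2$ is affine in $\alpha$ with slope $\frac12\norm{x}_X^2$. Hence $j$, being a pointwise minimum of affine functions, is concave on $(0,\infty)$, so the one-sided derivatives $D^\pm j(\alpha)$ exist (and are finite). The two asserted chains of inequalities are then nothing but the statement that $g(\alpha)$ is a supergradient of the concave function $j$ at $\alpha$, squeezed between its one-sided slopes.

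First I would establish the supergradient inequality by evaluating $J_\beta$ at the specific minimizer $x_\alpha^\delta$. For any $\beta>0$, minimality gives
\begin{equation*}
    j(\beta)=\min_{x\in X}J_\beta(x)\leq J_\beta(x_\alpha^\delta)=f(\alpha)+\beta\,g(\alpha)=j(\alpha)+(\beta-\alpha)g(\alpha),
\end{equation*}
where the last equality uses $j(\alpha)=f(\alpha)+\alpha g(\alpha)$. Thus $j(\beta)-j(\alpha)\leq(\beta-\alpha)g(\alpha)$ for all $\beta>0$.

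Next I would specialize $\beta=\alpha+t$ and pass to the one-sided limits. For $t>0$, dividing by $t$ yields $t^{-1}\bigl(j(\alpha+t)-j(\alpha)\bigr)\leq g(\alpha)$, and letting $t\to0^+$ gives $D^+j(\alpha)\leq g(\alpha)$. For $t<0$, dividing by $t$ reverses the inequality to $t^{-1}\bigl(j(\alpha+t)-j(\alpha)\bigr)\geq g(\alpha)$, and letting $t\to0^-$ gives $D^-j(\alpha)\geq g(\alpha)$. Together these prove the first chain $D^+j(\alpha)\leq g(\alpha)\leq D^-j(\alpha)$. The second chain then follows by pure algebra from $f(\alpha)=j(\alpha)-\alpha g(\alpha)$: since $\alpha>0$, multiplying $g(\alpha)\leq D^-j(\alpha)$ by $\alpha$ and substituting gives $f(\alpha)\geq j(\alpha)-\alpha D^-j(\alpha)$, while $g(\alpha)\geq D^+j(\alpha)$ gives $f(\alpha)\leq j(\alpha)-\alpha D^+j(\alpha)$.

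The only genuinely delicate point — and the one I would flag as the main obstacle — is the existence of the one-sided derivatives and the legitimacy of passing the quotient inequalities to the limit. This is exactly where concavity is indispensable: it guarantees that the difference quotient $t\mapsto t^{-1}\bigl(j(\alpha+t)-j(\alpha)\bigr)$ is monotone, so the limits defining $D^\pm j(\alpha)$ exist, are finite on the open interval, and inherit the non-strict inequalities. Everything else is bookkeeping, and in particular no appeal to the singular value decomposition or to differentiability of $x_\alpha^\delta$ in $\alpha$ is needed, which is what makes the argument robust.
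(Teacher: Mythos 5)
Your argument is essentially the paper's own proof: both derive $j(\beta)\leq f(\alpha)+\beta g(\alpha)=j(\alpha)+(\beta-\alpha)g(\alpha)$ from the minimizing property of $x_\alpha^\delta$, divide by $t=\beta-\alpha$ with the sign-dependent reversal, pass to the one-sided limits, and obtain the second chain by algebra from $j=f+\alpha g$. Your additional observation that $j$ is concave as a pointwise minimum of affine functions -- which guarantees monotonicity of the difference quotients and hence existence of $D^\pm j(\alpha)$ -- is a welcome justification of a step the paper leaves implicit.
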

\begin{proof}
    For any $\alpha,\tilde\alpha>0$, the minimization property for $j$ yields that
    \begin{equation*}
        j(\tilde\alpha) = f(\tilde\alpha) + \tilde\alpha g(\tilde\alpha)\leq f(\alpha) + \tilde\alpha g(\alpha).
    \end{equation*}
    Hence,
    \begin{equation*}
        \begin{aligned}
            j(\alpha) - j(\tilde\alpha)
            &=  f(\alpha) + \alpha g(\alpha) -  f(\tilde\alpha) - \tilde\alpha g(\tilde\alpha)\\
            &\geq  f(\alpha) + \alpha g(\alpha) -  f(\alpha) - \tilde\alpha g(\alpha)\\
            &=(\alpha - \tilde \alpha) g(\alpha),
        \end{aligned}
    \end{equation*}
    which implies for $\tilde\alpha:= \alpha+t>\alpha$ with $t>0$ that
    \begin{equation*}
        \frac{j(\alpha+t) - j(\alpha)}{t} \leq g(\alpha).
    \end{equation*}
    Passing to the limit $t\to 0$ thus shows that $D^+j(\alpha)\leq g(\alpha)$. The corresponding inequality for $D^-j(\alpha)$ follows analogously with $t<0$.

    The remaining inequalities follow from this together with the definition of $j$; for example, using
    \begin{equation*}
        j(\alpha) = f(\alpha) + \alpha g(\alpha) \leq f(\alpha) + \alpha D^-j(\alpha),
    \end{equation*}
    and rearranging.
\end{proof}

By one of Lebesgue's theorems (see \cite[Theorem~V.17.12]{HewittStromberg}), a monotone function is differentiable almost everywhere (i.e., $D^-f \neq D^+f$ on at most a set of Lebesgue measure zero).
Hence, $f$ and $g$ and therefore also $j = f+\alpha g$ are differentiable almost everywhere, and we obtain the following expression for the derivative of the latter.
\begin{cor}
    For almost all $\alpha>0$, the value function $j$ is differentiable with
    \begin{equation*}
        j'(\alpha) = g(\alpha).
    \end{equation*}
\end{cor}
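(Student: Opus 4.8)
The plan is to combine the two-sided estimate from the preceding lemma with the almost-everywhere differentiability already granted by Lebesgue's theorem; no new analytic input is needed, so the argument is short.

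First I would record the measure-theoretic input. By \cref{lem:tikhonov:monoton} the value functions $f$ and $g$ are monotone (increasing and decreasing, respectively), so by the cited theorem of Lebesgue each is differentiable at almost every $\alpha>0$. Since $\alpha\mapsto\alpha$ is differentiable everywhere, the product $\alpha g(\alpha)$ is differentiable wherever $g$ is, and hence $j=f+\alpha g$ is differentiable outside a set $N\subset(0,\infty)$ of Lebesgue measure zero. It is worth stressing here that this conclusion rests on the a.e.\ differentiability of the monotone \emph{pieces} $f$ and $g$, not on monotonicity of $j$ itself (which need not hold).

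Next I would fix an arbitrary $\alpha\in(0,\infty)\setminus N$. Differentiability of $j$ at such a point means exactly that the two one-sided difference quotients converge to a common limit, that is,
\begin{equation*}
    D^+ j(\alpha) = D^- j(\alpha) = j'(\alpha).
\end{equation*}
On the other hand, the preceding lemma provides the sandwich $D^+ j(\alpha)\leq g(\alpha)\leq D^- j(\alpha)$, which holds for \emph{every} $\alpha>0$. Combining these two facts at the chosen $\alpha$ forces the outer terms to coincide with $g(\alpha)$, so the inequality collapses to $j'(\alpha)=g(\alpha)$. As $\alpha\in(0,\infty)\setminus N$ was arbitrary and $N$ is a null set, this establishes the claim for almost all $\alpha>0$.

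There is no real obstacle in this argument; the only point requiring a moment's care is the interpretation of ``differentiable,'' namely that it forces the left and right derivatives to exist and agree, which is precisely what allows the sandwich to pin down the common value as $g(\alpha)$.
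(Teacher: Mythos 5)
Your argument is correct and follows exactly the route the paper intends: almost-everywhere differentiability of $j=f+\alpha g$ from Lebesgue's theorem applied to the monotone functions $f$ and $g$ (\cref{lem:tikhonov:monoton}), combined with the sandwich $D^+j(\alpha)\leq g(\alpha)\leq D^-j(\alpha)$ from the preceding lemma to identify $j'(\alpha)=g(\alpha)$ at every point of differentiability. Nothing is missing.
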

This characterization can be useful for example when implementing minimization-based heuristic parameter choice rules.

Furthermore, \cref{thm:tikhonov:funktional} suggests a new interpretation of the simplest source condition $x^\dag\in X_1=\calR(K^*)$. Since the minimizer of \eqref{eq:tikhonov:funktional} does not change when dividing the Tikhonov functional by $\alpha>0$, the minimizer $x_\alpha^\delta$ is also a minimizer of
\begin{equation}\label{eq:tikhonov:funkt_ref}
    \min_{x\in X} \frac{1}{2\alpha}\norm{Kx-y^\delta}_Y^2 + \frac12\norm{x}_X^2.
\end{equation}
Now we want $x_\alpha^\delta\to x^\dag$ as $\delta\to 0$ and $\alpha\to 0$. Formally passing to the limits in \eqref{eq:tikhonov:funkt_ref}, i.e., first replacing $y^\delta$ with $y\in\calR(K)$ and then letting $\alpha\to0$, we see that the limit functional can only have a finite minimum in some $\bar x$ if $K\bar x=y$. The limit functional is therefore given by
\begin{equation}
    \label{eq:tikhonov:funkt_const}
    \min_{x\in X,\ Kx=y}  \frac12\norm{x}_X^2.
\end{equation}
We again proceed formally. Introducing the Lagrange multiplier $p\in Y$, we can write \eqref{eq:tikhonov:funkt_const} as the unconstrained saddle-point problem
\begin{equation*}
    \min_{x\in X}\max_{p\in Y} \frac12\norm{x}_X^2 - \inner{p}{Kx-y}_Y.
\end{equation*}
For $(\bar x,\bar p)\in X\times Y$ to be a saddle point, the partial derivatives with respect to both $x$ and $p$ have to vanish, leading to the conditions
\begin{equation*}
    \left\{
        \begin{aligned}
            \bar x &= K^*\bar p,\\
            K\bar x &= y.
        \end{aligned}
    \right.
\end{equation*}
But for $y\in \calR(K)$, the solution of \eqref{eq:tikhonov:funkt_const} describes exactly the minimum norm solution $x^\dag$, i.e., $\bar x = x^\dag$.
The existence of a Lagrange multiplier $\bar p$ with $x^\dag = K^*\bar p$ is therefore equivalent to the source condition $x^\dag \in \calR(K^*)$.
(Since $K^*$ need not be surjective, this is a non-trivial assumption.)
Intuitively, this makes sense: If we want to approximate $x^\dag$ by a sequence of minimizers $x_\alpha^\delta$, the limit $x^\dag$ should itself be a minimizer (of an appropriate limit problem).

Finally, the interpretation of Tikhonov regularization as minimizing a functional can -- in contrast to the construction via the singular value decomposition -- be extended to \emph{nonlinear} operator equations as well as to equations in Banach spaces. It can further be generalized by replacing the squared norms by other \emph{discrepancy} and \emph{penalty functionals}. Of course, this also entails generalized source conditions. We will return to this in \cref{chap:tikhonov-nl}.

\chapter{Landweber regularization}\label{chap:landweber}

The usual starting point for deriving Landweber regularization is the characterization from \cref{thm:inverse:normalen} of the minimum norm solution as the solution $x\in\calN(K)^\bot$ of the normal equations \eqref{eq:inverse:normal}. These can be written equivalently for any $\omega>0$ as the fixed point equation
\begin{equation*}
    x = x - \omega(K^*Kx-K^*y) = x + \omega K^*(y-Kx).
\end{equation*}
The corresponding fixed-point iteration -- also known as \emph{Richardson iteration}%
\footnote{This method for the solution of linear systems of equations traces back to \href{http://www-history.mcs.st-andrews.ac.uk/Biographies/Richardson.html}{Lewis Fry Richardson}. He also proposed in 1922 the modern method of weather prediction by numerical simulation. (His own first attempt in 1910 -- by hand! -- was correct in principle but gave wrong results due to noisy input data. Weather prediction is an ill-posed problem!)}%
-- is
\begin{equation}\label{eq:landweber:iteration}
    x_n = x_{n-1} + \omega K^*(y-Kx_{n-1}), \qquad n\in \N,
\end{equation}
for some $x_0\in X$. Here we only consider $x_0=0$ for the sake of simplicity.
The Banach Fixed-Point Theorem ensures that this iteration converges to a solution of the normal equations if $y\in \calR(K)$ and $\norm{\Id - \omega K^*K}_{\linop(X,X)}<1$.
Since $x_0 = 0\in\calR(K^*)$, an induction argument shows that $x_n\in\calR(K^*)\subset\calN(K)^\bot$ for all $n\in\N$, and therefore $x_n\to x^\dag$. If $y^\delta\notin\calR(K)$, however, no convergence can be expected.
The ideas is therefore to stop the iteration early, i.e., take $x_m$ for an appropriate $m\in\N$ as the regularized approximation. The \emph{stopping index} $m\in\N$ thus plays the role of the regularization parameter here, which fits into the framework of \cref{chap:spektral} if we set $\alpha = \frac1m>0$.%
\footnote{This method was first proposed for the solution of ill-posed operator equations by Lawrence Landweber. In \cite{Landweber}, he shows the convergence for $y\in\calR(K)$; otherwise, he then writes,
\enquote{such a sequence may give useful successive approximations}.}

Performing $m$ steps of the iteration \eqref{eq:landweber:iteration} can be formulated as a spectral regularization. For this, we first derive a recursion-free characterization of the final iterate $x_m$.
\begin{lemma}
    If $x_0 =0$, then
    \begin{equation*}
        x_m = \omega \sum_{n=0}^{m-1}(\Id - \omega K^*K)^n K^*y\qquad\text{for all }m\in\N.
    \end{equation*}
\end{lemma}
\begin{proof}
    We proceed by induction. For $m=1$,
    \begin{equation*}
        x_1 = \omega K^*y = \omega (\Id - \omega K^*K)^0 K^*y.
    \end{equation*}
    Let now $m\in\N$ be arbitrary, and let the claim hold for $x_m$. Then
    \begin{equation*}
        \begin{aligned}[b]
            x_{m+1} & =  x_{m} + \omega K^*(y-Kx_m)\\
            & = (\Id - \omega K^*K)x_m + \omega K^*y \\
            & =  (\Id - \omega K^*K)\left(\omega \sum_{n=0}^{m-1}(\Id - \omega K^*K)^n K^*y\right) + \omega K^*y\\
            & = \omega \sum_{n=0}^{m-1}(\Id - \omega K^*K)^{n+1} K^*y +  \omega (\Id - \omega K^*K)^0 K^*y\\
            & =  \omega \sum_{n=0}^{m}(\Id - \omega K^*K)^{n} K^*y.
        \end{aligned}
        \qedhere
    \end{equation*}
\end{proof}
Performing $m$ steps of the \emph{Landweber iteration} \eqref{eq:landweber:iteration} is thus equivalent to applying a linear operator, i.e.,
\begin{equation*}
    x_m = \phi_m(K^*K)K^*y
\end{equation*}
for
\begin{equation*}
    \phi_m(\lambda) = \omega \sum_{n=0}^{m-1}(1 - \omega \lambda)^n = \omega \frac{1-(1-\omega\lambda)^{m}}{1-(1-\omega\lambda)} = \frac{1-(1-\omega\lambda)^{m}}{\lambda}.
\end{equation*}
Apart from the notation $\phi_m$ instead of $\phi_\alpha$ for $\alpha=\frac1m$ (i.e., considering $m\to\infty$ instead of $\alpha\to 0$), this is exactly the filter from \cref{ex:spektral}\,(iii).
\begin{theorem}
    For any $\omega\in (0,\kappa^{-1})$, the family $\{\phi_m\}_{m\in\N}$ defines a regularization $\{R_m\}_{m\in\N}$ with $R_m :=  \phi_m(K^*K)K^*$.
\end{theorem}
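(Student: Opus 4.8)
The plan is to recognize that, with $m\to\infty$ playing the role of $\alpha\to 0$, the statement is nothing but the claim that $\{\phi_m\}_{m\in\N}$ is a regularizing filter in the sense of the definition preceding \cref{ex:spektral}. Once the two filter conditions are verified, \cref{thm:spektral:konvergenz} applies verbatim and yields that $\{R_m\}_{m\in\N}$ with $R_m=\phi_m(K^*K)K^*$ is a regularization. So the entire proof reduces to checking (a) that each $\phi_m$ is piecewise continuous and bounded on $[0,\kappa]$, (b) pointwise convergence $\phi_m(\lambda)\to 1/\lambda$, and (c) uniform boundedness $\lambda|\phi_m(\lambda)|\leq C_\phi$.

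The one preliminary observation that drives everything is that the hypothesis $\omega\in(0,\kappa^{-1})$ forces $1-\omega\lambda\in(0,1)$ for every $\lambda\in(0,\kappa]$: indeed $\omega\lambda\leq\omega\kappa<1$ and $\omega\lambda>0$. I would record this first. For (a), I would rewrite $\phi_m$ in its polynomial form $\phi_m(\lambda)=\omega\sum_{n=0}^{m-1}(1-\omega\lambda)^n$ (already derived in the preceding lemma), which is a polynomial in $\lambda$, hence continuous on all of $[0,\kappa]$ and, since each summand satisfies $|1-\omega\lambda|^n\leq 1$ there, bounded by $\omega m$ (in particular the apparent singularity of $\tfrac{1-(1-\omega\lambda)^m}{\lambda}$ at $\lambda=0$ is removable with value $\omega m$).

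For (b) and (c), I would work with the compact form $\lambda\phi_m(\lambda)=1-(1-\omega\lambda)^m$. Condition (b) follows because $1-\omega\lambda\in(0,1)$ implies $(1-\omega\lambda)^m\to 0$ as $m\to\infty$, so $\phi_m(\lambda)=\tfrac{1-(1-\omega\lambda)^m}{\lambda}\to\tfrac1\lambda$ for each fixed $\lambda\in(0,\kappa]$. Condition (c) follows at once from the same two-sided bound $0<(1-\omega\lambda)^m<1$, which gives $0<\lambda\phi_m(\lambda)=1-(1-\omega\lambda)^m<1$, so that $\lambda|\phi_m(\lambda)|\leq 1$ uniformly in $m$ and $\lambda$; hence $C_\phi=1$.

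There is no real obstacle here beyond bookkeeping; the only point that genuinely requires care is the use of the constraint on $\omega$. I would emphasize that $\omega<\kappa^{-1}$ is exactly what guarantees $1-\omega\lambda\geq 0$ on $[0,\kappa]$: if one only assumed $1-\omega\lambda$ to have modulus below $1$ (i.e.\ $\omega<2/\kappa$), pointwise convergence would still hold, but for odd $m$ with $1-\omega\lambda<0$ one would get $1-(1-\omega\lambda)^m>1$, so the clean bound $C_\phi=1$ would be lost and one would have to track a larger constant. Under the stated hypothesis everything collapses to $C_\phi=1$, and invoking \cref{thm:spektral:konvergenz} closes the argument.
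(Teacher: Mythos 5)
Your proof is correct and follows essentially the same route as the paper: verify that $0<1-\omega\lambda<1$ on $(0,\kappa]$, deduce pointwise convergence of $\phi_m(\lambda)$ to $1/\lambda$ and the uniform bound $\lambda|\phi_m(\lambda)|=|1-(1-\omega\lambda)^m|\leq 1=:C_\phi$, and invoke \cref{thm:spektral:konvergenz}. The additional checks (continuity and boundedness of each $\phi_m$ via the polynomial form, removability of the singularity at $\lambda=0$) are fine and only make explicit what the paper leaves implicit.
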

\begin{proof}
    We only have to show that $\phi_m(\lambda)\to \frac1\lambda$ as $m\to\infty$ and that $\lambda\phi_m(\lambda)$ is uniformly bounded for all $m\in\N$.
    By the assumption on $\omega$, we have
    $0< 1-\omega\lambda<1$ for all $\lambda\in(0,\kappa]$, which yields $(1-\omega\lambda)^m\to 0$ as $m\to\infty$ as well as
    \begin{equation*}
        \lambda |\phi_m(\lambda)| = |1-(1-\omega\lambda)^m| \leq 1 =: C_\phi \qquad\text{for all } m\in\N \text{ and } \lambda\in(0,\kappa].
    \end{equation*}
    Hence $\{\phi_m\}_{m\in\N}$ is a regularizing filter, and the claim follows from \cref{thm:spektral:konvergenz}.
\end{proof}
Hence the Landweber iteration converges to a minimum norm solution $x^\dag$ as $m\to\infty$ if and only if  $y\in\calD(K^\dag)$; otherwise it diverges. It therefore suggest itself to choose the stopping index by the discrepancy principle: Pick $\tau>1$ and take $m(\delta,y^\delta)$ such that $x_m^\delta := R_my^\delta$ satisfies
\begin{equation}
    \label{eq:landweber:diskrepanz}
    \norm{Kx_{m(\delta,y^\delta)}^\delta - y^\delta}_Y \leq \tau \delta < \norm{Kx_{m}^\delta - y^\delta}_Y \qquad\text{for all }m<m(\delta,y^\delta).
\end{equation}
(This does not require any additional effort since the residual $y^\delta - Kx_m^\delta$ is computed as part of the iteration \eqref{eq:landweber:iteration}.)
The existence of such an $m(\delta,y^\delta)$ is guaranteed by \cref{thm:morozov} together with \cref{lem:spektral:beschraenkt_K}.

We now address convergence rates, where from now on we assume that $\omega\in(0,\kappa^{-1})$.
\begin{theorem}
    For all $\nu>0$ and $\tau>1$, the Landweber iteration \eqref{eq:landweber:iteration} together with the discrepancy principle \eqref{eq:landweber:diskrepanz} is an order optimal regularization method.
\end{theorem}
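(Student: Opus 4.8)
The plan is to reduce everything to the Morozov discrepancy-principle result \cref{thm:spektral:morozov}, which (given a filter with qualification $\nu_0>0$ and $\tau>C_r$) already yields an order optimal regularization method for all $\nu\in(0,\nu_0-1]$. Since the statement asserts order optimality for \emph{all} $\nu>0$ and all $\tau>1$, the key point I must establish is that the Landweber filter has \emph{infinite qualification}, i.e.\ that condition \eqref{eq:spektral:ordnung_omega} holds for every $\nu>0$; then $\nu_0-1=\infty$ and \cref{thm:spektral:morozov} applies on all of $(0,\infty)$, while the threshold $C_r$ in \eqref{eq:spektral:ordnung_r} will turn out to be exactly $1$.

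First I would record the residual factor of the filter. From $\phi_m(\lambda)=\lambda^{-1}\bigl(1-(1-\omega\lambda)^m\bigr)$ we get
\[
    r_m(\lambda)=1-\lambda\phi_m(\lambda)=(1-\omega\lambda)^m .
\]
For $\omega\in(0,\kappa^{-1})$ and $\lambda\in(0,\kappa]$ we have $0<1-\omega\lambda<1$, hence $0<r_m(\lambda)<1$ and therefore $C_r=\sup_{m\in\N,\,\lambda\in(0,\kappa]}|r_m(\lambda)|=1$. Thus the hypothesis $\tau>C_r$ of \cref{thm:spektral:morozov} becomes precisely $\tau>1$, matching the statement. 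Condition \eqref{eq:spektral:ordnung_phi} is then immediate from the geometric-sum form $\phi_m(\lambda)=\omega\sum_{n=0}^{m-1}(1-\omega\lambda)^n$: bounding each summand by $1$ gives $0\leq\phi_m(\lambda)\leq\omega m=\omega\alpha^{-1}$ for $\alpha=\tfrac1m$.

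The substantive step is the qualification bound \eqref{eq:spektral:ordnung_omega}. Using $1-\omega\lambda\leq e^{-\omega\lambda}$, I would estimate
\[
    \omega_\nu(\tfrac1m)=\sup_{\lambda\in(0,\kappa]}\lambda^{\nu/2}(1-\omega\lambda)^m
    \leq\sup_{\lambda>0}\lambda^{\nu/2}e^{-m\omega\lambda}.
\]
The function $\lambda\mapsto\lambda^{\nu/2}e^{-m\omega\lambda}$ attains its maximum at $\lambda^\ast=\nu/(2m\omega)$, where it equals $\bigl(\tfrac{\nu}{2e\omega}\bigr)^{\nu/2}m^{-\nu/2}$. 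Hence $\omega_\nu(\tfrac1m)\leq C_\nu\,(\tfrac1m)^{\nu/2}$ with $C_\nu=\bigl(\tfrac{\nu}{2e\omega}\bigr)^{\nu/2}$, which is exactly \eqref{eq:spektral:ordnung_omega} for every $\nu>0$; the Landweber filter therefore has infinite qualification.

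Having verified \eqref{eq:spektral:ordnung_phi} and \eqref{eq:spektral:ordnung_omega} for all $\nu>0$ together with $C_r=1$, \cref{thm:spektral:morozov} yields that the discrepancy principle \eqref{eq:landweber:diskrepanz} gives an order optimal regularization method for every $\nu\in(0,\infty)$ and every $\tau>1$; existence of the stopping index $m(\delta,y^\delta)$ was already guaranteed by \cref{thm:morozov} together with \cref{lem:spektral:beschraenkt_K}. I expect the only real obstacle to be the uniform-in-$\nu$ maximization in \eqref{eq:spektral:ordnung_omega}: the clean exponential estimate $1-\omega\lambda\leq e^{-\omega\lambda}$ avoids the case distinction that was needed for Tikhonov and delivers the bound for all $\nu$ simultaneously, which is precisely what produces the infinite qualification driving the ``for all $\nu>0$'' conclusion.
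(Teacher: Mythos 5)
Your proposal is correct and follows essentially the same route as the paper: verify \eqref{eq:spektral:ordnung_phi}, show $C_r=1$, and establish infinite qualification by maximizing $\lambda^{\nu/2}e^{-m\omega\lambda}$ at $\lambda^*=\nu/(2m\omega)$, then invoke \cref{thm:spektral:morozov}. The only cosmetic difference is that you bound $\phi_m$ by summing the geometric series termwise where the paper invokes Bernoulli's inequality; the resulting bound $\omega m$ and the constant $C_\nu=(\nu/(2e\omega))^{\nu/2}$ agree with the paper's.
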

\begin{proof}
    We apply \cref{thm:spektral:morozov}, for which we verify the necessary conditions (following the convention $\alpha:=\frac1m$).
    First, due to $\omega\lambda<1$ Bernoulli's inequality yields that
    \begin{equation*}
        |\phi_m(\lambda)| = \frac{|1-(1-\omega\lambda)^m|}{\lambda} \leq \frac{|1-1+ m \omega\lambda|}{\lambda} = \omega m = \omega \alpha^{-1}\qquad \text{for all }\lambda \in(0,\kappa]
    \end{equation*}
    and hence that \eqref{eq:spektral:ordnung_phi} holds. (Clearly for $\omega \leq 1$; otherwise we can follow the proof of \cref{thm:spektral:morozov} and see that the additional constant $\omega$ only leads to a larger constant $C_2$.)

    Bernoulli's inequality further implies that $(1+x) \leq e^x$ and hence that
    \begin{equation*}
        r_m(\lambda) = 1-\lambda\phi_m(\lambda) = (1-\omega\lambda)^m \leq e^{-\omega\lambda m}\leq 1 =:  C_r \qquad\text{for all }m\in\N, \lambda\in(0,\kappa].
    \end{equation*}
    We now consider for fixed $\nu>0$ and $m\in\N$ the function $h_m(\lambda) :=  \lambda^{\nu/2}e^{-\omega\lambda m}$ and compute
    \begin{equation*}
        h_m'(\lambda) = \frac\nu2 \lambda^{\nu/2-1}e^{-\omega\lambda m} - \omega m  \lambda^{\nu/2}e^{-\omega\lambda m} = \lambda^{\nu/2-1}e^{-\omega\lambda m}\omega m\left(\frac{\nu}{2\omega m} - \lambda\right).
    \end{equation*}
    The root $\lambda^* = \frac{\nu}{2\omega m}$ of this derivative satisfies $h_m''(\lambda^*)<0$, and hence
    \begin{equation*}
        \sup_{\lambda\in(0,\kappa]} \lambda^{\nu/2} r_m(\lambda) \leq\sup_{\lambda\in(0,\infty)} h_m(\lambda) = h_m\left( \frac{\nu}{2\omega m}\right) = e^{-\nu/2} \left(\frac{\nu}{2\omega}\right)^{\nu/2}m^{-\nu/2} =:  C_\nu \alpha^{\nu/2}.
    \end{equation*}
    This shows that \eqref{eq:spektral:ordnung_omega} holds for all $\nu>0$. Landweber regularization thus has infinite qualification, and the claim follows for $\tau>C_r =1$ from \cref{thm:spektral:morozov}.
\end{proof}

We next study the monotonicity of the Landweber iteration.
\begin{theorem}\label{thm:landweber:res_monoton}
    Let $m\in \N$. If $Kx_m^\delta-y^\delta\neq 0$, then
    \begin{equation*}
        \norm{Kx_{m+1}^\delta -y^\delta}_Y < \norm{Kx_{m}^\delta -y^\delta}_Y.
    \end{equation*}
\end{theorem}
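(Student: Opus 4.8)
The plan is to track the evolution of the residual $s_m := Kx_m^\delta - y^\delta \in Y$ under a single Landweber step and to show that the step strictly contracts its norm. Applying $K$ to the iteration \eqref{eq:landweber:iteration} with the noisy data $y^\delta$ gives $Kx_{m+1}^\delta = Kx_m^\delta - \omega KK^* s_m$, so the residual obeys the linear recursion
\begin{equation*}
    s_{m+1} = (\Id - \omega KK^*) s_m .
\end{equation*}
This reduces the claim to showing $\norm{(\Id - \omega KK^*)s_m}_Y < \norm{s_m}_Y$ whenever $s_m \neq 0$.

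First I would expand the square, using that $KK^*$ is selfadjoint, as
\begin{equation*}
    \norm{s_{m+1}}_Y^2 = \norm{s_m}_Y^2 - 2\omega \inner{KK^* s_m}{s_m}_Y + \omega^2 \norm{KK^* s_m}_Y^2 .
\end{equation*}
The cross term simplifies via the adjoint relation to $\inner{KK^* s_m}{s_m}_Y = \norm{K^* s_m}_X^2$, while the quadratic term is controlled by $\norm{KK^* s_m}_Y^2 = \norm{K(K^* s_m)}_Y^2 \leq \norm{K}_{\linop(X,Y)}^2 \norm{K^* s_m}_X^2 = \kappa \norm{K^* s_m}_X^2$. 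Substituting both yields
\begin{equation*}
    \norm{s_{m+1}}_Y^2 \leq \norm{s_m}_Y^2 - \omega(2 - \omega\kappa)\norm{K^* s_m}_X^2 .
\end{equation*}
Since $\omega \in (0,\kappa^{-1})$ we have $\omega\kappa < 1$, so $\omega(2 - \omega\kappa) > 0$, and therefore $\norm{s_{m+1}}_Y \leq \norm{s_m}_Y$ with equality only if $K^* s_m = 0$.

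The step that needs care is the \emph{strict} inequality: the hypothesis $s_m \neq 0$ must be upgraded to $K^* s_m \neq 0$. This is exactly where the standing assumption that $\calR(K)$ is dense in $Y$ enters, since it gives $\calN(K^*) = \overline{\calR(K)}{}^\bot = \{0\}$, so $K^*$ is injective and $s_m \neq 0$ forces $K^* s_m \neq 0$, hence the strict decrease. (Without density one genuinely needs $K^* s_m \neq 0$: if $y^\delta \in \calN(K^*)$, then $x_m^\delta \equiv 0$ and the residual stays constant.) An equivalent and perhaps more transparent route, matching the spectral style of these notes, is to insert the singular system $\{(\sigma_n,u_n,v_n)\}_{n\in\N}$ of $K$ and write
\begin{equation*}
    \norm{s_{m+1}}_Y^2 = \sum_{n\in\N} (1-\omega\sigma_n^2)^2\,\abs{\inner{s_m}{u_n}_Y}^2 + \norm{(\Id-P_{\overline{\calR}})s_m}_Y^2 ,
\end{equation*}
comparing termwise with $\norm{s_m}_Y^2$; because $0 < (1-\omega\sigma_n^2)^2 < 1$ for every $n$, strict decrease is equivalent to $P_{\overline{\calR}}s_m \neq 0$, i.e. again to $K^* s_m \neq 0$. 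I expect this strictness/density point, rather than the elementary estimate itself, to be the only real obstacle.
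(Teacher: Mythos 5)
Your proof is correct and follows essentially the same route as the paper: the paper derives the same residual recursion $Kx_{m+1}^\delta - y^\delta = (\Id-\omega KK^*)(Kx_m^\delta - y^\delta)$ and then expands in the singular system exactly as in your second variant. Your observation that the \emph{strict} inequality requires $K^*(Kx_m^\delta-y^\delta)\neq 0$, supplied by the chapter's standing assumption that $\calR(K)$ is dense in $Y$, is well spotted --- the paper passes over this silently, and its displayed sum tacitly drops the $\calN(K^*)$-component of the residual, which is only harmless under that density assumption.
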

\begin{proof}
    The iteration \eqref{eq:landweber:iteration} implies that
    \begin{equation*}
        \begin{aligned}
            Kx_{m+1}^\delta - y^\delta &= K\left((\Id-\omega K^*K)x_m^\delta + \omega K^*y^\delta\right) - y^\delta \\
            &= (\Id -\omega KK^*)Kx_m^\delta - (\Id+\omega KK^*)y^\delta\\
            &= (\Id -\omega KK^*)(Kx_m^\delta-y^\delta)
        \end{aligned}
    \end{equation*}
    and hence due to $\omega < \kappa^{-1} = \sigma_1^{-2}\leq \sigma_n^{-2}$ for all $n\in\N$ that
    \begin{equation*}
        \begin{aligned}[b]
            \norm{Kx_{m+1}^\delta - y^\delta}_Y^2 &= \sum_{n\in\N} (1-\omega\sigma_n^2)^2 \left|\inner{Kx_m^\delta-y^\delta}{u_n}_Y\right|^2 \\
            &< \sum_{n\in\N}  \left|\inner{Kx_m^\delta-y^\delta}{u_n}_Y\right|^2 \leq \norm{Kx_{m}^\delta - y^\delta}_Y^2.
        \end{aligned}
        \qedhere
    \end{equation*}
\end{proof}
The residual therefore always decreases as $m\to\infty$ (even though a least squares solution minimizing the residual will not exist for $y\notin\calD(K^\dag)$). For the error, this can be guaranteed only up to a certain step.
\begin{theorem}\label{thm:landweber:semikonvergenz}
    Let $m\in\N$. If
    \begin{equation*}
        \norm{Kx_m^\delta -y^\delta}_Y > 2\delta,
    \end{equation*}
    then
    \begin{equation*}
        \norm{x_{m+1}^{\delta}-x^\dag}_X < \norm{x_{m}^{\delta}-x^\dag}_X.
    \end{equation*}
\end{theorem}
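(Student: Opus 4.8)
The plan is to track the squared error $\norm{x_m^\delta - x^\dag}_X^2$ directly and show it strictly decreases under the stated condition. Write $e_m := x_m^\delta - x^\dag$ and let $r_m := y^\delta - Kx_m^\delta$ denote the residual; since we are in the setting $y\in\calR(K)$ underlying the discrepancy principle, we have $Kx^\dag = y$. The Landweber step \eqref{eq:landweber:iteration} gives $e_{m+1} = e_m + \omega K^* r_m$, so expanding the norm yields
\begin{equation*}
    \norm{e_{m+1}}_X^2 - \norm{e_m}_X^2 = 2\omega\inner{K^* r_m}{e_m}_X + \omega^2\norm{K^* r_m}_X^2 = 2\omega\inner{r_m}{Ke_m}_Y + \omega^2\norm{K^* r_m}_X^2,
\end{equation*}
where the last equality uses the definition of the adjoint. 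The goal is to show the right-hand side is negative.

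Next I would rewrite the cross term. From $Kx^\dag = y$ it follows that $Ke_m = Kx_m^\delta - y = -r_m + (y^\delta - y)$, so that
\begin{equation*}
    \inner{r_m}{Ke_m}_Y = -\norm{r_m}_Y^2 + \inner{r_m}{y^\delta - y}_Y \leq -\norm{r_m}_Y^2 + \delta\,\norm{r_m}_Y
\end{equation*}
by Cauchy--Schwarz and $\norm{y^\delta - y}_Y\leq\delta$. For the quadratic term I would use $\norm{K^* r_m}_X^2 = \inner{KK^* r_m}{r_m}_Y \leq \norm{KK^*}_{\linop(Y,Y)}\norm{r_m}_Y^2 = \kappa\,\norm{r_m}_Y^2$, together with the standing assumption $\omega < \kappa^{-1}$, to conclude $\omega^2\norm{K^* r_m}_X^2 < \omega\,\norm{r_m}_Y^2$.

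Combining the two bounds gives
\begin{equation*}
    \norm{e_{m+1}}_X^2 - \norm{e_m}_X^2 < -2\omega\,\norm{r_m}_Y^2 + 2\omega\delta\,\norm{r_m}_Y + \omega\,\norm{r_m}_Y^2 = \omega\,\norm{r_m}_Y\left(2\delta - \norm{r_m}_Y\right).
\end{equation*}
Under the hypothesis $\norm{r_m}_Y = \norm{Kx_m^\delta - y^\delta}_Y > 2\delta$ the bracket is negative while $\omega\,\norm{r_m}_Y > 0$, so $\norm{e_{m+1}}_X^2 < \norm{e_m}_X^2$, which is exactly the claim.

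The computation itself is routine; the only genuine subtleties are bookkeeping ones. The first is the identity $Kx^\dag = y$, which is where I silently invoke $y\in\calR(K)$ -- otherwise one only has $Kx^\dag = P_{\overline{\calR}}y$, and the cross term must be projected onto $\overline{\calR(K)}$ before Cauchy--Schwarz is applied; this is the standing assumption for the discrepancy principle and harmless here. The second, and the real point of the argument, is tracking exactly where the constant $2$ enters: it is forced by the cross term contributing $+2\omega\delta\,\norm{r_m}_Y$ while the two residual-squared terms collapse to $-\omega\,\norm{r_m}_Y^2$ after the estimate $\omega^2\kappa < \omega$. This is precisely why the monotonicity threshold is $2\delta$, and why the strict inequality $\omega<\kappa^{-1}$ is needed rather than merely $\omega\le\kappa^{-1}$.
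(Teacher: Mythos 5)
Your proof is correct and follows essentially the same route as the paper: expand the squared error along one Landweber step, use $Kx^\dag = y$ and Cauchy--Schwarz to bound the cross term by $-\norm{r_m}_Y^2 + \delta\norm{r_m}_Y$ (times $2\omega$), and use $\omega\kappa<1$ to absorb the quadratic term; the only cosmetic difference is that the paper keeps the two contributions as separately negative brackets while you collapse them into the single factor $\omega\norm{r_m}_Y(2\delta-\norm{r_m}_Y)$. Your closing remarks on where the constant $2$ and the strictness of $\omega<\kappa^{-1}$ enter match the paper's reasoning exactly.
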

\begin{proof}
    We again use the iteration to write with $\rho_m^\delta = y^\delta - Kx_m^\delta$ and $y=Kx^\dag$
    \begin{equation*}
        \begin{aligned}
            \norm{x_{m+1}^\delta - x^\dag}_X^2 &= \norm{x_m^\delta - x^\dag + \omega K^*(y^\delta - Kx_m^\delta)}_X^2 \\
            &= \norm{x_m^\delta - x^\dag}_X^2 - 2 \omega\inner{Kx^\dag - Kx_m^\delta}{ \rho_m^\delta}_Y + \omega^2\norm{K^*\rho_m^\delta}_X^2\\
            &= \norm{x_m^\delta - x^\dag}_X^2  + \omega\inner{\rho_m^\delta - 2 y + 2Kx_m^\delta}{\rho_m^\delta}_Y+ \omega\left(\omega\norm{K^*\rho_m^\delta}_X^2-\norm{\rho_m^\delta}_Y^2\right).
        \end{aligned}
    \end{equation*}
    We now have to show that the last two terms are negative.
    For the first term, we use the definition of $\rho_m^\delta$ and obtain by inserting $\rho_m^\delta= 2\rho_m^\delta-\rho_m^\delta = 2y^\delta -2Kx_m^\delta - \rho_m^\delta$ that
    \begin{equation*}
        \begin{aligned}
            \inner{\rho_m^\delta - 2 y + 2Kx_m^\delta}{\rho_m^\delta}_Y &= 2\inner{y^\delta -y}{ \rho_m^\delta}_Y - \norm{\rho_m^\delta}_Y^2\\
            &\leq 2\delta \norm{\rho_m^\delta}_Y - \norm{\rho_m^\delta}_Y^2\\
            &= \left(2\delta -\norm{Kx_m^\delta -y^\delta}_Y\right)\norm{\rho_m^\delta}_Y < 0
        \end{aligned}
    \end{equation*}
    since the term in parentheses is negative by assumption and $\norm{\rho_m^\delta}_Y>2\delta>0$.

    For the second term, we use $\omega < \kappa^{-1}$ and therefore that
    \begin{equation*}
        \omega\norm{K^*\rho_m^\delta}_X^2 \leq \omega\norm{K^*}_{\linop(Y,X)}^2\norm{\rho_m^\delta}^2_Y =\omega\kappa \norm{\rho_m^\delta}^2 < \norm{\rho_m^\delta}_X^2
    \end{equation*}
    and hence
    \begin{equation}\label{eq:landweber:res_spd}
        \omega\left(\omega\norm{K^*\rho_m^\delta}_X^2-\norm{\rho_m^\delta}_Y^2\right) < 0.
    \end{equation}
    Hence both terms are negative, and the claim follows.
\end{proof}

Hence the Landweber iteration reduces the error until the residual norm drops below twice the noise level. (This implies that for the discrepancy principle, $\tau$ should always be chosen less than $2$ since otherwise the iteration is guaranteed to terminate \emph{too} early.) From this point on, the error will start to increase again for $y^\delta\notin\calR(K)$ by \cref{thm:spektral:konvergenz}. This behavior is called \emph{semiconvergence} and is typical for iterative methods when applied to ill-posed problems.
The discrepancy principle then prevents that the error increases arbitrarily. (A slight increase is accepted -- how much, depends on the choice of $\tau \in (1,2)$.)

An important question relating to the efficiency of the Landweber method is the number of steps required for the discrepancy principle to terminate the iteration. The following theorem gives an upper bound.
\begin{theorem}
    Let $\tau>1$ and $y^\delta\in B_\delta(Kx^\dagger)$. Then the discrepancy principle \eqref{eq:landweber:diskrepanz} terminates the Landweber iteration \eqref{eq:landweber:iteration} in step
    \begin{equation*}
        m(\delta,y^\delta) \leq C \delta^{-2} \qquad\text{for some }C>0.
    \end{equation*}
\end{theorem}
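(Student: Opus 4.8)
The plan is to bound the stopping index by comparing the noisy residuals with the noise-free residuals and exploiting that the latter are square-summable over the course of the iteration. Write $\rho_m^\delta := y^\delta - Kx_m^\delta$ and $\rho_m := y - Kx_m$, where $x_m$ denotes the iterates \eqref{eq:landweber:iteration} run on the exact data $y = Kx^\dag$. From the recursion established in the proof of \cref{thm:landweber:res_monoton}, namely $y^\delta - Kx_{m+1}^\delta = (\Id - \omega KK^*)(y^\delta - Kx_m^\delta)$, together with $x_0 = 0$, I would first record the closed forms $\rho_m^\delta = (\Id - \omega KK^*)^m y^\delta$ and $\rho_m = (\Id - \omega KK^*)^m y$. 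Since $\omega < \kappa^{-1}$ makes $\Id - \omega KK^*$ a selfadjoint operator with spectrum in $[0,1]$ (eigenvalues $1 - \omega\sigma_n^2 \in (0,1)$ on $\overline{\calR(K)}$ and $1$ on $\calN(K^*)$), its operator norm is at most $1$, so $\norm{\rho_m^\delta - \rho_m}_Y = \norm{(\Id-\omega KK^*)^m(y^\delta - y)}_Y \leq \delta$.

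The heart of the argument is the estimate $\sum_{m=0}^\infty \norm{\rho_m}_Y^2 \leq \omega^{-1}\norm{x^\dag}_X^2$. Let $\{(\sigma_n,u_n,v_n)\}_{n\in\N}$ be a singular system of $K$. Since $y = Kx^\dag$ lies in $\calR(K)$ and $x^\dag\in\calN(K)^\bot$, I have $\inner{y}{u_n}_Y = \sigma_n\inner{x^\dag}{v_n}_X$, whence $\norm{\rho_m}_Y^2 = \sum_{n\in\N}(1-\omega\sigma_n^2)^{2m}\sigma_n^2\abs{\inner{x^\dag}{v_n}_X}^2$. Summing the geometric series in $m$ and using $1-(1-\omega\sigma_n^2)^2 = \omega\sigma_n^2(2-\omega\sigma_n^2) \geq \omega\sigma_n^2$, I obtain $\sum_{m=0}^\infty(1-\omega\sigma_n^2)^{2m} \leq (\omega\sigma_n^2)^{-1}$; after interchanging the (nonnegative) sums the factor $\sigma_n^2$ cancels, leaving $\sum_{m=0}^\infty\norm{\rho_m}_Y^2 \leq \omega^{-1}\sum_{n\in\N}\abs{\inner{x^\dag}{v_n}_X}^2 = \omega^{-1}\norm{x^\dag}_X^2$.

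Finally I would combine the two ingredients. Writing $M := m(\delta,y^\delta)$, the discrepancy principle \eqref{eq:landweber:diskrepanz} gives $\norm{\rho_m^\delta}_Y > \tau\delta$ for every $m < M$, so the perturbation bound yields $\norm{\rho_m}_Y \geq \norm{\rho_m^\delta}_Y - \delta > (\tau-1)\delta > 0$. Summing over $m = 0,\dots,M-1$ and invoking the summability estimate gives $M(\tau-1)^2\delta^2 < \sum_{m=0}^{M-1}\norm{\rho_m}_Y^2 \leq \omega^{-1}\norm{x^\dag}_X^2$, whence $m(\delta,y^\delta) < \frac{\norm{x^\dag}_X^2}{\omega(\tau-1)^2}\delta^{-2}$, which is the claim with $C = \norm{x^\dag}_X^2/(\omega(\tau-1)^2)$.

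The main obstacle is obtaining a bound valid for all $\tau > 1$ rather than only $\tau > 2$. A direct telescoping of the iteration error $\norm{x_m^\delta - x^\dag}_X^2$ (as in the proof of \cref{thm:landweber:semikonvergenz}) produces the per-step decrement $\omega(2\delta\norm{\rho_m^\delta}_Y - \norm{\rho_m^\delta}_Y^2)$, whose sign can be controlled from the lower bound $\norm{\rho_m^\delta}_Y > \tau\delta$ only when $\tau > 2$. The idea that circumvents this is to measure against the \emph{noise-free} residual $\rho_m$, whose square-summability is independent of $\tau$ and supplies the required $\delta^{-2}$ count; the cost $\delta$ of passing from $\rho_m^\delta$ to $\rho_m$ is then absorbed harmlessly into the factor $(\tau-1)^2$. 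A minor point to verify is the interchange of the two summations, which is justified by Tonelli's theorem since all terms are nonnegative.
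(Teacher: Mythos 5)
Your argument is correct and yields the same constant $C = \omega^{-1}(\tau-1)^{-2}\norm{x^\dag}_X^2$ as the paper (note that $x_0=0$ here, so $\norm{x^\dag-x_0}_X = \norm{x^\dag}_X$). The skeleton is shared: both proofs pass to the noise-free residual $\rho_m = (\Id-\omega KK^*)^m y$, control it through $\sum_{m}\norm{\rho_m}_Y^2 \leq \omega^{-1}\norm{x^\dag}_X^2$, and pay a single $\delta$ for the perturbation $\norm{\rho_m^\delta-\rho_m}_Y\leq\delta$. The two halves are executed differently, though. The paper obtains the square-summability from the energy identity $\norm{x^\dag-x_n}_X^2 - \norm{x^\dag-x_{n+1}}_X^2 > \omega\norm{\rho_n}_Y^2$ by telescoping (as in the proof of \cref{thm:landweber:semikonvergenz}), whereas you compute it explicitly from the singular value decomposition by summing the geometric series $\sum_{m}(1-\omega\sigma_n^2)^{2m}\leq(\omega\sigma_n^2)^{-1}$; your route is tied to the linear spectral calculus, while the energy argument is the one that survives in the nonlinear setting (compare \cref{thm:iter:landweber_stop}). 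For the final count, the paper first converts the sum bound into the pointwise decay $\norm{\rho_m}_Y\leq(\omega m)^{-1/2}\norm{x^\dag}_X$ using the residual monotonicity of \cref{thm:landweber:res_monoton} and then asks when $(\omega m)^{-1/2}\norm{x^\dag}_X+\delta\leq\tau\delta$; you instead count directly how many terms of a series with sum at most $\omega^{-1}\norm{x^\dag}_X^2$ can exceed $(\tau-1)^2\delta^2$, which is slightly more economical since it dispenses with the monotonicity lemma. Your closing remark correctly identifies why telescoping the \emph{noisy} error only works for $\tau>2$, and the interchange of the two nonnegative sums is indeed justified by Tonelli.
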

\begin{proof}
    We first derive a convergence rate for the residual norm in terms of $m$. For this, we consider for $n \geq 0$ the iterate $x_n$ produced by the Landweber iteration applied to the exact data $y:=Kx^\dag\in \calR(K)$ and denote the corresponding residual by $\rho_n := y - Kx_n$.
    We now proceed similarly to the proof of \cref{thm:landweber:semikonvergenz}. Using the iteration \eqref{eq:landweber:iteration} and \eqref{eq:landweber:res_spd} shows that
    \begin{equation*}
        \begin{aligned}
            \norm{x^\dag-x_n}_X^2 - \norm{x^\dag-x_{n+1}}_X^{2}
            &= \norm{x^\dag-x_n}_X^2 - \norm{x^\dag - x_n - \omega K^*\rho_n}_X^2\\
            &= 2\omega \inner{Kx^\dag-Kx_n}{\rho_n}_Y - \omega^2\norm{K^*\rho_n}_X^2\\
            &= \omega \left(\norm{\rho_n}_Y^2 - \omega\norm{K^*\rho_n}_X^2\right) + \omega\norm{\rho_n}_Y^2\\
            &>  \omega\norm{\rho_n}_Y^2.
        \end{aligned}
    \end{equation*}
    Summing over all $n=0,\dots,m-1$ and using the monotonicity of the residual from \cref{thm:landweber:res_monoton} then yields
    \begin{equation*}
        \begin{aligned}
            \norm{x^\dag-x_0}_X^2 - \norm{x^\dag-x_{m}}_X^{2}
            &= \sum_{n=0}^{m-1}\left(\norm{x^\dag-x_n}_X^2 - \norm{x^\dag-x_{n+1}}_X^{2}\right)\\
            &> \omega \sum_{n=0}^{m-1}\norm{\rho_n}_Y^2 > \omega m \norm{\rho_m}_X^2.
        \end{aligned}
    \end{equation*}
    In particular,
    \begin{equation*}
        \norm{y-Kx_m}_Y^2 < (\omega m)^{-1} \norm{x^\dag-x_0}_X^2.
    \end{equation*}

    As in the proof of \cref{thm:landweber:res_monoton}, we now have due to $x_0 = 0$ that
    \begin{equation*}
        \rho_m^\delta = y^\delta - Kx_{m}^\delta = (\Id - \omega KK^*)(y^\delta - Kx_{m-1}^\delta) = \dots = (\Id-\omega KK^*)^m y^\delta
    \end{equation*}
    and similarly for $\rho_m = (\Id-\omega KK^*)^m y$. This yields using $\omega < \kappa^{-1} < \sigma_n^{-2}$ the estimate
    \begin{equation*}
        \norm{(\Id-\omega KK^*)^m (y^\delta-y)}_Y^2 = \sum_{n\in\N} (1-\omega \sigma_n^2)^{2m} \abs{\inner{y^\delta-y}{u_n}_Y}^2 \leq \norm{y^\delta-y}_Y^2
    \end{equation*}
    and hence that
    \begin{equation*}
        \begin{aligned}
            \norm{Kx_m^\delta - y^\delta}_Y &= \norm{(\Id-\omega KK^*)^m y^\delta}_Y \\
            &\leq \norm{(\Id-\omega KK^*)^m y}_Y + \norm{(\Id-\omega KK^*)^m(y^\delta-y)}_Y\\
            &\leq \norm{y-Kx_m}_Y + \norm{y^\delta-y}_Y\\
            &\leq (\omega m)^{-1/2} \norm{x^\dag-x_0}_X + \delta.
        \end{aligned}
    \end{equation*}

    The discrepancy principle now chooses the stopping index $m(\delta,y^\delta)$ as the first index for which $\norm{Kx_{m(\delta,y^\delta)}^\delta - y^\delta}_Y\leq \tau\delta$. Due to the monotonicity of the residual norm, this is the case at the latest for the first $\bar m\in \N$ with
    \begin{equation*}
        (\omega \bar m)^{-1/2} \norm{x^\dag-x_0}_X + \delta \leq \tau\delta;
    \end{equation*}
    in other words, for which
    \begin{equation*}
        \bar m \geq \omega\frac{\norm{x^\dag-x_0}_X^2}{\omega^2(\tau-1)^2} \delta^{-2} \geq \bar m - 1.
    \end{equation*}
    This implies that
    \begin{equation*}
        m(\delta,y^\delta) \leq \bar m-1 \leq C \delta^{-2}+1
    \end{equation*}
    with $C:=  \omega^{-1}(\tau-1)^{-2}\norm{x^\dag-x_0}_X^2+\max\{1,\delta^2\}$.
\end{proof}

It is not surprising that this estimate can be improved under the usual source condition $x^\dag \in X_\nu$.
\begin{cor}
   If $x^\dag\in X_\nu$ for some $\nu>0$, then the discrepancy principle for $\tau>1$ and $y^\delta\in B_\delta(Kx^\dagger)$ terminates after at most iteration
    \begin{equation*}
        m(\delta,y^\delta) \leq C \delta^{-\frac2{\nu+1}} \qquad\text{for some }C>0.
    \end{equation*}
\end{cor}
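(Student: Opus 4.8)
The plan is to re-run the argument of the preceding theorem essentially verbatim, replacing only the crude decay estimate for the exact-data residual by a sharper one that exploits the source condition. The sole quantity entering the termination bound is $\norm{Kx_m^\delta - y^\delta}_Y$, and the proof of the preceding theorem already reduces this to the exact-data residual through
\begin{equation*}
    \norm{Kx_m^\delta - y^\delta}_Y \leq \norm{y - Kx_m}_Y + \delta,
\end{equation*}
where $x_m$ denotes the iterate produced from the exact data $y = Kx^\dag$ and the bound on the perturbation term uses $\norm{(\Id-\omega KK^*)^m}_{\linop(Y,Y)}\leq 1$. This reduction is independent of any smoothness of $x^\dag$, so I would carry it over unchanged.

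Second, I would improve the decay of $\norm{y - Kx_m}_Y$. Writing $\alpha = \tfrac1m$, \cref{lem:spektral:fehler} gives $\norm{Kx_\alpha - Kx^\dag}_Y \leq \omega_{\nu+1}(\alpha)\rho$ whenever $x^\dag \in X_{\nu,\rho}$; since $x^\dag \in X_\nu$ means $x^\dag = |K|^\nu w$ for some $w\in X$, one may take $\rho = \norm{w}_X$. The Landweber filter has infinite qualification, and the preceding convergence-rate proof established $\omega_{\nu+1}(\alpha)\leq C_{\nu+1}\alpha^{(\nu+1)/2}$ for a constant depending only on $\nu$ and $\omega$. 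Combining these yields
\begin{equation*}
    \norm{y - Kx_m}_Y \leq C_{\nu+1}\, m^{-\frac{\nu+1}2}\,\norm{w}_X,
\end{equation*}
which is the key refinement: the exponent $-\tfrac12$ of the noise-free case is upgraded to $-\tfrac{\nu+1}2$.

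Third, I would close the termination argument as before. By the residual monotonicity of \cref{thm:landweber:res_monoton}, the discrepancy principle stops at the first $m$ with $\norm{Kx_m^\delta - y^\delta}_Y \leq \tau\delta$, hence at the latest at the first $\bar m$ for which
\begin{equation*}
    C_{\nu+1}\norm{w}_X\, \bar m^{-\frac{\nu+1}2} + \delta \leq \tau\delta,
\end{equation*}
i.e. $C_{\nu+1}\norm{w}_X\,\bar m^{-(\nu+1)/2}\leq (\tau-1)\delta$. Solving for $\bar m$ gives
\begin{equation*}
    \bar m \leq \left(\tfrac{C_{\nu+1}\norm{w}_X}{\tau-1}\right)^{\frac2{\nu+1}}\delta^{-\frac2{\nu+1}},
\end{equation*}
so that $m(\delta,y^\delta)\leq \bar m \leq C\delta^{-2/(\nu+1)}$ with $C = C(\nu,\omega,\tau,\norm{w}_X)$, absorbing the additive rounding exactly as in the preceding proof.

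The argument is largely bookkeeping; the only substantive point — and the one to get right — is the sharpened residual estimate, in particular verifying that the source condition feeds through $\omega_{\nu+1}$ rather than $\omega_\nu$ (the residual lives in $Y$ and hence carries the extra power of $|K|$, precisely as in the second estimate of \cref{lem:spektral:fehler}). I would also record that $\omega\in(0,\kappa^{-1})$ remains in force, since this guarantees both the contraction bound $\norm{\Id-\omega KK^*}_{\linop(Y,Y)}\leq 1$ used in the reduction and the applicability of the qualification estimate for $\omega_{\nu+1}$.
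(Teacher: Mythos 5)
Your proposal is correct and in substance coincides with the paper's proof: the paper simply inserts $\alpha = \tfrac1m$ into the intermediate estimate \eqref{eq:spektral:morozov_alpha} from the proof of \cref{thm:spektral:morozov}, and that estimate rests on exactly the two ingredients you use, namely $\norm{Kx_\alpha - y}_Y \leq \omega_{\nu+1}(\alpha)\rho \leq C_{\nu+1}\alpha^{(\nu+1)/2}\rho$ (from \cref{lem:spektral:fehler} and the infinite qualification of the Landweber filter) together with the $\mathcal{O}(\delta)$ control of the propagated data error in the residual. Your self-contained unpacking along the lines of the preceding $\delta^{-2}$ bound is fine, and your remark that the source condition enters through $\omega_{\nu+1}$ rather than $\omega_\nu$ is exactly the right point to flag.
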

\begin{proof}
    Inserting $\alpha = \frac1m$ in the estimate \eqref{eq:spektral:morozov_alpha} in the proof of \cref{thm:spektral:morozov} yields
    \begin{equation*}
        m(\delta,y^\delta)^{1/2} = \alpha(\delta,y^\delta)^{-1/2} \leq C \delta^{-\frac1{\nu+1}}.
        \qedhere
    \end{equation*}
\end{proof}
Specifically, the estimate \eqref{eq:spektral:morozov_alpha} in the proof of \cref{thm:spektral:morozov} implies for $\alpha=\frac1m$ the bound $m \leq C \delta^{-\frac2{\nu+1}}$.
Still, Landweber regularization in practice often requires too many iterations, which motivates accelerated variants such as the one described in \cite[Chapter~6.2, 6.3]{Engl}.
Furthermore, regularization by early stopping can be applied to other iterative methods for solving the normal equation; a particularly popular choice is the conjugate gradient (CG) method; see \cite[Chapter~7]{Engl}.

\chapter{Discretization as regularization}

And now for something completely different.
We have seen that the fundamental difficulty in inverse problems is due to the unboundedness of the pseudoinverse for compact operators $K:X\to Y$ with infinite-dimensional range.
It thus suggests itself to construct a sequence $\{K_n\}_{n\in\N}$ of operators with \emph{finite-dimensional} ranges and approximate the wanted minimum norm solution $K^\dag y$ using the (now continuous) pseudoinverses $(K_n)^\dag$. This is indeed possible -- up to a point.
Such finite-dimensional operators can be constructed by either of the following approaches:
\begin{enumerate}
    \item We restrict the domain of $K$ to a finite-dimensional subspace $X_n\subset X$ and define $K_n:X_n \to Y$, which has finite-dimensional range because if $\{x_1,\dots,x_n\}$ is a basis of $X_n$, then $\calR(K_n) = \spann\{Kx_1,\dots,Kx_n\}$. This approach is referred to as \emph{least-squares projection}.
    \item We directly restrict the range of $K$ to a finite-dimensional subspace $Y_n\subset Y$ and define $K_n:X \to Y_n$. This approach is referred to as \emph{dual least-squares projection}.
\end{enumerate}
(Of course, we could also restrict domain \emph{and} range and define $K_n:X_n\to Y_n$, but this will not add anything useful from the point of regularization theory.)
In this chapter, we will study both approaches, where the second will be seen to have advantages. Since we do not require any spectral theory for this, we will consider again an arbitrary bounded operator $T\in\linop(X,Y)$.

\section{Least-squares projection}

Let $\{X_n\}_{n\in\N}$ be a sequence of nested subspaces, i.e.,
\begin{equation*}
    X_1 \subset X_2 \subset \dots \subset X,
\end{equation*}
with $\dim X_n = n$ and $\overline{\bigcup_{n\in\N} X_n} = X$. Furthermore, let $P_n:= P_{X_n}$ denote the orthogonal projection onto $X_n$ and set $T_n := TP_n\in\linop(X,Y)$. Since $T_n$ has finite-dimensional range, $T_n^\dag:= (T_n)^\dag$ is continuous. We thus define for $y\in Y$ the regularization $x_n:= T_n^\dag y$, i.e., the minimum norm solution of $TP_n x =y$. By \cref{lem:inverse}, we then have
\begin{equation*}
    x_n \in \calR(T_n^\dag) = \calN(T_n)^\bot = \overline{\calR(T_n^*)} = \overline{\calR(P_n T^*)}\subset X_n
\end{equation*}
since $X_n$ is finite-dimensional and therefore closed and $P_n$ is selfadjoint. (We are thus only looking for a minimum norm solution in $X_n$ instead of in all of $X$.)
To show that $T_n^\dag$ is a regularization in the sense of \cref{def:regularisierung}, we have to show that $y\in \calD(T^\dag)$ implies that $T_n^\dag y\to T^\dag y$ as $n\to\infty$.
This requires an additional assumption.%
\footnote{Here we follow \cite{Kindermann:2016}; the proof in \cite{Engl} using a similar equivalence for weak convergence requires an additional assumption, as was pointed out in \cite{Du:2008}.}
\begin{lemma}\label{lem:discret:conv}
    Let $y\in \calD(T^\dag)$. Then $x_n\to x^\dag$ if and only if $\limsup_{n\to\infty}\norm{x_n}_X\leq \norm{x^\dag}_X$.
\end{lemma}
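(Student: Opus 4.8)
The plan is to dispatch the two implications separately. The forward direction is immediate: if $x_n\to x^\dag$, then continuity of the norm gives $\norm{x_n}_X\to\norm{x^\dag}_X$, so in particular $\limsup_{n\to\infty}\norm{x_n}_X=\norm{x^\dag}_X\le\norm{x^\dag}_X$. The substance lies entirely in the converse, for which I would run a weak-compactness argument via the subsequence principle: since the hypothesis makes $\{x_n\}_{n\in\N}$ bounded, it suffices to show that every subsequence admits a further subsequence converging \emph{strongly} to $x^\dag$. Fixing a subsequence and invoking the Bolzano--Weierstraß theorem for weak convergence, I extract a sub-subsequence $x_{n_k}\wkto\bar x$; the whole task then reduces to proving $\bar x=x^\dag$ and $\norm{x_{n_k}}_X\to\norm{x^\dag}_X$, because weak convergence together with convergence of norms yields strong convergence.

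To identify $\bar x$ I would argue in two stages. First, that $\bar x$ is a least squares solution, i.e.\ $T\bar x=P_{\overline\calR}y$. The point is that $x_n$ minimizes $\norm{Tz-y}_Y$ over $z\in X_n$, so comparing with the admissible competitor $P_n x^\dag\in X_n$ and using $P_n x^\dag\to x^\dag$ (which holds since $\overline{\bigcup_n X_n}=X$) gives
\begin{equation*}
    \norm{Tx_n-y}_Y\le\norm{TP_nx^\dag-y}_Y\to\norm{Tx^\dag-y}_Y=\norm{P_{\overline\calR}y-y}_Y .
\end{equation*}
As $\norm{Tx_n-y}_Y\ge\norm{P_{\overline\calR}y-y}_Y$ always holds, the residuals converge to the minimal value. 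Weak continuity of $T$ then yields $Tx_{n_k}-y\wkto T\bar x-y$, and weak lower semicontinuity of the norm gives $\norm{T\bar x-y}_Y\le\norm{P_{\overline\calR}y-y}_Y$, hence equality; by \cref{thm:inverse:minnorm} this means $\bar x$ is a least squares solution, so $\bar x=x^\dag+n_0$ for some $n_0\in\calN(T)$.

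Second, I would pin down $n_0=0$ using the norm hypothesis, which is exactly the step for which the extra assumption exists. Weak lower semicontinuity and the hypothesis give $\norm{\bar x}_X\le\liminf_k\norm{x_{n_k}}_X\le\limsup_n\norm{x_n}_X\le\norm{x^\dag}_X$, while the orthogonal splitting $x^\dag\in\calN(T)^\bot$, $n_0\in\calN(T)$ yields $\norm{\bar x}_X^2=\norm{x^\dag}_X^2+\norm{n_0}_X^2$. Combining forces $\norm{n_0}_X=0$, so $\bar x=x^\dag$. The same chain of inequalities, now squeezed into $\norm{x^\dag}_X=\norm{\bar x}_X\le\liminf_k\norm{x_{n_k}}_X\le\limsup_k\norm{x_{n_k}}_X\le\norm{x^\dag}_X$, simultaneously delivers the norm convergence $\norm{x_{n_k}}_X\to\norm{x^\dag}_X$ needed for strong convergence, and the subsequence principle then finishes the proof.

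The main obstacle is this second stage. The natural instinct would be to conclude $\bar x=x^\dag$ from minimality of the norm, but the iterates $x_n$ lie only in $\calN(T_n)^\bot$ and not in $\calN(T)^\bot$, so their weak limit is a priori merely \emph{some} least squares solution, and nothing intrinsic to the construction rules out a spurious component in $\calN(T)$. It is precisely the a priori control $\limsup_n\norm{x_n}_X\le\norm{x^\dag}_X$ that, via weak lower semicontinuity, suppresses this component, which is why the statement is an equivalence rather than an unconditional convergence result.
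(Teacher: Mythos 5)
Your proof is correct and follows essentially the same route as the paper: boundedness from the $\limsup$ hypothesis, weak compactness, identification of the weak limit as a least squares solution by comparing with the competitor $P_nx^\dag\in X_n$, elimination of the $\calN(T)$-component via the orthogonal splitting and weak lower semicontinuity, and finally norm convergence to upgrade weak to strong convergence. The only cosmetic difference is that the paper deduces $Tx_k\to Tx^\dag$ strongly from a Pythagoras splitting of the residual, whereas you work with convergence of the residual norms and weak lower semicontinuity; both are equally valid.
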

\begin{proof}
    If $x_n\to x^\dag$, the triangle inequality directly yields that
    \begin{equation*}
        \norm{x_n}_X \leq \norm{x_n-x^\dag}_X + \norm{x^\dag}_X \to \norm{x^\dag}_X.
    \end{equation*}

    Conversely, if the $\limsup$ assumption holds, the sequence $\{\norm{x_n}_X\}_{n\in\N}$ and thus also $\{x_n\}_{n\in\N}$ is bounded in $X$. Hence there exists a subsequence $\{x_{n_k}\}_{k\in\N}$ and a $\bar x\in X$ with $x_k:= x_{n_k}\rightharpoonup \bar x$ and $Tx_k \rightharpoonup T\bar x$. By the definition of $x_k$ as a least squares solution of $T_k x = y$ (of minimal norm) and by $Tx^\dag = P_{\overline\calR} y$ due to \cref{lem:inverse}\,(iv) and $\overline{\calR(T)}=\calN(T)^\bot$, we now have
    \begin{equation*}
        \begin{aligned}
            \norm{T_kx_k -Tx^\dag}_Y^2 + \norm{(\Id-P_{\overline\calR})y}_Y^2 &=
            \norm{T_kx_k - y}_Y^2
            \leq \norm{T_k x -y}_Y^2 \\
            &=
            \norm{T_kx -Tx^\dag}_Y^2 + \norm{(\Id-P_{\overline\calR})y}_Y^2  \qquad\text{for all }x\in X.
        \end{aligned}
    \end{equation*}
    Since $x_k\in X_k$, we have $x_k = P_k x_k$, and thus $x = P_kx^\dag$ satisfies
    \begin{equation}\label{eq:discret:wkconv}
        \begin{aligned}[t]
            \norm{Tx_k - Tx^\dag}_Y = \norm{T_k x_k - Tx^\dag}_Y
            &\leq \norm{T_k P_k x^\dag - Tx^\dag}_Y = \norm{T P_k x^\dag - Tx^\dag}_Y \\
            &\leq \norm{T}_{\linop(X,Y)}\norm{(\Id-P_k)x^\dag}_X.
        \end{aligned}
    \end{equation}
    By the assumptions on $\{X_n\}_{n\in\N}$, the last term converges to zero as $k\to \infty$, and hence we have that $Tx_k \to Tx^\dag$. This implies that $\bar x - x^\dag \in \calN(T)$.
    Now we always have that $x^\dag \in \calN(T)^\bot$, and hence the weak lower semicontinuity of the norm together with the $\limsup$ assumption yields that
    \begin{equation*}
        \norm{\bar x - x^\dag}_X^2 + \norm{x^\dag}_X^2 = \norm{\bar x}_X^2 \leq \liminf_{k\to\infty} \norm{x_k}_X^2 \leq \limsup_{k\to\infty} \norm{x_k}_X^2 \leq \norm{x^\dag}_X^2,
    \end{equation*}
    which implies that $\bar x = x^\dag$. This shows that every weakly convergent subsequence has the same limit $x^\dag$, and therefore the full sequence has to converge weakly to $x^\dag$.
    Finally, the lower semicontinuity
    Combining the weak lower semicontinuity of the norm with the $\limsup$ assumption finally yields that  $\norm{x_n}_X\to \norm{x^\dag}_X$ as well, and hence the sequence even converges strongly in the Hilbert space $X$.
\end{proof}

Unfortunately, it is possible to construct examples where $\{\norm{x_n}_X\}_{n\in\N}$ is not bounded; see, e.g., \cite[Example~3.19]{Engl}.
A sufficient condition for convergence is given in the following theorem.
\begin{theorem}\label{thm:discret:lsp_conv}
    Let $y\in \calD(T^\dag)$. If
    \begin{equation}
        \label{eq:discret:lsp_cond}
        \limsup_{n\to\infty}\norm{(T_n^*)^\dag x_n}_Y = \limsup_{n\to\infty}\norm{(T_n^\dag)^*x_n}_Y <\infty,
    \end{equation}
    then $x_n\to x^\dag$ as $n\to \infty$.
\end{theorem}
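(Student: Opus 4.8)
The plan is to reduce everything to the criterion in \cref{lem:discret:conv}: since $y\in\calD(T^\dag)$ is assumed, it suffices to prove the \emph{sharp} norm bound $\limsup_{n\to\infty}\norm{x_n}_X\leq\norm{x^\dag}_X$, after which \cref{lem:discret:conv} immediately yields $x_n\to x^\dag$. The whole difficulty is therefore to extract this sharp constant from hypothesis \eqref{eq:discret:lsp_cond}, which a priori only controls the auxiliary quantities $\norm{(T_n^*)^\dag x_n}_Y$.

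First I would record a dual representation of $x_n$. Since $T_n^*=P_nT^*$ maps into the finite-dimensional space $X_n$, its range $\calR(T_n^*)$ is closed, so $x_n\in\calR(T_n^\dag)=\calN(T_n)^\bot=\calR(T_n^*)$. Hence, setting $w_n:=(T_n^*)^\dag x_n$ and applying the Moore--Penrose identity from \cref{lem:inverse}\,(iv) to the operator $T_n^*$ (which gives $T_n^*(T_n^*)^\dag=P_{\overline{\calR(T_n^*)}}$), one obtains $x_n=T_n^*w_n$, where by \eqref{eq:discret:lsp_cond} the sequence $\norm{w_n}_Y$ is bounded, say by $M$.

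The key computation compares $\norm{x_n}_X^2$ with $\inner{x_n}{x^\dag}_X$. Using $x_n=T_n^*w_n$ and shifting the adjoint,
\begin{equation*}
    \norm{x_n}_X^2 - \inner{x_n}{x^\dag}_X = \inner{w_n}{T_n(x_n-x^\dag)}_Y,
\end{equation*}
so it remains to show $\norm{T_n(x_n-x^\dag)}_Y\to 0$. This is essentially the content of \eqref{eq:discret:wkconv}, whose derivation uses only the least-squares optimality of $x_n$ and hence holds verbatim for the full sequence: since $T_nx_n=Tx_n$ and $T_nx^\dag=TP_nx^\dag$, the triangle inequality gives $\norm{T_n(x_n-x^\dag)}_Y\leq 2\norm{T}_{\linop(X,Y)}\norm{(\Id-P_n)x^\dag}_X=:\tilde\eps_n\to 0$ by the assumptions on $\{X_n\}_{n\in\N}$. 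Cauchy--Schwarz then yields
\begin{equation*}
    \norm{x_n}_X^2 \leq \norm{x_n}_X\,\norm{x^\dag}_X + M\tilde\eps_n,
\end{equation*}
and solving this quadratic inequality in $\norm{x_n}_X$ and passing to the limit (using $M\tilde\eps_n\to 0$) gives $\limsup_{n\to\infty}\norm{x_n}_X\leq\norm{x^\dag}_X$, which is exactly what \cref{lem:discret:conv} requires.

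I expect the main obstacle to be conceptual rather than computational. The naive estimate $\norm{x_n}_X^2=\inner{w_n}{T_nx_n}_Y=\inner{w_n}{P_{\overline{\calR(T_n)}}y}_Y\leq M\norm{y}_Y$ only establishes \emph{boundedness} of $\{x_n\}_{n\in\N}$, and boundedness alone is insufficient to invoke \cref{lem:discret:conv}: a weak limit of a bounded subsequence need not lie in $\calN(T)^\bot$, since $\calN(T_n)\neq\calN(T)$ in general. The crucial move is to replace the fixed datum $y$ by the comparison element $x^\dag$ in the pairing, turning the bound into a self-referential quadratic inequality whose solution forces the \emph{sharp} constant $\norm{x^\dag}_X$; the one technical point to verify carefully is that $\norm{T_n(x_n-x^\dag)}_Y\to 0$ holds for the full sequence and not merely along a subsequence.
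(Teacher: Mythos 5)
Your argument is correct and is essentially the paper's own proof: both hinge on the identity $\norm{x_n}_X^2-\inner{x_n}{x^\dag}_X=\inner{T_n(x_n-x^\dag)}{w_n}_Y$ with $w_n=(T_n^*)^\dag x_n$ bounded by \eqref{eq:discret:lsp_cond}, the bound $\norm{T_n(x_n-x^\dag)}_Y\leq 2\norm{T}_{\linop(X,Y)}\norm{(\Id-P_n)x^\dag}_X\to 0$ from \eqref{eq:discret:wkconv}, and then \cref{lem:discret:conv}. Your explicit remark that \eqref{eq:discret:wkconv} holds for the full sequence (not just a subsequence) and your resolution of the quadratic inequality are just slightly more careful write-ups of steps the paper leaves implicit.
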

\begin{proof}
    Since
    \begin{equation*}
        \norm{x_n}_X^2 = \inner{x_n-x^\dag}{x_n}_X + \inner{x^\dag}{x_n}_X \leq \inner{x_n-x^\dag}{x_n}_X +\norm{x^\dag}_X\norm{x_n}_X,
    \end{equation*}
    it suffices to show that the first term on the right-hand side tends to zero as $n\to\infty$.
    For this, we set $w_n:= (T_n^\dag)^*x_n$ and use that $T_n^*w_n = x_n$ since $\calR(T_n^*)\subset X_n$ and therefore $x_n \in \calR(T_n^\dag) = \calR(T_n^*)$. This allows us to estimate
    \begin{equation}
        \label{eq:discret:conv}
        \begin{aligned}[t]
            \inner{x_n-x^\dag}{x_n}_X &= \inner{x_n-x^\dag}{T_n^*w_n}_X
            = \inner{T_nx_n-T_nx^\dag}{w_n}_Y\\
            &= \inner{T_n x_n-Tx^\dag}{w_n}_Y + \inner{Tx^\dag-T_n x^\dag}{ w_n}_Y\\
            &\leq \left(\norm{T_nx_n-Tx^\dag}_Y + \norm{T(\Id-P_n)x^\dag}_Y\right)\norm{w_n}_Y\\
            &\leq 2\norm{T}_{\linop(X,Y)} \norm{(\Id-P_n)x^\dag}_{X} \norm{w_n}_Y,
        \end{aligned}
    \end{equation}
    where in the last step we have again used \eqref{eq:discret:wkconv}.
    The last term is now bounded by the assumption \eqref{eq:discret:lsp_cond}, while the second term and thus the whole right-hand side tend to zero. We can therefore apply \cref{lem:discret:conv} to obtain the claim.
\end{proof}

This shows that the least-squares projection only defines a convergent regularization if the subspaces $X_n$ are chosen appropriately for the operator $T$.
Before moving on to the dual least-squares projection (which does not require such a condition), we consider the special case of compact operators.
\begin{theorem}\label{thm:discret:source}
    If $K\in\calK(X,Y)$ and $x^\dag \in X$ satisfy the condition \eqref{eq:discret:lsp_cond}, then $x^\dag\in \calR(K^*)$.
\end{theorem}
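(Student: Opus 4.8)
The plan is to exploit the identity hidden in the hypothesis together with the smoothing effect of the compact operator $K^*$. Following the proof of \cref{thm:discret:lsp_conv}, I would set $w_n := (K_n^\dag)^* x_n = (K_n^*)^\dag x_n$ and recall that $K_n^* = (KP_n)^* = P_n K^*$ (since $P_n$ is selfadjoint), so that $\calR(K_n^*)\subset X_n$ is finite-dimensional and hence closed. Because $x_n = K_n^\dag y \in \calR(K_n^\dag) = \calR(K_n^*)$ by \cref{lem:inverse}, applying $K_n^*(K_n^*)^\dag = P_{\overline{\calR(K_n^*)}}$ gives the crucial relation
\begin{equation*}
    x_n = K_n^* w_n = P_n K^* w_n.
\end{equation*}
The assumption \eqref{eq:discret:lsp_cond} says precisely that $\{w_n\}_{n\in\N}\subset Y$ is bounded, so by the weak Bolzano--Weierstraß property there is a subsequence with $w_{n_k}\wkto w$ for some $w\in Y$.

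The heart of the argument is to identify the strong limit of $x_{n_k}$. Since $K$ is compact, so is $K^*$ by the Schauder Theorem, and compact operators send weakly convergent sequences to strongly convergent ones; hence $K^* w_{n_k}\to K^* w$ strongly in $X$. I would then estimate
\begin{equation*}
    \norm{x_{n_k} - K^* w}_X \leq \norm{P_{n_k}(K^* w_{n_k} - K^* w)}_X + \norm{P_{n_k} K^* w - K^* w}_X
    \leq \norm{K^* w_{n_k} - K^* w}_X + \norm{(P_{n_k}-\Id) K^* w}_X,
\end{equation*}
using $\norm{P_{n_k}}_{\linop(X,X)}\leq 1$. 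The first term vanishes by the strong convergence just obtained, and the second vanishes because $P_n\to\Id$ pointwise (as $\overline{\bigcup_n X_n}=X$). Therefore $x_{n_k}\to K^* w\in\calR(K^*)$.

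Finally, the hypothesis \eqref{eq:discret:lsp_cond} also lets me invoke \cref{thm:discret:lsp_conv} directly, which gives $x_n\to x^\dag$ and in particular $x_{n_k}\to x^\dag$; by uniqueness of strong limits, $x^\dag = K^* w\in\calR(K^*)$, as claimed. The main obstacle I anticipate is the limit identification in the displayed estimate: one must not confuse the merely \emph{pointwise} convergence $P_{n_k}\to\Id$ with uniform convergence, and the argument only closes because the \emph{weak} convergence of $w_{n_k}$ is upgraded to \emph{strong} convergence after passing through the compact operator $K^*$ --- this is exactly where compactness of $K$ is indispensable, and it is the step that makes the conclusion $x^\dag\in\calR(K^*)$ (a genuine source condition) possible.
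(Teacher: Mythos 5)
Your proof is correct and takes essentially the same route as the paper's: the same auxiliary sequence $w_n=(K_n^\dag)^*x_n$, boundedness from \eqref{eq:discret:lsp_cond}, weak compactness, the Schauder theorem to upgrade $K^*w_{n_k}$ to a strongly convergent sequence, the identity $x_n=P_nK^*w_n$, and \cref{thm:discret:lsp_conv} to identify the limit as $x^\dag$. Your handling of the remainder term---splitting off $P_{n_k}(K^*w_{n_k}-K^*w)$ with $\norm{P_{n_k}}_{\linop(X,X)}\leq 1$ and using only the pointwise convergence $P_n\to\Id$ on the fixed vector $K^*w$---is if anything slightly more careful than the paper's corresponding step.
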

\begin{proof}
    Setting again $w_n:= (K_n^\dag)^*x_n$, the condition \eqref{eq:discret:lsp_cond} implies that $\{w_n\}_{n\in\N}$ is bounded and therefore contains a weakly convergent subsequence $w_k\rightharpoonup \bar w\in Y$. Since $K$ and therefore also $K^*$ is compact, $K^*w_k\to K^* \bar w$. On the other hand, it follows from $(K_n^\dag)^* = (K_n^*)^\dag = (P_nK^*)^\dag$ that
    \begin{equation*}
        K^*w_k = P_k K^* w_k + (\Id - P_k)K^* w_k = x_k + (\Id-P_k)K^* w_k.
    \end{equation*}
    Passing to the limit on both sides of the equation and appealing to \cref{thm:discret:lsp_conv}, the boundedness of $w_k$, and $\norm{\Id-P_k}_{\linop(X,X)}\to 0$, we deduce that $K^*\bar w = x^\dag$, i.e., $x^\dag\in\calR(K^*)$.
\end{proof}

Hence the condition \eqref{eq:discret:lsp_cond} already implies a source condition.
It is therefore not surprising that we can give an estimate for the convergence $x_n\to x^\dag$.
\begin{theorem}\label{thm:discret:lsp_conv_rate}
    If $K\in\calK(X,Y)$ and $x^\dag \in X$ satisfy the condition \eqref{eq:discret:lsp_cond} and $y\in\calD(K^\dag)$, then there exists a constant $C>0$ such that
    \begin{equation*}
        \norm{x_n-x^\dag}_X \leq C \norm{(\Id-P_n)K^*}_{\linop(Y,X)}\qquad\text{for all }n\in\N.
    \end{equation*}
\end{theorem}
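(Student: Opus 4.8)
The plan is to reduce everything to a source representer and then combine a Céa-type (Galerkin) estimate for the output error with the boundedness hypothesis \eqref{eq:discret:lsp_cond}. First, since $x^\dag$ satisfies \eqref{eq:discret:lsp_cond}, \cref{thm:discret:source} provides a $\bar w\in Y$ with $x^\dag=K^*\bar w$; this is exactly what converts the abstract smoothness of $x^\dag$ into the operator-norm quantity on the right-hand side. The natural splitting then exploits that $x_n\in X_n$ while $x^\dag\in\calN(K)^\bot$: writing $e_n:=x_n-P_nx^\dag\in X_n$, the remainder $(\Id-P_n)x^\dag$ lies in $X_n^\bot$, so $e_n\perp(\Id-P_n)x^\dag$ and hence
\[
    \norm{x_n-x^\dag}_X^2=\norm{e_n}_X^2+\norm{(\Id-P_n)x^\dag}_X^2.
\]
The second summand is immediate: $\norm{(\Id-P_n)x^\dag}_X=\norm{(\Id-P_n)K^*\bar w}_X\leq\norm{(\Id-P_n)K^*}_{\linop(Y,X)}\norm{\bar w}_Y$, which is already of the claimed form, so the real work is to bound the in-space error $\norm{e_n}_X$ by the same operator norm.

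For this I would first record the Galerkin orthogonality coming from the normal equation $K_n^*K_nx_n=K_n^*y$: since $K_nz=Kz$ for $z\in X_n$, one gets $\inner{Kz}{Kx_n-y}_Y=0$ for all $z\in X_n$, and combining with $y-Kx^\dag=(\Id-P_{\overline{\calR}})y\perp\calR(K)$ yields $\inner{Kz}{K(x_n-x^\dag)}_Y=0$ for all $z\in X_n$. Testing with $z=e_n$ and using $x_n-x^\dag-e_n=-(\Id-P_n)x^\dag$ produces the Céa estimate $\norm{K(x_n-x^\dag)}_Y\leq\norm{K(\Id-P_n)x^\dag}_Y$. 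The crucial trick is the factorization $K(\Id-P_n)K^*=\bigl((\Id-P_n)K^*\bigr)^*(\Id-P_n)K^*$ (self-adjointness of $\Id-P_n$ and idempotency), which gives $\norm{K(\Id-P_n)x^\dag}_Y=\norm{K(\Id-P_n)K^*\bar w}_Y\leq\norm{(\Id-P_n)K^*}^2\norm{\bar w}_Y$, i.e. the \emph{squared} operator norm. Hence $\norm{Ke_n}_Y\leq\norm{K(x_n-x^\dag)}_Y+\norm{K(\Id-P_n)x^\dag}_Y\leq 2\norm{(\Id-P_n)K^*}^2\norm{\bar w}_Y$.

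To close, I would rewrite $\norm{e_n}_X^2=\inner{x_n-x^\dag}{e_n}_X$ (using $e_n\perp(\Id-P_n)x^\dag$) and push it into $Y$: with $x_n=K_n^*w_n$ for $w_n:=(K_n^\dag)^*x_n$ (as in the proof of \cref{thm:discret:lsp_conv}) and $x^\dag=K^*\bar w$, and since $e_n\in X_n$ gives $K_ne_n=Ke_n$, one finds $\inner{x_n-x^\dag}{e_n}_X=\inner{w_n-\bar w}{Ke_n}_Y$. Cauchy--Schwarz then yields $\norm{e_n}_X^2\leq\norm{w_n-\bar w}_Y\,\norm{Ke_n}_Y$, and here \eqref{eq:discret:lsp_cond} enters decisively: $\limsup_{n\to\infty}\norm{w_n}_Y<\infty$ makes the whole sequence bounded, so $M:=\sup_n\norm{w_n-\bar w}_Y<\infty$ and therefore $\norm{e_n}_X^2\leq 2M\norm{\bar w}_Y\,\norm{(\Id-P_n)K^*}^2$. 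Substituting this and the bound on $\norm{(\Id-P_n)x^\dag}_X$ into the orthogonal splitting gives $\norm{x_n-x^\dag}_X\leq C\norm{(\Id-P_n)K^*}_{\linop(Y,X)}$ with $C=\sqrt{2M\norm{\bar w}_Y+\norm{\bar w}_Y^2}$. I expect the main obstacle to be obtaining a genuinely \emph{linear} rate rather than a square-root one: the crude bound would only give $\norm{e_n}_X^2\leq C\norm{(\Id-P_n)x^\dag}_X$ and thus lose half a power, and the remedy is precisely that the Céa estimate delivers the squared operator norm in $\norm{Ke_n}_Y$, which exactly compensates the outer square on $\norm{e_n}_X^2$.
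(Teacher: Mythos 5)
Your proof is correct and rests on the same pillars as the paper's: the source representer $\bar w$ from \cref{thm:discret:source}, the duality pairing through $x_n=K_n^*w_n$ with the boundedness supplied by \eqref{eq:discret:lsp_cond}, the best-approximation estimate in the image norm, and the decisive factorization $(K(\Id-P_n))^*=(\Id-P_n)K^*$ together with $(\Id-P_n)^2=\Id-P_n$ that produces the squared operator norm and thus the full (rather than half) power after taking the square root. The only difference is bookkeeping: you split the error orthogonally via $P_nx^\dag$ and treat the two pieces separately, whereas the paper writes $\norm{x_n-x^\dag}_X^2=\inner{x_n-x^\dag}{x_n}_X-\inner{x_n-x^\dag}{x^\dag}_X$ and bounds both inner products by $\norm{K(\Id-P_n)x^\dag}_Y$ before applying the same factorization.
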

\begin{proof}
    By \cref{thm:discret:source} there exists a $w\in Y$ with $x^\dag = K^*w$. Hence, \eqref{eq:discret:wkconv} implies that
    \begin{equation*}
        \inner{x_n-x^\dag}{x^\dag}_X \leq \norm{Kx_n-Kx^\dag}_Y\norm{w}_Y \leq \norm{K(P_n-\Id)x^\dag}_Y\norm{w}_Y.
    \end{equation*}
    It follows from this together with \eqref{eq:discret:conv} and the boundedness of the $w_n:= (K_n^\dag)^*x_n$ that
    \begin{equation*}
        \begin{aligned}
            \norm{x_n-x^\dag}_X^2 &= \inner{x_n-x^\dag}{x_n}_X - \inner{x_n-x^\dag}{x^\dag}_X\\
            &\leq 2\norm{K(\Id-P_n)x^\dag}_Y\norm{w_n}_Y + \norm{K(\Id-P_n)x^\dag}_Y\norm{w}_Y\\
            &\leq C\norm{K(\Id-P_n)x^\dag}_Y = C\norm{K(\Id-P_n)(\Id-P_n)K^*w}_Y\\
            &\leq C \norm{(\Id-P_n)K^*}_{\linop(Y,X)}^2\norm{w}_Y,
        \end{aligned}
    \end{equation*}
    where we have used in the last step that orthogonal projections are selfadjoint and thus that $(K(\Id-P_n))^* = (\Id-P_n)K^*$.
\end{proof}

\section{Dual least-squares projection}

Here we directly discretize the range of $T$. We thus consider a sequence $\{Y_n\}_{n\in\N}$ of nested subspaces, i.e.,
\begin{equation*}
    Y_1 \subset Y_2 \subset \dots \subset \overline{\calR(T)} = \calN(T^*)^\bot \subset Y,
\end{equation*}
with $\dim Y_n = n$ and $\overline{\bigcup_{n\in\N} Y_n} = \calN(T^*)^\bot$.
Let now $Q_n:= P_{Y_n}$ denote the orthogonal projection onto $Y_n$ and set $T_n := Q_nT \in \linop(X,Y_n)$.
Again, $T_n^\dag$ and hence also $T_n^\dag Q_n$ are continuous, and we can take $x_n := T_n^\dag Q_n y$ -- i.e., the minimum norm solution of $Q_nTx=Q_ny$ -- as a candidate for our regularization.
To show that this indeed defines a regularization, we introduce the orthogonal projection $P_n:= P_{X_n}$ onto
\begin{equation*}
    X_n:= T^*Y_n:=\setof{T^*y}{y\in Y_n}.
\end{equation*}
We then have the following useful characterization.
\begin{lemma}\label{lem:discret:dual_lsq_char}
    Let $y\in \calD(T^\dag)$. Then $x_n = P_n x^\dag$.
\end{lemma}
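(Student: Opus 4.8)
The plan is to use the pointwise characterization of the orthogonal projection (property (v) of orthogonal projections): since $x_n = P_n x^\dag$ is equivalent to $x_n \in X_n$ together with $x_n - x^\dag \in X_n^\bot$, it suffices to verify these two facts.

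First I would show that $x_n \in X_n$. By \cref{lem:inverse} applied to $T_n$, the iterate $x_n = T_n^\dag Q_n y$ lies in $\calR(T_n^\dag) = \calN(T_n)^\bot = \overline{\calR(T_n^*)}$. Since $Q_n$ is selfadjoint, $T_n^* = (Q_n T)^* = T^* Q_n$, and as $Q_n$ maps $Y$ onto $Y_n$ one gets $\calR(T_n^*) = T^* Y_n = X_n$. Because $X_n$ is the image of the finite-dimensional space $Y_n$, it is itself finite-dimensional, hence closed, so $\overline{\calR(T_n^*)} = X_n$ and therefore $x_n \in X_n$.

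Next I would reduce the orthogonality $x_n - x^\dag \in X_n^\bot$ to the single identity $T_n(x_n - x^\dag) = 0$. Indeed, every element of $X_n$ has the form $T^* v$ with $v \in Y_n$, and using $Q_n v = v$,
\begin{equation*}
    \inner{x_n - x^\dag}{T^* v}_X = \inner{T(x_n - x^\dag)}{Q_n v}_Y = \inner{T_n(x_n - x^\dag)}{v}_Y,
\end{equation*}
which vanishes for all $v \in Y_n$ once $T_n(x_n - x^\dag) = 0$.

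The crux --- and the step I expect to be the main obstacle --- is thus to show $T_n x_n = T_n x^\dag$, both equal to $Q_n y$. For $x^\dag$ this follows from \cref{lem:inverse}\,(iv), which gives $Tx^\dag = TT^\dag y = P_{\overline{\calR}} y$, so that
\begin{equation*}
    T_n x^\dag = Q_n T x^\dag = Q_n P_{\overline{\calR}} y = Q_n y,
\end{equation*}
where the last equality uses that $(\Id - P_{\overline{\calR}})y \in \overline{\calR(T)}^\bot$ is orthogonal to $Y_n \subset \overline{\calR(T)}$; this is precisely where the nestedness assumption $Y_n \subset \overline{\calR(T)}$ enters. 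In particular $Q_n y \in \calR(T_n)$, so the least-squares solution $x_n$ of $T_n x = Q_n y$ is in fact an exact solution, $T_n x_n = P_{\overline{\calR(T_n)}} Q_n y = Q_n y$ by \cref{thm:inverse:minnorm}. Hence $T_n(x_n - x^\dag) = 0$, the orthogonality from the previous step holds, and the projection characterization yields $x_n = P_n x^\dag$.
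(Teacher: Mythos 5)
Your proof is correct and follows essentially the same route as the paper: both rest on the identities $\calR(T_n^\dag)=\calR(T^*Q_n)=T^*Y_n=X_n$ and $T_nx^\dag = Q_ny$ (the latter via \cref{lem:inverse}\,(iv) and $Y_n\subset\overline{\calR(T)}$). The only difference is in the packaging of the last step: the paper identifies $P_nx^\dag$ as the minimum norm solution of $T_nx=Q_ny$ and invokes its uniqueness, whereas you verify directly that $x_n$ satisfies the two defining properties ($x_n\in X_n$ and $x_n-x^\dag\in X_n^\bot$) of the orthogonal projection — the same facts, read in the opposite direction.
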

\begin{proof}
    We first note that by definition of the pseudoinverse and of $X_n$, we have that
    \begin{equation*}
        \calR(T_n^\dag) = \calN(T_n)^\bot = {\calR(T_n^*)} = {\calR(T^*Q_n)} = T^*Y_n = X_n
    \end{equation*}
    (where the second equation follows from the fact that $\calR(T_n^*) = X_n$ is finite-dimensional)
    and hence that $x_n\in X_n$ as well as $X_n^\bot = \calN(T_n)$. This also implies that
    \begin{equation*}
        T_n (\Id-P_n)x = 0 \qquad\text{for all } x\in X,
    \end{equation*}
    i.e., that $T_nP_n = T_n$.
    Furthermore, it follows from the fact that $Y_n\subset \calN(T^*)^\bot=\overline{\calR(T)}$ (and hence that $\calR(T)^\bot\subset \calN(Q_n)$) together with \cref{lem:inverse}\,(iv) that
    \begin{equation*}
        Q_n y = Q_n P_{\overline{\calR(T)}} y = Q_n TT^\dag y = Q_n Tx^\dag = T_n x^\dag.
    \end{equation*}
    We thus obtain for any $x\in X$ that
    \begin{equation*}
        \norm{T_n x - Q_n y}_Y = \norm{T_n x- T_n x^\dag}_Y = \norm{T_n x-T_nP_nx^\dag}_Y = \norm{T_n(x-P_n x^\dag)}_Y.
    \end{equation*}
    Now $x_n$ is defined as the minimum norm solution of $T_nx = Q_n y$, i.e., as the one $x\in \calN(T_n)^\bot = X_n$ minimizing $\norm{T_n x - Q_n y}_Y$ -- which is obviously minimal for $x=P_nx^\dag\in X_n$. Since the minimum norm solution is unique, we have that $x_n = P_n x^\dag$.
\end{proof}
\begin{theorem}
    Let $y\in \calD(T^\dag)$. Then $x_n\to x^\dag$.
\end{theorem}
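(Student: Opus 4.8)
The plan is to exploit the characterization $x_n = P_n x^\dag$ established in \cref{lem:discret:dual_lsq_char}, which reduces the claim to showing that the orthogonal projections $P_n$ onto the nested subspaces $X_n = T^*Y_n$ converge pointwise to the identity at $x^\dag$. First I would record that $x^\dag = T^\dag y \in \calR(T^\dag) = \calN(T)^\bot = \overline{\calR(T^*)}$ by \cref{lem:inverse}, so that it suffices to prove $\overline{\bigcup_{n\in\N} X_n} = \overline{\calR(T^*)}$ and then apply a general fact about projections onto increasing subspaces. Note that the $X_n$ are indeed nested and of dimension $n$, since the $Y_n$ are nested with $\dim Y_n = n$ and $T^*$ is injective on $Y_n \subset \calN(T^*)^\bot$.

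The key step is the identity $\overline{\bigcup_n X_n} = \overline{\calR(T^*)}$. The inclusion ``$\subset$'' is immediate, since each $X_n = T^*Y_n \subset \calR(T^*)$. For ``$\supset$'', I would use that $T^*$ vanishes on $\calN(T^*) = \calR(T)^\bot$, so that $\calR(T^*) = T^*(\overline{\calR(T)})$: given $z = T^*w \in \calR(T^*)$ with $w \in \overline{\calR(T)} = \calN(T^*)^\bot$, the density assumption $\overline{\bigcup_n Y_n} = \calN(T^*)^\bot$ provides $w_k \in \bigcup_n Y_n$ with $w_k \to w$, whence the continuity of $T^*$ gives $T^*w_k \to z$ with $T^*w_k \in \bigcup_n X_n$. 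This shows $\calR(T^*) \subset \overline{\bigcup_n X_n}$, and equality follows after taking closures.

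Finally, I would invoke the elementary convergence of projections onto nested subspaces. Since the $X_n$ are increasing closed subspaces with $\overline{\bigcup_n X_n} = \overline{\calR(T^*)}$ and $x^\dag \in \overline{\calR(T^*)}$, for any $\eps>0$ there are an $m\in\N$ and a $z \in X_m$ with $\norm{x^\dag - z}_X < \eps$; for $n \geq m$ we have $z \in X_n$, so the optimality of the orthogonal projection (property (iv)) yields $\norm{x^\dag - P_n x^\dag}_X = \min_{u \in X_n}\norm{x^\dag - u}_X \leq \norm{x^\dag - z}_X < \eps$. Hence $P_n x^\dag \to x^\dag$, i.e.\ $x_n \to x^\dag$.

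The main obstacle is the density identity $\overline{\bigcup_n X_n} = \overline{\calR(T^*)}$; everything else is bookkeeping, but this step is precisely where the dual projection beats the primal one, as it requires no extra hypothesis such as \eqref{eq:discret:lsp_cond} — discretizing the range automatically produces subspaces adapted to $\calR(T^*)$, which is exactly the space containing $x^\dag$. I would be careful that the density hypothesis is formulated for $\calN(T^*)^\bot = \overline{\calR(T)}$ (the correct object to push forward through $T^*$) rather than for all of $Y$.
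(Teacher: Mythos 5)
Your proof is correct and follows the same route as the paper: reduce to $x_n = P_nx^\dag$ via \cref{lem:discret:dual_lsq_char}, show $\overline{\bigcup_n X_n} = \overline{\calR(T^*)} = \calN(T)^\bot \ni x^\dag$, and conclude by pointwise convergence of projections onto nested subspaces. You merely spell out the density identity and the projection-convergence step in more detail than the paper, which compresses them into a single chain of equalities.
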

\begin{proof}
    The construction of $Y_n$ implies that $X_n\subset X_{n+1}$ and hence that
    \begin{equation*}
        \overline{\bigcup_{n\in\N} X_n} = \overline{\bigcup_{n\in\N} T^*Y_n} = \overline{T^*\bigcup_{n\in\N} Y_n} = \overline{T^*\calN(T^*)^\bot} = \overline{\calR(T^*)} =\calN(T)^\bot.
    \end{equation*}
    Using $x^\dag \in \calR(T^\dag) = \calN(T)^\bot$, we deduce that $x_n\to x^\dag$.
\end{proof}
Under a source condition, we can show a similar error estimate as in \cref{thm:discret:lsp_conv_rate}.
\begin{theorem}\label{thm:discret:dlsp_conv_rate}
    Let $T\in\linop(X,Y)$ and $y\in\calD(T^\dag)$. If $x^\dag = T^\dag y\in \calR(T^*)$, then there exists a constant $C>0$ such that
    \begin{equation*}
        \norm{x_n-x^\dag}_X \leq C \norm{(\Id-P_n)T^*}_{\linop(Y,X)}\qquad\text{for all }n\in \N.
    \end{equation*}
\end{theorem}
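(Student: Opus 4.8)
The plan is to exploit the explicit characterization $x_n = P_n x^\dag$ established in \cref{lem:discret:dual_lsq_char}, which reduces the regularization error to a pure projection error that the source condition controls directly. This is what makes the dual least-squares projection so much more convenient than the ordinary one: there the analogous identity $x_n = P_n x^\dag$ fails, and one is forced to assume the extra boundedness condition \eqref{eq:discret:lsp_cond} and argue via weak subsequential limits, whereas here almost all the work has been front-loaded into the preceding lemma.

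First I would invoke \cref{lem:discret:dual_lsq_char}, which applies since $y\in\calD(T^\dag)$, to write $x_n = P_n x^\dag$. Because $P_n$ is an orthogonal projection, the error then becomes
\begin{equation*}
    \norm{x_n-x^\dag}_X = \norm{P_n x^\dag - x^\dag}_X = \norm{(\Id-P_n)x^\dag}_X,
\end{equation*}
so it only remains to bound this projection defect in terms of $\norm{(\Id-P_n)T^*}_{\linop(Y,X)}$.

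Next I would bring in the source condition. Since $x^\dag = T^\dag y\in\calR(T^*)$, there is some $w\in Y$ with $x^\dag = T^* w$. Substituting this and using the definition of the operator norm yields
\begin{equation*}
    \norm{(\Id-P_n)x^\dag}_X = \norm{(\Id-P_n)T^* w}_X \leq \norm{(\Id-P_n)T^*}_{\linop(Y,X)}\,\norm{w}_Y,
\end{equation*}
and the claim follows with the choice $C := \norm{w}_Y$, which is independent of $n$ as required.

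I do not expect any genuine obstacle here: the estimate is a one-line consequence of the characterization lemma together with the source representer. The only point requiring a little care is that the constant $C$ must be taken independent of $n$, which is automatic since $w$ (and hence $\norm{w}_Y$) is fixed by the source condition and does not depend on the discretization level. It is also worth noting in passing that the rate mirrors exactly the one obtained for the least-squares projection in \cref{thm:discret:lsp_conv_rate}, the essential difference being that here the source condition $x^\dag\in\calR(K^*)$ is \emph{assumed} rather than \emph{derived} from \eqref{eq:discret:lsp_cond} as in \cref{thm:discret:source}.
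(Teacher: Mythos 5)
Your proposal is correct and follows exactly the paper's own argument: invoke \cref{lem:discret:dual_lsq_char} to write $x_n = P_n x^\dag$, substitute the source representer $x^\dag = T^*w$, and bound $\norm{(\Id-P_n)T^*w}_X$ by the operator norm times $\norm{w}_Y$, taking $C = \norm{w}_Y$.
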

\begin{proof}
    The source condition $x^\dag = T^*w$ for some $w\in Y$ and \cref{lem:discret:dual_lsq_char} immediately yield that
    \begin{equation*}
        \norm{x_n - x^\dag}_X = \norm{P_n x^\dag - x^\dag}_X = \norm{(\Id-P_n)T^*w}_X \leq \norm{(\Id-P_n)T^*}_{\linop(Y,X)}\norm{w}_Y.
        \qedhere
    \end{equation*}
\end{proof}

The dual least-squares projection thus defines a regularization operator as well. By \cref{thm:regularisierung:a priori}, there thus exists (at least for compact operators) an a priori choice rule that turns the dual least-squares projection into a convergence regularization method. Characterizing this choice rule requires estimating the norm of $T_n^\dag$, for which we can use that $T_n$ has finite-dimensional range and is therefore compact. Hence there exists a (finite) singular system $\{(\mu_k,\tilde u_k,\tilde v_k\}_{k\in\{1,\dots,n\}}$; in particular, we can use that $\mu_n$ is the smallest (by magnitude) singular value of $T_n$.
\begin{theorem}\label{thm:discret:apriori}
    Let $y\in \calD(T^\dag)$ and for $y^\delta \in B_\delta(y)$ set $x_n^\delta := T_n^\dag Q_n y$. If $n(\delta)$ is chosen such that
    \begin{equation*}
        n(\delta)\to \infty, \qquad\frac{\delta}{\mu_{n(\delta)}}\to 0\qquad \text{for }\delta \to 0,
    \end{equation*}
    then $x_{n(\delta)}^\delta \to x^\dag$ as $\delta\to 0$.
\end{theorem}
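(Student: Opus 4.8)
The plan is to follow the now-familiar splitting \eqref{eq:regularisierung:splitting} of the regularization error into a data error and an approximation error, exactly as in the proof of \cref{thm:regularisierung:apriori_char} and its realization for the truncated singular value decomposition. Writing $x_n := T_n^\dag Q_n y$ for the regularized \emph{exact}-data solution and $x_n^\delta := T_n^\dag Q_n y^\delta$ for the one computed from the noisy measurement, I would split
\begin{equation*}
    \norm{x_{n(\delta)}^\delta - x^\dag}_X \leq \norm{x_{n(\delta)}^\delta - x_{n(\delta)}}_X + \norm{x_{n(\delta)} - x^\dag}_X
\end{equation*}
and show that each of the two terms tends to zero as $\delta\to 0$ under the two stated conditions on $n(\delta)$.

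For the approximation error $\norm{x_{n(\delta)} - x^\dag}_X$, I would simply invoke the preceding convergence theorem, which asserts $x_n\to x^\dag$ for every $y\in\calD(T^\dag)$ (and which rests on \cref{lem:discret:dual_lsq_char}, i.e. $x_n = P_n x^\dag$, together with $\overline{\bigcup_{n\in\N} X_n} = \calN(T)^\bot \ni x^\dag$). Since $n(\delta)\to\infty$ as $\delta\to 0$, passing to an arbitrary null sequence of noise levels shows that this error vanishes along the chosen indices.

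The data error is where the second condition enters. By linearity, $x_n^\delta - x_n = T_n^\dag Q_n(y^\delta - y)$, so $\norm{x_n^\delta - x_n}_X \leq \norm{T_n^\dag Q_n}_{\linop(Y,X)}\,\delta$, and the crux is to control $\norm{T_n^\dag Q_n}_{\linop(Y,X)}$ in terms of $\mu_n$. Because $T_n = Q_n T$ has finite-dimensional (hence closed) range, $T_n^\dag$ is continuous by \cref{thm:inverse:cont} and defined on all of $Y$; using the finite singular system $\{(\mu_k,\tilde u_k,\tilde v_k)\}_{k=1}^n$ and the Picard representation \eqref{eq:inverse:picard_pseudo} (the Picard condition being automatic for finite sums), I would write $T_n^\dag v = \sum_{k=1}^n \mu_k^{-1}\inner{v}{\tilde u_k}_Y \tilde v_k$ and read off $\norm{T_n^\dag}_{\linop(Y,X)} = \mu_n^{-1}$, with $\mu_n$ the smallest singular value. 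Combined with $\norm{Q_n}_{\linop(Y,Y)} = 1$, this gives $\norm{x_n^\delta - x_n}_X \leq \delta/\mu_n$, whence $\norm{x_{n(\delta)}^\delta - x_{n(\delta)}}_X \leq \delta/\mu_{n(\delta)} \to 0$ by assumption.

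I expect the only genuine obstacle to be the clean identification $\norm{T_n^\dag}_{\linop(Y,X)} = \mu_n^{-1}$: one must check that $T_n$ has rank exactly $n$, which follows since $\calR(T_n^*) = T^*Y_n$ and $T^*$ is injective on $Y_n \subset \calN(T^*)^\bot$, so that all $n$ singular values are positive and $\mu_n$ is genuinely the smallest nonzero one rather than an accidental zero. Everything else is routine, and combining the two vanishing bounds yields $x_{n(\delta)}^\delta \to x^\dag$ as $\delta\to 0$, which is the claim.
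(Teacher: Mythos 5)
Your proposal is correct and follows essentially the same route as the paper: the same splitting into approximation and data error, the identification $\norm{T_n^\dag}_{\linop(Y,X)} = \mu_n^{-1}$ via the finite singular system, and the bound $\norm{x_{n(\delta)}^\delta - x_{n(\delta)}}_X \leq \delta/\mu_{n(\delta)}$. If anything, your citation of the plain convergence theorem $x_n = P_nx^\dag \to x^\dag$ for the approximation error is the more appropriate one (the paper points to the rate estimate of \cref{thm:discret:dlsp_conv_rate}, which formally assumes the source condition $x^\dag\in\calR(T^*)$ not present in the hypotheses here), and your side remark that all $n$ singular values of $T_n$ are positive because $T^*$ is injective on $Y_n\subset\calN(T^*)^\bot$ is a worthwhile detail the paper leaves implicit.
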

\begin{proof}
    We proceed as in the proof of \cref{thm:regularisierung:apriori_char} and use the standard error decomposition
    \begin{equation*}\label{eq:discret:splitting}
        \norm{x_{n(\delta)}^\delta -x^\dag}_X
        \leq \norm{x_{n(\delta)}-x^\dag}_X + \norm{x_{n(\delta)}^\delta -x_{n(\delta)}}_X.
    \end{equation*}
    By \cref{thm:discret:dlsp_conv_rate}, the first term tends to zero as $n\to\infty$.

    For the second term, we use the singular value decomposition of $T_n$ and \eqref{eq:inverse:picard_pseudo} to obtain for any $n\in \N$ that
    \begin{equation*}
        \norm{T_n^\dag y}_X^2 = \sum_{k=1}^n \mu_k^{-2} |\inner{y}{\tilde u_k}_Y|^2\leq \mu_n^{-2} \norm{y}_Y^2 \qquad \text{for all }y\in Y,
    \end{equation*}
    with equality for $y=\tilde u_n\in Y$. This implies that $\norm{T_n^\dag}_{\linop(Y,X)} = \mu_n^{-1}$. Since $Q_n$ is an orthogonal projection, we have that
    \begin{equation*}
        \norm{x_n^{\delta}-x_n}_X = \norm{T_n^\dag Q_n (y^\delta-y)}_X
        \leq \norm{T_n^\dag}_{\linop(Y,X)}\norm{y^\delta-y}_Y\leq \frac\delta{\mu_n}.
    \end{equation*}
    The claim now follows from the assumptions on $n(\delta)$.
\end{proof}
Under the source condition from \cref{thm:discret:dlsp_conv_rate}, we can in this way also obtain convergence rates as in \cref{thm:spektral:apriori}. (Similar results also hold for the least-squares projection under the additional assumption \eqref{eq:discret:lsp_cond}.)

We can now ask how to choose $Y_n$ for given $n\in\N$ in order to minimize the regularization error, which by \cref{thm:discret:apriori} entails minimizing $\mu_n$. This question can be answered explicitly for compact operators.
\begin{theorem}
    Let $K\in\calK(X,Y)$ have the singular system $\{(\sigma_n,u_n,v_n)\}_{n\in\N}$. If $Y_n\subset Y$ with $\dim Y_n = n$, then $\mu_n\leq \sigma_n$.
\end{theorem}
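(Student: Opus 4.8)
The plan is to reduce the claim to the monotonicity of eigenvalues encoded in the Courant--Fischer min--max principle \eqref{eq:courant}. First I would record the basic algebraic facts about $T_n = Q_n K$. Since $Q_n$ is an orthogonal projection, it is selfadjoint and idempotent, so $T_n^* T_n = K^* Q_n Q_n K = K^* Q_n K$, and for every $x\in X$ we have
\[
    \inner{T_n^* T_n x}{x}_X = \inner{Q_n Kx}{Q_n Kx}_Y = \norm{Q_n Kx}_Y^2 \le \norm{Kx}_Y^2 = \inner{K^* K x}{x}_X,
\]
where the inequality uses that orthogonal projections satisfy $\norm{Q_n}_{\linop(Y,Y)}=1$. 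Thus $T_n^*T_n$ is dominated by $K^*K$ in the sense of quadratic forms.

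Next I would identify the eigenvalues to which the min--max principle is applied. Both $T_n^*T_n$ and $K^*K$ are compact, selfadjoint, and positive; by the Spectral \cref{thm:spektral} their eigenvalues sorted by decreasing magnitude are, respectively, $\mu_k^2$ and $\sigma_k^2$ (cf.\ the proof of \cref{thm:svd}). Because $\calR(T_n)\subset Y_n$ has dimension $n$, the operator $T_n^*T_n$ has rank at most $n$, so its $n$-th eigenvalue is exactly $\mu_n^2$ (a possible zero being covered as well). Applying the max--min form of \eqref{eq:courant} to both operators and inserting the form inequality from the first paragraph then yields
\[
    \mu_n^2 = \max_{\dim V=n}\,\min_{x\in V}\setof{\inner{T_n^*T_n x}{x}_X}{\norm{x}_X=1} \le \max_{\dim V=n}\,\min_{x\in V}\setof{\inner{K^*K x}{x}_X}{\norm{x}_X=1} = \sigma_n^2,
\]
and taking square roots gives $\mu_n\le\sigma_n$, as claimed.

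The one point that needs a little care --- and the only mild obstacle --- is the bookkeeping of eigenvalue indices: one must confirm that the smallest singular value $\mu_n$ of the rank-$\le n$ operator $T_n$ really matches the $n$-th entry in the min--max enumeration of the eigenvalues of $T_n^*T_n$, including the degenerate case where $T_n$ is rank-deficient and $\mu_n=0$, so that the two maxima are compared at the same index. Once this correspondence is fixed, the comparison is entirely formal, and it is worth emphasizing that nothing about $Y_n$ is used beyond $\dim Y_n = n$; the estimate holds for \emph{any} $n$-dimensional subspace, which is exactly what is needed for the subsequent optimality discussion.
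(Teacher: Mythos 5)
Your argument is correct, but it takes a different route from the one in the text. You work on $X$ with $T_n^*T_n = K^*Q_nK$ and derive the inequality from the domination of quadratic forms, $\inner{K^*Q_nKx}{x}_X = \norm{Q_nKx}_Y^2 \leq \norm{Kx}_Y^2 = \inner{K^*Kx}{x}_X$, which via the max--min half of \eqref{eq:courant} immediately gives $\mu_n^2\leq\sigma_n^2$; note that this never uses $\dim Y_n = n$ except to know that $T_n$ has at most $n$ singular values, so the comparison is made at a meaningful index. The text instead works on $Y$ with $K_nK_n^* = Q_nKK^*Q_n$, identifies $\mu_n^2$ as the minimum of $\inner{KK^*y}{y}_Y$ over unit vectors in $Y_n$, and then produces by a dimension count a specific unit vector $\bar y\in U_{n-1}^\bot\cap Y_n$ (with $U_{n-1}=\spann\{u_1,\dots,u_{n-1}\}$) at which this minimum is tested against the min--max characterization of $\sigma_n^2$. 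Your form-monotonicity argument is shorter and more conceptual, and it handles the rank-deficient case cleanly (with the convention $\mu_n=0$, the max--min still returns $0$). What the explicit test-vector argument buys is the equality analysis: the text immediately reads off that $\mu_n=\sigma_n$ when $Y_n=U_n$, since then $\bar y = u_n$ is the unique candidate, and this is exactly what is needed for the subsequent identification of the optimal choice of $Y_n$ with the truncated singular value decomposition. Your proof would need a small additional argument to recover that equality case.
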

\begin{proof}
    If $\mu_n$ is a singular value of $K_n$, then $\mu_n^2$ is an eigenvalue of $K_nK_n^* = Q_nKK^*Q_n$; similarly, $\sigma_n^2$ is an eigenvalue of $KK^*$. Set now $U_{k} := \spann\{u_1,\dots,u_k\}\subset\overline{\calR(K^*)}$ for all $k\in\N$.
    Since $\dim Y_n = n$, there exists $\bar y\in U_{n-1}^\bot\cap Y_n$ with $\norm{\bar y}_Y = 1$ (otherwise $U_{n-1}^\bot \subset Y_n^\bot$, but this is impossible since the codimension $U_{n-1}$ is too small). The Courant--Fischer min--max principle \eqref{eq:courant} thus implies that
    \begin{equation*}
        \begin{aligned}
            \mu_n^2 &= \max_{V}\min_{y}\setof{\inner{Q_nKK^*Q_n y}{y}_y}{\norm{y}_Y= 1,\ y\in V,\ \dim V = n}\\
            &= \min_{y}\setof{\inner{KK^* y}{y}_Y}{\norm{y}_Y= 1,\ y\in Y_n}
            \leq \inner{KK^* \bar y }{\bar y}_Y\\
            &\leq \max_{y} \setof{\inner{KK^* y}{y}_Y}{\norm{y}_Y= 1,\ y\in U_{n-1}^\bot} = \sigma_n^2
        \end{aligned}
    \end{equation*}
    since the maximum is attained for $y=u_n\in U_{n-1}^\bot$.
\end{proof}
The proof also shows that equality of the singular values holds for $Y_n=U_n$, because then $\bar y=u_n$ is the only vector that is a candidate for minimization or maximization. But this choice corresponds exactly to the truncated singular value decomposition from \cref{ex:spektral}\,(i). In fact, the choice $Y_n = U_n$ is optimal with respect to the approximation error as well.
\begin{theorem}
    Let $K\in\calK(X,Y)$ have the singular system $\{(\sigma_n,u_n,v_n)\}_{n\in\N}$. If $Y_n\subset Y$ with $\dim Y_n = n$, then
    \begin{equation*}
        \norm{(\Id-P_n)K^*}_{\linop(Y,X)}\geq \sigma_{n+1},
    \end{equation*}
    with equality for $Y_n = U_n$.
\end{theorem}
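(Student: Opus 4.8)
The plan is to read $\norm{(\Id-P_n)K^*}_{\linop(Y,X)}$ as measuring how far $K^*y$ can lie from the subspace $X_n = K^*Y_n$ onto which $P_n$ projects. The key structural fact is that $X_n = K^*Y_n$ is the image of the $n$-dimensional space $Y_n$, so $\dim X_n \leq n$; this dimension deficit is exactly what drives the lower bound. Throughout I use the singular value decomposition $K^*y = \sum_{m\in\N}\sigma_m\inner{y}{u_m}_Y v_m$ from \cref{thm:svd} and set $U_k := \spann\{u_1,\dots,u_k\}$ as in the proof of the preceding theorem. If $K$ has fewer than $n+1$ positive singular values, the convention $\sigma_{n+1}=0$ makes the inequality trivial, so I may assume $\sigma_{n+1}>0$.

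For the lower bound, I would produce a single unit vector $y$ witnessing $\norm{(\Id-P_n)K^*y}_X \geq \sigma_{n+1}$. Consider the linear map $L\colon U_{n+1}\to X_n$, $Ly := P_n K^* y$. Since $\dim U_{n+1}=n+1 > n \geq \dim X_n$, the map $L$ has a nontrivial kernel, so there is $y\in U_{n+1}$ with $\norm{y}_Y=1$ and $P_nK^*y=0$, i.e. $(\Id-P_n)K^*y = K^*y$. Writing $y=\sum_{j=1}^{n+1}c_j u_j$, orthonormality of the $u_j$ and $v_j$ gives $\norm{y}_Y^2=\sum_{j=1}^{n+1}c_j^2=1$ and
\begin{equation*}
    \norm{(\Id-P_n)K^*y}_X^2 = \norm{K^*y}_X^2 = \sum_{j=1}^{n+1}\sigma_j^2 c_j^2 \geq \sigma_{n+1}^2\sum_{j=1}^{n+1}c_j^2 = \sigma_{n+1}^2,
\end{equation*}
using $\sigma_1\geq\dots\geq\sigma_{n+1}$. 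Taking the supremum over unit vectors then yields $\norm{(\Id-P_n)K^*}_{\linop(Y,X)}\geq\sigma_{n+1}$, and this argument uses nothing about $Y_n$ beyond $\dim Y_n=n$.

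For the equality when $Y_n=U_n$, I would first identify $X_n$ explicitly: since $K^*u_j=\sigma_j v_j$ with $\sigma_j>0$, we get $X_n = K^*U_n = \spann\{v_1,\dots,v_n\}=:V_n$, so $P_n=P_{V_n}$. The SVD then gives $(\Id-P_n)K^*y = \sum_{m>n}\sigma_m\inner{y}{u_m}_Y v_m$, whence the Bessel inequality \eqref{eq:funktan:bessel} delivers the matching upper bound
\begin{equation*}
    \norm{(\Id-P_n)K^*y}_X^2 = \sum_{m>n}\sigma_m^2|\inner{y}{u_m}_Y|^2 \leq \sigma_{n+1}^2\sum_{m>n}|\inner{y}{u_m}_Y|^2 \leq \sigma_{n+1}^2\norm{y}_Y^2.
\end{equation*}
Combined with the lower bound this forces equality; alternatively, the choice $y=u_{n+1}$ attains it directly, since $(\Id-P_n)K^*u_{n+1}=\sigma_{n+1}v_{n+1}$.

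The only genuine subtlety, and the step I would be most careful with, is the dimension bookkeeping: one must observe $\dim X_n\leq n$ (which holds for any $Y_n$ because $X_n$ is a $K^*$-image) so that the kernel count for $L$ goes through, and separately handle the degenerate case of finitely many singular values via the $\sigma_{n+1}=0$ convention. Everything else is routine orthonormal-expansion bookkeeping.
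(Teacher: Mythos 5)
Your proof is correct, and it takes a genuinely different route from the paper. The paper obtains the lower bound in one stroke from the Courant--Fischer min--max principle \eqref{eq:courant} applied to $K^*K$: choosing $V=X_n^\bot$ in the characterization of $\sigma_{n+1}^2$ gives $\sigma_{n+1}^2\leq\max\setof{\inner{K^*Kx}{x}_X}{\norm{x}_X=1,\ x\in X_n^\bot}=\norm{K(\Id-P_n)}_{\linop(X,Y)}^2=\norm{(\Id-P_n)K^*}_{\linop(Y,X)}^2$, and equality for $Y_n=U_n$ is read off because the extremum is then attained at $x=v_{n+1}\in X_n^\bot$. Your dimension-counting construction of a unit $y\in U_{n+1}$ with $P_nK^*y=0$ is in effect the classical proof of that half of the min--max principle, inlined and transported to the $Y$ side. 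What this buys is self-containedness (no appeal to \eqref{eq:courant}) and it quietly sidesteps the small bookkeeping point that $\dim X_n$ may be strictly less than $n$, which the min--max formulation with the constraint $\dim V^\bot=n$ would otherwise need a word about; your argument only ever uses $\dim X_n\leq n$. Your explicit Bessel-inequality computation for $Y_n=U_n$ is also slightly more informative than the paper's closing remark, since it establishes the matching upper bound $\norm{(\Id-P_n)K^*}_{\linop(Y,X)}\leq\sigma_{n+1}$ directly rather than inferring equality from the attainment of the extremum. Your handling of the degenerate case of finitely many singular values via the convention $\sigma_{n+1}=0$ is appropriate.
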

\begin{proof}
    We again use the Courant--Fischer min--max principle, this time for the eigenvalue $\sigma_n^2$ of $K^*K$. Setting $X_n:= K^*Y_n$ and $P_n:= P_{X_n}$, we then have that
    \begin{equation*}
        \begin{aligned}
            \sigma_{n+1}^2
            &= \min_{V}\max_{x}\setof{\inner{K^*Kx}{x}_X}{\norm{x}_X= 1}{\ x\in V\subset X,\ \dim V^\bot = n}\\
            &\leq \max_{x}\setof{\inner{K^*Kx}{x}_X}{\norm{x}_X= 1}{\ x\in X_n^\bot}\\
            &=\max_{x}\setof{\inner{K^*K(\Id-P_n)x}{(\Id-P_n)x}_X}{\norm{x}_X= 1}\\
            &=\max_{x}\setof{\norm{K(\Id-P_n)x}_Y^2}{\norm{x}_X= 1}\\
            &= \norm{K(\Id-P_n)}_{\linop(X,Y)}^2= \norm{(\Id-P_n)K^*}_{\linop(Y,X)}^2.
        \end{aligned}
    \end{equation*}
    If $Y_n = U_n$, then $X_n= K^*U_n =\spann\{v_1,\dots,v_n\}$, and the minimum and maximum in the inequality are attained for $x= v_{n+1} \in X_n^\bot$.
\end{proof}
Hence the best possible convergence rate (under the source condition from \cref{thm:discret:dlsp_conv_rate}) for the dual least-squares projection is
\begin{equation*}
    \norm{x_n^\delta-x^\dag}_X \leq C \left(\sigma_{n+1}+\frac{\delta}{\sigma_n}\right),
\end{equation*}
and this rate is attained for the truncated singular value decomposition.

Without knowledge of a singular system, however, it is necessary in practice to choose $n$ very small in order to ensure the condition on $\mu_n$. But this leads to a very coarse discretization that does not sufficiently capture the behavior of the infinite-dimensional operator.
The usual approach is therefore to combine a much finer discretization with one of the regularization methods discussed in the previous chapters. To obtain an optimal convergence rate and to avoid needless computational effort, one should then appropriately choose the regularization parameter in dependence of $\delta$ as well as of $n$ (or, vice versa, choose $n$ in dependence of $\alpha(\delta)$).

\part{Nonlinear inverse problems}

\chapter{Nonlinear ill-posed problems}

We now consider \emph{nonlinear} operators $F:U\to Y$ for $U\subset X$ and Hilbert spaces $X$ and $Y$.
The corresponding nonlinear inverse problem then consists in solving the operator equation $F(x)=y$. Such problems occur in many areas; in particular, trying to reconstruct the coefficients of a partial differential equations from a solution for given data (right-hand sides, initial or boundary conditions), e.g., in electrical impedance tomography, is a nonlinear ill-posed problem.
Here we will characterize this ill-posedness in an abstract setting; concrete examples would require results on partial differential equations that would go far beyond the scope of these notes.

A fundamental difference between linear and nonlinear operators is that the latter can act very differently on different subsets of $X$. The global characterization of well- or ill-posedness in the sense of Hadamard is hence too restrictive. We therefore call the operator $F:U\to y$ \emph{locally well-posed} in $x\in U$ if there exists an $r>0$ such that for all sequences $\{x_n\}_{n\in\N}\subset B_r(x)\cap U$ with $F(x_n)\to F(x)$, we also have that $x_n\to x$. Otherwise the operator is called \emph{locally ill-posed} (in $x$).
In this case, there exists for all $r>0$ a sequence $\{x_n\}_{n\in\N}\subset B_r(x)\cap U$ with $F(x_n)\to F(x)$ such that $x_n$ does not converge to $x$. A linear operator $T:X\to Y$ is either locally well-posed for all $x\in X$ or locally ill-posed for all $x\in X$.
The latter holds if and only if $T$ is not injective or $\calR(T)$ is not closed (e.g., for compact operators with infinite-dimensional range).%
\footnote{The local ill-posedness thus generalizes the (global) ill-posedness in the sense of Nashed, not of Hadamard.}
For nonlinear operators, the situation is a bit more involved. As in the linear case, we call $F:U\to Y$ compact, if every bounded sequence $\{x_n\}_{n\in\N}\subset U$ admits a convergent subsequence of $\{F(x_n)\}_{n\in\N}\subset Y$. However, nonlinear compact operators need \emph{not} be continuous and hence completely continuous (consider, e.g., an arbitrary bounded operator with finite-dimensional range); the latter is therefore an additional assumption. In fact, a weaker assumption suffices: an operator $F:U\to X$ is called \emph{weakly closed}, if $x_n\wkto x\in U$ and $F(x_n)\wkto y$ imply that $F(x)=y$.
\begin{lemma}\label{lem:nichtlin:vollcontinuous}
    Let $F:U\to Y$ be compact and weakly closed. Then $F$ is completely continuous, i.e., maps weakly convergent sequences in $X$ to strongly convergent sequences in $Y$.
\end{lemma}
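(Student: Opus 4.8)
The plan is to combine the compactness of $F$ with the standard characterization of strong convergence via subsequences: a sequence in a metric space converges to a limit if and only if every subsequence admits a further subsequence converging to that same limit. So I would start by fixing a weakly convergent sequence $x_n \wkto x$ with $x \in U$, and aim to show $F(x_n) \to F(x)$ strongly in $Y$. The first ingredient is that weakly convergent sequences are bounded (recalled in the excerpt), so $\{x_n\}_{n\in\N}$ is a bounded sequence in $U$, which is exactly the hypothesis needed to invoke compactness of $F$.

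Next I would set up the subsequence argument. Let $\{F(x_{n_k})\}_{k\in\N}$ be an arbitrary subsequence of $\{F(x_n)\}_{n\in\N}$. Since $\{x_{n_k}\}_{k\in\N}$ is still bounded, compactness of $F$ yields a further subsequence $\{x_{n_{k_j}}\}_{j\in\N}$ such that $F(x_{n_{k_j}}) \to y$ strongly in $Y$ for some $y \in Y$. The point is then to identify the limit $y$ as $F(x)$, and for this I would use weak closedness: the sequence $\{x_{n_{k_j}}\}_{j\in\N}$ is a subsequence of a weakly convergent sequence and hence still satisfies $x_{n_{k_j}} \wkto x$, while strong convergence $F(x_{n_{k_j}}) \to y$ implies in particular $F(x_{n_{k_j}}) \wkto y$. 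By the weak closedness of $F$ (with $x \in U$), this forces $F(x) = y$.

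Having shown that every subsequence of $\{F(x_n)\}_{n\in\N}$ contains a further subsequence converging strongly to the fixed limit $F(x)$, I would conclude by the subsequence principle that the full sequence converges, i.e., $F(x_n) \to F(x)$ strongly in $Y$. Since $x_n \wkto x$ was an arbitrary weakly convergent sequence with limit in $U$, this establishes complete continuity of $F$.

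There is no real obstacle here; the argument is a clean interplay of three facts (boundedness of weakly convergent sequences, extraction of a strongly convergent image-subsequence from compactness, and pinning down its limit via weak closedness). The only point requiring a little care is the logical structure: compactness only produces a strongly convergent \emph{subsequence}, not convergence of the whole image sequence, so the identification of the limit must be carried out on subsequences and then globalized through the subsequence criterion for convergence. It is also worth noting explicitly that weak closedness is precisely what converts the a priori unidentified compactness limit into $F(x)$, which is why the hypothesis cannot be dropped.
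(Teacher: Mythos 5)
Your proof is correct and follows essentially the same route as the paper's: boundedness of the weakly convergent sequence, extraction of a strongly convergent image subsequence by compactness, identification of the limit as $F(x)$ via weak closedness, and the subsequence criterion to upgrade to convergence of the whole sequence. Your version merely makes the subsequence-of-a-subsequence bookkeeping a bit more explicit than the paper does.
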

\begin{proof}
    Let $\{x_n\}_{n\in\N}\subset U$ be a weakly converging sequence with $x_n\wkto x\in U$. Then $\{x_n\}_{n\in\N}$ is bounded, and hence $\{F(x_n)\}_{n\in\N}$ contains a convergent subsequence $\{F(x_{n_k})\}_{k\in\N}$ with $F(x_{n_k})\to y\in Y$. Since strongly convergent sequences also converge weakly (to the same limit), the weak closedness yields that $y=F(x)$. Hence the limit is independent of the subsequence, which implies that the whole sequence converges.
\end{proof}
For such operators, we can show an analogous result to \cref{thm:inverse:discont}.
\begin{theorem}
    Let $X$ be an infinite-dimensional separable Hilbert space and $U\subset X$. If $F:U\to Y$ is completely continuous, then $F$ is locally ill-posed in all interior points of $U$.
\end{theorem}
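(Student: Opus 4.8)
The plan is to exhibit, at any interior point $x\in U$, the failure of local well-posedness by perturbing $x$ along an orthonormal basis, exploiting that such a basis converges weakly but not strongly to zero. Concretely, I first invoke separability and infinite-dimensionality of $X$ to fix an orthonormal basis $\{e_n\}_{n\in\N}$; as noted after the Bessel inequality, $e_n\wkto 0$ while $\norm{e_n}_X = 1$, so $\{e_n\}_{n\in\N}$ does not converge strongly. Since $x$ is an interior point of $U$, there is an $r_0>0$ with $U_{r_0}(x)\subset U$, which supplies the room needed to place perturbed points inside $U$.

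Next I would verify the definition of local ill-posedness directly: for an arbitrary $r>0$ set $\rho := \tfrac12\min\{r,r_0\}>0$ and define $x_n := x + \rho e_n$. Then $\norm{x_n - x}_X = \rho < r_0$, so $x_n\in U_{r_0}(x)\subset U$, and simultaneously $\norm{x_n-x}_X = \rho \le r$, so $\{x_n\}_{n\in\N}\subset B_r(x)\cap U$. Because $e_n\wkto 0$, we have $x_n\wkto x$, and the complete continuity of $F$ then yields $F(x_n)\to F(x)$ strongly in $Y$. On the other hand $\norm{x_n - x}_X = \rho > 0$ for every $n$, so $x_n\not\to x$. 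Thus for every $r>0$ there is a sequence in $B_r(x)\cap U$ whose images converge to $F(x)$ while the sequence itself does not converge to $x$; this is exactly the negation of local well-posedness, and hence $F$ is locally ill-posed in $x$.

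There is no genuine analytic obstacle here: the argument is the nonlinear counterpart of \cref{thm:inverse:discont}, with complete continuity of $F$ replacing the compactness-of-the-identity contradiction used there. The only points requiring a modicum of care are bookkeeping ones, namely ensuring the perturbed iterates remain in $U$ (handled by the interior-point radius $r_0$) and that the chosen sequence lies in $B_r(x)$ for every prescribed $r$ (handled by taking $\rho$ below both $r$ and $r_0$). I would also remark explicitly that separability is used only to guarantee a countable orthonormal basis along which to perturb; the essential mechanism is that in infinite dimensions weak convergence is strictly weaker than strong convergence, and complete continuity upgrades the former to the latter on the image side.
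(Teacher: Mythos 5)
Your argument is correct and is essentially the paper's own proof: both perturb the interior point along an orthonormal basis, use that $e_n\wkto 0$ but $\norm{e_n}_X=1$, and let complete continuity turn the weak convergence $x_n\wkto x$ into $F(x_n)\to F(x)$ while $\norm{x_n-x}_X$ stays bounded away from zero. Your only addition is the slightly more careful radius bookkeeping with $\rho=\tfrac12\min\{r,r_0\}$, which makes explicit that the defining condition of local ill-posedness holds for every $r>0$.
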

\begin{proof}
    Since $X$ is separable, there exists an (infinite) orthonormal basis $\{u_n\}_{n\in\N}$. Let now $x\in U$ be an interior point and define for $r>0$ with $B_r(x)\subset U$ the points $x_n:= x+\frac{r}{2} u_n\in B_r(x)$. Then $\norm{x_n-x}_X=\frac{r}2$, but the fact that $u_n\wkto 0$ for any orthonormal basis implies that $x_n\wkto x$ and hence that $F(x_n)\to F(x)$ due to the complete continuity of $F$.
\end{proof}

As in the linear case we now define minimum norm solutions and regularizations.
Since $0\in X$ can now longer be taken as a generic point, we denote for given $y\in \calR(F)$ and $x_0\in X$ any point $x^\dag\in U$ with $F(x^\dag)=y$ and
\begin{equation*}
    \norm{x^\dag -x_0}_X = \min\setof{\norm{x-x_0}_X}{F(x) = y}
\end{equation*}
as \emph{$x_0$-minimum norm solution}.
For nonlinear inverse problems, these need not be unique in contrast to the linear case. Their existence also requires that $F(x)=y$ actually admits a solution.
A regularization of $F(x)=y$ is now a family $\{R_\alpha\}_{\alpha>0}$ of continuous (possibly nonlinear) operators $R_\alpha: X\times Y\to X$ such that $R_\alpha(x_0,y)$ converges to an $x_0$-minimum norm solution as $\alpha\to 0$.
In combination with a parameter choice rule for $\alpha$, we define (convergent) regularization methods as before. For nonlinear inverse problems, these operators can in general not be given explicitly; most regularizations are instead based on an (iterative) linearization of the problem.

\bigskip

This requires a suitable notion of derivatives for operators between normed vector spaces.
Let $X,Y$ be normed vector spaces, $F:U\to Y$ be an operator with $U\subset X$ and $x\in U$, and $h\in X$ be arbitrary.
\begin{itemize}
    \item If the one-sided limit
        \begin{equation*}
            F'(x;h) := \lim_{t\to 0^+} \frac{F(x+th)-F(x)}{t}\in Y,
        \end{equation*}
        exists, it is called the \emph{directional derivative} of $F$ in $x$ in direction $h$.
    \item If $F'(x;h)$ exists for all $h\in X$ and
        \begin{equation*}
            DF(x):X\to Y, h\mapsto F'(x;h)
        \end{equation*}
        defines a bounded linear operator, we call $F$ \emph{Gâteaux differentiable} (in $x$) and $DF\in \linop(X,Y)$ its \emph{Gâteaux derivative}.
    \item If additionally
        \begin{equation*}
            \lim_{\norm{h}_X\to 0} \frac{\norm{F(x+h) - F(x) - DF(x)h}_Y}{\norm{h}_X} = 0,
        \end{equation*}
        then $F$ is called \emph{Fréchet differentiable} (in $x$) and $F'(x):=DF(x)\in \linop(X,Y)$ its \emph{Fréchet derivative}.
    \item If the mapping $F':U\to \linop(X,Y)$, $x\mapsto F'(x)$, is (Lipschitz) continuous, we call $F$ \emph{(Lipschitz) continuously differentiable}.
\end{itemize}
The difference between Gâteaux and Fréchet differentiable lies in the approximation error of $F$ near $x$ by $F(x)+DF(x)h$: While it only has to be bounded in $\norm{h}_X$ -- i.e., linear in $\norm{h}_X$ -- for a Gâteaux differentiable function, it has to be superlinear in $\norm{h}_X$ if $F$ is Fréchet differentiable. (For a \emph{fixed} direction $h$, this of course also the case for Gâteaux differentiable functions; Fréchet differentiability thus additionally requires a uniformity in $h$.)

If $F$ is Gâteaux differentiable, the Gâteaux derivative can be computed via
\begin{equation*}
    DF(x) h = \left(\tfrac{d}{dt}F(x+th)\right)\Big|_{t=0}.
\end{equation*}
(However, the existence and linearity of this limit does \emph{not} show the Gâteaux differentiability of $F$ since it doesn't imply that $DF(x)$ is continuous with respect to the right norms.)
Bounded linear operators $F\in \linop(X,Y)$ are obviously Fréchet differentiable with derivative $F'(x) = F \in \linop(X,Y)$ for all $x\in X$.
Note that the Gâteaux derivative of a functional $F:X\to\R$ is an element of the \emph{dual space}
$X^* = \linop(X,\R)$ and thus cannot be added to elements in $X$. However, in Hilbert spaces (and in particular in $\R^n$), we can use the \nameref{thm:frechetriesz} \cref{thm:frechetriesz} to identify $DF(x)\in X^*$ with an element $\nabla F(x) \in X$, called \emph{gradient} of $F$, in a canonical way via
\begin{equation*}
    DF(x)h = \inner{\nabla F(x)}{h}_X \qquad\text{for all } h\in X.
\end{equation*}
As an example, let us consider the functional $F(x) = \frac12\norm{x}_X^2$, where the norm is induced by the inner product. Then we have for all $x,h\in X$ that
\begin{equation*}
    F'(x;h) = \lim_{t\to 0^+} \frac{\frac12\inner{x+th}{x+th}_X - \frac12\inner{x}{x}_X}{t} = \inner{x}{h}_X = DF(x)h,
\end{equation*}
since the inner product is linear in $h$ for fixed $x$.
Hence, the squared norm is Gâteaux differentiable in $x$ with derivative $DF(x) = h\mapsto \inner{x}{h}_X\in X^*$ and gradient $\nabla F(x) = x\in X$; it is even Fréchet differentiable since
\begin{equation*}
    \lim_{\norm{h}_X\to 0} \frac{\left|\frac12\norm{x+h}_X^2 - \frac12\norm{x}_X^2 - \inner{x}{h}_X\right|}{\norm{h}_X} = \lim_{\norm{h}_X\to 0}\frac12 \norm{h}_X = 0.
\end{equation*}
If the same mapping is now considered on a smaller Hilbert space $X'\hookrightarrow X$ (e.g., $X=L^2(\Omega)$ and $X'=H^1(\Omega)$), then the derivative $DF(x)\in (X')^*$ is still given by  $DF(x)h=\inner{x}{h}_X$ (now only for all $h\in X'$), but the gradient $\nabla F\in X'$ is now characterized by
\begin{equation*}
    DF(x)h = \inner{\nabla F(x)}{h}_{X'} \qquad\text{for all } h\in X'.
\end{equation*}
Different inner products thus lead to different gradients.

Further derivatives can be obtained through the usual calculus, whose proof in Banach spaces is exactly as in $\R^n$. As an example, we prove a chain rule.
\begin{theorem}\label{thm:frechet_chain}
    Let $X$, $Y$, and $Z$ be Banach spaces, and let $F:X\to Y$ be Fréchet differentiable in $x\in X$ and $G:Y\to Z$ be Fréchet differentiable in $y:=F(x)\in Y$. Then, $G\circ F$ is Fréchet differentiable in $x$ and
    \begin{equation*}
        (G\circ F)'(x) = G'(F(x))\circ F'(x).
    \end{equation*}
\end{theorem}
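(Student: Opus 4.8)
The plan is to work directly with the remainder terms in the definition of Fréchet differentiability and to show that the remainder of the composite is superlinear. First I would introduce the abbreviations $r_F(h) := F(x+h) - F(x) - F'(x)h$ and $r_G(k) := G(y+k) - G(y) - G'(y)k$ for $y := F(x)$, so that by hypothesis $\norm{r_F(h)}_Y = o(\norm{h}_X)$ as $\norm{h}_X \to 0$ and $\norm{r_G(k)}_Z = o(\norm{k}_Y)$ as $\norm{k}_Y \to 0$.

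Next I would set $k = k(h) := F(x+h) - F(x) = F'(x)h + r_F(h)$, so that $G(F(x+h)) = G(y+k) = G(y) + G'(y)k + r_G(k)$. Substituting and using the linearity of $G'(y)$ gives the composite remainder
\begin{equation*}
    (G\circ F)(x+h) - (G\circ F)(x) - (G'(y)\circ F'(x))h = G'(y)r_F(h) + r_G(k).
\end{equation*}
It then remains to show that both terms on the right are $o(\norm{h}_X)$. The first is immediate: since $G'(y)\in\linop(Y,Z)$ is bounded, $\norm{G'(y)r_F(h)}_Z \leq \norm{G'(y)}_{\linop(Y,Z)}\norm{r_F(h)}_Y = o(\norm{h}_X)$. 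For the second I would first establish a linear bound on $k$: once $\norm{h}_X$ is small enough that $\norm{r_F(h)}_Y \leq \norm{h}_X$, we obtain $\norm{k}_Y \leq (\norm{F'(x)}_{\linop(X,Y)}+1)\norm{h}_X =: C\norm{h}_X$, which in particular forces $k(h)\to 0$ as $h\to 0$.

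The main obstacle, and the only point genuinely requiring care, is that $k(h)$ may vanish for some $h\neq 0$, where the quotient $\norm{r_G(k)}_Z/\norm{k}_Y$ is undefined. I would circumvent this by introducing the auxiliary function $\eta(k) := \norm{r_G(k)}_Z/\norm{k}_Y$ for $k\neq 0$ together with $\eta(0) := 0$. The Fréchet differentiability of $G$ at $y$ is precisely the statement that $\eta$ is continuous at $0$, and the inequality $\norm{r_G(k)}_Z \leq \eta(k)\norm{k}_Y$ then holds for \emph{all} $k$ (trivially when $k=0$). Combining this with the linear bound yields $\norm{r_G(k(h))}_Z \leq \eta(k(h))\,C\norm{h}_X$; since $k(h)\to 0$ forces $\eta(k(h))\to 0$, the second term is also $o(\norm{h}_X)$. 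Together these estimates show that the composite remainder is $o(\norm{h}_X)$, which is exactly the Fréchet differentiability of $G\circ F$ at $x$ with derivative $G'(F(x))\circ F'(x)$.
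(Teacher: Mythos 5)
Your proof is correct and follows essentially the same route as the paper's: the same splitting of the composite remainder into $G'(y)r_F(h)$ plus the remainder of $G$ evaluated at the inner increment $k(h)$, and the same linear bound $\norm{k(h)}_Y \leq C\norm{h}_X$ to convert $o(\norm{k}_Y)$ into $o(\norm{h}_X)$. Your explicit treatment of the degenerate case $k(h)=0$ via the auxiliary function $\eta$ is a small refinement of a point the paper passes over by writing the remainder as a function $r_1(\norm{g}_Y)$ of the norm alone.
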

\begin{proof}
    For $h\in X$ with $x+h\in\dom F$ we have
    \begin{equation*}
        (G\circ F)(x+h ) - (G\circ F)(x) = G(F(x+h))-G(F(x)) = G(y+g)  - G(y)
    \end{equation*}
    with $g := F(x+h)-F(x)$. The Fréchet differentiability of $G$ thus implies that
    \begin{equation*}
        \norm{(G\circ F)(x+h ) - (G\circ F)(x) - G'(y)g }_Z =  r_1(\norm{g}_Y)
    \end{equation*}
    with $r_1(t)/t \to 0$ for $t\to 0$. The Fréchet differentiability of $F$ further implies
    \begin{equation*}
        \norm{g - F'(x)h}_Y = r_2(\norm{h}_X)
    \end{equation*}
    with $r_2(t)/t \to 0$ for $t\to 0$. In particular,
    \begin{equation}\label{eq:frechet_chain:est}
        \norm{g}_Y \leq \norm{F'(x)h}_Y + r_2(\norm{h}_X).
    \end{equation}
    Hence, with $c:=\norm{G'(F(x))}_{\linop(Y,Z)}$ we have
    \begin{equation*}
        \norm{(G\circ F)(x+h) - (G\circ F)(x) -  G'(F(x)) F'(x)h}_Z \leq  r_1(\norm{g}_Y) +  c\, r_2(\norm{h}_X).
    \end{equation*}
    If $\norm{h}_X\to 0$, we obtain from \eqref{eq:frechet_chain:est} and $F'(x)\in \linop(X,Y)$ that $\norm{g}_Y\to 0$ as well, and the claim follows.
\end{proof}
A similar rule for Gâteaux derivatives does not hold, however.

We will also need the following variant of the mean value theorem.
Let $[a,b]\subset \R$ be a bounded interval and $f:[a,b]\to X$ be continuous. We then define  the \emph{Bochner integral} $\int_a^b f(t)\,dt\in X$ using the \nameref{thm:frechetriesz} \cref{thm:frechetriesz} via
\begin{equation} \label{eq:bochner}
    \inner{\int_a^b f(t)\,dt}{z}_X = \int_a^b\inner{f(t)}{z}_X\,dt \qquad\text{for all }z\in X,
\end{equation}
since by the continuity of $t\mapsto \norm{f(t)}_X$ on the compact interval $[a,b]$, the right-hand side defines a continuous linear functional on $X$. The construction then directly implies that
\begin{equation}
    \label{eq:bochner_est}
    \left\|\int_a^b f(t)\,dt\right\|_X \leq \int_a^b \norm{f(t)}_X\,dt.
\end{equation}
\begin{theorem}\label{thm:frechet:mittelwert}
    Let $F:U\to Y$ be Fréchet differentiable, and let $x\in U$ and $h\in Y$ be given with $x+th\in U$ for all $t\in[0,1]$. Then
    \begin{equation*}
        F(x+h) - F(x) =\int_0^1 F'(x+th)h\,dt.
    \end{equation*}
\end{theorem}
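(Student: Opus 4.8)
The plan is to reduce the vector-valued identity to the scalar fundamental theorem of calculus by testing both sides against an arbitrary $z\in Y$ and exploiting the defining property \eqref{eq:bochner} of the Bochner integral, applied with $Y$ in place of $X$ (the construction is verbatim the same in any Hilbert space). Since two elements of a Hilbert space coincide as soon as their inner products with every $z\in Y$ agree, it suffices to prove
\[
    \inner{F(x+h)-F(x)}{z}_Y = \int_0^1 \inner{F'(x+th)h}{z}_Y\,dt \qquad\text{for all } z\in Y,
\]
after which \eqref{eq:bochner} rewrites the right-hand side as $\inner{\int_0^1 F'(x+th)h\,dt}{z}_Y$ and the claim follows by varying $z$.

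For fixed $z\in Y$ I would introduce the scalar function $\varphi_z:[0,1]\to\R$, $\varphi_z(t):=\inner{F(x+th)}{z}_Y$, which is well-defined because $x+th\in U$ for all $t\in[0,1]$ by hypothesis. The central step is to show that $\varphi_z$ is differentiable with $\varphi_z'(t)=\inner{F'(x+th)h}{z}_Y$. This is where Fréchet differentiability enters: at each point $x+th$ it yields $F(x+(t+s)h)-F(x+th)=sF'(x+th)h+o(|s|)$, so that $s^{-1}\bigl(F(x+(t+s)h)-F(x+th)\bigr)\to F'(x+th)h$ in $Y$ as $s\to 0$ (one-sided at the endpoints); since $\inner{\cdot}{z}_Y$ is a bounded, hence continuous, linear functional, I may pass it through this limit to obtain exactly $\varphi_z'(t)=\inner{F'(x+th)h}{z}_Y$.

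It then remains to apply the ordinary fundamental theorem of calculus, $\varphi_z(1)-\varphi_z(0)=\int_0^1\varphi_z'(t)\,dt$, which is precisely the displayed scalar identity. The only genuine obstacle is the regularity of the integrand: both for the Bochner integral $\int_0^1 F'(x+th)h\,dt$ to be defined in the sense of \eqref{eq:bochner} and for the scalar fundamental theorem to apply to $\varphi_z'$, one needs $t\mapsto F'(x+th)h$ to be continuous on $[0,1]$. This holds whenever $F'$ is continuous, so that $t\mapsto F'(x+th)$ is continuous in $\linop(X,Y)$ and evaluation at the fixed $h$ preserves continuity; continuity of $t\mapsto\inner{F'(x+th)h}{z}_Y$ then guarantees Riemann integrability of $\varphi_z'$. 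Once this is settled, arbitrariness of $z\in Y$ upgrades the scalar identity to the asserted $Y$-valued equality, completing the proof.
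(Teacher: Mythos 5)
Your proof is correct and follows essentially the same route as the paper's: scalarize by testing against an arbitrary $z\in Y$, differentiate $t\mapsto\inner{F(x+th)}{z}_Y$ (the paper invokes the chain rule of \cref{thm:frechet_chain} where you compute the difference quotient directly and pass the continuous functional through the limit), apply the scalar fundamental theorem of calculus, and convert back via the defining property \eqref{eq:bochner} of the Bochner integral. Your closing remark that one needs continuity of $t\mapsto F'(x+th)h$ both for the integral to be defined in the sense of \eqref{eq:bochner} and for the scalar fundamental theorem to apply is a point the paper's proof passes over silently; it is satisfied in all of the paper's subsequent uses, where $F$ is assumed (Lipschitz) continuously differentiable.
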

\begin{proof}
    Consider for arbitrary $y\in Y$ the function
    \begin{equation*}
        f:[0,1]\to \R,\qquad t\mapsto\inner{F(x+th)}{y}_Y.
    \end{equation*}
    From \cref{thm:frechet_chain} we obtain that $f$ (as a composition of operators between normed vector spaces) is differentiable with
    \begin{equation*}
        f'(t) = \inner{F'(x+th)h}{y}_Y,
    \end{equation*}
    and the fundamental theorem of calculus in $\R$ yields that
    \begin{equation*}
        \inner{F(x+h)-F(x)}{y}_Y = f(1)-f(0) = \int_0^1f'(t)\,dt = \inner{\int_0^1 F'(x+th)h\,dt}{y}_Y,
    \end{equation*}
    where the last equality follows from \eqref{eq:bochner}.
    Since $y\in Y$ was arbitrary, the claim follows.
\end{proof}

If the Fréchet derivative is locally Lipschitz continuous, i.e., if there exist $L>0$ and $\delta>0$ such that
\begin{equation}
    \label{eq:nichtlin:lipschitz}
    \norm{F'(x_1)-F'(x_2)}_{\linop(X,Y)} \leq L\norm{x_1-x_2}_X\qquad\text{for all }x_1,x_2\in B_\delta(x),
\end{equation}
the linearization error can even be estimated quadratically.
\begin{lemma}\label{lem:nichtlin:lipschitz}
    Let $F:U\to Y$ Lipschitz continuously differentiable in a neighborhood $V\subset U$ of $x\in U$. Then for all $h\in X$ with $x+th\in V$ for $t\in[0,1]$,
    \begin{equation*}
        \label{eq:nichtlin:lipschitz_est}
        \norm{F(x+h)-F(x) - F'(x)h}_Y \leq \frac{L}2\norm{h}_X^2.
    \end{equation*}
\end{lemma}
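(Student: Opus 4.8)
The plan is to reduce everything to the integral representation of the increment supplied by the mean value theorem and then exploit the Lipschitz bound \eqref{eq:nichtlin:lipschitz} inside the integral. Concretely, I would first invoke \cref{thm:frechet:mittelwert} (whose hypotheses are met since $x+th\in V\subset U$ for all $t\in[0,1]$) to write
\begin{equation*}
    F(x+h)-F(x) = \int_0^1 F'(x+th)h\,dt.
\end{equation*}
The key observation is that the term $F'(x)h$ we wish to subtract can be rewritten as a Bochner integral of a constant integrand, namely $F'(x)h = \int_0^1 F'(x)h\,dt$. Subtracting this inside the integral yields
\begin{equation*}
    F(x+h)-F(x)-F'(x)h = \int_0^1 \bigl(F'(x+th)-F'(x)\bigr)h\,dt,
\end{equation*}
which isolates exactly the difference of derivatives that the Lipschitz assumption controls.

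Next I would apply the Bochner integral estimate \eqref{eq:bochner_est} to pass the norm inside the integral, obtaining
\begin{equation*}
    \norm{F(x+h)-F(x)-F'(x)h}_Y \leq \int_0^1 \norm{\bigl(F'(x+th)-F'(x)\bigr)h}_Y\,dt.
\end{equation*}
For the integrand I would use the operator-norm inequality $\norm{Sh}_Y\leq\norm{S}_{\linop(X,Y)}\norm{h}_X$ together with \eqref{eq:nichtlin:lipschitz}, noting that the relevant points $x+th$ and $x$ both lie in $V$ and that $\norm{(x+th)-x}_X = t\norm{h}_X$. This gives the pointwise bound $\norm{(F'(x+th)-F'(x))h}_Y \leq L\,t\,\norm{h}_X^2$.

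Finally I would integrate this elementary bound over $[0,1]$, using $\int_0^1 t\,dt = \tfrac12$, to conclude
\begin{equation*}
    \norm{F(x+h)-F(x)-F'(x)h}_Y \leq L\norm{h}_X^2\int_0^1 t\,dt = \frac{L}{2}\norm{h}_X^2,
\end{equation*}
which is the claim. There is no genuine obstacle here: the only point requiring care is ensuring that the segment $\{x+th : t\in[0,1]\}$ stays inside the neighborhood $V$ on which the Lipschitz estimate holds, but this is precisely the hypothesis, so the mean value theorem and the Lipschitz bound apply along the entire path of integration. The whole argument is thus the nonlinear analogue of the familiar one-dimensional estimate of the Taylor remainder via the integral form.
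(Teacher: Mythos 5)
Your proof is correct and follows exactly the same route as the paper's: apply the mean value theorem \cref{thm:frechet:mittelwert} to write the linearization error as $\int_0^1 (F'(x+th)-F'(x))h\,dt$, move the norm inside via \eqref{eq:bochner_est}, and bound the integrand by $Lt\norm{h}_X^2$ using the Lipschitz condition. Your version merely spells out the intermediate steps (rewriting $F'(x)h$ as an integral, the operator-norm inequality) that the paper leaves implicit.
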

\begin{proof}
    \Cref{thm:frechet:mittelwert} together with \eqref{eq:bochner_est} and \eqref{eq:nichtlin:lipschitz} directly yield that
    \begin{equation*}
        \begin{aligned}[b]
            \norm{F(x+h)-F(x) - F'(x)h}_Y
            &= \left\|\int_0^1 F'(x+th)h -F'(x)h\,dt\right\|_Y\\
            &\leq \int_0^1 \norm{F'(x+th)h -F'(x)h}\,dt\\
            &\leq \int_0^1 Lt\norm{h}_X^2\,dt = \frac{L}2 \norm{h}_X^2.
        \end{aligned}
        \qedhere
    \end{equation*}
\end{proof}

\bigskip

A natural question is now about the relationship between the local ill-posedness of $F:U\to Y$ in $x$ and of its linearization $F'(x)\in \linop(X,Y)$. The following result suggests that at least for completely continuous operators, the latter inherits the ill-posedness of the former.
\begin{theorem}\label{thm:nichtlin:frechet_complete_cont}
    If $F:U\to Y$ is completely continuous and Fréchet differentiable in $x\in U$, then $F'(x)\in \linop(X,Y)$ is compact.
\end{theorem}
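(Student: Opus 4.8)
The plan is to use the characterization recalled in the excerpt that a linear operator is compact if and only if it maps weakly convergent sequences to strongly convergent ones. By linearity of $F'(x)$, it then suffices to show that $h_n\wkto 0$ in $X$ implies $F'(x)h_n\to 0$ strongly in $Y$. So I would start with a sequence $h_n\wkto 0$; since weakly convergent sequences are bounded, I fix $M>0$ with $\norm{h_n}_X\le M$ for all $n$, and I fix a radius $r>0$ with $B_r(x)\subset U$ (so that the differentiability and complete continuity hypotheses can be applied along the relevant points).

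The key device is to exploit Fréchet differentiability to split, for any fixed $t>0$ small enough that $tM<r$ (so that $x+th_n\in B_r(x)\subset U$),
\[
F'(x)h_n = \frac1t\bigl(F(x+th_n)-F(x)\bigr) - \frac1t\,\bigl(F(x+th_n)-F(x)-F'(x)(th_n)\bigr).
\]
For fixed $t$, the sequence satisfies $x+th_n\wkto x$ (since $th_n\wkto 0$), so complete continuity forces $F(x+th_n)\to F(x)$ strongly as $n\to\infty$; hence the first term on the right tends to $0$ in $Y$.

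For the remainder term I would invoke the Fréchet remainder estimate: given $\eps>0$ there is $\eta>0$ with $\norm{F(x+v)-F(x)-F'(x)v}_Y\le\eps\norm{v}_X$ whenever $\norm{v}_X\le\eta$. Choosing $t$ small enough that $tM\le\min\{\eta,r\}$ makes $\norm{th_n}_X\le tM\le\eta$ uniformly in $n$, so that $\frac1t\norm{F(x+th_n)-F(x)-F'(x)(th_n)}_Y\le\frac1t\,\eps\,(tM)=\eps M$ for every $n$. Combining the two estimates yields $\limsup_{n\to\infty}\norm{F'(x)h_n}_Y\le\eps M$, and since $\eps>0$ was arbitrary, $F'(x)h_n\to 0$, which is the desired compactness.

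The subtle point — and essentially the only one requiring care — is the interchange of the two limiting processes ($n\to\infty$ versus $t\to 0$): complete continuity only helps at a fixed $t$ as $n\to\infty$, while the linearization error only becomes negligible as $t\to 0$. The splitting above resolves this by first choosing $t$ small \emph{uniformly in $n$} (using the boundedness $\norm{h_n}_X\le M$) to control the remainder, and only then sending $n\to\infty$ at that fixed $t$; the uniform bound $\eps M$ is precisely what makes the $\limsup$ argument close.
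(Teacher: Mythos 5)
Your proof is correct and rests on exactly the same mechanism as the paper's: scale $h_n$ by a small factor $t$ so that the Fréchet remainder is uniformly controlled by $\eps\norm{th_n}_X$ (using the boundedness of the weakly convergent sequence), and let complete continuity kill the difference quotient $\frac1t\bigl(F(x+th_n)-F(x)\bigr)$ as $n\to\infty$. The only difference is presentational --- you argue directly via a $\limsup$ estimate, whereas the paper runs the identical decomposition as a proof by contradiction starting from a sequence with $\norm{F'(x)h_n}_Y\geq\eps$ --- so the two proofs are essentially the same.
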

\begin{proof}
    Let $x\in U$ be arbitrary and assume to the contrary that $F'(x)$ is not compact and therefore not completely continuous. Then there exists a sequence $\{h_n\}_{n\in\N}$ with $h_n\wkto 0$ as well as an $\eps>0$ such that
    \begin{equation*}
        \norm{F'(x)h_n}_Y \geq \eps \qquad\text{for all }n\in \N.
    \end{equation*}
    Since weak convergence implies boundedness, we can assume without loss of generality (by proper scaling of $h_n$ and $\eps$) that $\norm{h_n}_X\leq 1$ for all $n\in\N$.
    By definition of the Fréchet derivative, there then exists a $\delta>0$ such that
    \begin{equation*}
        \norm{F(x+h)-F(x)-F'(x)h}_Y \leq \frac{\eps}2 \norm{h}_X \qquad\text{for all }\norm{h}_X\leq\delta.
    \end{equation*}
    Since $\{h_n\}_{n\in\N}$ is bounded, there exists a $\tau>0$ sufficiently small that $\norm{\tau h_n}_X \leq\delta$ and $x+\tau h_n\in U$ for all $n\in \N$ (otherwise $F$ would not be differentiable in $x$). Then we have that $x+\tau h_n\wkto x$; however, for all $n\in\N$,
    \begin{equation*}
        \begin{aligned}
            \norm{F(x+\tau h_n) - F(x)}_Y
            &= \norm{F'(x)(\tau h_n) + F(x+\tau h_n) - F(x)-F'(x)(\tau h_n)}_Y\\
            &\geq \norm{F'(x)(\tau h_n)}_Y - \norm{F(x+\tau h_n)-F(x)-F'(x)(\tau h_n)}_Y\\
            &\geq \tau \eps - \tau\norm{h_n}_X \frac\eps{2} \geq \tau \frac\eps{2}.
        \end{aligned}
    \end{equation*}
    Hence $F$ is not completely continuous.
\end{proof}
Note that this does not necessarily imply that $F'(x)h=y-F(x+h)$ is ill-posed, as $F'(x)$ may happen to have finite-dimensional range. Conversely, a locally well-posed problem may have an ill-posed linearization; see \cite[Example~\textsc{a}.1, \textsc{a}.2]{EKN}. This naturally has consequences to any regularization that relies on linearization.
The reason for this discrepancy is the fact that although the linearization error tends to zero superlinearly as $\norm{h}_X\to0$, for fixed $h\in X$ the error may be much larger than either the nonlinear residual $y-F(x)$ or the linear residual $y-F(x+h)-F'(x)h$. To obtain stronger results, we thus have to impose conditions on the nonlinearity of $F$.

One possibility is to require more smoothness of $F$, e.g., local Lipschitz continuity of the derivative around $x\in U$. Under this assumption, the linearization indeed inherits the local ill-posedness.
\begin{theorem}
    Let $F:U\to Y$ be Fréchet differentiable with locally Lipschitz continuous derivative. If $F$ is locally ill-posed in $x\in U$, then $F'(x)$ is locally ill-posed in all $h\in\calN(F'(x))$.
\end{theorem}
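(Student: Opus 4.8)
The plan is to argue by contraposition, exploiting that local ill-posedness is a \emph{global} property of a bounded linear operator. As recalled just before this theorem, $F'(x)\in\linop(X,Y)$ is either locally well-posed at every point of $X$ or locally ill-posed at every point, the latter being equivalent to $F'(x)$ failing to be injective or $\calR(F'(x))$ failing to be closed. Consequently it suffices to show that $F'(x)$ is locally ill-posed \emph{somewhere}: it is then automatically locally ill-posed at every $h$, in particular at every $h\in\calN(F'(x))$, which is exactly the asserted conclusion. I will obtain this by proving the contrapositive statement: \emph{if $F'(x)$ is locally well-posed, then $F$ is locally well-posed in $x$}, contradicting the hypothesis.

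So assume $F'(x)$ is locally well-posed. By the global dichotomy it is injective with closed range, and \cref{thm:closedrange} together with the characterization \eqref{eq:funktan:inverse_bd} of a continuous inverse then furnishes a constant $c>0$ with
\begin{equation*}
    c\norm{h}_X \le \norm{F'(x)h}_Y \qquad\text{for all } h\in X.
\end{equation*}
Let $V\subset U$ be the neighborhood of $x$ on which $F'$ is $L$-Lipschitz as in \eqref{eq:nichtlin:lipschitz}, and fix $r>0$ so small that $B_r(x)\subset V$ and $\tfrac{L}{2}r<\tfrac{c}{2}$.

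Now I would take an arbitrary sequence $\{x_n\}_{n\in\N}\subset B_r(x)\cap U$ with $F(x_n)\to F(x)$ and set $h_n:=x_n-x$, so that $\norm{h_n}_X<r$. Since $B_r(x)$ is convex and contained in $V$, \cref{lem:nichtlin:lipschitz} bounds the linearization error by $\tfrac{L}{2}\norm{h_n}_X^2$, and combining this with the coercivity estimate gives
\begin{equation*}
    c\norm{h_n}_X \le \norm{F'(x)h_n}_Y \le \norm{F(x_n)-F(x)}_Y + \tfrac{L}{2}\norm{h_n}_X^2 \le \norm{F(x_n)-F(x)}_Y + \tfrac{c}{2}\norm{h_n}_X,
\end{equation*}
hence $\tfrac{c}{2}\norm{h_n}_X \le \norm{F(x_n)-F(x)}_Y$. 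Letting $n\to\infty$ shows $x_n\to x$, i.e.\ $F$ is locally well-posed in $x$ --- the desired contradiction, and the theorem follows by contraposition.

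The one delicate point, and the reason for organizing the proof this way, is the interplay between the quadratic linearization bound and the linear coercivity estimate: the term $\tfrac{L}{2}\norm{h_n}_X^2$ does \emph{not} vanish along a sequence with $x_n\not\to x$, so a direct constructive argument --- pushing a bad sequence of $F$ forward to a bad sequence of $F'(x)$ via $h_n=x_n-x$ --- would stall precisely here. The contraposition circumvents this by absorbing $\tfrac{L}{2}\norm{h_n}_X^2\le\tfrac{L}{2}r\,\norm{h_n}_X$ into the coercive lower bound \emph{after} shrinking $r$ below $c/L$. This is exactly where Lipschitz continuity of the derivative is indispensable: mere Fréchet differentiability yields only an $o(\norm{h}_X)$ error, which cannot be beaten uniformly against a fixed coercivity constant over a whole ball, whereas the quadratic estimate of \cref{lem:nichtlin:lipschitz} can.
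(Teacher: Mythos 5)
Your proof is correct and follows essentially the same route as the notes: both arguments combine the quadratic linearization bound of \cref{lem:nichtlin:lipschitz} with the boundedness of the inverse of $F'(x)$ (equivalent to injectivity plus closed range) and absorb the quadratic term on a sufficiently small ball to obtain an estimate of the form $\norm{x_n-x}_X \leq C\norm{F(x_n)-F(x)}_Y$, which contradicts the local ill-posedness of $F$. The only difference is cosmetic: where you invoke the coercivity estimate $c\norm{h}_X\leq\norm{F'(x)h}_Y$ directly, the notes detour through the continuous adjoint pseudoinverse, writing $h=F'(x)^*w$ with $\norm{w}_Y\leq C\norm{h}_X$ and estimating $\tfrac{L}{2}\norm{h}_X^2\leq\mu\norm{F'(x)h}_Y$ --- your version is, if anything, slightly more direct.
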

\begin{proof}
    Assume to the contrary that the nonlinear operator is locally ill-posed but its linearization is locally well-posed. The latter is equivalent to $F'(x)$ being injective and having closed range. Hence by \cref{thm:inverse:cont} there exists a continuous pseudoinverse $F'(x)^\dag\in \linop(Y,X)$.
    Now if $F'(x)^\dag$ is continuous, so is $(F'(x)^*)^\dag = (F'(x)^\dag)^*$, and we can thus find for all $h\in X$ a $w:= (F'(x)^*)^\dag h\in Y$ with $\norm{w}_Y \leq C\norm{h}_X$.
    Letting $\mu\in(0,1)$ and setting $\delta:= \frac{2\mu}{CL}$, we then have in particular that $\norm{w}_Y\leq \frac{2\mu}{L}$ for all $\norm{h}_X\leq \delta$.
    Furthermore, \cref{lem:inverse}\,(iv) together with $\calR(F'(x)^*) = \overline{\calR(F'(x)^*)}=\calN(F'(x))^\bot = X$ (since if $(F'(x)^*)^\dag$ is continuous, $F'(x)^*$ has closed range as well) implies that
    \begin{equation*}
        F'(x)^*w = F'(x)^*(F'(x)^*)^\dag h = h.
    \end{equation*}

    We now bound the linearization error with the help of this \enquote{linearized source condition} and \cref{lem:nichtlin:lipschitz}: For all $h\in X$ with $\norm{h}_X\leq \delta$, we have that
    \begin{equation*}
        \begin{aligned}
            \norm{F(x+h)-F(x) - F'(x)h}_Y
            &\leq \frac{L}2\norm{h}_X^2 = \frac{L}2\norm{F'(x)^*w}_X^2 = \frac{L}{2} \inner{F'(x)F'(x)^*w}{w}_Y\\
            &\leq \frac{L}{2}\norm{F'(x)F'(x)^*w}_Y\norm{w}_Y\\
            &\leq \mu \norm{F'(x)h}_Y.
        \end{aligned}
    \end{equation*}
    The triangle inequality then yields that
    \begin{equation*}
        \begin{aligned}
            \norm{F'(x)h}_Y &= \norm{F(x+h)-F(x) - F'(x)h - F(x+h)+F(x)}_Y\\
            &\leq \mu \norm{F'(x)h}_Y + \norm{F(x+h)-F(x)}_Y
        \end{aligned}
    \end{equation*}
    and hence that
    \begin{equation}
        \label{eq:nichtlin:bound1}
        \norm{F'(x)h}_Y \leq \frac{1}{1-\mu}\norm{F(x+h)-F(x)}_Y \qquad\text{for all }\norm{h}_X\leq \delta.
    \end{equation}

    Since we have assumed that $F$ is locally ill-posed, there has to exist a sequence $\{h_n\}_{n\in\N}$ with $\norm{x+h_n-x}_X = \norm{h_n}_X=\frac\delta2$ but $F(x+h_n)\to F(x)$. But from \eqref{eq:nichtlin:bound1}, we then obtain that $F'(x)(x+h_n-x) = F'(x)h_n \to 0$, in contradiction to the assumed local well-posedness of the linearization.
\end{proof}

An alternative to \eqref{eq:nichtlin:lipschitz} is the so-called \emph{tangential cone condition}: For given $x\in U$, there exist $\eta<1$ and $\delta > 0$ such that
\begin{equation}
    \label{eq:nichtlin:tangential}
    \norm{F(x+h)-F(x) - F'(x)h}_Y \leq \eta \norm{F(x+h)-F(x)}_Y \qquad\text{for all }\norm{h}_X\leq \delta.
\end{equation}
In other words, the linearization error should be \emph{uniformly} bounded by the nonlinear residual. Here we can even show equivalence.
\begin{theorem}
    Let $F:U\to Y$ be Fréchet differentiable and satisfy the tangential cone condition \eqref{eq:nichtlin:tangential} in $x\in U$. Then $Fy$ is locally ill-posed in $x\in U$ if and only if $F'(x)$ is locally ill-posed (in any $h\in X$).
\end{theorem}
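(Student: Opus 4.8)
The plan is to extract from the tangential cone condition \eqref{eq:nichtlin:tangential} a two-sided comparison between the nonlinear residual $\norm{F(x+h)-F(x)}_Y$ and the linear residual $\norm{F'(x)h}_Y$, and then read off both implications of the equivalence from this single estimate. Throughout I would assume $\delta$ has been shrunk, if necessary, so that $B_\delta(x)\subset U$, ensuring that every perturbed point at which \eqref{eq:nichtlin:tangential} is invoked actually lies in the domain.

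First I would derive the key inequality. Starting from the identity $F'(x)h = \left(F(x+h)-F(x)\right) - \left(F(x+h)-F(x)-F'(x)h\right)$ and applying the triangle inequality together with \eqref{eq:nichtlin:tangential}, one obtains for all $\norm{h}_X\leq\delta$ that
\[
    (1-\eta)\norm{F(x+h)-F(x)}_Y \leq \norm{F'(x)h}_Y \leq (1+\eta)\norm{F(x+h)-F(x)}_Y.
\]
Since $\eta<1$, the factor $1-\eta$ is strictly positive, so the two residuals tend to zero simultaneously: for a sequence $\{h_n\}$ with $\norm{h_n}_X\leq\delta$ one has $F(x+h_n)\to F(x)$ if and only if $F'(x)h_n\to 0 = F'(x)\cdot 0$.

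Then I would deduce both directions from this. For the forward implication, suppose $F$ is locally ill-posed in $x$; given any $\rho>0$, local ill-posedness applied with radius $\min\{\rho,\delta\}$ produces a sequence $x+h_n\in B_\rho(x)\cap U$ with $\norm{h_n}_X\leq\delta$, $F(x+h_n)\to F(x)$, and $h_n\not\to 0$. The upper bound then forces $F'(x)h_n\to 0$, so $\{h_n\}\subset B_\rho(0)$ witnesses local ill-posedness of $F'(x)$ at $0$; because $F'(x)$ is linear, it is then locally ill-posed at every $h\in X$. For the converse, if $F'(x)$ is locally ill-posed, then for any $r>0$ (again restricting to radius $\min\{r,\delta\}$) there is $\{h_n\}\subset B_r(0)$ with $F'(x)h_n\to 0$ and $h_n\not\to 0$; the lower bound yields $F(x+h_n)\to F(x)$ while $x+h_n\not\to x$, so $F$ is locally ill-posed in $x$.

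There is no serious obstacle here: the entire content is the two-sided estimate, after which both implications are immediate. The only points requiring care are purely bookkeeping, namely keeping every perturbation within the radius $\delta$ so that \eqref{eq:nichtlin:tangential} remains available, intersecting this with the radius demanded by each ill-posedness definition (handled by passing to $\min\{r,\delta\}$), and invoking the linearity of $F'(x)$ to promote ill-posedness at $0$ to ill-posedness at an arbitrary $h$.
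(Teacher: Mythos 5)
Your proof is correct and follows essentially the same route as the paper: both derive the two-sided estimate $(1-\eta)\norm{F(x+h)-F(x)}_Y\leq \norm{F'(x)h}_Y \leq (1+\eta)\norm{F(x+h)-F(x)}_Y$ from the tangential cone condition via the standard and reverse triangle inequalities, and then read off the two implications directly. Your extra bookkeeping on radii and the explicit appeal to linearity of $F'(x)$ only make precise what the paper leaves implicit.
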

\begin{proof}
    From the tangential cone condition together with the (standard and reverse) triangle inequalities, we obtain that
    \begin{equation}\label{eq:nichtlin:tangential_est}
        (1-\eta)\norm{F(x+h)-F(x)}_Y\leq \norm{F'(x)h}_Y \leq (1+\eta) \norm{F(x+h)-F(x)}_Y
    \end{equation}
    for all $\norm{h}_X\leq \delta$.
    The second inequality coincides with \eqref{eq:nichtlin:bound1}, which we have already shown to imply the local ill-posedness of the linearization of a locally ill-posed nonlinear operator.
    We can argue similarly for the first inequality: Assume that $F'(x)$ is locally ill-posed. Then there exists a sequence $\{h_n\}_{n\in\N}$ with $\norm{x+h_n-x}_X = \norm{h_n}=\frac\delta2$ but $F'(x)h_n\to 0$, which together with \eqref{eq:nichtlin:tangential_est} implies that $F(x+h_n)\to F(x)$ as well. Hence, $F$ is also ill-posed.
\end{proof}
In combination with a weak source condition, the tangential cone condition even implies local uniqueness of the $x_0$-minimum norm solution.
\begin{theorem}
    Let $F:U\to Y$ be Fréchet differentiable and $y\in Y$ and $x_0\in X$ be given. If the tangential cone condition \eqref{eq:nichtlin:tangential} holds in $x^\dag \in U$ with $F(x^\dag)=y$ and $x^\dag -x_0\in\calN(F'(x^\dag))^\bot$, then $x^\dag$ is the unique $x_0$-minimum norm solution in $B_\delta(x^\dag)$ for the $\delta>0$ from \eqref{eq:nichtlin:tangential}.
\end{theorem}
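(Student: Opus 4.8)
The plan is to show directly that every competing solution inside $B_\delta(x^\dag)$ is at least as far from $x_0$ as $x^\dag$ is, with equality only for $x^\dag$ itself; this simultaneously gives minimality and uniqueness. First I would fix an arbitrary $\tilde x\in B_\delta(x^\dag)\cap U$ with $F(\tilde x)=y$ and set $h:=\tilde x-x^\dag$, so that $\norm{h}_X\leq \delta$ and the tangential cone condition \eqref{eq:nichtlin:tangential} is applicable to this $h$.

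The key observation is that being a solution forces $h$ into the kernel of the linearization. Since $F(x^\dag+h)=F(\tilde x)=y=F(x^\dag)$, the residual $F(x^\dag+h)-F(x^\dag)$ vanishes, so the right-hand side of \eqref{eq:nichtlin:tangential} is zero and
\[
    \norm{F'(x^\dag)h}_Y = \norm{F(x^\dag+h)-F(x^\dag)-F'(x^\dag)h}_Y \leq \eta\,\norm{F(x^\dag+h)-F(x^\dag)}_Y = 0,
\]
whence $h\in\calN(F'(x^\dag))$.

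Now I would invoke the assumption $x^\dag-x_0\in\calN(F'(x^\dag))^\bot$. Because $h\in\calN(F'(x^\dag))$ and $x^\dag-x_0\in\calN(F'(x^\dag))^\bot$ are orthogonal, the Pythagorean identity yields
\[
    \norm{\tilde x-x_0}_X^2 = \norm{h+(x^\dag-x_0)}_X^2 = \norm{h}_X^2 + \norm{x^\dag-x_0}_X^2 \geq \norm{x^\dag-x_0}_X^2.
\]
This shows that $x^\dag$ minimizes the distance to $x_0$ among all solutions in $B_\delta(x^\dag)$, i.e.\ it is an $x_0$-minimum norm solution there; and since equality forces $\norm{h}_X=0$, that is $\tilde x=x^\dag$, the minimizer is unique in $B_\delta(x^\dag)$.

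I do not expect a genuine obstacle here: the tangential cone condition does the real work by collapsing the nonlinear constraint $F(\tilde x)=y$ into the linear membership $h\in\calN(F'(x^\dag))$, after which the orthogonal splitting is immediate. The only point needing minor care is that $B_\delta(x^\dag)$ is the \emph{closed} ball, so $\norm{h}_X\leq\delta$ lies precisely in the range where \eqref{eq:nichtlin:tangential} is assumed to hold.
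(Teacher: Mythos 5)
Your proof is correct and follows essentially the same route as the paper: the tangential cone condition with vanishing residual forces $h=\tilde x-x^\dag$ into $\calN(F'(x^\dag))$, and the Pythagorean identity with the orthogonality assumption $x^\dag-x_0\in\calN(F'(x^\dag))^\bot$ then gives both minimality and uniqueness. No gaps.
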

\begin{proof}
    Let $x\in B_\delta(x^\dag)\setminus\{x^\dag\}$ with $F(x) = y$ be arbitrary. Then \eqref{eq:nichtlin:tangential} for $h:= x- x^\dag$ implies that $F'(x^\dag)(x-x^\dag)=0$, i.e., that $x-x^\dag\in \calN(F'(x^\dag))\setminus\{0\}$. It follows that
    \begin{equation*}
        \begin{aligned}
            \norm{x-x_0}_X^2 &= \norm{x^\dag - x_0 + x - x^\dag}_X^2 \\
            &= \norm{x^\dag-x_0}^2_X + 2\inner{x^\dag - x_0}{x-x^\dag}_X + \norm{x-x^\dag}^2_X \\
            &> \norm{x^\dag-x_0}^2_X
        \end{aligned}
    \end{equation*}
    since the inner product vanishes due to orthogonality and we have assumed that $x\neq x^\dag$. Hence $x^\dag$ is the (locally) unique $x_0$-minimum norm solution.
\end{proof}

It should be admitted that it is often very difficult to verify these abstract conditions for concrete nonlinear inverse problems; there are even examples where these can be show \emph{not} to hold.
Thus one often uses strongly problem-specific approaches instead of an abstract theory for nonlinear problems.%
\footnote{\enquote{Linear inverse problems are all alike; every nonlinear inverse problem is nonlinear in its own way.}}
Still, the abstract perspective can be useful by showing limits and possibilities.

\chapter{Tikhonov regularization}\label{chap:tikhonov-nl}

The starting point of Tikhonov regularization of nonlinear inverse problems $F(x)=y$ is \cref{thm:tikhonov:funktional}: For given $\alpha>0$, $x_0\in X$, and $y\in Y$, we choose $x_\alpha$ as minimizer of the  Tikhonov functional
\begin{equation}\label{eq:tikh_nl:funktional}
    J_\alpha(x):= \frac12\norm{F(x)-y}_Y^2 + \frac\alpha2 \norm{x-x_0}_X^2.
\end{equation}
If $F$ is not linear, we cannot express this choice through an explicit regularization operator $R_\alpha$. We thus have to proceed differently to show existence of a solution, continuous dependence of $x_\alpha$ on $y$, and convergence to an $x_0$ minimum norm solution as $\alpha\to 0$.
On the other hand, this is possible under weaker assumptions: It suffices to require that $F$ is weakly closed with non-empty and weakly closed domain $\dom F=:U$ (which we always assume from here on).
These assumptions also ensure for $y\in \calR(F)$ the existence of a (not necessarily unique) $x_0$-minimum norm solution $x^\dag\in U$.

We first show existence of a minimizer. The proof is a classical application of \href{http://www-history.mcs.st-and.ac.uk/Biographies/Tonelli.html}{Tonelli's} \emph{direct method} of the calculus of variations, which generalizes the Weierstraß Theorem (every continuous function attains its minimum and maximum on a finite-dimensional compact set) to infinite-dimensional vector spaces.
\begin{theorem}\label{thm:tikh_nl:existenz}
    Let $F:U\to Y$ be weakly closed, $\alpha>0$, $x_0\in X$, and $y\in Y$. Then there exists a minimizer $x_\alpha\in U$ of $J_\alpha$.
\end{theorem}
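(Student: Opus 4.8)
The plan is to use Tonelli's direct method of the calculus of variations. First I would establish that the Tikhonov functional $J_\alpha$ is bounded below: since both terms are nonnegative, we have $\inf_{x\in U} J_\alpha(x)\geq 0$, so the infimum $J^*:=\inf_{x\in U} J_\alpha(x)$ is finite. Hence there exists a minimizing sequence $\{x_n\}_{n\in\N}\subset U$ with $J_\alpha(x_n)\to J^*$. From the boundedness of $\{J_\alpha(x_n)\}_{n\in\N}$ and the coercivity provided by the penalty term $\frac\alpha2\norm{x-x_0}_X^2$ (with $\alpha>0$ fixed), I would deduce that $\{x_n\}_{n\in\N}$ is bounded in $X$: indeed $\frac\alpha2\norm{x_n-x_0}_X^2\leq J_\alpha(x_n)\leq C$ for some $C$, so $\norm{x_n-x_0}_X$ and therefore $\norm{x_n}_X$ is bounded.

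Next I would invoke the Bolzano--Weierstraß property for weak convergence in Hilbert spaces (stated in the excerpt): every bounded sequence in $X$ admits a weakly convergent subsequence. So after passing to a subsequence (not relabeled), $x_n\wkto \bar x$ for some $\bar x\in X$. The key structural inputs now are that $U=\dom F$ is \emph{weakly closed}, which guarantees $\bar x\in U$, and that $F$ is \emph{weakly closed}. The plan is then to show $\bar x$ is a minimizer by a lower-semicontinuity argument. By boundedness of $\{x_n\}_{n\in\N}$, the sequence $\{F(x_n)\}_{n\in\N}$ is bounded in $Y$ (since $\norm{F(x_n)-y}_Y^2\leq 2J_\alpha(x_n)$ is bounded), hence admits a further weakly convergent subsequence $F(x_n)\wkto \bar y$ in $Y$; the weak closedness of $F$ then forces $\bar y=F(\bar x)$.

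Finally I would combine the two weak convergences with weak lower semicontinuity of the norm, inequality \eqref{eq:funktan:lsc}. Since $x\mapsto\norm{x-x_0}_X$ and $y\mapsto\norm{y-y}_Y$ are norms (up to a fixed shift, which preserves weak lower semicontinuity), and using $x_n\wkto\bar x$ and $F(x_n)\wkto F(\bar x)$, I obtain
\begin{equation*}
    J_\alpha(\bar x) = \frac12\norm{F(\bar x)-y}_Y^2 + \frac\alpha2\norm{\bar x-x_0}_X^2 \leq \liminf_{n\to\infty}\left(\frac12\norm{F(x_n)-y}_Y^2 + \frac\alpha2\norm{x_n-x_0}_X^2\right) = J^*.
\end{equation*}
Since $\bar x\in U$ gives $J_\alpha(\bar x)\geq J^*$, equality holds and $x_\alpha:=\bar x$ is the desired minimizer.

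The step I expect to be the main obstacle is the lower-semicontinuity estimate for the discrepancy term, because it requires handling the weak convergence of $F(x_n)$ correctly: one must first extract the \emph{subsequence} along which $F(x_n)$ converges weakly (using boundedness of the images) and then identify its limit as $F(\bar x)$ via weak closedness, rather than assuming any continuity of $F$. Care is needed to ensure all extractions are compatible (one takes nested subsequences) and that weak lower semicontinuity applies to the shifted norm $\norm{\,\cdot\,-x_0}_X$, which follows since $x\mapsto x-x_0$ is weakly continuous (affine and bounded) and then \eqref{eq:funktan:lsc} applies. Everything else is routine.
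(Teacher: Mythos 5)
Your proposal is correct and follows essentially the same route as the paper's proof: the direct method with a minimizing sequence, coercivity of the penalty term giving boundedness, nested weak subsequence extractions for $\{x_n\}$ and $\{F(x_n)\}$, identification of the limit via weak closedness of $U$ and of $F$, and weak lower semicontinuity of the norms to pass to the limit. No gaps.
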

\begin{proof}
    We first note that $J_\alpha(x)\geq 0$ for all $x\in U$. Hence the set $\setof{J_\alpha(x)}{x\in U}\subset \R$ is bounded from below and thus has a finite infimum. This implies that there exists a sequence $\{x_n\}_{n\in \N}\subset U$ such that
    \begin{equation*}
        J_\alpha(x_n)\to m:= \inf\setof{J_\alpha(x)}{x\in U}.
    \end{equation*}
    Such a sequence is called a \emph{minimizing sequence}. Note that the convergence $\{J_\alpha(x_n)\}_{n\in\N}$ does not imply the convergence of $\{x_n\}_{n\in\N}$.

    However, since convergent sequences are bounded, there exists an $M>0$ such that
    \begin{equation}
        \label{eq:tikh_nl:existenz1}
        \frac12\norm{F(x_n)-y}_Y^2 + \frac\alpha2 \norm{x_n-x_0}_X^2 = J_\alpha(x_n) \leq M\qquad \text{for all }n\in \N.
    \end{equation}
    It follows that
    \begin{equation*}
        \frac\alpha2\left(\norm{x_n}_X -  \norm{x_0}_X\right)^2 \leq  \frac\alpha2 \norm{x_n-x_0}_X^2 \leq J_\alpha(x_n) \leq M,
    \end{equation*}
    i.e., the sequence $\{x_n\}_{n\in\N}$ is bounded and thus contains a weakly convergent subsequence -- which we again denote by $\{x_k\}_{k\in\N}$ for simplicity -- with limit $\bar x\in U$ (since $U$ is assumed to be weakly closed). This limit is a candidate for a minimizer.

    Similarly, \eqref{eq:tikh_nl:existenz1} implies that $\{F(x_k)\}_{k\in\N}$ is bounded in $Y$.
    By passing to a further subsequence (which we still denote by $\{x_k\}_{k\in\N}$), we thus obtain that $F(x_k)\wkto \bar y\in Y$, and the weak closedness of $F$ yields that $\bar y = F(\bar x)$.
    Together with the weak lower semicontinuity of norms, we obtain that
    \begin{equation*}
        \begin{aligned}
            \frac12\norm{F(\bar x)-y}_Y^2 + \frac\alpha2 \norm{\bar x-x_0}_X^2 &\leq \liminf_{k\to\infty}\frac12\norm{F(x_k)-y}_Y^2 +
            \liminf_{k\to\infty} \frac\alpha2 \norm{x_k-x_0}_X^2\\
            &\leq  \limsup_{k\to\infty}\left(\frac12\norm{F(x_k)-y}_Y^2 + \frac\alpha2 \norm{x_k-x_0}_X^2\right).
        \end{aligned}
    \end{equation*}
    By definition of the minimizing sequence, $J_\alpha(x_k)\to m$ for any subsequence as well, and hence
    \begin{equation*}
        \inf_{x\in U} J_\alpha(x) \leq J_\alpha(\bar x) \leq \limsup_{k\to\infty} J_\alpha(x_k) = m = \inf_{x\in U} J_\alpha (x).
    \end{equation*}
    The infimum is thus attained in $\bar x$, i.e., $J_\alpha(\bar x) = \min_{x\in U} J_\alpha(x)$.
\end{proof}
Due to the nonlinearity of $F$, we can in general not expect the minimizer to be unique, so that we cannot introduce a well-defined mapping $y\mapsto x_\alpha$ as a regularization operator. In place of the continuity of $R_\alpha$, we can therefore only show the following weaker stability result.
\begin{theorem}\label{thm:tikh_nl:stabil}
    Let $F:U\to Y$ be  weakly closed, $\alpha>0$, $x_0\in X$, and $y\in Y$. Let $\{y_n\}_{n\in\N}$ be a  sequence with $y_n\to y$ and let $\{x_n\}_{n\in\N}$ be a sequence of minimizers of $J_{\alpha}$ for $y_n$ in place of $y$. Then the sequence $\{x_n\}_{n\in\N}$ contains a weakly convergent subsequence, and every weak cluster point of $\{x_n\}_{n\in\N}$ is a minimizer of $J_\alpha$.

    If $J_\alpha$ has for $y$ a unique minimizer, then the whole sequence converges strongly.
\end{theorem}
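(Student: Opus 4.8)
The plan is to run the direct method exactly as in the proof of \cref{thm:tikh_nl:existenz}, but now tracking the perturbed data $y_n$ and systematically exploiting the minimality of each $x_n$ for the functional $J_\alpha^{(n)}$ obtained by replacing $y$ with $y_n$ in \eqref{eq:tikh_nl:funktional}. First I would establish boundedness of $\{x_n\}_{n\in\N}$: fixing any $\tilde x\in U$ (which is non-empty by assumption) and using that $x_n$ minimizes $J_\alpha^{(n)}$, we obtain
\[
    \frac\alpha2\norm{x_n-x_0}_X^2 \leq J_\alpha^{(n)}(x_n)\leq J_\alpha^{(n)}(\tilde x) = \frac12\norm{F(\tilde x)-y_n}_Y^2 + \frac\alpha2\norm{\tilde x-x_0}_X^2,
\]
whose right-hand side is bounded because $y_n\to y$. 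Hence both $\{x_n\}$ and $\{F(x_n)\}$ are bounded, and I may extract a subsequence (not relabeled) with $x_n\wkto\bar x$ and $F(x_n)\wkto\bar y$; the weak closedness of $U$ then gives $\bar x\in U$, and the weak closedness of $F$ gives $\bar y = F(\bar x)$.

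Next I would show that $\bar x$ minimizes $J_\alpha$. Since $F(x_n)-y_n\wkto F(\bar x)-y$ (strong convergence $y_n\to y$ combined with weak convergence of $F(x_n)$) and $x_n-x_0\wkto\bar x-x_0$, the weak lower semicontinuity \eqref{eq:funktan:lsc} of the norm together with $\liminf(a_n+b_n)\geq\liminf a_n+\liminf b_n$ yields $J_\alpha(\bar x)\leq \liminf_n J_\alpha^{(n)}(x_n)$. On the other hand, minimality of $x_n$ gives $J_\alpha^{(n)}(x_n)\leq J_\alpha^{(n)}(x)\to J_\alpha(x)$ for any fixed $x\in U$, so $\liminf_n J_\alpha^{(n)}(x_n)\leq J_\alpha(x)$. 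Combining the two inequalities proves $J_\alpha(\bar x)\leq J_\alpha(x)$ for all $x\in U$. Since this argument applies verbatim to any weakly convergent subsequence, the first assertion is settled.

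The hard part will be upgrading weak to strong convergence, which is what the uniqueness statement needs. Comparing against the now-admissible point $x=\bar x$ gives $\limsup_n J_\alpha^{(n)}(x_n)\leq J_\alpha(\bar x)$, which together with the liminf bound above forces $J_\alpha^{(n)}(x_n)\to J_\alpha(\bar x)$. The delicate step is to decouple this convergence of the sum into its two summands: each of the data term and the penalty term is separately weakly lower semicontinuous with liminf no smaller than its value at $\bar x$, so convergence of the sum to the sum of the two limiting values forces each term to converge individually; in particular $\norm{x_n-x_0}_X\to\norm{\bar x-x_0}_X$. Combined with $x_n-x_0\wkto\bar x-x_0$ and the fact (noted in the excerpt) that weak convergence together with convergence of norms implies strong convergence, I obtain $x_n\to\bar x$ strongly along the subsequence.

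Finally, if $J_\alpha$ has a unique minimizer $\bar x$ for the data $y$, a subsequence-of-subsequence argument closes the proof. Every subsequence of $\{x_n\}$ is bounded, hence admits a further weakly convergent subsequence whose limit is a minimizer of $J_\alpha$ and therefore equals $\bar x$; by the decoupling argument this further subsequence converges \emph{strongly} to $\bar x$. As every subsequence thus has a further subsequence converging strongly to the same limit $\bar x$, the whole sequence $\{x_n\}$ converges strongly to $\bar x$.
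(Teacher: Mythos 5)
Your proposal is correct and follows essentially the same route as the paper's proof: boundedness from the minimizing property, extraction of a weakly convergent subsequence, identification of the limit via weak closedness, and weak lower semicontinuity to show the limit is a minimizer. Your direct ``decoupling'' argument for the norm convergence (sum converges to sum of limits plus separate liminf bounds forces each term to converge) is just the contrapositive form of the paper's contradiction argument, and your subsequence-of-subsequences conclusion matches the paper's handling of the unique-minimizer case.
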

\begin{proof}
    First, \cref{thm:tikh_nl:existenz} ensures that for every $y_n\in Y$ there exists a minimizer $x_n\in U$. The minimizing property of $x_n$ then implies for all $n\in \N$ and any $x\in U$ that
    \begin{equation*}
        \frac12\norm{F(x_n)-y_n}_Y^2 + \frac\alpha2\norm{x_n-x_0}_X^2 \leq  \frac12\norm{F(x)-y_n}_Y^2 +  \frac\alpha2\norm{x-x_0}_X^2.
    \end{equation*}
    Since $y_n\to y$, the right-hand side is bounded in $n\in\N$, and hence both $\{x_n\}_{n\in\N}$ and $\{F(x_n)-y_n\}_{n\in\N}$ are bounded as well.
    We can thus find a weakly convergent subsequence $\{x_k\}_{k\in\N}$ and a $\bar x\in U$ such that (possibly after passing to a further subsequence)
    \begin{equation*}
        x_k\wkto \bar x,\qquad F(x_k)-y_k\wkto \bar y.
    \end{equation*}
    The convergence of $y_k\to y$ and the weak closedness of $F$ then imply that $F(x_k)\wkto F(\bar x)$.

    From the weak lower semicontinuity of norms, we obtain from this that
    \begin{align}
        \frac\alpha2\norm{\bar x - x_0}_X^2 &\leq \liminf_{k\to\infty}\frac\alpha2\norm{ x_k - x_0}_X^2,\\
        \frac12\norm{F(\bar x)-y}_Y^2 &\leq \liminf_{k\to\infty}\frac12\norm{F(x_k)-y_k}_Y^2.\label{eq:tikh_nl:stabil1}
    \end{align}
    Using again the minimization property of the $x_n$, this implies that for any $x\in U$,
    \begin{equation}\label{eq:tikh_nl:stabil2}
        \begin{aligned}[t]
            J_\alpha(\bar x) &= \frac12\norm{F(\bar x)-y}_Y^2 + \frac\alpha2 \norm{\bar x-x_0}_X^2 \\
            &\leq \liminf_{k\to\infty}\left(\frac12\norm{F(x_k)-y_k}_Y^2 + \frac\alpha2 \norm{x_k-x_0}_X^2\right)\\
            &\leq \limsup_{k\to\infty}\left(\frac12\norm{F(x_k)-y_k}_Y^2 + \frac\alpha2 \norm{x_k-x_0}_X^2\right)\\
            &\leq \limsup_{k\to\infty}\left(\frac12\norm{F(x)-y_k}_Y^2 + \frac\alpha2 \norm{x-x_0}_X^2\right)\\
            &= \lim_{k\to\infty}\frac12\norm{F(x)-y_k}_Y^2 + \frac\alpha2 \norm{x-x_0}_X^2\\
            &= \frac12\norm{F(x)-y}_Y^2 + \frac\alpha2 \norm{x -x_0}_X^2 = J_\alpha(x).
        \end{aligned}
    \end{equation}
    Hence $\bar x$ is a minimizer of $J_\alpha$. Since this argument can be applied to any weakly convergent subsequence of $\{x_n\}_{n\in\N}$, we also obtain the second claim.

    If now the minimizer $x_\alpha$ of $J_\alpha$ is unique, then every weakly convergent subsequence has the same limit, and hence the whole sequence must converge weakly to $x_\alpha$. To show that this convergence is in fact strong, it suffices by \eqref{eq:funktan:lsc} to show that $\limsup_{n\to\infty}\norm{x_n}_X\leq \norm{x_\alpha}_X$.
    Assume to the contrary that this inequality does not hold. Then there must exist a subsequence $\{x_k\}_{k\in\N}$ with $x_k\wkto x_\alpha$ and $F(x_k)\wkto F(x_\alpha)$ but
    \begin{equation*}
        \lim_{k\to\infty} \norm{x_k-x_0}_X =:  M > \norm{x_\alpha-x_0}_X.
    \end{equation*}
    But \eqref{eq:tikh_nl:stabil2} for $x=\bar x= x_\alpha$ implies that
    \begin{equation*}
        \label{eq:tikh_nl:stabil3}
        \lim_{k\to\infty} \left(\frac12\norm{F(x_k)-y_k}_Y^2 + \frac\alpha2 \norm{x_k-x_0}_X^2\right) =  \frac12\norm{F(x_\alpha)-y}_Y^2 + \frac\alpha2 \norm{x_\alpha-x_0}_X^2.
    \end{equation*}
    Together with the calculus for convergent sequences, this shows that
    \begin{equation*}
        \begin{aligned}
            \lim_{k\to\infty} \frac12\norm{F(x_k)-y_k}_Y^2 &=
            \lim_{k\to\infty} \left(\frac12\norm{F(x_k)-y_k}_Y^2 +\frac\alpha2\norm{x_k-x_0}_X^2\right) -\lim_{k\to\infty} \frac\alpha2\norm{x_k-x_0}_X^2 \\
            &=
            \frac12\norm{F(x_\alpha)-y}_Y^2 + \frac\alpha2 \norm{x_\alpha-x_0}_X^2-  \frac\alpha2 M^2 \\
            &< \frac12\norm{F(x_\alpha)-y}_Y^2
        \end{aligned}
    \end{equation*}
    in contradiction to \eqref{eq:tikh_nl:stabil1} and $\bar x = x_\alpha$.
\end{proof}

It remains to show that $x_\alpha$ converges to an $x_0$-minimum norm solution as $\alpha\to 0$. In contrast to the linear case, we do this already in combination with an a priori choice rule, i.e., we prove that this combination leads to a convergent regularization method. In analogy to the \cref{sec:spektral:parameter}, we denote by $x_\alpha^\delta$ a minimizer of $J_\alpha$ for fixed $\alpha>0$ and noisy data $y^\delta\in Y$.
\begin{theorem}\label{thm:tikh_nl:conv}
    Let $F:U\to Y$ be weakly closed, $y\in \calR(F)$, and $y^\delta\in B_\delta(y)$. If $\alpha(\delta)$ is a  parameter choice rule such that
    \begin{equation*}
        \alpha(\delta) \to 0\quad \text{and}\quad \frac{\delta^2}{\alpha(\delta)}\to 0 \qquad\text{for }\delta\to 0,
    \end{equation*}
    then every sequence $\{x_{\alpha(\delta_n)}^{\delta_n}\}_{n\in\N}$ with $\delta_n\to 0$ contains a strongly convergent subsequence, and every cluster point is an $x_0$-minimum norm solution of $F(x)=y$.
    If the $x_0$-minimum norm solution $x^\dag\in U$ is unique, then the whole sequence converges strongly to $x^\dag$.
\end{theorem}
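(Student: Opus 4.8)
The plan is to follow Tonelli's direct method once more, as in the proofs of \cref{thm:tikh_nl:existenz,thm:tikh_nl:stabil}, but now comparing the regularized minimizers against a fixed $x_0$-minimum norm solution. Since $y\in\calR(F)$ and $F$ is weakly closed with weakly closed domain, such an $x^\dag\in U$ with $F(x^\dag)=y$ and $\norm{x^\dag-x_0}_X$ minimal among all solutions exists. Abbreviate $\alpha_n:=\alpha(\delta_n)$ and $x_n:=x_{\alpha_n}^{\delta_n}$. The starting point is the defining minimality $J_{\alpha_n}(x_n)\le J_{\alpha_n}(x^\dag)$, which using $F(x^\dag)=y$ and $\norm{y-y^{\delta_n}}_Y\le\delta_n$ gives
\begin{equation*}
    \tfrac12\norm{F(x_n)-y^{\delta_n}}_Y^2 + \tfrac{\alpha_n}2\norm{x_n-x_0}_X^2 \le \tfrac{\delta_n^2}2 + \tfrac{\alpha_n}2\norm{x^\dag-x_0}_X^2.
\end{equation*}

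First I would read off two consequences. Dropping the nonnegative residual term and dividing by $\alpha_n/2$ yields $\norm{x_n-x_0}_X^2\le \delta_n^2/\alpha_n + \norm{x^\dag-x_0}_X^2$; since $\delta_n^2/\alpha_n\to0$, the sequence $\{x_n\}_{n\in\N}$ is bounded and moreover
\begin{equation}\label{eq:plan:limsup}
    \limsup_{n\to\infty}\norm{x_n-x_0}_X \le \norm{x^\dag-x_0}_X.
\end{equation}
Dropping instead the nonnegative penalty term shows $\norm{F(x_n)-y^{\delta_n}}_Y^2\le \delta_n^2 + \alpha_n\norm{x^\dag-x_0}_X^2\to0$, whence $F(x_n)\to y$ strongly by the triangle inequality together with $\norm{y^{\delta_n}-y}_Y\le\delta_n\to0$. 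Here the two conditions on the parameter choice rule play complementary roles: $\delta_n^2/\alpha_n\to0$ controls the penalty and forces \eqref{eq:plan:limsup}, while $\alpha_n\to0$ forces the residual to vanish; this balancing is the crux of the argument and the one genuinely delicate point.

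Next I would invoke weak compactness. As $\{x_n\}_{n\in\N}$ is bounded, any subsequence has a weakly convergent sub-subsequence $x_{n_k}\wkto\bar x$, and $\bar x\in U$ since $U$ is weakly closed. From $F(x_{n_k})\to y$ we get $F(x_{n_k})\wkto y$, so the weak closedness of $F$ yields $F(\bar x)=y$; thus $\bar x$ solves $F(x)=y$. The weak lower semicontinuity \eqref{eq:funktan:lsc} applied to $x_{n_k}-x_0\wkto\bar x-x_0$ together with \eqref{eq:plan:limsup} gives
\begin{equation*}
    \norm{\bar x-x_0}_X \le \liminf_{k\to\infty}\norm{x_{n_k}-x_0}_X \le \limsup_{k\to\infty}\norm{x_{n_k}-x_0}_X \le \norm{x^\dag-x_0}_X,
\end{equation*}
while minimality of $x^\dag$ and the fact that $\bar x$ is a solution force $\norm{x^\dag-x_0}_X\le\norm{\bar x-x_0}_X$. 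Hence $\bar x$ is an $x_0$-minimum norm solution and all inequalities are equalities, so $\norm{x_{n_k}-x_0}_X\to\norm{\bar x-x_0}_X$. Combined with $x_{n_k}-x_0\wkto\bar x-x_0$, the characterization of strong convergence in Hilbert spaces (weak convergence plus convergence of norms) upgrades this to $x_{n_k}\to\bar x$ strongly. This establishes the first two assertions: every subsequence has a strongly convergent sub-subsequence whose limit is an $x_0$-minimum norm solution.

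Finally, if the $x_0$-minimum norm solution $x^\dag$ is unique, the usual subsequence principle closes the argument: were $\{x_n\}_{n\in\N}$ not to converge strongly to $x^\dag$, there would be an $\eps>0$ and a subsequence with $\norm{x_n-x^\dag}_X\ge\eps$; being bounded, it would contain a further subsequence converging strongly to some $x_0$-minimum norm solution, which by uniqueness equals $x^\dag$, contradicting $\norm{x_n-x^\dag}_X\ge\eps$. Hence the whole sequence converges strongly to $x^\dag$. Everything apart from the simultaneous exploitation of both parameter conditions is a routine weak-compactness and lower-semicontinuity argument already rehearsed in \cref{thm:tikh_nl:stabil}.
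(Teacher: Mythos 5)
Your proposal is correct and follows essentially the same route as the paper's proof: the minimality inequality $J_{\alpha_n}(x_n)\le J_{\alpha_n}(x^\dag)$, boundedness and weak subsequential compactness, weak closedness of $F$ to identify the limit as a solution, the weak lower semicontinuity chain to show it is an $x_0$-minimum norm solution with $\norm{x_{n_k}-x_0}_X\to\norm{\bar x-x_0}_X$, and a subsequence--subsequence argument in the unique case. The only cosmetic differences are that you conclude $F(x_n)\to y$ strongly for the whole sequence up front and upgrade to strong convergence by citing the standard ``weak convergence plus norm convergence'' fact rather than writing out the Pythagoras identity as the paper does; both are equivalent.
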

\begin{proof}
    Set $\alpha_n:= \alpha(\delta_n)$ and $x_n:= x_{\alpha_n}^{\delta_n}$, and let $x^\dag$ be an $x_0$-minimum norm solution of $F(x)=y$. Then the minimization property of $x_n$ implies that for all $n\in\N$,
    \begin{equation}
        \label{eq:tikh_nl:conv1}
        \begin{aligned}[t]
            \frac12\norm{F(x_n)-y^{\delta_n}}_Y^2 + \frac{\alpha_n}2 \norm{x_n-x_0}_X^2
            &\leq  \frac12\norm{F(x^\dag)-y^{\delta_n}}_Y^2 + \frac{\alpha_n}2 \norm{x^\dag-x_0}_X^2\\
            &\leq \frac{\delta_n^2}2 +  \frac{\alpha_n}2 \norm{x^\dag-x_0}_X^2.
        \end{aligned}
    \end{equation}
    In particular,
    \begin{equation}
        \label{eq:tikh_nl:conv2}
        \norm{x_n-x_0}_X^2 \leq \frac{\delta_n^2}{\alpha_n}  + \norm{x^\dag-x_0}_X^2 \qquad\text{for all }k\in\N,
    \end{equation}
    and the right-hand side is bounded due to the convergence $\frac{\delta_n^2}{\alpha_n} \to 0$.
    Hence there exists a weakly convergent subsequence $\{x_{k}\}_{k\in\N}$ and an $\bar x\in U$ with $x_k\wkto \bar x$.
    Similarly, we obtain from \eqref{eq:tikh_nl:conv1} that
    \begin{equation}
        \label{eq:tikh_nl:conv3}
        \frac12\norm{F(x_k)-y^{\delta_k}}_Y^2 \leq \frac{\delta_k^2}{2}  +  \frac{\alpha_k}2 \norm{x^\dag-x_0}_X^2 \qquad\text{for all }n\in\N.
    \end{equation}
    This implies that $\{F(x_k)-y^{\delta_k}\}_{k\in\N}$ in turn contains a weakly convergent subsequence (which we do not further distinguish) with limit $\bar y\in Y$. The weak closedness of $F$ and the strong convergence $y^{\delta_n}\to y$ then again yield that $\bar y=F(\bar x)-y$, i.e., that $F(x_k)\wkto F(\bar x)$.

    We now obtain from the weak lower semicontinuity of the norm together with \eqref{eq:tikh_nl:conv2} that
    \begin{equation}
        \label{eq:tikh_nl:conv4}
        \begin{aligned}[t]
            \norm{\bar x-x_0}_X^2  &\leq \liminf_{k\to\infty} \norm{x_k-x_0}_X^2\leq \limsup_{k\to\infty} \norm{x_k-x_0}_X^2\\
            &\leq \lim_{k\to\infty}\frac{\delta_k^2}{\alpha_k} + \norm{x^\dag-x_0}_X^2
            = \norm{x^\dag-x_0}_X^2,
        \end{aligned}
    \end{equation}
    and similarly from \eqref{eq:tikh_nl:conv3} that
    \begin{equation*}
        \norm{F(\bar x)-y}_Y^2 \leq \liminf_{k\to\infty}\norm{F(x_k)-y^{\delta_k}}_Y^2\leq \lim_{k\to\infty} \left(\delta_k^2  +  {\alpha_k} \norm{x^\dag-x_0}_X^2\right)=0.
    \end{equation*}
    Hence, $F(\bar x) = y$ and
    \begin{equation*}
        \norm{\bar x - x_0}_X \leq  \norm{x^\dag - x_0}_X = \min\setof{\norm{x-x_0}_X}{F(x)=y}\leq  \norm{\bar x - x_0}_X,
    \end{equation*}
    i.e., $\bar x$ is an $x_0$-minimum norm solution.

    It remains to show that the subsequence $\{x_k\}_{k\in\N}$ converges strongly. We start from the Pythagoras identity
    \begin{equation*}\label{eq:tikh_nl:conv5}
        \norm{x_k-\bar x}_X^2 = \norm{x_k - x_0}_X^2 - 2\inner{x_k-x_0}{\bar x-x_0}_X + \norm{\bar x-x_0}_X^2.
    \end{equation*}
    The weak convergence $x_k\wkto \bar x$ then implies that
    \begin{equation*}
        \lim_{k\to\infty} 2\inner{x_k-x_0}{\bar x-x_0}_X = 2\inner{\bar x -x_0}{\bar x-x_0}_X = 2\norm{\bar x-x_0}^2.
    \end{equation*}
    Furthermore, \eqref{eq:tikh_nl:conv4} and the fact that both $\bar x$ and $x^\dag$ are $x_0$-minimum norm solutions imply that
    \begin{equation*}
        \lim_{k\to\infty} \norm{x_k - x_0}_X = \norm{\bar x - x_0}_X=\norm{x^\dag - x_0}_X.
    \end{equation*}
    Together, we obtain that
    \begin{equation*}
        0\leq \limsup_{k\to\infty} \norm{x_k-\bar x}_X^2 \leq  \norm{\bar x - x_0}_X^2 -2\norm{\bar x-x_0}^2 + \norm{\bar x-x_0}_X^2 = 0,
    \end{equation*}
    i.e., that $x_k\to \bar x$. The claim for a unique $x_0$-minimum norm solution again follows from a subsequence-subsequence argument.
\end{proof}

\bigskip

We now derive error estimates under a source conditions, where we restrict ourselves to the simplest case that corresponds to the choice $\nu=1$ for linear inverse problems. As a motivation, we again consider the formal limit problem \eqref{eq:tikhonov:funkt_const} for $\alpha=0$, which in the nonlinear case becomes
\begin{equation*}
    \min_{x\in U,\,F(x)=y} \frac12 \norm{x-x_0}_X^2
\end{equation*}
and again characterizes the $x_0$-minimum norm solutions.
As before, we introduce a Lagrange multiplier $p\in Y$ for the equality constraint to obtain the saddle-point problem
\begin{equation*}
    \min_{x\in U}\max_{p\in Y} L(x,p),\qquad L(x,p):=  \frac12\norm{x-x_0}_X^2 - \inner{p}{F(x)-y}_Y.
\end{equation*}
Setting the partial Fréchet derivative $L'_p(\bar x,\bar p)$ of $L$ with respect to $p$ to zero again yields the necessary condition $F(\bar x)=y$ for a saddle point $(\bar x,\bar p)\in U\times Y$.
If we assume for simplicity that the $x_0$-minimum norm solution $x^\dag$ is an interior point of $U$,
then we can also set the Fréchet derivative $L'_x(x^\dag,p^\dag)$ of $L$ with respect to $x$ in the corresponding saddle point $(x^\dag,p^\dag)$ to zero; this implies for all $h\in X$ that
\begin{equation*}
    0=L'_x(x^\dag,p^\dag)h = \inner{x^\dag-x_0}{h}_X - \inner{p^\dag}{F'(x^\dag)h}_Y = \inner{x^\dag-x_0 - F'(x^\dag)^*p^\dag}{h}_Y,
\end{equation*}
i.e., the existence of a $p^\dag\in Y$ with
\begin{equation*}
    x^\dag- x_0 = F'(x^\dag)^*p^\dag.
\end{equation*}
This is our source condition in the nonlinear setting. However, as in the last chapter we require an additional nonlinearity condition for $F$ in the $x_0$-minimum norm solution; here we assume the Lipschitz condition \eqref{eq:nichtlin:lipschitz}.
\begin{theorem}\label{thm:tikh_nl:rate_apriori}
    Let $F:U\to Y$ be Fréchet differentiable with convex domain $\dom F =U$. Let further $y\in\calR(F)$ and $y^\delta\in B_\delta(y)$, and let $x^\dag$ be an $x_0$-minimum norm solution such that
    \begin{enumerate}
        \item $F'$ is Lipschitz continuous near $x^\dag$ with Lipschitz constant $L$;
        \item there exists a $w\in Y$ with $x^\dag -x_0 = F'(x^\dag)^*w$ and $L\norm{w}_Y <1$.
    \end{enumerate}
    If $\alpha(\delta)$ is a parameter choice rule with
    \begin{equation*}
        c\delta \leq \alpha(\delta) \leq C \delta  \qquad\text{for } c,C>0,
    \end{equation*}
    then there exist constants $c_1,c_2>0$ such that for all $\delta>0$ small enough,
    \begin{align}
        \norm{x_{\alpha(\delta)}^\delta-x^\dag}_X &\leq c_1 \sqrt{\delta},\\
        \norm{F(x_{\alpha(\delta)}^\delta)-y^\delta}_Y &\leq c_2 {\delta}.
    \end{align}
\end{theorem}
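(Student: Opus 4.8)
The plan is to exploit the minimizing property of $x_\alpha := x_{\alpha(\delta)}^\delta$ against the feasible comparison point $x^\dag$ (which satisfies $F(x^\dag)=y$), turning the source condition (ii) and the quadratic linearization estimate of \cref{lem:nichtlin:lipschitz} into a single inequality from which both rates can be read off by completing a square. First I would abbreviate $\alpha := \alpha(\delta)$ and record, from $J_\alpha(x_\alpha)\leq J_\alpha(x^\dag)$ together with $\norm{F(x^\dag)-y^\delta}_Y\leq\delta$, the inequality
\begin{equation*}
    \tfrac12\norm{F(x_\alpha)-y^\delta}_Y^2 + \tfrac\alpha2\norm{x_\alpha-x^\dag}_X^2 + \alpha\inner{x_\alpha-x^\dag}{x^\dag-x_0}_X \leq \tfrac{\delta^2}2,
\end{equation*}
where I have used the expansion $\norm{x_\alpha-x_0}_X^2-\norm{x^\dag-x_0}_X^2 = \norm{x_\alpha-x^\dag}_X^2 + 2\inner{x_\alpha-x^\dag}{x^\dag-x_0}_X$. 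Before manipulating the cross term, I must ensure that $x_\alpha$ lies in the neighbourhood of $x^\dag$ on which $F'$ is Lipschitz, so that \cref{lem:nichtlin:lipschitz} applies along the segment $[x^\dag,x_\alpha]\subset U$ (using convexity of $U$); since the choice $c\delta\leq\alpha(\delta)\leq C\delta$ satisfies $\alpha(\delta)\to0$ and $\delta^2/\alpha(\delta)\to0$, \cref{thm:tikh_nl:conv} guarantees $x_\alpha\to x^\dag$ as $\delta\to0$, which is exactly what ``for $\delta$ small enough'' buys us.

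Next I would rewrite the cross term using the source condition $x^\dag-x_0=F'(x^\dag)^*w$, so that $\inner{x_\alpha-x^\dag}{x^\dag-x_0}_X = \inner{F'(x^\dag)(x_\alpha-x^\dag)}{w}_Y$, and then replace $F'(x^\dag)(x_\alpha-x^\dag)$ by $F(x_\alpha)-y$ at the cost of a remainder $r$ with $\norm{r}_Y\leq\tfrac{L}2\norm{x_\alpha-x^\dag}_X^2$ via \cref{lem:nichtlin:lipschitz}. Bounding $\alpha\inner{r}{w}_Y$ by $\tfrac{\alpha L\norm{w}_Y}2\norm{x_\alpha-x^\dag}_X^2$ lets me absorb it into the quadratic term, which is where the smallness assumption $L\norm{w}_Y<1$ becomes essential: the coefficient $\tfrac\alpha2(1-L\norm{w}_Y)$ stays strictly positive. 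Splitting $F(x_\alpha)-y=(F(x_\alpha)-y^\delta)+(y^\delta-y)$ and estimating the two inner products with $w$ by Cauchy--Schwarz and $\norm{y-y^\delta}_Y\leq\delta$, I would arrive after completing the square in $\norm{F(x_\alpha)-y^\delta}_Y$ at the key inequality
\begin{equation*}
    \left(\norm{F(x_\alpha)-y^\delta}_Y-\alpha\norm{w}_Y\right)^2 + \alpha(1-L\norm{w}_Y)\norm{x_\alpha-x^\dag}_X^2 \leq \left(\delta+\alpha\norm{w}_Y\right)^2 .
\end{equation*}

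Both summands on the left are nonnegative, so the estimates follow by inspection. Dropping the residual term gives $\norm{x_\alpha-x^\dag}_X^2\leq(\delta+\alpha\norm{w}_Y)^2/(\alpha(1-L\norm{w}_Y))$, and inserting $c\delta\leq\alpha\leq C\delta$ yields $\norm{x_\alpha-x^\dag}_X\leq c_1\sqrt\delta$ with $c_1=(1+C\norm{w}_Y)/\sqrt{c(1-L\norm{w}_Y)}$; dropping instead the error term gives $\norm{F(x_\alpha)-y^\delta}_Y\leq\delta+2\alpha\norm{w}_Y\leq(1+2C\norm{w}_Y)\delta=:c_2\delta$. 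The main obstacle I anticipate is not the algebra but the neighbourhood bookkeeping: \cref{lem:nichtlin:lipschitz} only applies once $x_\alpha$ is provably close to $x^\dag$, so the argument is genuinely conditional on the qualitative convergence from \cref{thm:tikh_nl:conv}, and one must also check that the constants $c_1,c_2$ remain uniform in $\delta$ (they do, since they depend only on $c,C,L,\norm{w}_Y$). A secondary subtlety is tracking signs correctly when absorbing the remainder and completing the square, where an erroneous sign would spoil the crucial positivity $1-L\norm{w}_Y>0$.
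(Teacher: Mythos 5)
Your proposal is correct and follows essentially the same route as the paper's proof: the minimizing property against $x^\dag$, the Pythagoras expansion of $\norm{x_\alpha-x_0}_X^2$, the source condition combined with the quadratic linearization bound from \cref{lem:nichtlin:lipschitz}, and the completion of the square yielding the identical key inequality and the same constants $c_1,c_2$. Your explicit remark that the Lipschitz neighbourhood is reached via \cref{thm:tikh_nl:conv} for $\delta$ small enough is a careful touch the paper leaves implicit.
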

\begin{proof}
    First, the minimizing property of $x_\alpha^\delta$ for $\alpha:= \alpha(\delta)$ again implies that
    \begin{equation}\label{eq:tikh_nl:rate0}
        \frac12\norm{F(x_\alpha^\delta)-y^{\delta}}_Y^2 + \frac{\alpha}2 \norm{x_\alpha^\delta-x_0}_X^2 \leq  \frac{\delta^2}2 + \frac{\alpha}2 \norm{x^\dag-x_0}_X^2.
    \end{equation}
    To obtain from this an estimate of $x_\alpha^\delta-x^\dag$, we use the productive zero $x^\dag -x^\dag$ on the left-hand side and the Pythagoras identity, which yields the inequality
    \begin{equation*}
        \norm{x_\alpha^\delta -x_0}_X^2 = \norm{x_\alpha^\delta - x^\dag}_X^2 + 2\inner{x_\alpha^\delta - x^\dag}{ x^\dag -x_0}_X + \norm{x^\dag -x_0}_X^2.
    \end{equation*}
    Inserting this into \eqref{eq:tikh_nl:rate0} and using the source condition (ii) then shows that
    \begin{equation}
        \label{eq:tikh_nl:rate1}
        \begin{aligned}[t]
            \frac12\norm{F(x_\alpha^\delta)-y^{\delta}}_Y^2 + \frac{\alpha}2 \norm{x_\alpha^\delta-x^\dag}_X^2
            &\leq \frac{\delta^2}2 + {\alpha} \inner{x^\dag-x_0}{x^\dag-x_\alpha^\delta}_X\\
            &= \frac{\delta^2}2 + \alpha\inner{w}{F'(x^\dag)(x^\dag-x_\alpha^\delta)}_Y\\
            &\leq \frac{\delta^2}2 + \alpha\norm{w}_Y\norm{F'(x^\dag)(x^\dag-x_\alpha^\delta)}_Y.
        \end{aligned}
    \end{equation}

    Since $x_\alpha^\delta,x^\dag\in U$ and is $U$ convex, the condition (i) allows us to apply \cref{lem:nichtlin:lipschitz} for $x=x_\alpha^\delta$ and $h=x^\dag -x_\alpha^\delta\in U$ to obtain
    \begin{equation*}
        \norm{F(x^\dag)-F(x_\alpha^\delta) - F'(x^\dag)(x^\dag-x_\alpha^\delta)}_Y \leq \frac{L}2\norm{x^\dag -x_\alpha^\delta}_X^2.
    \end{equation*}
    Together with the triangle inequalities, we arrive at
    \begin{equation}\label{eq:tikh_nl:rate2}
        \begin{aligned}[t]
            \norm{F'(x^\dag)(x^\dag-x_\alpha^\delta)}_Y &\leq \frac{L}2\norm{x^\dag -x_\alpha^\delta}_X^2 + \norm{F(x_\alpha^\delta) - F(x^\dag)}_Y\\
            &\leq \frac{L}2\norm{x^\dag -x_\alpha^\delta}_X^2 + \norm{F(x_\alpha^\delta) - y^\delta}_Y + \delta.
        \end{aligned}
    \end{equation}
    Inserting this into \eqref{eq:tikh_nl:rate1} then yields that
    \begin{equation*}
        \norm{F(x_\alpha^\delta)-y^{\delta}}_Y^2 + {\alpha} \norm{x_\alpha^\delta-x^\dag}_X^2 \leq \delta^2 + \alpha \norm{w}_Y \left(L\norm{x^\dag -x_\alpha^\delta}_X^2 + 2\norm{F(x_\alpha^\delta) - y^\delta}_Y + 2\delta\right).
    \end{equation*}
    We now add $\alpha^2\norm{w}_Y^2$ to both sides and rearrange to obtain the inequality
    \begin{equation*}
        \left(\norm{F(x_\alpha^\delta)-y^{\delta}}_Y - \alpha \norm{w}_Y\right)^2 + \alpha(1-L\norm{w}_Y) \norm{x_\alpha^\delta-x^\dag}_X^2 \leq  \left(\delta + \alpha\norm{w}_Y\right)^2.
    \end{equation*}
    Dropping one of the two terms on the left-hand side and applying the parameter choice rule $c\delta\leq \alpha\leq C\delta$ then yields
    \begin{align*}
        \norm{F(x_\alpha^\delta)-y^{\delta}}_Y &\leq \delta + 2\alpha \norm{w}_Y \leq (1+2C\norm{w}_Y) \delta
        \intertext{as well as (since $L\norm{w}_Y < 1$ by assumption)}
        \norm{x_\alpha^\delta-x^\dag}_X  &\leq \frac{\delta +\alpha\norm{w}_Y}{\sqrt{\alpha(1-L\norm{w}_Y)}} \leq \frac{1+C\norm{w}_Y}{\sqrt{c(1-L\norm{w}_Y)}}\sqrt{\delta},
    \end{align*}
    respectively, and hence the claim.
\end{proof}
Note that condition (ii) entails a smallness condition on $x^\dag -x_0$: To obtain the claimed convergence rate, $x_0$ already has to be a sufficiently good approximation of the desired solution $x^\dag$.
Conversely, the condition indicates \emph{which} $x_0$-minimum norm solution the minimizers converge to if $x^\dag$ is not unique.

With a bit more effort, one can show analogously to \cref{cor:tikhonov:rate} the higher rate $\delta^{\nu/(\nu+1)}$ under the stronger source condition $x^\dag -x^0 \in\calR((F'(x^\dag)^*F'(x^\dag))^{\nu/2})$ and the corresponding choice of $\alpha(\delta)$, up to the qualification $\nu_0 = 2$; see \cite[Theorem~10.7]{Engl}.

We next consider the a posteriori choice of $\alpha$ according to the discrepancy principle: Set $\tau>1$ and choose $\alpha=\alpha(\delta,y^\delta)$ such that
\begin{equation}
    \label{eq:tikh_nl:morozov}
    \delta < \norm{F(x_\alpha^\delta)-y^\delta}_Y \leq  \tau \delta.
\end{equation}
\begin{theorem}
    Let $F:U\to Y$ be Fréchet differentiable with convex domain $\dom F =U$. Let further $y\in\calR(F)$ and $y^\delta\in B_\delta(y)$, and let $x^\dag$ be an $x_0$-minimum norm solution such that conditions (i) and (ii) from \cref{thm:tikh_nl:rate_apriori} are satisfied.
    If $\alpha:=\alpha(\delta,y^\delta)$ is chosen according to \eqref{eq:tikh_nl:morozov}, then there exists a constant $c>0$ such that
    \begin{equation*}
        \norm{x_\alpha^\delta-x^\dag}_X \leq c \sqrt{\delta}.
    \end{equation*}
\end{theorem}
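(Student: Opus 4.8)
The plan is to follow the proof of \cref{thm:tikh_nl:rate_apriori} almost verbatim, replacing the bracketing $c\delta\le\alpha\le C\delta$ supplied there by the a priori rule with the two inequalities that the discrepancy principle \eqref{eq:tikh_nl:morozov} provides for $\alpha:=\alpha(\delta,y^\delta)$. The key realization is that the \emph{lower} bound $\delta<\norm{F(x_\alpha^\delta)-y^\delta}_Y$ now does the work that the parameter choice rule did before.

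Concretely, I would start from the minimizing inequality \eqref{eq:tikh_nl:rate0}, which (using $F(x^\dag)=y$ and $\norm{y-y^\delta}_Y\le\delta$) reads
\[
    \tfrac12\norm{F(x_\alpha^\delta)-y^\delta}_Y^2 + \tfrac{\alpha}2\norm{x_\alpha^\delta-x_0}_X^2 \le \tfrac{\delta^2}2 + \tfrac{\alpha}2\norm{x^\dag-x_0}_X^2 .
\]
Discarding the residual term via the discrepancy lower bound $\norm{F(x_\alpha^\delta)-y^\delta}_Y>\delta$ and cancelling $\tfrac{\delta^2}2$ gives the uniform bound $\norm{x_\alpha^\delta-x_0}_X\le\norm{x^\dag-x_0}_X$. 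By the Pythagoras identity this is equivalent to $\norm{x_\alpha^\delta-x^\dag}_X^2\le 2\inner{x^\dag-x_0}{x^\dag-x_\alpha^\delta}_X$, so the entire error is governed by a single inner product.

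I would then feed in the source condition (ii), writing $\inner{x^\dag-x_0}{x^\dag-x_\alpha^\delta}_X=\inner{w}{F'(x^\dag)(x^\dag-x_\alpha^\delta)}_Y\le\norm{w}_Y\norm{F'(x^\dag)(x^\dag-x_\alpha^\delta)}_Y$, and bound the last factor with the linearization estimate \eqref{eq:tikh_nl:rate2} — this time inserting the discrepancy \emph{upper} bound $\norm{F(x_\alpha^\delta)-y^\delta}_Y\le\tau\delta$ — to obtain $\norm{F'(x^\dag)(x^\dag-x_\alpha^\delta)}_Y\le\tfrac{L}2\norm{x^\dag-x_\alpha^\delta}_X^2+(\tau+1)\delta$. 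Substituting back yields
\[
    \norm{x_\alpha^\delta-x^\dag}_X^2\le L\norm{w}_Y\,\norm{x_\alpha^\delta-x^\dag}_X^2+2(\tau+1)\norm{w}_Y\,\delta ,
\]
and absorbing the quadratic term on the left using $1-L\norm{w}_Y>0$ from (ii) gives $\norm{x_\alpha^\delta-x^\dag}_X\le c\sqrt\delta$ with $c=\bigl(2(\tau+1)\norm{w}_Y/(1-L\norm{w}_Y)\bigr)^{1/2}$.

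The main obstacle is the legitimacy of \eqref{eq:tikh_nl:rate2}: \cref{lem:nichtlin:lipschitz} only applies while $x_\alpha^\delta$ lies in the neighborhood on which $F'$ is Lipschitz (and the segment to $x^\dag$ stays in $U$, which is where convexity of $\dom F$ is used). I would dispose of this by a preliminary convergence step, argued exactly as in \cref{thm:tikh_nl:conv}: the discrepancy principle forces $\norm{F(x_\alpha^\delta)-y^\delta}_Y\le\tau\delta\to0$, while the uniform bound $\norm{x_\alpha^\delta-x_0}_X\le\norm{x^\dag-x_0}_X$ together with weak compactness, the weak closedness of $F$, and weak lower semicontinuity of the norm shows $x_{\alpha(\delta)}^\delta\to x^\dag$ as $\delta\to0$. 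Hence for all $\delta$ small enough the iterate sits in the Lipschitz neighborhood, and the computation above applies.
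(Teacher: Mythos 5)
Your proposal is correct and follows essentially the same route as the paper's proof: both use the discrepancy lower bound in the minimizing inequality to cancel the residual and obtain $\norm{x_\alpha^\delta-x_0}_X\leq\norm{x^\dag-x_0}_X$, then combine the Pythagoras identity, the source condition, the linearization estimate \eqref{eq:tikh_nl:rate2} with the discrepancy upper bound $\tau\delta$, and absorption via $L\norm{w}_Y<1$, arriving at the same constant. Your closing remark on ensuring $x_\alpha^\delta$ lies in the Lipschitz neighborhood for small $\delta$ is a reasonable extra precaution that the paper leaves implicit.
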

\begin{proof}
    From \eqref{eq:tikh_nl:morozov} and the minimizing property of $x_\alpha^\delta$, we directly obtain that
    \begin{equation*}
        \frac{\delta^2}{2} + \frac{\alpha}2 \norm{x_\alpha^\delta-x_0}_X^2
        < \frac12 \norm{F(x_\alpha^\delta)-y^\delta}_Y^2 + \frac{\alpha}2 \norm{x_\alpha^\delta-x_0}_X^2
        \leq \frac{\delta^2}2 + \frac{\alpha}2 \norm{x^\dag-x_0}_X^2
    \end{equation*}
    and hence that
    \begin{equation*}
        \frac{\alpha}2 \norm{x_\alpha^\delta-x_0}_X^2 \leq \frac{\alpha}2 \norm{x^\dag-x_0}_X^2.
    \end{equation*}
    As for \eqref{eq:tikh_nl:rate1} and \eqref{eq:tikh_nl:rate2}, we can then use the conditions (i) and (ii) together with the parameter choice \eqref{eq:tikh_nl:morozov} to show that
    \begin{equation*}
        \begin{aligned}
            \norm{x_\alpha^\delta-x^\dag}_X^2 &\leq \norm{w}_Y\left(L\norm{x_\alpha^\delta-x^\dag}_X^2 + 2\norm{F(x_\alpha^\delta)-y^\delta}_Y + 2\delta\right)\\
            &\leq  \norm{w}_Y\left(L\norm{x_\alpha^\delta-x^\dag}_X^2 + 2(1+ \tau)\delta\right).
        \end{aligned}
    \end{equation*}
    Since $L\norm{w}_X<1$, we can again rearrange this to
    \begin{equation*}
        \norm{x_\alpha^\delta-x^\dag}_X^2 \leq \frac{2(1+\tau)\norm{w}_Y}{1-L\norm{w}_Y} \delta,
    \end{equation*}
    which yields the desired estimate.
\end{proof}
In contrast to Tikhonov regularization of linear problems, it is however not guaranteed that an $\alpha$ satisfying \eqref{eq:tikh_nl:morozov} exists; this requires (strong) assumptions on the nonlinearity of $F$. Another sufficient -- and more general -- assumption is the uniqueness of minimizers of $J_\alpha$ together with a condition on $x_0$.
\begin{theorem}\label{thm:tikh_nl:value}
    Assume that for fixed $y^\delta\in B_\delta(y)$ and arbitrary $\alpha>0$, the minimizer $x_\alpha^\delta$ of $J_\alpha$ is unique. If $x_0\in U$ and $\tau >1$ satisfy $\norm{F(x_0)-y^\delta}_Y> \tau\delta$, then there exists an $\alpha>0$ such that \eqref{eq:tikh_nl:morozov} holds.
\end{theorem}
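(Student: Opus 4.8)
The plan is to study the residual as a function of the regularization parameter and close the argument with an intermediate-value step. Write $d(\alpha) := \norm{F(x_\alpha^\delta)-y^\delta}_Y$ for $\alpha>0$, where $x_\alpha^\delta$ is the (by hypothesis unique) minimizer of $J_\alpha$ for the fixed data $y^\delta$, whose existence is guaranteed by \cref{thm:tikh_nl:existenz}. The discrepancy condition \eqref{eq:tikh_nl:morozov} will follow once I establish three facts: that $d$ is continuous on $(0,\infty)$, that $d(\alpha)>\tau\delta$ for all sufficiently large $\alpha$, and that $d(\alpha)<\tau\delta$ for all sufficiently small $\alpha$. An intermediate-value argument then produces an $\alpha$ with $d(\alpha)=\tau\delta$, which in particular lies in $(\delta,\tau\delta]$.

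First I would treat the two limiting regimes. For $\alpha\to\infty$, the competitor comparison $J_\alpha(x_\alpha^\delta)\leq J_\alpha(x_0)=\frac12\norm{F(x_0)-y^\delta}_Y^2$ gives $\frac\alpha2\norm{x_\alpha^\delta-x_0}_X^2\leq\frac12\norm{F(x_0)-y^\delta}_Y^2$, so $x_\alpha^\delta\to x_0$ strongly; combining boundedness of $\{F(x_\alpha^\delta)\}$ with the weak closedness of $F$ (exactly as in the proof of \cref{thm:tikh_nl:existenz}) yields $F(x_\alpha^\delta)\wkto F(x_0)$ and hence $d(\alpha)\to\norm{F(x_0)-y^\delta}_Y>\tau\delta$ by assumption. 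For $\alpha\to 0$, I use that $y\in\calR(F)$ and test with an $x_0$-minimum norm solution $x^\dag$, which gives $d(\alpha)^2\leq\delta^2+\alpha\norm{x^\dag-x_0}_X^2$ and therefore $\limsup_{\alpha\to 0}d(\alpha)\leq\delta<\tau\delta$.

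The main obstacle is the continuity of $\alpha\mapsto d(\alpha)$, and this is precisely where the uniqueness hypothesis is indispensable. I would fix $\alpha_0>0$ and an arbitrary sequence $\alpha_n\to\alpha_0$, and rerun the direct-method argument of \cref{thm:tikh_nl:stabil} with the \emph{parameter} rather than the data varying: the bound $J_{\alpha_n}(x_{\alpha_n}^\delta)\leq J_{\alpha_n}(x_0)$ makes $\{x_{\alpha_n}^\delta\}$ bounded (since $\alpha_n\to\alpha_0>0$), every weak subsequential limit $\bar x$ is a minimizer of $J_{\alpha_0}$ by weak lower semicontinuity of the norms together with weak closedness of $F$, and uniqueness forces $\bar x=x_{\alpha_0}^\delta$. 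The delicate part is upgrading this to convergence of the residuals: tracing the $\liminf$/$\limsup$ sandwich shows $J_{\alpha_n}(x_{\alpha_n}^\delta)\to J_{\alpha_0}(x_{\alpha_0}^\delta)$, and since both nonnegative summands are $\liminf$-bounded below by their values at $x_{\alpha_0}^\delta$, each must converge separately, in particular $d(\alpha_n)\to d(\alpha_0)$. A subsequence-of-every-subsequence argument then gives continuity of $d$ at $\alpha_0$.

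Finally I would set $\bar\alpha:=\sup\setof{\alpha>0}{d(\alpha)\leq\tau\delta}$. The small-$\alpha$ estimate makes this set nonempty and the large-$\alpha$ estimate makes it bounded above, so $\bar\alpha\in(0,\infty)$ and continuity applies there. Approaching $\bar\alpha$ from within the set gives $d(\bar\alpha)\leq\tau\delta$, while approaching it from above (where $d>\tau\delta$) gives $d(\bar\alpha)\geq\tau\delta$, so $d(\bar\alpha)=\tau\delta$. Since $\tau>1$, the choice $\alpha=\bar\alpha$ satisfies $\delta<\tau\delta=d(\bar\alpha)\leq\tau\delta$, which is exactly \eqref{eq:tikh_nl:morozov}.
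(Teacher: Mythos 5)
Your proof is correct and follows essentially the same route as the paper's: continuity of the residual value function $\alpha\mapsto\norm{F(x_\alpha^\delta)-y^\delta}_Y$ via the direct method together with the uniqueness hypothesis, the endpoint limits $\norm{F(x_0)-y^\delta}_Y>\tau\delta$ as $\alpha\to\infty$ and $\leq\delta$ as $\alpha\to0$, and an intermediate-value argument. The only cosmetic difference is that you obtain the endpoint limits from explicit competitor comparisons (which is, if anything, slightly more self-contained), whereas the paper invokes the monotonicity of the value function as in \cref{lem:tikhonov:monoton}.
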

\begin{proof}
    We first show the continuity of the value function $f(\alpha):= \norm{F(x_\alpha^\delta)-y^\delta}_Y$.
    Let $\alpha>0$ be arbitrary and $\{\alpha_n\}_{n\in\N}$ be a sequence with $\alpha_n\to \alpha$ as $n\to\infty$.
    Then there exist $\eps>0$ and $N\in\N$ such that $0<\alpha-\eps\leq \alpha_n\leq \alpha+\eps$ for all $n>N$.
    Let further $x_\alpha^\delta$ be the unique minimizer of $J_\alpha$ and $x_n := x_{\alpha_n}^\delta$ for $n\in\N$ be the minimizer of $J_{\alpha_n}$. The minimizing property of $x_n$ for $J_{\alpha_n}$ for all $n>N$ then yields that
    \begin{equation*}
        \begin{aligned}
            \frac12\norm{F(x_n)-y^\delta}_Y^2 + \frac{\alpha-\eps}2 \norm{x_n-x_0}_X^2
            &\leq \frac12\norm{F(x_n)-y^\delta}_Y^2 + \frac{\alpha_n}2 \norm{x_n-x_0}_X^2\\
            &\leq \frac12\norm{F(x_\alpha^\delta)-y^\delta}_Y^2 + \frac{\alpha_n}2 \norm{x_\alpha^\delta-x_0}_X^2\\
            &\leq \frac12\norm{F(x_\alpha^\delta)-y^\delta}_Y^2 + \frac{\alpha+\eps}2 \norm{x_\alpha^\delta-x_0}_X^2,
        \end{aligned}
    \end{equation*}
    which implies that both $\{x_n\}_{n>N}$ and $\{F(x_n)\}_{n>N}$ are bounded. As in the proof of \cref{thm:tikh_nl:stabil}, we obtain from this that
    \begin{equation}\label{eq:tikh_nl:disk1}
        \lim_{n\to\infty}\left( \frac12\norm{F(x_n)-y^\delta}_Y^2 + \frac{\alpha_n}2 \norm{x_n-x_0}_X^2\right) = \frac12\norm{F(x_\alpha^\delta)-y^\delta}_Y^2 + \frac{\alpha}2 \norm{x_\alpha^\delta-x_0}_X^2
    \end{equation}
    as well as that (using the uniqueness of the minimizers) $x_n\to x_\alpha^\delta$.
    Hence $\alpha\mapsto x_\alpha^\delta$ is continuous. Together with the continuity of the norm, this implies the continuity of $g:\alpha\mapsto\frac\alpha2\norm{x_\alpha^\delta-x_0}_X^2$ and thus by \eqref{eq:tikh_nl:disk1} also of $f$.

    As in \cref{lem:tikhonov:monoton}, we can now use the minimizing property of $x_\alpha^\delta$ to show the monotonicity of $f$, which implies that
    \begin{equation*}
        \begin{aligned}
            \lim_{\alpha\to\infty} \norm{F(x_\alpha^\delta)-y^\delta}_Y &= \norm{F(x_0)-y^\delta}_Y > \tau\delta,\\
            \lim_{\alpha\to 0} \norm{F(x_\alpha^\delta)-y^\delta}_Y &= \inf_{x\in U} \norm{F(x)-y^\delta}_Y \leq \norm{F(x^\dag)-y^\delta}_Y \leq \delta.
        \end{aligned}
    \end{equation*}
    Hence, the continuous function $f(\alpha)$ attains all values in $(\delta,\tau\delta]$; in particular, there exists an $\alpha$ such that \eqref{eq:tikh_nl:morozov} holds.
\end{proof}

Since under our assumptions $J_\alpha$ is a differentiable nonlinear functional, the minimizer $x_\alpha^\delta$ can be computed by standard methods from nonlinear optimization such as gradient or (quasi-)Newton methods. Here again the possible non-uniqueness of minimizers leads to practical difficulties. Note in particular that all claims have been about \emph{global} minimizers of the Tikhonov functional, while (gradient-based) numerical methods in general can only produce (approximations of) \emph{local} minimizers. This gap between theory and practice is still an open problem in inverse problems.

\bigskip

In the proof of \cref{thm:tikh_nl:rate_apriori}, we have used the source and nonlinearity conditions to bound the right-hand side of \eqref{eq:tikh_nl:rate1} by suitable function of the terms on the left-hand side. It is possible to take this estimate directly as a source condition without introducing source representations or Lipschitz constants. In recent years, such \emph{variational source conditions} have attracted increasing interest. In our context, they have the following form: There exist $\beta_1\in [0,1)$ and $\beta_2\geq 0$ such that
\begin{equation}
    \label{eq:tikh_nl:var_source}
    \inner{x^\dag - x_0}{ x^\dag - x}_X \leq \beta_1 \left(\frac12\norm{x-x^\dag}_X^2\right) + \beta_2 \norm{F(x)-F(x^\dag)}_Y \quad \text{for all }x\in U,
\end{equation}
where $U$ is a sufficiently large neighborhood of $x^\dag$ (in particular, containing all minimizers $x_\alpha^\delta$ of $J_\alpha$). Note the different powers on the left- and right-hand sides, which are supposed to account for the different convergence speeds of error and residual.
\begin{theorem}
    Let $y\in\calR(F)$, $y^\delta\in B_\delta(y)$, and $x^\dag$ be a $x_0$-minimum norm solution satisfying the variational source condition \eqref{eq:tikh_nl:var_source} for some $\beta_1<1$.
    If $\alpha(\delta)$ is a parameter choice rule with
    \begin{equation*}
        c\delta \leq \alpha(\delta) \leq C \delta  \qquad\text{for } c,C>0,
    \end{equation*}
    then there exist constants $c_1,c_2>0$ such that
    \begin{align}
        \norm{x_{\alpha(\delta)}^\delta-x^\dag}_X &\leq c_1 \sqrt{\delta},\\
        \norm{F(x_{\alpha(\delta)}^\delta)-y^\delta}_X &\leq c_2 {\delta}.
    \end{align}
\end{theorem}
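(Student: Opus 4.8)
The plan is to mirror the proof of \cref{thm:tikh_nl:rate_apriori}, but to replace the combined use of the classical source condition (ii) and the Lipschitz nonlinearity condition (i) by a single application of the variational source condition \eqref{eq:tikh_nl:var_source}, which has been engineered precisely to reproduce the bound on the critical inner-product term. First I would start, exactly as in \eqref{eq:tikh_nl:rate0}, from the minimizing property of $x_\alpha^\delta$ (writing $\alpha := \alpha(\delta)$) together with $\norm{F(x^\dag)-y^\delta}_Y \leq \delta$ and $F(x^\dag)=y$, obtaining
\begin{equation*}
    \frac12\norm{F(x_\alpha^\delta)-y^\delta}_Y^2 + \frac{\alpha}2 \norm{x_\alpha^\delta-x_0}_X^2 \leq \frac{\delta^2}2 + \frac{\alpha}2 \norm{x^\dag-x_0}_X^2.
\end{equation*}
Inserting the Pythagoras identity for $\norm{x_\alpha^\delta - x_0}_X^2$ as before isolates the term $\alpha\inner{x^\dag - x_0}{x^\dag - x_\alpha^\delta}_X$ on the right, giving the analogue of \eqref{eq:tikh_nl:rate1}.

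The decisive step is then to apply the variational source condition \eqref{eq:tikh_nl:var_source} with $x = x_\alpha^\delta$ (which is legitimate since $x_\alpha^\delta$ lies in the neighborhood $U$ on which the condition is assumed), bounding the inner product directly by $\frac{\beta_1}2\norm{x_\alpha^\delta - x^\dag}_X^2 + \beta_2\norm{F(x_\alpha^\delta) - F(x^\dag)}_Y$. Using $F(x^\dag)=y$, the residual factor is controlled via $\norm{F(x_\alpha^\delta)-F(x^\dag)}_Y \leq \norm{F(x_\alpha^\delta)-y^\delta}_Y + \delta$. Moving the resulting $\frac{\alpha\beta_1}2\norm{x_\alpha^\delta-x^\dag}_X^2$ contribution to the left-hand side, where it is absorbed thanks to $\beta_1<1$, and completing the square in $R := \norm{F(x_\alpha^\delta)-y^\delta}_Y$ yields the clean estimate
\begin{equation*}
    \frac12\left(R - \alpha\beta_2\right)^2 + \frac{\alpha(1-\beta_1)}2\norm{x_\alpha^\delta-x^\dag}_X^2 \leq \frac12\left(\delta + \alpha\beta_2\right)^2,
\end{equation*}
where the mixed term $\alpha\beta_2\delta$ is exactly what makes the right-hand side a perfect square.

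From here both claims follow by dropping one term at a time and inserting the parameter choice $c\delta \leq \alpha \leq C\delta$. Keeping only the first term gives $R \leq \delta + 2\alpha\beta_2 \leq (1 + 2C\beta_2)\delta$, hence the residual bound with $c_2 := 1 + 2C\beta_2$. Keeping only the second term gives $\norm{x_\alpha^\delta - x^\dag}_X^2 \leq (\delta + \alpha\beta_2)^2 \big/ \bigl(\alpha(1-\beta_1)\bigr) \leq (1 + C\beta_2)^2\delta \big/ \bigl(c(1-\beta_1)\bigr)$, hence the rate $\norm{x_\alpha^\delta - x^\dag}_X \leq c_1\sqrt{\delta}$ with $c_1 := (1 + C\beta_2)\big/\sqrt{c(1-\beta_1)}$. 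I do not expect a genuine obstacle: the variational source condition was designed to make exactly this argument go through, so the only points needing care are the bookkeeping of the noise level in the bound for $\norm{F(x_\alpha^\delta)-F(x^\dag)}_Y$ and checking that the completed square really absorbs all cross terms. The one structural fact used implicitly is the existence of the minimizer $x_\alpha^\delta$ and its membership in $U$, which is where weak closedness of $F$ (via \cref{thm:tikh_nl:existenz}) enters the setting.
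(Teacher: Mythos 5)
Your proof is correct and follows essentially the same route as the paper's: minimizing property, Pythagoras identity, variational source condition with $x=x_\alpha^\delta$, triangle inequality for the residual, absorption of the $\beta_1$-term, and the parameter choice. The only (cosmetic) difference is that you complete the square in the residual where the paper invokes the generalized Young inequality with $\eps=\tfrac12$; your variant mirrors the completed-square argument in the proof of \cref{thm:tikh_nl:rate_apriori} more closely and yields marginally cleaner constants.
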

\begin{proof}
    From the minimizing property of $x_\alpha^\delta$, we again obtain the first inequality of \eqref{eq:tikh_nl:rate1}. We now estimate this further using the variational source condition, the triangle inequality, the generalized Young inequality $ab\leq \frac{1}{2\eps}a^2 + \frac{\eps}2 b^2$ for $\eps=\frac12$, and the parameter choice to obtain that
    \begin{equation*}
        \begin{aligned}[t]
            \frac12\norm{F(x_\alpha^\delta)-y^{\delta}}_Y^2 + \frac{\alpha}2 \norm{x_\alpha^\delta-x^\dag}_X^2 &\leq \frac{\delta^2}2 + {\alpha} \inner{x^\dag-x_0}{x^\dag-x_\alpha^\delta}_X\\
            &\leq  \frac{\delta^2}2 + \alpha\beta_1 \left(\tfrac12\norm{x_\alpha^\delta-x^\dag}_X^2\right) + \alpha \beta_2 \norm{F(x_\alpha^\delta)-F(x^\dag)}_Y\\
            &\leq \frac{\delta^2}2 + \frac{\alpha}2\beta_1\norm{x_\alpha^\delta-x^\dag}_X^2 + \alpha\beta_2\left(\norm{F(x_\alpha^\delta)-y^\delta}_Y + \delta\right)\\
            &\leq \frac{\delta^2}2+ \frac{\alpha}2\beta_1\norm{x_\alpha^\delta-x^\dag}_X^2 +
            {\alpha^2\beta_2^2} + \frac14\norm{F(x_\alpha^\delta)-y^\delta}_Y^2 \\
            \MoveEqLeft[-1] +\alpha\beta_2\delta\\
            &\leq \left(\frac12+C^2\beta_2^2 + C\beta_2\right)\delta^2 +
            \frac{\alpha}2\beta_1\norm{x_\alpha^\delta-x^\dag}_X^2\\
            \MoveEqLeft[-1] + \frac14\norm{F(x_\alpha^\delta)-y^\delta}_Y^2.
        \end{aligned}
    \end{equation*}
    Due to the assumption that $\beta_1<1$, we can absorb the last two terms on the right-hand side into the left-hand side, which yields
    \begin{align}
        \norm{x_\alpha^\delta-x^\dag}_X &\leq \sqrt{\frac{1 + 2C\beta_2 + 2C^2\beta_2^2}{c(1-\beta_1)}}\,\sqrt{\delta}
        \intertext{as well as}
        \norm{F(x_\alpha^\delta)-y^\delta}_Y &\leq \sqrt{2 + 4C\beta_2 + 4C^2\beta_2^2}\,\delta.
        \qedhere
    \end{align}
\end{proof}

We finally study the connection between variational and classical source conditions.
\begin{lemma}
    Let $F:U\to Y$ be Fréchet differentiable and $x^\dag$ be an $x_0$-minimum norm solution. If there exists a $w\in Y$ with $x^\dag -x_0 = F'(x^\dag)^*w$ and either
    \begin{enumerate}
        \item $F'$ is Lipschitz continuous with constant $L\norm{w}_Y<1$ or
        \item the tangential cone condition \eqref{eq:nichtlin:tangential} is satisfied,
    \end{enumerate}
    then the variational source condition \eqref{eq:tikh_nl:var_source} holds.
\end{lemma}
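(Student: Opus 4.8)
The plan is to treat both cases uniformly at first, reducing everything to a bound on the linearized quantity $\norm{F'(x^\dag)(x^\dag-x)}_Y$, and only at the last step to split according to which nonlinearity hypothesis is available. First I would insert the source representation $x^\dag-x_0 = F'(x^\dag)^*w$ into the left-hand side of \eqref{eq:tikh_nl:var_source}, move the adjoint across the inner product, and apply the Cauchy--Schwarz inequality:
\[
    \inner{x^\dag-x_0}{x^\dag-x}_X = \inner{F'(x^\dag)^*w}{x^\dag-x}_X = \inner{w}{F'(x^\dag)(x^\dag-x)}_Y \leq \norm{w}_Y\,\norm{F'(x^\dag)(x^\dag-x)}_Y.
\]
It then remains to control $\norm{F'(x^\dag)(x^\dag-x)}_Y$ by a combination of the computable residual $\norm{F(x)-F(x^\dag)}_Y$ and the squared error $\norm{x-x^\dag}_X^2$, since $\norm{F'(x^\dag)(x^\dag-x)}_Y = \norm{F'(x^\dag)(x-x^\dag)}_Y$ by linearity. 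This is exactly the point where the two conditions enter.

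For case (i) I would set $h := x-x^\dag$ and invoke \cref{lem:nichtlin:lipschitz}, which gives $\norm{F(x)-F(x^\dag)-F'(x^\dag)h}_Y \leq \frac{L}2\norm{h}_X^2$; a triangle inequality then yields $\norm{F'(x^\dag)(x-x^\dag)}_Y \leq \norm{F(x)-F(x^\dag)}_Y + \frac{L}2\norm{x-x^\dag}_X^2$. Substituting this back produces \eqref{eq:tikh_nl:var_source} with $\beta_1 = L\norm{w}_Y$ and $\beta_2 = \norm{w}_Y$, and the hypothesis $L\norm{w}_Y<1$ is precisely what guarantees $\beta_1\in[0,1)$. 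For case (ii) I would instead use the already-derived consequence \eqref{eq:nichtlin:tangential_est} of the tangential cone condition, namely $\norm{F'(x^\dag)h}_Y \leq (1+\eta)\norm{F(x)-F(x^\dag)}_Y$; this bounds the linearized term by the residual alone, so substitution gives \eqref{eq:tikh_nl:var_source} with $\beta_1 = 0$ and $\beta_2 = (1+\eta)\norm{w}_Y$, where $\beta_1=0<1$ is trivially admissible.

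The calculations themselves are routine, so I do not expect a genuine obstacle; the only real care is in the bookkeeping of the neighborhood $U$ on which the conclusion is asserted. In case (i) I must ensure $U$ lies in the region where $F'$ is Lipschitz and that the segments $\{x^\dag+th : t\in[0,1]\}$ stay inside it, for which convexity of $\dom F$ (assumed in the companion rate theorems) suffices; in case (ii) I need $U\subset B_\delta(x^\dag)$ so that \eqref{eq:nichtlin:tangential} applies to every $x\in U$. Since the variational source condition is only claimed on a sufficiently small neighborhood of $x^\dag$ containing the minimizers $x_\alpha^\delta$, shrinking $U$ to meet these requirements is harmless. The main conceptual point, rather than a difficulty, is recognizing that both nonlinearity conditions play the identical role of transferring the estimate from the inaccessible linearized residual $F'(x^\dag)(x-x^\dag)$ to the genuinely available nonlinear residual and error terms.
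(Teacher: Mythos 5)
Your proof is correct and follows essentially the same route as the paper: insert the source representation, apply Cauchy--Schwarz to reduce to bounding $\norm{F'(x^\dag)(x^\dag-x)}_Y$, then use \cref{lem:nichtlin:lipschitz} in case (i) and the tangential cone estimate \eqref{eq:nichtlin:tangential_est} in case (ii), arriving at the same constants $\beta_1,\beta_2$. The only cosmetic difference is that the paper performs the triangle-inequality split before the case distinction while you fold it into each case, and your remarks on the admissible neighborhood $U$ are a sensible (if tacit in the paper) addition.
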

\begin{proof}
    We first apply the classical source condition to the left-hand side of \eqref{eq:tikh_nl:var_source} and estimate
    \begin{equation*}
        \begin{aligned}
            \inner{x^\dag - x_0}{ x^\dag - x}_X
            &= \inner{F'(x^\dag)^*w}{x^\dag-x}_X\\
            &= \inner{w}{F'(x^\dag)(x^\dag -x)}_Y\\
            &\leq \norm{w}_Y\norm{F'(x^\dag)(x^\dag -x)}_Y\\
            &\leq \norm{w}_Y\left(\norm{F(x)-F(x^\dag)-F'(x^\dag)(x^\dag -x)}_Y + \norm{F(x)-F(x^\dag)}_Y\right).
        \end{aligned}
    \end{equation*}
    If now assumption (i) holds, we can apply \cref{lem:nichtlin:lipschitz} to obtain the inequality
    \begin{equation*}
        \inner{x^\dag - x_0}{ x^\dag - x}_X \leq \norm{w}_Y\left(\frac{L}{2}\norm{x^\dag-x}_X^2 + \norm{F(x)-F(x^\dag)}_Y\right),
    \end{equation*}
    i.e., \eqref{eq:tikh_nl:var_source} with $\beta_1 = L\norm{w}_Y<1$ and $\beta_2 = \norm{w}_Y$.

    On the other hand, if assumption (ii) holds, we can directly estimate
    \begin{equation*}
        \inner{x^\dag - x_0}{ x^\dag - x}_X \leq \norm{w}_Y(\eta+1)\norm{F(x)-F(x^\dag)}_Y,
    \end{equation*}
    which implies \eqref{eq:tikh_nl:var_source} with $\beta_1=0$ and $\beta_2=(1+\eta)\norm{w}_Y>0$.
\end{proof}

For a linear operator $T\in\linop(X,Y)$, we of course do not need any nonlinearity condition; in this case the variational source condition \eqref{eq:tikh_nl:var_source} is equivalent to the classical source condition $x^\dag\in \calR(T^*)$, see \cite[Lemma~2]{Scherzer:2014}.
For nonlinear operators, however, it is a weaker (albeit even more abstract) condition.
The main advantage of this type of condition is that it does not involve the Fréchet derivative of $F$ and hence can also be applied for non-differentiable $F$; furthermore, it can be applied to generalized Tikhonov regularization, in particular in Banach spaces; see, e.g., \cite{Poeschl:2007a,Scherzer:2009,HKKS}.

\chapter{Iterative regularization}

There also exist iterative methods for nonlinear inverse problems that, like the Landweber iteration, construct a sequence of approximations and can be combined with a suitable termination criterion to obtain a regularization method.
Specifically, a \emph{(convergent) iterative regularization method} refers to a procedure that constructs for  given $y^\delta\in Y$ and $x_0\in U$ a sequence $\{x_n^\delta\}_{n\in\N}\subset U$ together with a stopping index $N(\delta,y^\delta)$, such that for all $y\in\calR(F)$ and all $x_0=x_0^\delta$ sufficiently close to an isolated solution $x^\dag\in U$ of $F(x)=y$, we have that%
\footnote{In contrast to the previous chapters, we denote here by $x^\dag$ not an ($x_0$-)minimum norm solution, but any solution of $F(x)=y$.}%
\begin{subequations}
    \begin{align}
        &N(0,y)<\infty,\quad x_{N(0,y)} = x^\dag\qquad\text{or}\qquad  N(0,y)=\infty,\quad x_n\to x^\dag \text{ for } n\to\infty,
        \label{eq:iter:stab}\\
        &\lim_{\delta\to 0}\sup_{y^\delta\in B_\delta(y)}\norm{x_{N(\delta,y^\delta)}^\delta-x^\dag}_X = 0.
        \label{eq:iter:conv}
    \end{align}
\end{subequations}
The first condition states that for exact data (i.e., $\delta=0$), the sequence either converges to a solution or reaches one after finitely many steps. The second condition corresponds to the definition of a convergent regularization method in the linear setting.

We again terminate by the Morozov discrepancy principle: Set $\tau>1$ and choose $N=N(\delta,y^\delta)$ such that
\begin{equation}
    \label{eq:iter:diskrepanz}
    \norm{F(x_{N}^\delta) - y^\delta}_Y \leq \tau \delta < \norm{F(x_{n}^\delta) - y^\delta}_Y \qquad\text{for all }n<N.
\end{equation}
In this case, a sufficient condition for \eqref{eq:iter:conv} is the monotonicity and stability of the method. Here and in the following, we again denote by $x_n$ the elements of the sequence generated for the exact data $y\in \calR(F)$ and by $x_n^\delta$ the elements for the noisy data $y^\delta\in B_\delta(y)$.
\begin{lemma}\label{lem:iter:conv}
    Let $N(\delta,y^\delta)$ be chosen by the discrepancy principle \eqref{eq:iter:diskrepanz}. If an iterative method for a continuous operator $F:U\to Y$ satisfies the condition \eqref{eq:iter:stab} as well as
    \begin{subequations}
        \begin{align}
            &\norm{x_n^\delta - x^\dag}_X \leq \norm{x_{n-1}^\delta -x^\dag}_X &&\text{for all } n\in\{1,\dots,N(\delta,y^\delta)\},
            \label{eq:iter:conv_mon}\\
            &\lim_{\delta\to 0} \norm{x_n^\delta-x_n}_X = 0   &&\text{for every fixed } n\in\N,
            \label{eq:iter:conv_stab}
        \end{align}
    \end{subequations}
    then the condition \eqref{eq:iter:conv} is also satisfied.
\end{lemma}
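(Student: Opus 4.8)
The plan is to argue by contradiction, exploiting the interplay between the stopping rule \eqref{eq:iter:diskrepanz} and the two structural hypotheses \eqref{eq:iter:conv_mon}--\eqref{eq:iter:conv_stab}. Suppose \eqref{eq:iter:conv} fails. Then there is an $\eta>0$ together with a null sequence $\{\delta_k\}_{k\in\N}$ and data $y^{\delta_k}\in B_{\delta_k}(y)$ such that, writing $N_k := N(\delta_k,y^{\delta_k})$, one has $\norm{x_{N_k}^{\delta_k}-x^\dag}_X\geq \eta$ for all $k$. (Since the supremum in \eqref{eq:iter:conv} need not be attained, I would simply choose $y^{\delta_k}$ realizing the failure up to a factor, say with value $\geq \eta/2$.) The stopping indices $N_k$ are finite for each $k$, and I would distinguish according to whether $\{N_k\}$ admits a bounded subsequence or tends to infinity, passing to subsequences freely.

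First consider the case that $\{N_k\}$ has a bounded subsequence; after relabelling it is constant, $N_k\equiv n^*$. The stopping rule \eqref{eq:iter:diskrepanz} gives $\norm{F(x_{n^*}^{\delta_k})-y^{\delta_k}}_Y\leq \tau\delta_k\to 0$. The stability \eqref{eq:iter:conv_stab} yields $x_{n^*}^{\delta_k}\to x_{n^*}$, so continuity of $F$ together with $y^{\delta_k}\to y$ forces $F(x_{n^*})=y$. Monotonicity \eqref{eq:iter:conv_mon} gives $\norm{x_{n^*}^{\delta_k}-x^\dag}_X\leq \norm{x_0-x^\dag}_X$, and passing to the limit $\norm{x_{n^*}-x^\dag}_X\leq\norm{x_0-x^\dag}_X$; since $x^\dag$ is an isolated solution and $x_0$ is taken close enough that this ball lies in the isolation neighbourhood, I conclude $x_{n^*}=x^\dag$. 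But then $x_{n^*}^{\delta_k}\to x^\dag$, contradicting $\norm{x_{n^*}^{\delta_k}-x^\dag}_X\geq\eta$.

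It remains to treat $N_k\to\infty$. Here I would use \eqref{eq:iter:stab} in the unified form: for every $\eps>0$ there is a fixed $m\in\N$ with $\norm{x_m-x^\dag}_X\leq\tfrac\eps2$ (in the terminating case $x_m=x^\dag$ for $m=N(0,y)$, while in the non-terminating case this is the convergence $x_n\to x^\dag$). Because $N_k\to\infty$, eventually $N_k\geq m$, so monotonicity \eqref{eq:iter:conv_mon} iterated down to index $m$ gives $\norm{x_{N_k}^{\delta_k}-x^\dag}_X\leq\norm{x_m^{\delta_k}-x^\dag}_X$, and stability \eqref{eq:iter:conv_stab} applied at the fixed index $m$ makes $\norm{x_m^{\delta_k}-x_m}_X\leq\tfrac\eps2$ for $k$ large; hence $\norm{x_{N_k}^{\delta_k}-x^\dag}_X\leq\eps$. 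As $\eps$ was arbitrary this again contradicts the lower bound $\eta$, completing the contradiction and thus establishing \eqref{eq:iter:conv}.

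The main obstacle --- and the reason the hypotheses are stated as they are --- lies in the bounded-stopping-index case: producing an exact solution of $F(x)=y$ in the limit and then identifying it with $x^\dag$. This identification is exactly where continuity of $F$, the smallness of $\norm{x_0-x^\dag}_X$, and the isolatedness of $x^\dag$ must all be invoked together; monotonicity is what confines every iterate to a fixed ball about $x^\dag$ and thereby keeps the limit inside the isolation neighbourhood. The unbounded case is comparatively soft, requiring only that one can trade an $\eps$ of approximation error (from \eqref{eq:iter:stab}) against an $\eps$ of stability error (from \eqref{eq:iter:conv_stab}) under the monotone decay \eqref{eq:iter:conv_mon}.
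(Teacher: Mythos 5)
Your proof is correct and follows essentially the same route as the paper's: a case distinction on whether the stopping indices $N_k$ stay bounded (where the discrepancy principle, stability \eqref{eq:iter:conv_stab}, and continuity of $F$ produce a solution of $F(x)=y$ in the limit) or tend to infinity (where Féjer monotonicity \eqref{eq:iter:conv_mon} down to a fixed index combined with \eqref{eq:iter:stab} and \eqref{eq:iter:conv_stab} gives the $\eps$-estimate); the contradiction framing is only cosmetic. In fact, in the bounded case you are slightly more careful than the paper, which stops at \enquote{converges to a solution of $F(x)=y$}, whereas you explicitly identify that limit with $x^\dag$ using the monotonicity bound $\norm{x_{n^*}-x^\dag}_X\leq\norm{x_0-x^\dag}_X$ together with the standing assumption that $x^\dag$ is isolated and $x_0$ sufficiently close.
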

\begin{proof}
    Let $F:U\to Y$ be continuous, $\{y^{\delta_k}\}_{k\in\N}$ with $y^{\delta_k}\in B_{\delta_k}(y)$ and $\delta_k\to 0$ as $k\to\infty$, and set $N_k:= N(\delta_k,y^{\delta_k})$.
    We first consider the case that $\{N_k\}_{k\in\N}$ is bounded and hence that the set $\setof{N_k}{k\in\N}\subset\N$ is finite.
    After passing to a subsequence if necessary, we can therefore assume that $N_k =\bar N$ for all $k\in \N$. It then follows from \eqref{eq:iter:conv_stab} that $x^{\delta_k}_{\bar N}\to x_{\bar N}$ as $k\to\infty$. Since all $N_k$ are chosen according to the discrepancy principle \eqref{eq:iter:diskrepanz}, we have that
    \begin{equation*}
        \norm{F(x^{\delta_k}_{\bar N})-y^{\delta_k}}_Y\leq \tau\delta_k  \qquad\text{for all }k\in\N.
    \end{equation*}
    Passing to the limit on both sides and using the continuity of $F$ then yields that $F(x_{\bar N})=y$, i.e., $x^{\delta_k}_{\bar N}$ converges to a solution of $F(x)=y$ and the condition \eqref{eq:iter:conv} is thus satisfied.

    Otherwise, there exists a subsequence with $N_k\to\infty$. We can assume (possibly after passing to a further subsequence) that $N_k$ is increasing. Then \eqref{eq:iter:conv_mon} yields that for all $l\leq k$,
    \begin{equation*}
        \norm{x^{\delta_k}_{N_k}-x^\dag}_X \leq \norm{x^{\delta_k}_{N_l}-x^\dag}_X
        \leq
        \norm{x^{\delta_k}_{N_l}-x_{N_l}}_X + \norm{x_{N_l}-x^\dag}_X.
    \end{equation*}
    Let now $\eps>0$ be arbitrary. Since we have assumed that condition \eqref{eq:iter:stab} holds, there exists an $L>0$ such that $\norm{x_{N_L}-x^\dag}_X\leq\frac\eps2$. Similarly, \eqref{eq:iter:conv_stab} for $n={N_L}$ shows the existence of a $K>0$ such that $ \norm{x^{\delta_k}_{N_L}-x_{N_L}}_X \leq\frac\eps2$ for all $k\geq K$. Hence, the condition \eqref{eq:iter:conv} holds in this case as well.
\end{proof}
A sequence $\{x_n\}_{n\in\N}$ satisfying \eqref{eq:iter:conv_mon} is called \emph{Féjer monotone}; this property is fundamental for the convergence proof of many iterative methods.

In general, iterative methods for nonlinear inverse problems rely on a linearization of $F$, with different methods applying the linearization at different points in the iteration.

\section{Landweber regularization}

Analogously to the linear Landweber regularization, we start from the characterization of the wanted solution $x^\dag$ as a minimizer of the functional $J_0(x)=\frac12\norm{F(x)-y}_Y^2$. If $F$ is Fréchet differentiable, the chain rule yields the necessary optimality condition
\begin{equation*}
    0 = J_0'(x^\dag)h = \inner{F(x^\dag)-y}{F(x^\dag)'h}_Y  = \inner{F'(x^\dag)^*(F(x^\dag)-y)}{h}_X\qquad\text{for all }h\in X.
\end{equation*}
This is now a nonlinear equation for $x^\dag$, which as in the linear case can be written as a fixed-point equation. This leads to the nonlinear Richardson iteration
\begin{equation*}\label{eq:iter:landweber}
    x_{n+1} = x_n - \omega_n F'(x_n)^*(F(x_n)-y),
\end{equation*}
for which we can expect convergence if $\omega_n\norm{F'(x_n)^*}_{\linop(Y,X)}^2<1$. (Alternatively, \eqref{eq:iter:landweber} can be interpreted as a steepest descent method with step size $\omega_n$ for the minimization of $J_0$.)
For simplicity, we assume in the following that $\norm{F'(x)}_{\linop(X,Y)}< 1$ for all $x$ sufficiently close to $x^\dag$, so that we can take $\omega_n = 1$. (This is not a significant restriction since can always scale $F$ and $y$ appropriately without changing the solution of $F(x)=y$.)
Furthermore, we assume that $F$ is continuously Fréchet differentiable and satisfies the tangential cone condition \eqref{eq:nichtlin:tangential} in a neighborhood of $x^\dag$.
Specifically, we make the following assumption:
\begin{assumption}\label{ass:iter-nl}
    Let $F:U\to Y$ be continuously differentiable and $x_0\in U$. Assume that there exists an $r>0$ such that
    \begin{enumerate}
        \item $B_{2r}(x_0)\subset U$;
        \item there exists a solution $x^\dag\in B_r(x_0)$;
        \item for all $x,\tilde x\in B_{2r}(x_0)$,
            \begin{align}
                \norm{F'(x)}_{\linop(X,Y)}&\leq 1,
                \label{eq:iter:landweber_ass:bnd}\\
                \norm{F(x)-F(\tilde x) - F'(x)(x-\tilde x)}_Y &\leq \eta \norm{F(x)-F(\tilde x)}_Y \qquad\text{with }  \eta<\tfrac12.
                \label{eq:iter:landweber_ass:tan}
            \end{align}
    \end{enumerate}
\end{assumption}
Under these assumptions, the nonlinear Landweber iteration \eqref{eq:iter:landweber} is well-posed and Féjer monotone even for noisy data $y^\delta\in B_\delta(y)$.
\begin{lemma}\label{lem:iter:landweber_mon}
    Let \cref{ass:iter-nl} hold.
    If $x_n^\delta\in B_r(x^\dag)$ for some $\delta\geq 0$ and satisfies
    \begin{align}
        \label{eq:iter:landweber:mon1}
        \norm{F(x_n^\delta)-y^\delta}_Y &\geq 2 \frac{1+\eta}{1-2\eta}\delta,
        \intertext{then}
        \norm{x_{n+1}^\delta - x^\dag}_X &\leq \norm{x_{n}^\delta - x^\dag}_X
    \end{align}
    and thus $x_{n+1}^\delta \in B_r(x^\dag)\subset B_{2r}(x_0)$.
\end{lemma}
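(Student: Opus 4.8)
The plan is to bound the change in squared distance to $x^\dag$ across one step of the iteration
$$x_{n+1}^\delta = x_n^\delta - F'(x_n^\delta)^*\bigl(F(x_n^\delta)-y^\delta\bigr),$$
taking $\omega_n=1$ as assumed. Expanding the square and moving the cross term into $Y$ via the adjoint, I would start from
$$\norm{x_{n+1}^\delta-x^\dag}_X^2 - \norm{x_n^\delta-x^\dag}_X^2 = -2\inner{F(x_n^\delta)-y^\delta}{F'(x_n^\delta)(x_n^\delta-x^\dag)}_Y + \norm{F'(x_n^\delta)^*\bigl(F(x_n^\delta)-y^\delta\bigr)}_X^2.$$
The aim is to show the right-hand side is nonpositive whenever the residual $\rho_n := \norm{F(x_n^\delta)-y^\delta}_Y$ is large in the sense of \eqref{eq:iter:landweber:mon1}.

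Three ingredients drive the estimate. First, the bound \eqref{eq:iter:landweber_ass:bnd} gives $\norm{F'(x_n^\delta)^*}_{\linop(Y,X)}\le 1$, so the quadratic term is controlled by $\rho_n^2$. Second — the crucial step — I would apply the tangential cone condition \eqref{eq:iter:landweber_ass:tan} with $x=x_n^\delta$ and $\tilde x=x^\dag$ (both lie in $B_{2r}(x_0)$, since $x_n^\delta\in B_r(x^\dag)\subset B_{2r}(x_0)$ and $x^\dag\in B_r(x_0)$) to replace $F'(x_n^\delta)(x_n^\delta-x^\dag)$ by $F(x_n^\delta)-F(x^\dag)$ up to an error of norm at most $\eta\norm{F(x_n^\delta)-y}_Y$. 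Third, since $F(x^\dag)=y$ and $\norm{y-y^\delta}_Y\le\delta$, I would repeatedly use Cauchy--Schwarz together with $\norm{F(x_n^\delta)-y}_Y\le \rho_n+\delta$ to convert every occurrence of the exact residual and of the cone-condition error term into expressions in $\rho_n$ and $\delta$ alone.

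Carrying out the bookkeeping, the cross term contributes $-2\rho_n^2$ plus corrections, and after collecting all contributions the difference reduces to
$$\norm{x_{n+1}^\delta-x^\dag}_X^2 - \norm{x_n^\delta-x^\dag}_X^2 \le \rho_n\bigl(2(1+\eta)\delta - (1-2\eta)\rho_n\bigr).$$
Here $\eta<\tfrac12$ ensures $1-2\eta>0$, so the right-hand side is nonpositive exactly when $\rho_n\ge \frac{2(1+\eta)}{1-2\eta}\delta$, which is the hypothesis \eqref{eq:iter:landweber:mon1}. This yields $\norm{x_{n+1}^\delta-x^\dag}_X\le\norm{x_n^\delta-x^\dag}_X\le r$, hence $x_{n+1}^\delta\in B_r(x^\dag)$; the inclusion $B_r(x^\dag)\subset B_{2r}(x_0)$ then follows from the triangle inequality and $x^\dag\in B_r(x_0)$. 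The main obstacle is nothing deep but purely the careful combination in the second and third ingredients: one must track the cone-condition error and the data-noise term together so that the two adverse contributions $2\eta\rho_n^2$ and the noise cross terms assemble into precisely the coefficients $2(1+\eta)$ and $(1-2\eta)$, rather than into a weaker threshold.
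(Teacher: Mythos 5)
Your proposal is correct and follows essentially the same route as the paper: expand the squared error across one iteration step, bound the quadratic term by $\norm{F(x_n^\delta)-y^\delta}_Y^2$ using \eqref{eq:iter:landweber_ass:bnd}, regroup the cross term so that the tangential cone condition \eqref{eq:iter:landweber_ass:tan} (applied with $\tilde x = x^\dag$, $F(x^\dag)=y$) together with $\norm{y-y^\delta}_Y\le\delta$ yields the factor $2(1+\eta)\delta - (1-2\eta)\norm{F(x_n^\delta)-y^\delta}_Y$. The bookkeeping you describe is exactly the paper's computation, and the concluding inclusion argument is also identical.
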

\begin{proof}
    The iteration \eqref{eq:iter:landweber} together with \eqref{eq:iter:landweber_ass:bnd} for  $x_n^\delta\in B_r(x^\dag)\subset B_{2r}(x_0)$ lead to the estimate
    \begin{equation*}
        \begin{aligned}
            \norm{x_{n+1}^\delta-x^\dag}_X^2 - \norm{x_{n}^\delta-x^\dag}_X^2
            &= 2 \inner{x_{n+1}^\delta-x_n^\delta}{x_n^\delta - x^\dag}_X  + \norm{x_{n+1}^\delta- x_n^\delta}_X^2\\
            &= 2\inner{F'(x_n^\delta)^*(y^\delta - F(x_n^\delta))}{x_n^\delta - x^\dag}_X\\
            + \norm{F'(x_n^\delta)^*(y^\delta - F(x_n^\delta))}_X^2\span\omit\\
            &\leq 2 \inner{y^\delta-F(x_n^\delta)}{F'(x_n^\delta)(x_n^\delta - x^\dag)}_Y+ \norm{y^\delta - F(x_n^\delta)}_Y^2\\
            &= 2 \inner{y^\delta-F(x_n^\delta)}{y^\delta-F(x_n^\delta)+F'(x_n^\delta)(x_n^\delta - x^\dag)}_Y\\
            - \norm{y^\delta - F(x_n^\delta)}_Y^2\span\omit\\
            &\leq \norm{y ^\delta- F(x_n^\delta)}_Y\big(2\norm{y^\delta-F(x_n^\delta)+F'(x_n^\delta)(x_n^\delta - x^\dag)}_Y\\
            - \norm{y^\delta - F(x_n^\delta)}_Y\big).\span\omit
        \end{aligned}
    \end{equation*}
    Inserting the productive zero $F(x^\dag)-y$ in the first norm inside the parentheses and applying the triangle inequality as well as the tangential cone condition \eqref{eq:iter:landweber_ass:tan} then yields that
    \begin{equation*}
        \begin{aligned}
            \norm{y^\delta-F(x_n^\delta)+F'(x_n^\delta)(x_n^\delta - x^\dag)}_Y&\leq \delta + \norm{F(x_n^\delta) -F(x^\dag)  - F'(x_n^\delta)(x_n^\delta - x^\dag)}_Y\\
            &\leq \delta + \eta \norm{F(x_n^\delta)-F(x^\dag)}_Y\\
            &\leq (1+\eta)\delta + \eta \norm{F(x_n^\delta) - y^\delta}_Y
        \end{aligned}
    \end{equation*}
    and hence that
    \begin{equation}
        \label{eq:iter:landweber:mon2}
        \norm{x_{n+1}^\delta-x^\dag}_X^2 - \norm{x_{n}^\delta-x^\dag}_X^2 \leq \norm{y^\delta - F(x_n^\delta)}_Y\big(2(1+\eta)\delta - (1-2\eta)\norm{y^\delta - F(x_n^\delta)}_Y\big).
    \end{equation}
    By \eqref{eq:iter:landweber:mon1}, the term in parentheses is non-positive, from which the desired monotonicity follows.
\end{proof}
By induction, this shows that $x_n^\delta\in B_{2r}(x_0)\subset U$ as long as \eqref{eq:iter:landweber:mon1} holds. If we choose $\tau$ for the discrepancy principle \eqref{eq:iter:diskrepanz} such that
\begin{equation}
    \label{eq:iter:diskrepanz_landweber}
    \tau > 2 \frac{1+\eta}{1-2\eta} > 2,
\end{equation}
then this is the case for all $n\leq N(\delta,y^\delta)$. This choice also guarantees that the stopping index $N(\delta,y^\delta)$ is finite.
\begin{theorem}\label{thm:iter:landweber_stop}
    Let \cref{ass:iter-nl} hold.
    If $N(\delta,y^\delta)$ is chosen according to the discrepancy principle \eqref{eq:iter:diskrepanz} with $\tau$ satisfying \eqref{eq:iter:diskrepanz_landweber} then
    \begin{equation}
        \label{eq:iter:landweber_stop_noisy}
        N(\delta,y^\delta)< C \delta^{-2} \qquad\text{for some }C>0.
    \end{equation}
    For exact data (i.e., $\delta = 0$),
    \begin{equation}
        \label{eq:iter:landweber_stop_exact}
        \sum_{n=0}^{\infty} \norm{F(x_n)-y}_Y^2 < \infty.
    \end{equation}
\end{theorem}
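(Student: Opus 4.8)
The plan is to extract both claims from the single fundamental estimate \eqref{eq:iter:landweber:mon2} established in \cref{lem:iter:landweber_mon}, namely
\begin{equation*}
    \norm{x_{n+1}^\delta-x^\dag}_X^2 - \norm{x_{n}^\delta-x^\dag}_X^2 \leq \norm{y^\delta - F(x_n^\delta)}_Y\big(2(1+\eta)\delta - (1-2\eta)\norm{y^\delta - F(x_n^\delta)}_Y\big),
\end{equation*}
which holds whenever $x_n^\delta\in B_r(x^\dag)$ and \eqref{eq:iter:landweber:mon1} is satisfied. Both assertions are telescoping arguments, and the only genuine work is to convert the right-hand side into a quantity that is bounded away from zero.

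For the noisy bound \eqref{eq:iter:landweber_stop_noisy}, I would first observe that for every $n<N(\delta,y^\delta)$ the discrepancy principle \eqref{eq:iter:diskrepanz} gives $\norm{F(x_n^\delta)-y^\delta}_Y>\tau\delta$, so that \eqref{eq:iter:landweber:mon1} holds by the choice \eqref{eq:iter:diskrepanz_landweber} of $\tau$, the estimate above applies, and all iterates up to the stopping index remain in $B_r(x^\dag)$ by induction (starting from $\norm{x_0-x^\dag}_X\leq r$). Writing $c_\eta:=(1-2\eta)\tau-2(1+\eta)>0$, the factor in parentheses is bounded above by $-c_\eta\delta$; multiplying this \emph{negative} quantity by $\norm{F(x_n^\delta)-y^\delta}_Y>\tau\delta$ yields a decrement strictly below $-\tau c_\eta\delta^2$ at each such step. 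Summing over $n=0,\dots,N(\delta,y^\delta)-1$ telescopes the left-hand side to $\norm{x_{N}^\delta-x^\dag}_X^2-\norm{x_0-x^\dag}_X^2\geq-\norm{x_0-x^\dag}_X^2$, and rearranging gives $N(\delta,y^\delta)\,\tau c_\eta\delta^2<\norm{x_0-x^\dag}_X^2\leq r^2$, i.e., \eqref{eq:iter:landweber_stop_noisy} with $C=r^2/(\tau c_\eta)$.

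For the exact-data claim \eqref{eq:iter:landweber_stop_exact}, I would set $\delta=0$, so that \eqref{eq:iter:landweber:mon1} reduces to the trivially true $\norm{F(x_n)-y}_Y\geq0$; the estimate therefore holds at every step, and by induction via the monotonicity all $x_n$ stay in $B_r(x^\dag)$. The right-hand side now collapses to $-(1-2\eta)\norm{F(x_n)-y}_Y^2$, giving
\begin{equation*}
    (1-2\eta)\norm{F(x_n)-y}_Y^2 \leq \norm{x_n-x^\dag}_X^2 - \norm{x_{n+1}-x^\dag}_X^2.
\end{equation*}
Summing from $0$ to $M-1$ telescopes the right-hand side to at most $\norm{x_0-x^\dag}_X^2$, and since $1-2\eta>0$ and this bound is uniform in $M$, letting $M\to\infty$ yields \eqref{eq:iter:landweber_stop_exact}.

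The argument is essentially routine bookkeeping once \eqref{eq:iter:landweber:mon2} is in hand; the one point demanding care is the noisy estimate, where one must exploit \emph{both} that the parenthesized factor is negative and bounded away from zero (via the choice of $\tau$) \emph{and} that the residual $\norm{F(x_n^\delta)-y^\delta}_Y$ is itself bounded below by $\tau\delta$, so that their product scales like $\delta^2$ rather than merely $\delta$. Dropping either lower bound would lose the $\delta^{-2}$ rate.
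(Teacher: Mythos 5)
Your proposal is correct and follows essentially the same route as the paper: both proofs rest on the monotonicity estimate \eqref{eq:iter:landweber:mon2}, use the discrepancy principle to make the right-hand side at most $-\tau\bigl((1-2\eta)\tau-2(1+\eta)\bigr)\delta^2$ per step (the paper merely factors the two lower bounds $\norm{F(x_n^\delta)-y^\delta}_Y>\tau\delta$ in a slightly different order), and telescope; the exact-data case is identical. The constants you obtain agree with the paper's up to replacing $\norm{x_0-x^\dag}_X^2$ by its upper bound $r^2$.
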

\begin{proof}
    Since $x_0^\delta= x_0 \in B_{2r}(x_0)$ and by the choice of $\tau$, we can apply \cref{lem:iter:landweber_mon} for all $n< N=N(\delta,y^\delta)$. In particular, it follows from \eqref{eq:iter:landweber:mon2} and \eqref{eq:iter:diskrepanz_landweber} that
    \begin{equation*}
        \norm{x_{n+1}^\delta-x^\dag}_X^2 - \norm{x_{n}^\delta-x^\dag}_X^2 < \norm{y^\delta - F(x_n^\delta)}_Y^2\left(\frac2\tau (1+\eta) + 2\eta -1\right)\quad\text{for all }n<N.
    \end{equation*}
    Summing from $n=0$ to $N-1$ and telescoping thus yields
    \begin{equation*}
        \left(1-2\eta - \frac2\tau(1+\eta)\right) \sum_{n=0}^{N-1} \norm{F(x_n^\delta)-y^\delta}_Y^2 < \norm{x_0-x^\dag}_X^2 - \norm{x_{N}^\delta -x^\dag}_X^2\leq \norm{x_0-x^\dag}_X^2.
    \end{equation*}
    Since $N$ is chosen according to the discrepancy principle, we have that $\norm{F(x_n^\delta)-y^\delta}_Y>\tau\delta$ for all $n<N$. Together we thus obtain that
    \begin{equation*}
        N\tau^2\delta^2 <  \sum_{n=0}^{N-1} \norm{F(x_n^\delta)-y^\delta}_Y^2 < \left(1-2\eta-2\tau^{-1}(1+\eta)\right)^{-1} \norm{x_0-x^\dag}_X^2
    \end{equation*}
    and hence \eqref{eq:iter:landweber_stop_noisy} for $C:= \left((1-2\eta)\tau^2 - 2(1+\eta)\tau\right)^{-1}\norm{x_0-x^\dag}_X^2>0$.

    For $\delta = 0$, \eqref{eq:iter:landweber:mon1} is satisfied for all $n\in\N$, and obtain directly from \eqref{eq:iter:landweber:mon2} by summing and telescoping that
    \begin{equation*}
        (1-2\eta) \sum_{n=0}^{N-1} \norm{F(x_n)-y}_Y^2 \leq  \norm{x_0-x^\dag}_X^2 \qquad\text{for all }N\in\N.
    \end{equation*}
    Passing to the limit $N\to\infty$ then yields \eqref{eq:iter:landweber_stop_exact}.
\end{proof}
Although \eqref{eq:iter:landweber_stop_exact} implies that $F(x_n)\to y$ for exact data $y\in\calR(F)$, we cannot yet conclude that the $x_n$ converge. This we show next.
\begin{theorem}\label{thm:iter:landweber:conv_exact}
    Let \cref{ass:iter-nl} hold.
    Then $x_n\to \bar x$ with $F(\bar x) = y$ as $n\to\infty$.
\end{theorem}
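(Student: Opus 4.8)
The plan is to show that $\{x_n\}_{n\in\N}$ is a Cauchy sequence in $X$; since $X$ is complete, it then converges to some $\bar x$, and the continuity of $F$ together with \eqref{eq:iter:landweber_stop_exact} will identify the limit as a solution of $F(x)=y$. The key structural fact I would exploit is Féjer monotonicity: by \cref{lem:iter:landweber_mon} applied with $\delta=0$, the sequence $\{\norm{x_n-x^\dag}_X\}_{n\in\N}$ is nonincreasing and hence convergent to some limit $\ell\geq 0$. This convergence of norms is the crucial ingredient that will let me control cross terms.

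First I would fix indices $j\leq k$ and introduce an intermediate index $l$ with $j\leq l\leq k$ chosen so that the residual $\norm{F(x_l)-y}_Y$ is minimal among $j\leq n\leq k$; such an $l$ exists since we compare finitely many terms. The standard estimate for Landweber-type methods is to write
\begin{equation*}
    \norm{x_k-x_j}_X \leq \norm{x_k-x_l}_X + \norm{x_l-x_j}_X,
\end{equation*}
and to bound each term using the identity
\begin{equation*}
    \norm{x_k-x_l}_X^2 = \norm{x_k-x^\dag}_X^2 - \norm{x_l-x^\dag}_X^2 - 2\inner{x_k-x_l}{x_l-x^\dag}_X.
\end{equation*}
The first two terms on the right are differences of the convergent sequence $\{\norm{x_n-x^\dag}_X^2\}$ and hence become arbitrarily small for $j$ large. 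The inner product term is the one requiring work: I would expand $x_l-x^\dag=\sum_{n=l}^{k-1}(x_{n+1}-x_n) + (x_k-x^\dag)$ and similarly for $x_k-x_l$, using the iteration $x_{n+1}-x_n = F'(x_n)^*(y-F(x_n))$, so that each term becomes an inner product of the form $\inner{y-F(x_n)}{F'(x_n)(x_m-x^\dag)}_Y$. The tangential cone condition \eqref{eq:iter:landweber_ass:tan} applied with $\tilde x = x^\dag$ gives
\begin{equation*}
    \norm{F'(x_n)(x_m-x^\dag)}_Y \leq \norm{F(x_n)-F(x^\dag)}_Y + \eta\norm{F(x_n)-F(x^\dag)}_Y + (\text{residual at }m),
\end{equation*}
allowing every such cross term to be estimated by a constant times products of residuals $\norm{F(x_n)-y}_Y$ with $j\leq n\leq k$, each of which is at least $\norm{F(x_l)-y}_Y$ by the minimality choice of $l$.

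The main obstacle, and the heart of the argument, will be organizing these cross-term bounds so that every factor is dominated by the \emph{minimal} residual $\norm{F(x_l)-y}_Y$ and then absorbed into the telescoping sum $\sum_n \norm{F(x_n)-y}_Y^2$, which is finite by \eqref{eq:iter:landweber_stop_exact}. Concretely, after applying the tangential cone condition one reduces everything to a bound of the shape
\begin{equation*}
    |\inner{x_k-x_l}{x_l-x^\dag}_X| \leq C \sum_{n=j}^{k-1}\norm{F(x_n)-y}_Y^2,
\end{equation*}
and since the tail of a convergent series vanishes, this tends to zero as $j\to\infty$ uniformly in $k$. Combining the three pieces shows $\norm{x_k-x_j}_X\to 0$, so $x_n\to\bar x$. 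Finally, continuity of $F$ and \eqref{eq:iter:landweber_stop_exact} force $F(\bar x)=y$, completing the proof. The delicate point throughout is keeping $\eta<\tfrac12$ in play so the constant $C$ stays finite, exactly as in the monotonicity lemma.
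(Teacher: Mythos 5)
Your proposal is correct and follows essentially the same route as the paper's proof: establishing the Cauchy property via Féjer monotonicity and the polarization identity, introducing an intermediate index of minimal residual, telescoping the iteration to reduce the cross terms to residual products controlled by the tangential cone condition, and invoking the summability \eqref{eq:iter:landweber_stop_exact}. The only cosmetic slip is a sign in your expansion of $x_l-x^\dag$ as a telescoping sum plus $x_k-x^\dag$, which does not affect the argument.
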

\begin{proof}
    We show that $\{e_n\}_{n\in\N}$ with $e_n:= x_n-x^\dag$ is a Cauchy sequence.
    Let $m,n\in\N$ with $m\geq n$ be given and choose $k\in \N$ with $m\geq k \geq n$ such that
    \begin{equation}
        \label{eq:iter:landweber:conv_exact1}
        \norm{y-F(x_k)}_Y \leq \norm{y-F(x_j)}_Y \qquad\text{for all } n\leq j\leq m.
    \end{equation}
    (I.e., we chose $k\in\{n,\dots,m\}$ such that the residual -- which need not be monotone in the nonlinear case -- is minimal in this range.)
    We now estimate
    \begin{equation*}
        \norm{e_m-e_n}_X \leq \norm{e_m-e_k}_X + \norm{e_k-e_n}_X
    \end{equation*}
    and consider each term separately.
    First,
    \begin{align*}
        \norm{e_m-e_k}_X^2 &= 2\inner{e_k-e_m}{e_k}_X + \norm{e_m}_X^2 - \norm{e_k}_X^2,\\
        \norm{e_k-e_n}_X^2 &= 2\inner{e_k-e_n}{e_k}_X + \norm{e_n}_X^2 - \norm{e_k}_X^2.
    \end{align*}
    It follows from \cref{lem:iter:landweber_mon} that $\norm{e_n}_X\geq0$ is decreasing and thus converges to some $\eps\geq 0$. Hence, both differences on the right-hand side converge to zero as $n\to\infty$, and it remains to look at the inner products. Here, inserting the definition of $e_n$, telescoping the sum, and using the iteration \eqref{eq:iter:landweber} yields that
    \begin{equation*}
        e_m-e_k = x_m - x_k = \sum_{j=k}^{m-1} x_{j+1} - x_j
        = \sum_{j=k}^{m-1} F'(x_j)^*(y-F(x_j)).
    \end{equation*}
    Inserting this into the inner product, generously adding productive zeros, and using the tangential cone condition \eqref{eq:iter:landweber_ass:tan} then leads to
    \begingroup
    \allowdisplaybreaks
    \begin{equation*}
        \begin{aligned}
            \inner{e_k-e_m}{e_k}_X &= \sum_{j=k}^{m-1}-\inner{y-F(x_j)}{F'(x_j)(x_k-x^\dag)}_Y\\
            &\leq  \sum_{j=k}^{m-1}\norm{y-F(x_j)}_Y\norm{F'(x_j)(x_k-x_j+x_j-x^\dag)}_Y\\
            &\leq   \sum_{j=k}^{m-1}\norm{y-F(x_j)}_Y\big(\norm{y-F(x_j)-F'(x_j)(x^\dag-x_j)}_Y + \norm{y-F(x_k)}_Y \\
            + \norm{F(x_j)-F(x_k)-F'(x_j)(x_j-x_k)}_Y\big)\span\omit\\
            &\leq (1+\eta)  \sum_{j=k}^{m-1}\norm{y-F(x_j)}_Y\norm{y-F(x_k)}_Y + 2\eta \sum_{j=k}^{m-1}\norm{y-F(x_j)}_Y^2\\
            &\leq (1+3\eta) \sum_{j=k}^{m-1}\norm{y-F(x_j)}_Y^2,
        \end{aligned}
    \end{equation*}
    \endgroup
    where we have used the definition \eqref{eq:iter:landweber:conv_exact1} of $k$ in the last estimate.
    Similarly we obtain that
    \begin{equation*}
        \inner{e_k-e_n}{e_k}_X \leq (1+3\eta) \sum_{j=n}^{k-1}\norm{y-F(x_j)}_Y^2.
    \end{equation*}
    Due to \cref{thm:iter:landweber_stop}, both remainder terms converge to zero as $n\to\infty$.
    Hence $\{e_n\}_{n\in\N}$ and therefore also $\{x_n\}_{n\in\N}$ are Cauchy sequences, which implies that $x_n\to \bar x$ with $F(\bar x)=y$ (due to \eqref{eq:iter:landweber_stop_exact}).
\end{proof}

It remains to show the convergence condition \eqref{eq:iter:conv} for noisy data.
\begin{theorem}\label{thm:iter:landweber:conv_noisy}
    Let \cref{ass:iter-nl} hold.
    Then $x_{N(\delta,y^\delta)}\to \bar x$ with $F(\bar x) = y$ as $\delta\to 0$.
\end{theorem}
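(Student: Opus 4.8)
The plan is to apply \cref{lem:iter:conv} with the limiting solution $\bar x$ from \cref{thm:iter:landweber:conv_exact} playing the role of the target solution in that lemma. Since $F$ is continuously Fréchet differentiable by \cref{ass:iter-nl}, it is in particular continuous, so the standing hypothesis of \cref{lem:iter:conv} is met. Moreover, \cref{thm:iter:landweber:conv_exact} states exactly that for exact data the iterates converge to $\bar x$ with $F(\bar x)=y$, which is the condition \eqref{eq:iter:stab} (in its $N(0,y)=\infty$ form) with $\bar x$ in place of $x^\dag$. It therefore remains to verify the Féjer monotonicity \eqref{eq:iter:conv_mon} and the stability \eqref{eq:iter:conv_stab}, both relative to $\bar x$.

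For the monotonicity, I would first observe that $\bar x$ is itself a solution lying in $B_{2r}(x_0)$: it is the limit of the exact iterates $x_n$, all of which lie in $B_{2r}(x_0)$, and $F(\bar x)=y$. Inspecting the proof of \cref{lem:iter:landweber_mon}, the only properties of $x^\dag$ that enter are $F(x^\dag)=y$ and membership in $B_{2r}(x_0)$ (so that the bound \eqref{eq:iter:landweber_ass:bnd} and the tangential cone condition \eqref{eq:iter:landweber_ass:tan} apply with $\tilde x=x^\dag$). Hence the same computation yields, for every $x_n^\delta\in B_{2r}(x_0)$ satisfying \eqref{eq:iter:landweber:mon1} with $\bar x$, that $\norm{x_{n+1}^\delta-\bar x}_X\le\norm{x_n^\delta-\bar x}_X$. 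Now for all $n<N(\delta,y^\delta)$ the discrepancy principle \eqref{eq:iter:diskrepanz} gives $\norm{F(x_n^\delta)-y^\delta}_Y>\tau\delta$, and the choice \eqref{eq:iter:diskrepanz_landweber} of $\tau$ ensures $\tau\delta\ge 2\frac{1+\eta}{1-2\eta}\delta$; thus \eqref{eq:iter:landweber:mon1} holds, and an induction (starting from $x_0^\delta=x_0\in B_{2r}(x_0)$) shows that the iterates remain in $B_{2r}(x_0)$ and that \eqref{eq:iter:conv_mon} holds up to the stopping index.

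For the stability, I would argue by induction on the fixed index $n$ that $x_n^\delta\to x_n$ as $\delta\to0$. The base case is immediate since $x_0^\delta=x_0$ independently of $\delta$. For the step, the iteration \eqref{eq:iter:landweber} gives
\begin{equation*}
    x_{n+1}^\delta = x_n^\delta - F'(x_n^\delta)^*\bigl(F(x_n^\delta)-y^\delta\bigr),\qquad
    x_{n+1} = x_n - F'(x_n)^*\bigl(F(x_n)-y\bigr).
\end{equation*}
As $\delta\to0$ we have $y^\delta\to y$ (since $\norm{y^\delta-y}_Y\le\delta$) and, by the inductive hypothesis, $x_n^\delta\to x_n$; the continuity of $F$ and of $F'$ (in the operator norm, and hence of the adjoint $F'(\cdot)^*$) then yields $F(x_n^\delta)\to F(x_n)$ and $F'(x_n^\delta)^*\to F'(x_n)^*$, so that $x_{n+1}^\delta\to x_{n+1}$. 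This establishes \eqref{eq:iter:conv_stab}.

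With \eqref{eq:iter:stab}, \eqref{eq:iter:conv_mon}, and \eqref{eq:iter:conv_stab} verified for the target $\bar x$, \cref{lem:iter:conv} yields \eqref{eq:iter:conv}, i.e.\ $\sup_{y^\delta\in B_\delta(y)}\norm{x_{N(\delta,y^\delta)}^\delta-\bar x}_X\to0$ as $\delta\to0$, which is the claim. The main obstacle is the first step: the monotonicity \cref{lem:iter:landweber_mon} is phrased for the a priori solution $x^\dag$ of \cref{ass:iter-nl}, whereas the exact iteration is only known to converge to $\bar x$, and $\bar x=x^\dag$ need not hold without a uniqueness assumption; care is needed to check that the monotonicity argument transfers verbatim to $\bar x$ as a solution in $B_{2r}(x_0)$.
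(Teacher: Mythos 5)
Your proof is correct and follows the same route as the paper: apply \cref{lem:iter:conv}, with \eqref{eq:iter:stab} supplied by \cref{thm:iter:landweber:conv_exact}, \eqref{eq:iter:conv_mon} by \cref{lem:iter:landweber_mon}, and \eqref{eq:iter:conv_stab} by induction using the continuity of $F$ and $F'$. You are in fact more careful than the paper, which silently identifies the limit $\bar x$ of the exact iteration with the solution $x^\dag$ from \cref{ass:iter-nl} when invoking the monotonicity lemma; your observation that the monotonicity computation transfers to any solution lying in $B_{2r}(x_0)$ (while the containment of the iterates in $B_{2r}(x_0)$ is still guaranteed by the original lemma applied with $x^\dag$) closes that gap.
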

\begin{proof}
    We apply \cref{lem:iter:conv}, for which we have already shown condition \eqref{eq:iter:stab} in \cref{thm:iter:landweber:conv_exact}.
    Since $F$ and $F'$ are by assumption continuous, the right-hand side of \eqref{eq:iter:landweber} for fixed  $n\in\N$ depends continuously on $x_n$.
    Hence for all $k\leq n$, the right-hand side of \eqref{eq:iter:landweber} for $x_{k+1}^\delta$ converges to that for $x_{k+1}$ as $\delta\to 0$, which implies the stability condition \eqref{eq:iter:conv_stab}. Finally, the monotonicity condition \eqref{eq:iter:conv_mon} follows from \cref{lem:iter:landweber_mon}, and hence \cref{lem:iter:conv} yields \eqref{eq:iter:conv}.
\end{proof}

Under the usual source condition $x^\dag-x_0\in\calR(F'(x^\dag)^*)$ -- together with additional, technical, assumptions on the nonlinearity of $F$ -- it is possible to show the expected convergence rate of $\mathcal{O}(\sqrt{\delta})$, see \cite[Theorem~3.2]{HankeNeubauerScherzer}, \cite[Theorem~2.13]{Kaltenbacher}.

\section{Levenberg--Marquardt method}

As in the linear case, one drawback of the Landweber iteration is that \eqref{eq:iter:landweber_stop_noisy} shows that $N(\delta,y^\delta)=\mathcal{O}(\delta^{-2})$ may be necessary to satisfy the discrepancy principle, which in practice can be too many. Faster iterations can be built on Newton-type methods.
For the original equation $F(x)=y$, one step of Newton's method consists in solving the linearized equation
\begin{equation}
    \label{eq:iter:newton}
    F'(x_n) h_n = -(F(x_n)-y)
\end{equation}
and setting $x_{n+1}:= x_n+h_n$. However, if $F$ is completely continuous, the Fréchet derivative $F'(x_n)$  is compact by \cref{thm:nichtlin:frechet_complete_cont}, and hence \eqref{eq:iter:newton} is in general ill-posed as well. The idea is now to apply Tikhonov regularization to the Newton step \eqref{eq:iter:newton}, i.e., to compute $h_n$ as the solution of the minimization problem
\begin{equation}
    \label{eq:iter:newton_tikh}
    \min_{h\in X} \frac12\norm{F'(x_n)h + F(x_n)-y}_Y^2 + \frac{\alpha_n}2 \norm{h}_X^2
\end{equation}
for suitable $\alpha_n>0$. Using \cref{lem:tikhonov:normalen} and $h_n = x_{n+1} -x_n$, this leads to an explicit scheme that is known as the \emph{Levenberg--Marquardt method}:
\begin{equation}
    \label{eq:iter:lm}
    x_{n+1} = x_n + \left(F'(x_n)^*F'(x_n) + \alpha_n\Id\right)^{-1}F'(x_n)^* (y-F(x_n)).
\end{equation}
We now show similarly to the Landweber iteration that \eqref{eq:iter:lm} leads to an iterative regularization method even for noisy data $y^\delta\in B_\delta(y)$.
This requires choosing $\alpha_n$ appropriately; we do this such that the corresponding minimizer $h_{\alpha_n}$ satisfies for some $\sigma\in(0,1)$ the equation
\begin{equation}
    \label{eq:iter:lm:alpha}
    \norm{F'(x_n^\delta)h_{\alpha_n} + F(x_n^\delta)-y^\delta}_Y = \sigma \norm{F(x_n^\delta)-y^\delta}_Y.
\end{equation}
Note that this is a heuristic choice rule; we thus require additional assumptions.
\begin{assumption}\label{ass:iter:lm1}
    Let $F:U\to Y$ be continuously differentiable and $x_0\in U$. Assume that there exists an $r>0$ such that
    \begin{enumerate}
        \item $B_{2r}(x_0)\subset U$;
        \item there exists a solution $x^\dag\in B_r(x_0)$;
        \item there exists a $\gamma>1$ such that
            \begin{equation}
                \label{eq:iter:lm:alpha_bed}
                \norm{F'(x_n^\delta)(x^\dag-x_n^\delta) + F(x_n^\delta)-y^\delta}_Y \leq \frac\sigma\gamma\norm{F(x_n^\delta)-y^\delta}_Y \qquad\text{for all }n\in\N.
            \end{equation}
    \end{enumerate}
\end{assumption}
\begin{theorem}\label{thm:iter:lm:alpha}
    If \cref{ass:iter:lm1} holds, then there exists an $\alpha_n>0$ satisfying \eqref{eq:iter:lm:alpha}.
\end{theorem}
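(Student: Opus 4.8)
The plan is to fix the index $n$ and recognize the inner minimization problem \eqref{eq:iter:newton_tikh} as a \emph{linear} Tikhonov problem. Writing $A := F'(x_n^\delta) \in \linop(X,Y)$ and $b := y^\delta - F(x_n^\delta)$, the objective in \eqref{eq:iter:newton_tikh} is exactly $\frac12\norm{Ah - b}_Y^2 + \frac{\alpha}2\norm{h}_X^2$. Since $\alpha>0$ this functional is strictly convex, so by \cref{thm:tikhonov:funktional} (applied to the bounded linear operator $A$ with right-hand side $b$ and base point $x_0=0$) it has a \emph{unique} minimizer $h_\alpha$, characterized by the normal equation $(A^*A+\alpha\Id)h_\alpha = A^*b$ of \cref{lem:tikhonov:normalen}; this is precisely the increment in the Levenberg--Marquardt step \eqref{eq:iter:lm}. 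I then introduce the residual value function
\begin{equation*}
    f(\alpha) := \norm{A h_\alpha - b}_Y = \norm{F'(x_n^\delta)h_\alpha + F(x_n^\delta) - y^\delta}_Y, \qquad \alpha>0,
\end{equation*}
and reduce the claim to producing an $\alpha_n$ with $f(\alpha_n) = \sigma\norm{F(x_n^\delta)-y^\delta}_Y = \sigma\norm{b}_Y$, which I will get from the intermediate value theorem. The whole argument is the linear analogue of the existence proof in \cref{thm:tikh_nl:value}.

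If $b=0$ then $h_\alpha=0$ and \eqref{eq:iter:lm:alpha} holds trivially, so I assume $\norm{b}_Y>0$. The first technical step is continuity of $f$ on $(0,\infty)$: because the minimizer is unique, this follows verbatim from the value-function continuity argument in the proof of \cref{thm:tikh_nl:value} applied to the (weakly closed, since bounded linear) operator $A$; alternatively it follows from the norm continuity of $\alpha\mapsto(A^*A+\alpha\Id)^{-1}$. The second step computes the two boundary limits. As $\alpha\to\infty$, testing the minimization property against $h=0$ gives $\frac{\alpha}2\norm{h_\alpha}_X^2 \le \frac12\norm{b}_Y^2$, so $h_\alpha\to 0$ and hence $f(\alpha)\to\norm{b}_Y$. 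As $\alpha\to0^+$, testing the minimization property against an arbitrary fixed $h\in X$ gives $\tfrac12 f(\alpha)^2 \le \tfrac12\norm{Ah-b}_Y^2 + \tfrac\alpha2\norm{h}_X^2$, whence $\limsup_{\alpha\to0^+} f(\alpha) \le \norm{Ah-b}_Y$.

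The decisive step, and the point at which \cref{ass:iter:lm1} enters, is the choice $h = x^\dag - x_n^\delta$ in this last bound. Then $Ah-b = F'(x_n^\delta)(x^\dag-x_n^\delta) + F(x_n^\delta) - y^\delta$, and assumption (iii), i.e.\ \eqref{eq:iter:lm:alpha_bed}, yields directly
\begin{equation*}
    \limsup_{\alpha\to0^+} f(\alpha) \le \norm{F'(x_n^\delta)(x^\dag-x_n^\delta) + F(x_n^\delta) - y^\delta}_Y \le \frac{\sigma}{\gamma}\norm{b}_Y.
\end{equation*}
Since $\gamma>1$ this is \emph{strictly} below $\sigma\norm{b}_Y$, while $\lim_{\alpha\to\infty} f(\alpha) = \norm{b}_Y > \sigma\norm{b}_Y$ because $\sigma\in(0,1)$. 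Continuity of $f$ then gives, by the intermediate value theorem, an $\alpha_n>0$ with $f(\alpha_n)=\sigma\norm{b}_Y$, which is exactly \eqref{eq:iter:lm:alpha}.

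I expect the genuinely technical part to be the rigorous continuity of $f$; the limit computations are one-line estimates, and in fact only the \emph{upper} bound on $\limsup_{\alpha\to0^+} f(\alpha)$ is needed, which comes for free from the test-element inequality, so the precise small-$\alpha$ limit can be sidestepped entirely. The slack parameter $\gamma>1$ in \eqref{eq:iter:lm:alpha_bed} plays exactly one role: it forces the small-$\alpha$ residual level strictly below the target $\sigma\norm{b}_Y$, so that continuity alone (without any monotonicity of $f$) already guarantees the crossing at some $\alpha_n$.
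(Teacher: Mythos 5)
Your proof is correct and follows essentially the same route as the paper: uniqueness of the linear Tikhonov minimizer gives continuity of the residual value function, the limits at $\alpha\to\infty$ and $\alpha\to0^+$ are pinned down by testing the minimization property (against $h=0$ and $h=x^\dag-x_n^\delta$, respectively, the latter invoking \eqref{eq:iter:lm:alpha_bed}), and the intermediate value theorem produces $\alpha_n$. The only cosmetic difference is that you work with a $\limsup$ upper bound at $\alpha\to0^+$ where the paper identifies the limit as the infimum over $h$; both suffice.
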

\begin{proof}
    Set $f_n(\alpha):= \norm{F'(x_n^\delta)h_{\alpha} + F(x_n^\delta)-y^\delta}_Y$. Since $F'(x_n^\delta)$ is linear, the minimizer $h_{\alpha}$ of \eqref{eq:iter:newton_tikh} is unique for all $\alpha>0$. As in the proof of \cref{thm:tikh_nl:value}, this implies the continuity of $f_n$ as well as that
    \begin{align*}
        \lim_{\alpha\to\infty} f_n(\alpha) &= \norm{F(x_n^\delta)-y^\delta}_Y,\\
        \lim_{\alpha\to 0} f_n(\alpha) &= \inf_{h\in X}\norm{F'(x_n^\delta)h + F(x_n^\delta)-y^\delta}_Y \leq \norm{F'(x_n^\delta)(x^\dag -x_n^\delta) + F(x_n^\delta)-y^\delta}_Y.
    \end{align*}
    By assumption, we now have that
    \begin{equation*}
        \lim_{\alpha\to 0} f_n(\alpha) \leq \frac\sigma\gamma  \norm{F(x_n^\delta)-y^\delta}_Y < \sigma\norm{F(x_n^\delta)-y^\delta}_Y < \norm{F(x_n^\delta)-y^\delta}_Y = \lim_{\alpha\to\infty} f_n(\alpha),
    \end{equation*}
    which together with the continuity of $f_n(\alpha)$ implies the existence of a solution $\alpha_n>0$ of $f_n(\alpha)= \sigma\norm{F(x_n^\delta)-y^\delta}_Y$.
\end{proof}
For this choice of of $\alpha_n$, we can again show the Féjer monotonicity property \eqref{eq:iter:conv_mon}.
\begin{lemma}\label{lem:iter:lm_mon}
    Let \cref{ass:iter:lm1} hold.
    If $x_n^\delta\in B_{r}(x^\dag)$, then
    \begin{equation}
        \label{eq:iter:lm_mon}
        \norm{x_n^\delta-x^\dag}_X^2 - \norm{x_{n+1}^\delta-x^\dag}_X^2 \geq \norm{x_{n+1}^\delta-x_n^\delta}_X^2 + \frac{2(\gamma-1)\sigma^2}{\gamma \alpha_n} \norm{F(x_n^\delta)-y^\delta}_Y^2.
    \end{equation}
    In particular,
    \begin{equation}
        \label{eq:iter:lm_mon2}
        \norm{x_{n+1}^\delta-x^\dag}_X \leq \norm{x_{n}^\delta-x^\dag}_X
    \end{equation}
    and hence $x_{n+1}^\delta\in B_r(x^\dag)\subset B_{2r}(x_0)$.
\end{lemma}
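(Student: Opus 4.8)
The plan is to work with the step $s_n := x_{n+1}^\delta - x_n^\delta = h_{\alpha_n}$ and the error $e_n := x_n^\delta - x^\dag$, so that $e_{n+1} = e_n + s_n$ and, by the Pythagoras identity, the left-hand side of \eqref{eq:iter:lm_mon} expands as $\norm{e_n}_X^2 - \norm{e_{n+1}}_X^2 = -2\inner{e_n}{s_n}_X - \norm{s_n}_X^2$. Subtracting the term $\norm{s_n}_X^2 = \norm{x_{n+1}^\delta-x_n^\delta}_X^2$ appearing on the right-hand side, it remains to prove the single scalar inequality $-2\inner{e_n}{s_n}_X - 2\norm{s_n}_X^2 \geq \frac{2(\gamma-1)\sigma^2}{\gamma\alpha_n}\norm{F(x_n^\delta)-y^\delta}_Y^2$.

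First I would abbreviate $A := F'(x_n^\delta)$ and $r_n := y^\delta - F(x_n^\delta)$, and recall that $s_n$ is by construction the minimizer of the Tikhonov functional in \eqref{eq:iter:newton_tikh}. By \cref{lem:tikhonov:normalen} it therefore solves the normal equation $(A^*A + \alpha_n\Id)s_n = A^* r_n$. Introducing the linearized residual $\rho_n := A s_n - r_n$, whose norm is pinned to $\sigma\norm{r_n}_Y$ by the choice rule \eqref{eq:iter:lm:alpha}, the normal equation rearranges to the key identity $\alpha_n s_n = -A^*\rho_n$. This lets me replace each occurrence of $s_n$ inside an inner product by $-\alpha_n^{-1}A^*\rho_n$ and push the adjoint across, obtaining $\inner{e_n}{s_n}_X = -\alpha_n^{-1}\inner{\rho_n}{Ae_n}_Y$ and $\norm{s_n}_X^2 = -\alpha_n^{-1}\inner{\rho_n}{As_n}_Y$.

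Combining these two and using $As_n = \rho_n + r_n$, the scalar quantity becomes $-2\inner{e_n}{s_n}_X - 2\norm{s_n}_X^2 = \frac{2}{\alpha_n}\big(\inner{\rho_n}{Ae_n + r_n}_Y + \norm{\rho_n}_Y^2\big)$. At this point the two hypotheses enter exactly where needed: the choice rule gives $\norm{\rho_n}_Y^2 = \sigma^2\norm{r_n}_Y^2$, while Assumption \ref{ass:iter:lm1}(iii) reads precisely $\norm{Ae_n + r_n}_Y \leq \frac{\sigma}{\gamma}\norm{r_n}_Y$, since $A(x^\dag - x_n^\delta) + F(x_n^\delta) - y^\delta = -(Ae_n + r_n)$. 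A single Cauchy--Schwarz step bounds the cross term from below by $-\frac{\sigma^2}{\gamma}\norm{r_n}_Y^2$, and the two contributions collapse to $\frac{2\sigma^2}{\alpha_n}(1 - \tfrac1\gamma)\norm{r_n}_Y^2$, which is the desired right-hand side. Then \eqref{eq:iter:lm_mon2} is immediate because the right-hand side of \eqref{eq:iter:lm_mon} is nonnegative ($\gamma>1$, $\sigma>0$, $\alpha_n>0$), and the inclusion $x_{n+1}^\delta\in B_r(x^\dag)\subset B_{2r}(x_0)$ follows from $\norm{x_{n+1}^\delta - x^\dag}_X \leq \norm{x_n^\delta-x^\dag}_X \leq r$ together with $x^\dag\in B_r(x_0)$ and the triangle inequality.

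The bookkeeping I expect to be most error-prone is the sign and orientation of the residuals: one must consistently fix $r_n = y^\delta - F(x_n^\delta)$ so that \eqref{eq:iter:newton_tikh} reads $\frac12\norm{A h - r_n}_Y^2 + \frac{\alpha_n}{2}\norm{h}_X^2$ and Assumption \ref{ass:iter:lm1}(iii) genuinely controls $\norm{Ae_n+r_n}_Y$ rather than some other combination. Beyond this, there is no real obstacle: everything reduces to the normal-equation identity $\alpha_n s_n = -A^*\rho_n$ and one Cauchy--Schwarz estimate, with the gap between $\sigma/\gamma$ and $\sigma$ producing exactly the positive factor $(\gamma-1)/\gamma$.
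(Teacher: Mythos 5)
Your proof is correct and follows essentially the same route as the paper's: both arguments hinge on the normal equation $\alpha_n s_n = -F'(x_n^\delta)^*\bigl(F'(x_n^\delta)s_n - r_n\bigr)$ to move the inner products into $Y$, then combine one Cauchy--Schwarz step with the parameter choice \eqref{eq:iter:lm:alpha} and \cref{ass:iter:lm1}\,(iii) to extract the factor $(\gamma-1)/\gamma$. The only difference is cosmetic (you bound the error decrease from below where the paper bounds its negative from above, and your $\rho_n$ is the negative of the paper's $\tilde y_n - T_n h_n$), and all signs and constants check out.
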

\begin{proof}
    We proceed as for \cref{lem:iter:landweber_mon} by using the iteration \eqref{eq:iter:lm} to estimate the error difference, this time applying the parameter choice \eqref{eq:iter:lm:alpha} in place of the discrepancy principle.
    For the sake of legibility, we set $T_n:= F'(x_n^\delta)$, $h_n:= x_{n+1}^\delta-x_n^\delta$, and $\tilde y_n:= y^\delta-F(x_n^\delta)$.
    First, we rewrite \eqref{eq:iter:lm} as $\alpha_n h_n = T_n^*\tilde y_n - T_n^*T_n h_n$, which implies that
    \begin{align}
        \inner{x_{n+1}^\delta-x_n^\delta}{x_n^\delta - x^\dag}_X &= \alpha_n^{-1}\inner{\tilde y_n - T_n h_n}{ T_n(x_n^\delta-x^\dag)}_Y\label{eq:iter:lm_mon3}
        \intertext{and similarly that}
        \inner{x_{n+1}^\delta-x_n^\delta}{x_{n+1}^\delta- x_n^\delta}_X &= \alpha_n^{-1}\inner{\tilde y_n - T_n h_n}{ T_n h_n}_Y.
    \end{align}
    Together with the productive zero $\tilde y_n - \tilde y_n$, this shows that
    \begingroup
    \allowdisplaybreaks
    \begin{equation*}
        \begin{aligned}[t]
            \norm{x_{n+1}^\delta-x^\dag}_X^2 - \norm{x_{n}-x^\dag}_X^2
            &= 2 \inner{x_{n+1}^\delta-x_n^\delta}{x_n^\delta - x^\dag}_X  + \norm{x_{n+1}^\delta- x_n^\delta}_X^2\\
            &= 2\alpha_n^{-1} \inner{\tilde y_n - T_n h_n}{ \tilde y_n + T_n(x_n^\delta-x^\dag)}_Y\\
            \MoveEqLeft[-1] + 2\alpha_n^{-1} \inner{\tilde y_n - T_n h_n}{ T_n h_n-\tilde y_n}_Y - \norm{x_{n+1}^\delta- x_n^\delta}_X^2\\
            &=  2\alpha_n^{-1} \inner{\tilde y_n - T_n h_n}{ \tilde y_n - T_n(x^\dag-x_n^\delta)}_Y\\
            \MoveEqLeft[-1] - 2\alpha_n^{-1} \norm{\tilde y_n - T_n h_n}_Y^2 - \norm{x_{n+1}^\delta- x_n^\delta}_X^2\\
            &\leq  2\alpha_n^{-1} \norm{\tilde y_n - T_n h_n}_Y\norm{\tilde y_n - T_n(x^\dag-x_n^\delta)}_Y\\
            \MoveEqLeft[-1] - 2\alpha_n^{-1} \norm{\tilde y_n - T_n h_n}_Y^2 - \norm{x_{n+1}^\delta- x_n^\delta}_X^2.
        \end{aligned}
    \end{equation*}
    \endgroup
    For the terms with $h_n$, we can directly insert the parameter choice rule \eqref{eq:iter:lm:alpha}.
    For the terms with $x^\dag$, we apply the assumption \eqref{eq:iter:lm:alpha_bed} together with  \eqref{eq:iter:lm:alpha} to obtain that
    \begin{equation*}
        \norm{\tilde y_n - T_n (x^\dag-x_n^\delta)}_Y \leq \frac\sigma\gamma\norm{\tilde y_n}_Y = \frac1\gamma \norm{\tilde y_n - T_n h_n}_Y.
    \end{equation*}
    Inserting this, rearranging, and multiplying with $-1$ now yields \eqref{eq:iter:lm_mon}.
\end{proof}

We next show that for noisy data $y^\delta\in B_\delta(y)$, the discrepancy principle \eqref{eq:iter:diskrepanz} yields a finite stopping criterion $N(\delta,y^\delta)$.
This requires a stronger version of the tangential cone condition \eqref{eq:iter:landweber_ass:tan}.
\begin{assumption}\label{ass:iter:lm2}
    Let \cref{ass:iter:lm1} hold with (iii) replaced by
    \begin{enumerate}
        \item[(iii$'$)] there exist $M>0$ and $c>0$ such that for all $x,\tilde x\in B_{2r}(x_0)$,
            \begin{align}
                \norm{F'(x)}_{\linop(X,Y)}&\leq M,\\
                \norm{F(x)-F(\tilde x) - F'(x)(x-\tilde x)}_Y &\leq c\norm{x-\tilde x}_X \norm{F(x)-F(\tilde x)}_Y.
                \label{eq:iter:lm_tan}
            \end{align}
    \end{enumerate}
\end{assumption}
\begin{theorem}\label{thm:iter:lm_stop}
    Let \cref{ass:iter:lm2} hold.
    If $N(\delta,y^\delta)$ is chosen according to the discrepancy principle \eqref{eq:iter:diskrepanz} with $\tau>\sigma^{-1}$ and if $\norm{x_0-x^\dag}_X$ is sufficiently small, then
    \begin{equation*}
        N(\delta,y^\delta) < C (1+|\log \delta|)\qquad\text{for some } C>0.
    \end{equation*}
\end{theorem}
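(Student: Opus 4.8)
The plan is to obtain a logarithmic bound on the stopping index by showing that the Levenberg--Marquardt iterates make geometric progress in reducing the residual until the discrepancy principle triggers. The key insight is that the parameter choice rule \eqref{eq:iter:lm:alpha} forces each step to reduce the residual by a fixed factor $\sigma\in(0,1)$ in a \emph{linearized} sense, and under the stronger tangential cone condition \eqref{eq:iter:lm_tan} this linearized reduction transfers (up to controllable error) to the genuine nonlinear residual $\norm{F(x_n^\delta)-y^\delta}_Y$. If I can establish an inequality of the form $\norm{F(x_{n+1}^\delta)-y^\delta}_Y \leq \theta\, \norm{F(x_n^\delta)-y^\delta}_Y$ for some constant $\theta<1$ (as long as the iteration has not yet stopped, i.e.\ while $\norm{F(x_n^\delta)-y^\delta}_Y>\tau\delta$), then the number of steps to drive the residual from its initial value down to the threshold $\tau\delta$ is $\mathcal{O}(1+|\log\delta|)$, which is exactly the claimed bound.

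First I would record the consequences of the parameter choice. By \eqref{eq:iter:lm:alpha}, the residual of the \emph{linearized} equation at the new step satisfies $\norm{F'(x_n^\delta)h_{\alpha_n}+F(x_n^\delta)-y^\delta}_Y = \sigma\norm{F(x_n^\delta)-y^\delta}_Y$. Since $h_{\alpha_n}=x_{n+1}^\delta-x_n^\delta$, I would then write
\begin{equation*}
    F(x_{n+1}^\delta)-y^\delta = \bigl(F(x_{n+1}^\delta)-F(x_n^\delta)-F'(x_n^\delta)h_{\alpha_n}\bigr) + \bigl(F'(x_n^\delta)h_{\alpha_n}+F(x_n^\delta)-y^\delta\bigr),
\end{equation*}
and apply the triangle inequality. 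The second bracket is exactly $\sigma\norm{F(x_n^\delta)-y^\delta}_Y$ by the parameter choice; the first bracket is the linearization error, which by \eqref{eq:iter:lm_tan} is bounded by $c\norm{x_{n+1}^\delta-x_n^\delta}_X\norm{F(x_{n+1}^\delta)-F(x_n^\delta)}_Y$. The crucial point is to control both $\norm{x_{n+1}^\delta-x_n^\delta}_X$ and $\norm{F(x_{n+1}^\delta)-F(x_n^\delta)}_Y$ in terms of the current residual. For the step length, I would use the monotonicity estimate \eqref{eq:iter:lm_mon} from \cref{lem:iter:lm_mon}, which bounds $\norm{x_{n+1}^\delta-x_n^\delta}_X^2$ by the decrease in the squared error, and in particular shows that the total sum of squared steps is bounded by $\norm{x_0-x^\dag}_X^2$; hence individual steps are small when $\norm{x_0-x^\dag}_X$ is small. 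The smallness hypothesis on $\norm{x_0-x^\dag}_X$ in the theorem is precisely what guarantees $c\norm{x_{n+1}^\delta-x_n^\delta}_X$ stays below some $1-\sigma-\epsilon$, closing the contraction.

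The main obstacle I anticipate is making the contraction constant $\theta$ genuinely less than $1$ and \emph{uniform} over all $n<N(\delta,y^\delta)$, rather than merely step-dependent. The circularity that $\norm{F(x_{n+1}^\delta)-F(x_n^\delta)}_Y$ appears on the right-hand side (via the linearization-error term) while I am trying to bound $\norm{F(x_{n+1}^\delta)-y^\delta}_Y$ on the left must be untangled: I would bound $\norm{F(x_{n+1}^\delta)-F(x_n^\delta)}_Y \leq \norm{F(x_{n+1}^\delta)-y^\delta}_Y+\norm{F(x_n^\delta)-y^\delta}_Y$ and absorb the $\norm{F(x_{n+1}^\delta)-y^\delta}_Y$ term to the left, which is permissible exactly when $c\norm{x_{n+1}^\delta-x_n^\delta}_X<1$. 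Since \cref{lem:iter:lm_mon} keeps all iterates in $B_r(x^\dag)\subset B_{2r}(x_0)$, the bound $M$ on $\norm{F'}$ and the uniform tangential cone constant $c$ apply throughout, and the required smallness of the steps follows from $\norm{x_0-x^\dag}_X$ being small. Once the uniform contraction $\norm{F(x_{n+1}^\delta)-y^\delta}_Y\leq\theta\norm{F(x_n^\delta)-y^\delta}_Y$ is secured, I would conclude: for $n<N$ we have $\norm{F(x_n^\delta)-y^\delta}_Y>\tau\delta$, while $\norm{F(x_n^\delta)-y^\delta}_Y\leq\theta^n\norm{F(x_0)-y^\delta}_Y$; combining these gives $\tau\delta<\theta^n\,\norm{F(x_0)-y^\delta}_Y$, so $n<\frac{\log(\norm{F(x_0)-y^\delta}_Y/(\tau\delta))}{\log(1/\theta)}\leq C(1+|\log\delta|)$, which is the desired estimate.
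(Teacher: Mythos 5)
Your core argument for the logarithmic bound is exactly the paper's: the same decomposition of $F(x_{n+1}^\delta)-y^\delta$ into linearization error plus linearized residual, the same use of the parameter choice \eqref{eq:iter:lm:alpha} and the tangential cone condition \eqref{eq:iter:lm_tan}, the same absorption of $\norm{F(x_{n+1}^\delta)-y^\delta}_Y$ to the left, yielding the contraction factor $\theta=\frac{\sigma+c\norm{x_0-x^\dag}_X}{1-c\norm{x_0-x^\dag}_X}<1$ and hence $\tau\delta<\theta^{N-1}\norm{F(x_0)-y^\delta}_Y$. That part is correct and complete in outline.

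There is, however, a genuine gap in the preliminaries. You invoke \cref{lem:iter:lm_mon} (and implicitly \cref{thm:iter:lm:alpha}, which guarantees that an $\alpha_n$ satisfying \eqref{eq:iter:lm:alpha} exists at all) as if they were available for free. Both results are stated under \cref{ass:iter:lm1}, whose part (iii) is precisely the inequality \eqref{eq:iter:lm:alpha_bed}. But the present theorem assumes \cref{ass:iter:lm2}, in which (iii) has been \emph{replaced} by the Lipschitz-type tangential cone condition (iii$'$); so \eqref{eq:iter:lm:alpha_bed} is no longer a hypothesis and must be derived before any of your machinery applies. The paper does this inductively: from \eqref{eq:iter:lm_tan} one gets
\begin{equation*}
    \norm{F'(x_n^\delta)(x^\dag-x_n^\delta)+F(x_n^\delta)-y^\delta}_Y \leq (1+c\norm{x_0-x^\dag}_X)\delta + c\norm{x_0-x^\dag}_X\norm{F(x_n^\delta)-y^\delta}_Y,
\end{equation*}
and as long as the discrepancy principle has not triggered one has $\delta<\tau^{-1}\norm{F(x_n^\delta)-y^\delta}_Y$, which turns this into \eqref{eq:iter:lm:alpha_bed} with $\gamma=\sigma\tau\bigl(1+c(1+\tau)\norm{x_0-x^\dag}_X\bigr)^{-1}$; the requirement $\gamma>1$ is exactly where the hypothesis $\tau>\sigma^{-1}$ (together with the smallness of $\norm{x_0-x^\dag}_X$) enters. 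Your proposal never uses $\tau>\sigma^{-1}$, which is the symptom of this omission: without the inductive verification of \eqref{eq:iter:lm:alpha_bed}, the step $\alpha_n$ may fail to exist, the iterates need not stay in $B_{2r}(x_0)$, and the bound $\norm{h_n}_X\leq\norm{x_0-x^\dag}_X$ on which your contraction constant depends is not justified. Adding this induction at the start closes the argument.
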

\begin{proof}
    We first show that under these assumptions, the error decreases up to the stopping index.
    Assume that $N:=N(\delta,y^\delta)\geq 1$ (otherwise there is nothing to show) and that
    \begin{equation}
        \label{eq:iter:lm_stop1}
        \norm{x_0-x^\dag}_X \leq \min \{r,\tilde r\}, \qquad \tilde r:=  \frac{\sigma\tau-1}{c(1+\tau)}.
    \end{equation}
    From \eqref{eq:iter:lm_tan} with $x=x_0$ and $\tilde x = x^\dag$, we then obtain by inserting $y-y$ that
    \begin{equation*}
        \begin{aligned}
            \norm{F'(x_0)(x^\dag - x_0)+F(x_0)-y^\delta}_Y &\leq \delta + \norm{F(x_0)-y-F'(x_0)(x_0-x^\dag)}_Y \\
            &\leq \delta + c\norm{x_0-x^\dag}_X\norm{F(x_0)-y}_Y\\
            &\leq (1+c \norm{x_0-x^\dag}_X)\delta + c\norm{x_0-x^\dag}_X\norm{F(x_0)-y^\delta}_Y.
        \end{aligned}
    \end{equation*}
    Since $x_0$ by assumption does not satisfy the discrepancy principle, $\delta < \tau^{-1}\norm{F(x_0)-y^\delta}_Y$. Inserting this thus yields \eqref{eq:iter:lm:alpha_bed} with $\gamma := \sigma\tau(1+c(1+\tau)\norm{x_0-x^\dag}_X)^{-1}>1$ for $\norm{x_0-x^\dag}_X$ sufficiently small. Hence \cref{lem:iter:lm_mon} implies that
    \begin{equation*}
        \norm{x_1^\delta-x^\dag}_X \leq \norm{x_0-x^\dag}_X \leq \min\{r,\tilde r\}
    \end{equation*}
    and therefore in particular that $x_1^\delta \in B_{2r}(x_0)\subset U$.
    If now $N>1$, we obtain as above that
    \begin{equation*}
        \begin{aligned}
            \norm{F'(x_1^\delta)(x^\dag - x_1^\delta)+F(x_1^\delta)-y^\delta}_Y &\leq  (1+c \norm{x_1^\delta-x^\dag}_X)\delta + c\norm{x_1^\delta-x^\dag}_X\norm{F(x_1^\delta)-y^\delta}_Y
            \\
            &\leq  (1+c \norm{x_0-x^\dag}_X)\delta + c\norm{x_0-x^\dag}_X\norm{F(x_1^\delta)-y^\delta}_Y.
        \end{aligned}
    \end{equation*}
    By induction, the iteration \eqref{eq:iter:lm} is thus well-defined for all $n<N$, and \eqref{eq:iter:lm_mon} holds.

    Proceeding as for the Landweber iteration by summing the residuals now requires a uniform bound on $\alpha_n$. For this, we use that with $T_n$, $h_n$ and $\tilde y_n$ as in the proof of \cref{lem:iter:lm_mon},
    \begin{equation*}
        (T_nT_n^*+\alpha_n\Id)\left(\tilde y_n -T_nh_n\right) = T_n\left(T_n^*\tilde y_n - T_n^*T_nh_n - \alpha_n h_n\right) + \alpha_n \tilde y_n = \alpha_n \tilde y_n,
    \end{equation*}
    where we have used the iteration \eqref{eq:iter:lm} in the last step.
    Using the assumption $\norm{T_n}_{\linop(X,Y)}\leq M$ and the parameter choice \eqref{eq:iter:lm:alpha} then implies that
    \begin{equation}\label{eq:iter:lm_stop2}
        \begin{aligned}[t]
            \alpha_n \norm{\tilde y_n}_Y &=\norm{(T_nT_n^*+\alpha_n\Id)(\tilde y_n-T_n h_n)}_Y\\
            &\leq (M^2+\alpha_n)\norm{\tilde y_n-T_n h_n}_Y\\
            &= (M^2+\alpha_n)\sigma \norm{\tilde y_n}_Y.
        \end{aligned}
    \end{equation}
    Solving \eqref{eq:iter:lm_stop2} for $\alpha_n$ now yields that
    $\alpha_n \leq \frac{\sigma M^2}{1-\sigma}$, which together with \eqref{eq:iter:lm_mon} leads to
    \begin{equation*}
        \norm{x_n^\delta-x^\dag}_X^2 - \norm{x_{n+1}^\delta-x^\dag}_X^2 \geq  \frac{2(\gamma-1)(1-\sigma)\sigma}{\gamma M^2} \norm{F(x_n^\delta)-y^\delta}_Y^2 \qquad\text{for all }n<N.
    \end{equation*}

    Since $N$ was chosen according to discrepancy principle \eqref{eq:iter:diskrepanz}, we can sum this inequality from $n=0$ to $N-1$ to obtain the estimate
    \begin{equation*}
        N (\tau\delta)^2 \leq \sum_{n=0}^{N-1}\norm{F(x_n^\delta)-y^\delta}_Y^2
        \leq \frac{\gamma M^2}{2(\gamma-1)(1-\sigma)\sigma}\norm{x_0-x^\dag}_X.
    \end{equation*}
    This implies that $N$ is finite for all $\delta>0$.

    For the logarithmic estimate, we use the parameter choice \eqref{eq:iter:lm:alpha} together with the assumption \eqref{eq:iter:lm_tan} to show that for arbitrary $n<N$,
    \begin{equation*}
        \begin{aligned}
            \sigma \norm{F(x_n^\delta)-y^\delta}_Y &= \norm{F'(x_n^\delta)h_n + F(x_n^\delta) -y^\delta}_Y\\
            &\geq \norm{F(x_{n+1}^\delta)-y^\delta}_Y - \norm{F'(x_n^\delta)h_n+F(x_n^\delta)-F(x_{n+1}^\delta)}_Y\\
            &\geq \norm{F(x_{n+1}^\delta)-y^\delta}_Y - c\norm{h_n}_X\norm{F(x_{n+1}^\delta)-F(x_n^\delta)}_Y\\
            &\geq (1-c\norm{h_n}_X)\norm{F(x_{n+1}^\delta)-y^\delta}_Y - c\norm{h_n}_X\norm{F(x_n^\delta)-y^\delta}_Y.
        \end{aligned}
    \end{equation*}
    We now obtain from \eqref{eq:iter:lm_mon} that
    \begin{equation*}
        \norm{h_n}_X \leq \norm{x_n^\delta-x^\dag}_X \leq \norm{x_0-x^\dag}_X,
    \end{equation*}
    which together with the discrepancy principle yields for $n=N-2$ that
    \begin{equation*}
        \begin{aligned}
            \tau\delta \leq \norm{F(x_{N-1}^\delta)-y^\delta}_Y &\leq \frac{\sigma + c\norm{x_0-x^\dag}_X}{1-c\norm{x_0-x^\dag}_X} \norm{F(x_{N-2}^\delta)-y^\delta}_Y\\
            &\leq \left(\frac{\sigma + c\norm{x_0-x^\dag}_X}{1-c\norm{x_0-x^\dag}_X}\right)^{N-1} \norm{F(x_{0})-y^\delta}_Y.
        \end{aligned}
    \end{equation*}
    For $\norm{x_0-x^\dag}_X$ sufficiently small, the term in parentheses is strictly less than $1$, and taking the logarithm shows the desired bound on $N$.
\end{proof}

If the noise level $\delta$ is small, $\mathcal{O}(1+|\log \delta|)$ is a significantly smaller bound than $\mathcal{O}(\delta^{-2})$ (for comparable constants, which however cannot be assumed in general), and therefore the Levenberg--Marquardt method can be expected to terminate much earlier than the Landweber iteration. On the other hand, each step is more involved since it requires the solution of a linear system. Which of the two methods is faster in practice (as measured by actual time) depends on the individual inverse problem.

We now consider (local) convergence for noisy data.
\begin{theorem}
    Let \cref{ass:iter:lm2} hold.
    If $\norm{x_0-x^\dag}_X$ is sufficiently small, then $x_n\to \bar x$ with $F(\bar x)=y$ as $n\to\infty$.
\end{theorem}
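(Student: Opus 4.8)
The plan is to prove convergence for exact data (i.e., the case $\delta=0$), which is precisely the statement of condition \eqref{eq:iter:stab} needed to complete the regularization framework. The overall strategy mirrors the proof of \cref{thm:iter:landweber:conv_exact} for the Landweber iteration: I would show that the error sequence $e_n := x_n - x^\dag$ is a Cauchy sequence, which together with the continuity of $F$ and the fact that the residual tends to zero will yield $x_n \to \bar x$ with $F(\bar x)=y$. The two crucial ingredients available from the preceding results are the Féjer monotonicity \eqref{eq:iter:lm_mon2} from \cref{lem:iter:lm_mon} (which applies for all $n$ when $\delta=0$, since then \cref{ass:iter:lm1}(iii) is automatically satisfiable and the iterates stay in $B_r(x^\dag)$) and the summability of residuals that follows by summing and telescoping \eqref{eq:iter:lm_mon}.

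First I would establish that for exact data, the stronger form \eqref{eq:iter:lm_mon} holds for \emph{all} $n\in\N$, giving both $\norm{e_{n+1}}_X \leq \norm{e_n}_X$ (so $\norm{e_n}_X \searrow \eps$ for some $\eps\geq 0$) and, after telescoping,
\begin{equation*}
    \sum_{n=0}^\infty \left(\norm{x_{n+1}-x_n}_X^2 + \frac{2(\gamma-1)\sigma^2}{\gamma\alpha_n}\norm{F(x_n)-y}_Y^2\right) \leq \norm{x_0-x^\dag}_X^2 < \infty.
\end{equation*}
This already shows $\norm{x_{n+1}-x_n}_X\to 0$ and, provided $\alpha_n$ stays bounded, that $\sum_n \norm{F(x_n)-y}_Y^2 < \infty$; the uniform bound $\alpha_n \leq \sigma M^2/(1-\sigma)$ derived in \eqref{eq:iter:lm_stop2} (which uses only \cref{ass:iter:lm2}) supplies exactly this. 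Next I would run the Cauchy argument: for $m\geq n$ choose an intermediate index $k\in\{n,\dots,m\}$ minimizing the residual over that range, split $\norm{e_m-e_n}_X \leq \norm{e_m-e_k}_X + \norm{e_k-e_n}_X$, and expand each squared term via the polarization identity into a convergent-difference part (controlled by $\norm{e_j}_X\to\eps$) and inner-product terms $\inner{e_k-e_m}{e_k}_X$.

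The main obstacle, and the step requiring the most care, is bounding these inner products by the summable residuals. Here I would telescope $e_m - e_k = \sum_{j=k}^{m-1}(x_{j+1}-x_j)$ and substitute the Levenberg--Marquardt step $x_{j+1}-x_j = (T_j^*T_j+\alpha_j\Id)^{-1}T_j^*(y-F(x_j))$ with $T_j := F'(x_j)$; unlike the Landweber case where the increment is simply $T_j^*(y-F(x_j))$, the presence of the regularized inverse $(T_j^*T_j+\alpha_j\Id)^{-1}$ complicates the estimate, and one must exploit the stronger tangential cone condition \eqref{eq:iter:lm_tan} together with the boundedness $\norm{T_j}_{\linop(X,Y)}\leq M$ and the smallness of $\norm{x_0-x^\dag}_X$ to control the extra factors. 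Once each inner product is bounded by a constant times $\sum_{j=n}^{m-1}\norm{F(x_j)-y}_Y^2$, the summability from the telescoped \eqref{eq:iter:lm_mon} forces the tail to zero, so $\{e_n\}_{n\in\N}$ is Cauchy. Finally, with $x_n\to\bar x$ and $\sum_n\norm{F(x_n)-y}_Y^2<\infty$ giving $F(x_n)\to y$, continuity of $F$ yields $F(\bar x)=y$, completing the proof.
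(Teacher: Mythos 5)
Your overall architecture is the same as the paper's: establish \eqref{eq:iter:lm:alpha_bed} with some $\gamma>1$ from the strengthened tangential cone condition \eqref{eq:iter:lm_tan} and the smallness of $\norm{x_0-x^\dag}_X$, run the induction to get Féjer monotonicity and summability of $\norm{F(x_n)-y}_Y^2$ via the bound $\alpha_n\leq \sigma M^2/(1-\sigma)$, and then repeat the Cauchy argument of \cref{thm:iter:landweber:conv_exact}. (One small imprecision: condition (iii) of \cref{ass:iter:lm1} is not ``automatically satisfiable'' for $\delta=0$; it has to be \emph{derived} from \eqref{eq:iter:lm_tan}, which is what forces the smallness assumption on $\norm{x_0-x^\dag}_X$ in the first place, via $\gamma=\sigma(c\norm{x_0-x^\dag}_X)^{-1}>1$.)

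The genuine gap is in your target for the inner-product estimate. Rewriting the increment via $\alpha_j h_j = T_j^*(\tilde y_j - T_j h_j)$ (i.e., \eqref{eq:iter:lm_mon3}) and using the parameter choice \eqref{eq:iter:lm:alpha} together with the tangential cone condition, the $j$-th term of $\inner{e_k-e_m}{e_k}_X$ is bounded by $(1+5\eta)\sigma\,\alpha_j^{-1}\norm{F(x_j)-y}_Y^2$ --- note the factor $\alpha_j^{-1}$. You propose to bound the whole sum by a constant times $\sum_j\norm{F(x_j)-y}_Y^2$ and invoke summability of the residuals; but that requires a uniform \emph{lower} bound on $\alpha_j$, which is not available (the choice rule \eqref{eq:iter:lm:alpha} only yields the upper bound $\alpha_j\leq\sigma M^2/(1-\sigma)$, and $\alpha_j$ may tend to zero). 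The step that closes this is to use the monotonicity inequality \eqref{eq:iter:lm_mon} itself, which bounds precisely the combination $\alpha_j^{-1}\norm{F(x_j)-y}_Y^2$ by $\frac{\gamma}{2(\gamma-1)\sigma^2}\left(\norm{e_j}_X^2-\norm{e_{j+1}}_X^2\right)$; the sum then telescopes to $\frac{\gamma(1+5\eta)}{2\sigma(\gamma-1)}\left(\norm{e_k}_X^2-\norm{e_m}_X^2\right)$, which tends to zero because $\norm{e_n}_X$ is monotone and hence convergent. With that substitution your argument goes through; without it, the key estimate fails.
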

\begin{proof}
    From \eqref{eq:iter:lm_tan} for $x=x_0$ and $\tilde x = x^\dag$, we directly obtain that
    \begin{equation*}
        \norm{F(x_0)-y - F'(x_0)(x_0-x^\dag)}_Y \leq c\norm{x_0-x^\dag}_X \norm{F(x_0)-y}_Y.
    \end{equation*}
    For $\norm{x_0-x^\dag}_X$ sufficiently small we then have that $\gamma:= \sigma(c\norm{x_0-x^\dag}_X)^{-1}>1$ and thus that \eqref{eq:iter:lm:alpha_bed} holds. We can thus apply \cref{lem:iter:lm_mon} to deduce that $\norm{x_1-x^\dag}_X\leq \norm{x_0-x^\dag}_X$. Hence, $x_1\in B_{2r}(x_0)$ and thus $\norm{x_1-x^\dag}_X$ is sufficiently small as well.
    By induction, we then obtain the well-posedness of the iteration and the monotonicity of the error for all $n\in\N$.
    As in the proof of \cref{thm:iter:lm_stop}, rearranging and summing yields that
    \begin{equation*}
        \sum_{n=0}^{\infty}\norm{F(x_n)-y}_Y^2
        \leq \frac{\gamma M^2}{2(\gamma-1)(1-\sigma)\sigma}\norm{x_0-x^\dag}_X <\infty
    \end{equation*}
    and hence that $F(x_n)\to y$ as $n\to\infty$.

    The remainder of the proof proceeds analogously to that of \cref{thm:iter:landweber:conv_exact}.
    We set $e_n :=  x_n-x^\dag$ and consider
    \begin{equation*}
        \norm{e_m-e_n}_X \leq \norm{e_m-e_k}_X + \norm{e_k-e_n}_X
    \end{equation*}
    for any $m\geq n$ and $k\in\{n,\dots,m\}$ chosen according to \eqref{eq:iter:landweber:conv_exact1}.
    The Féjer monotonicity from \cref{lem:iter:lm_mon} again shows that $\|e_n\|_X\to\eps$ for some $\eps\geq 0$ as $n\to\infty$, requiring us to only look at the mixed terms.
    Using \eqref{eq:iter:lm_mon3} and the parameter choice \eqref{eq:iter:lm:alpha}, we obtain that
    \begin{equation*}
        \begin{aligned}
            \inner{e_k-e_m}{e_k}_X &= \sum_{j=k}^{m-1} -\inner{x_{j+1}-x_j}{x_k-x^\dag}_X\\
            &= \sum_{j=k}^{m-1} -\alpha_j^{-1}\inner{y-F(x_j)-F'(x_j)(x_{j+1}-x_j)}{F'(x_j)(x_k-x^\dag)}_Y\\
            &\leq \sum_{j=k}^{m-1} \alpha_j^{-1}\norm{y-F(x_j)-F'(x_j)(x_{j+1}-x_j)}_Y\norm{F'(x_j)(x_k-x^\dag)}_Y\\
            &= \sum_{j=k}^{m-1} \sigma\alpha_j^{-1}\norm{F(x_j)-y}_Y\norm{F'(x_j)(x_k-x^\dag)}_Y.
        \end{aligned}
    \end{equation*}
    For the second term, we use \eqref{eq:iter:lm_tan} and set $\eta:= c\norm{x_0-x^\dag}_X\geq c\norm{x_j-x^\dag}_X$ for all $j\geq0$ to arrive at
    \begin{equation*}
        \begin{aligned}
            \norm{F'(x_j)(x_k-x^\dag)}_Y &\leq \norm{F(x_k)-y}_Y + \norm{y-F(x_j)-F'(x_j)(x^\dag-x_j)}_Y\\
            \MoveEqLeft[-7] + \norm{F(x_j)-F(x_k)-F'(x_j)(x_j-x_k)}_Y\\
            &\leq \norm{F(x_k)-y}_Y + c\norm{x_j-x^\dag}_X \norm{F(x_j)-y}_Y \\
            \MoveEqLeft[-7] + c\norm{x_j-x_k}_X \norm{F(x_j)-F(x_k)}_Y\\
            &\leq (1+5\eta)\norm{F(x_j)-y}_Y,
        \end{aligned}
    \end{equation*}
    where we have again used multiple productive zeros as well as \eqref{eq:iter:landweber:conv_exact1}.

    We can now apply \eqref{eq:iter:lm_mon} to obtain that
    \begin{equation*}
        \begin{aligned}
            \inner{e_k-e_m}{e_k}_X &\leq \sum_{j=k}^{m-1}(1+5\eta)\sigma\alpha_j^{-1}\norm{F(x_j)-y}_Y^2\\
            &\leq \sum_{j=k}^{m-1} \frac{\gamma(1+5\eta)}{2\sigma(\gamma-1)} \left(\norm{e_j}_X^2-\norm{e_{j+1}}_X^2\right)\\
            &= \frac{\gamma(1+5\eta)}{2\sigma(\gamma-1)} \left(\norm{e_k}_X^2-\norm{e_{m}}_X^2\right) \to 0
        \end{aligned}
    \end{equation*}
    as $n\to \infty$ due to the convergence of $\|e_n\|_X\to\eps$.
    We similarly deduce that
    \begin{equation*}
        \inner{e_k-e_n}{e_k}_X\leq  \frac{\gamma(1+5\eta)}{2\sigma(\gamma-1)} \left(\norm{e_n}_X^2-\norm{e_{k}}_X^2\right) \to 0
    \end{equation*}
    as $n\to\infty$,
    which again implies that $\{e_n\}_{n\in\N}$ and hence that $\{x_n\}_{n\in\N}$ is a Cauchy sequence. The claim now follows since $F(x_n)\to y$.
\end{proof}

We now have almost everything at hand to apply \cref{lem:iter:conv} and show the convergence of the Levenberg--Marquardt method for noisy data $y^\dag\in Y$.
\begin{theorem}
    Let \cref{ass:iter:lm2} hold. If $\norm{x_0-x^\dag}_X$ is sufficiently small, then $x_{N(\delta,y^\delta)}^\delta\to \bar x$ with $F(\bar x) = y$ as $\delta\to 0$.
\end{theorem}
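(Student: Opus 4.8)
The plan is to invoke \cref{lem:iter:conv}, exactly as in the proof of \cref{thm:iter:landweber:conv_noisy}, with the $x^\dag$ of that lemma identified with the limit $\bar x$ of the exact iteration from the previous theorem. The operator $F$ is continuous by \cref{ass:iter:lm2} and the stopping index $N(\delta,y^\delta)$ is chosen by the discrepancy principle \eqref{eq:iter:diskrepanz}, so three ingredients remain. The convergence condition \eqref{eq:iter:stab} is precisely the content of the preceding theorem ($x_n\to\bar x$ with $F(\bar x)=y$ for exact data). The Féjer monotonicity \eqref{eq:iter:conv_mon} up to the stopping index follows from \cref{lem:iter:lm_mon}: for $n<N(\delta,y^\delta)$, the induction carried out in the proof of \cref{thm:iter:lm_stop} (valid for $\norm{x_0-x^\dag}_X$ small) places $x_n^\delta\in B_r(x^\dag)$ and verifies \eqref{eq:iter:lm:alpha_bed}, which is exactly what \cref{lem:iter:lm_mon} requires. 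The whole burden therefore reduces to the stability condition \eqref{eq:iter:conv_stab}: for every fixed $n\in\N$, $\norm{x_n^\delta-x_n}_X\to 0$ as $\delta\to 0$.

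I would establish \eqref{eq:iter:conv_stab} by induction on $n$, the base case $x_0^\delta=x_0$ being trivial. For the step, assume $x_n^\delta\to x_n$ while $y^\delta\to y$; I must show that one Levenberg--Marquardt step \eqref{eq:iter:lm} sends the perturbed data continuously, i.e. $x_{n+1}^\delta\to x_{n+1}$. Since $F$ and $F'$ are continuous, the operator $T:=F'(x_n^\delta)$ converges to $F'(x_n)$, and the regularized solution operator $(T^*T+\alpha\Id)^{-1}T^*$ depends continuously on $T$ and on $\alpha$ as long as $\alpha$ stays bounded away from $0$. The next iterate is then a jointly continuous function of $(x_n^\delta,y^\delta,\alpha_n^\delta)$ under that proviso, so the only genuinely new point compared with the Landweber argument is the continuous dependence of the \emph{implicitly defined} parameter $\alpha_n^\delta=\alpha_n(\delta,y^\delta)$, fixed by \eqref{eq:iter:lm:alpha}, on the data.

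The hard part is therefore this continuity of $\alpha_n^\delta$, and I would handle it as follows. The value function $f_n(\alpha)=\norm{F'(x_n^\delta)h_\alpha+F(x_n^\delta)-y^\delta}_Y$ is continuous (as in \cref{thm:iter:lm:alpha}); from the explicit spectral form of the Tikhonov residual for the linear operator $F'(x_n^\delta)$ it is moreover strictly increasing wherever it is positive, so the equation $f_n(\alpha)=\sigma\norm{F(x_n^\delta)-y^\delta}_Y$ has a unique root $\alpha_n^\delta$. When the exact residual $\norm{F(x_n)-y}_Y$ is nonzero, the corresponding exact equation likewise has a unique root $\alpha_n^0>0$, lying strictly between the two limit values computed in \cref{thm:iter:lm:alpha}; since the target level and $f_n$ itself depend continuously on $(x_n^\delta,y^\delta)$ through $F$ and $F'$, and $f_n$ is strictly monotone near the root, the root varies continuously with the data, giving $\alpha_n^\delta\to\alpha_n^0>0$. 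In particular $\alpha_n^\delta$ is bounded away from $0$ for small $\delta$, which is exactly the proviso needed above to close the induction. The single case to isolate is when the exact iteration hits a solution at a finite step, so that some exact residual vanishes and $\alpha_n^0$ is no longer determined; there the exact iterate already equals $\bar x$ and stability is argued directly. With \eqref{eq:iter:conv_stab} in hand, \cref{lem:iter:conv} yields \eqref{eq:iter:conv} and hence $x_{N(\delta,y^\delta)}^\delta\to\bar x$.
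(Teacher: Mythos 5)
Your proposal is correct and follows the same skeleton as the paper's proof: invoke \cref{lem:iter:conv}, obtain \eqref{eq:iter:stab} from the exact-data convergence theorem and \eqref{eq:iter:conv_mon} from \cref{lem:iter:lm_mon} via the induction of \cref{thm:iter:lm_stop}, and reduce everything to the stability condition \eqref{eq:iter:conv_stab}. Where you genuinely diverge is in how \eqref{eq:iter:conv_stab} is verified. The paper disposes of it in two sentences: continuity of $x\mapsto F'(x)^*F'(x)+\alpha\Id$ plus the Inverse Function Theorem give continuity of the resolvent, hence the right-hand side of \eqref{eq:iter:lm} is continuous in $x_n$ -- silently treating $\alpha_n$ as a fixed quantity. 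You correctly observe that $\alpha_n=\alpha_n(\delta,y^\delta)$ is defined only implicitly through \eqref{eq:iter:lm:alpha} and therefore itself depends on the data, so the continuity of one iteration step is really a statement about the joint map $(x_n^\delta,y^\delta,\alpha_n^\delta)\mapsto x_{n+1}^\delta$ together with continuity of the root $\alpha_n^\delta$. Your argument for the latter -- the Tikhonov residual $f_n$ is, by the spectral representation, strictly increasing in $\alpha$ whenever $P_{\overline{\calR(F'(x_n^\delta))}}(y^\delta-F(x_n^\delta))\neq 0$ (which \eqref{eq:iter:lm:alpha_bed} guarantees in the relevant regime), so the unique root varies continuously with the data and stays bounded away from $0$ -- is sound, and the finite-termination case is rightly split off. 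In short, your proof buys a rigorous closure of a step the paper's proof glosses over, at the cost of the monotone-root analysis; the paper's version is shorter but, read literally, incomplete on exactly the point you isolate as ``the hard part.''
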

\begin{proof}
    It remains to verify the continuity condition \eqref{eq:iter:conv_stab}. Since $F$ is assumed to be continuous differentiable, $F'(x^\dag)^*F'(x^\dag)+\alpha\Id$ is continuous.
    By the Inverse Function Theorem (e.g., \cite[Theorem~10.4]{renardyrogers2004}), there thus exists a sufficiently small neighborhood of $x^\dag$ where $(F'(x)^*F'(x)+\alpha\Id)^{-1}$ is continuous as well. For fixed $n\in\N$, the right-hand side of \eqref{eq:iter:lm} is therefore continuous in $x_n$, which implies the condition~\eqref{eq:iter:conv_stab} and hence the claimed convergence.
\end{proof}

Under a source condition and with a suitable a priori choice of $\alpha_n$ and $N=N(\delta)$, it is possible to show (logarithmic) convergence rate as $\delta\to 0$; see \cite[Theorem~4.7]{Kaltenbacher}.

\section{Iteratively regularized Gauß--Newton method}

We finally consider the following version of the Levenberg--Marquardt method which was proposed in \cite{Bakushinskii:1992}: Set $x_{n+1}=x_n+h_n$ where now $h_n$ is the solution of the minimization problem
\begin{equation}
    \label{eq:iter:irgn_min}
    \min_{h\in X} \frac12\norm{F'(x_n)h + F(x_n)-y}_Y^2 + \frac{\alpha_n}2 \norm{h+x_n-x_0}_X^2.
\end{equation}
By \cref{lem:tikhonov:normalen}, this is equivalent to the explicit iteration known as the \emph{iteratively regularized Gauß--Newton method}:
\begin{equation}
    \label{eq:iter:irgn}
    x_{n+1} = x_n + \left(F'(x_n)^*F'(x_n) + \alpha_n\Id\right)^{-1}\left(F'(x_n)^* (y-F(x_n))+\alpha_n(x_0-x_n)\right).
\end{equation}
Note that the only difference to the Levenberg--Marquardt method is the additional term on the right-hand side.
Similarly, comparing \eqref{eq:iter:irgn_min} to \eqref{eq:iter:newton_tikh}, the former has $x_n+h_n-x_0 = x_{n+1}-x_0$ in the regularization term. The point is that this allows interpreting $x_{n+1}$ directly as the minimizer of the \emph{linearized} Tikhonov functional
\begin{equation*}
    \min_{x\in X} \frac12\norm{F'(x_n)(x-x_n) + F(x_n)-y}_Y^2 + \frac{\alpha_n}2 \norm{x-x_0}_X^2,
\end{equation*}
and hence to use the properties of linear Tikhonov regularization for the analysis.
In practice, this method also shows better stability since the explicit regularization of $x_{n+1}$ prevents unchecked growth through the constant addition of (bounded) increments $h_n$.

As for the Levenberg--Marquardt method, one can now show (under some conditions on the nonlinearity) that this iteration is well-defined and converges for exact as well as noisy data; see \cite[Theorem~4.2]{Kaltenbacher}.
Instead, we will only show convergence rates for an a priori choice of $\alpha_n$ and $N(\delta)$.
To make use of the results for linear Tikhonov regularization from \cref{chap:tikhonov}, we assume that $F$ is Fréchet differentiable and completely continuous such that $F'(x)$ is compact for all $x$ by \cref{thm:nichtlin:frechet_complete_cont}. Specifically, we make the following assumptions.
\begin{assumption}\label{ass:iter:irgn}
    Let $F:U\to Y$ be continuously differentiable and completely continuous, and let $x^\dag$ be an $x_0$-minimum norm solution. Assume further that
    \begin{enumerate}
        \item $F'$ is Lipschitz continuous with constant $L$;
        \item there exists a $w\in X$ with $x^\dag -x_0 = |F'(x^\dag)|^\nu w$ and $\norm{w}_X \leq \rho$ for some $\nu\in[1,2]$ and $\rho>0$;
    \end{enumerate}
\end{assumption}
We first show that the regularization error satisfies a quadratic recursion.
\begin{lemma}\label{lem:iter:irgn_rek}
    Let \cref{ass:iter:irgn} hold.
    If the stopping index $N(\delta)$ and $\alpha_n$, $1\leq n\leq n N(\delta)$, are chosen such that
    \begin{equation}\label{eq:iter:irgn:apriori}
        \alpha_{N(\delta)}^{(\nu+1)/2}\leq \tau \delta \leq \alpha_n^{(\nu+1)/2} \qquad\text{for all }n<N(\delta)
    \end{equation}
    and some $\tau>0$,
    \begin{multline*}
        \norm{x_{n+1}^\delta-x^\dag}_X \leq \left(C_\nu\rho+\tau^{-1}\right)\alpha_n^{\nu/2} + L\rho \left(C_\nu\alpha_n^{(\nu-1)/2} + \norm{F'(x^\dag)}_{\linop(X,Y)}^{\nu-1}\right)\norm{x_{n}^\delta-x^\dag}_X\\
        + \frac{L}{2\alpha_n^{1/2}}\norm{x_{n}^\delta-x^\dag}_X^2 \qquad\text{for all } n<N(\delta).
    \end{multline*}
\end{lemma}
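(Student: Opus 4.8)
The plan is to reduce the statement to an error decomposition of $x_{n+1}^\delta-x^\dag$ into three spectrally tractable pieces and estimate each with the linear Tikhonov filter bounds from \cref{chap:tikhonov}. Throughout I abbreviate $T_n:=F'(x_n^\delta)$ and $A:=F'(x^\dag)$, and recall $y=F(x^\dag)$. By \cref{lem:tikhonov:normalen} the increment in \eqref{eq:iter:irgn} is precisely the one rendering $x_{n+1}^\delta$ the minimizer of the linearized Tikhonov functional, so $x_{n+1}^\delta-x_0=(T_n^*T_n+\alpha_n\Id)^{-1}\big(T_n^*(y^\delta-F(x_n^\delta))+\alpha_n(x_0-x_n^\delta)\big)+\big(x_n^\delta-x_0\big)$. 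Inserting $x_n^\delta-x^\dag=(T_n^*T_n+\alpha_n)^{-1}(T_n^*T_n+\alpha_n)(x_n^\delta-x^\dag)$, collecting terms and using $F(x^\dag)-F(x_n^\delta)+T_n(x_n^\delta-x^\dag)=:s_n$ (the linearization residual) gives the backbone identity
\begin{equation*}
    x_{n+1}^\delta-x^\dag=(T_n^*T_n+\alpha_n)^{-1}T_n^*\big[(y^\delta-y)+s_n\big]+\alpha_n(T_n^*T_n+\alpha_n)^{-1}(x_0-x^\dag).
\end{equation*}

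The first two contributions share the factor $(T_n^*T_n+\alpha_n)^{-1}T_n^*$, whose norm is at most $\alpha_n^{-1/2}$ by \cref{lem:spektral:beschraenkt} applied to the Tikhonov filter ($C_\phi=1$). For the data term, $\norm{y^\delta-y}_Y\le\delta$ together with the a priori rule \eqref{eq:iter:irgn:apriori} (i.e.\ $\delta\le\tau^{-1}\alpha_n^{(\nu+1)/2}$ for $n<N(\delta)$) yields the bound $\tau^{-1}\alpha_n^{\nu/2}$. For the linearization term, \cref{lem:nichtlin:lipschitz} gives $\norm{s_n}_Y\le\frac L2\norm{x_n^\delta-x^\dag}_X^2$, hence the contribution $\frac{L}{2\alpha_n^{1/2}}\norm{x_n^\delta-x^\dag}_X^2$. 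These account for the $\tau^{-1}\alpha_n^{\nu/2}$ and the quadratic terms of the claim.

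The source term is the crux. Write $x^\dag-x_0=|A|^\nu w$ and split through the resolvent identity $(T_n^*T_n+\alpha_n)^{-1}-(A^*A+\alpha_n)^{-1}=(T_n^*T_n+\alpha_n)^{-1}(A^*A-T_n^*T_n)(A^*A+\alpha_n)^{-1}$. The diagonal part $\alpha_n(A^*A+\alpha_n)^{-1}|A|^\nu w$ is bounded by $C_\nu\rho\,\alpha_n^{\nu/2}$ exactly by the Tikhonov qualification estimate $\omega_\nu(\alpha)\le C_\nu\alpha^{\nu/2}$ (valid since $\nu\in[1,2]\subset(0,2]$). For the correction I use the asymmetric factorization $A^*A-T_n^*T_n=(A-T_n)^*A+T_n^*(A-T_n)$ and abbreviate $R_n:=(A^*A+\alpha_n)^{-1}|A|^\nu w$; Lipschitz continuity of $F'$ gives $\norm{A-T_n}\le L\norm{x_n^\delta-x^\dag}_X$. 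The first piece $\alpha_n(T_n^*T_n+\alpha_n)^{-1}(A-T_n)^*AR_n$ is estimated with $\norm{\alpha_n(T_n^*T_n+\alpha_n)^{-1}}\le1$ and the key spectral bound $\norm{AR_n}_Y=\norm{(A^*A+\alpha_n)^{-1}(A^*A)^{(\nu+1)/2}w}_X\le\norm{A}^{\nu-1}\rho$, where \cref{lem:functional_range}(iii) converts $A$ into $|A|$ and $\lambda\mapsto\lambda^{(\nu+1)/2}/(\lambda+\alpha)$ is increasing (as $(\nu+1)/2\ge1$), so its supremum on $(0,\kappa]$ is $\kappa^{(\nu-1)/2}=\norm{A}^{\nu-1}$; this produces $L\rho\,\norm{A}^{\nu-1}\norm{x_n^\delta-x^\dag}_X$. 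The second piece $\alpha_n(T_n^*T_n+\alpha_n)^{-1}T_n^*(A-T_n)R_n$ uses $\norm{\alpha_n(T_n^*T_n+\alpha_n)^{-1}T_n^*}\le\alpha_n^{1/2}$ and $\norm{R_n}_X\le C_\nu\alpha_n^{(\nu-2)/2}\rho$, giving $L C_\nu\rho\,\alpha_n^{(\nu-1)/2}\norm{x_n^\delta-x^\dag}_X$. Summing the five contributions reproduces the stated recursion with $\norm{A}=\norm{F'(x^\dag)}_{\linop(X,Y)}$.

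The one genuinely delicate point — and the main obstacle — is the correction term above: bounding $\norm{A^*A-T_n^*T_n}$ crudely loses half a power of $\alpha_n$ and produces the wrong exponent $\alpha_n^{(\nu-2)/2}$. The remedy is the asymmetric splitting $(A-T_n)^*A+T_n^*(A-T_n)$, which allows pairing the surviving derivative $A$ with $R_n$ (exploiting $(\nu+1)/2\ge1$ to obtain the factor $\norm{A}^{\nu-1}$) in one piece, and pairing the resolvent factor $T_n^*$ (to gain the compensating $\alpha_n^{1/2}$) in the other. The restriction $\nu\in[1,2]$ is used on both sides: $(\nu+1)/2\ge1$ makes the relevant spectral function monotone, while $\nu/2\le1$ keeps the qualification estimate $\omega_\nu(\alpha)\le C_\nu\alpha^{\nu/2}$ available. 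The remaining manipulations (the three norm bounds for functions of $T_n^*T_n$ and the elementary suprema of scalar multipliers) are routine and can absorb all numerical constants into a single $C_\nu$.
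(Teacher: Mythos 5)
Your proof is correct and follows essentially the same route as the paper's: the same decomposition of $x_{n+1}^\delta-x^\dag$ into data, linearization, and source contributions, with the source term split via the resolvent identity into the exact-derivative approximation error and a correction, and the same asymmetric factorization $A^*A-T_n^*T_n=(A-T_n)^*A+T_n^*(A-T_n)$ paired with the identical spectral bounds (your grouping $\alpha_n^{1/2}\cdot\alpha_n^{(\nu-2)/2}$ versus the paper's $\alpha_n^{-1/2}\cdot\alpha_n^{\nu/2}$ is only cosmetic). No gaps.
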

\begin{proof}
    Using the iteration and rearranging appropriately, we split the regularization error $x_{n+1}-x^\dag$ into three components that we then estimate separately. We set $K_n:= F'(x_n^\delta)$ as well as $K:= F'(x^\dag)$ and write
    \begin{equation*}
        \begin{aligned}
            x_{n+1}^\delta-x^\dag &= x_n^\delta - x^\dag + \left(K_n^*K_n + \alpha_n\Id\right)^{-1}\left(K_n^* (y^\delta-F(x_n^\delta))+\alpha_n(x_0-x_n^\delta)\right)\\
            &= \left(K_n^*K_n + \alpha_n\Id\right)^{-1}\left(\alpha_n(x_0-x^\dag) + K_n^*\left(y^\delta-F(x_n^\delta) + K_n(x_n^\delta - x^\dag)\right)\right)\\
            &= \left[\alpha_n\left(K^*K+\alpha_n\Id\right)^{-1}(x_0-x^\dag)\right]
            + \left[ \left(K_n^*K_n + \alpha_n\Id\right)^{-1}K_n^* (y^\delta-y)\right]\\
            \MoveEqLeft[-1]+ \Big[ \left(K_n^*K_n + \alpha_n\Id\right)^{-1}K_n^*\left (F(x^\dag)-F(x_n^\delta) + K_n(x_n^\delta-x^\dag)\right)\\
            \MoveEqLeft[-2]+ \alpha_n\left(K_n^*K_n + \alpha_n\Id\right)^{-1}(K_n^*K_n - K^*K)\left(K^*K +\alpha_n\Id\right)^{-1}(x_0-x^\dag)\Big]\\
            &=:  [e_1] + [e_2] + [e_{3a}+e_{3b}].
        \end{aligned}
    \end{equation*}
    We first estimate the \enquote{approximation error} $e_1$. Since $K$ is compact, we obtain from \cref{lem:tikhonov:normalen} the representation $(K^*K+\alpha\Id)^{-1} x = \phi_\alpha(K^*K)x$ for $\phi_\alpha(\lambda) = (\lambda+\alpha)^{-1}$. Together with the source condition, this implies for all $\nu\leq \nu_0 = 2$ that
    \begin{equation*}
        \begin{aligned}
            \norm{e_1}_X &= \norm{\alpha_n\left(K^*K+\alpha_n\Id\right)^{-1}(x_0-x^\dag)}_X\\
            &= \norm{\alpha_n\phi_{\alpha_n}(K^*K)(K^*K)^{\nu/2}w}_X\\
            &\leq \sup_{\lambda\in(0,\kappa]} \frac{\alpha_n \lambda^{\nu/2}}{\lambda+\alpha_n} \norm{w}_X = \sup_{\lambda\in(0,\kappa]} \omega_\nu(\alpha_n)\norm{w}_X\\
            &\leq C_\nu \alpha_n^{\nu/2}\rho
        \end{aligned}
    \end{equation*}
    as shown in \cref{chap:tikhonov}.

    For the \enquote{data error} $e_2$, we also use the estimates from \cref{chap:tikhonov} together with the a priori choice of $\alpha_n$ to obtain for all $n<N(\delta)$ that
    \begin{equation*}
        \begin{aligned}
            \norm{e_2}_X &= \norm{\left(K_n^*K_n + \alpha_n\Id\right)^{-1}K_n^* (y^\delta-y)}_X\\
            &\leq \norm{\phi_{\alpha_n}(K_n^*K_n) K_n^*}_{\linop(Y,X)} \norm{y^\delta -y}_Y\\
            &\leq \frac1{\sqrt{\alpha_n}}\,\delta \leq \tau^{-1}\alpha_n^{\nu/2}.
        \end{aligned}
    \end{equation*}

    The \enquote{nonlinearity error} $e_{3a}+e_{3b}$ is again estimated separately. For the first term, we use the Lipschitz condition and \cref{lem:nichtlin:lipschitz} to bound
    \begin{equation*}
        \begin{aligned}
            \norm{e_{3a}}_X &:=  \norm{\left(K_n^*K_n + \alpha_n\Id\right)^{-1}K_n^*\left (F(x^\dag)-F(x_n^\delta) + K_n(x_n^\delta-x^\dag)\right)}_X \\
            &\leq
            \norm{\phi_{\alpha_n}(K_n^*K_n)K^*_n}_{\linop(Y,X)}\norm{F(x^\dag)-F(x_n^\delta) - F'(x_n^\delta)(x^\dag - x_n^\delta)}_Y\\
            &\leq \frac{1}{\sqrt{\alpha_n}} \frac{L}{2} \norm{x_n^\delta -x^\dag}_X^2.
        \end{aligned}
    \end{equation*}
    For the second term, we use the identity
    \begin{equation*}
        K_n^*K_n - K^*K = K_n^*(K_n-K) + (K_n^*-K^*)K
    \end{equation*}
    as well as the Lipschitz continuity of $F'(x)$ and the source condition to estimate similarly as above
    \begin{equation*}
        \begin{aligned}
            \norm{e_{3b}}_X &:=  \norm{\alpha_n\left(K_n^*K_n + \alpha_n\Id\right)^{-1}(K_n^*K_n - K^*K)\left(K^*K +\alpha_n\Id\right)^{-1}(x_0-x^\dag)}_X\\
            &\leq \norm{\phi_{\alpha_n}(K_n^*K_n)K_n^*}_{\linop(Y,X)}\norm{K-K_n}_{\linop(X,Y)}\norm{\alpha_n\phi_{\alpha_n}(K^*K)(K^*K)^{\nu/2}w}_X\\
            \MoveEqLeft[-1] + \norm{\alpha_n\phi_{\alpha_n}(K_n^*K_n)}_{\linop(X,X)}\norm{K_n-K}_{\linop(X,Y)}\norm{K\phi_{\alpha_n}(K^*K)(K^*K)^{1/2}}_{\linop(X,Y)}\\
            \cdot\norm{(K^*K)^{(\nu-1)/2}w}_X\span\omit \\
            &\leq \frac{1}{\sqrt{\alpha_n}}\ L\norm{x^\dag-x_n^\delta}_X\  C_\nu \alpha_n^{\nu/2}\rho + \sup_{\lambda\in(0,\kappa]} \frac{\alpha_n}{\alpha_n+\lambda}\  L \norm{x_n^\delta-x^\dag}\  \norm{K}_{\linop(X,Y)}^{\nu-1}\rho
            \\
            &\leq {L\rho}\left(C_\nu\alpha_n^{(\nu-1)/2} + \norm{K}_{\linop(X,Y)}^{\nu-1}\right)\norm{x_n^\delta-x^\dag}_X,
        \end{aligned}
    \end{equation*}
    where we have used $\norm{K^*}_{\linop(Y,X)} =\norm{K}_{\linop(X,Y)}$ and -- applying \cref{lem:functional_range}\,(iii) -- the inequality
    \begin{equation*}
        \norm{K\phi_\alpha(K^*K)(K^*K)^{1/2}}_{\linop(X,Y)} = \norm{(K^*K)^{1/2}\phi_\alpha(K^*K)(K^*K)^{1/2}}_{\linop(X,X)} \leq \sup_{\lambda\in(0,\kappa]} \frac{\lambda}{\lambda+\alpha}\leq 1.
    \end{equation*}
    Combining the separate estimates yields the claim.
\end{proof}

If the initial error is small enough, we obtain from this the desired error estimate.
\begin{theorem}
    Let \cref{ass:iter:irgn} hold for $\rho>0$ sufficiently small and $\tau>0$ sufficiently large. Assume further that $\alpha_0\leq 1$ and
    \begin{equation*}
        1 < \frac{\alpha_n}{\alpha_{n+1}} \leq q \qquad\text{for some }q>1.
    \end{equation*}
    Then we have for exact data (i.e., $\delta=0$) that
    \begin{align}
        \norm{x_n-x^\dag}_X &\leq c_1 \alpha_n^{\nu/2}\qquad \text{for all }n\in\N
        \label{eq:iter:irgn:conv_exact}
        \intertext{and for noisy data that}
        \norm{x_{N(\delta)}^\delta - x^\dag}_X &\leq c_2 \delta^{\frac{\nu}{\nu+1}} \qquad\text{as }\delta\to 0.
        \label{eq:iter:irgn:conv_noisy}
    \end{align}
\end{theorem}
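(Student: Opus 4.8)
The plan is to reduce everything to a single induction claim, namely that
$\norm{x_n^\delta-x^\dag}_X\leq c_1\alpha_n^{\nu/2}$, which I would prove for all $n\in\N$ when $\delta=0$ and for all $0\leq n\leq N(\delta)$ in the noisy case, and then read off both rates from it. For the base case $n=0$, the source condition \cref{ass:iter:irgn}\,(ii) gives $\norm{x_0-x^\dag}_X=\norm{|F'(x^\dag)|^\nu w}_X\leq\norm{F'(x^\dag)}_{\linop(X,Y)}^\nu\rho$, which is below $c_1\alpha_0^{\nu/2}$ once $\rho$ is small (recall $\alpha_0$ is fixed). The induction step is driven entirely by \cref{lem:iter:irgn_rek}: I would substitute the hypothesis $\norm{x_n^\delta-x^\dag}_X\leq c_1\alpha_n^{\nu/2}$ into its recursion and use that $\{\alpha_n\}$ is decreasing with $\alpha_n\leq\alpha_0\leq 1$ (so $\alpha_n^{(\nu-1)/2}\leq 1$ for $\nu\in[1,2]$) and that $\alpha_n/\alpha_{n+1}\leq q$ (so $\alpha_n^{\nu/2}\leq q^{\nu/2}\alpha_{n+1}^{\nu/2}$). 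The induction hypothesis also keeps the iterates in the fixed ball $\norm{x_n^\delta-x^\dag}_X\leq c_1\alpha_0^{\nu/2}$ on which the hypotheses of the lemma apply, so no circularity arises.

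Collecting the three terms of the recursion then yields $\norm{x_{n+1}^\delta-x^\dag}_X\leq q^{\nu/2}\bigl(a+b\,c_1+\tfrac{L}{2}c_1^2\bigr)\alpha_{n+1}^{\nu/2}$, where $a:=C_\nu\rho+\tau^{-1}$ and $b:=L\rho(C_\nu+\norm{F'(x^\dag)}_{\linop(X,Y)}^{\nu-1})$. To close the induction it suffices to pick $c_1>0$ with $q^{\nu/2}(a+b\,c_1+\tfrac{L}{2}c_1^2)\leq c_1$, i.e. so that the quadratic $\tfrac{L}{2}q^{\nu/2}c_1^2-(1-q^{\nu/2}b)c_1+q^{\nu/2}a$ is nonpositive. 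The main obstacle is verifying that such a $c_1$ exists and is simultaneously compatible with the base case: for $\rho$ small one has $q^{\nu/2}b<1$, and for $\tau$ large together with $\rho$ small the constant term $q^{\nu/2}a$ is small, so the discriminant is positive and the admissible interval for $c_1$ reaches up to roughly $2/(Lq^{\nu/2})$, a threshold bounded away from $0$ independently of $\rho$. Since the base-case requirement $\norm{F'(x^\dag)}_{\linop(X,Y)}^\nu\rho/\alpha_0^{\nu/2}\to 0$ as $\rho\to 0$, one can choose $c_1$ (e.g. near $1/(Lq^{\nu/2})$) satisfying both the quadratic inequality and the base case. This balancing of the two smallness/largeness conditions on $\rho$ and $\tau$ is the crux of the argument.

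With the induction in hand, \eqref{eq:iter:irgn:conv_exact} is immediate for $\delta=0$, since then the recursion of \cref{lem:iter:irgn_rek} holds for every $n$ (the data-error term vanishes). For noisy data I would run the same induction up to $n=N(\delta)$: here the lower half of the a priori choice \eqref{eq:iter:irgn:apriori}, namely $\tau\delta\leq\alpha_n^{(\nu+1)/2}$ for $n<N(\delta)$, is exactly what absorbs the data error $\delta/\sqrt{\alpha_n}$ into the $\tau^{-1}\alpha_n^{\nu/2}$ contribution already present in the coefficient $a$. Evaluating the resulting bound at the stopping index and then applying the upper half of \eqref{eq:iter:irgn:apriori}, namely $\alpha_{N(\delta)}^{(\nu+1)/2}\leq\tau\delta$ so that $\alpha_{N(\delta)}^{\nu/2}\leq(\tau\delta)^{\nu/(\nu+1)}$, yields
$\norm{x_{N(\delta)}^\delta-x^\dag}_X\leq c_1\alpha_{N(\delta)}^{\nu/2}\leq c_1\tau^{\nu/(\nu+1)}\delta^{\nu/(\nu+1)}$, which is \eqref{eq:iter:irgn:conv_noisy} with $c_2:=c_1\tau^{\nu/(\nu+1)}$.
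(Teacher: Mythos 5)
Your proposal is correct and follows essentially the same route as the paper: the paper normalizes to $\xi_n:=\alpha_n^{-\nu/2}\norm{x_n^\delta-x^\dag}_X$, derives from \cref{lem:iter:irgn_rek} the quadratic recursion $\xi_{n+1}\leq a+b\xi_n+c\xi_n^2$, and shows $\xi_n\leq\max\{t_1,\xi_0\}$ via the fixed points $t_1\leq t_2$ of $a+bt+ct^2=t$, which is exactly your choice of a single admissible constant $c_1\in[t_1,t_2]$ dominating $\xi_0$. Your treatment of the base case, the confinement to $B_r(x^\dag)$, and the final evaluation at $N(\delta)$ using both halves of \eqref{eq:iter:irgn:apriori} all match the paper's argument.
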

\begin{proof}
    \Cref{lem:iter:irgn_rek} shows that $\xi_n:= \alpha_n^{-\nu/2}\norm{x_n^\delta - x^\dag}_X$ satisfies the quadratic recursion
    \begin{equation*}
        \label{eq:iter:irgn_conv1}
        \xi_{n+1} \leq a + b \xi_n + c \xi_n^2
    \end{equation*}
    with
    \begin{equation*}
        a:=  q^{\nu/2}(C_\nu\rho+\tau^{-1}),\qquad
        b:=  q^{\nu/2}L\rho\left(C_\nu + \norm{F'(x^\dag)}_{\linop(X,Y)}^{\nu-1}\right),\qquad
        c:=  q^{\nu/2}\frac{L}{2}\rho,
    \end{equation*}
    where we have used that $\nu\geq1$ and hence that $\alpha_n^{-1/2}\leq \alpha_n^{-\nu/2}$ and $\alpha_n^{\nu/2} < \alpha_0^{\nu/2}\leq 1$.
    Clearly we can make $a$, $b$ and $c$ arbitrarily small by choosing $\rho$ sufficiently small and $\tau$ sufficiently large.
    Let now $t_1,t_2$ be the solutions of the fixed-point equation $a+bt+ct^2 = t$, i.e.,
    \begin{equation*}
        t_1 = \frac{2a}{1-b+\sqrt{(1-b)^2-4ac}},\qquad t_2 = \frac{1-b+\sqrt{(1-b)^2-4ac}}{2c}.
    \end{equation*}
    Now the source condition yields $\norm{x_0-x^\dag}_X \leq \norm{F'(x^\dag)}_{\linop(X,Y)}^{\nu} \rho$, and hence we can guarantee that $x_0\in B_r(x^\dag)\subset U$ for some $r>0$ by choosing $\rho$ sufficiently small. In particular, we can assume that $t_2 \geq \xi_0$.

    We now show by induction that
    \begin{equation}
        \label{eq:iter:irgn:conv_ind1}
        \xi_n \leq \max\{t_1,\xi_0\} =:  C_\xi\qquad\text{for all }n\leq N(\delta).
    \end{equation}
    For $n=0$, this claim follows straight from the definition; we thus assume that \eqref{eq:iter:irgn:conv_ind1} holds for some fixed $n<N(\delta)$.
    Then we have in particular that $\xi_n\leq \xi_0$, and the definition of $\xi_n$ together with the assumptions that $\alpha_n\leq \alpha_0\leq 1$ and $\nu\geq 1$ imply that
    \begin{equation*}
        \norm{x_n^\delta-x^\dag}_X\leq \alpha_n^{\nu/2}\alpha_0^{-\nu/2}\norm{x_0-x^\dag}_X\leq r
    \end{equation*}
    and hence that $x_n^\delta \in B_r(x^\dag)\subset U$.
    This shows that the iteration \eqref{eq:iter:irgn} is well-defined and that we can apply \cref{lem:iter:irgn_rek}. We now distinguish two cases in \eqref{eq:iter:irgn:conv_ind1}:
    \begin{enumerate}
        \item $\xi_n\leq t_1$: Then we have by $a,b,c\geq 0$ and the definition of $t_1$ that
            \begin{equation*}
                \xi_{n+1}\leq a +b\xi_n + c\xi_n^2 \leq a+b t_1 +b t_1^2 =t_1.
            \end{equation*}
        \item $t_1 < \xi_n \leq \xi_0$: Since we have assumed that $t_2\geq \xi_0$, it follows that $\xi_n\in(t_1,t_2]$, and $a+(b-1)t+ct^2\leq 0$ for $t\in[t_1,t_2]$ due to $c\geq 0$ implies that
            \begin{equation*}
                \xi_{n+1}\leq a +b\xi_n + c\xi_n^2 \leq \xi_n \leq \xi_0.
            \end{equation*}
    \end{enumerate}
    In both cases, we have obtained \eqref{eq:iter:irgn:conv_ind1} for $n+1$.

    For $\delta=0$ we have $N(0)=\infty$, and \eqref{eq:iter:irgn:conv_ind1} implies that
    \begin{equation*}
        \norm{x_n-x^\dag}_X \leq \alpha_n^{\nu/2} C_\xi \qquad\text{for all }n\in\N,
    \end{equation*}
    yielding \eqref{eq:iter:irgn:conv_exact} with $c_1 :=  C_\xi$.
    For $\delta>0$, \eqref{eq:iter:irgn:conv_ind1} for $n=N(\delta)$ together with the parameter choice \eqref{eq:iter:irgn:apriori} implies that
    \begin{equation*}
        \norm{x_{N(\delta)}-x^\dag}_X \leq \alpha_{N(\delta)}^{\nu/2} C_\xi \leq (\tau\delta)^{\frac{\nu}{\nu+1}} C_\xi,
    \end{equation*}
    yielding \eqref{eq:iter:irgn:conv_noisy} with $c_2 :=  C_\xi \tau^{\frac{\nu}{\nu+1}}$.
\end{proof}
In a similar way (albeit with a bit more effort), it is also possible to derive convergence rates (up to the saturation $\nu_0-1=1$) if the stopping index is chosen according to the discrepancy principle, see \cite[Theorem~4.13]{Kaltenbacher}.

\part{Statistical inverse problems}

\chapter{Frequentist inverse problems}\label{chap:frequentist}

Until now, we have treated the noisy data $y^\delta$ as an arbitrary element of the data space $Y$ of which we know nothing beside the noise level $\delta = \norm{y^\delta-y}_Y$.
In \emph{statistical inverse problems}, the noisy data are instead considered as a random variable with a (hopefully) known distribution. Rather than the worst-case regularization error over all data that are compatible with the noise level, one can then study the \emph{average} regularization error over \emph{all} data with respect to this distribution. (In contrast, the first approach is sometimes referred to as \emph{deterministic inverse problems}.)

In statistics, this is also called \emph{inference}, and one distinguishes two approaches:%
\footnote{This distinction is based on fundamental philosophical differences on the nature of probability: for \enquote{frequentists}, probability arises as the limit of relative frequencies of outcomes of random experiments as the number of repetitions tends to infinity (which may make sense for measurements but not for the sought-for exact solution), while \enquote{Bayesians} consider probability as a quantification of ignorance (which is also -- and especially -- relevant for the exact solution). But regardless of philosophical interpretation, these two approaches lead to practical differences in methodology.}
\begin{enumerate}
    \item \emph{frequentist inference}, where the exact (minimum norm) solution $x^\dag$ is treated as a fixed but unknown element, and
    \item \emph{Bayesian inference}, where the exact solution is treated as a random variable with a given distribution as well.
\end{enumerate}

\bigskip

In this chapter, we will consider the first approach, using a formulation (but not notation) that follows statistical conventions. As in the previous chapters, we focus here on infinite-dimensional problems; this is referred to as \emph{nonparametric statistics}, in contrast to \emph{parametric} statistics, which studies models such as linear, polynomial, or exponential models that can be described by finitely many scalar parameters which are then estimated.

\section{Statistical noise model and estimators}

We first have to establish what we mean by random noise in infinite-dimensional spaces.
Let $X$ and $Y$ again be Hilbert spaces and $T\in \linop(X,Y)$ be a bounded linear operator. We assume that for given $x\in X$ we have measured
\begin{equation}\label{eq:frequentist:gleichung}
    y^\delta := Tx + \delta \xi,
\end{equation}
where $\xi$ is a random perturbation and $\delta$ is the noise level. We consider here \emph{Gaussian white noise}; such noise not only occurs in a wide range of practical applications but is also the simplest case.
(Alternatives would be, e.g., \emph{Laplace} or \emph{Poisson noise}.)
The fundamental difficulty is here that $\xi$ -- and hence $y^\delta$ in general\footnote{and, indeed, with probability $1$} -- is \emph{not} an element of $Y$. It is instead a \emph{stochastic process}, which maps any $y\in Y$ linearly and continuously to a real random variable. (The noise $\xi$ is thus only defined by how it acts on the \enquote{exact} measurement $Tx$.)

The proper definition requires some concepts from probability theory, which we assume to be familiar and refer to standard textbooks like \cite{Klenke,Kallenberg} for a rigorous introduction.
Let $\Omega$ be a sample space, $\Sigma$ be a sigma-algebra on $\Omega$, and $\mu:\Sigma\to[0,1]$ be a probability measure. Then $L^2(\Omega; \mu)$ denotes the space of all random variables that are square-integrable with respect to $\mu$. For random variables $X,Y:\Omega\to \R$ on the probability space $(\Omega,\Sigma,\mu)$, the expectation and covariance are then defined, respectively, as 
\begin{equation*}
    \Exp[X] := \int_\Omega X\, d\mu,\qquad
    \Cov[X,Y] := \Exp\left[(X- \Exp[X])(Y-\Exp[Y])\right].
\end{equation*}
If -- as usual in functional analysis -- we write $\dual{\xi,y}_Y$ for the action of $\xi\in Y^*$ on $y\in Y$, we can define Gaussian white noise as follows.
\begin{defn}
    Let $(\Omega,\Sigma,\mu)$ be a probability space.
    A bounded linear operator $\xi:Y\to L^2(\Omega; \mu)$ is called a \emph{white noise process} (with expectation $0$ and covariance $\Id$) on $Y$ if
    \begin{enumerate}
        \item$\displaystyle\Exp[\dual{\xi,y}_Y] = 0$  for all $y\in Y$;
        \item$\displaystyle\Cov[\dual{\xi,y_1}_Y,\dual{\xi,y_2}_Y] = \inner{y_1}{y_2}_Y$ for all $y_1,y_2\in Y$.
    \end{enumerate}
    A white noise process $\xi$ is called \emph{Gaussian} if for all $n\in \N$ and any pairwise distinct $y_1,\dots,y_n\in Y$, the scalar random variables $\dual{\xi,y_1}_Y,\dots,\dual{\xi,y_n}_Y$ are independently and identically normally distributed.
\end{defn}

If $K:X\to Y$ is a compact linear operator, we can take for $y\in Y$ in particular the sequence $\{u_n\}_{n\in\N}\subset Y$ of its singular vectors. In this case, the scalar random variables
\begin{equation*}
    \xi_n := \dual{\xi,u_n}_Y, \qquad\text{for all } n\in\N,
\end{equation*}
are independently and identically normally distributed with expectation $0$ and standard deviation $1$ since $\Exp[\xi_n] = 0$ and $\Cov[\xi_m,\xi_n] = \inner{u_n}{u_m}_Y = 1$ for $n=m$ and $0$ otherwise.
Similarly, we obtain from \eqref{eq:frequentist:gleichung} the random variables
\begin{equation*}
    y_n^\delta := \dual{y^\delta,u_n}_Y = \inner{Kx}{u_n}_Y + \delta\xi_n = \sigma_n \inner{x}{v_n}_X + \delta\xi_n\qquad\text{for all }n\in\N,
\end{equation*}
or equivalently since $\sigma_n>0$,
\begin{equation}\label{eq:frequentist:folgenraum}
    x_n^\delta := \sigma_n^{-1} y_n^\delta = \inner{x}{v_n}_X + \frac{\delta}{\sigma_n} \xi_n \qquad\text{for all } n\in\N.
\end{equation}
This is referred to in statistics as the \emph{sequence space model}; in this formulation, inference entails  estimating $x_n := \inner{x}{v_n}_X$ given $x_n^\delta$. Since $\sigma_n\to 0$ as $n\to \infty$, this is of course not possible in a stable way for all $n\in \N$. However, for $\delta = 0$ we recover from \eqref{eq:frequentist:folgenraum} with
\begin{equation*}
    \sum_{n\in\N} x_n v_n = \sum_{n\in\N} \sigma_n^{-1} y_n v_n = \sum_{n\in\N} \sigma_n^{-1} \inner{y}{u_n}_Y v_n = x^\dag
\end{equation*}
the minimum norm solution; compare \cref{thm:inverse:picard}. Hence $x_n = \inner{x}{v_n}_X = \inner{x^\dag}{v_n}_X$ for every solution $x\in X$ to $Kx=y$.

\bigskip

Similarly to \cref{chap:regularization}, for $\delta>0$ we instead define a \emph{linear estimator}
\begin{equation*}
    x^\delta_\gamma := \sum_{n\in\N} \gamma_n x_n^\delta v_n
\end{equation*}
for a suitable sequence $\{\gamma_n\}_{n\in\N} \subset [0,\infty)$.

Clearly, every filter $\{\phi_\alpha\}_{\alpha>0}$ together with a parameter choice rule $\alpha$ defines a linear estimator via
\begin{equation}\label{eq:frequentist:filter}
    \gamma_n := \phi_{\alpha(\delta,y^\delta)}(\sigma_n^2)\sigma_n^2\qquad \text{for all }n\in\N.
\end{equation}
(Hence estimators correspond to regularization \emph{methods} rather than operators in deterministic inverse problems.)
In particular,
\begin{enumerate}
    \item truncated singular value decomposition corresponds to $\gamma_n = \begin{cases} 1 &\text{if } \sigma_n^2\geq \alpha(\delta,y^\delta),\\ 0 &\text{otherwise;}\end{cases}$
    \item Tikhonov regularization corresponds to $\gamma_n = \frac{\sigma_n^2}{\sigma_n^2+\alpha(\delta,y^\delta)}$;
    \item Landweber regularization corresponds to
        $\gamma_n = 1-(1-\omega\sigma_n^2)^{1/\alpha(\delta,y^\delta)}$.
\end{enumerate}
(For the practical implementation, of course, one would use the alternative formulations from the corresponding chapters instead of the sequence space model.)

However, filters are not the only way of defining linear estimators. Of special theoretical (albeit not practical) importance is the \emph{Pinsker estimator}, defined by setting
\begin{equation*}
    \gamma_n := \max\{0,1-\kappa_\delta a_n\}
\end{equation*}
for a monotonically increasing sequence $\{a_n\}_{n\in\N}\subset (0,\infty)$ and the solution $\kappa_\delta>0$ to
\begin{equation}\label{eq:frequentist:pinsker}
    \kappa_\delta \rho^2- \delta^2 \sum_{n\in\N} \frac{a_n}{\sigma_n^2} \max\{0, 1-\kappa_\delta a_n\} = 0
\end{equation}
for a further constant $\rho>0$.
Here, the parameter $\kappa_\delta$ corresponds to the regularization parameter; the Pinsker estimator thus includes a specific (a priori) parameter choice strategy.
This parameter can be more explicitly given as
\begin{equation}\label{eq:frequentist:pinsker_parameter}
    \kappa_\delta = \frac{\sum_{n=1}^{N_\delta} \sigma_n^{-2}a_n}{\frac{\rho^2}{{\delta}^2} + \sum_{n=1}^{N_\delta} \sigma_n^{-2}a_n}
    \quad\text{for}\quad
    N_\delta := \max\setof{N\in\N}{\delta^2 \sum_{n=1}^N \sigma_n^2 a_n(a_N-a_n)\leq \rho^2}.
\end{equation}

\section{Risk and order optimality}

Since the noise and therefore the data are random, the corresponding estimator is a random variable as well. Instead of the worst-case regularization error \eqref{eq:regularisierung:paramwahl}, we now define for a (not necessarily linear) estimator $x(y^\delta)$ the \emph{risk}
\begin{equation}
    R(x(y^\delta),x^\dag) := \Exp[\norm{x(y^\delta)-x^\dag}_X^2],
\end{equation}
where the expectation is taken over all possible perturbations $\xi$. (Note that the risk is based on the \emph{squared} regularization error.)
As for deterministic noise, the analysis is based on the fundamental decomposition of the regularization error.
\begin{theorem}\label{thm:frequentist:gesamtfehler}
    Let $x^\delta_\gamma$ be a linear estimator for the sequence space model \eqref{eq:frequentist:folgenraum}. Then
    \begin{equation}\label{eq:frequentist:gesamtfehler}
        R(x^\delta_\gamma,x^\dag) = \sum_{n\in\N}\left( (1-\gamma_n)^ 2 x_n^2 + \delta^2 \frac{\gamma_n^2}{\sigma_n^2}\right).
    \end{equation}
\end{theorem}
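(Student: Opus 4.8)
The plan is to compute the risk directly by substituting the definitions of the linear estimator and the sequence space model, then exploiting the orthonormality of $\{v_n\}_{n\in\N}$ together with the statistical properties of the white noise. First I would write out the error $x_\gamma^\delta - x^\dag$ in the basis $\{v_n\}$: since $x_\gamma^\delta = \sum_{n\in\N}\gamma_n x_n^\delta v_n$ and $x^\dag = \sum_{n\in\N} x_n v_n$ (using the identity $x_n = \inner{x^\dag}{v_n}_X$ established just before the statement), the $v_n$-coefficient of the error is $\gamma_n x_n^\delta - x_n$. Inserting the sequence space model $x_n^\delta = x_n + \frac{\delta}{\sigma_n}\xi_n$ from \eqref{eq:frequentist:folgenraum}, this coefficient becomes
\begin{equation*}
    \gamma_n\left(x_n + \tfrac{\delta}{\sigma_n}\xi_n\right) - x_n = (\gamma_n - 1)x_n + \gamma_n\tfrac{\delta}{\sigma_n}\xi_n.
\end{equation*}

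Next, because $\{v_n\}_{n\in\N}$ is orthonormal, Parseval's identity gives $\norm{x_\gamma^\delta - x^\dag}_X^2 = \sum_{n\in\N}\abs{(\gamma_n-1)x_n + \gamma_n\tfrac{\delta}{\sigma_n}\xi_n}^2$. I would then take the expectation and interchange it with the (infinite) sum. Expanding the square inside the expectation produces three types of terms: the deterministic $(1-\gamma_n)^2 x_n^2$, a cross term proportional to $(\gamma_n-1)\gamma_n\tfrac{\delta}{\sigma_n} x_n \Exp[\xi_n]$, and the noise term $\gamma_n^2\tfrac{\delta^2}{\sigma_n^2}\Exp[\xi_n^2]$. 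Here I invoke the defining properties of Gaussian white noise: $\Exp[\xi_n] = \Exp[\dual{\xi,u_n}_Y] = 0$ kills the cross terms, while $\Exp[\xi_n^2] = \Cov[\dual{\xi,u_n}_Y,\dual{\xi,u_n}_Y] = \inner{u_n}{u_n}_Y = 1$ (so each $\xi_n$ has variance $1$) reduces the noise term to exactly $\delta^2\gamma_n^2/\sigma_n^2$. Summing over $n$ then yields \eqref{eq:frequentist:gesamtfehler}.

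The main obstacle is the rigorous justification of interchanging the expectation with the infinite summation, i.e.\ applying Tonelli/Fubini to move $\Exp$ inside $\sum_{n\in\N}$. Since every summand $\abs{(\gamma_n-1)x_n + \gamma_n\tfrac{\delta}{\sigma_n}\xi_n}^2$ is nonnegative, the monotone convergence theorem applies and the interchange is valid provided the resulting series is interpreted as a value in $[0,\infty]$; the identity then holds as an equality of (possibly infinite) quantities, with the risk being finite precisely when the right-hand side converges. I would state this explicitly rather than gloss over it, since for a general sequence $\{\gamma_n\}$ (e.g.\ the Pinsker weights) the convergence is not automatic and the formula is most useful stated for all admissible $\gamma$. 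A secondary subtlety worth a remark is that $\xi$ is only a stochastic process and not an element of $Y$, so the manipulation must be carried out at the level of the scalar random variables $\xi_n = \dual{\xi,u_n}_Y \in L^2(\Omega;\mu)$; working coefficientwise in the $\{v_n\}$ basis is exactly what sidesteps the need to give $\xi$ pointwise meaning in $Y$.
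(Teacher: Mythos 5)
Your proof is correct and follows essentially the same route as the paper's: expand the error coefficientwise in the basis $\{v_n\}$, insert the sequence space model, expand the square, and use $\Exp[\xi_n]=0$ and $\Exp[\xi_n^2]=1$ to kill the cross terms and normalize the variance. Your additional care in justifying the interchange of expectation and infinite sum via Tonelli/monotone convergence is a point the paper glosses over (it simply invokes linearity of the expectation), and your reading of the identity as an equality in $[0,\infty]$ is the right way to make that step rigorous.
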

\begin{proof}
    First, the definition of $x^\delta_\gamma$ and the characterization of $x^\dag$ from \cref{thm:inverse:picard} implies that
    \begin{equation*}
        \norm{x^\delta_\gamma - x^\dag}_X^2 = \sum_{n\in\N} (\gamma_n x_n^\delta - x_n)^2.
    \end{equation*}
    We now insert the productive zero $\gamma_n x_n - \gamma_n x_n$, apply the definition \eqref{eq:frequentist:folgenraum}, expand the square, and use the linearity of the expectation to obtain
    \begin{equation*}
        \begin{aligned}
            \Exp[\norm{x^\delta_\gamma-x^\dag}_X^2]
            &= \sum_{n\in\N} \Exp\left[\left((\gamma_nx_n^\delta - \gamma_n x_n) + (\gamma_n x_n - x_n)\right)^2\right] \\
            &= \sum_{n\in\N} \Exp\left[\left( \gamma_n \frac\delta{\sigma_n}\xi_n+(\gamma_n-1)x_n \right)^2\right] \\
            &= \sum_{n\in\N} \left((1-\gamma_n)^2 x_n^2 + 2 (1-\gamma_n)x_n \cdot \gamma_n \frac{\delta}{\sigma_n^2}\Exp[\xi_n] +
            \delta^2 \frac{\gamma_n^2}{\sigma_n^2} \Exp[\xi_n^2]\right).
        \end{aligned}
    \end{equation*}
    Since the $\xi_n$ by assumption are normally distributed with expectation $0$ and covariance $1$, we have $\Exp[\xi_n] = 0$ and $\Exp[\xi_n^2] = \Cov[\xi_n,\xi_n]=1$, which yields the claim.
\end{proof}
Comparing \eqref{eq:frequentist:gesamtfehler} to \eqref{eq:regularisierung:splitting}, the first term corresponds exactly to the approximation error, here called \emph{bias}, while the second term corresponds to the propagated data error, here called \emph{variance}.
As in spectral regularization, our job is now to choose the weights $\gamma_n$ (corresponding to the parameter choice for $\alpha$) such that these two terms are optimally balanced.
For example, if the estimator is defined as in \eqref{eq:frequentist:filter} via a filter $\{\phi_\alpha\}_{\alpha>0}$ and we again set $r_\alpha(\lambda) = 1-\lambda\phi_\alpha(\lambda)$, then
\begin{equation*}
    R(x^\delta_\gamma,x^\dag) = \sum_{n\in\N} \left(r_{\alpha(\delta,y^\delta)}(\sigma_n)^2 x_n^2 + \delta^2  \phi_{\alpha(\delta,y^\delta)}(\sigma_n^2)^2\sigma_n^2\right)
\end{equation*}
(where in contrast to the worst-case error we even have equality).
Here the first (purely deterministic) term can be estimated exactly as in spectral regularization. However, the second term has to be treated differently: we cannot as in \cref{lem:spektral:beschraenkt} simply use that $\sigma_n^2\phi_\alpha(\sigma_n^2)\leq C_\phi^2$ 
and then bound the remaining term via Bessel's inequality by $\norm{y-y^\delta}_Y^2\leq \delta^2$ -- this is exactly the price that we have to pay for considering stochastic noise $\xi\notin Y$.

\bigskip

As for deterministic inverse problems, one is now interested in bounding the risk under a source condition. Similarly to \cref{lem:regularisierung:quell}, it is straightforward to show that for the sequence space model \eqref{eq:frequentist:folgenraum}, the source condition $x\in X_{\nu,\rho}$ holds for some $\nu,\rho>0$ if and only if
\begin{equation}
    \sum_{n\in\N} \sigma_n^{-2\nu} x_n^2 \leq \rho^2.
\end{equation}
(Of course one could consider -- also for deterministic problems -- more general weights $a_n\to\infty$ instead of $\sigma_n^{-\nu}$; one then refers to $X_{\nu,\rho}$ as a \emph{coefficient ellipsoid}.)
The consequence of the different variance term in \cref{thm:frequentist:gesamtfehler} is now that for stochastic inverse problems, the singular values of $K$ directly influence the convergence rate. We show this for the example of truncated singular value decomposition for moderately ill-posed problems.
\begin{theorem}\label{thm:frequentist:tsvd}
    Let $K\in \calK(X,Y)$ have singular values satisfying
    \begin{equation*}
        c_\mu n^{-\mu} \leq \sigma_n \leq C_\mu n^{-\mu}\qquad\text{for all }n\in\N
    \end{equation*}
    for some $\mu>0$ and $C_\mu>c_\mu>0$, and let $x^\dag\in X_{\nu,\rho}$ for $\nu,\rho>0$. If $x_\gamma^\delta$ is a linear estimator defined through
    \begin{equation}\label{eq:frequentist:tsvd_apriori}
        \gamma_n = \begin{cases} 1 & \text{if } n\leq N(\delta),\\ 0 &\text{otherwise},\end{cases}
        \qquad \text{with}\qquad
        c_N\delta^{\frac{-2}{2\mu(\nu+1)+1}} \leq N(\delta) \leq C_N\delta^{\frac{-2}{2\mu(\nu+1)+1}}
    \end{equation}
    for some $C_N>c_N>0$, then there exists a $C>0$ such that
    \begin{equation}\label{eq:frequentist:tsvd_rate}
        R(x_\gamma^\delta,x^\dag) \leq C \delta^\frac{4\mu\nu}{2\mu(\nu+1)+1} \qquad\text{as }\delta\to 0.
    \end{equation}
\end{theorem}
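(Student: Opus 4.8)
The plan is to start from the exact risk decomposition in \cref{thm:frequentist:gesamtfehler}. For the truncated singular value estimator \eqref{eq:frequentist:tsvd_apriori}, writing $N := N(\delta)$, the weights satisfy $1-\gamma_n = 0$ for $n\le N$ and $\gamma_n = 0$ for $n>N$, so the formula \eqref{eq:frequentist:gesamtfehler} collapses to
\begin{equation*}
    R(x_\gamma^\delta,x^\dag) = \sum_{n=N+1}^\infty x_n^2 + \delta^2 \sum_{n=1}^N \sigma_n^{-2}.
\end{equation*}
The first term is the bias and the second the variance, and the strategy is to bound each polynomially in $N$, then insert the a priori choice of $N(\delta)$ to balance them.

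For the bias I would use the sequence-space form of the source condition $x^\dag\in X_{\nu,\rho}$, namely $\sum_{n\in\N}\sigma_n^{-2\nu}x_n^2\le\rho^2$. Since $\{\sigma_n\}_{n\in\N}$ is decreasing, $\sigma_n^{-2\nu}\ge\sigma_{N+1}^{-2\nu}$ for $n>N$, whence
\begin{equation*}
    \sum_{n=N+1}^\infty x_n^2 \le \sigma_{N+1}^{2\nu}\sum_{n=N+1}^\infty \sigma_n^{-2\nu}x_n^2 \le \sigma_{N+1}^{2\nu}\rho^2 \le C_\mu^{2\nu}\rho^2\, N^{-2\mu\nu},
\end{equation*}
using the upper singular value bound $\sigma_{N+1}\le C_\mu(N+1)^{-\mu}\le C_\mu N^{-\mu}$. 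For the variance I would instead use the lower bound $\sigma_n\ge c_\mu n^{-\mu}$, giving $\sigma_n^{-2}\le c_\mu^{-2}n^{2\mu}$, together with the elementary estimate $\sum_{n=1}^N n^{2\mu}\le C'N^{2\mu+1}$ obtained by comparing the sum with $\int_0^N (t+1)^{2\mu}\,dt$. This yields $\delta^2\sum_{n=1}^N\sigma_n^{-2}\le C''\delta^2 N^{2\mu+1}$.

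Combining the two bounds gives $R(x_\gamma^\delta,x^\dag)\le C_1 N^{-2\mu\nu} + C_2\,\delta^2 N^{2\mu+1}$, and it remains only to insert the prescribed $N(\delta)$. Setting $\beta:=2\mu(\nu+1)+1$, the \emph{lower} bound $N\ge c_N\delta^{-2/\beta}$ controls the bias by a multiple of $\delta^{4\mu\nu/\beta}$ (the exponent being negative), while the \emph{upper} bound $N\le C_N\delta^{-2/\beta}$ controls the variance by a multiple of $\delta^{2-(4\mu+2)/\beta}$. The decisive arithmetic identity is $2\beta-(4\mu+2)=4\mu\nu$, so the variance exponent also equals $4\mu\nu/\beta$, and both contributions match the claimed rate $\delta^{4\mu\nu/(2\mu(\nu+1)+1)}$. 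There is no genuine obstacle here; the only point requiring care is that the two-sided bound on $N(\delta)$ must be used \emph{asymmetrically} — the lower bound for the bias and the upper bound for the variance — and that the chosen exponent is precisely the one balancing the two, with the polynomial sum estimate for the variance being the sole mildly technical ingredient.
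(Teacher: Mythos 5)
Your proposal is correct and follows essentially the same route as the paper's proof: the same bias--variance split from \cref{thm:frequentist:gesamtfehler}, the same use of the source condition with the upper singular-value bound for the bias and the lower bound with a polynomial sum estimate for the variance, and the same insertion of the a priori choice of $N(\delta)$. Your explicit verification of the exponent identity $2\beta-(4\mu+2)=4\mu\nu$ and the remark that the two-sided bound on $N(\delta)$ is used asymmetrically are correct and, if anything, slightly more careful than the paper's own write-up.
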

\begin{proof}
    Applying \cref{thm:frequentist:gesamtfehler} to this choice of $\gamma_n$, we obtain with $N:=N(\delta)$
    \begin{equation*}
        R(x_\gamma^\delta, x^\dag)
        = \sum_{n = N+1}^\infty x_n^2 + \delta^2 \sum_{n=1}^N \sigma_n^{-2}
    \end{equation*}
    since the second sum is finite.
    For the first term, we can use the source condition together with the assumption on the singular values to estimate
    \begin{equation*}
        \sum_{n = N+1}^\infty x_n^2 = \sum_{n = N+1}^\infty \sigma_n^{-2\nu} x_n^2\sigma_n^{2\nu} \leq \sigma_N^{2\nu} \rho^2 \leq C_\mu N^{-2\mu\nu}\rho^2.
    \end{equation*}
    For the second term, we can similarly use the assumption on the singular values to obtain%
    \footnote{For given $N\in\N$, this is of course a rather generous upper bound. However, \emph{Faulhaber's formula} states that $\sum_{n=1}^N n^r = \frac1{r+1}N^{r+1} + \mathcal{O}(N^r)$ as $N\to \infty$ for any $r>{-1}$; this formula can be derived via, e.g., the Euler--Maclaurin formula (and for $r\in\N$ directly via induction and the binomial formula).}
    \begin{equation*}
        \delta^2 \sum_{n=1}^N \sigma_n^{-2} \leq \delta^2 c_\mu \sum_{n=1}^N n^{2\mu} \leq \delta^2 c_\mu N\cdot N^{2\mu} = c_\mu \delta^2 N^{2\mu+1}.
    \end{equation*}
    Inserting the parameter choice rule \eqref{eq:frequentist:tsvd_apriori} in both estimates and simplifying then yields \eqref{eq:frequentist:tsvd_rate}.
\end{proof}
Compare this to the \enquote{deterministic} order optimal rate $\delta^{\frac{2\nu}{\nu+1}} = \delta^{\frac{4\mu\nu}{2\mu(\nu+1)}}$ from \cref{thm:spektral:apriori,ex:spectral:tsvd} of the squared(!) worst-case errror for the truncated singular value decomposition: the \enquote{statistical} rate \eqref{eq:frequentist:tsvd_rate} is not only slower but also explicitly depends on the decay rate of the singular values.
Since the rate -- and hence the parameter choice rule -- therefore depends even more on information that is usually not available in practice, in statistics one is less interested in the concrete rate for an estimator and instead defines order optimality directly as the smallest possible risk over a given class of estimators. Hence an estimator $x(y^\delta)$ is called a \emph{linear minimax estimator} for $X_{\nu,\rho}$ if
\begin{equation*}
    \sup_{x^\dag\in X_{\nu,\rho}} R(x(y^\delta),x^\dag) = \inf_{\gamma}\sup_{x^\dag\in X_{\nu,\rho}} R(x_\gamma^\delta,x^\dag),
\end{equation*}
where the infimum is taken over all sequences $\{\gamma_n\}_{n\in\N}\subset [0,\infty)$.
If we only have that
\begin{equation*}
    \sup_{x^\dag\in X_{\nu,\rho}} R(x(y^\delta),x^\dag) \leq C \inf_{\gamma}\sup_{x^\dag\in X_{\nu,\rho}} R(x_\gamma^\delta,x^\dag)
\end{equation*}
for some constant $C>0$, then $x(y^\delta)$ is said to attain the \emph{(linear) minimax rate}.

Surprisingly, it is even possible to explicitly determine the linear minimax estimator.
\begin{theorem}\label{thm:frequentist:pinsker}
    The Pinsker estimator is a linear minimax estimator. Specifically, for all $\nu,\rho>0$ it holds that
    \begin{equation*}
        \inf_{\gamma}\sup_{x^\dag\in X_{\nu,\rho}} R(x_\gamma^\delta,x^\dag)
        = \delta^2\sum_{n\in\N} \sigma_n^{-2}\max\{0,1-\kappa_\delta \sigma_n^{-\nu}\}
    \end{equation*}
    for the solution $\kappa_\delta$ to \eqref{eq:frequentist:pinsker} with the given $\rho$ and $a_n := \sigma_n^{-\nu}$,
    where the infimum and supremum are attained, respectively, for
    \begin{align*}
        \bar \gamma_n &= \max\{0,1-\kappa_\delta \sigma_n^{-\nu}\},\\
        \bar x_n^2 &=  \frac{\delta^2}{\kappa_\delta}\sigma_n^{-2+\nu}\max\{0,1-\kappa_\delta\sigma_n^{-\nu}\}.
    \end{align*}
\end{theorem}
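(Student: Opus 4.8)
The plan is to recognize the minimax problem as having a saddle point and to verify that the Pinsker pair is one, which avoids invoking any general minimax theorem. First I would pass to the sequence model, where by \cref{thm:frequentist:gesamtfehler} the risk of a linear estimator is the decoupled sum \eqref{eq:frequentist:gesamtfehler}, and where the source set $X_{\nu,\rho}$ is the coefficient ellipsoid $\sum_{n\in\N}\sigma_n^{-2\nu}x_n^2\leq\rho^2$. The decisive reparametrization is $t_n:=x_n^2$: writing $a_n:=\sigma_n^{-\nu}$ and $b_n:=\delta^2/\sigma_n^2$ and
\begin{equation*}
    F(\gamma,t):=\sum_{n\in\N}\left((1-\gamma_n)^2 t_n+\delta^2\frac{\gamma_n^2}{\sigma_n^2}\right),\qquad E:=\setof{t\in[0,\infty)^\N}{\sum_{n\in\N}a_n^2 t_n\leq\rho^2},
\end{equation*}
we have $R(x_\gamma^\delta,x^\dag)=F(\gamma,t)$ with $t\in E$, and since $R$ depends on $x^\dag$ only through the $x_n^2$, the map $x\mapsto t$ sends $X_{\nu,\rho}$ onto $E$, so $\sup_{x^\dag\in X_{\nu,\rho}}R(x_\gamma^\delta,x^\dag)=\sup_{t\in E}F(\gamma,t)$. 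The point is that $F$ is \emph{affine} in $t$ and strictly convex in each $\gamma_n$. My goal is to show that $(\bar\gamma,\bar t)$, with $\bar t_n=\bar x_n^2$ and $\bar\gamma_n=\max\{0,1-\kappa_\delta\sigma_n^{-\nu}\}$ from the statement, is a saddle point of $F$ on $[0,\infty)^\N\times E$.

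For the inner minimization I would fix $t=\bar t$ and minimize $F(\cdot,\bar t)$ coordinatewise. Each term $(1-\gamma_n)^2\bar t_n+\delta^2\gamma_n^2/\sigma_n^2$ is convex in $\gamma_n$ with unconstrained minimizer $\gamma_n=\bar t_n/(\bar t_n+b_n)$; plugging in the explicit $\bar t_n=\tfrac{\delta^2}{\kappa_\delta}\sigma_n^{-2+\nu}\max\{0,1-\kappa_\delta\sigma_n^{-\nu}\}$ gives, on the active coordinates $\kappa_\delta\sigma_n^{-\nu}<1$, exactly $\bar t_n+b_n=b_n\sigma_n^\nu/\kappa_\delta$ and hence $\bar t_n/(\bar t_n+b_n)=1-\kappa_\delta\sigma_n^{-\nu}=\bar\gamma_n$, while on the inactive coordinates $\bar t_n=0$ forces the minimizer to be $0=\bar\gamma_n$. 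This establishes $F(\bar\gamma,\bar t)\leq F(\gamma,\bar t)$ for all $\gamma\geq 0$, and records that the minimal coordinate value is $b_n\bar t_n/(\bar t_n+b_n)$.

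For the outer maximization I would fix $\gamma=\bar\gamma$; then $F(\bar\gamma,\cdot)$ is affine in $t$ with linear part $\sum_{n\in\N}(1-\bar\gamma_n)^2 t_n$. The key elementary estimate is $(1-\bar\gamma_n)^2\leq\kappa_\delta^2 a_n^2$, with equality precisely on the active set: on active coordinates $1-\bar\gamma_n=\kappa_\delta\sigma_n^{-\nu}=\kappa_\delta a_n$, and on inactive coordinates $1-\bar\gamma_n=1\leq\kappa_\delta a_n$. Consequently, for every $t\in E$,
\begin{equation*}
    \sum_{n\in\N}(1-\bar\gamma_n)^2 t_n\leq\kappa_\delta^2\sum_{n\in\N}a_n^2 t_n\leq\kappa_\delta^2\rho^2,
\end{equation*}
whereas at $t=\bar t$ the factor $(1-\bar\gamma_n)^2=\kappa_\delta^2 a_n^2$ holds on the support of $\bar t$, giving $\sum_{n\in\N}(1-\bar\gamma_n)^2\bar t_n=\kappa_\delta^2\sum_{n\in\N}a_n^2\bar t_n=\kappa_\delta^2\rho^2$. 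This last equality is exactly the requirement that the ellipsoid constraint be \emph{active} at $\bar t$, which is what the defining equation \eqref{eq:frequentist:pinsker} for $\kappa_\delta$ encodes (after dividing by $\kappa_\delta$); here I would also argue existence and uniqueness of $\kappa_\delta$ from the monotonicity of the left-hand side of \eqref{eq:frequentist:pinsker} in $\kappa_\delta$, or simply invoke the closed form \eqref{eq:frequentist:pinsker_parameter}. Thus $\bar t$ maximizes $F(\bar\gamma,\cdot)$ over $E$, i.e.\ $F(\bar\gamma,t)\leq F(\bar\gamma,\bar t)$.

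Having both saddle inequalities, I would conclude as usual: $\inf_\gamma\sup_{t\in E}F\leq\sup_{t\in E}F(\bar\gamma,t)=F(\bar\gamma,\bar t)=\inf_\gamma F(\gamma,\bar t)\leq\sup_{t\in E}\inf_\gamma F$, and weak duality $\sup\inf\leq\inf\sup$ forces all quantities to equal $F(\bar\gamma,\bar t)$. This identifies $\bar\gamma$ (the Pinsker weights with $a_n=\sigma_n^{-\nu}$) as a linear minimax estimator and $\bar t=\bar x^2$ as the least favorable coefficients, and evaluating $F(\bar\gamma,\bar t)=\sum_{n\in\N}b_n\bar t_n/(\bar t_n+b_n)=\delta^2\sum_{n\in\N}\sigma_n^{-2}\max\{0,1-\kappa_\delta\sigma_n^{-\nu}\}$ yields the claimed value. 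The main obstacle I anticipate is not the saddle computation itself but the bookkeeping around the constraint: showing that $\kappa_\delta$ is well defined, that the defining equation indeed makes the ellipsoid constraint active at $\bar t$, and that the series for the variance term and for the minimax value converge (so that the suprema and infima are genuinely attained and finite); these I would handle via the monotonicity argument for $\kappa_\delta$ together with the fact that $\bar\gamma_n$ is eventually zero, so only finitely many coordinates are active.
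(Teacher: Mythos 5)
Your proposal is correct and is essentially the paper's own argument: the paper likewise obtains the lower bound from $\inf\sup\geq\sup\inf$ by computing the per-coordinate minimum $\tfrac{ab}{a+b}$ of the risk at the least favorable coefficients $\bar x_n$, and the upper bound by estimating $(1-\bar\gamma_n)^2\sigma_n^{2\nu}\leq\kappa_\delta^2$ over the ellipsoid and invoking \eqref{eq:frequentist:pinsker} to make the constraint active. Your explicit saddle-point packaging (and the reparametrization $t_n=x_n^2$ making the objective affine in $t$) is only a cosmetic reorganization of the same computations, so no further comparison is needed.
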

\begin{proof}
    By \cref{thm:frequentist:gesamtfehler}, any $x^\dag$ and any linear estimator $x_\gamma^\delta$ satisfy
    \begin{equation*}
        R(x_\gamma^\delta,x^\dag) = \sum_{n\in\N}\left((1-\gamma_n)^2x_n^2 + \delta^2 \frac{\gamma_n^2}{\sigma_n^2}\right).
    \end{equation*}
    We now first take the infimum over all admissible $\gamma_n$ for fixed $x^\dag$, which we can do term by term due to their non-negativity. Each term is of the form $f(t) = a (1-t)^2 + bt^2$ with $a,b>0$, for which we find the minimum $\min_t f(t) = \frac{ab}{a+b}$ by straightforward calculus.
    Hence
    \begin{equation*}
        \inf_{\gamma} R(x_\gamma^\delta,x^\dag) = \sum_{n\in\N} \frac{\delta^2\sigma_n^{-2} x_n^2}{x_n^2 + \delta^2\sigma_n^{-2}}.
    \end{equation*}
    Furthermore, we have for $0<\bar\gamma_n = 1-\kappa_\delta \sigma_n^{-\nu}$ and hence for $\bar x_n^2 = \frac{\delta^2}{\kappa_\delta}\sigma_n^{-2+\nu}\bar\gamma_n$ that
    \begin{equation*}
        \frac{\bar x_n^2}{\bar x_n^2 + \delta^2 \sigma_n^{-2}} =
        \frac{\bar \gamma_n}{\bar \gamma_n + \kappa_\delta \sigma_n^{-\nu}}
        = 1 - \kappa_\delta \sigma_n^{-\nu} = \bar \gamma_n.
    \end{equation*}
    (If $\bar\gamma_n = 0$, then $\bar x_n=0$ as well and hence this relation holds trivially.)
    Finally, \eqref{eq:frequentist:pinsker} yields
    \begin{equation*}
        \sum_{n\in\N} \sigma_n^{-2\nu} \bar x_n^2 = \frac{\delta^2}{\kappa_\delta} \sum_{n\in\N} \sigma_n^{-\nu-2} \max\{0,1-\kappa_\delta \sigma_n^{-\nu}\} = \rho^2
    \end{equation*}
    and hence $\bar x := \sum_{n \in \N} \bar x_n v_n \in X_{\nu,\rho}$. Since $\inf\sup \geq \sup\inf$ in general, we thus obtain that
    \begin{equation}\label{eq:frequentist:pinsker:minimax1}
        \begin{aligned}[t]
            \inf_{\gamma}\sup_{x^\dag\in X_{\nu,\rho}} R(x_\gamma^\delta,x^\dag)
            &\geq
            \sup_{x^\dag\in X_{\nu,\rho}} \inf_{\gamma}R(x_\gamma^\delta,x^\dag)
            =
            \sup_{x^\dag\in X_{\nu,\rho}} \sum_{n\in\N}  \frac{\delta^2 \sigma_n^{-2} x_n^2}{x_n^2 + \delta^2\sigma_n^{-2}} \\
            &\geq
            \sum_{n\in\N}  \frac{\delta^2\sigma_n^{-2} \bar x_n^2}{\bar x_n^2 + \delta^2\sigma_n^{-2}}
            =
            \delta^2  \sum_{n\in\N} \sigma_n^{-2} \bar\gamma_n.
        \end{aligned}
    \end{equation}

    For the converse inequality, we use that for all $x^\dag \in X_{\nu,\rho}$,
    \begin{equation*}
        \sum_{n\in\N} (1-\bar\gamma_n)^2 x_n^{2}
        \leq \sup_{n\in \N}\{ (1-\bar\gamma_n)^2\sigma_n^{2\nu}\} \sum_{n\in\N} \sigma_n^{-2\nu} x_n^2
        \leq \sup_{n\in \N}\{ (1-\bar\gamma_n)^2\sigma_n^{2\nu}\} \rho^2.
    \end{equation*}
    We again follow the case distinction in the definition of $\gamma_n$ to obtain
    \begin{equation*}
        (1-\bar\gamma_n)^2 \sigma_n^{2\nu}
        =
        \begin{cases}
            \sigma_n^{2\nu} &\text{if }  1-\kappa_\delta \sigma_n^{-\nu} \leq 0,\\
            \kappa_\delta^2 &\text{if }  1-\kappa_\delta \sigma_n^{-\nu} >0,
        \end{cases}
    \end{equation*}
    where both cases can be estimated from above by $\kappa_\delta^2$. Hence \eqref{eq:frequentist:pinsker} implies that
    \begin{equation}\label{eq:frequentist:pinsker:minimax2}
        \begin{aligned}[t]
            \inf_{\gamma}\sup_{x^\dag\in X_{\nu,\rho}} R(x_\gamma^\delta,x^\dag)
            &\leq
            \sup_{x^\dag\in X_{\nu,\rho}} \sum_{n\in\N}\left((1-\bar \gamma_n)^2x_n^2 + \delta^2 \frac{\bar \gamma_n^2}{\sigma_n^2}\right)
            \leq \kappa_\delta^2\rho^2 + \delta^2\sum_{n\in \N} \sigma_n^{-2}\bar\gamma_n^2\\
            &= \delta^2\sum_{n\in\N}\sigma_n^{-2} (\kappa_\delta \sigma_{n}^{-\nu}\bar\gamma_n + \bar\gamma_n^2)
            =
            \delta^2  \sum_{n\in\N} \sigma_n^{-2} \bar\gamma_n,
        \end{aligned}
    \end{equation}
    where the final equality is once more obtained by case distinction for $\gamma_n$.
\end{proof}
In fact, the KKT conditions for the linear minimax estimator (considered as a convex minimization problem for $\gamma_n$ under the constraint $x^\dag\in X_{\nu,\rho}$) can be used to derive \eqref{eq:frequentist:pinsker} as well as its given solution.
With significantly more effort, one can further show that the Pinsker estimator even attains the minimax rate over \emph{all} estimators; see \cite{BelitserLevit}.

The Pinsker estimator can now serve as a benchmark for other linear estimators. For example, the fact that the singular values form a monotonically decreasing null sequence implies that $\bar\gamma_n = 0$ for all $n>N$ with $\sigma_N^\nu < \kappa_\delta \leq \sigma_{N+1}^\nu$, similarly to the truncated singular value decomposition.
In fact, the optimality of the Pinsker estimator fundamentally relies on the optimality of this parameter choice.
\begin{cor}
    If $\nu,\rho>0$ and $x_{\hat\gamma}^\delta$ is a linear estimator with
    \begin{equation}\label{eq:frequentist:tsvd_pinsker}
        \hat\gamma_n = \begin{cases} 1 & \text{if } n\leq N(\delta),\\ 0 &\text{otherwise},\end{cases}
        \qquad \text{with}\qquad
        \sigma_{n}^\nu < 2\kappa_\delta \quad\text{for all } n>N(\delta)
    \end{equation}
    for the solution $\kappa_\delta$ to \eqref{eq:frequentist:pinsker} with the given $\rho$ and $a_n := \sigma_n^{-\nu}$, then
    \begin{equation*}
        \sup_{x^\dag\in X_{\nu,\rho}} R(x_{\hat\gamma}^\delta,x^\dag)\leq 4 \inf_{\gamma}\sup_{x^\dag\in X_{\nu,\rho}} R(x_{\gamma}^\delta,x^\dag).
    \end{equation*}
\end{cor}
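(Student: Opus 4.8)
The plan is to evaluate the worst-case risk of the truncated estimator $x_{\hat\gamma}^\delta$ explicitly, to recall the value of the minimax risk together with an internal decomposition of it taken from the proof of \cref{thm:frequentist:pinsker}, and then to match the two sides with the constant $4$ by pairing the bias of $x_{\hat\gamma}^\delta$ against one summand of the minimax risk and its variance against the other. First I would apply \cref{thm:frequentist:gesamtfehler} to the weights $\hat\gamma_n$ from \eqref{eq:frequentist:tsvd_pinsker}: since $(1-\hat\gamma_n)^2$ vanishes for $n\le N(\delta)$ and equals $1$ for $n>N(\delta)$, while $\hat\gamma_n^2$ does the reverse, the risk collapses to $R(x_{\hat\gamma}^\delta,x^\dag)=\sum_{n>N(\delta)}x_n^2+\delta^2\sum_{n\le N(\delta)}\sigma_n^{-2}$. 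Only the first (bias) term depends on $x^\dag$, and taking the supremum over the coefficient ellipsoid $\sum_n\sigma_n^{-2\nu}x_n^2\le\rho^2$ is a one-line concentration argument: since $\{\sigma_n\}$ is decreasing, a unit of the ellipsoid budget buys the most squared mass on the surviving coordinate with largest $\sigma_n$, namely $n=N(\delta)+1$, so the supremum equals $\sigma_{N(\delta)+1}^{2\nu}\rho^2$. This yields the worst-case risk $W:=\sigma_{N(\delta)+1}^{2\nu}\rho^2+\delta^2\sum_{n\le N(\delta)}\sigma_n^{-2}$.

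Next I would record the needed facts about the minimax risk $M:=\inf_\gamma\sup_{x^\dag\in X_{\nu,\rho}}R(x_\gamma^\delta,x^\dag)$. By \cref{thm:frequentist:pinsker} we have $M=\delta^2\sum_{n\in\N}\sigma_n^{-2}\bar\gamma_n$ with $\bar\gamma_n=\max\{0,1-\kappa_\delta\sigma_n^{-\nu}\}$. Moreover, multiplying the defining equation \eqref{eq:frequentist:pinsker} (with $a_n=\sigma_n^{-\nu}$) by $\kappa_\delta$ gives $\kappa_\delta^2\rho^2=\delta^2\sum_n\sigma_n^{-2}(\kappa_\delta\sigma_n^{-\nu})\bar\gamma_n$, and since $\kappa_\delta\sigma_n^{-\nu}+\bar\gamma_n=1$ on $\{\bar\gamma_n>0\}$ this is precisely the computation carried out in \eqref{eq:frequentist:pinsker:minimax2} and produces the decomposition $M=\kappa_\delta^2\rho^2+\delta^2\sum_n\sigma_n^{-2}\bar\gamma_n^2$. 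It is this equality, splitting $M$ into two nonnegative pieces, that I will exploit.

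I would then bound the two parts of $W$ with the common factor $4$. For the bias, the truncation level forces $\sigma_{N(\delta)+1}^\nu<2\kappa_\delta$, so $\sigma_{N(\delta)+1}^{2\nu}\rho^2<4\kappa_\delta^2\rho^2$. For the variance, every retained index $n\le N(\delta)$ lies below the threshold, hence $\sigma_n^\nu\ge 2\kappa_\delta$, which gives $\bar\gamma_n=1-\kappa_\delta\sigma_n^{-\nu}\ge\tfrac12$ and therefore $\bar\gamma_n^2\ge\tfrac14$; consequently $\delta^2\sum_{n\le N(\delta)}\sigma_n^{-2}\le 4\,\delta^2\sum_{n\le N(\delta)}\sigma_n^{-2}\bar\gamma_n^2\le 4\,\delta^2\sum_n\sigma_n^{-2}\bar\gamma_n^2$. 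Summing the two estimates and inserting the decomposition of $M$ yields $W<4\kappa_\delta^2\rho^2+4\,\delta^2\sum_n\sigma_n^{-2}\bar\gamma_n^2=4M$, which is the assertion.

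The main obstacle — indeed the only genuinely delicate point — is the reading of the condition in \eqref{eq:frequentist:tsvd_pinsker}. To obtain $\bar\gamma_n\ge\tfrac12$ on the retained coefficients I need not merely $\sigma_n^\nu<2\kappa_\delta$ for $n>N(\delta)$ but also $\sigma_n^\nu\ge 2\kappa_\delta$ for $n\le N(\delta)$; this follows from the monotonicity of the singular values once $N(\delta)$ is understood as the exact threshold index $\max\{n:\sigma_n^\nu\ge 2\kappa_\delta\}$, which is what the stated condition encodes. The secondary care needed is to charge the factor $4$ on the bias and on the variance to the \emph{two separate} summands $\kappa_\delta^2\rho^2$ and $\delta^2\sum_n\sigma_n^{-2}\bar\gamma_n^2$ of $M$; bounding both parts crudely against $M$ as a whole (via $\kappa_\delta^2\rho^2\le M$ and $\bar\gamma_n\ge\tfrac12$) would only deliver the weaker constant $6$.
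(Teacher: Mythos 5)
Your proof is correct and follows essentially the same route as the paper: bound the worst-case risk of the truncated estimator by bias plus variance, invoke the identity $\inf_\gamma\sup_{x^\dag} R = \kappa_\delta^2\rho^2 + \delta^2\sum_{n}\sigma_n^{-2}\bar\gamma_n^2$ established in the proof of the Pinsker theorem, and charge the bias against the first summand and the variance against the second, each with a factor $4$. Your two refinements --- computing the bias supremum exactly as $\sigma_{N(\delta)+1}^{2\nu}\rho^2$ (the paper uses the cruder bound $\sigma_{N(\delta)}^{2\nu}\rho^2$, which does not combine directly with the stated hypothesis) and making explicit that $\bar\gamma_n\geq\tfrac12$ for $n\leq N(\delta)$ requires reading $N(\delta)$ as the exact threshold index --- are both welcome and only make precise what the paper's proof implicitly assumes.
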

\begin{proof}
    We have shown in the proof of \cref{thm:frequentist:tsvd} that
    \begin{equation*}
        \sup_{x^\dag\in X_{\nu,\rho}} R(x_{\hat\gamma}^\delta,x^\dag)\leq
        \sigma_{N(\delta)}^{2\nu} \rho^2 + \delta^2\sum_{n\in\N} \sigma_n^{-2}.
    \end{equation*}
    By definition, $\bar \gamma_n = 1 - \kappa_\delta \sigma_n^{-\nu}\geq \frac12$ for all $n\leq N(\delta)$, and hence
    \begin{equation*}
        \sigma_{N(\delta)}^{2\nu} \rho^2 + \delta^2\sum_{n\in\N} \sigma_n^{-2}
        \leq
        (2\kappa_\delta)^2\rho^2 + \delta^2\sum_{n\in \N} \sigma_n^{-2}(2\bar\gamma_n)^2.
    \end{equation*}
    Furthermore, \cref{thm:frequentist:pinsker} implies that all inequalities in \eqref{eq:frequentist:pinsker:minimax1} and \eqref{eq:frequentist:pinsker:minimax2} hold with equality and hence that
    \begin{equation*}
        \kappa_\delta^2\rho^2 + \delta^2\sum_{n\in \N} \sigma_n^{-2}\bar\gamma_n^2
        =
        \inf_{\gamma}\sup_{x^\dag\in X_{\nu,\rho}} R(x_{\gamma}^\delta,x^\dag),
    \end{equation*}
    which together with the previous inequalities yields the claim.
\end{proof}
The next step is now to use the ansatz $\sigma_{N(\delta)}^\nu \approx \kappa_\delta$ in \eqref{eq:frequentist:pinsker} together with the assumption that $\sigma_n \approx n^{-\mu}$ to derive an estimate of $N(\delta) \approx \delta^{\frac{-2}{2\mu(\nu+1)+1}}$ and thus show that the truncated singular value decomposition attains the linear minimax rate with the a priori choice \eqref{eq:frequentist:tsvd_apriori} as well.
We will not do this here and merely make a plausibility estimate (by simpy ignoring constants and lower-order terms):
\begin{equation*}
    N(\delta)^{-\mu\nu} \approx \sigma_{N(\delta)}^{\nu} \approx \kappa_\delta \approx \delta^2 \sum_{n\in \N} \sigma_n^{-2-\nu}\bar \gamma_n
    \approx \delta^2\sum_{n=1}^{N(\delta)} n^{2\mu+\mu\nu} \approx \delta^2 N(\delta)^{2\mu+\mu\nu+1},
\end{equation*}
where in the last step we have used the same estimate as in the proof of \cref{thm:frequentist:tsvd} (which is asymptotically tight due to Faulhaber's formula).
Solving for $N(\delta)$ now gives the desired rate in $\delta$.
In general, one can show that any regularizing filter together with the appropriate a priori choice rule  defines an estimator that attains the linear minimax rate up to the qualification of the filter; see \cite{BissantzHohageMunkRuymgaart:2007}.

\chapter{Bayesian inverse problems}

We conclude with a very brief outlook to the Bayesian approach to inverse problems. As mentioned in the last chapter, the general idea is to not consider a fixed exact (minimum norm) solution $x^\dag$ and instead only assume that such a solution follows a certain probability distribution -- according to the tenet that \enquote{randomness is lack of information}.
An alternative interpretation is that we replace the \enquote{hard} source condition $x^\dag\in X_\nu$ with a probability distribution for the true solution, just like in the last chapter have replaced the hard assumption that $y^\delta\in B_\delta(Kx^\dag)$ with a distribution for the noise.

\bigskip

We again need some concepts from probability theory. Let $(\Omega,\Sigma,\mu)$ be a probability space. For technical reasons, which we only address briefly here, we will assume that $\Omega\subset\R^N$ for some (possibly very large) $N\in \N$; we can thus fix $\Sigma$ as the Borel algebra and $\mu$ as the Lebesgue measure on $\R^N$. If $X:\Omega\to\R$ is a random variable, then we can define a new probability measure on $\R$, endowed with the Borel algebra, by setting
\begin{equation*}
    \mu_X(A) := \mu(X^{-1}(A))\qquad\text{for all Borel measurable } A\subset \R,
\end{equation*}
i.e., the probability that $X\in A$. We then say that $X$ \emph{is distributed according to} $\mu_X$ and write $X\sim \mu_X$. Furthermore, if there exists a $\rho_X\in L^1(\Omega; \mu)$ such that
\begin{equation*}
    \mu_X(A) := \int_A \rho_X(x)\,d\mu \qquad\text{for all $\Sigma$ measurable } A\subset \Omega,
\end{equation*}
then $\rho_X$ is called \emph{density} of $\mu_X$ (with respect to $\mu$). (If $\Omega\subset X$ for an infinite-dimensional Hilbert space $X$, there need not exist a reference measure like the Lebesgue measure with respect to which one can define densities in general.)

Let now $X,Y:\Omega\to\R$ be random variables. Their \emph{joint distribution} is then given by
\begin{equation*}
    \mu_{X,Y}(A\times B) := \mu(X^{-1}(A)\cap Y^{-1}(B)) \qquad\text{for all Borel measurable } A,B\subset \R.
\end{equation*}
If
\begin{equation*}
    \mu_{X,Y}(A\times B) = \mu_X(A)\mu_Y(B)  \qquad\text{for all Borel measurable } A,B\subset \R,
\end{equation*}
then $X$ and $X$ are called \emph{independent}; in this case, $\mu_{X,Y}$ has density $\rho_{X,Y}(x,y) = \rho_{X}(x)\rho_Y(y)$. This allows defining random variables taking values in $\R^N$. Conversely, if we have a joint distribution $\mu_{X,Y}$ with density $\rho_{X,Y}$, we can extract the \emph{marginal}
\begin{equation*}
    \mu_{X}(A) = \mu_{X,Y}(A\times \Omega) \qquad\text{for all Borel measurable } A\subset \R.
\end{equation*}
Similarly, we define the \emph{conditional probability distribution} (for $X\in A$ given that $Y\in B$) as
\begin{equation*}
    \mu_{X|Y\in B}(A) := \frac{\mu_{X,Y}(A\times B)}{\mu_Y(B)}\qquad\text{for all Borel measurable } A,B\subset \R.
\end{equation*}
This definition can be extended to singleton sets of the form $B=\{y_0\}$ for some $y_0\in Y$ (which have Lebesgue measure zero) via so-called regular conditional probabilities. In this case, we write $\mu_{X|y_0}$ for the \emph{conditional probability distribution of $X$ given $y_0$}.

The \emph{Bayes Theorem} now allows characterizing these conditional probability distributions through the joint distribution and the marginal for $X$. (Using the right definition of all occuring objects, its proof reduces to a simple application of Fubini's Theorem.)
There exist several different versions; we here give one in terms of densities that will be used in the following.
\begin{theorem}[Bayes]\label{thm:bayes}
    Let $X:\Omega\to \R^N$ and $Y:\Omega\to\R^M$ be random variables. If $\mu_X$ has density $\rho_X$, $\mu_Y$ has density $\rho_Y$, and $\mu_{Y|x}$ has for $\mu_X$-almost every $x\in \R^N$ density $\rho_{Y|x}$, then $\mu_{X|y}$ has for $\mu_Y$-almost every $y\in \R^M$ density
    \begin{equation*}
        \rho_{X|y}(x) = \frac{\rho_{Y|x}(y)\rho_X(x)}{\rho_Y(y)}\qquad\text{for $\mu_X$-almost every }x\in \R^M
    \end{equation*}
    with
    \begin{equation*}
        \rho_Y(y) = \int_{\R^N} \rho_{Y|x}(y)\,d\mu_X = \int_{\R^N} \rho_{Y|x}(y)\rho_X(x) \,dx.
    \end{equation*}
\end{theorem}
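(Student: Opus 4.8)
The plan is to prove Bayes' Theorem (\cref{thm:bayes}) by reducing everything to the joint density and then invoking Fubini's Theorem, exactly as the excerpt hints. First I would assemble the objects that the hypotheses guarantee: the marginal density $\rho_X$ of $\mu_X$, the conditional densities $\rho_{Y|x}$ of $\mu_{Y|x}$ (for $\mu_X$-almost every $x$), and the marginal density $\rho_Y$ of $\mu_Y$. The key structural fact I would establish first is that the joint distribution $\mu_{X,Y}$ on $\R^N\times\R^M$ admits the density
\begin{equation*}
    \rho_{X,Y}(x,y) = \rho_{Y|x}(y)\,\rho_X(x).
\end{equation*}
This is essentially the definition of the regular conditional probability $\mu_{Y|x}$ combined with the disintegration of $\mu_{X,Y}$ over the marginal $\mu_X$: for any Borel measurable $A\subset\R^N$ and $B\subset\R^M$ one writes
\begin{equation*}
    \mu_{X,Y}(A\times B) = \int_A \mu_{Y|x}(B)\,d\mu_X(x) = \int_A\int_B \rho_{Y|x}(y)\,dy\,\rho_X(x)\,dx,
\end{equation*}
and identifying the integrand against Lebesgue measure on the product space gives the claimed joint density.

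Next I would recover the marginal density of $Y$. Integrating the joint density over the $x$-variable and applying Fubini's Theorem (which is licensed by the $L^1$ integrability of the densities) yields
\begin{equation*}
    \mu_Y(B) = \mu_{X,Y}(\R^N\times B) = \int_B\left(\int_{\R^N} \rho_{Y|x}(y)\rho_X(x)\,dx\right)dy,
\end{equation*}
so that the bracketed inner integral is precisely a density for $\mu_Y$; by uniqueness of densities (up to $\mu_Y$-null sets) this coincides with $\rho_Y(y)$, which establishes the second displayed formula of the theorem. With both the joint density and $\rho_Y$ in hand, I would then compute the conditional density $\rho_{X|y}$ by the symmetric disintegration of $\mu_{X,Y}$ over the marginal $\mu_Y$: the defining relation $\mu_{X,Y}(A\times B)=\int_B \mu_{X|y}(A)\,d\mu_Y(y)$ forces, for $\mu_Y$-almost every $y$,
\begin{equation*}
    \rho_{X|y}(x) = \frac{\rho_{X,Y}(x,y)}{\rho_Y(y)} = \frac{\rho_{Y|x}(y)\,\rho_X(x)}{\rho_Y(y)},
\end{equation*}
which is the assertion. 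The final substitution step is purely algebraic once the two disintegrations are matched.

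The main obstacle I expect is not in the algebra but in making the \emph{regular conditional probability} machinery rigorous, and in particular justifying that the two disintegrations (over $\mu_X$ and over $\mu_Y$) are genuinely compatible on a single joint density rather than merely formally so. The delicate point is that conditioning on the null-measure event $\{Y=y\}$ is only meaningful through the regular conditional distribution alluded to in the excerpt, so I would need the existence theorem for regular conditional probabilities on the standard Borel spaces $\R^N,\R^M$ (which holds since these are Polish) to guarantee that $\mu_{X|y}$ is well defined $\mu_Y$-almost everywhere. Granting that — and the excerpt explicitly allows us to assume the probabilistic framework — the heavy lifting is a single application of Fubini to interchange the order of integration, after which everything reduces to the uniqueness of Radon--Nikodym densities with respect to Lebesgue measure.
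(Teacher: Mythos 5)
Your proposal is correct and follows exactly the route the paper indicates (which it only sketches in a parenthetical remark): build the joint density from the disintegration over $\mu_X$, apply Fubini to identify the marginal density $\rho_Y$, and then read off the conditional density from the symmetric disintegration over $\mu_Y$. You also correctly flag the only genuinely delicate point, namely the existence of regular conditional probabilities on the standard Borel spaces $\R^N$ and $\R^M$, which the paper likewise delegates to the probabilistic framework.
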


\bigskip

We will now apply this theorem to inverse problems of the form $Tx=y$ for $T\in\linop(\R^N,\R^M)$.
To this end, we have to choose
\begin{itemize}
    \item for the unknown exact solution $x^\dag\in\R^N$ a \emph{prior distribution} $\mu_X$, based on prior knowledge we have independent of any measurement, and
    \item for the measurement $y^\delta\in \R^M$ a \emph{likelihood} $\mu_{Y|x}$, which quantifies how likely any measurement $y^\delta$ is given some hypothetical exact data $y:=Tx\in \R^M$ for given $x\in\R^N$.
\end{itemize}
The \emph{posterior distribution} $\mu_{X|y^\delta}$ then tells us how consistent any $x\in\R^N$ is with both our prior knowledge and the actual measurement $y^\delta$ and thus quantifies all remaining uncertainty about the true solution after the measurement. The \emph{evidence} $\rho_Y(y^\delta)$ is a normalization constant that ensures that the posterior is indeed a probability distribution (i.e., is non-negative and has total mass $1$).

The central questions are now:
\begin{enumerate}[label=\arabic*.]
    \item How should we choose prior distribution and likelihood in a problem-specific way?
    \item How can we extract practically useful information from the posterior distribution?
\end{enumerate}
In the following, we will only sketch the simplest approaches to these to questions, although it should be pointed out that this does not even begin to do justice to the strength of the Bayesian approach.

We start with modeling, where we assume -- both for simplicity and to make the connection to the results of the previous chapters -- that both the true solution and the measurement are normally distributed.
Recall that a random variable $X:\Omega\to\R$ is \emph{normally distributed} with expectation $t_0$ and variance $\sigma^2$ if and only if it has density
\begin{equation*}
    \rho_X(t) = \frac{1}{\sqrt{2\pi}}\, e^{-\frac{1}{2\sigma^2} (t-t_0)^2}\qquad\text{for all }t\in \R.
\end{equation*}
We then write $X \sim \calN(t_0,\sigma^2)$. If we are given $N$ independently and identically normally distributed variables $X_i$, then $X:=(X_1,\dots,X_N)^T:\Omega\to\R^N$ is normally distributed as well with density
\begin{equation*}
    \rho_X(x) = \prod_{i=1}^N\frac{1}{\sqrt{2\pi}} \, e^{-\frac{1}{2\sigma^2} (x_i-x_0)^2} = \frac{1}{\sqrt{(2\pi)^N}} \, e^{-\frac{1}{2\sigma^2} \norm{x-x_0}_2^2} \qquad\text{for all }x\in\R^N.
\end{equation*}
Generally, if $x_0\in\R^N$ and if $C\in\R^{N\times N}$ is selfadjoint and positive definite, we say that the random variable $X:\Omega\to\R^N$ is normally distributed with expectation $x_0$ and \emph{covariance matrix} $C$, and write $X\sim \calN(x_0,C)$, if it has density
\begin{equation}\label{eq:bayes:gaussian}
    \rho_X(x) =  \frac{1}{\sqrt{(2\pi)^N\det C}} \, e^{-\frac{1}{2} \inner{C^{-1}(x-x_0)}{(x-x_0)}_2} \qquad\text{for all }x\in\R^N.
\end{equation}
(If $K$ is injective, then $x\sim \calN(0,C)$ for $C=|K|^{-2\nu}=(K^*K)^{-\nu}$ may be considered as the Bayesian equivalent to the source condition $x\in X_\nu$.)

We now assume for prior distribution and likelihood, respectively, that
\begin{itemize}
    \item $\mu_X = \calN(0,\sigma^2\Id)$ for some $\sigma>0$;
    \item $\mu_{Y|x} = \calN(Tx,\delta^2 \Id)$ for some $\delta>0$.
\end{itemize}
(The latter corresponds exactly to the assumption on the noise in \cref{chap:frequentist}; since we here only consider finite-dimensional problems, we do not have to make the distinction between random variables and processes. In practice, of course, these assumptions should be based on careful statistical modeling of the problem at hand.)

By \cref{thm:bayes} and the calculus for exponential functions, the posterior distribution $\mu_{X|y^\delta}$ for a given measurement $y^\delta\in \R^M$ then has density
\begin{equation}\label{eq:bayes:posterior}
    \rho_{X|y^\delta}(x) =
    \frac{e^{-\frac{1}{2\delta^2}\norm{Tx-y^\delta}_2^2 - \frac{1}{2\sigma^2}\norm{x}_2^2}}{\int_{\R^N}e^{-\frac1{2\delta^2}\norm{Tx-y^\delta}_2^2 - \frac{1}{2\sigma^2}\norm{x}_2^2}\,dx} \qquad\text{for all }x\in\R^N.
\end{equation}
This density can now be used to make statements about which points $x\in \R^N$ have a particularly high probability (based on the given distributions) to give rise to this measurement; such points are called \emph{point estimators}.
The most common point estimator is the \emph{maximum a posteriori (MAP) estimator}, which is the point that has (in a suitable sense) the maximal probability under the posterior distribution. In our setting, this is the (global) maximizer of the posterior density, i.e.,
\begin{equation*}
    \begin{aligned}[t]
        x_{\mathrm{MAP}} &:= \arg\max_{x\in\R^N} \rho_{X|y^\delta}(x) = \arg\min_{x\in\R^N} -\log \rho_{X|y^\delta}(x)\\
        &= \arg\min_{x\in\R^N} \frac{1}{2\delta^2}\norm{Tx-y^\delta}_2^2 +                              \frac{1}{2\sigma^2}\norm{x}_2^2.
    \end{aligned}
\end{equation*}
In other words, $x_{\mathrm{MAP}}$ coincides -- for this choice of prior distribution and likelihood!%
\footnote{Conversely, deliberately choosing prior distribution and likelihood for the sole purpose of computing the MAP estimator via Tikhonov regularization amounts to committing a \emph{Bayesian crime}.}
-- by \cref{thm:tikhonov:funktional} exactly to Tikhonov regularization with $\alpha = \frac{\delta^2}{\sigma^2}$, which by \cref{lem:tikhonov:normalen} can be written as
\begin{equation}\label{eq:bayes:map2}
    x_{\mathrm{MAP}} = \left(T^*T + \frac{\delta^2}{\sigma^2} \Id\right)^{-1} T^* y^\delta.
\end{equation}

An alternative is the \emph{conditional mean}
\begin{equation*}
    x_{\mathrm{CM}} := \Exp[X| Y = y^\delta] = \int_{\R^N} x\,d\mu_{X|y^\delta} = \int_{\R^N} x \rho_{X|y^\delta}(x) \,dx.
\end{equation*}
To calculate this point estimator in our concrete setting, we rewrite the posterior density \eqref{eq:bayes:posterior} slightly by expanding the square, using \eqref{eq:bayes:map2}, and completing the square again. This yields
\begin{equation*}
    \begin{aligned}
        \frac{1}{2\delta^2}\norm{Tx-y^\delta}_2^2 + \frac{1}{2\sigma^2}\norm{x}_2^2
        &=
        \frac1{2\delta^2}\inner{(T^*T+\tfrac{\delta^2}{\sigma^2}\Id)x}{x}_2 - \frac{1}{\delta^2}\inner{T^*y^\delta}{x}_2 + \frac{1}{2\delta^2}\norm{y^\delta}_2^2\\
        &= \frac1{2\delta^2}\inner{(T^*T+\tfrac{\delta^2}{\sigma^2}\Id)(x-x_{\mathrm{MAP}})}{x}_2 \\
        \MoveEqLeft[-1] - \frac{1}{2\delta^2}\inner{(T^*T+\tfrac{\delta^2}{\sigma^2}\Id)x}{x_{\mathrm{MAP}}}_2 + \frac{1}{2\delta^2}\norm{y^\delta}_2^2\\
        &= \frac1{2\delta^2}\inner{(T^*T+\tfrac{\delta^2}{\sigma^2}\Id)(x-x_{\mathrm{MAP}})}{x-x_{\mathrm{MAP}}}_2 \\
        \MoveEqLeft[-1] - \frac{1}{2\delta^2}\inner{(T^*T+\tfrac{\delta^2}{\sigma^2}\Id)x_{\mathrm{MAP}}}{x_{\mathrm{MAP}}}_2 + \frac{1}{2\delta^2}\norm{y^\delta}_2^2.
    \end{aligned}
\end{equation*}
Now the last two terms are constant in $x$. Furthermore, $T^*T+\frac{\delta^2}{\sigma^2}\Id$ is selfadjoint and positive definite, and thus has an inverse that is selfadjoint and positive definite as well. Up to a constant,  \eqref{eq:bayes:posterior} therefore is exactly of the form \eqref{eq:bayes:gaussian}, i.e., the posterior distribution is also normally distributed with
\begin{equation}\label{eq:bayes:posterior_gauss}
    \mu_{X|y^\delta} \sim N\left(x_{\mathrm{MAP}},\delta^2(T^*T+\tfrac{\delta^2}{\sigma^2}\Id)^{-1}\right).
\end{equation}
In particular, $x_{\mathrm{CM}} = x_{\mathrm{MAP}}$.

However, if the prior distribution or the likelihood is not normally distributed, or if the inverse problem is nonlinear, then in general the conditional mean does not coincide with the MAP estimator. In this case, one usually does not have a closed expression for $x_{\mathrm{CM}}$ and has to resort to a numerical approximation.
One possibility is the following. \Cref{thm:bayes} implies that
\begin{equation*}
    x_{\mathrm{CM}} = \int_{\R^N} x\rho_{X|y^\delta}(x) \,dx
    = \frac{\int_{\R^N} x \rho_{Y|x}(y^\delta)\rho_X(x) \,dx}{\int_{\R^N} \rho_{Y|x}(y^\delta)\rho_X(x)\,dx}
    = \frac{\int_{\R^N} x \rho_{Y|x}(y^\delta) \,d\mu_X}{\int_{\R^N} \rho_{Y|x}(y^\delta)\,d\mu_X}
\end{equation*}
by definition of the density. However, these are very high-dimensional integrals so that standard quadrature is not feasible. Instead, one uses \emph{Monte Carlo integration}: If we have samples $x_1,\dots,x_n$ that are independently and identically distributed according to $\mu_X$ (which are straightforward to generate at least if $\mu_X$ is a normal distribution), then
\begin{equation*}
    \hat x_{\mathrm{CM}} := \frac{\frac1n\sum_{i=1}^n x_i \rho_{Y|x_i}(y^\delta)}{\frac1n\sum_{i=1}^n \rho_{Y|x_i}(y^\delta)}
    =
    \frac{\sum_{i=1}^n x_i e^{-\frac1{2\delta^2}\norm{Tx_i-y^\delta}_2^2}}{\sum_{i=1}^n e^{-\frac1{2\delta^2}\norm{Tx_i-y^\delta}_2^2}}
\end{equation*}
defines an approximation to $x_{\mathrm{CM}}$ that by the large of law numbers converges to $x_{\mathrm{CM}}$ at the rate $\mathcal{O}(1/\sqrt{n})$. (If it is impossible or prohibitive to generate samples from the prior distribution, one can instead use \emph{Metropolis--Hastings Markov chain Monte Carlo methods} that directly generate a sequence of samples that are distributed according to the \emph{posterior} distribution with high probability.)

Mote Carlo methods can also be used to compute \emph{region estimators}; such estimators quantify the remaining uncertainty after the measurent. (Accordingly, one speaks of \emph{uncertainty quantification}.) One class of examples of such estimators are \emph{credible sets}; these are sets $C_\alpha\subset\R^N$ for given $\alpha\in (0,1)$ that satisfy
\begin{equation}\label{eq:bayes:credibleset}
    \mu_{X|y^\delta}(C_\alpha) = \int_{C_\alpha}\rho_{X|y^\delta}(x)\,dx = 1-\alpha,
\end{equation}
i.e., $C_\alpha$ contains $1-\alpha$ (e.g., $0.95$) of the posterior distribution's mass. 
The larger such a set, the less certain we are about the true solution (which however need not lie in $C_\alpha$!) However, these sets are not unique for a given $\alpha$; one possibility is to consider as credible sets only \emph{highest posterior density sets} of the form
\begin{equation*}
    C_\alpha^* =  \setof{x\in \R^N}{ -\log \rho_{X|y^\delta}(x) \leq \eta_\alpha},
\end{equation*}
where for given $\alpha$ one only has to find $\eta_\alpha\in (0,\infty)$ such that \eqref{eq:bayes:credibleset} holds.

\backmatter
\printbibliography

\end{document}